\title[]{Asymptotic properties of infinitesimal characters and applications}
\author[]{Andr\'es Sambarino}
\date{}
\renewcommand*{\backref}[1]{}
\renewcommand*{\backrefalt}[4]{\quad \tiny
  \ifcase #1 (\textbf{NOT CITED.})%
  \or    (Cited on page~#2.)%
  \else   (Cited on pages~#2.)%
  \fi}
\def\MRbibitem{\@ifnextchar[\my@lbibitem\my@bibitem}
\def\mybiblabel#1#2{\@biblabel{{\hyperref{http://www.ams.org/mathscinet-getitem?mr=#1}{}{}{#2}}}}
\def\myhyperanchor#1{\Hy@raisedlink{\hyper@anchorstart{cite.#1}\hyper@anchorend}}
\def\my@lbibitem[#1]#2#3#4\par{%
  \item[\mybiblabel{#2}{#1}\myhyperanchor{#3}\hfill]#4%
  \@ifundefined{ifbackrefparscan}{}{\BR@backref{#3}}%
  \if@filesw{\let\protect\noexpand\immediate
    \write\@auxout{\string\bibcite{#3}{#1}}}\fi\ignorespaces%
}
\def\my@bibitem#1#2#3\par{%
  \refstepcounter\@listctr
  \item[\mybiblabel{#1}{\the\value\@listctr}\myhyperanchor{#2}\hfill]#3%
  \@ifundefined{ifbackrefparscan}{}{\BR@backref{#2}}%
  \if@filesw\immediate\write\@auxout
    {\string\bibcite{#2}{\the\value\@listctr}}\fi\ignorespaces%
}
\newcommand{\xqedhere}[2]{%
  \rlap{\hbox to#1{\hfil\ledrappierap{\ensuremath{#2}}}}}
\newcommand{\Z}{\mathbb{Z}} 
\newcommand{\Q}{\mathbb{Q}}
\newcommand{\R}{\mathbb{R}}
\newcommand{\C}{\mathbb{C}}
\newcommand{\N}{\mathbb{N}}
\renewcommand{\P}{\mathbb{P}}
\newcommand{\K}{\mathbb K}
\renewcommand{\H}{\mathbb H}
\newcommand{\T}{\Theta}
\newcommand{\lb}{\llbracket}
\newcommand{\rb}{\rrbracket}
\newcommand{\eps}{\varepsilon}
\newcommand{\G}{\Upgamma}    
\newcommand{\gh}{{\sf \Gamma}}    
\newcommand{\GA}{\Uplambda}
\newcommand{\<}{\langle}
\renewcommand{\>}{\rangle}
\newcommand{\E}{{\sf E}}
\newcommand{\g}{\gamma}
\newcommand{\bord}{\partial}
\newcommand{\Lie}{\mathrm{Lie}}
\newcommand{\posgen}{\EuScript F^{(2)}}
\newcommand{\UG}{\sf{U}\gh}
\newcommand{\vt}{\uptheta}
\newcommand{\p}{{\mathsf{f}}}
\newcommand{\wk}{\check}
\renewcommand{\aa}{\alpha}
\newcommand{\bb}{\beta}
\renewcommand{\/}{\backslash}
\newcommand{\grupo}{\Lambda} 
\newcommand{\Bcone}{{\EuScript L}}
\newcommand{\conodual}[1]{\inte({\Bcone_{#1})^*}}
\newcommand{\Mod}{\mathrm{Mod}}
\newcommand{\gedossplit}{{\mathfrak{G}_2}}
\newcommand{\Gedossplit}{{\mathsf{G_2}}}
\newcommand{\scr}{\mathscr}
\renewcommand{\sf}[1]{{\mathsf{#1}}}
\newcommand{\cal}{\mathcal}
\renewcommand{\frak}{\mathfrak}
\newcommand{\ledrappier}{{\EuScript J}}
\newcommand{\re}{{\scr r}}
\newcommand{\neudim}{\operatorname{neudim}}
\newcommand{\A}{{\sf A}}
\newcommand{\B}{{\sf B}}
\newcommand{\Ce}{{\sf C}}
\newcommand{\D}{{\sf D}}
\newcommand{\F}{{\sf F}}
\newcommand{\Ge}{{\sf G}_2}
\newcommand{\car}{{\frak X}}
\newcommand{\peso}{\varpi}
\newcommand{\yy}{\sf{y}}
\newcommand{\xx}{\sf{x}}
\newcommand{\ua}{\upalpha}
\newcommand{\Hff}{\mathrm{Hff}}
\newcommand{\symp}{\upomega}
\newcommand{\entropy}[2]{\hh_{#2}^{#1}}
\renewcommand{\t}{\vartheta}
\newcommand{\centro}{{\frak Z}}
\newcommand{\undi}{\underline{i}}
\newcommand{\undiX}{\underline{i}^{\frak X}}
\newcommand{\length}{\psi}
\DeclareMathOperator{\Var}{var}
\DeclareMathOperator{\covar}{covar}
\newcommand{\caracteres}{{\frak X(\gh,\sf G)}}
\newcommand{\caracG}{{\frak X(\G,\sf G)}}
\newcommand{\prin}{\updelta}
\newcommand{\fund}{\phi}
\newcommand{\rep}{\upphi}
\newcommand{\tangcero}[2]{\mathsf{T}_{#1}^{#2}}
\newcommand{\gibbs}{\upnu}
\newcommand{\crossm}{\mathsf{B}}
\newcommand{\cross}{\frak{C}}
\newcommand{\crossg}{\frak{G}}
\newcommand{\crossG}{\frak{\Gamma}}
\newcommand{\affinelim}{\mathscr{A}}
\newcommand{\jordanlim}{\mathscr{V\!\!J}}
\newcommand{\full}{\G_{\mathrm{full}}}
\newcommand{\kt}{{\varkappa}}
\newcommand{\ktv}{\upkappa}
\newcommand{\umargulis}{\breve{\margulis}}
\newcommand{\margulis}{{\sf m}}
\newcommand{\lin}[1]{{\dot{#1}}}
\newcommand{\tra}[1]{v_{#1}}
\newcommand{\espacio}{{\EuScript V}^0}
\newcommand{\co}{{\sf u}}
\newcommand{\coclase}{[{\sf u}]}
\newcommand{\morfi}{{\rho(\gh)}}
\newcommand{\tangente}{{v}}
\newcommand{\factor}[1]{V_{#1}}
\newcommand{\mass}[2]{\cal p_{#1}{#2}} 
\newcommand{\trivia}{{\EuScript T}}
\newcommand{\raff}{{\breve{\rr}}}
\newcommand{\raffN}{\rr_\Vlongest}
\newcommand{\rr}{\cal{r}}
\newcommand{\raffi}{\rr_{\trivia}}
\DeclareMathOperator{\supp}{supp}
\newcommand{\kahl}{{\upvarphi}}
\newcommand{\vp}{\mu}
\renewcommand{\b}{{\frak b}}
\newcommand{\n}{{\frak n}}
\newcommand{\gl}{{\frak{gl}}}
\renewcommand{\sl}{\frak{sl}}
\renewcommand{\ge}{{\frak g}}
\newcommand{\m}{{\frak m}}
\newcommand{\e}{{\frak e}}
\newcommand{\f}{{\frak f}}
\newcommand{\s}{{\frak s}}
\renewcommand{\a}{{\frak a}}
\newcommand{\h}{{\frak h}}
\renewcommand{\u}{{\frak u}}
\renewcommand{\p}{{\frak p}}
\renewcommand{\k}{{\frak k}}
\newcommand{\so}{\frak{so}}
\renewcommand{\sp}{\frak{sp}}
\newcommand{\su}{{\frak{su}}}
\newcommand{\gedosl}{\rep_{\peso_\aa}(\gedossplit)}
\newcommand{\Gedosl}{\rep_{\peso_\aa}(\Gedossplit)}
\newcommand{\dual}{\wp}
\newcommand{\lf}[1]{\underline{#1}}
\newcommand{\nrm}{{\Psi}}
\newcommand{\medidas}{\EuScript M}
\newcommand{\Holder}{\mathrm{H\ddot ol}}
\newcommand{\Anosov}{\frak A}
\newcommand{\deriva}[2]{\frac{\partial}{\partial #1}\Big|_{#1=#2}}
\newcommand{\Weyl}{W}
\newcommand{\longest}{{w_0}}
\newcommand{\X}{\EuScript X}
\newcommand{\simple}{{\sf\Delta}}
\renewcommand{\root}{{\sf\Phi}}
\newcommand{\poids}{{\sf\Pi}}
\newcommand{\sroot}{{\sigma}}
\newcommand{\slroot}{{\upsigma}}
\newcommand{\encono}{{\X}}
\newcommand{\adg}{\sf g}
\newcommand{\dd}{\delta} 
\newcommand{\pp}{{\mathrm P}}
\newcommand{\jordan}{\uplambda}
\DeclareMathOperator{\Int}{\mathrm{Inn}}
\DeclareMathOperator{\isom}{Isom}
\DeclareMathOperator{\ii}{i}
\DeclareMathOperator{\spa}{span}
\newcommand{\class}{\mathrm{C}}
\DeclareMathOperator{\End}{End}
\DeclareMathOperator{\SL}{{\mathsf{SL}}}
\DeclareMathOperator{\PSL}{{\mathsf{PSL}}}
\DeclareMathOperator{\GL}{{\mathsf{GL}}}
\DeclareMathOperator{\SO}{{\mathsf{SO}}}
\DeclareMathOperator{\PGL}{{\mathsf{PGL}}}
\DeclareMathOperator{\id}{id}
\newcommand{\inte}{\mathrm{int}\,}
\DeclareMathOperator{\relinte}{\mathrm{rel-int}}
\newcommand{\hitchin}{\EuScript{H}}
\DeclareMathOperator{\traza}{Trace}
\DeclareMathOperator{\ad}{ad}
\DeclareMathOperator{\Ad}{Ad}
\DeclareMathOperator{\rk}{rank}
\DeclareMathOperator{\Fix}{Fix}
\DeclareMathOperator{\diag}{diag}
\DeclareMathOperator{\hess}{Hess}
\newcommand{\diff}{\triangle}
\newcommand{\vacio}{\emptyset}
\newcommand{\tpos}{\EuScript P_\T(S,\sf G)}
\newcommand{\poif}{\poids_\rep}
\newcommand{\Vlongest}{\EuScript N}
\newcommand{\cero}{{\EuScript O}}
\newcommand{\area}{\mathrm{area}}
\newcommand{\hh}{{\scr h}}
\newcommand{\mmax}{\upmu}
\newcommand{\II}{\mathbf{I}}
\newcommand{\JJ}{\mathbf{J}}
\newcommand{\PP}{\mathbf{P}}
\renewcommand{\tt}{\mathrm}
\newcommand{\M}{\mathrm{MS}}
\newcommand{\VV}[1]{{\mathbb V}^{#1}}
\newcommand{\cartan}{\mu}
\newcommand{\grass}{\mathrm{Gr}}
\renewcommand{\angle}{\measuredangle}
\newcommand{\transverse}{\pitchfork}
\newcommand{\FF}{\mathrm F}
\newcommand{\trs}{\mathrm{T}}
\DeclareMathOperator{\PSp}{{\sf{PSp}}}
\newcommand{\barra}[1]{[#1]}
\newcommand{\Aff}{{\sf{Aff}}}
\newcommand{\varjor}[2]{\mathrm d\jordan^{#1}{(#2)}}
\newcommand{\varjort}[2]{\mathrm d\jordan_\t^{#1}{(#2)}}
\newcommand{\bpm}{\begin{pmatrix}}
\newcommand{\epm}{\end{pmatrix}}
\renewcommand{\o}{{\cal o}}
\newcommand{\lenf}{\mathfrak L}
\numberwithin{equation}{section}     
\setlist[enumerate,1]{label = {\upshape(\roman*)},ref = \roman*}
\setlist[enumerate,2]{label = {\upshape(\alph*)},ref = \alph*}
\newtheorem{thmA}{Theorem}
\newtheorem*{corN}{Corollary}
\newtheorem{thm}{Theorem}[section]
\newtheorem{cor}[thm]{Corollary}
\newtheorem{lemma}[thm]{Lemma}
\newtheorem{prop}[thm]{Proposition}
\theoremstyle{definition}
\newtheorem{defi}[thm]{Definition}
\newtheorem{ex}[thm]{Example}
\newtheorem{assu}{Assumption}
\theoremstyle{remark}
\newtheorem{obs}[thm]{Remark}
\newtheorem*{ack}{Acknowledgements}
\begin{document}

\begin{abstract}Inspired by Benoist, we study objects linked to integrable tangent vectors on the character variety of a semi-group $\G$ with values in a semi-simple real-algebraic group $\sf G$. We prove the \emph{cone of Jordan variations} has non-empty interior and, when $\sf G$ is split, establish non-empty interior of the set of \emph{length-normalized variations}. We apply these techniques to pressure forms on Anosov representations and higher-rank Teichmüller spaces. We identify an explicit functional $\kahl\in\a^*$ whose pressure form is compatible with Goldman's symplectic form at Fuchsian points in the Hitchin component. Finally, we show the degeneration of the Hausdorff dimension of \emph{higher}-quasi-circles is governed by a Diophantine equation.
\end{abstract}

\maketitle

\tableofcontents

\section{Introduction}

Let $\G$ be a semi-group and $\sf G$ a Zariski-connected semi-simple real-algebraic Lie group of the non-compact type. The \emph{character variety} of $\G$ with values in $\sf G$, of morphisms up to conjugation, is denoted by $\caracG=\hom(\G,\sf G)/\sf G.$ In this paper we investigate several objects associated to an integrable tangent vector $$v\in\sf T_\rho\caracG.$$

\noindent
We will think of $v$ as the (quotient projection of the) derivative of a curve $(\rho_t)_{t\in(-\eps,\eps)}$ in $\hom(\G,\sf G)$ with $\rho_0=\rho$ and such that for every $\g\in\G$ the curve $t\mapsto\rho_t(\g)$ is real-analytic in some neighborhood of $0$.

Let $\a$ be a Cartan subspace of $\sf G$, $\a^+\subset\a$ a closed Weyl chamber and $\jordan:\sf G\to\a^+$ be the Kostant-Jordan-Lyapunov-projection: up to signs, $\exp(\jordan(g))$ is conjugated to the $\R$-diagonalizable element of Jordan's decomposition of $g$. Commonly, $g$ is \emph{loxodromic} if $\jordan(g)\in\inte \a^+.$ For $\g\in\G$ we let $\jordan^\g:\caracG\to\a$ be the map $$\jordan^\g(\eta)=\jordan\big(\eta(\g)\big)\textrm{ and }\varjor\g{v}=\deriva t0\jordan(\rho_t(\g))\in\a$$ its differential at $v$.  For $\varphi\in\a^*$ we let $\varphi^\g:\caracG\to\R$ be the composition $$\varphi^\g=\varphi\circ\jordan^\g: \eta\mapsto\varphi\big(\jordan\big(\eta(\g)\big)\big).$$

Recall that \emph{Benoist's limit cone} of $\rho$ is defined by $\Bcone_\rho=\overline{\{\R_+\cdot\jordan^\g(\rho):\g\in\G\}}.$ Its dual $(\Bcone_\rho)^*$ consists on linear forms $\length\in\a^*$ such that $\length|\Bcone_\rho\geq0$.

\subsection{The cone of Jordan variations} 

We introduce \emph{the cone of Jordan variations} $$\jordanlim_{v}:=\overline{\Big\{\R_+\cdot\varjor{\g}{v}:\g\in\G\textrm{ with loxodromic }\rho(\g)\Big\}}\subset\a$$ and for $\length\in\conodual\rho$ we introduce the \emph{set of normalized variations} $$\VV{\length}_{v}:=\overline{\left\{\frac{\varjor\g{v}}{\length^{\g}(\rho)}:\g\in\G\textrm{ with loxodromic }\rho(\g)\right\}}\subset\a.$$

We will rule off variations that occur in proper normal subgroups of $\sf G$. Write $\ge=\bigoplus_{i\in I} \ge_i$ with $\ge_i$ a simple ideal and assume we've chosen the Cartan subspaces $\a_i$ of $\ge_i$ so that $\a=\bigoplus_i \a_i$. Let $p_i:\a\to\a_i$ be the associated projections. Then $v$ \emph{has full loxodromic variation} if for every $i\in I$ one has $p_i(\jordanlim_v)\neq\{0\}$, so \emph{full} stands for 'non-trivial variation in every simple factor of $\sf G$' and loxodromic stands for 'the variation is seen on loxodromic elements'. A $\g\in\G$ with loxodromic $\rho(\g)$ has \emph{full variation} if $\forall i\ p_i\big(\varjor\g v\big)\neq0$.

Let us simplify terminology and say that $v$ \emph{has Zariski-dense base-point} if $v\in\sf T_\rho\caracG$ and $\rho(\G)$ is Zariski-dense in $\sf G$. The following statements can be found (respectively) in Corollary \ref{phi-cotangent}, Proposition \ref{fullZariskidenso} and Proposition \ref{normconvex}.

\begin{thmA}\label{A}Let $v\in\sf T_\rho\caracG$ have Zariski-dense base point and full loxodromic variation. Then, $\jordanlim_{v}$ is convex and has non-empty interior. Moreover, full variation elements of $\rho(\G)$ are Zariski-dense in $\sf G$ and their Jordan projections intersect any open subcone of $\Bcone_\rho$. For every $\length\in\conodual\rho$ the set $\VV\length_v$ is convex.\end{thmA}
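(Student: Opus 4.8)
The plan is to derive the four assertions from two ingredients: the \emph{concatenation estimates} for Jordan projections and their variations provided by the asymptotic analysis of infinitesimal characters, and Benoist's Zariski-density results. I will use these in the form: if $\rho(\g_1),\dots,\rho(\g_k)$ are loxodromic and pairwise in general position (their attracting and repelling flags are mutually transverse), then for all large positive integers $n_1,\dots,n_k$ the element $w=\g_1^{n_1}\cdots\g_k^{n_k}$ has $\rho(w)$ loxodromic and
\[
\jordan(\rho(w))=\sum_j n_j\,\jordan(\rho(\g_j))+O(1),\qquad \varjor w v=\sum_j n_j\,\varjor{\g_j}v+O(1),
\]
the errors being uniform in the $n_j$ once the $\g_j$ are fixed. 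The second estimate follows from the first because the defect of the first, extended to a complex neighbourhood of $0$, is a uniformly bounded analytic function of the deformation parameter, hence has derivative at $0$ bounded by Cauchy's estimate. Two elementary facts will be used constantly: $\g\mapsto\varjor\g v$ and $\g\mapsto\jordan(\rho(\g))$ are $\G$-conjugation invariant and homogeneous of degree one under $\g\mapsto\g^n$; and, by Zariski-density of $\rho(\G)$, any finite family of loxodromic-giving elements can be brought into pairwise general position by replacing each with a suitable $\G$-conjugate, which alters neither its Jordan projection nor its Jordan variation.

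\emph{Convexity of $\jordanlim_v$ and of $\VV\length_v$.} Both are closed sets generated by single elements ($\varjor\g v$, resp. $\varjor\g v/\length^\g(\rho)$), so it suffices to check stability under the relevant combination of two generators. Given $\g_1,\g_2$ with $\rho(\g_i)$ loxodromic, I put them in general position and apply the estimate to $\g_1^{n_1}\g_2^{n_2}$: as $n_1,n_2\to\infty$ with $n_1/n_2$ fixed, $\tfrac{1}{n_1+n_2}\varjor{\g_1^{n_1}\g_2^{n_2}}v$ converges to the convex combination $\tfrac{n_1}{n_1+n_2}\varjor{\g_1}v+\tfrac{n_2}{n_1+n_2}\varjor{\g_2}v$, which therefore lies in $\jordanlim_v$; letting the ratio vary and using that $\jordanlim_v$ is a closed cone generated by such elements gives convexity. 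For $\VV\length_v$ one uses in addition that $\length\in\conodual\rho$ is strictly positive on $\Bcone_\rho\setminus\{0\}$, so the denominators $\length^\g(\rho)=\length(\jordan(\rho(\g)))$ are positive and satisfy $\length^{\g_1^{n_1}\g_2^{n_2}}(\rho)=n_1\length^{\g_1}(\rho)+n_2\length^{\g_2}(\rho)+O(1)$; hence $\varjor{\g_1^{n_1}\g_2^{n_2}}v/\length^{\g_1^{n_1}\g_2^{n_2}}(\rho)$ converges to the appropriate \emph{convex} combination of $\varjor{\g_1}v/\length^{\g_1}(\rho)$ and $\varjor{\g_2}v/\length^{\g_2}(\rho)$, whose weight sweeps $[0,1]$ as the ratio runs over $\Q_{>0}$.

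\emph{Non-empty interior.} As $\jordanlim_v$ is convex, non-empty interior is equivalent to $\spa_\R\jordanlim_v=\a$, and it is convenient to establish at once the localized statement: for every open convex subcone $\mathscr U\subseteq\Bcone_\rho$ the cone $\jordanlim_v^{\mathscr U}:=\overline{\{\R_+\varjor\g v:\rho(\g)\textrm{ loxodromic},\ \jordan(\rho(\g))\in\mathscr U\}}$ is convex with non-empty interior (the choice $\mathscr U=\inte\Bcone_\rho$ recovering the non-empty interior of $\jordanlim_v$, since $\jordanlim_v^{\mathscr U}\subseteq\jordanlim_v$). Convexity is again the concatenation argument, because a product of high powers of loxodromic elements whose Jordan projections lie in the convex cone $\mathscr U$ again has Jordan projection in $\mathscr U$; so everything reduces to $\spa_\R\jordanlim_v^{\mathscr U}=\a$. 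For this I would invoke the first-order description of the infinitesimal character — equivalently, a direct perturbative computation — namely $\varjor\g v=\pi_\a\big(\Ad(k_\g^{-1})u(\g)\big)$, where $u\in Z^1(\G,\ge)$ represents $v$, the element $k_\g$ conjugates $\rho(\g)$ into $\exp(\a)M$, and $\pi_\a\colon\ge=\a\oplus\m\oplus\bigoplus_{\alpha\ne0}\ge_\alpha\to\a$ is the projection onto the $\a$-summand (to first order, perturbing a loxodromic element in the $\m$- and root-space directions does not move its Jordan projection). Together with Benoist's theorem that the loxodromic elements of $\rho(\G)$ with Jordan projection in $\mathscr U$ are still Zariski-dense in $\sf G$ and have attracting/repelling configurations Zariski-dense among transverse pairs of full flags, this gives: were some nonzero $\varphi\in\a^*$ to annihilate $\jordanlim_v^{\mathscr U}$, the linear form $X\mapsto(\varphi\circ\pi_\a)\big(\Ad(k_\g^{-1})X\big)$ on $\ge$ would vanish on $u(\g)$ for all these $\g$, and feeding this into the cocycle identity — together with the Zariski-density of the conjugators — one is forced to conclude that the variation vanishes identically on some simple factor, i.e. $p_i(\jordanlim_v)=\{0\}$, contradicting full loxodromic variation. (The reduction to a single simple factor uses Zariski-density of $\rho(\G)$ to rule out a variation correlated across factors.) This spanning step is the heart of the matter, and the main obstacle is making ``forced'' precise: controlling the interaction between the \emph{fixed} cocycle $u$ and the \emph{moving}, yet Zariski-dense, conjugators $k_\g$.

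\emph{Full-variation elements.} Granting the localized statement, let $\mathscr U\subseteq\Bcone_\rho$ be an open convex subcone. Since $\inte\jordanlim_v^{\mathscr U}\ne\vacio$ is open in $\a$ and the loxodromic variations produced by $\mathscr U$-elements are dense in $\jordanlim_v^{\mathscr U}$, there is $\g$ with $\rho(\g)$ loxodromic, $\jordan(\rho(\g))\in\mathscr U$ and $\varjor\g v\in\inte\jordanlim_v^{\mathscr U}\subseteq\inte\jordanlim_v$; as $\inte\jordanlim_v$ is open in $\a=\bigoplus_i\a_i$ it meets no coordinate hyperplane, so $p_i(\varjor\g v)\ne0$ for every $i$, i.e. $\g$ has full variation. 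Hence, writing $F:=\{\g:\rho(\g)\textrm{ loxodromic with full variation}\}$, the projections $\{\jordan(\rho(\g)):\g\in F\}$ meet every open subcone of $\Bcone_\rho$. For Zariski-density of $\rho(F)$, fix $\g_0\in F$ with $\varjor{\g_0}v\in\inte\jordanlim_v$; for any loxodromic $\rho(\g')$, conjugating $\g'$ into general position with $\g_0$, the concatenation estimate gives $\tfrac1n\varjor{\g_0^n\g'}v\to\varjor{\g_0}v\in\inte\jordanlim_v$, so $\g_0^n\g'\in F$ for all large $n$. Since $\rho(\g_0)$ has infinite order, the Zariski closure of $\{\rho(\g_0)^n:n\geq n_0\}$ equals the positive-dimensional group $\Zariski{\langle\rho(\g_0)\rangle}$, which contains $e$; therefore the Zariski closure of $\rho(F)$ contains $\rho(\g')$ for every loxodromic $\rho(\g')$, and since these are Zariski-dense in $\sf G$ by Benoist, $\rho(F)$ is Zariski-dense in $\sf G$.
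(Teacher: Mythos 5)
Your treatment of convexity (of both $\jordanlim_v$ and $\VV\length_v$) and your deduction of Zariski-density of the full-variation elements are sound and close in spirit to the paper's Propositions \ref{normconvex} and \ref{fullZariskidenso} (your ``Zariski closure of a power semigroup contains the identity'' shortcut replaces the paper's Lemma \ref{abc} and is a legitimate, slightly cleaner route). But the central assertion — non-empty interior of $\jordanlim_v$ — is not proved: you reduce it to showing that no $\varphi\in\a^*$ can annihilate all $\varjor\g v=\pi_\a\big(\Ad(k_\g^{-1})u(\g)\big)$, and then write that one ``is forced to conclude'' the variation dies on a simple factor, conceding yourself that controlling the interaction between the fixed cocycle $u$ and the moving conjugators $k_\g$ is the main obstacle. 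That obstacle is exactly the theorem. Your formula for $\varjor\g v$ is the paper's Proposition \ref{margulisderivada}: it identifies $\varjor\g v$ with the $\a$-part of the Margulis invariant of $(\rho(\g),\co_v(\g))$ in $\sf G\ltimes_{\Ad}\ge$, and the Margulis invariant is famously \emph{not} additive, so the cocycle identity does not propagate the hypothesized linear relation in any direct way. The paper's entire Part \ref{affineactions} exists to quantify this failure: Zariski-density of the affine group $\G_{\co_v}$ in $\sf G\ltimes\ge$ (via Whitehead's lemma, Proposition \ref{Zarirre}), the Affine Ratio of four affine flags, Smilga's theorem that the additivity defect $\margulis(fq)-\margulis(f)-\margulis(q)$ converges to an Affine Ratio (Theorem \ref{SM}), and a transversality/dimension-count argument (Proposition \ref{hiperplanos}) showing these Affine Ratios cannot all lie in a hyperplane. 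None of this, nor a substitute for it, appears in your sketch; and since your ``localized'' interior statement underlies both the existence of full-variation elements in a given subcone and (implicitly) your claim that a correlated variation across simple factors is impossible, the gap propagates to those parts as well.

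Two smaller points. First, ``$\inte\jordanlim_v$ is open in $\a$ hence meets no coordinate hyperplane'' is false as stated — an open set certainly can meet the subspaces $\ker p_i$; what you need (and can get) is a generator ray meeting the open, dense complement of $\bigcup_i\ker p_i$ inside the interior, and then such a ray meets each $\ker p_i$ only at $0$. Second, when differentiating the asymptotic additivity of Jordan projections you should be explicit that you are invoking the uniformity of Benoist's convergence (Theorem \ref{CR}, Lemmas \ref{pares} and \ref{modif}) together with analyticity of the curve $t\mapsto\rho_t(\g)$, as the paper does in Lemma \ref{potencia}; the Cauchy-estimate phrasing is fine in principle but needs the locally uniform bound on the defect over a complex neighbourhood, which is not automatic from the real statement.
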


The most involved statement is to guarantee non-empty interior of $\jordanlim_v$. 
This is of course analogous, and inspired by, the classical result by Benoist \cite{limite} stating that if $\rho(\G)$ is Zariski-dense then $\Bcone_\rho$ is convex and has non-empty interior.

Theorem \ref{A} is stablished by means of the affine geometry $\sf G\ltimes_{\Ad}\ge$. If $(g,x)\in\sf G\ltimes\ge$ has loxodromic linear part, i.e. $g$ is loxodromic, then its \emph{Margulis projection} is well defined. This is a conjugacy invariant introduced by Margulis \cite{MargulisAffineRuso,MargulisAffine} for the affine group $\SO_{2,1}\ltimes\R^3$, when he proved existence of non-abelian free groups acting properly discontinuously on $\R^3$ by affine transformations. In the current context, this projection was defined by Smilga \cite{smilga1}.

The bridge between Theorem \ref{A} and the affine geometry is given by Proposition \ref{margulisderivada} below, independently established Kassel-Smilga \cite{KS} and also by Ghosh \cite{ghoshJordan} who further requires that $\sf G$ is split. Recall that a variation $v\in\sf T_\rho\caracG$ induces a $1$-cocycle $\co_v:\G\to\ge$ defined by \begin{equation}\label{deco}\co_v(\g)=\left.\frac{\partial}{\partial t}\right|_{t=0}\rho_t(\g)\rho(\g)^{-1},\end{equation} and $(\rho,\co_v)$ is a group morphism $\G\to\sf G\ltimes\ge$. Then, Proposition \ref{margulisderivada} states that $\varjor\g v$ coincides with the $\a$-coordinate of the Margulis projection of $(\rho(\g),\co_v(\g))$.

We then study more general affine groups $\sf G\ltimes_\rep V$ for a class of representations $\rep:\sf G\to\SL(V)$ and establish  \emph{non-empty interior} results in this setting (Corollary \ref{interiorAL} for irreducible $\rep$ and Corollary \ref{conococyclos} when $\rep$ is reducible but \emph{disjoined}). Similar versions of Corollary \ref{interiorAL} will also appear in Kassel-Smilga \cite{KS} and in Ghosh \cite{Souravpersonalcomm}, in particular \cite{KS} obtains the convexity stated in Theorem \ref{A}.

To mimic Benoist's proof in \cite{limite} we introduce the concept of \emph{affine ratio}, an invariant of four affine flags in general position; that can also be found in the independent work of Ghosh \cite{ghoshJordan} (for split groups, who also proves convexity of spectrum in this case). We then rely on Smilga's work \cite{SmilgaAnnalen} to  relate the defect of additivity of Margulis's invariants to this affine ratio. These results are achieved in Part \ref{affineactions}, however this viewpoint is used in the sequel, specially for Theorem \ref{thmPrinc}.


As a consequence  we obtain the following, generalizing previous work of Mess \cite{Mess}, Goldman-Labourie-Margulis \cite{GLM} (see also the alternative proof given by Danciger-Gu\'eritaud-Kassel \cite{DGKLorentz}) on $\H^2_\R$. If $\sf H$ is rank 1 and simple, and $\rho:\G\to\sf H$ is convex co-cocompact, then we let $\scr h(\rho)$ be the Hausdorff dimension of its limit set. Recall from Bridgeman-Canary-Labourie-S. \cite{pressure} that $\scr h$ is analytic about $\rho$.

\begin{corN}[Corollary \ref{levelsets} - Deformations along level sets of $\entropy{}{}$ give non-proper actions] Let $\sf H$ be the (identity component of the) isometry group of $\H^n_\R$ $n\neq3$, $\H^n_\H$ $n\geq2$, or the Cayley hyperbolic plane. Let $v\in\sf T_\rho\frak X(\G,\sf H)$ have Zariski-dense and convex-co-compact base-point, if $\mathrm d\scr h(v)=0$ then the action $(\rho,\co_v)$ on $\h$ is not proper.
\end{corN}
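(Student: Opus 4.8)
The plan is to route Benoist-style affine geometry through the first variation of the Hausdorff dimension. As $\sf H$ is simple of real rank one, $\a$ is a line; identify $\a\cong\R$ with $\a^+=\R_{\ge0}$, so that the positive root $\alpha$ is a positive multiple of the coordinate and, Zariski-density forcing loxodromic elements, $\Bcone_\rho=\R_{\ge0}$ and $\length:=\alpha\in\conodual\rho$; normalise $\alpha$ so that $\alpha(\jordan(g))$ is the translation length $\ell(g)$ of a loxodromic $g\in\sf H$. For $\g$ with $\rho(\g)$ loxodromic, Proposition~\ref{margulisderivada} identifies $\alpha(\varjor\g v)$ with the real Margulis invariant $\margulis_v(\g)$ of the affine transformation $(\rho(\g),\co_v(\g))$ of $\h$. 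Since $\rho$ is convex-cocompact --- equivalently, Anosov --- the convex core carries a compact transitive geodesic flow $\flujo$, and $[\g]\mapsto\margulis_v(\g)/\ell(\rho(\g))$ is the family of orbit-averages of a H\"older function $\widehat\margulis_v$ on $\flujo$; under $\a\cong\R$ this rewrites
\[
\VV\length_v=\overline{\{\,\margulis_v(\g)/\ell(\rho(\g))\ :\ \rho(\g)\ \textrm{loxodromic}\,\}}\subset\R,
\]
a convex, hence interval, subset by Theorem~\ref{A}.

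I would then invoke two inputs. The first is the \emph{affine dictionary}: for $\sf H$ as in the statement, if $(\rho,\co_v)$ acts properly on $\h$ then $\widehat\margulis_v$ is a positive, or a negative, cohomology class over $\flujo$; equivalently $0\notin\VV\length_v$; equivalently $\mu(\widehat\margulis_v)$ is non-zero of constant sign over all $\flujo$-invariant probabilities $\mu$. This is the Goldman--Labourie--Margulis properness criterion together with its extensions by Smilga and Kassel--Smilga (see \cite{KS,SmilgaAnnalen} and the Affine Ratio machinery of Part~\ref{affineactions}); only the \emph{necessity} of the sign condition is needed. The second is \emph{thermodynamical}: by \cite{pressure}, $\scr h$ is analytic near $\rho$, and the classical first-variation formula for the exponential growth rate of the marked length spectrum gives, for some $c_\rho>0$,
\[
\mathrm d\scr h(v)\;=\;-\,c_\rho\int\widehat\margulis_v\,\mathrm d\mu_{\mathrm{BMS}},
\]
$\mu_{\mathrm{BMS}}$ the Bowen--Margulis--Sullivan probability of $\flujo$; the integrand is recognised through the fact that the variation at $v$ of $\eta\mapsto\ell(\eta(\g))$ is $\alpha(\varjor\g v)=\margulis_v(\g)$. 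Hence $\mathrm d\scr h(v)=0$ if and only if $\mu_{\mathrm{BMS}}(\widehat\margulis_v)=0$.

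These close the argument immediately. Assume $\mathrm d\scr h(v)=0$, so $\mu_{\mathrm{BMS}}(\widehat\margulis_v)=0$. Periodic-orbit measures are weak-$\ast$ dense among $\flujo$-invariant probabilities and realise the orbit-averages $\margulis_v(\g)/\ell(\rho(\g))$, so the value $0=\mu_{\mathrm{BMS}}(\widehat\margulis_v)$ lies in $\VV\length_v$; by the affine dictionary $(\rho,\co_v)$ is not proper. For the last clause, if $\scr h$ is critical at $\rho$ then $\mathrm d\scr h$ vanishes on all of $\sf T_\rho\frak X(\G,\sf H)$; any affine action on $\h$ above $\Ad\rho$ is given by a cocycle in $Z^1(\G,\Ad\rho)$, which differs from $\co_v$ for a suitable $v\in\sf T_\rho\frak X(\G,\sf H)$ by a coboundary --- each class in $H^1(\G,\Ad\rho)$ integrating to a genuine deformation as $\rho$ is Anosov --- hence is conjugate to $(\rho,\co_v)$ and therefore not proper.

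The hard part is the affine dictionary, precisely the implication ``proper $\Rightarrow 0\notin\VV\length_v$'' for the adjoint action of each listed $\sf H$ on $\h$; this is exactly where the hypotheses on $\sf H$ are used. The list is essentially that of the rank-one simple groups whose minimal parabolic $P=MAN$ has $\m$ with trivial centre, so that a generic loxodromic element has no neutral translation direction in $\h$ other than $\a$, and the $\a$-valued Margulis invariant detects properness unaided; this is why $\H^3_\R$ --- that is, $\PSL(2,\C)$ --- and the complex hyperbolic groups $\PU(n,1)$, $n\ge2$, are excluded, whereas the remainder of the argument is a transcription of rank-one structure together with the standard first-variation formula for the critical exponent.
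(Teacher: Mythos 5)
Your overall strategy is the same as the paper's: identify $\alpha(\varjor\g v)$ with the Margulis invariant of $(\rho(\g),\co_v(\g))$ via Proposition \ref{margulisderivada}, express $\mathrm d\scr h(v)$ as (minus) the average of the normalized Margulis cocycle against the maximal-entropy/BMS measure, and feed the conclusion into the Kassel--Smilga properness criterion. The gap is in your ``affine dictionary.'' You use it in the form ``proper $\Rightarrow 0\notin\VV^{\length}_v$,'' i.e.\ $0\in\VV^{\length}_v\Rightarrow$ not proper. But the criterion actually available (Proposition \ref{proper}) is: $0\in\inte\M^{\length}(\coclase)\Rightarrow$ not proper, and $0\notin\M^{\length}(\coclase)\Rightarrow$ proper. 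The boundary case $0\in\partial\M^{\length}$ is not covered: $0\in\M^{\length}$ only says $\margulis(\g_n)/\length^{\g_n}(\rho)\to0$ along some sequence, which does not produce the bounded Margulis invariants with diverging lengths required by Proposition \ref{souravnon-swinging}. So from ``$0=\mu_{\mathrm{BMS}}(\widehat\margulis_v)\in\VV^{\length}_v$'' you cannot yet conclude non-properness.

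What is missing is exactly the content of Lemma \ref{masainte}: one must show (a) that $\VV^{\length}_{\{\aa\},v}$ has non-empty interior, which requires knowing that the variation cocycle is not Liv\v sic-cohomologous to a multiple of the length functional --- this is Theorem \ref{indeplambda1}, a genuinely nontrivial input, and without it $\VV^{\length}_v$ could a priori be a single point or an interval with $0$ on its boundary; and (b) that the maximal-entropy average $\mass{\length}{v}$ lies in the \emph{interior} of $\VV^{\length}_v$, which the paper gets from Babillot--Ledrappier (Proposition \ref{babled1} and Remark \ref{maxinterior}) once (a) holds. With (a) and (b) one gets $0\in\inte\VV^{\length}_v=\inte\M^{\length}(\coclase)$ (the equality using $\centro(\m)=\{0\}$, which you correctly identify as the role of the hypotheses on $\sf H$), and only then does Proposition \ref{proper} apply. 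Your appeal to weak-$\ast$ density of periodic-orbit measures gives membership in $\VV^{\length}_v$ but not interiority, so the argument as written does not close.
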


\subsection{Base point Liv\v sic-independence}

Denote by $\simple$ the set of simple restricted roots associated to $\a^+$. For $\sroot\in\simple$ let $\ge_\sroot$ be its root-space (Eq. \eqref{rootspace}). For a non-empty $\t\subset\simple$ the subspace $\a_\t=\bigcap_{\sroot\in\simple-\t}\ker\sroot$ comes equipped with a natural projection $\pi_\t:\a\to\a_\t$ (see \S\,\ref{s.Levi}). We let \begin{alignat*}{2}\jordan_\t&=\pi_\t\circ\jordan,\\\VV{\length}_{\t,v}&=\pi_\t\big(\VV\length_v\big).\end{alignat*}

In Corollary \ref{double-density} we prove the following:

\begin{thmA}[Double-density for roots with multiplicity $1$]\label{teoB}  Let $v\in\sf T_\rho\caracG$ have full loxodromic variation and Zariski-dense basepoint. Let $\t\subset\simple$ be such that $\dim\ge_\sroot=1$ for all $\sroot\in\t$, then the additive group spanned by $$\big\{\big(\varjort\g v,\jordan^\g(\rho)\big):\g\in\G\textrm{ with loxodromic $\rho(\g)$}\big\}$$ is dense in $\a_\t\times\a$. In particular, for any $\length\in\conodual\rho$ the convex set $\VV\length_{\t,v}$ has non-empty interior.\end{thmA}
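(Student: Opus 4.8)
The plan is to reduce the density assertion to obstructing the confinement of certain one-dimensional projections to cyclic subgroups, and to carry out the latter by combining quasi-additivity with the Affine Ratio of Part~\ref{affineactions}. Identifying $\a_\t\times\a$ with a Euclidean vector space and recalling that a subgroup of it is dense if and only if no nonzero linear functional maps it into a cyclic subgroup $c\Z\subset\R$ (with $c\geq0$, the case $c=0$ being the trivial subgroup), Theorem~\ref{teoB} becomes equivalent to: for every nonzero $(\xi,\tau)\in\a_\t^*\times\a^*$ the set $\big\{\xi(\varjort\g v)+\tau(\jordan^\g(\rho)):\g\in\G,\ \rho(\g)\text{ loxodromic}\big\}$ is not contained in any such $c\Z$. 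Granting this, the last sentence of the statement is immediate: $\VV\length_{\t,v}=\pi_\t(\VV\length_v)$ is convex by Proposition~\ref{normconvex}, and if it had empty interior in $\a_\t$ it would lie in an affine hyperplane $\{\xi=c_0\}$ with $\xi\neq0$; since $\length^\g(\rho)=\length(\jordan^\g(\rho))>0$ for loxodromic $\rho(\g)$ (because $\length\in\conodual\rho$), the nonzero functional $(\xi,-c_0\length)$ would then vanish on every generator $(\varjort\g v,\jordan^\g(\rho))$, contradicting density.

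So fix a nonzero $(\xi,\tau)$ and suppose $f(\g):=\xi(\varjort\g v)+\tau(\jordan^\g(\rho))\in c\Z$ for every $\g$ with $\rho(\g)$ loxodromic. If $\xi=0$ this contradicts a theorem of Benoist (the subgroup of $\a$ generated by the Jordan projections of a Zariski-dense sub-semi-group is dense), so $\g\mapsto\tau(\jordan^\g(\rho))$ is non-cyclic-valued unless $\tau=0$. Assume then $\xi\neq0$. The engine is quasi-additivity of both invariants along large powers of pairs of loxodromic elements: if $\g,\g'\in\G$ have loxodromic $\rho$-image with attracting/repelling flags in general position, then $\g^N(\g')^N\in\G$ is loxodromic for $N\gg0$, one has $\jordan^{\g^N(\g')^N}(\rho)=N\jordan^\g(\rho)+N\jordan^{\g'}(\rho)+\Delta^{\jordan}_N$ with $\Delta^{\jordan}_N$ bounded and convergent (its limit an analytic function of the flags and of $\jordan^\g(\rho),\jordan^{\g'}(\rho)$), and, by Proposition~\ref{margulisderivada} together with the analysis of the Affine Ratio in Part~\ref{affineactions} (which, after Smilga~\cite{SmilgaAnnalen}, expresses the defect of additivity of the Margulis projection through it), $\varjor{\g^N(\g')^N}{v}=N\varjor\g v+N\varjor{\g'}v+\Delta^{\mathrm{aff}}_N$ with $\Delta^{\mathrm{aff}}_N$ convergent to a non-constant analytic function of the four limiting affine flags and the Margulis invariants. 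Hence
\[
\mathcal D_N(\g,\g'):=f\big(\g^N(\g')^N\big)-Nf(\g)-Nf(\g')=\xi\big(\pi_\t\Delta^{\mathrm{aff}}_N\big)+\tau\big(\Delta^{\jordan}_N\big)
\]
lies in $c\Z$ for all $N$, hence so does its limit $\mathcal D_\infty(\g,\g')$.

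It remains to exhibit an infinite bounded set of values of $\mathcal D_\infty$, which cannot lie in any cyclic subgroup of $\R$, giving a contradiction. Since $\rho(\G)$ is Zariski-dense, attracting/repelling flags of loxodromic elements are dense among general-position quadruples, and — using full loxodromic variation of $v$ and Proposition~\ref{fullZariskidenso} — so are the associated affine flags among general-position quadruples of affine flags. Here the hypothesis $\dim\ge_\sroot=1$ for $\sroot\in\t$ enters: it suppresses the root-space resonances between $\varjort\g v$ and $\jordan^\g(\rho)$ and makes the $\a_\t$-component of the Margulis invariant (and of its Affine-Ratio defect) non-degenerate, so that Corollaries~\ref{interiorAL} and~\ref{conococyclos}, applied to the representation $\bigoplus_{\sroot\in\t}\rep_{\peso_\sroot}$ governing $\pi_\t$, guarantee that $\pi_\t\Delta^{\mathrm{aff}}_\infty$ ranges over a nonempty open subset of $\a_\t$ as $(\g,\g')$ vary. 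As $\xi\neq0$, a short argument — working on a compact family of uniformly general-position quadruples so that $\tau(\Delta^{\jordan}_\infty)$ stays bounded, and using that the Affine Ratio vanishes on concurrent quadruples — then shows $\mathcal D_\infty$ takes infinitely many bounded values and so cannot be confined to a cyclic subgroup; the contradiction proves the theorem. (This is the ``base-point Liv\v sic-independence'': after projecting to $\a_\t$, the variation cocycle carries no nontrivial cohomological relation with the length cocycle of the base point $\rho$.)

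The principal obstacle is exactly this last step: establishing that the $\a_\t$-projection of the limiting defect of additivity of the Margulis invariant fills a nonempty open subset of $\a_\t$ — i.e. reducing the ``$\a_\t$-richness'' of the Affine-Ratio defect to the non-empty-interior results for affine actions of the representation attached to $\t$, and recognizing the multiplicity-one condition $\dim\ge_\sroot=1$ as precisely what validates that reduction — and then disentangling this contribution from the Jordan-side defect $\Delta^{\jordan}_\infty$ that varies along with it. The remaining inputs — the reduction of the first paragraph, Benoist's density of Jordan projections, and the quasi-additivity estimates for $\jordan$ — are standard or already established in the paper.
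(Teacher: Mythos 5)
Your reduction to ``no nonzero functional $(\xi,\tau)$ sends the generating set into a cyclic subgroup $c\Z$'' is fine, and so is the deduction of the ``in particular'' clause. But the core of your argument has a genuine gap, and it occurs exactly at the step you flag as the principal obstacle. Passing to the limiting defects gives you $\xi(\pi_\t\Delta^{\mathrm{aff}}_\infty)+\tau(\Delta^{\jordan}_\infty)\in c\Z$ for all transverse pairs, and the tools you invoke (Proposition \ref{hiperplanos}, Corollaries \ref{interiorAL} and \ref{conococyclos}) only establish that the Affine Ratios, resp.\ the Margulis invariants, are \emph{not contained in a hyperplane}. That is enough to contradict containment in a hyperplane (the case $c=0$ with $(\xi,\tau)\neq0$), which is how the paper proves Theorem \ref{A}; it is \emph{not} enough to contradict containment in a closed subgroup of the form $\ell^{-1}(c\Z)$ with $c>0$. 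Your proposed fix --- produce an infinite bounded set of values of $\mathcal D_\infty$ by letting the affine flags range over an open set while $\tau(\Delta^{\jordan}_\infty)$ stays bounded --- is not available: the affine flags attached to elements of $\G$ have translation parts pinned down by the cocycle $\co_v$, so they form a countable set whose Affine Ratios are only known (via the Zariski-density argument of Proposition \ref{hiperplanos}) to avoid hyperplanes, not to accumulate on an open set; and even granting accumulation, you would still have to decouple the simultaneous variation of $\Delta^{\jordan}_\infty$, which you do not do. This is precisely the ``density of the group of periods'' difficulty that already makes Benoist's density statement in Theorem \ref{interiorsi} strictly harder than the non-empty-interior statement.

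The paper's route is different and is where the multiplicity-one hypothesis actually enters --- not, as you suggest, to make the $\a_\t$-component of the Margulis invariant non-degenerate (Corollary \ref{interiorAL} needs no such hypothesis). The proof of Theorem \ref{LedIndepA} works at \emph{second order}: for a root $\aa$ with $\dim\ge_\aa=1$ strictly minimizing $g$ among $\supp\varphi$, the space $V_2(\rep_\aa(g))$ is one-dimensional, so the cross-ratio defect $\nu^\t_{2n}(g,h)$ decays exactly like $e^{-2n\aa(\jordan(g))}$ with a nonzero, analytic, uniformly convergent limit $\kappa^\varphi(g,h)$ (Lemma \ref{lemmacm}, Proposition \ref{cmgeneral}). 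Differentiating this asymptotic along the curve $\rho_t$ and feeding in the integrality constraint forces the integers $m_{2n}$ to vanish eventually and then yields $\mathrm d\aa^{g}(v)=0$ (Lemma \ref{intermedioModif}); restricting to a Schottky sub-semigroup with limit cone concentrated near $\jordan(\g)$ (Proposition \ref{subsemi}, Remark \ref{G'schottky}) then traps $\jordanlim_{v|\G'}$ inside $\ker\aa$, contradicting Theorem \ref{A}. So the affine machinery is used only as the final contradiction, not as the engine of the density argument; to repair your proof you would need either this second-order analysis or an independent argument ruling out the lattice-type closed subgroups, neither of which your first-order defect computation supplies.
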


Before passing to the next subsection we show two applications of Theorem \ref{teoB}. The first one can be found in Corollary \ref{HilbertCritica}, we refer the reader to \S\ref{anosovpre} for the definition of Anosov representations, introduced by Labourie \cite{labourie} and generalized by Guichard-Wienhard \cite{olivieranna}. Recall also that for $\rho\in\car(\gh,\SL(3,\R))$ the \emph{Hilbert entropy} is defined by $$\entropy{\sf H}{\rho}=\lim_{t\to\infty}\frac1t\log\#\Big\{[\g]\in[\gh]:\frac{(\jordan_1^\g-\jordan_3^\g)(\rho)}2\leq t\Big\}.$$

\begin{corN}[No proper actions above level sets of entropy] Consider a $\simple$-Anosov $\rho:\gh\to\SL(3,\R)$ with Zariski-dense image and  $ v\in\sf T_\rho\car(\gh,\SL(3,\R))$. If $\mathrm d\entropy{\sf H}{}(v)=0$ then the affine action on $\sl(3,\R)$ via $\co_v$ is not proper. Moreover, there is a neighborhood $\cal U$ of $(\rho,v)$ in $\sf T\car(\gh,\SL(3,\R))$ such that for all $(\eta,w)\in\cal U$ the action via $\co_w$ is also not proper.\end{corN}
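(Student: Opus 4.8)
The plan is to deduce this corollary from Theorem \ref{teoB} together with the affine-geometric dictionary of Proposition \ref{margulisderivada}, following the same template as the rank-one Corollary \ref{levelsets}. First I would identify the relevant functional: the Hilbert entropy $\entropy{\sf H}{\rho}$ is governed by the linear form $\length=\tfrac12(\aa_1-\aa_3)$ (up to the standard normalization by $h$, the topological entropy of the geodesic flow), so that its differential $\mathrm d\entropy{\sf H}{}(v)$ is, by the variational / pressure formalism, a positive multiple of the integral of $\length^{\g}$-components of $\varjor\g v$ against the Bowen-Margulis measure — in particular $\mathrm d\entropy{\sf H}{}(v)=0$ forces the family $\{\varjor\g v/\length^{\g}(\rho)\}$ to accumulate on both sides of the hyperplane $\ker\length\subset\a$, so its closure $\VV\length_v$ cannot be contained in a half-space bounded away from $\ker\length$. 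For $\SL(3,\R)$ with $\rho$ Zariski-dense, every simple root has multiplicity $1$, so taking $\t=\simple$ Theorem \ref{teoB} applies with $\pi_\t=\id$ and tells us $\VV\length_v$ is a convex set with non-empty interior in $\a$; combined with the previous sentence, $\VV\length_v$ must then contain a neighborhood of $0$ in $\a$.

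The next step is the standard criterion for non-properness of the affine action: an affine action $(\rho,\co_v)$ on $\ge$ (here $\sl(3,\R)$) with $\rho$ Anosov is proper only if the Margulis projections of its elements stay in a single open half-space of $\a$ — equivalently, only if the cone $\jordanlim_v$ (which by Proposition \ref{margulisderivada} is the closed cone spanned by the $\a$-parts of the Margulis projections) is \emph{proper}, i.e. contained in an open half-space. But $\jordanlim_v$ contains $\R_+\cdot\VV\length_v$ up to closure, so once $\VV\length_v$ has $0$ in its interior we get $\jordanlim_v=\a$, which is as far from proper as possible; hence the action via $\co_v$ is not proper. This is exactly the mechanism used in Corollary \ref{levelsets}, only now supplied by Theorem \ref{teoB} in place of the rank-one input.

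For the moreover part — the open neighborhood $\cal U$ of $(\rho,v)$ — I would argue by an openness/stability argument. Anosov representations form an open set in $\car(\gh,\SL(3,\R))$, and Zariski-density is an open condition there as well (or one passes to the Zariski closure and notes the relevant estimates only improve); along these deformations the Jordan projections $\jordan^\g$, the data $\varjor\g v$, and hence the Margulis projections of $(\eta(\g),\co_w(\g))$ vary continuously. The key point is that ``$\jordanlim_w$ is not proper'' is an \emph{open} condition on $(\eta,w)$: if for $(\rho,v)$ the cone $\jordanlim_v$ already contains a basis of $\a$ coming from finitely many loxodromic elements $\g_1,\dots,\g_N$ whose Margulis projections positively span $\a$ (such a finite subcollection exists since $\jordanlim_v=\a$ and the cone is generated by these vectors), then for $(\eta,w)$ close enough the perturbed Margulis projections of the same $\g_1,\dots,\g_N$ still positively span $\a$, so $\jordanlim_w=\a$ and the action via $\co_w$ is again non-proper. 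Taking $\cal U$ to be the intersection of the Anosov locus, the Zariski-density locus, and the (open) locus where these $N$ perturbed vectors positively span $\a$ gives the claim.

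The main obstacle, I expect, is the first step: pinning down precisely that $\mathrm d\entropy{\sf H}{}(v)=0$ forces $\VV\length_v$ to meet both open half-spaces cut out by $\ker\length$ — that is, combining the analyticity and the variational formula for $\entropy{\sf H}{}$ (à la Bridgeman-Canary-Labourie-S.) with the conclusion of Theorem \ref{teoB}. Once that is in place, non-properness is immediate from the Margulis-projection criterion, and the openness of the neighborhood $\cal U$ is a soft continuity-and-finiteness argument. A secondary technical point is to make sure Theorem \ref{teoB}'s hypotheses (Zariski-dense base point, \emph{full loxodromic} variation) are met: Zariski-density is assumed, $v\neq0$ gives non-triviality, and for a simple group like $\SL(3,\R)$ ``full loxodromic variation'' reduces to $\jordanlim_v\neq\{0\}$, which follows because a non-proper-direction obstruction would otherwise contradict $v\neq0$ via Proposition \ref{margulisderivada}; this last reduction should be spelled out carefully.
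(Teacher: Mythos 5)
Your overall architecture matches the paper's: reduce to showing $0\in\inte\VV{\sf H}_v$, use Proposition \ref{margulisderivada} to identify normalized variations with normalized Margulis invariants (valid since $\sl(3,\R)$ is split, so $\centro(\m)=\{0\}$), and invoke the Kassel--Smilga criterion (Proposition \ref{proper}) for non-properness; your finite-positive-spanning argument for the neighborhood $\cal U$ is an acceptable substitute for the paper's appeal to continuity of $v\mapsto\VV{\sf H}_v$ (Lemma \ref{cc}), and the verification that Theorem \ref{teoB}'s hypotheses hold is correct.

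There is, however, a genuine gap at the decisive step: ``$\VV{\sf H}_v$ has non-empty interior and meets both sides of $\ker\sf H$, hence contains a neighborhood of $0$.'' This inference is false. Here $\ker\sf H$ is a line in the two-dimensional $\a$, and a convex body whose interior meets that line need not contain the origin (picture a small disk centered at the point $(1,-2,1)\in\ker\sf H$): its positive span is then a \emph{proper} convex cone and no contradiction with properness arises. What $\mathrm d\entropy{\sf H}{}(v)=0$ actually gives, via Corollary \ref{masa} and Eq. \eqref{masaentropia}, is that the distinguished point $\mass{\sf H}{v}\in\inte\VV{\sf H}_v$ satisfies $\sf H(\mass{\sf H}{v})=0$ --- an interior point lying \emph{on} $\ker\sf H$, not at the origin. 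The missing idea is a symmetry argument: $\sf H$ is invariant under the opposition involution $\ii$, hence so is the set $\VV{\sf H}_v$ (replace $\g$ by $\g^{-1}$, using that $\gh$ is a group), and $\ii$ acts as $-\id$ on the line $\ker\sf H$. Therefore $-\mass{\sf H}{v}=\ii(\mass{\sf H}{v})\in\inte\VV{\sf H}_v$ as well, and convexity finally gives $0\in\inte\VV{\sf H}_v$. Without this step your argument stalls exactly where you flagged the ``main obstacle.''
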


The second one can be found in Corollary \ref{rootsmall}, the definition of the Hitchin component $\hitchin_\ge(S)$ of $S$ associated to a simple split $\ge$ can be found in \S\ref{IntroPress}.

\begin{corN}[Curves with arbitrary small root-variation] Let $\ge$ be simple split, $\slroot\in\simple$, $\peso_\slroot\in\a^*$ be the associated fundamental weight and $0\neq v\in\sf T_\rho\hitchin_{\ge}(S)$ have Zariski-dense base-point. Then,  there exists $h>0$ such that for positive $\eps$ and $\delta$ there exists $C>0$ with $$\#\big\{[\g]\in[\pi_1S]\textrm{ primitive}:\peso_\slroot^\g(\rho)\in(t-\eps,t]\textrm{ and }|\mathrm d\slroot^\g(v)|\leq\delta\big\}\sim C \frac{e^{ht}}{t^{3/2}}.$$ In particular, for every $\delta>0$ there exists $\g\in\pi_1S$ with arbitrary large translation length and such that $|\mathrm d\slroot^\g(v)|\leq\delta$.
\end{corN}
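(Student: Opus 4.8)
The plan is to derive the statement from the double-density Theorem~\ref{teoB}, fed into a joint local limit theorem for reparametrized Anosov flows. \emph{First} I would set up Theorem~\ref{teoB} with $\t=\{\slroot\}$ and verify its hypotheses: as $\ge$ is simple and split, every restricted root space is one-dimensional, so $\dim\ge_\slroot=1$; the base point is Zariski dense by assumption; and $v\neq0$ has full loxodromic variation --- $\ge$ being simple this merely asks $\jordanlim_v\neq\{0\}$, and if $\jordanlim_v=\{0\}$ then by Proposition~\ref{margulisderivada} the $\a$-valued Margulis invariant of $(\rho(\g),\co_v(\g))$ vanishes for every $\g$, which (as $\ge$ is split, so $\m=0$, this is the whole Margulis invariant) forces $\co_v$ to be a coboundary, i.e.\ $v=0$, a contradiction. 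Thus Theorem~\ref{teoB} applies and the additive group generated by $\{(\varjort\g v,\jordan^\g(\rho)):\g\in\piS\}$ is dense in $\a_\t\times\a$. Since $\a_\t$ is a line and, with the conventions of \S\ref{s.Levi}, $\slroot\circ\pi_\t=\slroot$, the root $\slroot$ restricts to an isomorphism $\a_\t\to\R$ carrying $\varjort\g v$ to $\mathrm d\slroot^\g(v)$; pushing the dense group forward along the linear surjection $(\slroot,\peso_\slroot)\colon\a_\t\times\a\to\R^2$ I conclude that the group generated by the pairs $(\mathrm d\slroot^\g(v),\peso_\slroot^\g(\rho))$, $\g\in\piS$, is dense in $\R^2$.

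\emph{Next} comes the dynamical picture. A Hitchin representation is Borel Anosov, hence $\{\slroot\}$-Anosov, so $\peso_\slroot$ is a positive length functional and the geodesic flow of $S$ reparametrized by $\peso_\slroot$ is a metric Anosov (H\"older) flow whose primitive periodic orbits are indexed by primitive $[\g]\in[\piS]$, the orbit of $[\g]$ having period $\peso_\slroot^\g(\rho)$; write $h:=\entropy{\peso_\slroot}{\rho}>0$ for its topological entropy. By Proposition~\ref{margulisderivada}, $\g\mapsto\varjort\g v$ is, under $\a_\t\cong\R$, the period $\int_\g F=\mathrm d\slroot^\g(v)$ of a H\"older function $F$ over this flow --- the $\a_\t$-component of the Margulis cocycle of $\co_v$, whose regularity is part of the affine-geometry input of Part~\ref{affineactions}. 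The conclusion of the previous step is precisely joint non-arithmeticity of the pair (reparametrization $\peso_\slroot$, weight $F$): no non-trivial linear combination of the periods $\mathrm d\slroot^\g(v)$ and $\peso_\slroot^\g(\rho)$ lands in a discrete subgroup of $\R$. In particular the flow is topologically mixing and $F$ is not cohomologous to a constant, so its asymptotic variance $\sigma^2$ is strictly positive.

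\emph{Finally} I would invoke the joint local limit / renewal theorem for reparametrized Anosov flows --- a consequence of the transfer-operator analysis of the thermodynamic formalism used throughout the paper, here perturbed in the two parameters dual to (period, $F$) --- which under joint non-arithmeticity yields, for all $\eps,\delta>0$,
\[
\#\big\{[\g]\in[\piS]\text{ primitive}:\peso_\slroot^\g(\rho)\in(t-\eps,t],\ |\mathrm d\slroot^\g(v)|\le\delta\big\}\ \sim\ C\,\frac{e^{ht}}{t^{3/2}},\qquad C=C(\eps,\delta)>0.
\]
The exponent is explained by the two constraints: a prime-orbit count in a window of width $\eps$ is $\asymp e^{ht}/t$, while among orbits of period $\approx t$ the value $\mathrm d\slroot^\g(v)$ is approximately Gaussian of variance $\approx\sigma^2 t$ (here positivity of $\sigma^2$, hence Theorem~\ref{teoB}, is used), so a box of size $\delta$ captures a proportion $\asymp\delta/\sqrt t$; the product is $\asymp\delta\,e^{ht}/t^{3/2}$, the extra $t^{-1/2}$ reflecting the one-dimensionality of $\a_\t$. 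The ``in particular'' is then immediate: taking $\eps=1$ and $\delta$ fixed the right-hand side tends to $\infty$, so for every large $t$ there is a primitive $\g$ with $|\mathrm d\slroot^\g(v)|\le\delta$ and $\peso_\slroot^\g(\rho)>t-1$, whence its translation length $\|\jordan^\g(\rho)\|\ge(t-1)/\|\peso_\slroot\|$ is arbitrarily large. The hard part will be this joint asymptotic with the sharp exponent $t^{-3/2}$: one must simultaneously handle a thin window in the period variable and a bounded constraint on the one-dimensional auxiliary variable $\mathrm d\slroot^\g(v)$, i.e.\ carry out the spectral analysis of the transfer operator perturbed in both dual parameters --- the essential ingredient being the non-degeneracy $\sigma^2>0$ supplied by the double-density theorem.
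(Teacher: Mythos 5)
Your overall route is the paper's route: feed the double-density of Theorem~\ref{teoB} (applied with $\t=\{\slroot\}$, using $\dim\ge_\slroot=1$ in the split case) into a Lalley/Babillot--Ledrappier-type local limit theorem for the geodesic flow reparametrized by $\peso_\slroot(\ledrappier)$. The paper does exactly this, citing Babillot--Ledrappier after first noting that for Hitchin representations this reparametrized flow is H\"older-conjugate to a genuine $\class^{1+\alpha}$ Anosov flow (Potrie--S., Pozzetti--S.--Wienhard), so that their theorem applies as stated; your appeal to an unspecified ``joint local limit theorem for reparametrized metric Anosov flows'' glosses over this regularity upgrade, but that is a presentational rather than structural difference.

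The genuine gap is the centering. Your Gaussian heuristic places the distribution of $\mathrm d\slroot^\g(v)$ over orbits of period $\approx t$ at mean $0$, and your claim that a $\delta$-window captures a proportion $\asymp\delta/\sqrt t$ is only valid if the drift $\int \slroot(\vec\ledrappier)/\peso_\slroot(\ledrappier)\,d\mmax$ vanishes --- equivalently, if $0$ lies in (the interior of) $\slroot\big(\VV{\peso_\slroot}_v\big)$. Double-density of the \emph{additive group} spanned by the pairs $(\mathrm d\slroot^\g(v),\peso_\slroot^\g(\rho))$ does not give this: a set with all first coordinates positive can still generate a dense subgroup, and if $0$ were outside the closed set of ratios $\mathrm d\slroot^\g(v)/\peso_\slroot^\g(\rho)$ the count in the statement would be eventually empty, while if $0$ were an interior point other than the mean the exponential rate would drop below the topological entropy by a large-deviation penalty. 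The paper supplies the missing input from Theorem~\ref{PS}: the entropies of the simple roots are identically $1$ on $\hitchin_\ge(S)$, which forces $0\in\inte\slroot\big(\VV{\peso_\slroot}_v\big)$ and lets one take $\xi=0$ in Babillot--Ledrappier's theorem. This Hitchin-specific fact is indispensable and absent from your argument. (Separately, your derivation of full loxodromic variation from $v\neq0$ leans on Proposition~\ref{margulisderivada} alone, which only identifies the $\a$-part of the Margulis invariant; the correct reference is Corollary~\ref{corfullerton} together with Corollary~\ref{phi-cotangent}, though the conclusion is of course right.)
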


\subsection{Pressure forms for higher-rank Teichm\"uller spaces}\label{IntroPress}

A fundamental question in higher rank Teichm\"uller theory consists on finding an analog of the Weil-Petterson K\"ahler metric for the Hitchin component.

Let now $\ge$ be a simple split Lie algebra and $\Int\ge$ be its group of inner automorphisms. Recall from Kostant \cite{kostant} that $\ge$ contains a remarkable $\Int\ge$-conjugacy class of $\sl_2(\R)$ embeddings called \emph{the principal $\sl_2$'s}. The \emph{Hitchin component of $\ge$} (or of $\Int\ge$, or of the type of $\ge$) of a closed connected orientable surface $S$ with genus $\geq2$, is a(ny) connected component of the character variety $$\hitchin_{\ge}(S)\subset\frak X(\pi_1S,\Int\ge)$$ characterized by the following fact: there exists a discrete and faithful $\rho\in\hitchin_{\ge}(S)$ whose Zariski-closure is a principal $\PSL(2,\R)$ in $\Int\ge$. The latter representations are called \emph{Fuchsian} and the space of Fuchsian representations forms a natural embedding of the Teichm\"uller space  $\cal T(S)=\hitchin_{\A_1}(S)$ of $S$ inside $\hitchin_\ge(S).$

Hitchin \cite{Hitchin} showed that $\hitchin_\ge(S)$ is a contractible  analytic manifold and Labourie \cite{labourie} - Beyrer-Guichard-Labourie-Pozzetti-Wienhard \cite{BGLPW} show that every $\rho\in\hitchin_\ge(S)$ is faithful with discrete image (see also Fock-Goncharov \cite{FG}).

The space $\hitchin_\ge(S)$, being a subset of a surface-group character variety, is naturally equipped with Goldman's \cite{GoldmanSymplectic} symplectic form $\symp$. Moreover, Bridgeman-Canary-Labourie-S. \cite{pressure} construct, for each $\rho\in\hitchin_\ge(S)$ and each linear form $\length\in\conodual\rho$, a semi-definite symmetric bilinear form $\PP^\length_\rho$ on $\sf T_\rho\hitchin_\ge(S)$, called the \emph{$\length$-pressure form} (see \S\,\ref{thermopre} for references on similar constructions).

The question from the beginning of this section can be interpreted as a compatibility question between the pressure forms $\PP^\length$, for different choices of $\length$, and  $\symp$. Combining Labourie-Wentworth \cite{Labourie-Wentworth} with Corollary \ref{factor-cotangente} and \S\ref{ktsec} we establish:

\begin{corN}[Corollary \ref{compa1}] We let $\ge$ have type $\A$, $\B$, $\Ce$ or $\Ge$. Then there exist a unique (up to scaling) and explicit form $\kahl\in\a^*$ such that $\PP^\kahl$ is compatible with Goldman's symplectic form on $\hitchin_\ge(S)$ at the Fuchsian points.
\end{corN}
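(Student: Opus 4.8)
The plan is to diagonalize all relevant data along Kostant's decomposition of $\ge$ under a principal $\sl_2$, reducing the statement to an explicit algebraic problem indexed by the exponents of $\ge$. Fix a Fuchsian $\rho\in\hitchin_\ge(S)$, so $\rho=\iota\circ\rho_0$ with $\rho_0:\pi_1S\to\PSL(2,\R)$ Fuchsian and $\iota$ a principal $\PSL(2,\R)$ inside $\Int\ge$. By Kostant \cite{kostant}, $\ge$ viewed via $\Ad\rho$ decomposes, under the principal $\sl_2$, as $\ge=\bigoplus_{i=1}^{r}\factor{2e_i}$ with $r=\rk\ge$ and $1=e_1\le e_2\le\cdots\le e_r$ the exponents of $\ge$; for types $\A,\B,\Ce,\Ge$ the exponents are pairwise distinct, so this splitting is multiplicity-free and Killing-orthogonal, whence $$\sf T_\rho\hitchin_\ge(S)=\bigoplus_{i=1}^{r}\hcero\rho{2e_i}.$$ Moreover the zero-weight space of the principal $\sl_2$ in $\ge$ is the Cartan subalgebra $\a$ (its semisimple generator $h$ being regular in $\a$), so $\a=\bigoplus_i\big(\a\cap\factor{2e_i}\big)$; fix a generator $w_i$ of the line $\a\cap\factor{2e_i}$, so $\{w_1,\dots,w_r\}$ is a basis of $\a$ with $w_1\in\R h$. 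Since the Killing form is block-diagonal for this splitting, Goldman's form decomposes as $\symp=\bigoplus_i\symp_i$ with $\symp_i=a_i\,\omega_i$, where $\omega_i$ is the canonical symplectic form on $\hcero\rho{2e_i}$ (cup product paired with the normalized $\sl_2$-invariant form on $\factor{2e_i}$, which is symmetric since $\dim\factor{2e_i}$ is odd) and $a_i>0$ records how the Killing form restricts to $\factor{2e_i}$.

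Next I describe the pressure form at $\rho$. Since $\jordan^\g(\rho)=\ell_{\rho_0}(\g)\,h$ for all $\g$, the $\length$-length spectrum of $\rho$ is $\length(h)$ times the hyperbolic length spectrum of $\rho_0$; hence, for every $\length\in\conodual\rho=\{\length(h)>0\}$, the equilibrium state entering the definition of $\PP^\length_\rho$ is the Bowen--Margulis measure of the geodesic flow of $\rho_0$, independently of $\length$. Combining this with the computation along the Fuchsian locus of Labourie--Wentworth \cite{Labourie-Wentworth} --- which shows that the first-variation reparametrization functions attached to the (distinct) Kostant factors are pairwise uncorrelated for this measure --- and with Corollary \ref{factor-cotangente}, one obtains that the same splitting diagonalizes the pressure form: $$\PP^\length_\rho=\bigoplus_{i=1}^{r} b_i(\length)\,g_i,\qquad b_i(\length)=c_i\,\frac{\length(w_i)^2}{\length(h)},$$ where $g_i=\omega_i(\cdot,J_i\cdot)$ is the positive form attached to the complex structure $J_i$ on $\hcero\rho{2e_i}$ induced by the conformal structure underlying $\rho_0$, and $c_i>0$ is an explicit constant depending only on $e_i$ (for $i=1$, $b_1$ recovers up to a scalar the Weil--Petersson metric on $\cal T(S)\subset\hitchin_\ge(S)$).

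Set $J=\bigoplus_iJ_i$, a $\symp$-compatible complex structure. That $\PP^\kahl_\rho$ be compatible with $\symp_\rho$ at $\rho$ means $\PP^\kahl_\rho=\mu\,\symp_\rho(\cdot,J\cdot)$ for some $\mu>0$; block by block this reads $b_i(\kahl)=\mu\,a_i$, i.e. $c_i\,\kahl(w_i)^2=\mu\,a_i\,\kahl(h)$, for every $i$ --- equivalently $\kahl(w_i)^2=\lambda\,a_i/c_i$ for a common $\lambda>0$. Since $\{w_i\}$ is a basis of $\a$, this pins down $\kahl\in\a^*$ up to the choice of signs and of $\lambda$; the constraint $\kahl\in\conodual\rho$ (needed for $\PP^\kahl$ even to be defined) selects the branch $\kahl(w_i)=\sqrt{\lambda\,a_i/c_i}>0$, and within it $\kahl$ is unique up to positive scaling (rescaling $\kahl$ by $t$ rescales $\PP^\kahl_\rho$ by $t$). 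By $\MCG$-naturality the same $\kahl$ works simultaneously at every Fuchsian point, which gives Corollary \ref{compa1}. The explicit closed form of $\kahl$ as a combination of fundamental weights, together with the values of $a_i$ and $c_i$ and the check that $\kahl$ lies in $\conodual\rho$, is carried out type by type in \S\ref{ktsec} for $\ge$ of type $\A$, $\B$, $\Ce$ or $\Ge$; this is where the case distinction comes from, the scheme above still producing a $\kahl$ for the other types but not in closed form.

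The main obstacle is the middle step: deriving from the thermodynamic description of $\PP^\length$ in Bridgeman--Canary--Labourie--S. \cite{pressure}, combined with \cite{Labourie-Wentworth} and Corollary \ref{factor-cotangente}, that at Fuchsian points the pressure form \emph{diagonalizes} along Kostant's factors with exactly the coefficients $c_i\,\length(w_i)^2/\length(h)$ --- that is, the cross-factor correlations of the first-variation functions vanish, and the diagonal ones compute the Hodge forms $g_i$ with the stated $e_i$-dependent constants. Once this is established, the remaining linear algebra and the explicit identification of $\kahl$ in \S\ref{ktsec} are comparatively routine.
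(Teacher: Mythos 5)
Your proposal follows essentially the same route as the paper: split $\sf T_\prin\hitchin_\ge(S)$ along Kostant's $\ad\s$-factors (multiplicity-free precisely for types $\A$, $\B$, $\Ce$, $\Ge$), observe that both Goldman's form and every pressure form are block-diagonal for this splitting (the latter is Lemma \ref{ortogonalfuchs}, via Labourie--Wentworth's orthogonality of normalized deformations), compute the diagonal pressure blocks via Labourie--Wentworth together with Proposition \ref{prescribir} and Corollary \ref{factor-cotangente}, and solve block by block for $\kahl$. The only inaccuracy is the homogeneity of the pressure form in $\length$: since $\PP^{t\length}=\PP^{\length}$, the correct coefficient is $b_i(\length)\propto\big(\length(w_i)/\length(h)\big)^2$ rather than $\length(w_i)^2/\length(h)$ (so rescaling $\kahl$ does not rescale $\PP^\kahl$, and the sign of $\kahl|\kt^e$ is invisible to $\PP^\kahl$ at Fuchsian points), but this does not affect the conclusion.
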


The form $\kahl$ is explicit but rather involved to compute. For the rank 2 simple split Lie algebras one has (see Remark \ref{calculokahl}), up to scaling:\begin{alignat}{2}\label{kahlrg2}\kahl_{\sl(3,\R)}(a) &=a_1-a_3-\frac{\sqrt2}{2}a_2;\nonumber\\ 
 \kahl_{\sp(4,\R)}(a) &=\big(3+\frac{\sqrt{10}}{30}\big)a_1+\big(1-\frac{\sqrt{10}}{10}\big)a_2;\nonumber\\ 
\kahl_{\gedossplit}(a) & =\Big(8 + \frac{\sqrt{42}}{315}\Big)a_1 + \Big(2 - \frac{\sqrt{42}}{210}\Big)a_2.\end{alignat} Observe that in all these cases $\kahl\in(\a^+)^*$ so Theorem \ref{thmPrinc} below implies that $\PP^\kahl$ is Riemannian on the corresponding Hitchin component. It would be interesting to understand the relation between $\PP^\kahl$ and the 1-parameter family of K\"ahler metrics on rank-$2$ Hitchin components, found by Kim-Zhang \cite{Kim-Zhang} and Labourie \cite{labourie-cyclicsurfaces}.

Although $\PP^\kahl$ is compatible with $\symp$ at Fuchsian points, it is unclear it should be compatible elsewhere. It seems moreover natural that the form $\length$ whose pressure metric is compatible with $\symp$ depends on the basepoint $\rho\in\hitchin_\ge(S).$ Thus we introduce the following definition:


\begin{defi}\label{funclong} A \emph{length function on $\hitchin_\ge(S)$} is a smooth $\mathrm{Out}(\pi_1S)$-invariant map $\uppsi:\hitchin_\ge(S)\to\a^*$ such that for all $\rho$ one has $\uppsi(\rho)\in\conodual\rho.$
\end{defi}


The forms $\PP^\length$ only depend on $\length\in\a^*$ up to scaling, so non-constant length functions are purely a higher-rank phenomenon. A natural choice is, for example, Quint's growth form: we fix a norm $\sf N$ on $\a$ and let $\uppsi(\rho)=$ the unique $\length$ in the critical hyper-surface of $\rho$ minimizing the dual norm $\sf N^*$ (see Eq. \eqref{qh1}). In this paper we prove the following (see Corollary \ref{tposlengthfunction} for the $\T$-positive case).

\begin{corN}[Corollary \ref{pathmetricHitchin}] For any length function $\uppsi:\hitchin_\ge(S)\to\a^*$ the associated pressure semi-norm $\rho\mapsto\PP^{\uppsi(\rho)}$ induces a $\Mod(S)$-invariant path metric on $\hitchin_\ge(S)$. If moreover $\ge$ has type $\A$, $\B$, $\Ce$, $\D$, or $\Ge$ and $\uppsi$ is chosen as to not verify any of the degenerations in Theorem \ref{thmPrinc}, then $\rho\mapsto\PP^{\uppsi(\rho)}$ is a $\Mod(S)$-invariant Riemannian metric on $\hitchin_\ge(S)$, as regular as $\uppsi$.
\end{corN}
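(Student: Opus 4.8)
The plan is to split the statement into its two assertions -- the path-metric claim valid for any length function, and the Riemannian claim under the non-degeneracy hypothesis -- and to reduce each to results already developed in the paper together with the basic thermodynamic properties of the pressure form. First, recall that for fixed $\length\in\conodual\rho$ the pressure form $\PP^\length_\rho$ on $\sf T_\rho\hitchin_\ge(S)$ is positive semi-definite (this is the Bridgeman-Canary-Labourie-S. construction cited in \S\ref{IntroPress}), and that by Definition \ref{funclong} a length function $\uppsi$ is smooth and $\Mod(S)$-invariant with $\uppsi(\rho)\in\conodual\rho$ for every $\rho$; hence $\rho\mapsto\PP^{\uppsi(\rho)}_\rho$ is a well-defined $\Mod(S)$-invariant continuous (indeed as regular as $\uppsi$) field of positive semi-definite quadratic forms on the tangent bundle of the Hitchin component. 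A $\Mod(S)$-invariant semi-norm on a connected smooth manifold always induces a $\Mod(S)$-invariant path (pseudo-)metric by taking the infimum of lengths of piecewise-$C^1$ paths; to see that this pseudo-metric is a genuine metric (i.e. separates points) one must rule out a continuum of points pairwise at distance zero, and here one invokes that the degeneracy locus of $\PP^{\uppsi(\rho)}_\rho$ is nowhere dense -- which follows from Theorem \ref{teoB} (double-density, hence non-empty interior of $\VV\length_{\t,v}$) applied along any would-be null direction $v$, forcing some length variation to be non-constant. So the first assertion is essentially formal once the non-degeneracy "on an open dense set'' is in hand.

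For the second assertion the point is to upgrade "positive semi-definite with nowhere-dense kernel'' to "positive definite everywhere'', and this is exactly what Theorem \ref{thmPrinc} provides: under the hypothesis that $\ge$ has type $\A$, $\B$, $\Ce$, $\D$ or $\Ge$ and that $\uppsi(\rho)$ avoids the explicit list of degenerations there, one has $\PP^{\uppsi(\rho)}_\rho(v)>0$ for every $0\neq v\in\sf T_\rho\hitchin_\ge(S)$. Thus I would (i) quote Theorem \ref{thmPrinc} to get pointwise positive-definiteness; (ii) observe that $\uppsi$ smooth implies the assignment $\rho\mapsto\PP^{\uppsi(\rho)}_\rho$ is as regular as $\uppsi$, because the pressure form depends analytically on the pair (representation, linear functional) by the thermodynamic formalism recalled in \S\ref{thermopre} and $\uppsi$ enters only through its value; (iii) conclude that this is a bona fide Riemannian metric, and that $\Mod(S)$-invariance is inherited from that of $\uppsi$ and from the equivariance of the pressure form under precomposition by outer automorphisms (the pressure form being built from the conjugacy-invariant length spectrum $\gamma\mapsto\length^\gamma(\rho)$, which $\Mod(S)$ merely permutes). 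The path-metric structure then refines automatically to the Riemannian distance.

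The main obstacle I anticipate is not in either end of the argument but in the hinge between them: making rigorous that a $\Mod(S)$-invariant, as-regular-as-$\uppsi$, positive semi-definite form whose kernel is controlled by Theorem \ref{teoB} really induces a path metric that separates \emph{all} points of $\hitchin_\ge(S)$, including pairs not related by the mapping class group. One must argue that no nontrivial path can have zero $\PP^{\uppsi}$-length; the natural route is to show the zero-length locus through a given $\rho$ is contained in a proper analytic subvariety -- using that the functions $\rho\mapsto\length^\gamma(\rho)$ are real-analytic and that, by Theorem \ref{teoB} and Theorem \ref{A}, their differentials jointly have full rank on a dense open set -- and then to invoke that a piecewise-analytic path of zero length would have constant marked length spectrum for a Zariski-dense family of $\gamma$, contradicting the fact (Labourie, Bridgeman-Canary-Labourie-S.) that the normalized length spectrum is a local parametrization transverse to the $\PP^\length$-kernel. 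Everything else -- regularity, $\Mod(S)$-invariance, semi-definiteness -- is bookkeeping on top of the cited theorems.
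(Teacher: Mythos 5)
There is a genuine gap in your argument for the first (and main) assertion. You claim that once the degeneracy locus of $\rho\mapsto\PP^{\uppsi(\rho)}$ is known to be nowhere dense, the path pseudo-metric separates points "essentially formally". This is false: a positive semi-definite tensor that is positive definite off a nowhere-dense set can still fail to induce a metric, because a path may lie entirely inside the degenerate locus and have zero length. The standard counterexample is $g=dx^2+x^2\,dy^2$ on $\R^2$: it degenerates only on the line $\{x=0\}$, yet the segment $t\mapsto(0,t)$ has zero length, so the path pseudo-metric does not separate $(0,0)$ from $(0,1)$. What one actually needs — and what the paper supplies — is control of the form \emph{restricted to the tangent spaces of the degenerate strata}. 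Your proposed fix in the last paragraph (zero-length locus in a proper analytic subvariety, constant marked length spectrum along a piecewise-analytic null path) does not address this: Theorem \ref{teoB} and Corollary \ref{HitchinDense} only apply at points with Zariski-dense image, which is precisely where nothing degenerates; the problem lives on the lower strata.

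The paper's proof is a stratification argument. By Theorem \ref{clausuras} the possible Zariski closures of Hitchin representations form a finite nested family, and each stratum is (an open subset of) the Hitchin component $\hitchin_\h(S)$ of the smaller group $\sf H$, embedded in $\hitchin_\ge(S)$ via the representations of Table \ref{unesco}. Applying Corollary \ref{HitchinDense} \emph{inside each sub-Hitchin component} — i.e.\ to representations whose image is Zariski-dense in $\sf H$ — shows that $\PP^{\uppsi(\rho)}$ restricted to $\sf T_\rho\hitchin_\h(S)$ is positive definite on the relatively open dense part of that stratum. This is exactly the hypothesis of Lemma \ref{nested} (Bray--Canary--Kao--Martone), which then yields that the path pseudo-metric is a genuine metric. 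Your treatment of the second assertion (pointwise positive definiteness from Theorem \ref{thmPrinc}, regularity and $\Mod(S)$-invariance from the construction) is correct, but the first assertion cannot be closed without the stratified non-degeneracy input and Lemma \ref{nested} or an equivalent substitute.
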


The proof boils down to understanding the degeneration set of $\PP^\length$ for a fixed $\length\in\conodual\rho$. This is the content of Theorem \ref{thmPrinc} below which we now explain.

Let us chose a principal $\sl_2$, $\s$, whose semi-simple element lies in $\a$ (and with Weyl chamber contained in $\a^+$). Recall from Kostant \cite{kostant} that the decomposition of $\ge$ into irreducible $\ad\s$-factor has $\rk \ge$ factors,  each of them of odd dimension $2e+1$. The numbers $e$ appearing in this decomposition are called \emph{the exponents} of $\ge$ and we denote by $V_e$ the associated irreducible factor, so that $$\ge=\bigoplus_{e\textrm{ exponent of }\ge} V_e$$ is the decomposition of $\ge$ into irreducible $\ad\s$-factors. Table \ref{exponentes} in \S\ref{ktsec} gives the exponents for each type of $\ge.$ \begin{defi} If $e$ is an exponent of $\ge$ then we consider the line $\kt^e=V_e\cap\a$ and call it \emph{the $e$-th Kostant line}.\end{defi} The family $\{\kt^e:e\textrm{ exponent of }\ge\}$ spans $\a$.

Identifying the tangent space at $\rho$ to the character variety with the first twisted cohomology group $H^1_{\Ad\rho}(\pi_1S,\ge)$ as in Eq. \eqref{deco}, the above decomposition of $\ge$ in $\s$-modules yields a splitting at a Fuchsian point $\prin\in\hitchin_\ge(S)$,  $$\sf T_\prin\hitchin_\ge(S)=\bigoplus_{e\textrm{ exponent of }\ge} H^1_{\Ad\prin}(\pi_1S,V_e)=\bigoplus_{e\textrm{ exponent of }\ge}\tangcero\prin e,$$ where we have simplified notation and writen $\tangcero\prin e=H^1_{\Ad\prin}(\pi_1S,V_e)$.

Denote by $\ii:\a\to\a$ the opposition involution. If non-trivial, it is realized by an external involution of $\Int\ge$ that induces an involution $\underline\ii:\frak X(\pi_1S,\Int\ge)\to\frak X(\pi_1S,\Int\ge)$. Points in $\hitchin_\ge(S)$ that are fixed by $\underline\ii$ will be called \emph{self-dual}. If $\rho$ is self-dual then $\mathrm d_\rho\underline\ii$ is an involution on $\sf T_\rho\hitchin_\ge(S).$

In the special case of $\D_4$, its Dynkin diagram has an order three automorphism $\uptau$ that induces an order three automorphism $\underline\uptau$ of $\hitchin_{\D_4}(S)$. Also, in this case $3$ appears twice as an exponent (see Table \ref{exponentes}), let us denote by $V_{3,\mathrm a}$ the $\ad\s$-factor that is not contained in the natural representation $\so(3,4)\to\so(4,4)$ preserving a non-isotropic line, see \S\,\ref{tricota} for details.

With these notations at hand we can completely describe the degenerations of pressure forms on the Hitchin component of classical type.

\begin{thmA}[Pressure degenerations are Lie-theoretic]\label{thmPrinc} Let $\ge$ be simple split of type $\A$, $\B$, $\Ce$, $\D$ or $\Ge$. Consider $\rho\in\hitchin_\ge(S)$ and $\length\in\conodual\rho$ then, the pressure form $\PP^\length_\rho$ is degenerate at $\tangente\in\sf T_\rho\hitchin_\ge(S)$ if and only if either of the following hold:\begin{itemize}

\item[-] $\rho$ is Fuchsian and $${\displaystyle \tangente\in\bigoplus_{e:\length(\kt^e)=0}\tangcero{\rho}{e}},$$

\item[-] $\rho$ is self dual, $\length$ is $\ii$-invariant and $\tangente$ is $\mathrm d_\rho\overline\ii$-anti-invariant. 

\item[-] $\ge$ is of type $\sf A_6$ or $\Ce_3$, the Zariski closure of $\rho(\pi_1S)$ is conjugate to the $7$-dimensional irreducible representation of the real split group $\sf G_2$, $\length(\kt^3)=0$ and $\tangente\in H^1_{\Ad\rho}(\pi_1S,V_3)$.

\item[-] $\ge$ is of type $\sf D_4$, the Zariski closure of $\rho(\pi_1S)$ has Lie algebra conjugate to the spin representation $\so(3,4)\to\so(4,4)$, $\tangente\in H^1_{\Ad\rho}\big(\pi_1S,\underline\uptau (V_{3,\mathrm a})\big)$ and $\length(-1,1,1,-1)=0.$

\end{itemize}

\end{thmA}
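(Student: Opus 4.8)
The plan is to reduce the degeneration problem to a computation on each irreducible $\ad\s$-factor at a Fuchsian point, then propagate along the Hitchin component via the pressure-form machinery. First I would recall from \cite{pressure} that the pressure form $\PP^\length_\rho$ is the Hessian at $s=1$ of the function $s\mapsto \hh(s\length^\cdot)$ controlling the critical exponent, and that its kernel at $\rho$ consists of those variations $v$ whose associated length-reparametrization cocycle $\mathrm d(\length\circ\jordan)(v)$ is a coboundary (Livšic-cohomologous to zero) over the geodesic flow of $\pi_1S$. So the first key step is the identification: $v\in\ker\PP^\length_\rho$ if and only if, for all $\g\in\pi_1S$, $\length\big(\varjor\g v\big)=0$. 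This is where Theorem \ref{teoB} (double-density) and Theorem \ref{A} enter decisively — when $\rho$ has Zariski-dense image the only way for $\length\circ\mathrm d\jordan^\cdot(v)$ to vanish on all of $\pi_1S$ is a rather constrained Lie-theoretic condition, because $\VV\length_v$ (and its projections) have non-empty interior unless $v$ lies in a degenerate locus. For the Fuchsian case the argument is cleanest: at $\prin$, Labourie–Wentworth \cite{Labourie-Wentworth} compute $\mathrm d\jordan^\g$ restricted to each factor $\tangcero\prin e$, showing that the variation of the Jordan projection lands in the Kostant line $\kt^e$ times an explicit (nonzero, by the Zariski-density/Anosov hypothesis) cocycle-pairing; hence $\length\big(\varjor\g v\big)\equiv 0$ on $\tangcero\prin e$ exactly when $\length(\kt^e)=0$. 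Summing over factors gives the first bullet.

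Second, I would handle the self-dual case. If $\rho$ is fixed by $\underline\ii$ and $\length$ is $\ii$-invariant, then $\jordan^\g$ intertwines $\underline\ii$ with $\ii$, so the $\mathrm d_\rho\overline\ii$-anti-invariant part of a variation contributes zero to $\length\circ\mathrm d\jordan^\g$ for every $\g$ — an averaging/symmetry argument: $\length(\mathrm d\jordan^\g(v)) = \length(\ii\,\mathrm d\jordan^\g(\mathrm d\overline\ii\, v)) = -\length(\mathrm d\jordan^\g(v))$ when $v$ is anti-invariant, forcing it to vanish. Conversely one uses that on the invariant part the previous analysis shows non-degeneracy (outside the first-bullet locus), and a Zariski-density argument shows these are the only relations. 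The same bookkeeping — now combined with an identification of which $\ad\s$-factor survives in the smaller Zariski closure — covers the $\sf G_2\hookrightarrow\sf A_6,\Ce_3$ exotic case and the $\so(3,4)\to\so(4,4)$ case for $\D_4$: here one must check, factor by factor in Table \ref{exponentes}, that the relevant $V_e$ (resp. $\underline\uptau(V_{3,\mathrm a})$) is precisely the piece of $\sf T_\rho\hitchin$ on which $\length\circ\mathrm d\jordan^\cdot$ vanishes identically, using the branching of the $\ad\s$-decomposition under the inclusion of Zariski closures and the specific linear form $\length(-1,1,1,-1)$ dictated by the $\uptau$-twisted spin embedding.

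Third, the harder direction: propagating from Fuchsian/special points to an arbitrary $\rho\in\hitchin_\ge(S)$. For generic $\rho$ (Zariski-dense, not self-dual, not one of the two exotic closures) I expect $\PP^\length_\rho$ to be non-degenerate for every $\length\in\conodual\rho$, and the proof should run: if $v\in\ker\PP^\length_\rho$ then $\length(\varjor\g v)=0$ for all $\g$, i.e. $\varjor\g v\in\ker\length$ for all loxodromic $\rho(\g)$, so $\jordanlim_v\subset\ker\length$ has empty interior; by Theorem \ref{A} this forces $v$ to fail full loxodromic variation, hence $p_i(\jordanlim_v)=\{0\}$ for some simple ideal — but $\ge$ is simple, so $\jordanlim_v=\{0\}$, and a cocycle whose Margulis/Jordan variation vanishes identically is (via Proposition \ref{margulisderivada} and the affine-ratio rigidity in Part \ref{affineactions}, together with the vanishing of second-order Livšic data) cohomologically trivial, i.e. $v=0$. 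When $\rho$ is self-dual or has one of the exotic Zariski closures the cone $\jordanlim_v$ can be genuinely contained in a hyperplane and the above must be refined: restrict attention to the sub-representation cut out by the Zariski closure, apply the non-empty-interior statement of Theorem \ref{A} inside that smaller group, and track how $\ker\length$ meets the corresponding $\ad\s$-factors. The main obstacle, I expect, is precisely this last bookkeeping — proving that no \emph{other} degeneration can occur, i.e. that whenever $v\notin\bigoplus_{e:\length(\kt^e)=0}\tangcero\rho e$ (and the self-dual/exotic exceptions fail) one genuinely has $\length(\varjor\g v)\neq 0$ for some $\g$. This requires the full strength of the double-density result of Theorem \ref{teoB} applied with care to roots of multiplicity one, plus the Labourie–Wentworth transversality computation to rule out "accidental" vanishing coming from cancellations between different Kostant lines; the Diophantine/rationality phenomena flagged in the introduction are exactly the signature that these cancellations are delicate and occur only along the explicitly listed Lie-theoretic loci.
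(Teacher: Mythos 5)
Your high-level strategy (stratify by Zariski closure, reduce to adjoint factors, use Kostant lines and Labourie--Wentworth at Fuchsian points, use the non-empty-interior results elsewhere) is the same as the paper's, but the central identification you build everything on is wrong, and this breaks the ``harder direction''. You claim $\tangente\in\ker\PP^\length_\rho$ if and only if $\length(\varjor\g\tangente)=0$ for all $\g$. By Theorem \ref{JJ} (Eq.~\eqref{P=0}) and Remark \ref{varphi=cte}, degeneration of $\PP^\length_\rho$ at $\tangente$ means $\length(\varjor\g\tangente)=c\,\length^\g(\rho)$ for \emph{all} $\g$, where $c$ is (up to sign) $\partial^{\log}\entropy\length{}(\tangente)$ and need not vanish; equivalently $\VV\length_{\tangente}$ lies in an \emph{affine} level set of $\length$, not in $\ker\length$. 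The two conditions coincide only where the entropy of $\length$ is critical (at Fuchsian points by Corollary \ref{fuchscritica}, or for simple roots by Theorem \ref{PS}). Hence your argument for the generic stratum --- ``$\jordanlim_{\tangente}\subset\ker\length$ has empty interior, contradicting Theorem \ref{A}'' --- does not start: one must exclude the affine relation with arbitrary constant, which is precisely why the paper invokes the \emph{double}-density statement (Theorem \ref{teoB}, via Theorem \ref{LedIndepA} and Corollary \ref{nondeg}), i.e.\ density of the group generated by the pairs $\big(\varjort\g\tangente,\jordan^\g(\rho)\big)$ in $\a_\t\times\a$, and not merely non-empty interior of the cone of Jordan variations.

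The second genuine gap is that you never control cross terms between distinct adjoint factors. Knowing that $\PP^\length$ restricted to each $\tangcero\prin e$ (resp.\ to each $H^1_{\Ad\rho}(\pi_1S,V)$ for $V$ an adjoint factor of the Zariski closure) is definite or degenerate does not determine the degeneration locus of the full form unless the factors are shown to be $\PP^\length$-orthogonal. The paper establishes this at Fuchsian points via the Labourie--Wentworth orthogonality for $\PP^{\peso_1}$ together with the rescaling $\length|\kt^e=c_e\,\peso_1|\kt^e$ (Lemma \ref{ortogonalfuchs}), and for the fixed/anti-fixed decompositions via the fact that $\undi$ is a $\PP^\length$-isometry (Corollary \ref{tauisom}). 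For the $\Gedossplit$ stratum inside type $\A_6$ no such symmetry separates $V_3$ from $V_1\oplus V_5$, and the mixed direction $\co_3+\co_{1,5}$ requires a separate argument split into the cases $\slroot_2\in\supp\length$ and $\slroot_2\notin\supp\length$, using Lemma \ref{intermedioModif} and the irreducibility statement of Theorem \ref{indeplambda1}; this is the most delicate step of the proof and your sketch is silent on it. (A minor misreading: the Diophantine equations in the paper record which Kostant lines lie in $\ker\slroot_j$, for the Hausdorff-dimension corollaries; they are not the signature of cancellations inside the pressure-form argument.)
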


For example, in $\PSL(4,\R)$ the Kostant lines are \begin{alignat*}{2}
\kt^1 & = \R\cdot(3,1,-1,-3),\\ 
\kt^2 & =\R\cdot(1,-1,-1,1),\\ 
\kt^3 &= \R\cdot(1,-3,3,-1),
\end{alignat*} so the strongly dominant weight $2\peso_1+\peso_2:a\mapsto3a_1+a_2$ contains $\kt^3$ in its kernel. Consequently, Theorem \ref{thmPrinc} implies that the Pressure form $\PP^{2\peso_1+\peso_2}$ degenerates \emph{only} at the Fuchsian locus and in the directions given by $\tangcero\rho3.$ See Table \ref{ejemplosKV} in \S\ref{ktsec} for the list of Kostant lines on $\sl(d,\R)$ for $d\leq8$.

\begin{obs} The question of non-degeneration on the Hitchin component has been  dealt with in the previously mentioned work by Bridgeman-Canary-Labourie-S. \cite{pressure}, where it is stablished that:\begin{itemize}\item[-] if we let $\peso_1$ be the first fundamental weight $\peso_1:a\mapsto a_1$, then $\PP^{\peso_1}$ is Riemannian on $\hitchin_{\sl(d,\R)}(S)$ and, \item[-] for every strongly dominant weight $\chi$ the form $\PP^{\chi}$ is Riemannian on the space $\{\rho\in\hitchin_\ge(S):\rho(\pi_1S)\textrm{ is Zariski-dense}\}$;
\end{itemize}
and in B.-C.-L.-S. \cite{Liouvillepressure} where it is stablished that $\PP^{\slroot_1}$ is Riemannian, where $\slroot_1:a\mapsto a_1-a_2$ is the first simple root. In \cite{pressure} it is also established the following (to be compared with Theorem \ref{teoB}): \emph{Let $\rho_t:\G\to\SL(d,\R)$ be an analytic curve of irreducible representations with speed $v$ and assume there exist $g,h\in\G$ such that $\rho_0(g)$ and $\rho_0(h)$ are bi-proximal and transverse (see Def. \ref{stronglytransversal}), assume also that $\mathrm d\peso_1^g(v)\neq0$, then the set of pairs $$\big\{(\mathrm d \peso_1^\g(v),\peso_1^\g(\rho)\big):\g\in\G\textrm{ with $\rho_0(\g)$ proximal}\big\}\subset\R^2$$ is not contained in a line.}

Finally, pressure forms for $\peso_1$ and $\slroot_1$ are Riemannian on the Hitchin components of geometrically finite Fuchsian groups by Bray-Canary-Kao-Martone \cite{pressurecusped}.
\end{obs}

\subsection{Hausdorff dimension of higher-quasi-circles} We use Theorem \ref{thmPrinc} to study deformations of higher-rank Teichm\"uller spaces inside the complexified group.

Let us fix $\rho\in\hitchin_\ge(S)$. Then it follows from Labourie \cite{labourie} that there exists a $\rho$-equivariant H\"older-continuous map \begin{equation}\label{aquim}\zeta_\rho:\bord\pi_1S\to\EuScript F_\simple(\Int\ge)\end{equation} from the Gromov-boundary of $\pi_1S$ to the full flag space of $\Int\ge$. While the circle $\zeta(\bord\pi_1S)\subset\EuScript F_\simple(\Int\ge)$ is Lipschitz, each of its projections into the maximal flags $$\mathbf{L}_{\rho,\sroot}:=\zeta^{\sroot}_\rho(\bord\pi_1S)\subset\EuScript F_{\sroot}(\Int\ge),$$ for $\sroot\in\simple$, is a $\class^{1+\alpha}$ circle (Labourie \cite{labourie} and Pozzetti-S.-Wienhard \cite{PSW1}). In this paper we deform these circles inside the complex maximal flag variety $\EuScript F_{\sroot}(\Int(\ge_\C))$. More precisely, the equivariant map from Equation \eqref{aquim} can also be defined for representations neighboring $\rho$ in the complex characters $\frak X\big(\pi_1S,\Int(\ge_\C)\big)$ (\cite{labourie}). Moreover, there exists a neighborhood $\EuScript U$ of $\hitchin_\ge(S)\subset\frak X\big(\pi_1,\Int(\ge_\C)\big)$ such that for every $\eta\in\EuScript U$ and every $\sroot\in\simple$ the function \begin{alignat*}{2}\Hff_\sroot:\,&\EuScript U\to[1,\infty)\\ &\eta\mapsto\Hff({\bf{L}}_{\eta,\sroot})\end{alignat*} is real-analytic\footnote{due to \cite{PSW1} together with Bridgeman-Canary-Labourie-S. \cite{pressure}}, where $\Hff$ denotes the Hausdorff dimension for a Riemannian metric on $\EuScript F_{\sroot}(\Int(\ge_\C))$. We can thus study its Hessian at the critical points $\hitchin_\ge(S).$

If we let $\sf J$ be the tensor squaring $-\id$  on the complex characters $\frak X(\pi_1S,\Int(\ge_\C))$, induced by the complex structure of $\Int(\ge_\C)$, then the tangent space at a Hitchin point $\rho\in\hitchin_\ge(S)$ naturally splits as $$\sf T_\rho\frak X\big(\pi_1S,\Int(\ge_\C)\big)=\sf T_\rho\hitchin_\ge(S)\oplus \sf J\Big(\sf T_\rho\hitchin_\ge(S)\Big).$$ Deformations along $\sf T\hitchin_\ge(S)$ are understood, $\Hff_\sroot\equiv1$, so we turn into the complementary factor. By means of Bridgeman-Pozzetti-S.-Wienhard \cite{HessianHff} and Theorem \ref{thmPrinc}, in \S\,\ref{Hffdeg} we establish the following (an analogous result for $\T$-positive representations can be found on Corollary \ref{HffTpos}):

\begin{corN}[Corollary \ref{corcho}] Let $v\in\sf T_\rho\hitchin_\ge(S)$ be non-zero and have Zariski-dense base point, then $$\hess_\rho\Hff_\sroot(\sf Jv)>0.$$ In particular there exists a neighborhood (in the complex characters)  of points in $\hitchin_\ge(S)$ with Zariski-dense image where $\Hff_\sroot$ is rigid, i.e. such that if $\Hff_\sroot(\eta)=1$ then $\eta$ has values in the real characters.
\end{corN}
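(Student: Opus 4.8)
The strategy is to reduce the positivity of the Hessian of $\Hff_\sroot$ in the complex-transverse direction $\sf Jv$ to the non-degeneracy of a suitable pressure form, and then invoke Theorem \ref{thmPrinc} to rule out degeneracy. First I would recall from Bridgeman-Pozzetti-S.-Wienhard \cite{HessianHff} the formula expressing $\hess_\rho\Hff_\sroot$ at a critical point (where $\Hff_\sroot\equiv1$ on the Hitchin locus) in terms of a pressure-type quadratic form; the relevant linear functional is $\peso_\sroot\in\a^*$ (the fundamental weight dual to $\sroot$, which lies in $\conodual\rho$), and the computation in \cite{HessianHff} gives, up to a positive constant, $\hess_\rho\Hff_\sroot(\sf Jv)=\PP^{\peso_\sroot}_\rho(v)$ — i.e. the Hessian in the imaginary direction $\sf Jv$ equals the real pressure form of $\peso_\sroot$ evaluated on $v$. (The point of passing to $\sf Jv$ rather than $v$ is precisely that the "mixed" second derivative along the complex direction converts the vanishing first-order behaviour on $\hitchin_\ge(S)$ into a genuine second-order quadratic form, and the Cauchy-Riemann relations for the analytic family $\eta\mapsto{\bf L}_{\eta,\sroot}$ identify this with the real pressure form.)

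Next I would check that $\peso_\sroot$ avoids all the degenerations listed in Theorem \ref{thmPrinc}. For $v$ with Zariski-dense base point the only surviving clause is the Fuchsian one, so it suffices to verify that $\peso_\sroot(\kt^e)\neq0$ for every exponent $e$ of $\ge$; but then even at a Fuchsian $\rho$ the pressure form $\PP^{\peso_\sroot}_\rho$ would be non-degenerate on all of $\sf T_\rho\hitchin_\ge(S)$, and since we have assumed $\rho$ has Zariski-dense image, $\rho$ is not Fuchsian and no degeneration applies at all. Actually the cleanest route is: $\peso_\sroot$ is a strongly dominant weight, hence by the Remark following Theorem \ref{thmPrinc} (quoting \cite{pressure}) $\PP^{\peso_\sroot}$ is already known to be Riemannian on the Zariski-dense locus of $\hitchin_\ge(S)$; combined with the identification of the previous paragraph this gives $\hess_\rho\Hff_\sroot(\sf Jv)=\PP^{\peso_\sroot}_\rho(v)>0$ for every non-zero $v$ with Zariski-dense base point. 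So in fact one does not even need the full strength of Theorem \ref{thmPrinc} for this first assertion — only for the sharper rigidity statements in other ranges — but I would phrase it via Theorem \ref{thmPrinc} for uniformity.

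For the "in particular" clause I would argue as follows. Fix $\rho_0\in\hitchin_\ge(S)$ with Zariski-dense image. We know $\Hff_\sroot\equiv1$ on $\hitchin_\ge(S)$ and, by the Hessian positivity just established, $\Hff_\sroot$ is strictly convex in every direction of $\sf J(\sf T_{\rho_0}\hitchin_\ge(S))$ transverse to $\sf T_{\rho_0}\hitchin_\ge(S)$. Since $\Hff_\sroot\geq1$ always (Hausdorff dimension of the real circle being $1$ and the complex flag variety containing it) and $\Hff_\sroot$ is real-analytic on $\EuScript U$, a standard Morse-Bott argument — using that the critical set $\hitchin_\ge(S)$ is a smooth manifold along which $\Hff_\sroot$ is constant and equal to its minimum, and that the Hessian is positive-definite on the normal bundle at $\rho_0$ — shows $\Hff_\sroot(\eta)=1$ forces $\eta\in\hitchin_\ge(S)$ for $\eta$ in a neighborhood of $\rho_0$ in $\frak X(\pi_1S,\Int(\ge_\C))$, i.e. $\eta$ has values in the real characters. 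The main obstacle is the first paragraph: one must extract from \cite{HessianHff} the precise normalization of the Hessian formula and verify that the relevant functional is exactly $\peso_\sroot$ (and that it lands in $\conodual\rho$), since the identification $\hess\Hff_\sroot(\sf Jv)=\PP^{\peso_\sroot}(v)$ is what makes Theorem \ref{thmPrinc} applicable; the rest is soft.
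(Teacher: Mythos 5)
Your overall route is the paper's: reduce $\hess_\rho\Hff_\sroot(\sf Jv)$ to a pressure form via the main theorem of \cite{HessianHff} (stated in the paper as Theorem \ref{BPSWHyper}), kill the degenerations with Theorem \ref{thmPrinc}, and obtain the rigidity clause by a Morse--Bott argument at the minimum value $1$. However, you misidentify the functional in the Hessian formula: \cite{HessianHff} gives $\hess_\rho\Hff_\sroot(\sf Jv)=\PP^{\sroot}_\rho(v)$ with the \emph{simple root} $\sroot$ itself, not the fundamental weight $\peso_\sroot$ (that $\sroot\in\conodual\rho$ is guaranteed by Theorem \ref{PS}). This is already visible in the second half of Corollary \ref{corcho}, where the Fuchsian degeneration is governed by $\kt^e\subset\ker\sroot$; with $\peso_\sroot$ the degeneration locus would be different. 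For the statement at hand the slip happens to be harmless, because with a Zariski-dense base point Theorem \ref{thmPrinc} (equivalently Corollary \ref{HitchinDense}) gives $\PP^\length_\rho(v)>0$ for \emph{every} $\length\in\conodual\rho$, so the conclusion does not depend on which functional the Hessian computes.

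Your proposed ``cleanest route'' does not work as stated, though: $\peso_\sroot$ is not a strongly dominant weight once $\rk\ge\geq2$, since $\<\peso_\sroot,\bb\>=0$ for every simple root $\bb\neq\sroot$, so the remark quoting \cite{pressure} about strongly dominant weights does not apply to it. The correct invocation is simply Theorem \ref{thmPrinc}: none of its four degeneration clauses is compatible with a Zariski-dense base point, so $\PP^{\sroot}_\rho$ is definite there. The Morse--Bott argument for the ``in particular'' clause matches what the paper does in the analogous Corollary \ref{C1hff} and is fine, using $\Hff_\sroot\geq1$ with equality exactly on the critical manifold near $\rho$.
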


The second statement is a local analog of a classical result of Bowen \cite{bowen-quasicircles} who deals with $\ge$ of type $\A_1$. The first one is inspired by Bridgeman-Tayor \cite{WP-QF} and McMullen \cite{McMWP}, again for $\PSL(2,\C)$. We note that a higher-rank version of Bowen's Theorem, by considering the Hausdorff dimension in the full flag variety, has been recently obtained by Farre-Pozzetti-Viaggi \cite{FPVBowen}.

Actually we can be much more precise. Zariski closures of Hitchin representations have been classified (Guichard \cite{clausura}, S. \cite{clausurasPos}) so we can also look at the \emph{intermediate strata}. It turns out that the most subtle situation is actually the Fuchsian case, which we now explain, the complete picture can be found on \S\,\ref{Hffdeg}.

\begin{corN}[Corollary \ref{corcho}]Let $v\in\sf T_\prin\hitchin_\ge(S)$ be tangent to a Fuchsian representation $\prin$. If $v\in\tangcero\prin e$ and $\kt^e\subset\ker\sroot$, then $\hess_\prin\Hff_\sroot(\sf Jv)=0.$ If $\ge$ has classical type then the converse is also true: if $\hess_\prin\Hff_\sroot(\sf Jv)=0$ then $v\in\bigoplus_{e:\kt^e\subset\ker\sroot}\tangcero\prin e$.
\end{corN}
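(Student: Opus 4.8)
The plan is to transfer the computation of $\hess_\prin\Hff_\sroot$ to the pressure form $\PP^{\peso_\sroot}_\prin$ on the Hitchin component, using the Hessian formula of Bridgeman--Pozzetti--S.--Wienhard \cite{HessianHff} together with the K\"ahler-type identity relating the Hessian in the direction $\sf Jv$ to a pressure form in the direction $v$. Concretely, the first step is to recall that $\Hff_\sroot(\eta)$ is the abscissa of convergence of a series built from the $\peso_\sroot$-coordinate of the (complex) Jordan projection of $\zeta_\eta$, so that the second variation of $\Hff_\sroot$ at the critical point $\prin$ along $\sf Jv$ is, up to a positive constant, the value $\PP^{\peso_\sroot}_\prin(v,v)$ of the $\peso_\sroot$-pressure form evaluated on the \emph{real} direction $v$; this is exactly the content of \cite{HessianHff} combined with the fact that $\prin$ is a critical point of $\Hff_\sroot$ with $\Hff_\sroot\equiv1$ on $\sf T\hitchin_\ge(S)$. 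Once this identification is in place, the corollary becomes a question about when $\PP^{\peso_\sroot}_\prin$ degenerates at $v\in\sf T_\prin\hitchin_\ge(S)$, which is answered by Theorem \ref{thmPrinc}.

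The second step is to run Theorem \ref{thmPrinc} with $\length=\peso_\sroot$ at a \emph{Fuchsian} point $\prin$. At a Fuchsian point only the first bullet of Theorem \ref{thmPrinc} is relevant (the Zariski closure is a principal $\PSL(2,\R)$, so the "self-dual with $\mathrm d\overline\ii$-anti-invariant'' alternative, the $\sf G_2$ alternative, and the $\sf D_4$-spin alternative all require a strictly larger Zariski closure and thus do not occur). Hence $\PP^{\peso_\sroot}_\prin$ is degenerate at $v$ precisely when $v\in\bigoplus_{e:\peso_\sroot(\kt^e)=0}\tangcero\prin e$. It therefore only remains to check the purely Lie-theoretic equivalence $$\peso_\sroot(\kt^e)=0\iff\kt^e\subset\ker\sroot,$$ i.e. that the fundamental weight $\peso_\sroot$ vanishes on the Kostant line $\kt^e$ if and only if the simple root $\sroot$ does. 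For the forward (``only if'') direction of the corollary one does not even need the equivalence: if $v\in\tangcero\prin e$ with $\kt^e\subset\ker\sroot$ then in particular $\peso_\sroot(\kt^e)=0$ (since $\peso_\sroot$ is a combination that—more simply—one checks directly annihilates $\kt^e$ as soon as $\sroot$ does, because $\kt^e$ is a line and $\peso_\sroot$ restricted to the span issue reduces to $\sroot$ by the standard pairing), so Theorem \ref{thmPrinc} gives $\PP^{\peso_\sroot}_\prin(v,v)=0$ and hence $\hess_\prin\Hff_\sroot(\sf Jv)=0$.

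For the converse (``if'') one needs the classical-type hypothesis, and this is where the main work lies. The equivalence $\peso_\sroot(\kt^e)=0\iff \sroot(\kt^e)=0$ is a statement about the interaction between the principal $\sl_2$-grading of $\ge$ (which produces the Kostant lines $\kt^e$, one per exponent $e$) and the weight lattice. The clean way to see it: $\kt^e=V_e\cap\a$ is the line spanned by the $\ad\s$-image of a lowest weight vector configuration, and Kostant's description identifies $\a=\bigoplus_e\kt^e$ with a decomposition on which the fundamental weights act diagonally in a way governed by the exponents; one then verifies type-by-type (types $\A$, $\B$, $\Ce$, $\D$, $\Ge$), using the explicit Kostant lines tabulated in \S\ref{ktsec} (e.g. Table \ref{ejemplosKV}), that for each $\sroot\in\simple$ and each exponent $e$ the simple root $\sroot$ annihilates $\kt^e$ exactly when $\peso_\sroot$ does. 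The main obstacle is precisely this type-by-type verification: it is elementary linear algebra in each case but must be done carefully, in particular watching for the coincidences in $\D_4$ where $3$ is a repeated exponent, and confirming that the exceptional degenerations of Theorem \ref{thmPrinc} genuinely cannot contribute at a Fuchsian base point. Having done this, one concatenates the three steps—Hessian $=$ pressure form, Theorem \ref{thmPrinc} at a Fuchsian point with $\length=\peso_\sroot$, and the Lie-theoretic equivalence—to obtain the stated characterization of the degeneracy locus of $\hess_\prin\Hff_\sroot\circ\sf J$.
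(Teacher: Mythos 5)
Your overall architecture (transfer the Hessian of $\Hff_\sroot$ to a pressure form via \cite{HessianHff}, then invoke Theorem \ref{thmPrinc} at a Fuchsian point) is the paper's route, but you have the wrong linear functional in the Hessian formula, and this creates a genuine gap. Theorem \ref{BPSWHyper} states $\hess_\rho\Hff_\sroot(\sf Jv)=\PP^{\sroot}_\rho(v)$ with the \emph{simple root} $\sroot$ itself (which lies in $\conodual\rho$ by Theorem \ref{PS}), not the fundamental weight $\peso_\sroot$. With the correct functional, Theorem \ref{thmPrinc} applied with $\length=\sroot$ at a Fuchsian point gives the degeneracy locus $\bigoplus_{e:\sroot(\kt^e)=0}\tangcero\prin e$, which is literally the statement $\kt^e\subset\ker\sroot$ — no further Lie-theoretic work is required. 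By substituting $\peso_\sroot$ you are forced to add the equivalence $\peso_\sroot(\kt^e)=0\iff\sroot(\kt^e)=0$, which you do not prove: the parenthetical ``$\peso_\sroot$ restricted to the span issue reduces to $\sroot$ by the standard pairing'' is not an argument, since $\ker\peso_\sroot$ and $\ker\sroot$ are genuinely different hyperplanes of $\a$ in rank $\geq2$ and a line may a priori lie in one but not the other; the promised ``type-by-type verification'' is exactly the missing content, and nothing in \S\ref{ktsec} supplies it (the tables there record $\slroot_j(\ktv^e)$, not $\peso_j(\ktv^e)$). Since the degeneracy loci of $\PP^{\peso_\sroot}$ and $\PP^{\sroot}$ at a Fuchsian point are governed by these two different kernels, ``up to a positive constant'' does not rescue the argument.

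Two smaller points. First, your claim that the self-dual alternative of Theorem \ref{thmPrinc} ``requires a strictly larger Zariski closure'' is false: a Fuchsian representation is self-dual. It is harmless here only because at a Fuchsian point, when $\length$ is $\ii$-invariant, the $\mathrm d\overline\ii$-anti-invariant directions are already contained in $\bigoplus_{e:\length(\kt^e)=0}\tangcero\prin e$, so the second bullet is subsumed by the first; but that needs to be said, not dismissed on Zariski-closure grounds. Second, the classical-type hypothesis in the converse is inherited directly from Theorem \ref{thmPrinc} (whose proof covers types $\A,\B,\Ce,\D,\Ge$), not from any case analysis of Kostant lines. The repair is simple: replace $\peso_\sroot$ by $\sroot$ throughout, delete the second and third steps of your plan, and add the one-line remark about the self-dual alternative at Fuchsian points.
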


For example, when $\ge=\sl(d,\R)$ with the standard Cartan subspace $\a=\{a\in\R^d:\sum a_i=0\}$ and simple roots $\slroot_j(a)=a_j-a_{j+1}$; the above corollary reduces the question of understanding $\hess_\prin\Hff_{\slroot_j}\sf J v=0$ to describing the triples of integers $(d,e,j)$ such that $\kt^e\subset\ker\slroot_j$. This condition can be rephrased in terms of an explicit Diophantine equation (see Equation \eqref{diofanto}).  For example, understanding  degenerations on the second Grassmannian $\zeta^2(\bord\pi_1S)\subset\grass_2(\R^d)\subset\grass_2(\C^d)$ reduces to the (elementary) equation $$d-1=\frac{e(e+1)}{2},$$ so $\hess\Hff_{\slroot_2}$ is only degenerate when $d=4,7,11,16...$ However, for the remaining Grassmannians the equation is more involved. For $\grass_3(\C^d)$ the equation becomes $$e^4-6de^2+2e^3+6d^2-6de+11e^2-18d+10e+12=0,$$ which turns out to be a genus-$1$ complex curve, whose integer solutions can be completely described via the Elliptic logarithm method $\frak{Ellog}$ (see for example Tzanakis \cite{TzanakisBook}), so we obtain in \S\,\ref{3raraiz}:

\begin{cor}\label{Hff3} Consider $v\in\sf T_\rho\hitchin_{\sl(d,\R)}(S),$ then one has $\hess_{\rho}\Hff_{\slroot_3}(\sf Jv)=0$ if and only if one of the following holds : \begin{itemize}\item[-]  $d=6$, $\rho$ has values in $\PSp(6,\R)$ and $v\in\tangcero\rho2\oplus\tangcero\rho4$, \item[-] $d=17$, $\rho$ is Fuchsian and $v\in\tangcero\rho4\oplus\tangcero\rho8$,\item[-] $d=58$, $\rho$ is Fuchsian and $v\in\tangcero\rho8$.\end{itemize}
\end{cor}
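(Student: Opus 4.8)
The plan is to combine the general degeneration criterion of the previous corollary with an explicit analysis of the Diophantine curve $e^4-6de^2+2e^3+6d^2-6de+11e^2-18d+10e+12=0$ that governs the condition $\kt^e\subset\ker\slroot_3$ on $\sl(d,\R)$. By the preceding Corollary (the Fuchsian case of the Hessian degeneration, together with its converse in classical type), for a Fuchsian $\prin$ one has $\hess_\prin\Hff_{\slroot_3}(\sf Jv)=0$ exactly when $v\in\bigoplus_{e:\kt^e\subset\ker\slroot_3}\tangcero\prin e$; and by the more refined picture in \S\ref{Hffdeg} for intermediate strata, a non-Fuchsian $\rho\in\hitchin_{\sl(d,\R)}(S)$ can contribute only when its Zariski closure is an intermediate subgroup whose principal $\sl_2$-decomposition sees an exponent $e$ with $\kt^e\subset\ker\slroot_3$, the relevant case being $\PSp(6,\R)\hookrightarrow\SL(6,\R)$ (the symplectic group is the only classical intermediate closure small enough to produce a new degeneration here). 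So the proof reduces to: (i) solving the integer points of the curve; (ii) translating each solution into the appropriate statement about $d$, the Zariski closure, and which Kostant factors $\tangcero\rho e$ occur.

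First I would set up the correspondence between the Kostant line $\kt^e$ and the kernel of $\slroot_3$. Recall from Kostant that on $\sl(d,\R)$ the $e$-th irreducible $\ad\s$-factor $V_e$ is the $(2e+1)$-dimensional space, and its one-dimensional intersection with $\a$ is spanned by a weight vector whose entries are (up to normalization) the values of a degree-$e$ harmonic/Gegenbauer-type polynomial evaluated on the arithmetic progression $d-1,d-3,\dots,-(d-1)$; this is exactly the combinatorial input encoded in Table \ref{ejemplosKV}. Imposing $\slroot_3(a)=a_3-a_4=0$ on the generator of $\kt^e$ is then a single polynomial identity in $d$ and $e$, which (as already recorded in the excerpt) rearranges to the quartic $e^4+2e^3-6de^2+11e^2-6de+10e+6d^2-18d+12=0$. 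I would then verify this equation directly on the small cases in Table \ref{ejemplosKV} as a sanity check (e.g. $d=6,e=2$ and $d=6,e=4$ should be roots, matching the $\PSp(6,\R)$ line).

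Next I would carry out the Diophantine analysis. Treating the equation as a curve in $(d,e)$, one checks it is a smooth genus-$1$ curve; completing to an elliptic curve and applying the Elliptic logarithm method $\frak{Ellog}$ (as in Tzanakis \cite{TzanakisBook}) produces the finite, provably complete list of integer points. Among these one discards the solutions that are geometrically meaningless: $e$ must be an exponent of $\sl(d,\R)$, i.e. $1\le e\le d-1$, and one must have $d\ge2$; the bookkeeping of which root multiplicities ("$e$ appears once in $\sl(d,\R)$") leaves the relevant triples $(d,e)\in\{(6,2),(6,4),(17,4),(17,8),(58,8)\}$. Then I would interpret them: the two solutions at $d=6$ are precisely the pair of exponents of $\sp(6,\R)$ sitting inside $\sl(6,\R)$, so the degeneration there occurs on the intermediate stratum with Zariski closure in $\PSp(6,\R)$ and in directions $\tangcero\rho2\oplus\tangcero\rho4$; the two solutions at $d=17$ are both exponents of $\sl(17,\R)$ but of no proper classical intermediate closure, so they contribute only at Fuchsian points, giving $v\in\tangcero\rho4\oplus\tangcero\rho8$; and the single solution $d=58,e=8$ likewise contributes only at Fuchsian points, giving $v\in\tangcero\rho8$.

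The main obstacle is the elliptic-curve computation in step (iii): proving that the list of integer points is \emph{complete} (as opposed to merely exhibiting the points) requires putting the quartic in Weierstrass form, computing a Mordell--Weil basis and the relevant elliptic logarithms, and bounding the coefficients — this is where \cite{TzanakisBook}'s machinery is essential and where a genuine (if routine-in-principle) computation has to be done carefully. A secondary subtlety is making sure no \emph{non-classical} intermediate Zariski closure (from the Guichard \cite{clausura} / S. \cite{clausurasPos} classification) sneaks in an extra degeneration for $d\in\{6,17,58\}$: here one uses that for these $d$ the only intermediate principal-$\sl_2$ subalgebras whose exponent set contains an $e$ with $\kt^e\subset\ker\slroot_3$ is $\sp(6,\R)\subset\sl(6,\R)$ — the exceptional cases ($\sf G_2\subset\sl(7,\R)$, spin $\so(3,4)$) handled in Theorem \ref{thmPrinc} do not meet the condition for $\slroot_3$ — so no additional strata appear. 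Everything else is bookkeeping with Table \ref{exponentes} and Table \ref{ejemplosKV}.
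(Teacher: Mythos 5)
Your overall strategy is the one the paper follows in \S\ref{3raraiz}: use Theorem \ref{BPSWHyper} to convert the Hessian statement into one about $\PP^{\slroot_3}$, invoke the classification of pressure degenerations (Theorem \ref{degLietheoretic}) to reduce to the Fuchsian locus plus possible intermediate strata, translate $\kt^e\subset\ker\slroot_3$ into the quartic Diophantine equation via Lemma \ref{calculo}, and solve that equation completely by the elliptic-logarithm method (the paper's version: Weierstrass form $y^2=x^3-147x+610$, Mordell--Weil basis $\{(9,4),(11,18)\}$ plus the $2$-torsion point $(5,0)$, David's bounds and a reduction to $M\le 30$ followed by a finite check). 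Leaving that computation as acknowledged outstanding work is reasonable for a proposal, and your filtered list of solutions $(6,2),(6,4),(17,4),(17,8),(58,8)$ is the correct one.

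There is, however, a genuine gap in how you treat the intermediate strata. Your criterion --- a non-Fuchsian $\rho$ contributes exactly when its Zariski closure ``sees an exponent $e$ with $\kt^e\subset\ker\slroot_3$'' --- is not the criterion of Theorem \ref{degLietheoretic}, and taken literally it gives the wrong answer for $d=17$ and $d=58$. For $d=17$ the intermediate closure is $\SO(8,9)$, whose complementary $\ad$-module $\bigoplus_{e\ \mathrm{even}}V_e$ contains $V_4$ and $V_8$, both of whose Kostant lines lie in $\ker\slroot_3$; for $d=58$ the closure $\PSp(58,\R)$ has complementary module containing $V_8$ with $\kt^8\subset\ker\slroot_3$. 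Your criterion would therefore predict non-Fuchsian degenerations along $\tangcero\rho4\oplus\tangcero\rho8$ and $\tangcero\rho8$ on these strata, contradicting the very statement you are proving. (Under the alternative reading, where ``$e$ is an exponent of the subalgebra itself,'' the criterion instead wrongly excludes $d=6$: the solutions $2$ and $4$ are the exponents of the complementary module, not of $\sp(6,\R)$, whose exponents are $1,3,5$ --- so your assertion that they are ``the pair of exponents of $\sp(6,\R)$'' is also off.) The correct mechanism, from the second bullet of Theorem \ref{degLietheoretic}, is that a self-dual stratum degenerates only when $\length$ is $\ii$-invariant; for $\length=\slroot_3$ on $\sl(d,\R)$ this forces $\slroot_3=\slroot_{d-3}$, i.e.\ $d=6$, and then $v$ must be $\mathrm d_\rho\underline\ii$-anti-invariant, i.e.\ $v\in\tangcero\rho2\oplus\tangcero\rho4$. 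For $d=17$ and $d=58$ the same theorem (via Lemma \ref{estrata2forma}: the normalized variations of a nonzero cocycle valued in the full complementary module have non-empty interior in $\bigoplus_{e\ \mathrm{even}}\kt^e$, which is not contained in $\ker\slroot_3$ unless $d=6$) rules out every non-Fuchsian stratum, and this --- not the absence of a matching exponent --- is what forces ``$\rho$ Fuchsian'' in the last two bullets of the Corollary.
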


The Diophantine equation associated to the 4th root is 

$$11d - \frac{13}{2}e - \frac{23}{3}e^2- \frac{19}{8}e^3 - \frac{31}{24}e^4 - \frac{1}{8}e^5 - \frac{1}{24}e^6 + \frac{13}{2}de  - \frac{3}{2}d^2e + 7de^2 - \frac{3}{2}d^2e^2 + de^3 + \frac{1}{2}de^4 + d^3 - 6d^2=6,$$ which according to Maple is a genus-4 complex curve with one singular point. To our knowledge, no general method to  solve this kind of equation over $\Z$ is known.

\begin{ack} I would like to thank Luca Battistella, Martin Bridgeman, Richard Canary, Jeffrey Danciger, Fran\c cois Labourie,  Marco Macullan, Alejandro Passeggi, Rafael Potrie, Germain Poullot and Maria Beatrice Pozzetti for enlightening discussions related to this paper. 
\end{ack}

\section{Preliminaries}


Throughout this paper we will let $\sf G$ be the real points of a Zariski-connected semi-simple real-algebraic group of the non-compact type with Lie algebra $\ge$. We will also let $V$ be a finite-dimensional real vector space, $\G$ a semi-group and $\gh$ a finitely generated word-hyperbolic group.

\subsection{Notations from Lie theory}Let us fix  $o:\ge\to\ge$ a Cartan involution with associated Cartan decomposition $\ge=\k\oplus\p.$ Let $\a\subset\p$ be a maximal abelian subspace and let $\root\subset\a^*$ be the set of restricted roots of $\a$ in $\ge.$ For $\aa\in\root$ let us denote by \begin{equation}\label{rootspace}\ge_\aa=\{u\in\ge:[a,u]=\aa(a)u\,\forall a\in\a\}\end{equation} its associated root space. One has the (restricted) root space decomposition $\ge=\ge_0\oplus\bigoplus_{\aa\in\root}\ge_\aa,$ where $\ge_0$ is the centralizer of $\a.$ Fix a Weyl chamber $\a^+$ of $\a$ and let $\root^+$ and $\simple$ be, respectively, the associated sets of positive roots and of simple roots.

Let us denote by $(\cdot,\cdot)$ the Killing form of $\ge,$ its restriction to $\a,$ and its associated dual form in the dual of $\a,$ $\a^*.$ For $\chi,\psi\in\a^*$ define $$\<\chi,\psi\>=2\frac{(\chi,\psi)}{(\psi,\psi)}.$$ 

The \emph{Weyl group} of $\root,$ denoted by $\Weyl,$ is the group generated by, for each $\aa\in\root,$ the reflection $\re_\aa:\a^*\to\a^*$ on the hyperplane $\aa^\perp,$ $$\re_\aa(\chi)=\chi-\<\chi,\aa\>\aa.$$ It is a finite group with a unique \emph{longest element} $\longest$ (w.r.t. the word metric on the generating set $\{\re_\aa:\aa\in\simple\}$). This longest element sends $\a^+$ to $-\a^+.$

The \emph{restricted weight lattice} is defined by $$\poids=\{\varphi\in\a^*:\<\varphi,\aa\>\in\Z\ \forall\aa\in\root\},$$ it is spanned by the \emph{fundamental weights}: $\{\peso_\sroot:\sroot\in\simple\}$ where $\peso_\sroot$ is defined by $\<\peso_\sroot,\bb\>=d_\sroot\delta_{\sroot\bb}$ for every $\sroot,\bb\in\simple,$ where $d_\sroot=1$ if $2\sroot\notin\root^+$ and $d_\sroot=2$ otherwise.
The set $\poids_+$ of \emph{dominant restricted weights} is defined by $\poids_+=\poids\cap (\a^+)^*.$

If $\ge$ has \emph{reduced} root system, then it is customary to denote by $$\dd=\frac12\sum_{\aa\in\root^+}\aa=\sum_{\sroot\in\simple}\peso_\sroot,$$ where the last equality can be found in Humphreys \cite[\S 13.3]{james}. For every $\sroot\in\simple$ one has $\<\dd,\sroot\>=1$ (\cite[\S 10.2 ]{james}).

\subsection{Some $\sl_2$'s of $\ge$}\label{xalpha} For $\varphi\in\a^*$ let $u_\varphi\in\a$ be such that for all $v\in\a$ one has $$\varphi(v)=(u_\varphi,v).$$

For $\aa\in\root$ let $h_\aa\in\a$ be defined by, for all $\varphi\in\a^*$, $\varphi(h_\aa)=\<\varphi,\aa\>.$ The vectors $u_\aa$ and $h_\aa$ are related by the simple formula $h_\aa=2u_\aa/(u_\aa,u_\aa).$ Recall that for $x\in\ge_\aa$ one has $[x,o(x)]=(x,o(x))u_\aa$. Thus, for each $\aa\in\root^+$ and $\xx_\aa\in\ge_\aa$ there exists $\yy_\aa\in\ge_{-\aa}$ such that $$(\begin{smallmatrix}0 & 1 \\ 0 & 0\end{smallmatrix}) \mapsto \xx_\alpha,\quad (\begin{smallmatrix}0 &  0\\ 1 & 0\end{smallmatrix})  \mapsto \yy_\alpha \ \textrm{ and }\ (\begin{smallmatrix}1 & 0 \\ 0 & -1\end{smallmatrix}) \mapsto h_\alpha, $$

\noindent
is a Lie algebra isomorphism between $\sl_2(\R)$ and the span of $\{\xx_\aa,\yy_\aa,h_\aa\}.$ Let us fix such a choice of $\xx_\aa$ and $\yy_\aa$ from now on.

One says that $\ge$ is \emph{split} if the complexification $\a\otimes\C$ is a Cartan subalgebra of $\ge\otimes\C.$ Equivalently, $\ge$ is split if the centralizer $\centro_\k(\a)$ of $\a$ in $\k$ vanishes. Assume that $\ge$ is split. Following Kostant \cite[\S 5]{kostant}, consider the dual basis of $\{u_\sroot:\sroot\in\simple\}$ relative to $(\cdot,\cdot)$: $(\epsilon_\aa,u_\bb)=\delta_{\aa\bb},$ and let $\epsilon_0=\sum_{\sroot\in\simple}\epsilon_\sroot\in\a.$ Upon writing $$2\epsilon_0  =\sum_{\sroot\in\simple}r_\sroot u_\sroot$$ for some $r_\sroot\in\Z_{>0},$ define ${\displaystyle e^+=  \sum_{\sroot\in\simple}\xx_\sroot\ \textrm{ and }\ e^-=  \sum_{\sroot\in\Delta}r_\sroot \yy_\sroot.}$ Since $(2\epsilon_0,u_\sroot)=2$ for every $\sroot\in\simple,$ the identification $$(\begin{smallmatrix}0 & 1 \\ 0 & 0\end{smallmatrix}) \mapsto e^+, \quad (\begin{smallmatrix}0 &  0\\ 1 & 0\end{smallmatrix})  \mapsto e^-\ \textrm{ and }\   (\begin{smallmatrix}1 & 0 \\ 0 & -1\end{smallmatrix}) \mapsto 2\epsilon_0,$$

\noindent
is also a Lie algebra isomorphism between $\sl_2(\R)$ and the span $\s$ of $\{e^+,e^-,2\epsilon_0\}.$ The Lie algebra $\s$ is called \emph{a principal $\sl_2$} and the $\Int\ge$-conjugacy class of this representation is called \emph{the principal $\sl_2$} of $\ge.$

\subsection{Cartan decomposition}

Let $\sf K\subset\sf G$ be a compact group that contains a representative for every element of the Weyl group $\Weyl.$ This is to say, such that the normalizer $N_{\sf G}(\sf A)$ verifies $N_{\sf G}(\exp\a)=(N_{\sf G}(\exp\a)\cap \sf K)\exp\a.$ Cartan's decomposition asserts the existence of a map, called the \emph{Cartan projection} of $\sf G,$  $$\cartan:\sf G\to \a^+$$ such that for every $g\in\sf G$ there exist $k,l\in\sf K$ with $g=k\exp(\cartan(g))l$ and such that $\forall g_1,g_2\in \sf G$ one has $g_1\in \sf Kg_2 \sf K$ if and only if $\cartan(g_1)=\cartan(g_2).$

\subsection{Jordan decomposition}\label{jordan}It states that every $g\in\sf G$ can be written as a commuting product $g=g_eg_hg_n,$ where $g_e$ is elliptic, $g_n$ is unipotent and $g_h$ is conjugate to en element $z_g\in \exp(\a^+)$. We let $\jordan(g)=\cartan(z_g)\in\a^+,$ and the map $\jordan:\sf G\to\a^+$ is called \emph{the Jordan projection} of $\sf G.$

\subsection{Flag manifolds of $\sf G$}

A subset $\t\subset\simple$ determines a pair of opposite parabolic subgroups $\sf P_\t$ and $\check{\sf P}_\t$ whose Lie algebras are defined by \begin{alignat*}{2}\frak p_\t & =\bigoplus_{\sroot\in\root^+\cup\{0\}}\ge^{\sroot} \oplus\bigoplus_{\sroot\in\<\simple-\t\>}\ge^{-\sroot},\\ \check{\frak p}_\t & =\bigoplus_{\sroot\in\root^+\cup\{0\}}\ge^{-\sroot} \oplus\bigoplus_{\sroot\in\<\simple-\t\>}\ge^{\sroot}.\end{alignat*}

\noindent
The group $\check{\sf P}_\t$ is conjugated to the parabolic group $\sf P_{\ii\t}.$  We denote the \emph{flag space} associated to $\t$ by $\EuScript F_\t=\sf G/\sf P_\t.$ The $\sf G$ orbit of the pair $([\sf P_{\t}],[\check{\sf P}_{\t}])$ is the unique open orbit for the action of $\sf G$ in the product $\EuScript F_\t\times\EuScript F_{\ii\t}$ and is denoted by $\posgen_\t.$

\begin{obs} If $g\in\sf G$ is such that $\sroot(\jordan(g))>0$ for all $\sroot\in\t$ then $g$ acts proximally on $\EuScript F_\t$.We will denote by $(g^+, g^-)\in\posgen_{\vt}(\sf G)$ the corresponding \emph{attracting and repelling flags}, so that every flag $y\in\EuScript F_{\vt}(\sf G)$ in general position with $ g^-$ verifies $ g^ny\to  g^+.$
\end{obs}

\subsection{The center of the Levi group $\sf P_{\t}\cap\check{\sf P}_{\t}$}\label{s.Levi}

We now consider ${\displaystyle \a_\t=\bigcap_{\sroot\in\simple-\t}\ker\sroot.}$

Denoting by $\Weyl_\t=\{w\in \Weyl:w(v)=v\quad \forall v\in\a_\t\}$ the subgroup of the Weyl group generated by reflections associated to roots in $\simple-\t$, there is a unique projection $\pi_\t:\a\to\a_\t$ invariant under  $\Weyl_\t$.

The dual $(\a_\t)^*$ is canonically identified with the subspace of $\a^*$ of $\pi_\t$-invariant linear forms. Such space is spanned by the fundamental weights of roots in $\t$,
\begin{equation}\label{at*}(\a_\t)^*=\{\varphi\in\a^*:\varphi\circ\pi_\t=\varphi\}=\<\peso_\sroot|\a_\t:\sroot\in\t\>.\end{equation} 
We consider the projections $\cartan_\t =\pi_\t\circ \cartan:\sf G\to \a_\t$ and $\jordan_\t  =\pi_\t\circ\jordan:\sf G\to \a_\t.$


\subsection{Representations of $\sf G$}\label{representaciones}
The standard references for the following are Fulton-Harris \cite{FultonHarris}, Humphreys  \cite{james} and Tits \cite{Tits}. Let $\rep:\sf G\to\GL(V)$ be a finite dimensional rational\footnote{Namely a rational map between algebraic varieties.} representation. We also denote by $\rep:\frak g\to\gl(V)$ the Lie algebra homomorphism associated to $\rep.$  If $\rep:\ge\to\gl(V)$ is irreducible then we say that $\rep:\sf G\to\GL(V)$ is \emph{strongly irreducible}. Equivalently $\rep(\sf G)$ does not preserve finitely many subspaces of $V$. Since $\sf G$ is Zariski-connected, an irreducible representation is strongly irreducible.

The \emph{weight space} associated to $\chi\in\a^*$ is the vector space 
$$V^\chi=\{v\in V:\rep(a) v=\chi(a) v\ \forall a\in\sf A \}.$$ 

\noindent
We say that $\chi\in\a^*$ is a \emph{restricted weight} of $\rep$  if $V\chi\neq0$ and let $$\poids_\rep=\big\{\chi\in\a^*:V\chi\neq\{0\}\big\}.$$ Tits \cite[Theorem 7.2]{Tits} (see also \cite[\S13.4 Lemma B]{james})) states that $\poids_\rep$ has a unique maximal element with respect to the partial order $\chi\succ\psi$ if $\chi-\psi$ belongs to the semi-group spanned by $\simple.$ This is called \emph{the highest weight} of $\rep$ and denoted by $\chi_\rep.$ Thus, every $\chi\in\poids_\rep$ has the form $$\chi_\rep-\sum_{\sroot\in\simple} n_\sroot\sroot\textrm{ for }n_\sroot\in\N.$$
If we let  $\jordan_1$ be the logarithm of the spectral radius then for every $g\in\sf G$ one has
\begin{equation}\label{eq:spectralrep}
\jordan_1\big(\rep(g)\big)=\chi_\rep(\jordan(g)\big).
\end{equation}

\begin{defi}\label{trep}
	Let $\rep:\sf G\to\PGL(V)$ be a representation. We denote by $\t_\rep$ the set of simple roots $\sroot\in\simple$ such that $\chi_\rep-\sroot$ is still a weight of $\rep$. Equivalently 
	 \begin{equation}\label{peso<}
		\t_\rep=\big\{\sroot\in\simple:\<\chi_\rep,\sroot\>\neq0\big\}.
	\end{equation}
\end{defi}

We denote by $\|\,\|_\rep$ an Euclidean norm on $V$ invariant under $\rep \sf K$ and such that $\rep\exp\a $ is self-adjoint, see Benoist-Quint \cite[Lemma 6.33]{Benoist-QuintLibro}. We also let $\|\,\|_\rep$ be the induced operator norm. By definition of $\chi_\rep$, $\|\,\|_\rep$ and Eq.\eqref{eq:spectralrep}, for every $g\in \sf G $
\begin{equation}\label{eq:normayrep}
\log\|\rep g\|_\rep=\chi_\rep(\cartan(g)).
\end{equation}

Denote by $W_{\chi_\rep}$ the $\rep\sf A $-invariant complement of $V_{\chi_\rep}.$ The stabilizer in $\sf G $ of $W_{\chi_\rep}$ is $\wk{\sf P}_{\t_\rep},$ and thus one has a map of flag spaces 
\begin{equation}\label{maps}
	(\zeta_\rep,\zeta^*_\rep):\EuScript F_{\t_\rep}^{(2)}(\sf G )\to \mathrm{Gr}_{\dim V_{\chi_\rep}}^{(2)}(V).
\end{equation} This is a proper embedding which is an homeomorphism onto its image. Here, as above, $\mathrm{Gr}_{\dim V_{\chi_\rep}}^{(2)}(V)$ is the open $\PGL (V )$-orbit in the product of the Grassmannian of $(\dim V_{\chi_\rep})$-dimensional subspaces and the Grassmannian of $(\dim V-\dim V_{\chi_\rep})$-dimensional subspaces. One has  the following (see Humphreys \cite[Chapter XI]{LAG}).

\begin{prop}[Tits \cite{Tits}]\label{prop:titss} 
For each $\sroot\in\simple$ there exists a finite dimensional rational irreducible representation $\rep_\sroot:\sf G\to\PSL(V_\sroot),$ such that $\chi_{\rep_\sroot}$ is an integer multiple $l_\sroot\peso_\sroot$ of the fundamental weight and $\dim V_{\chi_{\rep_\sroot}}=1.$ \end{prop}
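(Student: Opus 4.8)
The plan is to construct the representations $\rep_\sroot$ directly as the irreducible representations of highest weight $l_\sroot\peso_\sroot$ for a suitable positive integer $l_\sroot$, and then verify that the weight space for the highest weight is one-dimensional. First I would recall that since $\sf G$ is a connected semi-simple algebraic group of the non-compact type, its irreducible rational representations are classified by their highest weights, which range over the dominant weights of a suitable lattice; the subtlety over $\R$ (as opposed to over $\C$ or for simply connected groups) is that not every abstract dominant weight in $\poids_+$ is realized as the highest weight of a rational representation of $\sf G$ itself — one may need to pass to a multiple. This is exactly why the statement allows $\chi_{\rep_\sroot}=l_\sroot\peso_\sroot$ rather than insisting $l_\sroot=1$: choosing $l_\sroot$ large enough (e.g. so that $l_\sroot\peso_\sroot$ lies in the character lattice of a maximal torus of $\sf G$, which it does for some positive integer since $\peso_\sroot$ is in the $\Q$-span of that lattice), Tits' construction produces a rational irreducible representation $\sf G\to\PGL(V_\sroot)$; one can further arrange it to land in $\PSL(V_\sroot)$ by twisting by a character / passing to the image in $\PSL$, using that $\sf G$ is semi-simple so it has no nontrivial rational characters.

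The second and more substantive point is the claim $\dim V_{\chi_{\rep_\sroot}}=1$. For this I would use the general structure of weight multiplicities in the irreducible representation of highest weight $\chi=l_\sroot\peso_\sroot$. The weights are of the form $\chi-\sum_{\bb\in\simple}n_\bb\bb$ with $n_\bb\in\N$, and the multiplicity of $\chi$ itself as a weight equals the multiplicity of the highest weight vector, which is always $1$ for an irreducible (the highest weight space is spanned by a highest weight vector and is one-dimensional; this is standard — see Humphreys \cite{james} or Fulton-Harris \cite{FultonHarris}). Here one must be slightly careful about the distinction between the weight space over $\C$ (where this is immediate) and the restricted weight space over $\R$ for the restricted root system of the real form; but since $l_\sroot\peso_\sroot$ is, by construction, the highest restricted weight and the top restricted weight space for an $\R$-irreducible representation with highest weight a multiple of a fundamental weight is again one-dimensional — this follows because the restricted highest weight space is the image of the complex highest weight space under the projection $\a_\C^*\to\a^*$, and no other complex weight of the representation projects onto $l_\sroot\peso_\sroot$ (any such would be $\succeq$ it in the complex ordering, contradicting maximality). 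So $\dim V_{\chi_{\rep_\sroot}}=1$ as desired, and then the map of flag spaces \eqref{maps} applied to $\rep_\sroot$ realizes $\EuScript F_\sroot(\sf G)$ inside a projective space $\P(V_\sroot)$, which is the standard picture one wants.

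The main obstacle I anticipate is the bookkeeping over $\R$: ensuring that $l_\sroot\peso_\sroot$ is simultaneously (i) a genuine highest weight of a rational representation of the given group $\sf G$ — not merely of its simply connected cover — and (ii) such that the resulting representation, after projectivizing, actually factors through $\PSL(V_\sroot)$ rather than only $\PGL(V_\sroot)$; and (iii) that the restricted weight space, not just the complex weight space, is one-dimensional. Points (i)–(ii) are handled by choosing $l_\sroot$ divisible enough and invoking that $\sf G$ semi-simple has finite abelianization / no rational characters; point (iii) is the weight-combinatorics argument sketched above. The cleanest route is simply to cite Tits \cite{Tits} for the existence and the highest-weight/multiplicity statement — this is precisely the content of \cite[Theorem 7.2]{Tits} and its corollaries on "$\ge$-representations" of a given algebraic group — and to note that the one-dimensionality of the highest (restricted) weight space is automatic for an irreducible with highest weight proportional to a fundamental weight. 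I would therefore present the proof as: fix $\sroot$; take $l_\sroot\in\Z_{>0}$ minimal with $l_\sroot\peso_\sroot$ in the weight lattice of $\sf G$; let $V_\sroot$ be the $\R$-irreducible representation of highest weight $l_\sroot\peso_\sroot$, composed with the projection $\GL(V_\sroot)\to\PSL(V_\sroot)$ (legitimate as $\sf G=[\sf G,\sf G]$); then $\dim V_{\chi_{\rep_\sroot}}=\dim V_{l_\sroot\peso_\sroot}=1$ by the highest-weight-vector uniqueness, completing the proof.
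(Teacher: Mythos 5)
The paper offers no proof of this proposition: it is quoted verbatim from Tits \cite{Tits} (with a pointer to Humphreys for background), so there is no argument of the paper's to compare yours against. Your points (i) and (ii) — choosing $l_\sroot$ so that $l_\sroot\peso_\sroot$ lies in the relevant lattice, and landing in $\PSL(V_\sroot)$ because $\sf G$ is semi-simple — are fine and standard. The problem is point (iii), which is where the entire content of Tits' theorem lives, and your argument for it is incorrect.

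The claim that one-dimensionality of the highest \emph{restricted} weight space is ``automatic for an irreducible with highest weight proportional to a fundamental weight'' is false. Your justification — that any complex weight restricting to $l_\sroot\peso_\sroot$ would be $\succeq$ the complex highest weight $\lambda$ — overlooks the complex simple roots $\aa$ that vanish on $\a$ (the compact simple roots, which exist precisely when $\sf G$ is not quasi-split). A weight $\mu=\lambda-\sum n_\aa\aa$ with $n_\aa>0$ only for such $\aa$ is strictly \emph{lower} than $\lambda$ in the complex dominance order, yet restricts to the same restricted weight; the restricted weight space $V^{\chi_\rep}$ is the sum of all such complex weight spaces, and it is one-dimensional only when $\lambda$ pairs trivially with every compact simple coroot. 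A concrete counterexample to ``automatic'': for $\SL(n,\H)=\SU^*(2n)$ the restricted root system is of type $\A_{n-1}$ and the standard representation on $\H^n\cong\R^{4n}$ has highest restricted weight equal to the first fundamental restricted weight, but its highest restricted weight space is a quaternionic line, of real dimension $4$. So existence of a \emph{proximal} representation with highest weight $l_\sroot\peso_\sroot$ genuinely has to be arranged, not observed. The honest options are either to cite Tits for exactly this (which is what the paper does), or to run one of the standard constructions that build proximality in — e.g.\ take $\u_\sroot$ the nilradical of $\frak p_{\{\sroot\}}$, $d=\dim\u_\sroot$, and let $V_\sroot$ be the irreducible subrepresentation of $\bigwedge^d\Ad$ generated by the line $\bigwedge^d\u_\sroot$, whose stabilizer is exactly $\sf P_{\{\sroot\}}$; or choose the complex highest weight to vanish on all compact simple coroots while restricting to $l_\sroot\peso_\sroot$ on $\a$. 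As written, your proof does not establish $\dim V_{\chi_{\rep_\sroot}}=1$.
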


We will fix from now on such a set of representations and call them, for each $\sroot\in\simple,$ the \emph{Tits representation associated to $\sroot$}.

\subsection{Gromov product, additive cross ratio and representations}\label{angulosCasa}\label{RD}  Consider $\t\subset\simple$. Recall from \cite{orbitalcounting} that the \emph{Gromov product}
$
(\mathord{\cdot}|\mathord{\cdot}):\posgen_\t\to\a_\t
$
is defined by
$$
\peso_\alpha\big((x|y)\big) = -\log\sin\angle_{\rep_\alpha o}(\xi_{\rep_\alpha}x,\xi^*_{\rep_\alpha}y) = -\log\frac{|\varphi(v)|}{\|\varphi\|_{\rep_\alpha o}\|v\|_{\rep_\alpha o}}\geq0
$$
for all $\alpha\in\t$, where $v\in\xi_{\rep_\alpha}x-\{0\}$ and $\ker\varphi = \xi_{\rep_\alpha}^*y$. Note that 
\begin{equation}\label{minangulo}\max_{\alpha\in\t}\peso_\alpha((x|y)_o) = -\log\min_{\alpha\in\t}\sin\angle_{\rep_\alpha o}(\xi_{\rep_\alpha}x,\xi^*_{\rep_\alpha}y).
\end{equation} and that, since $\{\peso_\alpha|\a_\t\}_{\alpha\in\t}$ is a basis of $(\a_\t)^*$, the right hand side of equation (\ref{minangulo}) is comparable to the norm $\|(x|y)\|$. One has the following:

\begin{obs}\label{GromovyRep} Let $\rep:\sf G\to\PSL(V)$ be a proximal  irreducible representation. If $(x,y)\in\posgen_{\t_\rep}$ then
${\displaystyle
\big(\Xi_\rep x |\Xi_\rep^* y\big)_{\rep o} = \chi_\rep\big((x|y)\big) = \sum_{\alpha\in\t_\rep}\<\chi_\rep,\sroot\>\,\peso_\sroot\big((x|y)\big).
}$
\end{obs}

Consider now the space $\EuScript F_\t^{(4)}$ of pairs $(x,y),(z,t)\in\EuScript F_\t^{(2)}$ with the extra transversality condition that both pairs $(x,t)$ and $(z,y)$ are also in general position. The (\emph{aditive}) \emph{cross ratio} is the $\sf G$-invariant map $\cross_\t:\EuScript F_\t^{(4)}\to\a_\t$ defined by $$\cross_\t(x,y,z,t)=(x|y)-(x|t)+(z|t)-(z|y).$$

\subsection{Proximality}\label{proxBenoist}


Recall that $g\in\End(V)$ is \emph{proximal} if it has a unique eigenvalue with maximal modulus and it has multiplicity $1.$ The associated eigenline is denoted by $g^+\in\P( V)$ and $g^-$ is its $g$-invariant complementary subspace. We consider then $\beta_g\in V^*$ and $v_g\in g^+$ such that $\ker\beta_g=g^-$ and $\beta_g(v_g)=1,$ we also let $\pi_g(w)=\beta_g(w)v_g$ be the projection over $g^+$ with kernel $g^-.$ We finally let $V_2(g)$ be the generalized eigenspace of $g$ associated to the second (in modulus) eigenvalue and $\tau_g$ be the only projection over $V_2(g)$ whose kernel is $g$-invariant.

\begin{defi}\label{stronglytransversal}If $g,h$ are proximal, we say they are \emph{transversally proximal} if $\beta_g(v_h)\beta_h(v_g)\neq0$ and \emph{strongly transversally proximal} if moreover $\beta_h(\tau_gv_h)\neq0.$
\end{defi}

We say that $g\in\sf G$ is $\t$-\emph{proximal} if $\rep_\sroot(g)$ is proximal $\forall\sroot\in\t.$ For such $g$ there exists $(g^-_\t,g^+_\t)\in\posgen_\t,$ defined by, for every $\sroot\in\t,$ $\Xi_{\rep_\sroot}\big(g^+_\t\big)=\rep_\sroot(g)^+.$ Every flag $x\in\EuScript F_\t$ in general position with $g^-_\t$ verifies $g^nx\to g^+_\t.$ Two elements $g,h\in\sf G$ are \emph{transversally $\t$-proximal} if they are $\t$-proximal and moreover $\{(g^+_\t,h^-_\t),(h^+_\t,g^-_\t)\}\subset\posgen_\t$. One has the following from Benoist \cite{limite}.

\begin{thm}[{Benoist \cite{limite}}]\label{CR} Let $g,h\in\sf G$ be transversally $\t$-proximal then, $$\lim_{n\to\infty}\jordan_\t(g^nh^n)-\jordan_\t(g^n)-\jordan_\t(h^n)=\cross_\t(g^-_\t,g^+_\t,h^-_\t,h^+_\t)=:\crossg_\t(g,h).$$\end{thm}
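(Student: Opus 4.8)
The plan is to reduce the statement to the case of a single irreducible proximal representation and then perform an explicit matrix computation. Since $\jordan_\t$ takes values in $\a_\t$ and the fundamental weights $\{\peso_\sroot|\a_\t:\sroot\in\t\}$ form a basis of $(\a_\t)^*$ (Eq.~\eqref{at*}), it suffices to check equality after applying $\peso_\sroot$ for each $\sroot\in\t$. By Proposition~\ref{prop:titss} choose the Tits representation $\rep_\sroot:\sf G\to\PSL(V_\sroot)$, for which $\chi_{\rep_\sroot}=l_\sroot\peso_\sroot$ and $\dim V_{\chi_{\rep_\sroot}}=1$, so that $\rep_\sroot(g)$ is proximal in the linear-algebra sense of \S\ref{proxBenoist}. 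By Equation~\eqref{eq:spectralrep} one has $\peso_\sroot(\jordan_\t(m))=\tfrac1{l_\sroot}\jordan_1(\rep_\sroot(m))=\tfrac1{l_\sroot}\log\lambda_1(\rep_\sroot(m))$ for any $m\in\sf G$, where $\lambda_1$ denotes the spectral radius. Hence the left-hand side of the theorem, after applying $\peso_\sroot$, becomes
\[
\frac1{l_\sroot}\lim_{n\to\infty}\Big(\log\lambda_1\big(\rep_\sroot(g)^n\rep_\sroot(h)^n\big)-\log\lambda_1\big(\rep_\sroot(g)^n\big)-\log\lambda_1\big(\rep_\sroot(h)^n\big)\Big),
\]
and by Remark~\ref{GromovyRep} the right-hand side, after applying $\peso_\sroot$, is $\tfrac1{l_\sroot}\cross_1\big(\rep_\sroot(g)^+,\rep_\sroot(g)^-,\rep_\sroot(h)^+,\rep_\sroot(h)^-\big)$. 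The transversal $\t$-proximality of $g,h$ translates, via the embedding \eqref{maps}, into transversal proximality of the linear maps $\rep_\sroot(g),\rep_\sroot(h)$ in the sense of Definition~\ref{stronglytransversal}. So the theorem is reduced to the following statement about $V=V_\sroot$, $A=\rep_\sroot(g)$, $B=\rep_\sroot(h)$: if $A,B\in\GL(V)$ are proximal and transversally proximal then $\log\lambda_1(A^nB^n)-\log\lambda_1(A^n)-\log\lambda_1(B^n)\to\cross_1(A^+,A^-,B^+,B^-)=\log|\beta_A(v_B)\beta_B(v_A)|$.

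For this linear statement, first I would normalize: after rescaling, assume $\lambda_1(A)=\lambda_1(B)=1$, so $\lambda_1(A^n)=\lambda_1(B^n)=1$ and only $\log\lambda_1(A^nB^n)$ survives. Proximality gives a spectral gap: in suitable coordinates $A$ (resp.\ $B$) acts on $A^+$ (resp.\ $B^+$) as multiplication by $1$ and contracts $A^-$ (resp.\ $B^-$), with $A^n\to\pi_A$ and $B^n\to\pi_B$ exponentially fast, where $\pi_A(w)=\beta_A(w)v_A$ is the rank-one projection onto $A^+$ along $A^-$ (notation of \S\ref{proxBenoist}). Therefore $A^nB^n\to\pi_A\pi_B$ as $n\to\infty$, and the composition of the two rank-one projections is the rank-one operator $w\mapsto\beta_B(w)\,\beta_A(v_B)\,v_A$, whose unique nonzero eigenvalue is $\beta_A(v_B)\beta_B(v_A)$ — this is nonzero precisely by the transversal-proximality hypothesis. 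Since eigenvalues depend continuously on the operator and the convergence $A^nB^n\to\pi_A\pi_B$ is exponentially fast while the spectral gap of $A^nB^n$ stabilizes, the top eigenvalue of $A^nB^n$ converges to $\beta_A(v_B)\beta_B(v_A)$; taking logarithms of moduli and undoing the normalization gives $\log\lambda_1(A^nB^n)-\log\lambda_1(A^n)-\log\lambda_1(B^n)\to\log|\beta_A(v_B)\beta_B(v_A)|=\cross_1(A^+,A^-,B^+,B^-)$, as desired.

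The main obstacle is making the convergence argument for the eigenvalue fully rigorous: one must control not just that $A^nB^n\to\pi_A\pi_B$ but that the \emph{proximality} (the spectral gap, hence the well-definedness of $\lambda_1$ as an isolated simple eigenvalue with a continuously varying eigenprojector) persists uniformly in $n$ large. Concretely, $A^nB^n = \pi_A\pi_B + E_n$ with $\|E_n\|\leq C\theta^n$ for some $\theta<1$; since $\pi_A\pi_B$ has a simple nonzero eigenvalue $\beta_A(v_B)\beta_B(v_A)$ and $0$ with high multiplicity, standard perturbation theory for the spectral projector (e.g.\ via the resolvent/Riesz projection around the nonzero eigenvalue) shows that for $n$ large $A^nB^n$ is proximal with $\lambda_1(A^nB^n)=\beta_A(v_B)\beta_B(v_A)+O(\theta^n)$. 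This is essentially Benoist's original argument in \cite{limite}; the only subtlety beyond his setting is bookkeeping the passage through the Tits representations and the identity $\peso_\sroot\circ\jordan_\t=\tfrac1{l_\sroot}\jordan_1\circ\rep_\sroot$, which is routine given \S\ref{representaciones}. I would present the reduction carefully and then cite \cite{limite} for the perturbative core, or include the short resolvent computation for completeness.
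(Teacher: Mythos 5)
Your argument is correct and is essentially the standard one: the paper does not prove Theorem~\ref{CR} (it is quoted from Benoist \cite{limite}), and your reduction through the Tits representations followed by the expansion $\rep_\sroot(g)^n=\lambda_1^n\pi_{g,\sroot}+\check P_n$ with exponentially small remainder is exactly the technique the paper itself uses for the neighbouring Lemma~\ref{modif}, and the perturbative identification of the top eigenvalue of $\pi_A\pi_B+E_n$ is the core of Benoist's (and Benoist--Quint's Lemma 7.15) proof. The only points to tidy in a write-up are the sign of the top eigenvalue of a real proximal matrix (it may be $-1$ after your normalization, which is harmless since only moduli enter $\jordan_1$) and the bookkeeping of the argument order in $\cross_\t$ against Definition~\ref{stronglytransversal}.
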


\begin{lemma}[{Benoist-Quint \cite[Lemma 7.10]{Benoist-QuintLibro}}]\label{pares} Let  $g,h\in\sf G$ be loxodromic, then there exists a non-empty Zariski-open subset $\mathrm G_{gh}$ of $\sf G^2$ such that whenever $(f,q)\in\mathrm G_{gh}$ the limit $$\lim_{\min\{n,m\}\to\infty}\jordan(g^mfh^nq)-m\jordan(g)-n\jordan(h)$$ exists. If $g$ and $h$ are transversally loxodromic, then $(\id,\id)\in\mathrm G_{gh}$.
\end{lemma}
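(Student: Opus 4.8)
The plan is to reduce the statement, via the Tits representations $\rep_\sroot\colon\sf G\to\PSL(V_\sroot)$ of \S\ref{representaciones} (replacing $\sf G$ by a finite cover if necessary so that the $\rep_\sroot$ lift to linear representations with highest weight $l_\sroot\peso_\sroot$ --- harmless for $\jordan$), to an elementary fact about products of proximal linear maps. Since $g$ is loxodromic one has $\sroot(\jordan(g))>0$ for every $\sroot\in\simple$, which forces $\rep_\sroot(g)$ to be proximal (its top weight line is one-dimensional and strictly dominant), with attracting line $\rep_\sroot(g)^+=\Xi_{\rep_\sroot}(g^+_\simple)$ and repelling hyperplane the one associated to $g^-_\simple$; similarly for $h$. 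As $\{\peso_\sroot\}_{\sroot\in\simple}$ is a basis of $\a^*$ and $\jordan_1(\rep_\sroot(y))=l_\sroot\,\peso_\sroot(\jordan(y))$ for all $y\in\sf G$, the vector $\jordan(g^mfh^nq)-m\jordan(g)-n\jordan(h)$ converges as $\min\{n,m\}\to\infty$ if and only if, for each $\sroot$, the scalar
\[
\jordan_1\!\big(\rep_\sroot(g)^m\,\rep_\sroot(f)\,\rep_\sroot(h)^n\,\rep_\sroot(q)\big)\;-\;m\,\jordan_1(\rep_\sroot(g))\;-\;n\,\jordan_1(\rep_\sroot(h))
\]
converges. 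So everything comes down to the following claim about a \emph{fixed} pair of proximal $A,B\in\GL(U)$: if $\phi,\psi\in\GL(U)$ satisfy $\beta_A(\phi v_B)\neq0$ and $\beta_B(\psi v_A)\neq0$, then $\jordan_1(A^m\phi B^n\psi)-m\jordan_1(A)-n\jordan_1(B)$ converges, as $\min\{n,m\}\to\infty$, to $\log\big|\beta_A(\phi v_B)\,\beta_B(\psi v_A)\big|$.

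To prove the claim I would use that a proximal $A$ has a real top eigenvalue $\lambda_A$ with $|\lambda_A|=e^{\jordan_1(A)}$ and satisfies $\big\|A^m|\lambda_A|^{-m}-\mathrm{sign}(\lambda_A)^m\,\pi_A\big\|\le C\theta_A^m$ for some $\theta_A<1$, where $\pi_A(w)=\beta_A(w)v_A$; likewise for $B$. Hence the normalized operators $M_{m,n}:=A^m\phi B^n\psi\,|\lambda_A|^{-m}|\lambda_B|^{-n}$ obey $\|M_{m,n}-\varepsilon_{m,n}P\|\le C'\theta^{\min\{m,n\}}$ with $\varepsilon_{m,n}\in\{\pm1\}$ and $P:=\pi_A\phi\pi_B\psi$, and a one-line computation gives $P(w)=\beta_A(\phi v_B)\,\beta_B(\psi w)\,v_A$. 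Under the hypotheses, $P$ --- and hence $\pm P$ --- is rank one with unique nonzero eigenvalue $c:=\beta_A(\phi v_B)\,\beta_B(\psi v_A)\neq0$, which is simple.

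The key step, which I expect to be the main obstacle, is to pass from this convergence up to sign to a statement about $\jordan_1(M_{m,n})$: the spectral radius is only upper semicontinuous in general, so one cannot just take limits. The way around it is that $\pm P$ has a \emph{simple dominant} eigenvalue $\pm c$ with all other eigenvalues $0$; by continuity of the spectrum and persistence of simple eigenvalues under small perturbations, for $\min\{m,n\}$ large $M_{m,n}$ is itself proximal, its spectral radius is attained at the eigenvalue lying near $\pm c$, and that eigenvalue --- being a continuous (indeed real-analytic) function of the matrix near $\pm P$ --- tends to $\pm c$. Therefore $\jordan_1(M_{m,n})\to\log|c|$, which is the claim. (When $\phi=\psi=\id$, one recognises $\log|c|$ as a positive combination of Gromov products, matching the $g^nh^n$ formula of Theorem \ref{CR}.)

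Finally I would assemble over $\sroot\in\simple$. Put
\[
\mathrm G_{gh}:=\big\{(f,q)\in\sf G^2:\ f\cdot h^+_\simple\transverse g^-_\simple\ \text{ and }\ q\cdot g^+_\simple\transverse h^-_\simple\big\}.
\]
Through the $\sf G$-equivariant embeddings $\Xi_{\rep_\sroot}$, the condition $f\cdot h^+_\simple\transverse g^-_\simple$ is equivalent to $\beta_{\rep_\sroot(g)}\!\big(\rep_\sroot(f)\,v_{\rep_\sroot(h)}\big)\neq0$ for all $\sroot$, and likewise $q\cdot g^+_\simple\transverse h^-_\simple$ to $\beta_{\rep_\sroot(h)}\!\big(\rep_\sroot(q)\,v_{\rep_\sroot(g)}\big)\neq0$ for all $\sroot$; so the claim applies in every $\rep_\sroot$ and the limit in the statement exists whenever $(f,q)\in\mathrm G_{gh}$. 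Each defining condition is non-empty Zariski-open (transversality to a fixed flag is non-empty Zariski-open, and $f\mapsto f\cdot h^+_\simple$ is a surjective morphism $\sf G\to\EuScript F_\simple$), so $\mathrm G_{gh}$ is a non-empty Zariski-open subset of $\sf G^2$. And if $g,h$ are transversally loxodromic then by definition $g^+_\simple\transverse h^-_\simple$ and $h^+_\simple\transverse g^-_\simple$, i.e.\ $\id\cdot g^+_\simple\transverse h^-_\simple$ and $\id\cdot h^+_\simple\transverse g^-_\simple$, whence $(\id,\id)\in\mathrm G_{gh}$.
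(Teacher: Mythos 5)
Your proof is correct and follows essentially the same route as the paper's (which defers to Benoist--Quint but sketches the identical method in the proof of Lemma \ref{modif}): reduce via the Tits representations to proximal linear maps, normalize by the top eigenvalues so the operators converge to the rank-one limit $\pi_A\phi\pi_B\psi$ with nonzero trace, and use persistence of the simple dominant eigenvalue to pass the limit through $\jordan_1$; the Zariski-open set is cut out by the same two transversality conditions. No gaps.
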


\begin{lemma}\label{modif}Let $g\in\sf G$ be loxodromic, then there exists a non-empty open subset $\mathrm G_g$ of $\sf G^2$ such that whenever $(f,q)\in\mathrm G_g$ the following limit exists: $$\lim_{n\to\infty}\jordan(fg^nq)-n\jordan(g).$$
\end{lemma}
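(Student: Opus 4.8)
The plan is to reduce, by means of the proximal Tits representations of Proposition~\ref{prop:titss}, to an elementary spectral-perturbation fact for a single proximal matrix. First, since $\jordan$ is a conjugacy invariant and $(g^nq)f$ is conjugate to $f(g^nq)$, one has $\jordan(fg^nq)=\jordan\big(g^n(qf)\big)$, so writing $p:=qf$ it is enough to find a non-empty open $\mathcal P\subset\sf G$ with $\lim_n\jordan(g^np)-n\jordan(g)$ existing for all $p\in\mathcal P$, and then take $\mathrm G_g:=\{(f,q):qf\in\mathcal P\}$, the preimage of $\mathcal P$ under the continuous map $(f,q)\mapsto qf$, which is open and non-empty (it contains $(\id,p)$ for any $p\in\mathcal P$).

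Since $\{\peso_\sroot:\sroot\in\simple\}$ is a basis of $\a^*$, I would then show that for each $\sroot\in\simple$ the real sequence $\peso_\sroot(\jordan(g^np))-n\peso_\sroot(\jordan(g))$ converges. Applying \eqref{eq:spectralrep} to the Tits representation $\rep_\sroot:\sf G\to\PSL(V_\sroot)$ --- highest weight $l_\sroot\peso_\sroot$, with one-dimensional highest weight space --- and choosing linear lifts $\widetilde G,\widetilde P\in\GL(V_\sroot\otimes\C)$ of $\rep_\sroot(g)$ and $\rep_\sroot(p)$, a routine bookkeeping with determinants shows this sequence differs by an $n$-independent constant from $\tfrac1{l_\sroot}\big(\log\operatorname{sr}(\widetilde G^n\widetilde P)-n\log\operatorname{sr}(\widetilde G)\big)$, where $\operatorname{sr}$ is the spectral radius. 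So everything comes down to the convergence of $\log\operatorname{sr}(\widetilde G^n\widetilde P)-n\log\operatorname{sr}(\widetilde G)$.

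Here I would use that $g$ is loxodromic, i.e.\ $\sroot(\jordan(g))>0$ for all simple roots: this forces the highest weight of $\rep_\sroot$ to be the unique weight maximizing $\chi\mapsto\chi(\jordan(g))$, and since its weight space is a line $\widetilde G$ is proximal over $\C$, with algebraically simple eigenvalue $\lambda$ of maximal modulus. Writing $V_\sroot\otimes\C=L\oplus W$ for the $\widetilde G$-invariant splitting with $L=\ker(\widetilde G-\lambda)$ and $\operatorname{sr}(\widetilde G|_W)<|\lambda|$, picking $v_0$ spanning $L$ and $\beta$ with $\ker\beta=W$, $\beta(v_0)=1$, the spectral projection is $\pi_L=\beta(\cdot)v_0$ and $\widetilde G^n/\lambda^n\to\pi_L$ in operator norm. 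Hence $\widetilde G^n\widetilde P/\lambda^n\to\pi_L\widetilde P$, a rank-one operator whose only possibly non-zero eigenvalue is $\beta(\widetilde Pv_0)$, and by continuity of the spectral radius $|\lambda|^{-n}\operatorname{sr}(\widetilde G^n\widetilde P)\to|\beta(\widetilde Pv_0)|$ provided $\beta(\widetilde Pv_0)\neq0$. This non-vanishing --- equivalently, that $\rep_\sroot(p)$ sends the attracting line of $\rep_\sroot(g)$ off its repelling hyperplane --- is the required genericity: it defines a non-empty Zariski-open subset of $\sf G$ containing $p=\id$. Intersecting over the finite set $\simple$ gives $\mathcal P\ni\id$, and then $\log\operatorname{sr}(\widetilde G^n\widetilde P)-n\log\operatorname{sr}(\widetilde G)\to\log|\beta(\widetilde Pv_0)|$ for $p\in\mathcal P$, as needed; in particular $\mathrm G_g\ni(\id,\id)$.

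I expect the only subtle point to be the spectral step of the last paragraph: one must make sure that for large $n$ the spectral radius of $\widetilde G^n\widetilde P/\lambda^n$ is carried by the single eigenvalue close to $\beta(\widetilde Pv_0)$ rather than by the ones clustering at $0$ --- which is precisely where $\beta(\widetilde Pv_0)\neq0$ enters, via continuity of the spectrum --- and, relatedly, to identify the genericity condition cleanly and verify that the resulting $\mathrm G_g$ is open and non-empty. (Alternatively, the lemma can be obtained from Lemma~\ref{pares} with $h=g$ by splitting $g^n=g^{\lfloor n/2\rfloor}g^{\lceil n/2\rceil}$, provided one checks that the Zariski-open locus $\mathrm G_{gg}$ appearing there meets the slice $\{\id\}\times\sf G$.)
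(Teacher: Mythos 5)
Your proposal is correct and follows essentially the same route as the paper: both arguments pass to the Tits representations $\rep_\sroot$, write $\rep_\sroot(g^n)$ as its dominant rank-one part plus a spectrally smaller remainder, and identify the genericity condition as the Zariski-open, non-empty transversality requirement (your $\beta(\widetilde Pv_0)\neq0$ with $p=qf$ is exactly the paper's $\traza(f\pi_{g,\sroot}q)\neq0$). Your preliminary conjugation reducing $fg^nq$ to $g^n(qf)$ is a harmless repackaging rather than a different method.
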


\begin{proof}The equation $\big(f(g^+),q(g^-)\big)\notin\posgen_\simple(\sf G)$ with variables $f$ and $q$, is described by polynomials, so its complement $\mathrm G_g$ is a (non-empty) Zariski-open set of $\sf G^2$. We commence by writing, for every $\sroot\in\simple$, $$\rep_\sroot(g^n)=\Big(\pm\exp\big(\peso_\sroot(\jordan(g))\big)\Big)^n\pi_{g,\sroot}+\check P_{n},$$ where $\pi_{g,\sroot}$ is the projection with image $\zeta_\sroot(g^+)$ and kernel $\zeta_\sroot^*(g^-)$ and where the spectral radius of $\check P_n$ is $\leq\exp(n(\peso_\sroot-\sroot)(\jordan(g)))$. Thus, if $(f,q)\in\mathrm G_g$  one has $$\lim_{n\to\infty}\frac{\rep_\sroot\big(fg^nq\big)}{(\pm)^n\exp(\peso_\sroot(n\jordan(g)))}=f\pi_{g,\sroot}q.$$ By the condition on $(f,q)$, one has $\traza(f\pi_{g,\sroot}q)\neq0$, thus for big enough $n$, $\rep_\sroot(fg^nq)$ is proximal and together with the above we get that $\jordan_1\rep_\sroot\big((fg^nq)\big)-n\peso_\sroot(\jordan(g))$ converges as $n\to\infty$. Since this holds $\forall\sroot\in\simple$ the lemma follows.\end{proof}

Given $r,\eps$ positive we say that $g$ is $(r,\eps)$-\emph{proximal} if it is proximal, $$\|(g^-|g^+)\|\leq r$$ and for every $x\in\EuScript F_\t$ with $\|(g^-|x)\|\leq \eps^{-1}$ one has $d_{\EuScript F_\t}(gx,g^+)\leq\eps.$ The following is from Benoist \cite[Corollaire 6.3]{Benoist-HomRed}, a proof can also be found in S. \cite[Lemma 5.6]{quantitative}.

\begin{thm}[{Benoist \cite{Benoist-HomRed}}]\label{proxCartan}For every $\delta>0$ there exist $r,\eps>0$ such that if $g\in\sf G$ is $(r,\eps)$-proximal then $\big\|\cartan_\t(g)-\jordan_\t(g)+(g_\t^-|g^+_\t)\big\|\leq\delta.$
\end{thm}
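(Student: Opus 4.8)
The plan is to reduce the statement to a single, representation-theoretically elementary estimate in projective space and then carry out a short linear-algebra computation there. First, since $\{\peso_\sroot|_{\a_\t}\}_{\sroot\in\t}$ is a basis of $(\a_\t)^*$ by Equation \eqref{at*}, it suffices to bound $\peso_\sroot\big(\cartan_\t(g)-\jordan_\t(g)+(g^-_\t|g^+_\t)\big)$ for each $\sroot\in\t$ separately. Fixing $\sroot$ and writing $T=\rep_\sroot(g)$ for the Tits representation $\rep_\sroot:\sf G\to\PSL(V_\sroot)$ (so $\chi_{\rep_\sroot}=l_\sroot\peso_\sroot$ and $\dim V_{\chi_{\rep_\sroot}}=1$), Equations \eqref{eq:spectralrep} and \eqref{eq:normayrep} identify $l_\sroot\peso_\sroot(\cartan(g))$ with $\log\|T\|_{\rep_\sroot}$ and $l_\sroot\peso_\sroot(\jordan(g))$ with $\jordan_1(T)$, while the definition of the Gromov product in \S\ref{angulosCasa} together with Remark \ref{GromovyRep} relates $l_\sroot\peso_\sroot\big((g^-_\t|g^+_\t)\big)$ (up to sign and the factor $l_\sroot$) to $\log\sin\angle_{\rep_\sroot o}(T^+,T^-)$, where $T^+,T^-$ are the attracting line and repelling hyperplane of $T$. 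Using the notation of \S\ref{proxBenoist} and the identity $\|\pi_T\|_{\rep_\sroot}=\|v_T\|_{\rep_\sroot}\|\beta_T\|_{\rep_\sroot}/|\beta_T(v_T)|=1/\sin\angle_{\rep_\sroot o}(T^+,T^-)$, the theorem thus reduces to the claim that for every $\delta'>0$ there exist $r',\eps'>0$ so that whenever $T$ is $(r',\eps')$-proximal on $\P(V_\sroot)$ one has $\big|\log\|T\|_{\rep_\sroot}-\jordan_1(T)-\log\|\pi_T\|_{\rep_\sroot}\big|\le\delta'$.

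For this claim I would set $\lambda=\lambda_1(T)$ (so $|\lambda|=e^{\jordan_1(T)}$) and decompose $T=\lambda\pi_T+N$, where $N$ annihilates $T^+$ and restricts to $T$ on the invariant hyperplane $T^-$; since $\id-\pi_T$ is the projection onto $T^-$ along $T^+$ one gets $\|N\|\le\|T|_{T^-}\|\cdot\|\pi_T\|$, and the triangle inequality $\big|\,|\lambda|\|\pi_T\|-\|T\|\,\big|\le\|N\|$ then bounds $\big|\log\|T\|-\log|\lambda|-\log\|\pi_T\|\big|$ by $2\|N\|/(|\lambda|\|\pi_T\|)$ as soon as that ratio is at most $1/2$. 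So the whole matter is to make $\|T|_{T^-}\|$ small relative to $|\lambda|\|\pi_T\|$, and this is where the quantitative contraction enters: normalising $v_T$ by $\beta_T(v_T)=1$, for a unit vector $w\in T^-$ the line $[w+t_0v_T]$ with $t_0\asymp e^{-1/\eps'}\|\beta_T\|$ lies far enough from $T^-$ that the $(r',\eps')$-proximality hypothesis applies (the bound $\|(T^-|T^+)\|\le r'$, equivalently $\|\pi_T\|\le e^{r'}$, being used to absorb the $T$-dependent constants), so $T(w+t_0v_T)=Tw+t_0\lambda v_T$ lies within $\eps'$ of the line $[v_T]$; applying $\beta_T$, which kills $Tw\in T^-$, then pins down the relevant scalar and yields $\|Tw\|\lesssim\eps'e^{-1/\eps'}e^{O(r')}|\lambda|$, hence the same bound for $\|T|_{T^-}\|$, which is $\le\delta'|\lambda|\|\pi_T\|$ once $r'$ and then $\eps'$ are chosen small. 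To close the loop I would invoke the standard fact that $(r,\eps)$-proximality of $g$ on $\EuScript F_\t$ forces, for each $\sroot\in\t$, the required $(r',\eps')$-proximality of $T=\rep_\sroot(g)$ on $\P(V_\sroot)$ with $r'$ bounded and $\eps'\to0$ as $\eps\to0$: this is the usual dictionary between strong contraction on flag spaces and large gaps $\sroot(\cartan(g))$ in the Cartan projection (Proposition \ref{control-enderezar} and its companions), together with the fact that a large $\sroot(\cartan(g))$ makes the gap between the two largest singular values of $T$ large, whence $T$ is strongly proximal on $\P(V_\sroot)$.

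I expect the main obstacle to be the bound on $\|T|_{T^-}\|$: proximality only controls the spectral radius of $T|_{T^-}$ (it equals $|\lambda_2(T)|$), not its operator norm after a single application of $T$, so one genuinely has to use the quantitative contraction to squeeze a macroscopic family of test lines $[w+t_0v_T]$ into a tiny neighbourhood of $T^+$ and then convert this into an operator-norm estimate, the delicate point being the choice of $t_0$ and the control of the scalar coming out of the relation $\beta_T(Tw)=0$. A secondary, more routine technical point — which I would treat as known, citing Benoist \cite{Benoist-HomRed} and S. \cite{quantitative} — is the passage from $(r,\eps)$-proximality on the flag space $\EuScript F_\t$ to quantitative proximality of the Tits representations on the projective spaces $\P(V_\sroot)$.
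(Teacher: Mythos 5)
The paper does not prove this statement: it is imported from Benoist \cite{Benoist-HomRed} (Corollaire 6.3), with a pointer to \cite[Lemma 5.6]{quantitative}. Your argument is, in substance, a correct reconstruction of that proof: reduce to the Tits representations via the basis $\{\peso_\sroot|\a_\t\}_{\sroot\in\t}$ of $(\a_\t)^*$, and for $T=\rep_\sroot(g)$ establish $\log\|T\|=\log|\lambda_1(T)|+\log\|\pi_T\|+O(\delta')$ by writing $T=\lambda_1\pi_T+N$ and showing $\|N\|$ is negligible against $|\lambda_1(T)|\,\|\pi_T\|$. Your mechanism for bounding $\|T|_{T^-}\|$ — pushing the test lines $[w+t_0v_T]$, with $t_0$ chosen just above the contraction threshold, into an $\eps'$-neighbourhood of $T^+$ and converting this into an operator-norm bound using $\|\pi_T\|\le e^{r'}$ — is exactly the right one, and you correctly identify the crux (proximality controls the spectral radius of $T|_{T^-}$, not its norm).

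Two caveats. First, the sign: your computation proves $\big\|\cartan_\t(g)-\jordan_\t(g)-(g^-_\t|g^+_\t)\big\|\le\delta$, with a minus sign, and this is the correct relation — one always has $\peso_\sroot(\cartan(g))\ge\peso_\sroot(\jordan(g))$ (operator norm dominates spectral radius) and the Gromov product pairs nonnegatively with fundamental weights by definition, so the plus sign in the displayed statement would force $\|(g^-_\t|g^+_\t)\|\lesssim\delta$, which $(r,\eps)$-proximality with fixed $r$ does not give. Your "up to sign" hedge conceals this; a complete write-up must commit to the minus sign (the plus in the statement is a transcription slip from the source). Second, the transfer you defer to the literature is not purely formal: the hypothesis gives contraction only for flags of $\EuScript F_\t$ at controlled Gromov product from $g^-_\t$, whereas your test lines $[w+t_0v_T]$ are generic points of $\P(V_\sroot)$, not in the image of the equivariant embedding. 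The standard chain is: flag contraction forces $\sroot(\cartan(g))$ large for $\sroot\in\t$, hence a large first singular gap for $T$, which together with $\|\pi_T\|\le e^{Cr}$ yields the projective $(r',\eps')$-proximality your computation uses. Quoting this is legitimate, but it is where the remaining content of Benoist's Lemme 6.2 lives and should be stated as such.
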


\begin{lemma}\label{contraccionproximal} There exists $C$ only depending on $\sf G$ such that for every $\t$-proximal $g\in\sf G$ and a flag $B\in\EuScript F_{\t}$ transverse to $g^-$ one has $$d(gB,g^+)\leq Ce^{(g^-|B)}e^{-\min\{\sroot(\jordan(g)):{\sroot\in\t}\}+\|(g^-|g^+)\|}.$$
\end{lemma}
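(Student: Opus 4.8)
The plan is to reduce the statement to an estimate for proximal linear maps on the projective spaces $\P(V_\sroot)$ of the Tits representations $\rep_\sroot$ ($\sroot\in\t$), and to prove that estimate directly using the norms $\|\,\|_{\rep_\sroot o}$. Since the embedding $\EuScript F_\t\hookrightarrow\prod_{\sroot\in\t}\P(V_\sroot)$, $x\mapsto(\zeta_{\rep_\sroot}(x))_{\sroot}$, is bi-Lipschitz for the induced metric, and since $\peso_\sroot\big((x|y)\big)=-\log\sin\angle_{\rep_\sroot o}(\zeta_{\rep_\sroot}x,\zeta^*_{\rep_\sroot}y)$, it suffices to prove, for each fixed $\sroot\in\t$, a bound $d_{\P(V_\sroot)}\big(\rep_\sroot(g)\zeta_{\rep_\sroot}(B),\rep_\sroot(g)^+\big)\le C_\sroot\,e^{\peso_\sroot((g^-|B))}e^{-\sroot(\jordan(g))+\peso_\sroot((g^-|g^+))}$ and then take the maximum over $\sroot\in\t$ (using $\peso_\sroot(\,\cdot\,)\lesssim\|\,\cdot\,\|$ on $\a_\t$ and $\sroot(\jordan(g))\ge\min_{\tau\in\t}\tau(\jordan(g))$). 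Fixing $\sroot$, I lift $\rep_\sroot$ to $\SL(V_\sroot)$ and abbreviate $\phi=\rep_\sroot(g)$, $[v]=\phi^+$, $\ker\beta=\phi^-$, normalised so that $\|v\|_{\rep_\sroot o}=\|\beta\|_{\rep_\sroot o}=1$; I take $w$ a unit representative of $\zeta_{\rep_\sroot}(B)$, which is transverse to $\ker\beta$. By Equation \eqref{eq:spectralrep} the top eigenvalue $\lambda_1$ of $\phi$ has modulus $e^{\chi_{\rep_\sroot}(\jordan(g))}$, while $|\beta(v)|=e^{-\peso_\sroot((g^-|g^+))}$ and $|\beta(w)|=e^{-\peso_\sroot((g^-|B))}$.

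Writing $w=cv+w'$ with $c=\beta(w)/\beta(v)$ and $w'\in\ker\beta$, I have $\phi w=c\lambda_1 v+\phi w'$ with $\phi w'\in\ker\beta$. Since the distance from $\phi w$ to $\R v$ is at most $\|\phi w-c\lambda_1 v\|=\|\phi w'\|$, and $\|\phi w\|\ge|\beta(\phi w)|=|\lambda_1|\,|\beta(w)|=e^{\chi_{\rep_\sroot}(\jordan(g))-\peso_\sroot((g^-|B))}$, one gets $d_{\P(V_\sroot)}(\phi w,[v])=\sin\angle(\phi w,v)\le\|\phi w'\|\,e^{-\chi_{\rep_\sroot}(\jordan(g))}e^{\peso_\sroot((g^-|B))}$; moreover $\|w'\|\le\|w\|+|c|\,\|v\|\le 1+e^{\peso_\sroot((g^-|g^+))-\peso_\sroot((g^-|B))}\le 2e^{\peso_\sroot((g^-|g^+))}$. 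It remains to bound $\|\phi w'\|$ for $w'$ in the repelling hyperplane. For this I would use the singular value decomposition $\phi=\sigma_1\,\mathbf u_1\langle\mathbf v_1,\cdot\rangle+Q$ with respect to $\|\,\|_{\rep_\sroot o}$, where $\sigma_1=e^{\chi_{\rep_\sroot}(\cartan(g))}$ and $\|Q\|\le\sigma_2\le e^{(\chi_{\rep_\sroot}-\sroot)(\cartan(g))}$ (bounds on the first two singular values of $\rep_\sroot(\exp\cartan(g))$, using $\dim V_{\chi_{\rep_\sroot}}=1$ and $\t_{\rep_\sroot}=\{\sroot\}$), so that $\|\phi w'\|\le\sigma_1|\langle w',\mathbf v_1\rangle|+\sigma_2\|w'\|$. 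The top right singular direction $\mathbf v_1$ is the attracting direction of the proximal map $\phi^*$, which has singular ratio $\sigma_2/\sigma_1\le e^{-\sroot(\cartan(g))}$ and transversality $\sin\angle((\phi^*)^+,(\phi^*)^-)=|\beta(v)|=e^{-\peso_\sroot((g^-|g^+))}$; the standard quantitative proximality estimates (Benoist \cite{Benoist-HomRed}, Bochi--Potrie--S. \cite{BPS}, S. \cite{quantitative}; cf.\ Theorem \ref{proxCartan}) then show that $\mathbf v_1$ is almost orthogonal to $\ker\beta$, i.e.\ $|\langle w',\mathbf v_1\rangle|\lesssim\|w'\|\,e^{-\sroot(\cartan(g))}e^{\peso_\sroot((g^-|g^+))}$, and hence $\|\phi w'\|\lesssim e^{(\chi_{\rep_\sroot}-\sroot)(\cartan(g))}e^{2\peso_\sroot((g^-|g^+))}$.

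Combining, $d_{\P(V_\sroot)}(\phi w,[v])\lesssim e^{(\chi_{\rep_\sroot}-\sroot)(\cartan(g))-\chi_{\rep_\sroot}(\jordan(g))}\,e^{\peso_\sroot((g^-|B))}e^{2\peso_\sroot((g^-|g^+))}$, and I finish by passing from $\cartan(g)$ to $\jordan(g)$: the same circle of estimates (in the form of Theorem \ref{proxCartan}) gives that $\chi_{\rep_\sroot}(\cartan(g))-\chi_{\rep_\sroot}(\jordan(g))$ and $\sroot(\cartan(g))-\sroot(\jordan(g))$ are each bounded, up to a universal multiplicative constant, by $\peso_\sroot((g^-|g^+))$, so that $(\chi_{\rep_\sroot}-\sroot)(\cartan(g))-\chi_{\rep_\sroot}(\jordan(g))\le-\sroot(\jordan(g))+C'\peso_\sroot((g^-|g^+))$; the per-$\sroot$ bound follows and one concludes by maximising over $\sroot\in\t$. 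The step I expect to be the main obstacle is exactly this quantitative-proximality input in the last two paragraphs: controlling, with constants depending only on $\sf G$, both the position of $\mathbf v_1$ (equivalently of $g^\pm_\t$) relative to the Cartan singular directions of $\rep_\sroot(g)$ and the discrepancy between $\cartan(g)$ and $\jordan(g)$, uniformly in the single transversality parameter $\|(g^-_\t|g^+_\t)\|$ — these estimates carry a mild circularity that has to be bootstrapped, while everything else is routine linear algebra and bookkeeping of exponents.
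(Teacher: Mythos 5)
Your argument is exactly the paper's: the printed proof is a one‑line citation of Theorem \ref{proxCartan} together with the linear‑algebra contraction estimate of Bochi--Potrie--S. \cite[Lemma A.6]{BPS}, and what you have written is a correct inline reconstruction of precisely those two ingredients, including the standard bootstrap that resolves the circularity you flag (the inequality is vacuous unless $\min_{\sroot\in\t}\sroot(\jordan(g))-\|(g^-|g^+)\|-\|(g^-|B)\|$ is large, and in that regime $g$ is automatically $(r,\eps)$-proximal for the $r,\eps$ of Theorem \ref{proxCartan}). The only discrepancy is that your exponent comes out as a constant multiple of $\|(g^-|g^+)\|$ rather than $\|(g^-|g^+)\|$ itself; this weaker form is what any proof along these lines produces and is all that is ever used in the paper (e.g.\ in Lemmas \ref{atraco} and \ref{gecuadrado}).
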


\begin{proof} See, for example, Bochi-Potrie-S. \cite[Lemma A.6]{BPS}.\end{proof}

We record the following lemma that will be needed in \S\,\ref{cocycleviewpoint}.

\begin{lemma}\label{esquiva}Consider a finite collection of irreducible representations $\{\rep_i:\sf G\to V_i\}_{i\in I}$. For each $i\in I$ fix $v_i\in V_i-\{0\}$ and a non-empty finite collection of strict subspaces $\{W_k^i:k\in F_i\}$ on $V_i$. Then $\big\{ g\in\sf G:\forall i\in I\ gv_i\notin\bigcup_{k\in F_i} W_k\big\}$ is Zariski-open and non-empty.
\end{lemma}

\begin{proof} The lemma follows as the required equations are described by polynomials. Moreover, since as $\sf G$ is a group, Zariski-connected implies Zariski-irreducible; thus a finite familly of Zariski-open non-empty subsets has non-empty intersection.\end{proof}
 

\subsection{Zariski-dense sub-semigroups of $\sf G$}

Let $\grupo<\sf G$ be a semi-group. Its \emph{limit cone} is $\Bcone_\grupo=\overline{\{\R_+\jordan(g):g\in\grupo\}}\subset\a^+.$ One has the following fundamental result:

\begin{thm}[Benoist \cite{limite,benoist2}]\label{interiorsi} Let $\grupo<\sf G$ be a Zariski-dense sub-semi-group, then $\Bcone_\grupo$ is convex and has non-empty interior. Moreover, the group spanned by $\{\jordan(g):g\in\grupo\}$ is dense in $\a$.
\end{thm}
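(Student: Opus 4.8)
The plan is to recall how Benoist \cite{limite,benoist2} proves Theorem \ref{interiorsi}, since this is a citation and we are simply sketching the standard argument. Write $\grupo<\sf G$ Zariski-dense. The proof has three essentially separate pieces: (a) convexity of $\Bcone_\grupo$; (b) non-empty interior of $\Bcone_\grupo$; (c) density of the group generated by $\{\jordan(g):g\in\grupo\}$ in $\a$. The central tool throughout is Theorem \ref{CR} (Benoist's asymptotic additivity): if $g,h\in\sf G$ are transversally $\simple$-proximal, then $\jordan(g^nh^n)-\jordan(g^n)-\jordan(h^n)\to\crossg_\simple(g,h)\in\a$. Combined with the fact that $\jordan$ is homogeneous under powers, $\jordan(g^n)=n\jordan(g)$, this says that $\jordan(g^nh^n)/n$ is close, for large $n$, to $\jordan(g)+\jordan(h)$; so the limit cone is "almost closed under addition of Jordan projections."

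For convexity (a): given $g,h\in\grupo$ loxodromic with $\jordan(g),\jordan(h)\in\inte\a^+$, one first needs to arrange that $g$ and $h$ (or suitable conjugates/products inside $\grupo$) are transversally $\simple$-proximal; this is where Zariski-density enters, via Lemma \ref{esquiva}-type genericity statements (no finite union of proper subvarieties exhausts $\grupo$ because $\overline{\grupo}_{\sf Z}=\sf G$), so one can find $f\in\grupo$ with $gf$ and $h$ transversally proximal and with essentially the same Jordan projection as $g$ up to a controlled error. Applying Theorem \ref{CR} to $g^n$ and $h^n$ inside $\grupo$ shows $\jordan(g)+\jordan(h)$ is a limit of $\frac1N\jordan(\text{element of }\grupo)$, hence lies in $\Bcone_\grupo$. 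Iterating, all $\N$-combinations $p\,\jordan(g)+q\,\jordan(h)$ lie in $\Bcone_\grupo$, and rescaling plus taking closures yields the whole segment between $\R_+\jordan(g)$ and $\R_+\jordan(h)$; so $\Bcone_\grupo$ is convex.

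For non-empty interior (b): the real content. If $\Bcone_\grupo$ had empty interior it would lie in a proper subspace, or more precisely in a proper "face" cut out by vanishing of some root or fundamental-weight combination on all Jordan projections — i.e. there is $0\ne\varphi\in\a^*$ with $\varphi(\jordan(g))=0$ for all $g\in\grupo$. One then shows this forces $\grupo$ into a proper algebraic subgroup, contradicting Zariski-density. Concretely: by (a), $\Bcone_\grupo$ is convex, so if it has empty interior it spans a proper subspace $\gola\subsetneq\a$; pick $\varphi\in\gola^\perp$. Using the spectral interpretation \eqref{eq:spectralrep} of weights, vanishing of a suitable such $\varphi$ on all Jordan projections translates into multiplicative relations among eigenvalues of $\rep(g)$ for an appropriate representation $\rep$ of $\sf G$, valid for all $g\in\grupo$; these relations define a proper algebraic subgroup of $\sf G$ containing $\grupo$, contradiction. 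This last implication — that a polynomial identity among eigenvalues holding on all of $\grupo$ must hold on $\overline{\grupo}_{\sf Z}=\sf G$, and that on $\sf G$ it fails — is the main obstacle and the place where one really must invoke the structure theory (it is essentially the content of \cite{limite} and uses the existence of enough loxodromic, in fact transversally proximal, elements in a Zariski-dense semigroup, which itself is a theorem of Benoist \cite{Benoist-HomRed,benoist2}).

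For density of the generated group (c): rather than only tracking $\R_+\jordan(g)$ one tracks the actual vectors $\jordan(g)\in\a$. Theorem \ref{CR} now gives $\jordan(g^nh^n)\equiv n\jordan(g)+n\jordan(h)+\crossg_\simple(g,h)+o(1)$, so differences of Jordan projections of elements of $\grupo$ accumulate at the cross-ratio cocycle values $\crossg_\simple(g,h)$; and Lemma \ref{pares} lets one insert generic $f,q\in\grupo$ to realize $\jordan(g^mfh^nq)-m\jordan(g)-n\jordan(h)$ as a genuine (not merely asymptotic) limit, so the subgroup $\langle\jordan(g):g\in\grupo\rangle$ contains, in its closure, all these cross-ratio values together with $\Z\text{-span}\{\jordan(g)\}$. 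One then shows the closed subgroup $\overline{\langle\jordan(g):g\in\grupo\rangle}$ is $\Weyl$-invariant-enough and Zariski-dense-enough in $\a$ that, were it proper, $\grupo$ would again be trapped in a proper algebraic subgroup — contradicting Zariski-density. Thus it is all of $\a$. Since each of (a), (b), (c) is exactly a theorem of Benoist cited in the statement, the proof in the paper will simply be the citation; the sketch above is the shape of that argument.
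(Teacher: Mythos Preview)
Your anticipation is correct: the paper does not prove this theorem at all --- it is stated with attribution to Benoist \cite{limite,benoist2} and no proof is given. Your closing remark that ``the proof in the paper will simply be the citation'' is exactly right.

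As for the sketch itself, it is a reasonable outline of Benoist's argument, with one caveat on part (b). The mechanism you describe --- that a linear relation $\varphi(\jordan(g))=0$ for all $g\in\grupo$ yields multiplicative eigenvalue relations cutting out a proper algebraic subgroup --- is not quite how the argument runs. Rather, if $\Bcone_\grupo\subset\ker\varphi$, then Theorem~\ref{CR} forces $\varphi\big(\crossg_\simple(g,h)\big)=0$ for \emph{all} transversally loxodromic pairs $g,h\in\grupo$; one then argues that the cross-ratio, viewed as an $\a$-valued function on configurations of flags, is nowhere locally constant in the relevant direction, so Zariski-density of the set of attracting/repelling flags of $\grupo$ yields a contradiction. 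This is the same spirit as the paper's own Proposition~\ref{hiperplanos} for the affine ratio. Your version conflates the limit-cone statement with a different (also true) fact about algebraic constraints, but the actual lever is the non-constancy of the cross-ratio.
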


We will moreover need the following:

\begin{prop}[{Benoist \cite[Proposition 5.1]{limite}}]\label{subsemi} Let $\grupo<\sf G$ be a Zariski-dense sub-semi-group and let $\scr C\subset\Bcone_\grupo$ be a closed convex cone with non-empty interior. Then there exists a Zariski-dense sub-semi-group $\grupo'<\grupo$ such that $\Bcone_{\grupo'}=\scr C$. If $\scr C$ is $\ii$-invariant then $\grupo'$ can be furthermore chosen to be a group. \end{prop}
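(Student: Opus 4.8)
The plan is to follow Benoist's construction in \cite[Proposition~5.1]{limite}: realize $\scr C$ as the limit cone of a free Schottky sub-semigroup of $\grupo$ whose generators are \emph{very proximal} elements with Jordan directions dense in the open cone $\inte\scr C$ and lying deep enough inside $\scr C$ to absorb the (bounded) additivity defects produced by taking products. I would treat the two inclusions $\scr C\subseteq\Bcone_{\grupo'}$ and $\Bcone_{\grupo'}\subseteq\scr C$ separately. The first is cheap and falls out of the construction: if $\{u_n\}_{n\ge1}$ is a fixed countable dense subset of $\inte\scr C$ and each $\grupo'$ contains a loxodromic $g_n$ whose Jordan direction $\R_+\jordan(g_n)$ is within $2^{-n}$ of $\R_+u_n$, then since $\jordan(g_n^k)=k\,\jordan(g_n)$ the closed cone $\Bcone_{\grupo'}$ contains every $\R_+\jordan(g_n)$, hence (these directions coming $2^{-n}$-close to the dense family $\{\R_+u_n\}$) it contains $\overline{\{\R_+u_n:n\ge1\}}=\scr C$. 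All the content is therefore in the reverse inclusion, which — $\scr C$ being closed — amounts to proving that $\jordan(\g)\in\scr C$ for \emph{every} $\g\in\grupo'$.

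I would build the generators recursively. At stage $n$, using Benoist's description of Zariski-dense semigroups (loxodromic Jordan directions being dense in $\Bcone_\grupo\supseteq\scr C$, and attracting–repelling flags of loxodromics sweeping out a Zariski-dense set, even with the Jordan direction constrained near a prescribed point), I pick a loxodromic $h_n\in\grupo$ with $\jordan(h_n)\in\inte\scr C$ pointing within $2^{-n}$ of $u_n$ and with flags $(h_n^+,h_n^-)$ in as general position as I wish relative to $(h_i^{\pm})_{i<n}$, so that all pairwise Gromov products — hence, by Theorem~\ref{CR}, all the $\crossg$-terms among these elements — are as small as needed; I also arrange the first finitely many $h_i$ to already generate a Zariski-dense semigroup, which is possible since $\sf G$ is connected and $\grupo$ is Zariski-dense. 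I then set $g_n:=h_n^{M_n}$ with $M_n\uparrow\infty$. For $M_n$ growing fast enough and the Gromov products small enough, $\grupo':=\langle g_n:n\ge1\rangle$ is a free Schottky sub-semigroup: every non-trivial $\g$ has a unique reduced word $\g=g_{n_1}\cdots g_{n_\ell}$ and is loxodromic.

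I would then control $\jordan(\g)$ by iterating Theorem~\ref{CR} (equivalently Lemma~\ref{pares}) together with the quantitative proximality inputs (Theorem~\ref{proxCartan} and Lemma~\ref{contraccionproximal}), obtaining $\big\|\jordan(\g)-\sum_{j=1}^{\ell}\jordan(g_{n_j})\big\|\le C\ell$ with $C$ governed by the fixed (small) Gromov products. Since $\scr C\subseteq\a^+$ is salient there is $\delta_0>0$ with $\|\sum_j v_j\|\ge\delta_0\sum_j\|v_j\|$ for $v_j\in\scr C$, and because $\|\jordan(g_{n_j})\|=M_{n_j}\|\jordan(h_{n_j})\|$, the direction $\R_+\jordan(\g)$ lies within $O\!\big(C/(\delta_0\min_j M_{n_j})\big)$ of $\mathrm{cone}\{\jordan(h_{n_j}):j\}\subseteq\inte\scr C$. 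Choosing, along the recursion, the Gromov products ever smaller and the powers $M_n$ ever larger — to compensate for the later directions $\jordan(h_n)$ approaching $\bord\scr C$ — makes this slack smaller than the margin of $\mathrm{cone}\{\jordan(h_{n_j}):j\}$ inside $\scr C$, so $\jordan(\g)\in\scr C$; hence $\Bcone_{\grupo'}=\scr C$ and $\grupo'$ is Zariski-dense. For the group statement, when $\scr C$ is $\ii$-invariant I would run the same construction with the symmetric generating set $\{g_n^{\pm1}\}$: then $\grupo'$ is a free group, and since $\jordan(g_n^{-1})=\ii(\jordan(g_n))\in\scr C$ and $(g_n^{-1})^{\pm}=g_n^{\mp}$, the same additivity estimate applies verbatim to reduced words in the $g_n^{\pm1}$.

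The hard part will be precisely this diagonal calibration: arranging the powers $M_n$ and the transversalities so that the limit cone is \emph{exactly} $\scr C$ and not merely a small neighborhood of it, i.e. keeping every $\jordan(\g)$ — including those of short words, which genuinely pick up the bounded $\crossg$-defects — inside the closed cone $\scr C$ while the prescribed generator directions still exhaust $\inte\scr C$. Everything else is bookkeeping built on Theorems~\ref{interiorsi}, \ref{CR}, \ref{proxCartan} and Lemma~\ref{contraccionproximal}.
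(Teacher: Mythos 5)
The paper offers no proof of this proposition --- it is quoted directly from Benoist --- and your plan is exactly Benoist's: a Schottky sub-semigroup built from high powers of loxodromic elements of $\grupo$ whose Jordan directions exhaust $\inte\scr C$. The part you single out as hard does close along the lines you sketch: keeping all pairwise Gromov products of the flags $h_n^{\pm}$ below a fixed $r$ (achievable at each stage, since only finitely many constraints are present when $h_n$ is chosen), the additivity defect of $\jordan$ over a reduced word of length $\ell$ is at most $C(r)\ell$, while saliency of $\a^+$ gives $\big\|\sum_j\jordan(g_{n_j})\big\|\geq\delta_0\sum_jM_{n_j}\|\jordan(h_{n_j})\|$; concavity of the distance to $\bord\scr C$ on the convex set $\scr C$ shows the barycentric direction of $\sum_j\jordan(g_{n_j})$ stays at angular distance at least the weighted average of the angular distances $\eta_{n_j}$ of the rays $\R_+\jordan(h_{n_j})$ from $\bord\scr C$, so the single per-generator condition $M_n\|\jordan(h_n)\|\eta_n\geq C(r)/\delta_0$ --- imposable at stage $n$ because $M_n$ is chosen last --- keeps every $\jordan(\g)$, $\g\in\grupo'$, inside $\scr C$.

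The genuine gap is the Zariski density of $\grupo'$. You arrange that $h_1,\dots,h_k$ generate a Zariski-dense sub-semigroup, but $\grupo'$ is generated by the powers $h_n^{M_n}$, and Zariski density is not inherited by powers for free: the Zariski closure of the semigroup generated by $h_1^{M_1},\dots,h_k^{M_k}$ could a priori be a proper algebraic subgroup even though the $h_i$ generate a dense one. This is the other half of Benoist's argument and needs its own step; for instance, since the Zariski closure of a sub-semigroup is a group, the closure of $\grupo'$ contains for each $n$ the identity component of the Zariski closure of the cyclic semigroup generated by $h_n$ (a finite-index statement, hence insensitive to passing to powers), and one must then show that these subgroups --- equivalently, loxodromic elements whose attracting and repelling flags have been placed in sufficiently general (or Zariski-dense) position --- generate $\sf G$. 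Without some such step the final assertion that $\grupo'$ is Zariski-dense is unsupported. A smaller point: in the group case you use $g_n^{-1}=h_n^{-M_n}$, which belongs to $\grupo$ only when $\grupo$ is itself a group (a free sub-semigroup contains no nontrivial subgroup at all); this is how the statement is applied in the paper, but your argument should make that hypothesis explicit.
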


Moreover, the proof of Benoist \cite[Lemma 4.3]{limite} gives:

\begin{obs}\label{G'schottky}If $g\in\grupo$ is loxodromic and $\scr C$ is a convex closed cone with non-empty interior and $\jordan(g)\in\scr C\subset\Bcone_{\grupo}$, the semi-group $\grupo'$ from the statement can be chosen to be a Schottky semi-group that contains a high enough power of $g$.
\end{obs}

\subsection{Thermodynamics}\label{thermopre}

We begin by recalling some facts on Thermodynamical formalism over hyperbolic systems developed by Bowen, Ruelle, Parry, Pollicott among others, see for example  \cite{bowenruelle} and Parry-Pollicott \cite{parrypollicott}.

We work with \emph{metric-Anosov flows}, also called Smale flows by Pollicott \cite{smaleflows} who transferred to this more general setting the classical theory carried out for hyperbolic systems. We will only state some needed results and refer the reader to, for example, Pollicott \cite{smaleflows} or  Bridgeman-Canary-Labourie-S. \cite{pressure} for the definition.

Let $X$ be a compact metric space equipped with a continuous flow $\phi:X\to X$. The space of $\phi$-invariant probability measures on $X$ is denoted by $\medidas^\phi.$ The \emph{metric entropy} of $m\in\medidas^\phi$ will be denoted by $h(\phi,m).$ Via the variational principle, we will define the  \emph{pressure} of a function $f:X\to\R$ as \begin{equation}\label{pressuredefi}P(f)= \sup_{m\in\medidas^{\phi}} h(\phi,m)+\int_X fdm.\end{equation}  A probability measure $m$ realizing the $\sup$ is called an \emph{equilibrium state} of $f.$

Two continuous maps $f,g:X\to V$ are \emph{Liv\v sic-cohomolo\-gous} if there exists a $U:X\to V,$ of class $\mathrm C^1$ in the direction of the flow\footnote{i.e. such that if for every $x\in X,$ the map $t\mapsto U(\phi_tx)$ is of class $\mathrm C^1,$ and the map $x\mapsto \left.\frac{\partial }{\partial t}\right|_{t=0}U(\phi_tx)$ is continuous}, such that for all $x\in X$ one has $$f(x)-g(x)=\left.\frac{\partial}{\partial t}\right|_{t=0} U(\phi_tx).$$ The $f$-\emph{period} of a periodic orbit $\tau$ with period $p_\phi(\tau)$ is $$\ell_\tau(f)=\int_\tau f=\int_0^{p(\tau)}f(\phi_sx)ds\in V.$$

Let $f:X\to\R_{>0}$ be continuous. For every $x\in X$ the function $\kappa_f:X\times\R\to \R,$ defined by $\kappa(x,t)=\int_0^tf(\phi_sx)ds,$ is an increasing homeomorphism of $\R.$ There is thus a continuous function $\alpha:X\times\R\to\R$ such that for all $(x,t)\in X\times\R,$

$$\alpha\big(x,\kappa(x,t)\big)=\kappa\big(x,\alpha(x,t)\big)=t.$$

\noindent
The \emph{reparametrization} of $\phi$ by $f:X\to\R_{>0}$ is the flow $\phi^f=(\phi^f_t:X \to X)_{t\in\R}$ defined, for all $(x,t)\in X\times\R$ by $$\phi^f_t(x)=\phi_{\alpha(x,t)}(x).$$ 

The \emph{Abramov transform} of $m\in\medidas^\phi$ is the measure $m^\#\in\medidas^{\phi^f}$ defined by \begin{equation}\label{abramovdef}m^\#=\frac{f\cdot m}{\int fdm}.\end{equation}

We assume from now on that $\phi$ is a H\"older-continuous metric-Anosov. In this situation, Liv\v sic proved that the Liv\v sic-cohomology class of a H\"older-continuous function is uniquely determined by its periods. Moreover, if $f$ is real-valued and H\"older-continuous, it has a unique equilibrium state, denoted by $\gibbs_f$. We also let $\mmax_\phi$ be the unique probability measure maximizing entropy of $\phi$.

Recall that the co-variance is defined, for H\"older-continuous $g,h:X\to\R$ with 0$=\int gd\gibbs_f=\int hd\gibbs_f$, by $$\covar_f(g,h):=\lim_{t\to\infty}\int_X\frac1t\left(\int_0^tg(\phi_sx)ds\right)\left(\int_0^th(\phi_sx)ds\right)d\gibbs_f,$$ and the variance by $\Var_f(g)=\covar_f(g,g)\geq0$.

\begin{thm}[{Parry-Pollicott \cite[Prop. 4.10,4.11]{parrypollicott}}]\label{pder} Let $f,g:X\to\R$ be H\"older continuous. Then one has $(\partial/\partial t)|_{t=0} P(f+tg)=\int gd\gibbs_f.$ If $\int g d\gibbs_f=0$ then $$\left.\frac{\partial^2 P(f+tg)}{\partial t^2}\right|_{t=0}=\Var_f(g),$$ and if $\Var_f(g)=0$ then $g$ is Liv\v sic-cohomologous to zero.
\end{thm}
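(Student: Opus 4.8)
\emph{First derivative.} The plan is to read this identity straight off the variational principle. Evaluating \eqref{pressuredefi} at the equilibrium state $\gibbs_f$ gives, for every $t$,
$$P(f+tg)\ \geq\ h(\phi,\gibbs_f)+\int(f+tg)\,d\gibbs_f\ =\ P(f)+t\!\int g\,d\gibbs_f,$$
and evaluating it instead at $\gibbs_{f+tg}$ gives $P(f)\geq P(f+tg)-t\int g\,d\gibbs_{f+tg}$. So for $t>0$ one has $\int g\,d\gibbs_f\leq t^{-1}\big(P(f+tg)-P(f)\big)\leq\int g\,d\gibbs_{f+tg}$, with the inequalities reversed for $t<0$. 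Uniqueness of equilibrium states, upper semicontinuity of the metric entropy and weak-$*$ compactness of $\medidas^\phi$ make $s\mapsto\gibbs_{f+sg}$ weak-$*$ continuous, so letting $t\to0$ squeezes both one-sided derivatives (which exist by convexity of $t\mapsto P(f+tg)$, itself a consequence of H\"older's inequality) to $\int g\,d\gibbs_f$.

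\emph{Symbolic reduction.} For the second-order statements I would first pass to a Markov coding of the H\"older metric-Anosov flow $\phi$ (Bowen's construction, carried out for Smale/metric-Anosov flows by Pollicott \cite{smaleflows}): a subshift of finite type $(\Sigma,\sigma)$, a positive H\"older roof $r$, and a bounded-to-one semiconjugacy from the suspension $\Sigma_r$ onto $X$, under which H\"older functions pull back to H\"older functions and closed orbits match closed orbits. Writing $\Delta h$ for the orbit integral over a return time of a pulled-back function $h$, the Bowen--Ruelle formula says that $s(t):=P(f+tg)$ is the unique zero of $s\mapsto P_\sigma(\Delta f+t\Delta g-sr)$; since $\partial_sP_\sigma(\cdot)=-\int r\,d\mu<0$, the implicit function theorem reduces the analyticity of $t\mapsto P(f+tg)$ and the computation of its first two derivatives to the analyticity of subshift pressures in their potentials, the translation being mediated by the Abramov normalization $\mu\mapsto\mu^\#$ (which relates invariant measures, and asymptotic variances, of $\sigma$ and of $\phi$).

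\emph{Transfer operator, variance, and the degenerate case.} After the usual H\"older reduction to the one-sided shift, the Ruelle operator $\mathcal{L}_\psi$ on $C^\alpha(\Sigma^+)$ has, by the Ruelle--Perron--Frobenius theorem \cite{bowenruelle}, a simple isolated leading eigenvalue $e^{P_\sigma(\psi)}$ with strictly positive eigenfunction, and $\psi\mapsto\mathcal{L}_\psi$ is real-analytic into bounded operators; Kato's perturbation theory then makes $t\mapsto P_\sigma(\psi+t\chi)=\log\lambda_t$ real-analytic, with $\partial_t\log\lambda_t|_0=\int\chi\,d\gibbs_\psi$ and, when $\int\chi\,d\gibbs_\psi=0$,
$$\partial_t^2\log\lambda_t\big|_0=\int\chi^2\,d\gibbs_\psi+2\sum_{n\geq1}\int\chi\cdot(\chi\circ\sigma^n)\,d\gibbs_\psi=\lim_{N\to\infty}\frac1N\int\Big(\sum_{i=0}^{N-1}\chi\circ\sigma^i\Big)^2d\gibbs_\psi,$$
the series converging by the spectral gap. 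Differentiating $P_\sigma(\Delta f+t\Delta g-s(t)r)=0$ twice and feeding in these formulas expresses $s''(0)$ through the asymptotic variance of $\Delta g$ for $\sigma$, which the suspension-to-base dictionary rewrites as the continuous-time variance $\Var_f(g)$. Finally, if $\Var_f(g)=0$ then the asymptotic variance of $\Delta g$ vanishes, forcing $\Delta g$ to be an $L^2$-coboundary for $\sigma$; since measurable coboundaries of H\"older functions over mixing subshifts are H\"older coboundaries by Liv\v sic regularity, every periodic sum of $\Delta g$, hence every $\phi$-periodic integral $\ell_\tau(g)$, vanishes, and the Liv\v sic theorem for the flow gives that $g$ is Liv\v sic-cohomologous to zero.

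\emph{Main obstacle.} The substantive points are the analyticity input — which genuinely rests on the spectral gap of $\mathcal{L}_\psi$, hence on the symbolic reduction — and the variance bookkeeping: one must carefully track how the continuous-time object $\Var_f(g)$ corresponds, under the suspension and the reparametrization hidden in $s(t)$, to the discrete asymptotic variance on $\Sigma$, and it is there, together with the implication ``variance zero $\Rightarrow$ coboundary'', that the real work of the proof is concentrated.
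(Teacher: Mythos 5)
Your proposal is correct and is essentially the standard argument of the cited source: the paper offers no proof of its own here, deferring entirely to Parry--Pollicott, and your route (variational-principle squeeze plus continuity of $t\mapsto\gibbs_{f+tg}$ for the first derivative; Markov coding, the Ruelle--Perron--Frobenius spectral gap and perturbation of the leading eigenvalue for analyticity and the second derivative; Liv\v sic regularity and the Liv\v sic theorem for the degenerate case) is exactly how that reference proceeds. The one place you rightly flag as needing care --- matching the discrete asymptotic variance of $\Delta g$ under the Abramov normalization with the continuous-time $\Var_f(g)$, using that $s'(0)=0$ when $\int g\,d\gibbs_f=0$ --- is handled correctly.
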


The space of H\"older-continuous functions with exponent $\alpha$ is naturally a Banach space, and by the previously mentioned result by Liv\v sic,  the space of Liv\v sic-cohomologically trivial functions is a closed subspace. The quotient $\Holder^\alpha(X)/\sim$ is thus a Banach space. Since $P$ is invariant under Liv\v sic-cohomology, Proposition \ref{pder} equips the space of (classes of) pressure zero functions for a fixed exponent $$\mathcal P^\alpha(X)=\big\{f\in\Holder^\alpha(X)/\sim:P(-f)=0\big\}$$ with a natural Riemannian metric: if $g\in\sf T_f\cal P(X)=\{g:\int gd\gibbs_f=0\}$ then we let $$\PP_f(g):=\frac{\Var_f(g)}{\int fd\gibbs_f}.$$

We now turn to concepts from Bridgeman-Canary-Labourie-S. \cite{pressure}. Similar ideas also appeared in Bonahon \cite{B-intersection}, Bridgeman \cite{martincriticos}, Burger \cite{burger}, Croke-Fathi \cite{CrokeFathi}, Fathi-Flaminio \cite{fathi-flaminio},  Katok-Knieper-Weiss \cite{Jcritico}, Knieper \cite{kni95} and McMullen \cite{McMWP}.

Let $f:X\to\R$ be H\"older-continuous and positive, consider, for $t>0,$ the finite set $\sf R_t(f)=\big\{\tau\textrm{ periodic}:\ell_f(\tau)\leq t\big\}$ and define the \emph{entropy of $f$} by $$\hh^f=\lim_{t\to\infty}\frac1t\log\# \sf R_t(f).$$ It is the topological entropy of $\phi^f$ and $\mmax_{\phi^f}=\gibbs_{-\hh^ff}^\#$ (see S. \cite{quantitative}). If $g:X\to\R$ is also H\"older continuous then in \cite{pressure} they define its \emph{dynamical intersection} with $f$ by $$\II_f(g)=\II(f,g)=\lim_{t\to\infty}\frac1{\#\sf R_t(f)}\sum_{\tau\in \sf R_t(f)}\frac{\ell_g(\tau)}{\ell_f(\tau)}.$$


The functions $\hh$ and $\II$ are well defined and vary analytically on $\Holder^\alpha(X,\R_+)$ and $\Holder^\alpha(X,\R_+)\times\Holder^\alpha(X,\R)$ respectively. If $g$ is also positive then we define its \emph{normalized dynamical intersection} with $f$ by $$\JJ_f(g)=\JJ(f,g)=\frac{\hh^g}{\hh^f}\II_f(g).$$ We have the following two rigidity results: one global and one infinitesimal.

\begin{thm}[\cite{pressure}]\label{JJ} One has $\JJ(f,g)\geq1$ and equality holds only if for every periodic orbit $\tau$ one has $\hh^f\ell_f(\tau)=\hh^g\ell_g(\tau).$ Let $(f_t)_{t\in(-\eps,\eps)}\in\Holder^\alpha(X,\R_+)$ be a $\class^2$ curve with $f_0=f,$ then $\hess_f\JJ_f(\vec f,\vec f)=\PP_f(\vec f),$ in particular $\hess_f\JJ(\vec f,\vec f)=0$ if and only if for $\forall\tau$ periodic one has \begin{equation}\label{P=0}\frac{\partial}{\partial t}\Big|_{t=0}\hh^{f_t}\ell_{f_t}(\tau)=0,\end{equation} or equivalently, $\partial^{\log}\hh$ and $\partial^{\log} f$ are Liv\v sic-cohomologous w.r.t. the flow $\phi^f$.
\end{thm}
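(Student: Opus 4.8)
The plan is to transport the whole statement to the reparametrised flow $\phi^{f}$, where it becomes a consequence of the variational principle and of the first‑ and second‑order derivative formulas for the pressure in Proposition \ref{pder}. I would first record the basic dictionary: $\phi^{g}=(\phi^{f})^{g/f}$, so $\hh^{g}=h_{\mathrm{top}}(\phi^{g})$ is the unique number with $P_{\phi^{f}}\!\big(-\hh^{g}(g/f)\big)=0$; moreover $\II_f(g)=\int(g/f)\,d\mmax_{\phi^{f}}$, and, computed along $\phi^{f}$, one has $\ell_f(\tau)=p_{\phi^{f}}(\tau)$ and $\ell_g(\tau)=\int_\tau(g/f)$.

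For the inequality I would apply the variational principle to $P_{\phi^{f}}$:
$$0=P_{\phi^{f}}\!\big(-\hh^{g}(g/f)\big)\ \geq\ h(\phi^{f},\mmax_{\phi^{f}})-\hh^{g}\!\int (g/f)\,d\mmax_{\phi^{f}}\ =\ \hh^{f}-\hh^{g}\,\II_f(g),$$
which is exactly $\JJ_f(g)=\tfrac{\hh^{g}}{\hh^{f}}\II_f(g)\geq1$. In case of equality, $\mmax_{\phi^{f}}$ is the equilibrium state of $-\hh^{g}(g/f)$; since the equilibrium state of a constant is $\mmax_{\phi^{f}}$, and a H\"older equilibrium state is unique and determines its function up to an additive constant and a Liv\v sic‑coboundary, $-\hh^{g}(g/f)$ is Liv\v sic‑cohomologous over $\phi^{f}$ to a constant $c$. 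Integrating against $\mmax_{\phi^{f}}$ and using equality gives $c=-\hh^{g}\II_f(g)=-\hh^{f}$; integrating over a periodic orbit $\tau$ gives $-\hh^{g}\ell_g(\tau)=c\,\ell_f(\tau)=-\hh^{f}\ell_f(\tau)$, i.e. $\hh^{g}\ell_g(\tau)=\hh^{f}\ell_f(\tau)$, the asserted rigidity.

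For the Hessian, write $u(t)=\hh^{f_t}$ and $\vec f=\tfrac{\partial}{\partial t}\big|_{0}f_t$. Differentiating $P\big(-u(t)f_t\big)=0$ once, using the first‑order formula of Proposition \ref{pder} and $\mmax_{\phi^{f}}=\gibbs_{-\hh^{f}f}^{\#}$, gives $\dot u(0)=-\hh^{f}\,\II_f(\vec f)$, consistent with $\tfrac{\partial}{\partial t}\big|_{0}\JJ_f(f_t)=0$ --- which in any case holds because $f$ is the global minimum of $\JJ_f$ by the first part; thus $\hess_f\JJ_f(\vec f,\vec f)=\tfrac{\partial^{2}}{\partial t^{2}}\big|_{0}\JJ_f(f_t)$ depends on $\vec f$ alone. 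I would then differentiate $P(-u(t)f_t)=0$ a second time via the chain rule for $t\mapsto P(\chi(t))$, whose first derivative is $\int\dot\chi(t)\,d\gibbs_{\chi(t)}$ and whose second derivative picks up the extra term $\covar_{\chi(t)}\!\big(\dot\chi(t),\dot\chi(t)\big)$ (both furnished by Proposition \ref{pder}, after polarising $\Var$), solve for $\ddot u(0)$, and substitute into $\tfrac{\partial^{2}}{\partial t^{2}}\big|_{0}\JJ_f(f_t)$. I expect every term carrying the second‑order jet $\tfrac{\partial^{2}}{\partial t^{2}}\big|_{0}f_t$ to cancel, leaving
$$\hess_f\JJ_f(\vec f,\vec f)=\frac{\hh^{f}}{\int f\,d\gibbs_{-\hh^{f}f}}\,\Var_{\gibbs_{-\hh^{f}f}}\!\big(\vec f-\II_f(\vec f)\,f\big),$$
and then the standard Abramov identity relating variances across the reparametrisation from $\phi$ to $\phi^{f}$ rewrites the right‑hand side as $\Var_{\mmax_{\phi^{f}}}\!\big(\logder{\vec f}f-\II_f(\vec f)\big)$, with $\logder{\vec f}f=\vec f/f$; this is the pressure form $\PP_f(\vec f)$ of \S\,\ref{thermopre} (the subtracted $\II_f(\vec f)$ being precisely the $\mmax_{\phi^{f}}$‑mean of $\logder{\vec f}f$, so that $\vec f-\II_f(\vec f)f$ sits in $\sf T_f\cal P(X)$ and no information is lost).

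Finally, $\hess_f\JJ_f(\vec f,\vec f)=0$ iff $\Var_{\mmax_{\phi^{f}}}\!\big(\logder{\vec f}f-\II_f(\vec f)\big)=0$; by the last assertion of Proposition \ref{pder} applied to $\phi^{f}$ this holds iff $\logder{\vec f}f$ is Liv\v sic‑cohomologous over $\phi^{f}$ to the constant $\II_f(\vec f)$, i.e. iff $\int_\tau\logder{\vec f}f=\II_f(\vec f)\,p_{\phi^{f}}(\tau)$ for every periodic orbit $\tau$. Using $\int_\tau\logder{\vec f}f=\int_\tau\vec f=\tfrac{\partial}{\partial t}\big|_{0}\ell_{f_t}(\tau)$, $p_{\phi^{f}}(\tau)=\ell_f(\tau)$, and $\II_f(\vec f)=-\tfrac1{\hh^{f}}\tfrac{\partial}{\partial t}\big|_{0}\hh^{f_t}$, the condition becomes $\hh^{f}\tfrac{\partial}{\partial t}\big|_{0}\ell_{f_t}(\tau)+\big(\tfrac{\partial}{\partial t}\big|_{0}\hh^{f_t}\big)\ell_f(\tau)=0$, that is $\tfrac{\partial}{\partial t}\big|_{0}\big(\hh^{f_t}\ell_{f_t}(\tau)\big)=0$, which is \eqref{P=0}; equivalently, $\logder{\vec f}\hh$ and $\logder{\vec f}f$ become Liv\v sic‑cohomologous with respect to $\phi^{f}$. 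The step I expect to be the main obstacle is the second‑order bookkeeping: establishing the chain rule for $\tfrac{\partial^{2}}{\partial t^{2}}P(\chi(t))$ from Proposition \ref{pder}, verifying that every second‑order‑jet term cancels, and carrying out the Abramov conversion so that the surviving variance is exactly $\PP_f(\vec f)$.
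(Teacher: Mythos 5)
The paper does not prove this statement; it is quoted verbatim from Bridgeman--Canary--Labourie--S.\ \cite{pressure}, so there is no internal proof to compare against. Your argument is correct and is essentially the proof in that reference: the inequality and its rigidity follow from the variational principle applied to the Bowen equation $P_{\phi^f}\big(-\hh^g(g/f)\big)=0$ together with uniqueness of equilibrium states, and the Hessian identity follows from differentiating $P(-\hh^{f_t}f_t)=0$ twice. Two minor remarks. First, the cancellation you defer does go through: with $u(t)=\hh^{f_t}$ and $\ddot f=\partial_t^2|_{0}f_t$, the second derivative of the Bowen equation gives
$$\ddot u(0)=\frac{(\hh^f)^2\Var_{-\hh^ff}\big(\vec f-\II_f(\vec f)f\big)}{\int f\,d\gibbs_{-\hh^ff}}+2\hh^f\II_f(\vec f)^2-\hh^f\II_f(\ddot f),$$
and substituting into $\partial_t^2|_{0}\JJ_f(f_t)=\tfrac{\ddot u(0)}{\hh^f}-2\II_f(\vec f)^2+\II_f(\ddot f)$ cancels both the $\II_f(\ddot f)$ and the $\II_f(\vec f)^2$ terms, leaving exactly your displayed formula, which is indeed the pullback $\PP_{\hh^ff}\big(\partial_t|_{0}\hh^{f_t}f_t\big)$, i.e.\ $\PP_f(\vec f)$ as the theorem intends it. Second, your Abramov rewriting drops a factor of $\hh^f$: the correct identity is that the variance of $\vec f/f-\II_f(\vec f)$ with respect to $\phi^f$ and $\mmax_{\phi^f}$ equals $\Var_{-\hh^ff}\big(\vec f-\II_f(\vec f)f\big)\big/\int f\,d\gibbs_{-\hh^ff}$, so the Hessian is $\hh^f$ times that variance rather than equal to it; this is harmless for the degeneracy characterization, which only uses the vanishing locus, and the remainder of your argument (Liv\v sic, the period computation leading to \eqref{P=0}) is fine.
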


In the above we have denoted by $\partial^{\log{}}$ the logarithmic derivative at $0$ $$\partial^{\log{}} g=\frac{(\partial/\partial t)|_{t=0} g_t}{g_0}.$$ We record the following consequence of $\JJ_f(\cdot)$ being critical at $f,$ giving the following formula for the variation of entropy, to be compared with Katok-Knieper-Weiss \cite{Jcritico}.

\begin{cor}[\cite{pressure}]\label{derivadaentropia}Let $(f_t)_{t\in(-\eps,\eps)}$ be a $\class^1$ curve of H\"older-continuous positive functions with $f_0=f$ and denote by $\hh_t=\hh^{f_t},$ then $$\partial^{\log{}} \hh=-\int\partial^{\log{}} fd\mmax_{\phi^{f}}=-\frac{\int \vec fd\gibbs_{-\hh f}}{\int fd\gibbs_{-\hh f}}.$$
\end{cor}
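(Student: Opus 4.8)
The plan is to deduce the formula as a first-order consequence of the rigidity statement Theorem \ref{JJ}, exploiting that the normalized dynamical intersection $\JJ_f(\cdot)$ has a \emph{global} minimum at $f$. Indeed, by the first part of Theorem \ref{JJ} we have $\JJ(f,g)\geq1$ for every positive Hölder-continuous $g$, whereas $\JJ(f,f)=\II_f(f)=\int 1\,d\mmax_{\phi^f}=1$ since $\mmax_{\phi^f}$ is a probability measure; hence $g\mapsto\JJ_f(g)$ is minimized at $g=f$. As $\hh$ and $\II$ depend analytically on the relevant Hölder functions (recalled just before Theorem \ref{JJ}) and $t\mapsto f_t$ is $\class^1$, the composite $t\mapsto\JJ_f(f_t)$ is $\class^1$ with an interior minimum at $t=0$, so $\left.\frac{d}{dt}\right|_{t=0}\JJ_f(f_t)=0$.

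Next I would differentiate this vanishing relation. Writing $\JJ_f(g)=\frac{\hh^g}{\hh^f}\II_f(g)$ and $\II_f(g)=\int\frac{g}{f}\,d\mmax_{\phi^f}$ and using $\II_f(f)=1$, the product rule yields
\[0=\frac1{\hh^f}\left(\left.\frac{\partial}{\partial t}\right|_{t=0}\hh^{f_t}\right)+\int\frac{\vec f}{f}\,d\mmax_{\phi^f},\]
with $\vec f=\left.\frac{\partial}{\partial t}\right|_{t=0}f_t$. Dividing by $\hh_0=\hh^f$ and recognizing that $\partial^{\log{}}f=\vec f/f$, this is precisely the first claimed identity $\partial^{\log{}}\hh=-\int\partial^{\log{}}f\,d\mmax_{\phi^f}$.

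For the second identity I would insert $\mmax_{\phi^f}=\gibbs_{-\hh^f f}^{\#}$ (recorded in the Thermodynamics subsection) and unwind the Abramov transform \eqref{abramovdef}, that is $\gibbs_{-\hh f}^{\#}=\frac{f\cdot\gibbs_{-\hh f}}{\int f\,d\gibbs_{-\hh f}}$, to get
\[\int\partial^{\log{}}f\,d\mmax_{\phi^f}=\int\frac{\vec f}{f}\cdot\frac{f\,d\gibbs_{-\hh f}}{\int f\,d\gibbs_{-\hh f}}=\frac{\int\vec f\,d\gibbs_{-\hh f}}{\int f\,d\gibbs_{-\hh f}}.\]

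The computation is short; the only point calling for some care --- the ``main obstacle'', such as it is --- is the analytic justification of differentiating $t\mapsto\JJ_f(f_t)$ (equivalently $t\mapsto\II_f(f_t)$) at $t=0$ under the mild $\class^1$ hypothesis on the curve: Theorem \ref{JJ} invokes $\class^2$ regularity only for its Hessian formula, while here only first-order criticality is needed, and the required differentiability follows from the analyticity of $\hh$ and $\II$ recalled above together with the chain rule.
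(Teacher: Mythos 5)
Your proof is correct and follows the same route the paper indicates: the corollary is presented there precisely as "a consequence of $\JJ_f(\cdot)$ being critical at $f$" (with the proof deferred to \cite{pressure}), and your argument — global minimality of $g\mapsto\JJ_f(g)$ at $g=f$ from Theorem \ref{JJ}, first-order criticality along the $\class^1$ curve, the formula $\II_f(g)=\int\frac{g}{f}\,d\mmax_{\phi^f}$, and the Abramov identity $\mmax_{\phi^f}=\gibbs_{-\hh f}^{\#}$ — is exactly that deduction. (Minor remark: your displayed equation already \emph{is} the first identity, since $\frac{1}{\hh^f}\dot\hh=\partial^{\log}\hh$; no further division is needed.)
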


\begin{assu}\label{assuA}
Let now $F:X\to V$ be H\"older-continuous and assume the vector space spanned by the periods of $F$ is $V$. Assume moreover that $F$ and $1$ are Liv\v sic-cohomologically independent. 
\end{assu}

The compact convex subset of $V$ $$ \medidas^\phi(F)=\Big\{\int_XFd\mu:\mu\in\medidas^\phi\Big\}$$ has hence non-empty interior. On the other hand, for each $\varphi\in V^*$ one can consider the pressure of the function $\varphi(F):X\to\R$: $$\pp(\varphi)=P(-\varphi\circ F).$$ Proposition \ref{pder} implies that $\pp:V^*\to\R$ is analytic and strictly convex. Moreover, using the natural identification $(V^*)^*=V,$ one has, for $\varphi\in V^*$ that \begin{equation}\label{dpres}d_\varphi\pp=\int Fd\gibbs_{-\varphi(F)}.\end{equation}  One has the following:

\begin{prop}[{Babillot-Ledrappier \cite[Prop. 1.1]{babled}}]\label{babled1} Under Assumption \ref{assuA}, the map $\dual:V^*\to V$ defined by $\varphi\mapsto d_\varphi\pp$ is a diffeomorphism between $V^*$ and the interior of $\medidas^\phi(F).$ \end{prop}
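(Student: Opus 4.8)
The statement is Babillot–Ledrappier's Proposition~\ref{babled1}, so the task is to explain how one shows that $\dual=d\pp:V^*\to V$ is a diffeomorphism onto $\inte\medidas^\phi(F)$. The plan is to exploit that $\pp$ is a smooth, strictly convex function on $V^*$ (which we already know from Proposition~\ref{pder}, since strict convexity comes from $\Var_{-\varphi(F)}(\psi(F))>0$ for $\psi\neq0$, using Liv\v sic-cohomological independence of the components of $F$ together with $1$). The map $\dual$ is then the gradient of a strictly convex function, hence automatically injective and a local diffeomorphism (its differential is $\hess_\varphi\pp$, which is positive-definite by strict convexity). So the only real content is to identify the image of $\dual$ with the interior of the moment body $\medidas^\phi(F)$.

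For the image, I would argue by two inclusions. First, $d_\varphi\pp=\int F\,d\gibbs_{-\varphi(F)}$ by Equation~\eqref{dpres}, and since $\gibbs_{-\varphi(F)}\in\medidas^\phi$, each such value lies in $\medidas^\phi(F)$; moreover, it lies in the \emph{interior}: if it were on the boundary, there would be a supporting hyperplane, i.e. a nonzero $\psi\in V^*$ with $\psi(d_\varphi\pp)=\max_{\mu}\psi(\int F d\mu)$, forcing $\gibbs_{-\varphi(F)}$ to be supported where $\psi\circ F$ is maximal along orbits, which (via Liv\v sic and the full-support property of equilibrium states for H\"older potentials on a transitive metric-Anosov flow) contradicts $\psi\neq0$ together with Assumption~\ref{assuA}. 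Conversely, given $w\in\inte\medidas^\phi(F)$, one must produce $\varphi$ with $d_\varphi\pp=w$; this is the standard Legendre-duality argument: the concave function $\varphi\mapsto \pp(\varphi)-\varphi(w)$ (note the sign conventions in $\pp(\varphi)=P(-\varphi\circ F)$) is proper — it tends to $-\infty$ in every direction precisely because $w$ is in the \emph{interior} of the moment body, so that $\pp(\varphi)\geq h(\phi,\mu)+\varphi(w-\int Fd\mu)$ grows at least linearly in the bad directions — hence attains a maximum, and at the maximizer $\varphi$ one has $d_\varphi\pp=w$.

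Finally, to upgrade ``bijective local diffeomorphism'' to ``diffeomorphism'' one notes $\inte\medidas^\phi(F)$ is an open convex set and $\dual$ is a smooth bijection onto it with everywhere-invertible differential, hence a diffeomorphism by the inverse function theorem applied pointwise; alternatively, $\dual$ is the gradient map of the smooth strictly convex $\pp$ and $\dual^{-1}$ is the gradient of the Legendre transform $\pp^*$, which is smooth on $\inte\medidas^\phi(F)$ because $\hess\pp$ is nondegenerate there.

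\medskip

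The main obstacle is the properness/coercivity step in the converse inclusion: showing that for $w$ in the interior of the moment body the function $\varphi\mapsto\pp(\varphi)-\varphi(w)$ attains its supremum. This requires quantifying ``$w$ is in the interior'' as a genuine strict inequality $\psi(w)<\max_\mu\psi(\int Fd\mu)-c|\psi|$ for some $c>0$ uniformly over $\psi$ on the unit sphere, and feeding it into the variational characterization of $\pp$. The full-support property of $\gibbs_{-\varphi(F)}$ (needed to rule out boundary values in the first inclusion) is also delicate and relies on transitivity of the metric-Anosov flow $\phi$ plus the Liv\v sic-cohomological independence in Assumption~\ref{assuA}.
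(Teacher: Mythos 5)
The paper does not prove this statement: it is quoted verbatim from Babillot--Ledrappier, with only the remark that the slightly weaker Assumption~\ref{assuA} does not affect their proof. Your argument is the standard Legendre-duality proof that the cited source itself uses (strict convexity of $\pp$ from $\Var_{-\varphi(F)}(\psi(F))>0$, injectivity and local invertibility of the gradient map, and identification of the image with the interior of the moment body), so in substance you are reconstructing the right proof rather than diverging from the paper.

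Two points deserve correction. First, $\pp$ is \emph{convex} (the paper states this explicitly after Eq.~\eqref{dpres}), so $\varphi\mapsto\pp(\varphi)-\varphi(w)$ is strictly convex; the coercivity you need is that it tends to $+\infty$ along every ray, after which it attains a \emph{minimum} whose critical-point equation gives $d_\varphi\pp=w$. Calling it concave and seeking a maximum is a sign slip; also note that for a finite-dimensional convex function, divergence along every ray already forces bounded sublevel sets, so no uniform quantification over the unit sphere is required beyond the pointwise strict inequality $\psi\big(\int F\,d\mu\big)<\psi(w)$ for some $\mu$ in each direction $\psi$, which is exactly what interiority of $w$ provides via the variational principle $\pp(\varphi)\geq h(\phi,\mu)-\varphi\big(\int F\,d\mu\big)$. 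Second, your exclusion of boundary values via ``$\gibbs_{-\varphi(F)}$ supported where $\psi\circ F$ is maximal'' is vaguer than necessary and not quite the right mechanism. The clean argument: if $\psi\big(d_\varphi\pp\big)=\max_{\mu}\psi\big(\int F\,d\mu\big)=:M$ for some $\psi\neq0$, consider $t\mapsto\pp(\varphi+t\psi)$; by Eq.~\eqref{dpres} its derivative equals $\psi\big(\int F\,d\gibbs_{-(\varphi+t\psi)(F)}\big)$, hence is bounded above by $M$ for all $t$, while strict convexity (Assumption~\ref{assuA} plus Theorem~\ref{pder}) forces this derivative to strictly exceed its value $M$ at $t=0$ for $t>0$ of the appropriate sign --- a contradiction. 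With these repairs the proof is complete and matches the cited argument.
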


Observe that our Assumption \ref{assuA} is slightly weaker than that of Babillot-Ledrappier \cite{babled}, however this does not affect the proof of \cite[Prop. 1.1]{babled}.

\begin{obs}\label{maxinterior} By Proposition \ref{babled1} one has ${\displaystyle\mass F{}:=\dual(0)=\int Fd\mmax_\phi\in\inte\medidas^\phi(F)}$.\end{obs}

\subsection{Anosov representations}\label{anosovpre}

Anosov representations where introduced by La\-bou\-rie \cite{labourie} for fundamental groups of closed negatively curved manifolds and extended to arbitrary (finitely generated) word-hyperbolic groups by Guichard-Wie\-nhard \cite{olivieranna}. They have, since then, been object of numerous works. We will present here a very summarized situation based on \cite{labourie}, \cite{olivieranna}, Gu\'eritaud-Guichard-Kassel-Wienhard \cite{GGKW}, Kapovich-Leeb-Porti \cite{KLP-Morse} and Bochi-Potrie-S. \cite{BPS}.

Let $\gh$ be a finitely generated group and denote, for $\g\in\gh,$ by $|\g|$ the word length w.r.t. a fixed finite symmetric generating set of $\gh.$

\begin{defi}\label{defAnosov}Let $\t\subset\simple$ be non-empty then, a representation $\rho:\gh\to\sf G$ is $\t$-\emph{Anosov} if there exist $c,\mu$ positive such that for all $\g\in\gh$ and $\sroot\in\t$ one has \begin{equation}\label{defA}\sroot\Big(\cartan\big(\rho(\g)\big)\Big)\geq \mu|\g|-c.\end{equation} We will denote by $\Anosov_\t(\gh, \sf G)\subset\caracteres$ the space of $\t$-Anosov characters. If $\sf G=\SL(d,\K)$ for $\K=\R$ or $\C$, a $\{\slroot_1\}$-Anosov representation is called \emph{projective Anosov}.
\end{defi}

If follows readily that for every $\sroot\in\t$ the representation $\rep_\sroot\circ\rho:\gh\to\GL(V_\sroot)$ is projective-Anosov. The theorem below can also be found in Bochi-Potrie-S. \cite{BPS}.

\begin{thm}[{Kapovich-Leeb-Porti \cite{KLP-Morse}}]\label{A-A} If $\rho:\gh\to\sf G$ is $\t$-Anosov then $\gh$ is word-hyperbolic. \end{thm}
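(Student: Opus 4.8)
The plan is to deduce word-hyperbolicity of $\gh$ from the exponential lower bound in Definition \ref{defAnosov} together with the coarse geometry of the symmetric space $\X=\sf G/\sf K$. First I would fix $\sroot\in\t$ and pass to the single functional $\sroot\circ\cartan$; since $\sroot\circ\cartan(\rho(\g))\asymp\log\|\rep_\sroot(\rho(\g))\|_{\rep_\sroot}$ up to the comparison with the singular values, it is harmless (and cleaner) to work with the projective-Anosov representation $\rep_\sroot\circ\rho:\gh\to\GL(V_\sroot)$, as noted right after Definition \ref{defAnosov}. So without loss of generality we may assume $\sf G=\GL(V)$ and $\rho$ projective Anosov, i.e. the first two singular values $\sigma_1,\sigma_2$ of $\rho(\g)$ satisfy $\log\big(\sigma_1/\sigma_2\big)(\rho(\g))\geq\mu|\g|-c$.

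The heart of the argument is a \emph{coarse Morse / contraction} property: the orbit map $\g\mapsto\rho(\g)\cdot o$ into $\X$ is a quasi-isometric embedding whose image is uniformly Morse, and this forces the Cayley graph of $\gh$ to satisfy a linear isoperimetric inequality. Concretely, the singular-value gap lets one attach to each $\g$ a well-defined pair of transverse flags $(\Xi^+_\g,\Xi^-_\g)$ (the "Cartan attracting/repelling" data), and the gap being \emph{uniform} in $|\g|$ gives, via Lemma \ref{contraccionproximal} and the quantified proximality of Theorem \ref{proxCartan}, that along any geodesic word $\g_0,\g_1=\g_0 s_1,\dots$ in $\gh$ the associated flags converge exponentially fast; this is exactly the statement that the sequence $(\rho(\g_n)o)$ is a uniform quasi-geodesic and that geodesic bigons are uniformly thin in the image. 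The steps I would carry out are: (1) establish $|\g|\asymp \|\cartan(\rho(\g))\|$ (upper bound is automatic from sub-additivity of $\cartan$ along the generators, lower bound is the Anosov hypothesis applied to $\sroot$); (2) show the orbit map is a QI embedding; (3) promote this to the Morse property using the singular value gap — a triangle $\g,\gh',\g\gh'$ with the two flags of $\rho(\g)$ and $\rho(\gh')$ in general position yields, via Lemma \ref{pares}/Theorem \ref{CR}-type estimates on $\cartan(\rho(\g)\rho(\gh'))$, that the orbit of a geodesic triangle in $\gh$ is uniformly thin in $\X$; (4) conclude that geodesic triangles in the Cayley graph of $\gh$ are uniformly thin, i.e. $\gh$ is Gromov-hyperbolic.

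The main obstacle is step (3)–(4): transferring thinness of triangles from $\X$ (which is \emph{not} itself hyperbolic when $\rk\sf G\geq2$) back to the Cayley graph. The point is that $\X$ need only be hyperbolic \emph{along the orbit of $\gh$}, and the singular-value gap is precisely what guarantees the orbit lands in a "Morse quasi-flat-free" region — one has to show that a geodesic segment in $\gh$ maps to a path in $\X$ that stays uniformly close to \emph{the} geodesic between its endpoints, using that the midpoint $\g$ of a word is detected by the transversality of the flag pair built from the two halves, and that this transversality is uniform (independent of the word) by the exponential gap. This is the technical content of Kapovich-Leeb-Porti \cite{KLP-Morse} and of Bochi-Potrie-S. \cite{BPS}; a self-contained proof essentially reproduces the "local-to-global" principle for Morse quasi-geodesics. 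Once triangles in $\gh$ are uniformly thin, hyperbolicity of $\gh$ is the Rips definition, and the theorem follows.
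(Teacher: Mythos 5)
First, note that the paper does not prove Theorem \ref{A-A}: it is quoted verbatim from Kapovich--Leeb--Porti, with the remark that it can also be found in Bochi--Potrie--S.\ \cite{BPS}. So there is no in-paper argument to compare against; what can be assessed is whether your sketch would actually yield the statement.

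Your reduction to the projective-Anosov case via $\rep_\sroot$ and your step (1)--(2) (that $|\g|\asymp\|\cartan(\rho(\g))\|$, hence the orbit map is a quasi-isometric embedding) are correct and standard. But the quasi-isometric embedding is not the issue --- $\Z^2$ embeds quasi-isometrically in a higher-rank $\X$ --- and it is precisely at steps (3)--(4) that your proposal stops being a proof. You propose to verify the Rips thin-triangle condition in the Cayley graph by transferring thinness from $\X$, but $\X$ is not hyperbolic, and the statement ``the orbit of a geodesic triangle in $\gh$ is uniformly thin in $\X$'' presupposes the Morse property for images of Cayley geodesics, which in turn is usually \emph{deduced from} hyperbolicity of $\gh$ (or proved by an independent local-to-global argument that is the whole content of \cite{KLP-Morse}). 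As written, the logic is circular, and you acknowledge as much by deferring the ``technical content'' to the cited papers. Also, invoking Lemma \ref{pares} and Theorem \ref{CR} here is off-target: those control Jordan projections of high powers $g^nh^m$, whereas what is needed to compare $\cartan(\rho(\g\g'))$ with $\cartan(\rho(\g))$ and $\cartan(\rho(\g'))$ are the quantitative singular-value/angle estimates of the type in \cite[Appendix A]{BPS}.

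For the record, neither published proof proceeds by pulling back thin triangles from $\X$. Bochi--Potrie--S.\ show that the singular-value gap forces any two Cayley geodesics with the same endpoints to fellow-travel (their Cartan attractors at comparable times must be uniformly transverse to the same repellers), i.e.\ geodesic \emph{bigons} are uniformly thin, and then conclude by Papasoglu's theorem that uniformly thin bigons imply hyperbolicity. Kapovich--Leeb--Porti instead construct the flag limit set directly from the Cartan data, show the action on it is an expanding convergence action, and apply Bowditch's dynamical characterization of hyperbolic groups. Either of these supplies the missing mechanism; your sketch, without one of them (or an equivalent local-to-global argument), does not close.
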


The group $\gh$ has thus a \emph{Gromov-boundary} $\bord\gh$ and a \emph{space of geodesics} $$\bord^2\gh=\{(x,y)\in(\bord\gh)^2:x\neq y\}.$$ The following proposition can be found in {Bochi-Potrie-S. \cite[Lemma 4.9]{BPS}}, Kapovich-Leeb-Porti \cite{KLP-Morse} and Gu\'eritaud-Guichard-Kassel-Wienhard \cite{GGKW} and relates Definition \ref{defAnosov} to Labourie's original definition.

\begin{prop}\label{conical}
If $\rho:\gh\to\sf G$ is $\t$-Anosov then there exist $\rho$-equivariant H\"older-continuous maps $$\xi^\t:\bord\gh\to\EuScript F_\t(\sf G)\ \textrm{ and }\ \xi^{\ii\t}:\bord\gh\to\EuScript F_{\ii\t}(\sf G)$$ such that if $x,y\in\bord\gh$ are distinct, then $(\xi^\t(x),\xi^{\ii\t}(y))\in\posgen_\t.$ Moreover, if $\g\in\gh$ is hyperbolic, then $\rho(\g)$ is $\t$-proximal with attracting point $\xi^\t(\g^+)=\rho(\g)_\t^+.$

\end{prop}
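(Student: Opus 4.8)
The plan is to produce the boundary maps directly from the metric-Anosov/Morse structure encoded by Definition \ref{defAnosov}, and then verify transversality and compatibility with proximal dynamics. First I would fix a simple root $\sroot\in\t$ and pass to the Tits representation $\rep_\sroot\circ\rho:\gh\to\GL(V_\sroot)$, which is projective-Anosov by the observation following Definition \ref{defAnosov}; it suffices to construct an equivariant H\"older map $\xi_\sroot\colon\bord\gh\to\P(V_\sroot)$ (together with its dual $\xi_\sroot^*$ into the dual projective space) and then assemble the $\xi^\t$ using the embedding \eqref{maps}, since the image of that embedding is closed and homeomorphic onto $\posgen_{\t_{\rep_\sroot}}$. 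Concretely, for $x\in\bord\gh$ choose a geodesic ray $(\g_n)$ in $\gh$ converging to $x$; the exponential gap $\slroot_1(\cartan(\rep_\sroot\rho(\g_n)))\geq\mu'|\g_n|-c'$ forces the top singular line $U_1(\rep_\sroot\rho(\g_n))\in\P(V_\sroot)$ to be Cauchy, and I would define $\xi_\sroot(x)$ to be its limit. Independence of the chosen ray, equivariance, continuity, and then H\"older continuity all follow from the standard quantitative-linear-algebra estimates — contraction of projective balls under matrices with large singular-value gap (e.g. Bochi--Potrie--S. \cite{BPS}, or Lemma \ref{contraccionproximal} applied to $\rep_\sroot$) — comparing rays that fellow-travel.

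Next I would establish transversality: given distinct $x,y\in\bord\gh$, the two geodesic rays from a basepoint to $x$ and to $y$ diverge, and combining the Anosov inequality \eqref{defA} with the angle/Gromov-product control of Proposition \ref{control-enderezar} (equivalently the $(r,\eps)$-proximality criterion of Theorem \ref{proxCartan}) shows the Gromov product $\|(\xi^\t(x)|\xi^{\ii\t}(y))\|$ stays bounded, hence the pair lies in the open orbit $\posgen_\t$. For the dynamical statement, if $\g\in\gh$ is infinite order (hyperbolic) then $\cartan(\rho(\g^n))$ has all roots in $\t$ growing linearly in $n$, so for large $n$ each $\rep_\sroot(\rho(\g^n))$ is $(r,\eps)$-proximal, whence $\rho(\g)$ is $\t$-proximal; its attracting flag is $\lim_n\rho(\g^n)B$ for any $B$ transverse to the repelling flag, and by equivariance $\xi^\t$ sends the attracting fixed point $\g^+\in\bord\gh$ to exactly this flag, i.e. $\xi^\t(\g^+)=\rho(\g)^+_\t$.

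The main obstacle, and the step deserving the most care, is the H\"older regularity of $\xi^\t$: pointwise convergence and continuity are soft, but H\"older control requires quantitatively matching the divergence rate of two fellow-travelling geodesic rays in $\gh$ (governed by the hyperbolicity constant of $\gh$, available via Theorem \ref{A-A}) against the exponential contraction rate in the flag variety coming from the uniform singular-gap lower bound $\mu|\g|-c$. This is exactly the argument carried out in \cite[\S 4--5]{BPS}; I would cite it rather than reproduce it, and devote the written proof to setting up the Tits-representation reduction and checking the transversality and proximal-limit clauses, which are then essentially formal consequences of Theorems \ref{proxCartan} and \ref{CR} together with Proposition \ref{control-enderezar}.
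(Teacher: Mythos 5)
The paper does not prove this proposition itself — it is quoted directly from Bochi--Potrie--S., Kapovich--Leeb--Porti and Gu\'eritaud--Guichard--Kassel--Wienhard — and your sketch reproduces exactly the standard construction from those references: reduce to the projective-Anosov case via the Tits representations, define $\xi_\sroot(x)$ as the limit of top singular directions along a ray, and get H\"older regularity, transversality and the proximal-fixed-point identification from the quantitative singular-gap estimates. This is correct and consistent with the sources the paper relies on.
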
 

By Labourie \cite{labourie} and Guichard-Wienhard \cite{olivieranna} $\Anosov_\t(\gh, \sf G)$ is an open subset of the character variety. We also let $\Anosov_\t^{*\mathrm{Z}}(\gh, \sf G)$ be the space of regular points $\rho$ such that $\rho(\gh)$ is Zariski-dense in $\sf G$. Since we are assuming that $\sf G$ is connected, one has the following:

\begin{prop}[{Bridgeman-Canary-Labourie-S. \cite[Proposition 7.3]{pressure}}] Assume $\t$ contains at least one simple root of each factor of $\sf G$. Then the space $\Anosov_\t^{*,\mathrm{Z}}(\gh,\sf G)$ is an analytic manifold.
\end{prop}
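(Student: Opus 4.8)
The plan is to reduce the statement to the well-known fact that an analytic manifold structure on the character variety can be obtained (e.g. by GIT or by the classical slice arguments for proper actions) on the locus of Zariski-dense — hence, a fortiori, irreducible and trivial-centralizer — representations, and then verify that the Anosov and regularity conditions only cut out an open subset. So the proof essentially has three movements: (i) identify the correct smooth ambient space; (ii) show openness of the conditions defining $\Anosov_\t^{*,\mathrm Z}(\gh,\sf G)$ inside it; (iii) handle the subtlety that "regular point of the character variety" and "Zariski-dense image" must cooperate.

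\textbf{Step 1: the smooth ambient space.} Fix a finite generating set of $\gh$, so $\hom(\gh,\sf G)\subset\sf G^k$ is a real-algebraic variety. On the open subset $\hom^{*}(\gh,\sf G)$ of representations $\rho$ with $Z_{\sf G}(\rho(\gh))$ equal to the (finite) center $Z(\sf G)$ — equivalently, the "good" or "irreducible-with-trivial-centralizer" representations — the action of $\sf G/Z(\sf G)$ by conjugation is free modulo the center, and one checks it is proper on this locus (here one uses that $\sf G$ is semisimple of the non-compact type, so the relevant orbits are closed; alternatively invoke the standard fact from character-variety theory, e.g. as in Goldman or Sikora). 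Consequently the quotient of the smooth locus of $\hom^{*}(\gh,\sf G)$ by $\sf G$ is an analytic manifold. Since $\sf G$ is \emph{connected}, any $\rho$ with $\rho(\gh)$ Zariski-dense automatically has $Z_{\sf G}(\rho(\gh))=Z(\sf G)$ (the centralizer of a Zariski-dense subset is the center), so Zariski-dense representations live inside $\hom^{*}(\gh,\sf G)$. At such a point the Zariski tangent space to $\hom(\gh,\sf G)$ is the space of $1$-cocycles $Z^1_{\Ad\rho}(\gh,\ge)$, the tangent to the orbit is the coboundaries $B^1_{\Ad\rho}(\gh,\ge)$, and the quotient is $H^1_{\Ad\rho}(\gh,\ge)$ — this is exactly the identification used in \eqref{deco}. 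One then invokes the fact (Goldman; or Weil rigidity-type arguments) that a Zariski-dense $\rho$ is a \emph{smooth} point of $\hom(\gh,\sf G)$, so locally the character variety is genuinely an analytic manifold modelled on $H^1_{\Ad\rho}(\gh,\ge)$.

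\textbf{Step 2: openness of the Anosov and Zariski-density conditions.} By Labourie \cite{labourie} and Guichard–Wienhard \cite{olivieranna}, $\Anosov_\t(\gh,\sf G)$ is an open subset of $\caracteres$; intersecting with the open manifold-locus above keeps it open, hence still an analytic manifold. It remains to see that $\Anosov_\t^{*,\mathrm Z}$ — the subset where moreover the image is Zariski-dense and the point is regular — is open. Regularity (being a smooth point) is automatic once we are inside the manifold-locus, and the set of representations with Zariski-dense image is open in $\hom(\gh,\sf G)$: if $\rho(\gh)$ is Zariski-dense and $\sroot\in\t$ contains a simple root of each factor of $\sf G$, then $\rho$ is $\t$-Anosov and in particular has infinite image in each factor; Zariski-density is detected by finitely many of the $\g\in\gh$ generating a Zariski-dense subgroup, and the condition that a fixed finite tuple $(\rho(\g_1),\dots,\rho(\g_m))$ generates a Zariski-dense subgroup is an open (indeed Zariski-open-type) condition in $\sf G^m$ — this is where the hypothesis that $\t$ meets every simple factor is used, to rule out the image degenerating into a proper normal subgroup along the boundary. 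Intersecting finitely many such open conditions with the open Anosov locus gives that $\Anosov_\t^{*,\mathrm Z}(\gh,\sf G)$ is open in the analytic manifold from Step 1, hence is itself an analytic manifold.

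\textbf{Main obstacle.} The genuinely delicate point is Step 1 — showing that the conjugation action of $\sf G$ is proper (equivalently, that orbits are closed and stabilizers are, modulo the center, trivial) precisely on the locus containing the Zariski-dense representations, so that the quotient is Hausdorff and carries an analytic structure rather than merely being a quotient topological space. For \emph{reductive} target groups this is classical (it is where semisimplicity of $\sf G$ and the closedness of relevant orbits enter), but one must be careful that "regular point of $\caracteres$" in the sense used here coincides with "smooth point of $\hom$ with trivial centralizer modulo center", and that no extra pathology appears because $\gh$ is only assumed finitely generated and word-hyperbolic rather than a surface group. I expect this is handled in the cited \cite{pressure}, so the write-up would largely consist of assembling: (a) smoothness of Zariski-dense points of $\hom$, (b) properness/freeness of the $\sf G/Z(\sf G)$-action on the good locus, (c) openness of Zariski-density, and (d) openness of the Anosov condition, and concluding that the intersection is an analytic manifold modelled on $H^1_{\Ad\rho}(\gh,\ge)$.
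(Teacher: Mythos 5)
The paper does not prove this statement (it is quoted verbatim from \cite[Prop.\ 7.3]{pressure}), so I can only judge your reconstruction on its merits. Your overall architecture --- restrict to the locus where the centralizer of the image equals $Z(\sf G)$, check that the conjugation action is proper and free modulo the center there, and verify that all the defining conditions are open --- is the right skeleton. But Step 1 contains a claim that is false in the stated generality and that the proposition is worded precisely to avoid: you assert that a Zariski-dense $\rho$ is automatically a \emph{smooth} point of $\hom(\gh,\sf G)$, "by Goldman or Weil rigidity-type arguments". That is a surface-group phenomenon: for $\pi_1S$ the obstruction to smoothness lives in $H^2_{\Ad\rho}(\pi_1S,\ge)$, which Poincar\'e duality pairs with $H^0$, i.e.\ with the centralizer, so trivial centralizer forces smoothness (this is exactly Theorem \ref{regularsurface} later in the paper). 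For a general finitely generated word-hyperbolic $\gh$ (say a higher-dimensional hyperbolic lattice) $H^2_{\Ad\rho}(\gh,\ge)$ has no reason to vanish and $\hom(\gh,\sf G)$ can be singular at Zariski-dense points. This is why the space in the statement is the set of \emph{regular} points with Zariski-dense image: regularity is a hypothesis, not a consequence. The fix is to intersect with the (open) smooth locus of $\hom(\gh,\sf G)$ and only quotient that; note that your Step 2 remark that "regularity is automatic once we are inside the manifold-locus" is circular, since the manifold-locus of Step 1 was constructed using the smoothness you are trying to justify.

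A second, smaller soft spot is the parenthetical that Zariski-density of the image of a fixed generating tuple is "an open (indeed Zariski-open-type) condition in $\sf G^m$". It is open in the Hausdorff topology, but it is not Zariski-open, and the openness is a genuine theorem rather than a one-liner: one writes the complement as the set of tuples contained in a conjugate of one of the finitely many conjugacy classes of maximal proper algebraic subgroups, uses compactness of $\sf G/\sf P$ to close up the parabolic case, uses the fact that Hausdorff degenerations of conjugates of a maximal reductive subgroup collapse into parabolics to close up the reductive case, and excludes the finite-image case --- which in the present setting is done by the Anosov hypothesis (and this, together with controlling every simple factor under perturbation, is plausibly where the assumption that $\t$ meets each factor of $\sf G$ enters, as you guessed). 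With the regularity hypothesis restored and the openness of Zariski-density argued rather than asserted, your proof goes through.
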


For surface groups one has the following description of the regular points of characters, that can be found on  Labourie's book \cite[\S\,5]{LabourieLectures}, from which we borrow the terminology of \emph{very regular points}: $$\hom^{\mathrm{vr}}(\pi_1S,\sf G)=\big\{\rho:\sf Z_{\sf G}\big(\rho(\pi_1S)\big)=\sf Z(\sf G)\big\}.$$ Since $\sf G$ is connected, morphisms with Zariski-dense image are very regular.

\begin{thm}\label{regularsurface} Both $\hom^{\mathrm{vr}}(\pi_1S,\sf G)$ and $\hom^{\mathrm{vr}}(\pi_1S,\sf G)/\sf G$ are analytic manifolds.
\end{thm}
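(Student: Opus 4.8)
The statement asserts that $\hom^{\mathrm{vr}}(\pi_1S,\sf G)$ and the quotient $\hom^{\mathrm{vr}}(\pi_1S,\sf G)/\sf G$ are analytic manifolds. The plan is to proceed in two stages: first establish smoothness (analyticity) of the total space $\hom^{\mathrm{vr}}$ inside $\hom(\pi_1S,\sf G)$, and then verify that the $\sf G$-action on it is free modulo center, proper, and has analytic quotient. For the first stage I would use the standard deformation-theoretic description of the tangent space: for $\rho\in\hom(\pi_1S,\sf G)$ the Zariski tangent space to $\hom(\pi_1S,\sf G)$ at $\rho$ is the space of $1$-cocycles $Z^1_{\Ad\rho}(\pi_1S,\ge)$ (cocycles valued in $\ge$ twisted by $\Ad\rho$, exactly as in Equation \eqref{deco}), and $\rho$ is a smooth point precisely when $\dim Z^1$ matches the expected dimension, equivalently when $H^2_{\Ad\rho}(\pi_1S,\ge)$ has the generic dimension. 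Since $S$ is a closed orientable surface of genus $\geq 2$, Poincaré duality identifies $H^2_{\Ad\rho}(\pi_1S,\ge)$ with $H^0_{\Ad\rho}(\pi_1S,\ge)^*$ (using that the Killing form gives a $\pi_1S$-invariant non-degenerate pairing on $\ge$), and $H^0_{\Ad\rho}(\pi_1S,\ge)=\ge^{\rho(\pi_1S)}$, the Lie algebra of the centralizer $\sf Z_{\sf G}(\rho(\pi_1S))$. The very-regularity condition $\sf Z_{\sf G}(\rho(\pi_1S))=\sf Z(\sf G)$ says exactly that this centralizer is as small as possible, hence $H^0$, and therefore $H^2$, have the minimal possible dimension; an Euler-characteristic computation then pins down $\dim Z^1$ to the expected value $\dim\ge\cdot(2g-1)+\dim\ge^{\rho(\pi_1S)}$, uniform on $\hom^{\mathrm{vr}}$. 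Upper semicontinuity of $\dim H^0$ (equivalently of $\dim\ge^{\rho(\pi_1S)}$) shows $\hom^{\mathrm{vr}}$ is Zariski-open in the set where $\dim H^0$ is minimal, and on that locus the representation scheme is smooth of the expected dimension, which over $\R$ gives an analytic manifold structure.

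For the quotient, I would first observe that on $\hom^{\mathrm{vr}}$ the action of the adjoint group $\sf G/\sf Z(\sf G)$ is \emph{free}: the stabilizer of $\rho$ is $\sf Z_{\sf G}(\rho(\pi_1S))/\sf Z(\sf G)$, which is trivial precisely by the definition of very-regularity. Next, properness of the $\sf G$-action on $\hom^{\mathrm{vr}}$ (or at least on the relevant locus): this is standard once one knows there is a slice, but to be safe one can restrict attention to a $\sf G$-invariant open neighborhood on which properness holds — indeed the set of completely reducible representations carries a proper action (this is the classical fact underlying the GIT/Luna-slice picture), and the very-regular locus can be handled similarly, or one invokes the analytic slice theorem directly. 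Given freeness and properness together with the fact that $\hom^{\mathrm{vr}}$ is an analytic manifold, the quotient $\hom^{\mathrm{vr}}/\sf G$ inherits a unique analytic manifold structure for which the projection is an analytic submersion (analytic quotient theorem for free proper actions of Lie groups). This is essentially the content already invoked in Labourie's book \cite[\S\,5]{LabourieLectures}, and I would cite that for the slice/properness input rather than reprove it.

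The main obstacle is the properness of the $\sf G$-action on the very-regular locus: $\hom^{\mathrm{vr}}$ contains representations that need not be completely reducible (e.g. parabolic-valued ones with trivial centralizer mod center), so one cannot directly quote the GIT statement about semisimple representations. The cleanest route is to note that very-regularity forces the Zariski closure of $\rho(\pi_1S)$ to have trivial centralizer in $\sf G/\sf Z(\sf G)$; by a structure argument (reductivity of centralizers of reductive subgroups, plus the observation that a nontrivial unipotent radical in the Zariski closure produces extra elements in $\ge^{\rho(\pi_1S)}$ or a nontrivial centralizer) one shows every very-regular representation is in fact completely reducible, so $\hom^{\mathrm{vr}}$ lies in the locus where the action is known to be proper. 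Once that reduction is in place the rest is the standard smoothness-plus-slice machinery, and the theorem follows.
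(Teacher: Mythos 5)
The paper does not actually prove Theorem \ref{regularsurface}: it states it and refers to Labourie's book \cite[\S\,5]{LabourieLectures}, so there is no internal proof to compare yours against and I assess your plan on its own merits. Your first stage is the standard argument and is essentially correct: since $\sf G$ is semisimple, $\sf Z(\sf G)$ is finite, so very-regularity gives $H^0_{\Ad\rho}(\pi_1S,\ge)=\ge^{\rho(\pi_1S)}=0$, Poincar\'e duality via the Killing form gives $H^2_{\Ad\rho}(\pi_1S,\ge)=0$, and then $\dim Z^1_{\Ad\rho}=(2g-1)\dim\ge$ matches the lower bound $(2g-1)\dim\sf G$ for the dimension of any component of $\hom(\pi_1S,\sf G)\subset\sf G^{2g}$ (cut out by $\dim\sf G$ relation equations), so $\rho$ is a smooth point. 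Two small caveats: the smooth locus you exhibit is $\{H^0=0\}$, which is a priori strictly larger than $\hom^{\mathrm{vr}}$ (the centralizer could be a finite group strictly containing $\sf Z(\sf G)$), so openness of $\hom^{\mathrm{vr}}$ itself still needs a short separate argument.

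The second stage has a genuine gap: the claim that very-regularity forces complete reducibility is false. Take $\sf G=\SL(3,\R)$ and let $P\cong\GL(2,\R)\ltimes\R^2$ be the maximal parabolic stabilizing a plane; a direct computation shows $\sf Z_{\sf G}(P)=\sf Z(\sf G)=\{I\}$. Choosing $\rho_0:\pi_1S\to\GL(2,\R)$ Zariski-dense and a non-coboundary cocycle $u$ valued in the ($2$-dimensional, irreducible) unipotent radical --- such $u$ exists by the Euler-characteristic count, and $(\rho_0,u)$ then has Zariski closure exactly $P$ by precisely the mechanism of Proposition \ref{Zarirre} of this paper --- one obtains $\rho\in\hom^{\mathrm{vr}}(\pi_1S,\SL(3,\R))$ preserving a plane and no complement. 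So $\hom^{\mathrm{vr}}$ genuinely contains non-reductive representations and you cannot quote properness of the action on the completely reducible locus. This is not cosmetic: already for $\sf G=\SL(2,\R)$ a representation $\rho$ with Zariski closure a Borel $B$ is very regular, and deforming it in the direction of the opposite root space while conjugating by the diverging torus elements $\diag(t,t^{-1})$ produces $\rho_n\to\rho$ and $g_n\to\infty$ with $g_n\rho_ng_n^{-1}$ converging to a very regular representation valued in the opposite Borel and generically not conjugate to $\rho$; this breaks properness of the action on $\hom^{\mathrm{vr}}$ and puts Hausdorffness of the quotient at such points in question. So the crux you correctly flagged --- properness on the non-reductive part of the very-regular locus --- is exactly where your argument fails; you must either reproduce Labourie's actual treatment of this point or restrict the second half of the statement to reductive very-regular representations, which is all the paper ever uses (e.g.\ in the proof of Corollary \ref{C1hff}, where the relevant $\rho$ has reductive Zariski closure).
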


\subsubsection{A reference flow for a group admitting an Anosov representation}\label{referenceFlow}
 
Assume that $\gh$ admits a $\t$-Anosov representation $\rho_0$, onto some $\sf G$ and some non-empty $\t$. Fix $\sroot\in\t$. Then, $\rep_\sroot\circ\rho_0:\gh\to\GL(V_\sroot)$ is projective Anosov and thus there exist $$\xi^1  :\bord\gh\to\P(V_\sroot)\ \textrm{ and }\ \xi^{d-1} :\bord\gh\to\P\big(V_\sroot^*\big)$$ such that for every $(x,y)\in\bord^2\gh$ one has $\ker\xi^{d-1}(x)\oplus\xi^1(y)=V_\sroot.$ We use the equivariant maps to construct a bundle $\R\to\widetilde{\sf F}\to \bord^2\gh$ whose fiber at $(x,y)\in\bord^2\gh$ is $$\widetilde{\sf F}_{(x,y)}=\big\{(\varphi,v)\in\xi^{d-1}(x)\times\xi^1(y):\varphi(v)=1\big\}/\sim,$$ where $(\varphi,v)\sim(-\varphi,-v).$ This bundle is equipped with a $\gh$-action $\g(\varphi,v)=\big(\varphi\circ\rho(\g)^{-1},\rho(\g) v\big)$ and an $\R$-action  $\widetilde{\phi}_t\cdot(\varphi,v)=(e^t\varphi,e^{-t}v).$ Let $$\sf U\gh=\gh\/\widetilde{\sf F}$$ and denote by $\phi=\big(\phi_t:\sf F\to\sf F\big)_{t\in\R}$ the induced flow on the quotient (it is usually called \emph{the geodesic flow of $\rep_\sroot\circ\rho_0$}).

\begin{thm}[Bridgeman-Canary-Labourie-S. \cite{pressure}]\label{tutti}The above $\gh$-action is properly discontinuous, co-compact, and $\phi$ is H\"older-continuous and metric-Anosov.
\end{thm}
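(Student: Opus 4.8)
The plan is to exhibit $\phi$ as a H\"older reparametrization, by a \emph{positive} H\"older function, of the Gromov geodesic flow of $\gh$, and then to transport the (well-known) properties of the latter. This is, in essence, the argument of \cite{pressure}.

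First, by Theorem \ref{A-A} the group $\gh$ is word-hyperbolic, so it carries a \emph{Gromov geodesic flow}: a proper, H\"older, properly discontinuous and cocompact action of $\gh$ on a metric realization of $\bord^2\gh\times\R$, commuting with a H\"older flow $\widetilde\psi$, whose induced flow $\psi$ on the compact quotient is H\"older metric-Anosov, and whose periodic orbits are indexed by conjugacy classes of infinite-order elements with periods comparable to the stable word-length (due to Gromov; see the constructions recalled in \cite{pressure}). I would then compare $\widetilde{\sf F}$ with this phase space. Both fiber $\gh$-equivariantly over $\bord^2\gh$, with the respective flows acting simply transitively on the contractible fibers. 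To trivialize $\widetilde{\sf F}\to\bord^2\gh$, attach to $(x,y)\in\bord^2\gh$ a unit vector $v_0(y)\in\xi^1(y)$ (for the norm $\|\cdot\|_{\rep_\sroot}$, defined up to sign) and the functional $\varphi_0(x,y)\in\xi^{d-1}(x)$ normalised by $\varphi_0(x,y)(v_0(y))=1$; since $\widetilde{\sf F}$ is taken modulo $(\varphi,v)\sim(-\varphi,-v)$, flipping the sign of $v_0$ flips that of $\varphi_0$, so $\sigma(x,y)=[(\varphi_0(x,y),v_0(y))]$ is a genuine section, which is H\"older by Proposition \ref{conical}. In the resulting trivialization $\bord^2\gh\times\R\to\widetilde{\sf F}$, $((x,y),s)\mapsto\widetilde\phi_s\big(\sigma(x,y)\big)$, the flow $\widetilde\phi$ is the $\R$-translation in $s$ and the $\gh$-action is twisted by the H\"older cocycle $\bus_\rho\colon\gh\times\bord\gh\to\R$, $\bus_\rho(\g,y)=\log\|\rep_\sroot\rho(\g)v_0(y)\|_{\rep_\sroot}$, namely $\g\cdot((x,y),s)=\big((\g x,\g y),\,s-\bus_\rho(\g,y)\big)$.

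The key point is that $\bus_\rho$ has \emph{positive periods growing linearly in word-length}. For $\g$ of infinite order, $v_0(\g^+)$ spans the top eigenline of $\rep_\sroot\rho(\g)$, so $\bus_\rho(\g,\g^+)=\log|\lambda_1(\rep_\sroot\rho(\g))|=\chi_{\rep_\sroot}(\jordan(\rho(\g)))$ by \eqref{eq:spectralrep}; this is positive because $\rep_\sroot\rho(\g)$ is proximal (Proposition \ref{conical}) and $\det\rep_\sroot=1$, forcing $|\lambda_1|>1$, and it grows at least linearly because $\rep_\sroot\circ\rho$ is projective Anosov: combining \eqref{defA} with the elementary inequality $\log\sigma_1\geq\tfrac{\dim V_\sroot-1}{\dim V_\sroot}\,\log(\sigma_1/\sigma_2)$ (valid since the product of the singular values is $1$) and $\jordan(\rho(\g))=\lim_n\tfrac1n\cartan(\rho(\g^n))$ yields $\chi_{\rep_\sroot}(\jordan(\rho(\g)))\geq\mu'\,\|\g\|_\infty$ for some uniform $\mu'>0$. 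A H\"older cocycle with these properties produces a reparametrization of the Gromov geodesic flow: there is a positive H\"older function on the Gromov phase space and a $\gh$-equivariant, fiber-preserving H\"older homeomorphism from that phase space onto $\widetilde{\sf F}$ conjugating the corresponding reparametrization of $\widetilde\psi$ with $\widetilde\phi$ (Ledrappier's boundary correspondence; see S. \cite{quantitative} and \cite{pressure}). Being $\gh$-equivariant, this homeomorphism shows at once that the $\gh$-action on $\widetilde{\sf F}$ is properly discontinuous and cocompact, with $\sf U\gh$ homeomorphic to the compact quotient of the Gromov flow; and since $\phi$ is then a reparametrization of the H\"older metric-Anosov flow $\psi$ by a positive H\"older function, and this class is stable under such reparametrizations (Pollicott \cite{smaleflows}; \cite{pressure}), $\phi$ is H\"older and metric-Anosov.

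The step I expect to be the real obstacle is the reparametrization correspondence itself: upgrading ``positive H\"older cocycle with linear growth'' to ``the $\bus_\rho$-twisted action on $\bord^2\gh\times\R$ is properly discontinuous, cocompact, and a positive-H\"older reparametrization of the Gromov geodesic flow.'' This is the Ledrappier-type argument and carries essentially all of the analytic content; the remaining ingredients --- H\"older regularity of $\xi^1,\xi^{d-1}$, of the section $\sigma$ and of $\bus_\rho$, and the stability of the metric-Anosov property under H\"older reparametrizations --- are standard.
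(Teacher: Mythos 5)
Your first half --- trivializing $\widetilde{\sf F}\to\bord^2\gh$ by a H\"older section, rewriting the $\gh$-action as a twist by the cocycle $\bus_\rho(\g,y)=\log\|\rep_\sroot\rho(\g)v_0(y)\|$, checking that its periods equal $\chi_{\rep_\sroot}(\jordan(\rho\g))>0$ and grow linearly in stable length, and comparing with Gromov's flow space to get proper discontinuity and cocompactness --- is exactly the route taken in \cite{pressure}. (One caveat even there: since the $\R$-orbits of the Gromov flow are only quasi-geodesics, what one actually produces is a proper, surjective, $\gh$-equivariant, orbit-preserving map, i.e.\ a H\"older orbit equivalence, not literally a time-change of the Gromov flow by a positive function; this is enough for proper discontinuity and cocompactness.)

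The genuine gap is in the second half. You deduce the metric-Anosov property of $\phi$ by transporting it from the Gromov geodesic flow, asserting that the latter is ``H\"older metric-Anosov (due to Gromov).'' That is not available: Gromov's construction (as completed by Champetier and Mineyev) provides the flow space, the properly discontinuous cocompact action and the quasi-geodesic orbits, but no local product structure or exponential contraction, and it is precisely \emph{not} known that the Gromov geodesic flow of a general word-hyperbolic group is metric-Anosov --- \cite{pressure} says so explicitly and for this reason does not argue as you do. The actual proof establishes the metric-Anosov property of $\phi$ directly from the representation: the stable (resp.\ unstable) laminations of $\sf U\gh$ are obtained by freezing the coordinate $(y,v)$ (resp.\ $(x,\varphi)$), the transversality $\ker\xi^{d-1}(x)\oplus\xi^1(y)=V_\sroot$ furnishes the local product structure, and the exponential contraction along leaves is exactly the singular-value gap encoded in the Anosov condition \eqref{defA}. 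Stability of the metric-Anosov class under positive H\"older reparametrizations is true but does not help you here, since the missing input is the metric-Anosov property of the base flow itself. To repair the argument you must replace the appeal to the Gromov flow by this direct construction of the laminations on $\widetilde{\sf F}$.
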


The flow-space $\sf U\gh$ and flow $\phi$ will be fixed from now on and used as a reference flow. The set of hyperbolic elements of $\gh$ will be denoted by $\gh_{\mathrm{h}}$ and for $\g\in\gh_{\mathrm{h}}$ we will denote by $\ell(\g)$ the period for $\phi$ of the periodic orbit $[\g]$ associated to $\g$.

\subsubsection{The Ledrappier potential}\label{thermoSS}

We now recall a combination of facts from Bridgeman-Canary-Labourie-S. \cite{pressure}, Potrie-S. \cite{exponentecritico} and S. \cite{quantitative,dichotomy}. Recall from the previous subsection that a base flow is fixed for $\gh$.

\begin{thm}\label{teoLedr}Let $\rho:\gh\to\sf G$ be $\t$-Anosov. Then there exists a H\"older-continuous map $\ledrappier_\rho:\sf U\gh\to\a_\t,$ called the  \emph{Ledrappier potential of $\rho$}, such that for every $\g\in\gh_{\mathrm{h}}$ $$\ell_{[\g]}(\ledrappier_\rho)=\jordan_\t^\g(\rho).$$ Moreover, if $\{\rho_u:\G\to\sf G\}_{u\in D}$ is an analytic family of $\t$-Anosov representations, then the map $u\mapsto\ledrappier_{\rho_u}$ is analytic.

\end{thm}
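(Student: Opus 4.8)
The plan is to construct $\ledrappier_\rho$ by composing the equivariant boundary maps with the additive cocycle machinery already set up for the reference flow. First I would reduce to the "line-bundle" pieces: for each $\sroot\in\t$ the composition $\rep_\sroot\circ\rho$ is projective Anosov (this is noted right after Definition \ref{defAnosov}), so it carries equivariant maps $\xi_\sroot^1,\xi_\sroot^{d-1}$ with $\ker\xi_\sroot^{d-1}(x)\oplus\xi_\sroot^1(y)=V_\sroot$ for $(x,y)\in\bord^2\gh$; these, together with the Tits embedding $(\zeta_{\rep_\sroot},\zeta^*_{\rep_\sroot})$ of Equation \eqref{maps}, assemble into the maps $\xi^\t,\xi^{\ii\t}$ of Proposition \ref{conical}. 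Since $\{\peso_\sroot|\a_\t:\sroot\in\t\}$ is a basis of $(\a_\t)^*$ by Equation \eqref{at*}, it suffices to produce, for each $\sroot\in\t$, a H\"older function $\ledrappier_\rho^\sroot:\UG\to\R$ whose period over $[\g]$ equals $\peso_\sroot(\jordan_\t^\g(\rho))=\jordan_1\big(\rep_\sroot(\rho(\g))\big)$ (using Equation \eqref{eq:spectralrep}), and then set $\ledrappier_\rho=\sum_{\sroot\in\t}\ledrappier_\rho^\sroot\,\check\peso_\sroot$ for the dual basis $\check\peso_\sroot\in\a_\t$.

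Next I would exhibit the scalar potential explicitly. Working in the flow model $\UG=\gh\backslash\widetilde{\sf F}$ of \S\ref{referenceFlow} (built from some auxiliary $\rep_{\sroot_0}\circ\rho_0$), define on $\widetilde{\sf F}$ the function $\ledrappier_\rho^\sroot(\varphi,v)=\frac{\partial}{\partial t}\big|_{t=0}\log\big\|\rep_\sroot\circ\rho(\phi_{-t})\big\|$-type expression; concretely, pick a section $(x,y)\mapsto(w_x,w_y)$ with $w_x\in\xi_\sroot^1(x)$, $w_y\in\xi_\sroot^1(y)$ and let the potential be the infinitesimal log-growth of $\|\rho(\g)w_y\|_{\rep_\sroot}$ along the flow direction, corrected to be flow-equivariant. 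The standard computation (as in \cite{pressure}, using the $\R$-action $\widetilde\phi_t\cdot(\varphi,v)=(e^t\varphi,e^{-t}v)$ and $\gh$-equivariance $\g(\varphi,v)=(\varphi\circ\rho(\g)^{-1},\rho(\g)v)$) shows this descends to $\UG$, is H\"older-continuous because the boundary maps $\xi_\sroot^1,\xi_\sroot^{d-1}$ and the norm $\|\cdot\|_{\rep_\sroot}$ are, and has period over $[\g]$ equal to $\log$ of the spectral radius of $\rep_\sroot(\rho(\g))$, i.e. $\peso_\sroot(\jordan_\t^\g(\rho))$ — here one uses that $\rho(\g)$ is $\t$-proximal with attracting flag $\xi^\t(\g^+)$ (Proposition \ref{conical}) so that the relevant eigenvalue is exactly the top one. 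Assembling over $\sroot\in\t$ gives the $\a_\t$-valued $\ledrappier_\rho$ with $\ell_{[\g]}(\ledrappier_\rho)=\jordan_\t^\g(\rho)$.

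For analyticity in the family, I would note that when $\{\rho_u\}_{u\in D}$ is an analytic family of $\t$-Anosov representations, the equivariant maps $\xi_\sroot^1(u,\cdot),\xi_\sroot^{d-1}(u,\cdot)$ depend analytically on $u$ — this is the content of the structural stability / analytic-dependence statements for Anosov representations (Bridgeman-Canary-Labourie-S.~\cite{pressure}, also \cite{BPS}), obtained from an analytic implicit-function / contraction argument for the associated cocycle over the flow. Since $\ledrappier_{\rho_u}$ is built by a fixed H\"older-continuous recipe from these maps and from $\|\cdot\|_{\rep_\sroot}$ (which varies algebraically with the representation), the assignment $u\mapsto\ledrappier_{\rho_u}\in\Holder^\alpha(\UG,\a_\t)$ is analytic.

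\textbf{Main obstacle.} The delicate point is not the algebra but the \emph{regularity}: one must check that the candidate potential is genuinely H\"older-continuous on $\UG$, uniformly, and $\mathrm C^1$ along the flow, and — for the family statement — that the dependence on $u$ is analytic as a map into the Banach space $\Holder^\alpha$. This requires the (by now standard, but technically substantive) input that the limit maps of an Anosov representation are H\"older with an exponent controllable in terms of the Anosov constants $c,\mu$ of \eqref{defA}, and vary analytically; once that is granted, the period computation is a routine application of $\t$-proximality (Proposition \ref{conical}) together with the identification $\jordan_1(\rep_\sroot(g))=\peso_\sroot(\jordan(g))$ scaled by $l_\sroot$ from Proposition \ref{prop:titss}, bookkeeping the multiple $l_\sroot$ so that the sum reconstructs $\jordan_\t$ rather than a scalar multiple of it.
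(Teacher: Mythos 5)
Your proposal is correct and follows essentially the same route as the paper, which itself gives no proof but cites this as a combination of results from \cite{pressure}, \cite{exponentecritico} and \cite{quantitative,dichotomy}: reduce to the scalar potentials attached to the projective Anosov representations $\rep_\sroot\circ\rho$ for $\sroot\in\t$, realize each as a H\"older reparametrization/translation-cocycle datum on the fixed reference flow $\UG$ with periods $\jordan_1(\rep_\sroot\rho(\g))=l_\sroot\peso_\sroot(\jordan^\g(\rho))$, assemble via the dual basis of $\{\peso_\sroot|\a_\t\}_{\sroot\in\t}$, and get analyticity in $u$ from the analytic dependence of the limit maps. Your attention to the factor $l_\sroot$ and to the H\"older/analyticity regularity issues is exactly where the substantive input from the cited references lies.
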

Let us consider the $\t$-limit cone of $\rho$ defined by \begin{equation}\label{conti}\Bcone_{\t,\rho}=\overline{\{\R_+\cdot\jordan_\t(\rho(\g)):\g\in\gh\}}=\R_+\cdot\medidas^\phi(\ledrappier_\rho)\subset\a_\t.\end{equation} Elements of $\inte(\Bcone_{\t,\rho})^*$ will be called \emph{length functionals}. Indeed (see S. \cite[Lemma 3.4.2]{dichotomy}),  $\length\in\inte(\Bcone_{\t,\rho})^*$ if and only if $\length(\ledrappier_\rho)$ is Liv\v sic-cohomologous to a strictly positive function, or equivalently there exist $c$ positive such that for every $\g\in\gh$ $$\length^\g(\rho)\geq c\ell(\g).$$
We can thus define the \emph{entropy} of $\length$ as $\entropy\psi\rho=\hh^{\psi(\ledrappier\rho)}$ and the \emph{critical hypersurface} 
\begin{alignat}{2}\label{qh1}
\EuScript Q_{\t,\rho}  & =\Big\{\length\in\inte(\Bcone_{\t,\rho})^*:\entropy\length\rho=1\Big\}. 
\end{alignat}

It follows that  $\EuScript Q_{\t,\rho}$ is a closed co-dimension-one analytic sub-manifold that  bounds a convex set, which is strictly convex if $\rho(\gh)$ is Zariski-dense, see S. \cite[\S\,5.9 and 5.10]{dichotomy} where details and more information can be found.

The Ledrappier potential embeds the space of $\t$-Anosov representations in the space of H\"older-continuous potentials $\Holder(\sf U\gh,\a_\t)$ \begin{alignat*}{2}\ledrappier:\Anosov_\t(\gh,\sf G)&\to\Holder(\UG,\a_\t)\\ \rho&\mapsto\ledrappier_\rho,
\end{alignat*} and by Bridgeman-Canary-Labourie-S. \cite{pressure} it is a real analytic map. Its differential at $v$ is (a Liv\v sic-cohomlogy class of) a H\"older-continuous map $$\vec{\ledrappier}_{v}:\UG\to\a_\t$$ whose periods are, by definition, ${\displaystyle \int_{[\g]} \vec{\ledrappier}_{v}=\mathrm d\jordan_\t^\g(v)}.$

By continuity of $\rho\mapsto\Bcone_{\t,\rho}$ (Eq. \eqref{conti}), every $\length\in(\a_\t)^*$ defines an open subset \begin{equation}\label{Upsi}\EuScript U_\length=\{\eta\in\Anosov_\t(\gh, \sf G):\length\in\conodual{\t,\eta}\}\end{equation} and gives in turn a map $\EuScript U_\length \to\Holder(\sf U\gh,\R),$ $\rho\mapsto\length(\ledrappier_\rho).$

\begin{defi}The $\length$-\emph{pressure form} $\PP^\length$ on $\EuScript U_\length$ is the pullback of the pressure metric on $\cal P(\sf U\gh)$ by the map $\rho\mapsto\entropy\length\rho\cdot\length(\ledrappier_\rho)$.\end{defi}

For $\length\in\EuScript Q_{\t,\rho}$ and $\eta\in\EuScript U_\length$, we define the $\length$-dynamical intersection of $(\rho,\eta)$ as \begin{equation}\label{parai}\II^\length(\rho,\eta)=\II\big(\length(\ledrappier_\rho),\length(\ledrappier_\eta)\big)=\lim_{t\to\infty}\frac1{\#\sf R_t^\length(\rho)}\sum_{\g\in \sf R_t^\length(\rho)}\frac{\length\big(\jordan(\eta(\g))\big)}{\length\big(\jordan(\rho(\g))\big)},\end{equation} where $\sf R_t^\length(\rho)=\big\{\g\in\gh_{\mathrm{h}}:\length^\g(\rho)\big)\leq t\}.$ By Theorem \ref{JJ}, upon denoting by $\JJ^\length$ the associated normalized intersection, one has $\hess_\rho\JJ^\length_\rho=\PP^\length_\rho.$

The opposition involution $\ii$ of $\a$ is induced by an external automorphism $\undi :\sf G\to\sf G$, which acts whence on characters $\undi:\caracteres\to\caracteres$. Eq. \eqref{parai} gives:


\begin{cor}\label{tauisom}For every $\length\in\EuScript Q_{\t,\rho}$ and $\eta\in\EuScript U_\length$ one has $\II^\length(\rho,\eta)=\II^{\length}(\undi \rho,\undi\eta)$. In particular, the involution $\undi$ on $\caracteres$ is an isometry of any pressure form $\PP^\length$.
\end{cor}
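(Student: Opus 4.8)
The plan is to deduce both assertions directly from the defining formula \eqref{parai}, using the single Lie-theoretic property of the opposition automorphism: $\jordan\circ\undi=\ii\circ\jordan$ (equivalently $\cartan\circ\undi=\ii\circ\cartan$) on $\sf G$, where $\ii\colon\a^+\to\a^+$ is the opposition involution. First I would record the consequences. Since $\ii$ permutes $\simple$, the automorphism $\undi$ carries $\t$-Anosov representations to $\ii\t$-Anosov ones; and since $\ii$ is linear with $\pi_{\ii\t}\circ\ii=\ii\circ\pi_\t$ (uniqueness of the $\Weyl_{\ii\t}$-invariant projection), one gets $\jordan_{\ii\t}\big(\undi(g)\big)=\ii\big(\jordan_\t(g)\big)$ for all $g$. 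Hence, for every $\g\in\gh_{\mathrm h}$, the period of $\ledrappier_{\undi\rho}$ over the orbit $[\g]$ is $\ii\big(\jordan_\t^\g(\rho)\big)$, i.e. the image under $\ii$ of the corresponding period of $\ledrappier_\rho$; by Liv\v sic's theorem this forces $\ledrappier_{\undi\rho}=\ii\circ\ledrappier_\rho$ up to Liv\v sic-cohomology.

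Next I would feed this into \eqref{parai}. For $\length\in\EuScript Q_{\t,\rho}$ set $\ii^*\length:=\length\circ\ii$, so that (again because $\ii$ is a linear involution of $\a$ preserving $\a^+$) one has $\ii^*\length\in\EuScript Q_{\ii\t,\undi\rho}$ and $\undi\eta\in\EuScript U_{\ii^*\length}$ whenever $\eta\in\EuScript U_\length$; this is the length functional to which $\undi$ naturally transports $\length$, and $\ii^*\length=\length$ precisely when the opposition involution is trivial, which covers most cases of interest. Using $\ii^2=\id$ one then has, for every $\g$, the equalities $(\ii^*\length)\big(\jordan(\undi\rho(\g))\big)=\length\big(\jordan(\rho(\g))\big)$ and $(\ii^*\length)\big(\jordan(\undi\eta(\g))\big)=\length\big(\jordan(\eta(\g))\big)$. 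Consequently the counting set $\sf R_t^{\ii^*\length}(\undi\rho)$ equals $\sf R_t^\length(\rho)$, and the summand indexed by $\g$ in $\II^{\ii^*\length}(\undi\rho,\undi\eta)$ equals that in $\II^\length(\rho,\eta)$; letting $t\to\infty$ gives $\II^\length(\rho,\eta)=\II^{\ii^*\length}(\undi\rho,\undi\eta)$, which is the asserted identity (literally with superscript $\length$ on both sides when $\ii=\id$).

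Finally, for the statement about pressure forms I would push the same computation through the remaining ingredient of the normalized intersection: the potentials $(\ii^*\length)(\ledrappier_{\undi\rho})$ and $\length(\ledrappier_\rho)$ have equal periods, hence equal topological entropy, so $\entropy{\ii^*\length}{\undi\rho}=\entropy\length\rho$; therefore $\JJ^{\ii^*\length}(\undi\rho,\undi\eta)=\JJ^\length(\rho,\eta)$, that is, $\JJ^{\ii^*\length}_{\undi\rho}\circ\undi=\JJ^\length_\rho$ near $\rho$. Since $\rho$ and $\undi\rho$ are the minimizing, hence critical, points of $\JJ^\length_\rho(\cdot)$ and $\JJ^{\ii^*\length}_{\undi\rho}(\cdot)$ (Theorem \ref{JJ}), the Hessians transform naturally: $\hess_\rho\JJ^\length_\rho=(\mathrm d_\rho\undi)^*\hess_{\undi\rho}\JJ^{\ii^*\length}_{\undi\rho}$, and combined with $\hess_\rho\JJ^\length_\rho=\PP^\length_\rho$ this yields $\PP^\length_\rho=(\mathrm d_\rho\undi)^*\PP^{\ii^*\length}_{\undi\rho}$. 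Thus $\undi$ pulls $\PP^{\ii^*\length}$ back to $\PP^\length$, i.e. it is an isometry between pressure forms (a self-isometry of $(\EuScript U_\length,\PP^\length)$ when $\ii=\id$). There is no genuine difficulty here; the one point requiring care is notational, namely that $\undi$ simultaneously transports $\t\mapsto\ii\t$ and $\length\mapsto\length\circ\ii$, so the formula \eqref{parai} must be read with this transported functional on the $\undi$-side, which is invisible whenever the opposition involution is trivial.
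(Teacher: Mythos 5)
Your starting point $\jordan\circ\undi=\ii\circ\jordan$ is correct, and so is the computation it yields, namely $\II^\length(\undi\rho,\undi\eta)=\II^{\length\circ\ii}(\rho,\eta)$. But you then stop and only claim the corollary literally when $\length\circ\ii=\length$ (or when $\ii=\id$), which is strictly weaker than the statement: the corollary asserts $\II^\length(\rho,\eta)=\II^\length(\undi\rho,\undi\eta)$, with the \emph{same} $\length$ on both sides, for \emph{every} $\length$, and hence that $\undi$ is a self-isometry of every $\PP^\length$. This stronger form is exactly what is used later (Lemma \ref{ortogonalfuchs} and the type $\A$ and $\D$ strata in the proof of Theorem \ref{degLietheoretic}, where one needs the $\pm1$-eigenspaces of $\mathrm d_\rho\undiX$ to be $\PP^\length$-orthogonal for arbitrary, typically non-$\ii$-invariant, $\length$). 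Note moreover that when $\ii=\id$ the automorphism $\undi$ is inner, so the cases your argument covers literally are precisely those where the statement is vacuous.

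The missing idea is the inversion symmetry of the marked length spectrum in Eq. \eqref{parai}. Since $\jordan(g^{-1})=\ii(\jordan(g))$, for any representation $\sigma$ and any $\g$ one has
$$\length^\g(\undi\sigma)=\length\big(\ii\,\jordan(\sigma(\g))\big)=\length\big(\jordan(\sigma(\g)^{-1})\big)=\length^{\g^{-1}}(\sigma).$$
The index set of the sum in \eqref{parai} (hyperbolic conjugacy classes) is invariant under $[\g]\mapsto[\g^{-1}]$, so $\sf R_t^\length(\undi\rho)$ is the image of $\sf R_t^\length(\rho)$ under inversion and the $\g$-summand of $\II^\length(\undi\rho,\undi\eta)$ equals the $\g^{-1}$-summand of $\II^\length(\rho,\eta)$; re-indexing gives $\II^\length(\undi\rho,\undi\eta)=\II^\length(\rho,\eta)$ with no hypothesis on $\length$. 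With this in place your final paragraph (equality of entropies, hence of $\JJ$, hence of the Hessians computing $\PP^\length$) goes through verbatim and yields the self-isometry statement in full. In short: $\undi$ does transport $\length$ to $\length\circ\ii$, but $\II$ cannot distinguish $\length$ from $\length\circ\ii$ because the length spectrum is inversion-symmetric, and your write-up omits this second half of the argument.
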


\subsubsection{Entropy regulating form}

Consider the set of \emph{$\length$-normalized variations} of $v$ $$ \VV\length_{\t,v}=\overline{\Big\{\frac{\varjort\g v}{\length\big(\jordan^\g(\rho)\big)}:\g\in\gh_{\mathrm h}\Big\}}.$$

\begin{obs}\label{varphi=cte}By Eq. \eqref{P=0}, the Pressure form $\PP^\length_\rho$ degenerates at $v$ if and only if $\length(\VV\length_{\t,v})=\partial^{\log{}}\entropy\length{}$, in particular $\VV\length_{\t,v}$ must be contained on a level set of $\length$.
\end{obs}

\begin{lemma}\label{cc}If $\rho$ is $\t$-Anosov then $\VV\length_{\t,v}$ is compact and convex. Moreover the map $v\mapsto\VV\length_{\t,v}$ is continuous on the open set $\sf T\EuScript U_\length$.\end{lemma}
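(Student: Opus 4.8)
The plan is to first establish compactness and convexity of $\VV\length_{\t,v}$, and then derive the continuity statement. For the first part, I would realize $\VV\length_{\t,v}$ as the closure of the periods-ratio set $\big\{\varjort\g v/\length(\jordan^\g(\rho)):\g\in\gh_{\mathrm h}\big\}$ and reinterpret it thermodynamically. By Theorem \ref{teoLedr} the periods of $\vec\ledrappier_v$ are $\mathrm d\jordan_\t^\g(v)$ and the periods of $\ledrappier_\rho$ are $\jordan_\t^\g(\rho)$; since $\length\in\conodual{\t,\rho}$, the function $\length(\ledrappier_\rho)$ is Liv\v sic-cohomologous to a strictly positive H\"older function, so we may pass to the reparametrized flow $\phi^{\length(\ledrappier_\rho)}$ on $\UG$. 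For a periodic orbit $[\g]$ of $\phi$ with $\phi$-period $\ell(\g)$, the ratio $\varjort\g v/\length(\jordan^\g(\rho))$ is exactly the average $\frac1{p_{\phi^{\length(\ledrappier_\rho)}}([\g])}\int_{[\g]}\pi_\t\vec\ledrappier_v$ of the (pushed-forward) H\"older function $\pi_\t\vec\ledrappier_v$ along the orbit $[\g]$ with respect to the reparametrized flow. Then $\VV\length_{\t,v}$ is the closure of $\big\{\int \pi_\t\vec\ledrappier_v\,\mathrm d\mu:\mu=\mu_{[\g]}^\#\text{ a periodic Abramov-normalized measure}\big\}$.

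Now I invoke the standard density of periodic orbit measures among all $\phi^{\length(\ledrappier_\rho)}$-invariant probability measures (a consequence of the specification property / metric-Anosov structure, Theorem \ref{tutti}): the set of such periodic measures is dense in $\medidas^{\phi^{\length(\ledrappier_\rho)}}$ in the weak-$*$ topology. Since $\mu\mapsto\int\pi_\t\vec\ledrappier_v\,\mathrm d\mu$ is continuous and linear on the compact convex set $\medidas^{\phi^{\length(\ledrappier_\rho)}}$, its image is compact and convex, and the closure of the image of the dense subset of periodic measures equals this full image. Hence $\VV\length_{\t,v}=\big\{\int\pi_\t\vec\ledrappier_v\,\mathrm d\mu:\mu\in\medidas^{\phi^{\length(\ledrappier_\rho)}}\big\}$ is compact and convex. (Alternatively, and equivalently, one can realize $\VV\length_{\t,v}$ as $\medidas^\phi(\pi_\t\vec\ledrappier_v,\length(\ledrappier_\rho))$-type ratio set and quote the same convexity mechanism used for $\medidas^\phi(F)$ in \S\ref{thermopre}.)

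For the continuity of $v\mapsto\VV\length_{\t,v}$ on $\sf T\EuScript U_\length$, I would argue with the Hausdorff topology on compact convex subsets of $\a_\t$. By Theorem \ref{teoLedr} (analyticity of $\rho\mapsto\ledrappier_\rho$, hence $v\mapsto\vec\ledrappier_v$ on $\sf T\EuScript U_\length$) and the local continuity of the reparametrizing function $\length(\ledrappier_\rho)$ and of the conjugacy between $\phi^{\length(\ledrappier_\rho)}$ and $\phi^{\length(\ledrappier_\eta)}$ for $\eta$ near $\rho$, the map sending $(\eta,w)$ to the pair (affine functional $\mu\mapsto\int\pi_\t\vec\ledrappier_w\,\mathrm d\mu$, convex body $\medidas^{\phi^{\length(\ledrappier_\eta)}}$ transported to a fixed model) varies continuously; composing with the continuous operation ``image of a convex body under an affine map'' gives continuity of $(\eta,w)\mapsto\VV\length_{\t,w}$ in the Hausdorff metric. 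Concretely: an upper bound on $\|\vec\ledrappier_w\|_{C^0}$ that is locally uniform gives a locally uniform bound on $\VV\length_{\t,w}$, hence precompactness; and any Hausdorff limit of $\VV\length_{\t,w_n}$ as $(\eta_n,w_n)\to(\rho,v)$ must coincide with $\VV\length_{\t,v}$ because every support functional $a^*\in(\a_\t)^*$ satisfies $\max_{x\in\VV\length_{\t,w}}a^*(x)=\sup_\mu\int a^*(\pi_\t\vec\ledrappier_w)\,\mathrm d\mu=P_{\phi^{\length(\ledrappier_\eta)}}\text{-type quantity}$, which depends continuously on $(\eta,w)$ by Proposition \ref{pder} and the analyticity above.

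The main obstacle I anticipate is the continuity statement, specifically controlling $\medidas^{\phi^{\length(\ledrappier_\eta)}}$ uniformly as $\eta$ varies: one must either track the orbit-equivalence between the reparametrized flows $\phi^{\length(\ledrappier_\eta)}$ for nearby $\eta$ (so as to pull everything back to the fixed flow $\phi$ on $\UG$ and compare integrals of functions that now vary continuously), or argue directly with support functions and the continuity of pressure. Both routes are standard in this circle of ideas (cf. the discussion around $\medidas^\phi(F)$ and Proposition \ref{babled1}), but writing the orbit-equivalence dependence carefully is the delicate point; the compactness and convexity, by contrast, are immediate from the linear-image-of-a-convex-set description once the thermodynamic reinterpretation is in place.
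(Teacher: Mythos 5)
Your argument is correct and is essentially the paper's proof: the paper likewise passes to the reparametrized flow $\phi^{\length(\ledrappier)}$ via the Abramov transform and identifies $\VV\length_{\t,v}$ with $\medidas^{\phi^{\length(\ledrappier)}}\big(\vec\ledrappier/\length(\ledrappier)\big)$, the image of the compact convex set of invariant probability measures under a continuous affine map, from which compactness, convexity and (by the continuous dependence of $\vec\ledrappier_w$ and $\ledrappier_\eta$ from Theorem \ref{teoLedr}) Hausdorff continuity all follow. The only cosmetic difference is that the correct integrand for the reparametrized flow is $\vec\ledrappier/\length(\ledrappier)$ rather than $\pi_\t\vec\ledrappier_v$ itself, a normalization you implicitly acknowledge.
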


\begin{proof}Recall we have a base-flow $\phi=(\phi_t:\sf U\gh\to\sf U\gh)_{t\in\R}$. Consider the Ledrappier potential $\ledrappier:\sf U\gh\to\a_\t$ together with its variation $\vec\ledrappier:\sf U\gh\to\a_\t$ associated to $v$. By definition, for every $\g\in\gh$ $$\int_\g\ledrappier=\jordan^\g(\rho)\textrm{ and }\int_\g\vec\ledrappier=\mathrm d\jordan_\t^\g(v).$$ If we let $\phi^{\length(\ledrappier)}$ be the reparametrization of $\phi$ by $\length(\ledrappier)$ (recall \S\,\ref{thermopre}) and we let $\mu\mapsto\mu^\#$ be the Abramov transform on invariant measures, then $$\frac{\mathrm d\jordan_\t^\g(v)}{\length^\g(\rho)}=\frac1{\ell_{\phi^{\length(\ledrappier)}}(\g)}\int_{\g^\#}\frac{\vec\ledrappier}{\length(\ledrappier)}$$ so $\VV{\length}_{\t,v}=\medidas^{\phi^{\length(\ledrappier)}}\big(\vec\ledrappier/\length(\ledrappier)\big),$ which yields the desired conclusion.\end{proof}

It is convenient to name a particular point of $\VV\length_{\t,v}$, to do so we introduce the \emph{length cone-bundle} that fibers over Anosov representations:
\begin{alignat*}{2}\lenf_\t(\gh,\sf G) & =\Big\{(\rho,\length)\in\Anosov_\t(\gh,\sf G)\times(\a_\t)^*: \length\in\conodual\rho\Big\}.\end{alignat*}

\begin{cor}[Entropy regulating form]\label{masa}\label{minte} There exists an analytic fibered map $$\mass{}{}:\lenf_\t(\gh,\sf G)\to\sf T^*\Anosov_\t(\gh,\sf G)\otimes\a_\t$$  such that if $(\rho,\length)\in\lenf_\t(\gh,\sf G)$ and $v\in\sf T_\rho\Anosov(\gh,\sf G)$ then $\mass{\length} (v)\in\relinte\VV\length_{\t,v}$ and \begin{alignat}{2}\label{masaentropia}\length\big(\mass{\length} {(v)}\big)&=-\mathrm d\log\entropy\length{}(v).\end{alignat} Moreover, if $\vec\ledrappier$ and $\length(\ledrappier)$ are Liv\v sic-co\-ho\-mo\-lo\-gi\-cally independent, then $\VV\length_{\t,v}$ has non-empty interior and thus $\mass{\length} (v)\in\inte\VV\length_{\t,v}$.
\end{cor}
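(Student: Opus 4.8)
The plan is to define $\mass{}{}$ using the measure of maximal entropy of a reparametrisation of the reference flow, and then to read off each assertion from the thermodynamical formalism of \S\ref{thermopre} --- Corollary \ref{derivadaentropia} for the entropy identity, and Remark \ref{maxinterior} for the interior statements.

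Fix $(\length,\rho)\in\lenf_\t(\gh,\sf G)$. Because $\length\in\conodual\rho$, by definition of a length functional the function $\length(\ledrappier_\rho)$ is Liv\v sic-cohomologous to a strictly positive H\"older function, so the reparametrised flow $\phi^{\length(\ledrappier_\rho)}$ on $\UG$ is again H\"older and metric-Anosov; let $\mmax$ denote its unique (fully supported, Gibbs) measure of maximal entropy, so that $\mmax=\gibbs_{-\entropy\length\rho\cdot\length(\ledrappier_\rho)}^{\#}$. For $v\in\sf T_\rho\Anosov_\t(\gh,\sf G)$, with Ledrappier variation $\vec\ledrappier_v\colon\UG\to\a_\t$ (periods $\mathrm d\jordan_\t^\g(v)$), I would set
\[
\mass{\length}{(v)}\;:=\;\int_{\UG}\frac{\vec\ledrappier_v}{\length(\ledrappier_\rho)}\,d\mmax\ \in\ \a_\t,
\]
which by the Abramov formula equals $\big(\int_{\UG}\vec\ledrappier_v\,d\gibbs_{-\entropy\length\rho\cdot\length(\ledrappier_\rho)}\big)\big/\big(\int_{\UG}\length(\ledrappier_\rho)\,d\gibbs_{-\entropy\length\rho\cdot\length(\ledrappier_\rho)}\big)$. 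The latter form depends only on the Liv\v sic-cohomology classes of $\vec\ledrappier_v$ and of $\length(\ledrappier_\rho)$ (integrals of coboundaries against invariant measures vanish, and $\entropy\length\rho=\hh^{\length(\ledrappier_\rho)}$ is cohomological), so $\mass{\length}{(v)}$ is well defined; it is linear in $v$ because $v\mapsto\vec\ledrappier_v$ is the differential of the analytic Ledrappier embedding, hence $(\length,\rho)\mapsto\big(v\mapsto\mass{\length}{(v)}\big)$ is a fibered section of $\sf T^*\Anosov_\t(\gh,\sf G)\otimes\a_\t$. Its analyticity I would obtain by assembling: analyticity of $\rho\mapsto\ledrappier_\rho$ (Theorem \ref{teoLedr}), hence of $(\length,\rho)\mapsto\length(\ledrappier_\rho)$; analyticity of $f\mapsto\hh^f$ and, by Theorem \ref{pder} and the classical Parry-Pollicott theory, weak-$*$ analyticity of $f\mapsto\gibbs_{-\hh^ff}$ tested against H\"older functions; and analyticity of $\rho\mapsto\vec\ledrappier_\bullet$ as a family of bounded operators.

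Granting this, the entropy identity \eqref{masaentropia} is immediate: applying Corollary \ref{derivadaentropia} to $f_t=\length(\ledrappier_{\rho_t})$ along a path tangent to $v$, its logarithmic derivative is $\partial^{\log}f=\length(\vec\ledrappier_v)/\length(\ledrappier_\rho)=\length\big(\vec\ledrappier_v/\length(\ledrappier_\rho)\big)$, whence $\mathrm d\log\entropy\length{}(v)=\partial^{\log}\hh=-\int_{\UG}\partial^{\log}f\,d\mmax=-\length\big(\mass{\length}{(v)}\big)$. For $\mass{\length}{(v)}\in\relinte\VV\length_{\t,v}$, I would invoke the identity $\VV\length_{\t,v}=\medidas^{\phi^{\length(\ledrappier_\rho)}}\big(\vec\ledrappier_v/\length(\ledrappier_\rho)\big)$ established in the proof of Lemma \ref{cc}, which exhibits $\mass{\length}{(v)}$ as the barycentre of this compact convex set against the fully supported measure $\mmax$. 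Finally, the ``moreover'' is Remark \ref{maxinterior}: via the passage from $\phi$-coboundaries to $\phi^{f}$-coboundaries (a H\"older $g$ being a $\phi^{f}$-coboundary iff $fg$ is a $\phi$-coboundary), the hypothesis that $\vec\ledrappier_v$ and $\length(\ledrappier_\rho)$ are Liv\v sic-cohomologically independent is exactly Assumption \ref{assuA} for the pair $\big(\vec\ledrappier_v/\length(\ledrappier_\rho),1\big)$ relative to $\phi^{\length(\ledrappier_\rho)}$ (its periods being $\big(\mathrm d\jordan_\t^\g(v),\length^\g(\rho)\big)$), so $\VV\length_{\t,v}$ has non-empty interior and $\mass{\length}{(v)}$ lies in it.

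I expect the relative-interior statement, in the absence of the independence hypothesis, to be the main point requiring work: one must show that the barycentre of $\VV\length_{\t,v}$ against a fully supported measure always lands in the relative interior. The plan is to reduce to Assumption \ref{assuA}. Write $F=\vec\ledrappier_v/\length(\ledrappier_\rho)$, let $W\subseteq\a_\t$ be the direction space of the affine hull of $\medidas^{\phi^{\length(\ledrappier_\rho)}}(F)=\VV\length_{\t,v}$, and let $q\colon\a_\t\to\a_\t/W$ be the quotient. Every orbit average of $F$ lies in that affine hull, so $q(F)$ has all periods proportional to the period of the constant $1$, hence $q(F)$ is Liv\v sic-cohomologous to a constant; after an adjustment by a constant and a coboundary, $F$ is therefore Liv\v sic-cohomologous to a $W$-valued H\"older function $F_W$ whose periods span $W$ and for which $F_W$ and $1$ are independent in $W$ --- i.e. Assumption \ref{assuA} holds inside $W$. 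Remark \ref{maxinterior} then gives $\int F_W\,d\mmax\in\inte\medidas^{\phi^{\length(\ledrappier_\rho)}}(F_W)$ (relative to $W$), which translates back to $\mass{\length}{(v)}\in\relinte\VV\length_{\t,v}$. The remaining point --- the analytic dependence (weak-$*$, tested against H\"older functions) of $\mmax$ and of the Gibbs states on $(\length,\rho)$ --- is routine given \cite{pressure,parrypollicott} and Theorem \ref{pder}.
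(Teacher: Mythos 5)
Your proposal is correct and follows exactly the paper's route: the paper defines $\mass\length{(v)}=\int \vec\ledrappier/\length(\ledrappier)\,d\mmax_{\phi^{\length(\ledrappier)}}$ and deduces the claims from Corollary \ref{derivadaentropia}, Proposition \ref{babled1} and Remark \ref{maxinterior}, which is precisely your argument. Your write-up merely fills in the details (Abramov, cohomology-invariance, and the reduction to the affine hull for the relative-interior claim) that the paper leaves implicit.
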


\begin{proof} 
For $(\rho,\length)\in\lenf_\t(\gh,\sf G)$ and $v\in\sf T_\rho\Anosov_\t(\gh,\sf G)$ we let $\mass\length{v}=\int \frac{\vec\ledrappier}{\length(\ledrappier)}d\mmax_{\phi^\length}.$ The corollary follows from Corollary \ref{derivadaentropia}, Proposition \ref{babled1} and Remark \ref{maxinterior}.\end{proof}

\subsubsection{A needed Lemma} The following is only needed in Corollary \ref{levelsets}.

\begin{lemma}\label{masainte}Assume $\sf G$ is simple and has rank $1$. Let $\rho\in\Anosov_{\simple}(\gh,\sf G)$ have Zariski-dense image and consider a non-zero $v\in\sf T_\rho\caracteres$. Then, the set $$\VV{}_v=\overline{\left\{\frac {\mathrm{d}\lambda^\g(v)}{\lambda^\g(\rho)}:\g\in\G\right\}}\subset\R$$ is a compact interval, has non-empty interior and $-\mathrm{d}\log \entropy{}{}(v)\in\inte\VV{}_v.$\end{lemma}

\begin{proof} Since $\sf G$ has rank $1$ the Ledrappier potential $\ledrappier$ and $\vec\ledrappier$ are real valued. In order to apply Corollary \ref{minte} we need to show that they are Liv\v sic-co\-ho\-mo\-lo\-gi\-cally independent, a fact contained in Theorem \ref{indeplambda1} below.\end{proof}

We conclude the section by stating the following result from Bridgeman-Canary-Labourie-S. \cite{pressure}. If  $\rho:\gh\to\sf G\subset\SL(d,\R)$ then we say that $\rho$ is $\sf G$-\emph{generic} if $\rho(\gh)$ contains an element that is loxodromic in $\sf G$.

\begin{thm}[{\cite[Lemma 9.8+Prop. 10.1]{pressure}}]\label{indeplambda1} Let $\rho:\gh\to\sf G\subset \SL(d,\R)$ be projective Anosov and irreducible. Let $(\rho_t)_{t\in(-\eps,\eps)}$ be a smooth curve through $\rho$ of $\sf G$-generic representations. If there exists $K\in\R$ such that for all $\g\in\gh$ one has $$\deriva t0\jordan_1(\rho_t\g)=K\jordan_1(\rho\g)$$ then $K=0$ and the class $\co\in H^1_{\Ad_{\sf G}\rho}(\gh,\sf G)$ associated to $\vec\rho$ is trivial.
\end{thm}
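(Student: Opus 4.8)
The plan is to translate the hypothesis into thermodynamic formalism, recognise it as a degeneration of the pressure form $\PP^{\peso_1}$, and then to deduce $v=0$ (whence also $K=0$) from the infinitesimal rigidity of the marked top‑eigenvalue spectrum of an irreducible representation.

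\emph{Step 1 --- thermodynamic reformulation.} Set $\Fl=\peso_1\circ\ledrappier_\rho$ and let $\vec\Fl$ be its variation along $v$, so that by Theorem \ref{teoLedr} one has $\int_{[\g]}\Fl=\jordan_1^\g(\rho)$ and $\int_{[\g]}\vec\Fl=\mathrm d\jordan_1^\g(v)$ for every $\g\in\gh_{\mathrm h}$. Since $\rho$ is projective Anosov, $\peso_1$ is positive on the limit cone of $\rho$, so $\Fl$ is Liv\v sic-cohomologous to a strictly positive function and one may pass to the reparametrised flow $\flujo^\Fl$ whose periodic orbit $[\g]$ has period $\jordan_1^\g(\rho)$. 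The hypothesis says precisely that $\vec\Fl$ and $K\Fl$ have equal periods, hence --- by Liv\v sic's theorem for the H\"older metric-Anosov flow of Theorem \ref{tutti} --- they are Liv\v sic-cohomologous; equivalently $\vec\Fl/\Fl$ is $\flujo^\Fl$-cohomologous to the constant $K$. Integrating this against $\mmax_{\flujo^\Fl}$ and invoking Corollary \ref{derivadaentropia} gives $K=-\mathrm d\big(\log\entropy{\peso_1}{}\big)(v)$; thus, by Remark \ref{varphi=cte} (equivalently by the identity $\hess\JJ=\PP$ of Theorem \ref{JJ}), the hypothesis is exactly the statement that the pressure form $\PP^{\peso_1}_\rho$ is degenerate at $v$. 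It therefore suffices to prove that for irreducible $\rho$ this forces $v=0$: the conclusion $K=0$ then follows, since the Anosov property furnishes a proximal $\g$ with $\jordan_1^\g(\rho)\ne0$ while $v=0$ gives $0=\mathrm d\jordan_1^\g(v)=K\jordan_1^\g(\rho)$.

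\emph{Step 2 --- from eigenvalues to the cross ratio.} For $\g$ with $\rho(\g)$ proximal write $\jordan_1^\g(\rho_t)=\log|\beta_t(\rho_t(\g)w_t)|-\log|\beta_t(w_t)|$, where $[w_t]$ is the attracting eigenline of $\rho_t(\g)$ and $\ker\beta_t$ its repelling hyperplane; differentiating at $t=0$ the terms carrying $\dot w_0$ and $\dot\beta_0$ cancel (since $[w_0]$ and $\ker\beta_0$ are $\rho(\g)$-invariant and carry the dominant eigenvalue), leaving $\mathrm d\jordan_1^\g(v)=\beta_0(\co_v(\g)w_0)/\beta_0(w_0)=\tr\big(\co_v(\g)\,\pi_{\rho(\g)}\big)$, with $\pi_{\rho(\g)}$ the rank-one spectral projector onto $[w_0]$; so the hypothesis reads $\tr(\co_v(\g)\pi_{\rho(\g)})=K\jordan_1^\g(\rho)$ for all proximal $\g$. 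Next I would differentiate Benoist's cross-ratio asymptotics (Theorem \ref{CR}): applying it to a transversally proximal pair $\rho_t(\g),\rho_t(\eta)$ and feeding in the hypothesis for $\g^n$, $\eta^n$ and $\g^n\eta^n$, the powers $n$ drop out and one obtains $\mathrm d\big[\peso_1\big(\crossg_1(\rho_t(\g),\rho_t(\eta))\big)\big](v)=K\,\peso_1\big(\crossg_1(\rho(\g),\rho(\eta))\big)$; by the density of attracting and repelling fixed points in the limit set, continuity, and the cocycle identities of the additive cross ratio $\cross_1$ of \S\ref{RD}, the same relation holds for $\cross_1$ evaluated on every $4$-tuple of limit flags in general position: the first variation along $v$ of $\cross_1$ equals $K$ times $\cross_1$.

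\emph{Step 3 --- rigidity and conclusion.} Since $\rho(\gh)$ acts irreducibly, the attracting lines $[w_{\rho(\g)}]$ span $\R^d$ and the repelling covectors $\beta_{\rho(\g)}$ span $(\R^d)^*$, so the cross ratio $\cross_1$ of limit flags over-determines the $\gh$-equivariant configuration $\big(\xi_\rho^1(\bord\gh),\xi_\rho^{d-1}(\bord\gh)\big)$; the cross-ratio relations of Step 2 then force, on the one hand, $K=0$, and on the other that the first-order deformation of this configuration is induced by a single element $u\in\sl(d,\R)$, i.e. that $\co_v=u-\Ad\rho(\cdot)u$ is a coboundary. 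As $\sf G$ is semisimple, $\ge=\Lie\sf G$ is a direct summand of $\sl(d,\R)$ as an $\Ad\sf G$-module, so an $\sl(d,\R)$-coboundary with values in $\ge$ is automatically a $\ge$-coboundary; hence $\co_v$ is cohomologically trivial in $H^1_{\Ad\rho}(\gh,\ge)$, i.e. $v=0$. The hard part is precisely this last step: passing from ``the marked cross-ratio spectrum varies by a single overall scaling'' to ``the defining cocycle is a coboundary (and the scaling is trivial)'' is an infinitesimal marked-cross-ratio rigidity statement --- the point where irreducibility is genuinely used, and where the bulk of the work of \cite[\S\,9--10]{pressure} lies --- and it is the careful bookkeeping of normalisations in the Benoist limit that allows one to conclude $K=0$ rather than to assume it.
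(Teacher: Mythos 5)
First, a point of reference: the paper does not prove this statement — it is imported wholesale from Bridgeman--Canary--Labourie--S. \cite{pressure} (their Lemma~9.8 and Proposition~10.1), so your proposal has to be measured against that proof rather than against anything in this text. Your Steps 1 and 2 reconstruct the BCLS set-up correctly: the identification of the hypothesis with degeneracy of $\PP^{\peso_1}$ at $v$, with $K=-\mathrm d\log\entropy{\peso_1}{}(v)$ forced by Corollary \ref{derivadaentropia}; the first-order perturbation formula $\mathrm d\jordan_1^\g(v)=\tr\big(\co_v(\g)\pi_{\rho(\g)}\big)$; and the passage to the first variation of the cross ratio via Theorem \ref{CR}. (The interchange of $\partial_t$ with Benoist's limit needs a word of justification for a merely differentiable curve; in \cite{pressure} one instead computes $\partial_t\log \mathrm b_{\rho_t}$ directly from the $\class^1$-varying limit maps and only then matches it with the eigenvalue asymptotics.) Your closing remark that an $\sl(d,\R)$-coboundary taking values in $\ge$ is automatically a $\ge$-coboundary, by complete reducibility, is also correct and genuinely needed to land in $H^1_{\Ad\rho}(\gh,\ge)$.

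The gap is Step 3, and it is not a small one: the two assertions you make there — that $\partial_t\log\mathrm b_{\rho_t}=K\log\mathrm b_{\rho}$ on all of $\bord^{(4)}\gh$ forces $K=0$, and that vanishing of the first variation of the cross ratio forces the infinitesimal deformation of $(\xi^1,\xi^{d-1})$ to be induced by a single $u\in\sl(d,\R)$ with $\co_v=u-\Ad\rho(\cdot)u$ — are exactly the content of \cite[Lemma 9.8]{pressure} and \cite[Prop. 10.1]{pressure}, and neither receives an argument in your text. Be aware that $K=0$ does not follow from the functional identities of the cross ratio alone: the additive cocycle relations of $\cross_1$ in \S\,\ref{RD} are homogeneous and hence compatible with any overall scaling $K$; nor does it follow from entropy considerations, since the first variation of the renormalized intersection $\JJ$ vanishes automatically at its minimum (Theorem \ref{JJ}), which is consistent with every $K$. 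One really needs the interplay between irreducibility (Burnside: $\rho(\gh)$ spans $\End(\R^d)$; the attracting lines and repelling covectors span $\R^d$ and $(\R^d)^*$) and the marked first-order data, and that interplay is where essentially all of the mathematical content of the theorem — and of BCLS's \S\,9--10 — resides. As written, the proposal is an accurate road map of the known proof, not a proof.
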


\subsubsection{Cross ratios}\label{crog}

A decomposition $\R^d=\ell\oplus V$ into a line and a hyperplane defines a rank-1 projection denoted by $\pi_{(\ell,V)}$. If $\R^d=r\oplus W$ is such that $\dim r=1$ and moreover $r\cap V=\{0\}=\ell\cap W$ (we say in this case that the decompositions are \emph{transverse}) then we define the (multiplicative) \emph{cross ratio} by \begin{equation}\label{B1}\crossm_1(\ell,V,r,W)=\traza(\pi_{\ell,V}\pi_{r,W}).\end{equation}

Let now $\eta:\gh\to\PGL(d,\R)$ be a projective Anosov representation with equivariant maps $\xi^1:\bord\gh\to\P(\R^d)$ and $\xi^{d-1}:\bord\gh\to\P((\R^d)^*).$ A pair $(x,y)\in\bord^2\gh$ defines a decomposition $\R^d=\xi^1(y)\oplus\xi^{d-1}(x)$. If we let $\bord^{(4)}\gh $ the space of pairwise distinct four-tuples then an element $(x,y,z,t)\in\bord^{(4)}\gh$ defines two transverse decompositions so we define the \emph{cross ratio map} $\mathrm b_\eta:\bord^{(4)}\gh\to\R$ by $$\mathrm b_{\eta}(x,y,z,t)=\crossm_1\Big(\xi^1(y),\xi^{d-1}(x),\xi^1(t),\xi^{d-1}(z)\Big).$$

If $\rho\in\Anosov_\t(\gh,\sf G)$ then we define the $\a_\t$-valued cross ratio map $\crossG_\rho:\bord^{(4)}\gh\to\a_\t$ by, for every $\sroot\in\t$, $\peso_\sroot(\crossG_\rho(x,y,z,t))=\log|\mathrm b_{\rep_\sroot}(x,y,z,t)|.$

\begin{thm}[\cite{pressure}]\label{crosan} If $\{\rho_u\}_{u\in D}\subset\Anosov_\t(\gh,\sf G)$ is an analytic family then for every four-tuple $(x,y,z,t)\in\bord^{(4)}\gh$ the map $u\mapsto\crossG_{\rho_u}(x,y,,z,t)$ is real-analytic.
\end{thm}


\addtocontents{toc}{\protect\setcounter{tocdepth}{1}}

\part{Affine actions}\label{affineactions}

\section{Margulis invariant: basics}

\subsection{An elementary lemma}\label{s.unmar}

Let $V$ be a finite dimensional real vector space and consider the \emph{affine group} $\Aff(V)=\GL(V)\ltimes  V.$ An element $f\in\Aff(V)$ has  a \emph{linear part} $\lin f\in\GL(V)$ and a \emph{translation part} $\tra f$ so that $\forall u\in V$ one has $$f(u)=\lin f(u)+\tra f.$$ 

Let us consider the (possibly trivial) generalized eigenspace of $\lin f$ associated to the eigenvalue $1$ $$\cero=\cero(\lin f)=\big\{w\in V:\exists n\geq0\textrm{ with }(\lin f-\id)^nw=0\big\}$$ and define the \emph{un-normalized Margulis invariant of $f$,} $\umargulis(f)\in \cero,$ as follows. Jordan's decomposition of $\lin f$ guarantees the existence of a $\lin f$-invariant decomposition $V=W\oplus \cero$, let $\pi^1:V\to \cero$ be the associated projection and define \begin{equation}\label{unmargu}\umargulis(f)=\pi^1(\tra f).\end{equation}

\begin{obs}\label{inversa} It follows at once that $\umargulis(f^{-1})=-\umargulis(f).$
\end{obs}

\begin{lemma}\label{elem}Consider $f\in\Aff(V),$ then there exists $o\in V$ such that the translate $\espacio(f)=\cero(\lin f)+o$ is invariant by $f.$ Moreover, the transformation $\cero\to \cero$ defined by $v\mapsto f(v+o)-o$ has linear part $\lin f|\cero$ and  translation part $\umargulis(f).$\end{lemma}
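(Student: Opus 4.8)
The plan is to decompose $V$ with respect to $\lin f$ and solve a fixed-point equation on the complement of $\cero(\lin f)$. Write $V=W\oplus\cero$ for the $\lin f$-invariant Jordan decomposition used to define $\umargulis(f)$, with projections $\pi^W:V\to W$ and $\pi^1:V\to\cero$. The key point is that $\lin f|_W$ does \emph{not} have $1$ as an eigenvalue, since $\cero=\cero(\lin f)$ already absorbed the entire generalized $1$-eigenspace. Hence $\lin f|_W-\id:W\to W$ is invertible. I would then set $o=-(\lin f|_W-\id)^{-1}\big(\pi^W(\tra f)\big)\in W\subset V$ and claim this $o$ works.

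To verify this, first I would check that $\espacio(f)=\cero+o$ is $f$-invariant. For $w\in\cero$, compute
\[
f(w+o)=\lin f(w)+\lin f(o)+\tra f=\lin f(w)+\big(\lin f(o)-o\big)+\tra f+o.
\]
Since $\lin f(w)\in\cero$ (as $\cero$ is $\lin f$-invariant) and $o\in W$, the term $\lin f(o)-o=(\lin f|_W-\id)(o)=-\pi^W(\tra f)$ lies in $W$; adding $\tra f=\pi^W(\tra f)+\pi^1(\tra f)=\pi^W(\tra f)+\umargulis(f)$ gives $\lin f(o)-o+\tra f=\umargulis(f)\in\cero$. Therefore $f(w+o)=\big(\lin f(w)+\umargulis(f)\big)+o\in\cero+o$, proving invariance. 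The very same computation identifies the induced map: $v\mapsto f(v+o)-o=\lin f(v)+\umargulis(f)$ on $\cero$, whose linear part is $\lin f|_\cero$ and whose translation part is $\umargulis(f)$, as asserted.

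This is essentially a one-line linear-algebra argument once the right splitting is in place, so there is no serious obstacle; the only thing to be careful about is the invertibility of $\lin f|_W-\id$, which is exactly why the \emph{generalized} (rather than ordinary) eigenspace for the eigenvalue $1$ is used in the definition of $\cero(\lin f)$. I would remark that $o$ is unique only up to adding an element of $\cero$ (indeed $\espacio(f)=\cero+o$ only depends on the coset), and that the construction recovers Remark \ref{inversa} since replacing $f$ by $f^{-1}$ flips the sign of the induced translation part.
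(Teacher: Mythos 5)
Your proof is correct and follows essentially the same route as the paper: the same splitting $V=W\oplus\cero$, the same observation that $\lin f|_W-\id$ is invertible, the same explicit fixed point $o=-(\lin f|_W-\id)^{-1}\pi^W(\tra f)$, and the same one-line computation identifying the induced affine map on $\cero+o$. The extra remarks on uniqueness of $o$ modulo $\cero$ and on the sign flip for $f^{-1}$ are accurate but not needed.
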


\begin{proof} Indeed, $1$ is not an eigenvalue of $\lin f|W$ and thus $\lin f|W-\id_W$ is invertible. Let $\pi:V\to W$ the projection following the decomposition $V=W\oplus \cero.$ The transformation $\pi f=\pi\circ(f|W):W\to W$ $$\pi f(w)=\lin fw+\pi(\tra f)$$ has a unique fixed point $\o=\o_f,$ defined by $\o=-(\lin f -\id)^{-1}\pi(\tra f).$ For $v\in \cero$ one has \begin{alignat*}{2}f(\o+v) & =\lin f(\o+v)+v_f\\ & =\lin f(\o)+\lin f(v)+\pi v_f+\umargulis(f)\\ & =\lin f(\o)+\pi v_f+\lin f(v)+\umargulis(f) =\o+\lin f(v)+\umargulis(f)\in \o+\cero,\end{alignat*} concluding the proof.\end{proof}

We end this subsection with the following remarks:

\begin{obs}\label{Invariancia}\item\begin{itemize}\item[-]For every $h\in\GL(V)$ one has $h\big(\umargulis(f)\big)=\umargulis(hfh^{-1}).$\item[-] Assume that $\lin f|\cero=\id,$ then for all $u\in V$ the maps $f$ and $f+u-\lin f u$ have the same un-normalized Margulis invariant, that is  $$\umargulis(f)=\umargulis(f+u-\lin fu).$$ Indeed $\pi^0\big(u-\lin f(u)\big)=0.$ \end{itemize}
\end{obs}

\subsection{Around the $0$-restricted-weight of an irreducible representation}

We consider now a reductive real-algebraic group $\sf G$ and an irreducible representation $\rep:\sf G\to\SL(V) $ with $0\in\poif$ and define \begin{alignat*}{2}\poif^{\longest} & =\big\{\chi\in\poif:\chi\circ\longest=\chi\big\},\\ \Vlongest & =\bigoplus_{\chi\in\poif^\longest}V^\chi.\end{alignat*} The vector space $\Vlongest$ will be called the \emph{ideally neutral space} of $\rep$. For $v\in\a$ we let \begin{alignat*}{2}\poif^{v,+} &=\{\chi\in\poif:\chi(v)>0\},\nonumber\\\poif^{v,-} & =\{\chi\in\poif:\chi(v)<0\}.\end{alignat*}

An element of $\Fix(\ii)$ necessarily annihilates all $\chi\in\poif^{\longest}$. We fix an $X_0\in\a^+\cap\Fix(\ii)$ which does not belong to the finite union of kernels $\ker\chi,$ $\chi\in\poif\setminus\poif^{\longest}$. This choice provides a partition $\poif=\poif^{\longest}\cup\poif^+\cup\poif^-$, where \begin{alignat*}{2}\poif^+ &:=\poif^{X_0,+},\\\poif^- & :=\poif^{X_0,-},\end{alignat*} with the advantage that $\longest(\poif^+)=\poif^-$ and $\{\chi\in\poif:\chi(X_0)=0\}=\poif^\longest$. 

We now consider the set of simple roots $\vt$ such $$\simple-\vt=\{\sroot\in\simple:\re_\sroot(\poif^+)=\poif^+\}.$$

\noindent
Then one has the following (observe we are using the opposite convention for parabolic groups than Smilga \cite{SmilgaAnnalen}).

\begin{prop}[{Smilga \cite[Prop. 6.4, Lemma 6.5]{SmilgaAnnalen}}]\label{amas} If $\sroot\in\simple-\vt$ then $\re_\sroot$ necessarily fixes each $\chi\in\poif^\longest$. The set $\vt$ is $\ii$-invariant and the parabolic group $\sf P^{\vt}$ is the stabilizer in $\sf G$ of each of $$a^+ :=\bigoplus_{\chi\in\poif^+}V^\chi\textrm{ and }A^+  :=a^+\oplus\Vlongest.$$ The group $\check{\sf P^\vt}$ is the stabilizer of  $a^- :=\bigoplus_{\chi\in\poif^-}V^\chi\textrm{ and }A^-  :=a^-\oplus\Vlongest.$ \end{prop}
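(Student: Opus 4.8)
The plan is to work entirely inside the weight-space decomposition $V=\bigoplus_{\chi\in\poif}V^\chi$ and track how the Weyl group, and in particular the simple reflections $\re_\sroot$, permute the weights. First I would establish the claim that any $\sroot\in\simple-\vt$ fixes each $\chi\in\poif^\longest$. By definition of $\vt$, for $\sroot\in\simple-\vt$ the reflection $\re_\sroot$ preserves the set $\poif^+$, hence (applying $\longest$ and using $\longest(\poif^+)=\poif^-$, together with the fact that $\re_\sroot$ and $\longest$ are both in $\Weyl$) it preserves $\poif^-$ as well; since $\poif=\poif^\longest\sqcup\poif^+\sqcup\poif^-$ and $\re_\sroot$ permutes $\poif$, it must preserve the complement $\poif^\longest$. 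It then remains to upgrade "$\re_\sroot$ preserves $\poif^\longest$" to "$\re_\sroot$ fixes each element of $\poif^\longest$". Here I would use that $\poif^\longest=\{\chi\in\poif:\chi(X_0)=0\}$ and that $\re_\sroot$ preserves a chosen $\Weyl$-invariant inner product on $\a^*$; the reflection $\re_\sroot$ acts on the subspace $\poif^\longest$ spans, and one argues — using that $X_0\in\Fix(\ii)$ is chosen generically, i.e. killing only the weights in $\poif^\longest$ among all weights — that $\re_\sroot$ restricted to the span of $\poif^\longest$ must be trivial, since otherwise it would move some weight out of the hyperplane $\ker X_0$ or create a weight not in $\poif$. (This genericity of $X_0$ is exactly what makes the argument go through, so I would be careful to invoke it precisely.)

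Next, for the $\ii$-invariance of $\vt$: since $\longest$ realizes $-\ii$ on $\a^*$ and $\longest(\poif^+)=\poif^-$, the condition "$\re_\sroot$ preserves $\poif^+$" is equivalent to "$\re_{\longest\sroot}$ preserves $\poif^-$", hence to "$\re_{\longest\sroot}$ preserves $\poif^+$"; as $\longest\sroot=-\ii(\sroot)$ and $\ii$ permutes $\simple$, this says $\sroot\in\simple-\vt\iff\ii(\sroot)\in\simple-\vt$, i.e. $\vt$ is $\ii$-invariant. I would state this cleanly and then move to the parabolic statement.

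For the identification of stabilizers, recall $\frak p^\vt=\ge_0\oplus\bigoplus_{\sroot\in\root^+}\ge_\sroot\oplus\bigoplus_{\sroot\in\<\simple-\vt\>}\ge_{-\sroot}$. One computes the Lie-algebra stabilizer of $a^+=\bigoplus_{\chi\in\poif^+}V^\chi$: a root vector $x\in\ge_\bb$ sends $V^\chi$ into $V^{\chi+\bb}$, so $\ge_\bb$ stabilizes $a^+$ iff for every $\chi\in\poif^+$ either $\chi+\bb\notin\poif$ or $\chi+\bb\in\poif^+$. Using the partition of $\poif$ induced by $X_0$: for $\bb\in\root^+$ one has $\bb(X_0)\geq0$, and when $\bb\in\<\simple-\vt\>$ one has $\bb(X_0)=0$ by the first part (reflections in $\simple-\vt$ fix $\poif^\longest$ and fix $X_0$-values appropriately), which forces $\chi+\bb$ to stay in $\poif^+\cup\poif^\longest$ — and a short argument excludes landing in $\poif^\longest$ for $\bb$ negative in $\<\simple-\vt\>$, giving stabilization of $a^+$ exactly by $\frak p^\vt$. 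The same bookkeeping with $A^+=a^+\oplus\Vlongest$ shows $\frak p^\vt$ stabilizes it too (roots in $\root^+$ move $\poif^\longest$-weights up into $\poif^+$ or out of $\poif$; roots in $\<\simple-\vt\>$ fix $\poif^\longest$). Passing from Lie algebra to the group, since $\sf P^\vt$ is its own normalizer (parabolics are self-normalizing) and equals the stabilizer of a point in $\EuScript F_\vt$, one concludes $\sf P^\vt$ is precisely the stabilizer of $a^+$ and of $A^+$. The statement for $\check{\sf P}^\vt$, $a^-$, $A^-$ follows by applying the Cartan involution $o$ (or equivalently replacing $X_0$ by $-X_0$), which swaps $\poif^+\leftrightarrow\poif^-$ and $\root^+\leftrightarrow\root^-$.

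The main obstacle I anticipate is the step asserting that $\re_\sroot$ does not merely permute $\poif^\longest$ but fixes it pointwise: this is where one must exploit the precise genericity of $X_0$ (it vanishes on $\poif^\longest$ and on nothing else in $\poif$) together with the fact that the weights of an irreducible representation form a single Weyl-orbit of the highest weight closed under "subtract a simple root", so there is essentially no room for a nontrivial reflection to act within the $\ker X_0$ slice without creating a forbidden weight. I expect everything else to be careful but routine weight-combinatorics, and for this reason I would lean on the cited statements in Smilga \cite[Prop.~6.4, Lemma~6.5]{SmilgaAnnalen} for the technical core and only indicate the structure of the argument here.
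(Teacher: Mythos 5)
First, note that the paper offers no proof of this statement at all: Proposition \ref{amas} is imported verbatim from Smilga \cite[Prop.\ 6.4, Lemma 6.5]{SmilgaAnnalen}, so there is no in-paper argument to compare yours against, and your decision to lean on the citation for the technical core is exactly what the author does. As a citation-plus-outline your proposal is acceptable. However, two of the deductions you sketch are not correct as stated, and they sit precisely at the points you yourself flag as delicate. (i) From $\re_\sroot(\poif^+)=\poif^+$ you cannot conclude $\re_\sroot(\poif^-)=\poif^-$ ``by applying $\longest$'': conjugation gives $\longest\,\re_\sroot\,\longest^{-1}=\re_{\longest\sroot}=\re_{\ii\sroot}$, a \emph{different} simple reflection unless $\ii\sroot=\sroot$; making that step work would already require the $\ii$-invariance of $\vt$, which you prove afterwards using this very step. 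What does follow immediately from complementation is only $\re_\sroot(\poif^\longest\cup\poif^-)=\poif^\longest\cup\poif^-$, which does not separate $\poif^\longest$ from $\poif^-$. (ii) The upgrade from ``preserves $\poif^\longest$ setwise'' to ``fixes $\poif^\longest$ pointwise'' is the actual content of Smilga's Proposition 6.4, and genericity of $X_0$ alone cannot deliver it: writing $\re_\sroot\chi=\chi-\<\chi,\sroot\>\sroot$, if $\sroot(X_0)=0$ then $\re_\sroot$ automatically preserves the slice $\{\chi:\chi(X_0)=0\}$ without moving anything ``out of the hyperplane'' and without creating new weights, so your proposed contradiction never materializes; one genuinely needs the $\sroot$-string combinatorics of the weight set here.

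A third soft spot is in the stabilizer computation: you assert that $\bb(X_0)=0$ for $\bb\in\<\simple-\vt\>$ ``by the first part,'' but the first part only gives $\<\chi,\sroot\>=0$ for $\chi\in\poif^\longest$ and $\sroot\in\simple-\vt$; it says nothing directly about $\sroot(X_0)$. The correct bookkeeping is: for $\bb\in\root^+$ one has $\bb(X_0)\geq 0$, so $\ge_\bb$ pushes $\poif^+\cup\poif^\longest$ weights into $\poif^+\cup\poif^\longest$ (or out of $\poif$), and for $\bb\in\<\simple-\vt\>$ one uses that the reflections in $\simple-\vt$ stabilize $\poif^+$ and fix $\poif^\longest$ pointwise to control $\chi\pm\bb$ --- this is the content of Smilga's Lemma 6.5 and of the paper's own proof of Lemma \ref{pv0}, which is the model you should imitate. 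The parts of your outline that are clean are the $\ii$-invariance of $\vt$ (the conjugation identity is used correctly there, modulo the same $\poif^-$ versus $\poif^+\cup\poif^\longest$ issue), the self-normalizing-parabolic step, and the passage to $\check{\sf P}^\vt$ via $X_0\mapsto -X_0$.
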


\begin{lemma}[{Smilga \cite[Lemme 7.12]{SmilgaAnnalen}}]\label{pv0}Consequently, for each $\chi\in\poif^\longest$ the algebra $\ge^0\bigoplus_{\slroot\in\<\simple-\vt\>}\ge^\slroot\oplus\ge^{-\slroot}$ acts trivially on $V^\chi$ and thus  $\sf L_\vt$ acts as $\rep(\sf M\sf A)$ on $\Vlongest$.
\end{lemma}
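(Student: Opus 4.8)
The statement comes down to this: the root spaces $\ge^\slroot$ with $\slroot\in\<\simple-\vt\>$ should annihilate every weight space $V^\chi$, $\chi\in\poif^\longest$. Indeed, the remaining summand $\ge^0$ of the Levi subalgebra $\l_\vt=\ge^0\oplus\bigoplus_{\slroot\in\<\simple-\vt\>}\ge^\slroot$ equals $\lie(\sf M\sf A)$, and once the root spaces are known to kill $\Vlongest=\bigoplus_{\chi\in\poif^\longest}V^\chi$, the root subgroups $\exp\ge^\slroot$ act on $\Vlongest$ as the identity, so $\sf L_\vt$ — being generated by $\sf M\sf A$ together with these root subgroups — acts on $\Vlongest$ exactly as $\rep(\sf M\sf A)$. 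Since $\rep(\ge^\slroot)V^\chi\subseteq V^{\chi+\slroot}$, the plan is to prove the purely combinatorial fact that $\chi+\slroot$ is never a restricted weight of $\rep$ when $\chi\in\poif^\longest$ and $\slroot\in\<\simple-\vt\>$ is a root.

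To that end I would first extract, from Proposition \ref{amas} and the definition of $\vt$, two inputs about the subgroup $\Weyl_\vt$ of the Weyl group generated by the reflections $\re_\sroot$, $\sroot\in\simple-\vt$ (in the sense of \S\,\ref{s.Levi}): it fixes every element of $\poif^\longest$ (this is Proposition \ref{amas}), and it preserves $\poif^+$ (each generator does, by the defining property of $\simple-\vt$), hence it also preserves $\poif^-$ and $\poif^\longest$, i.e. it preserves the partition $\poif=\poif^+\sqcup\poif^\longest\sqcup\poif^-$. For a root $\slroot\in\<\simple-\vt\>$ one has $\re_\slroot\in\Weyl_\vt$, so for $\chi\in\poif^\longest$ we get $\re_\slroot\chi=\chi$, i.e. $\<\chi,\slroot\>=0$, and therefore $\re_\slroot(\chi+\slroot)=(\chi+\slroot)-\<\chi+\slroot,\slroot\>\slroot=\chi-\slroot$.

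Now suppose, for a contradiction, that $\chi+\slroot$ is a weight. Since the weight set is Weyl-invariant, $\chi-\slroot=\re_\slroot(\chi+\slroot)$ is a weight too, and $\re_\slroot$ carries $\chi+\slroot$ to $\chi-\slroot$ within a single block of the partition above. If that block is $\poif^+$ or $\poif^-$, then $\slroot(X_0)=(\chi+\slroot)(X_0)$ and $-\slroot(X_0)=(\chi-\slroot)(X_0)$ would have the same nonzero sign — impossible. If the block is $\poif^\longest$, then $\chi+\slroot$ is fixed by $\longest$, which together with $\longest\chi=\chi$ forces $\longest\slroot=\slroot$; but $\longest$ interchanges $\root^+$ and $\root^-$ and so fixes no root. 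In all cases we reach a contradiction, so $\chi+\slroot$ is not a weight, as required; applying this to $\slroot$ and to $-\slroot$ gives $\rep(\ge^{\pm\slroot})V^\chi=0$.

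The step I expect to be the real obstacle is precisely the case where the block is $\poif^\longest$. It is tempting to dispose of it by declaring $X_0$ regular — so that $\slroot(X_0)\neq0$ and only the first case can occur — but the genericity actually imposed on $X_0$ only keeps it off the kernels of the weights in $\poif\setminus\poif^\longest$, not off the kernels of roots, so one cannot assume $\slroot(X_0)\neq0$. The argument must instead use the identification $\poif^\longest=\{\chi\in\poif:\chi(X_0)=0\}$ recorded just before the statement, together with the fact that the longest Weyl element fixes no root. Getting this bookkeeping right — rather than any analytic estimate — is where the care is needed; everything else is standard weight combinatorics, and the passage from the Lie algebra computation to the group $\sf L_\vt$ is routine structure theory.
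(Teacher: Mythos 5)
Your proof is correct and follows essentially the same route as the paper's: both reduce to showing $\chi+\slroot\notin\poif$ by combining the $\re_\slroot$-invariance of the partition $\poif=\poif^+\sqcup\poif^\longest\sqcup\poif^-$, the fact that $\longest$ fixes no root (ruling out $\chi+\slroot\in\poif^\longest$), and a sign contradiction at $X_0$. The only cosmetic difference is that you invoke the pointwise identity $\re_\slroot\chi=\chi$ to get $\re_\slroot(\chi+\slroot)=\chi-\slroot$, whereas the paper gets by with the weaker inclusion $\re_\slroot\chi\in\poif^\longest\cup\poif^-$ and the positivity of $\slroot(X_0)$.
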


\begin{proof} Fix $\slroot\in\<\simple-\vt\>$, the reflection $\re_\slroot$ also stabilizes $\poif^+$. Since $\chi$ is $\longest$-invariant one readily sees that $\chi+\slroot$ is not. Now, $\re_\slroot(\chi)\in\poif^\longest\cup\poif^-$ (or else $\chi=\re_\slroot(\re_\slroot\chi)\in\poif^+$). So if $\chi+\slroot\in\poif$ then necessarily $\chi+\slroot(X_0)>0$, i.e. $\chi+\slroot\in\poif^+$. However, $$\re_\slroot(\chi+\slroot)(X_0)=(\re_\slroot\chi-\slroot)(X_0)<0,$$ i.e. $\re_\slroot(\chi+\slroot)\in\poif^-$ achieving a contradiction. Whence $\chi+\slroot\notin\poif$ and thus $ \fund(\ge_\slroot) V^\chi \subset V^{\chi-\slroot}=\{0\}$ concluding the proof.  \end{proof}

We now consider the action of $\rep(\sf M\sf A)$ on $\Vlongest$. The first remarkable subspace, the \emph{neutralizing space} of $\rep$, is the fixed point subspace of $\rep(\sf M\sf A)$ on $\Vlongest,$ and coincides with the fixed point set of $\rep(\sf M)$ on $V^0$. It is denoted by  $$\trivia:=V^{\rep(\sf M\sf A)}=V^{0,\rep(\sf M)}=\big\{v\in V^0:\rep(\sf M)v=v\big\}.$$

\begin{defi} The \emph{neutralizing dimension} of $\rep$ is $\neudim\rep:=\dim\trivia$.
\end{defi}
 
Let $\trivia^{\perp_\sf M}\subset \Vlongest$ be the $\rep(\sf M)$-invariant complement, the decomposition \begin{equation}\label{deconeu}\Vlongest=\trivia^{\perp_\sf M}\oplus\trivia
\end{equation} is canonical given $\rep.$ Let $\pi^{\trivia}:\Vlongest\to\trivia$ the associated projection.

\begin{ex}\item \begin{itemize}\item[-] If $\sf G$ is split then $\trivia=V^0,$ see for example Ghosh \cite[Lemma 2.5]{SouravIso}.
\item[-] For the defining representation of $\SO_{1,n}$ the neutralizing space is trivial.
\item[-] If we let $\rep=\Ad:\sf G\to\SL(\ge)$ then $\Vlongest=\ge^0=\m\oplus\a,$ and writing $\m=[\m,\m]\oplus\centro(\m)$ one has that $\sf M$ preserves each factor of $[\m,\m]\oplus\centro(\m)\oplus\a$ and acts trivially on $\centro(\m)\oplus\a$ so  $$\trivia=\centro(\m)\oplus\a.$$ The typical situation when $\centro(\m)\neq\{0\}$ is when $\sf G$ is a complex group considered as a real group, in which case $\centro(\m)=\m$ and $\centro(\m)\oplus\a$ is a Cartan subalgebra of $\ge$ (as a complex Lie algebra).
\end{itemize}\end{ex}

If we let $k=\dim a^+$ then, as $\rep(\longest)a^+=a^-$, one has $k=\dim a^-$, let also $n=k+\dim\Vlongest$, and denote by $\FF_{k,n}$ the space of partial flags $(u_k\subset u_n)$ where $u_i\in\grass_i(V)$. Proposition \ref{amas} and Eq. \eqref{maps} give an algebraic $\rep$-equivariant map \begin{alignat}{2}\label{equiv}  :\EuScript F_{\vt}(\sf G)& \to  \FF_{k,n}(V)\nonumber\\ x & \mapsto \big(x_\rep,X_\rep\big) \end{alignat} such that if $(x,y)\in\posgen_{\vt}(\sf G)$ then $\big(x_\rep,X_\rep\big)$ and $\big(y_\rep,Y_\rep\big)$ are in general position. 
\begin{cor}\label{anguloscasa} There exists $C$ only depending on $\rep$ such that if $(x,y)\in\posgen_\vt$ then $$\|(x|y)\|\frac1C\leq\big\|\big((x,X)|(y,Y)\big)\big\|_{\rep o}\leq C\|(x|y)\|.$$
\end{cor}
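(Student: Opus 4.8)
The plan is to deduce the estimate from Remark~\ref{GromovyRep}, applied to two proximal irreducible $\sf G$-representations extracted from exterior powers of $\rep$. Write $N=\dim V$ and $k=\dim a^+$; by \eqref{equiv} and Proposition~\ref{amas} the $k$-plane $x_\rep=a^+(x)$ sits inside the $\ell$-plane $X_\rep=A^+(x)$, where $\ell:=\dim A^+=k+\dim\Vlongest$, and $(x_\rep,X_\rep)$, $(y_\rep,Y_\rep)$ lie in general position in the corresponding two-step partial flag variety of $\SL(V)$. Since the Tits representations of $\SL(V)$ attached to the two simple roots this flag variety involves are the $k$-th and $\ell$-th exterior powers of the standard representation, the definition of the Gromov product in \S\ref{angulosCasa} identifies the first coordinate of $\big((x,X)|(y,Y)\big)_{\rep o}$ with the projective Gromov product --- measured with $\Wedge^k\|\,\|_{\rep o}$ --- of $\Wedge^k x_\rep\in\P(\Wedge^kV)$ against the point of $\P\big((\Wedge^kV)^*\big)$ determined by the $(N-k)$-plane $A^-(y)$, and the second coordinate with the analogous projective Gromov product of $\Wedge^\ell X_\rep$ against the point determined by the $(N-\ell)$-plane $a^-(y)$.

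First I would recognise these two projective pictures as genuine proximal irreducible $\sf G$-modules. Because $\poif^+$ consists of exactly the $k$ weights of $\rep$ (counted with multiplicity) on which $X_0$ is positive, the weight $\chi_k:=\sum_{\chi\in\poif^+}(\dim V^\chi)\chi$ is attained in $\Wedge^kV$ only by the line $\Wedge^k a^+$; hence the $\sf G$-submodule $\VF k\subseteq\Wedge^kV$ it generates is proximal, with highest-weight line $\Wedge^k a^+$, and since $\Stab_{\sf G}(\Wedge^k a^+)=\Stab_{\sf G}(a^+)=\sf P^\vt$ by Proposition~\ref{amas}, this forces $\t_{\VF k}=\vt$. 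Running the same argument with $\poif^+\cup\poif^\longest$ --- the $\ell$ weights of $\rep$ that are nonnegative on $X_0$ --- and with $\Stab_{\sf G}(A^+)=\sf P^\vt$ yields a proximal irreducible $\VF\ell\subseteq\Wedge^\ell V$ of highest weight $\chi_\ell:=\sum_{\chi\in\poif^+\cup\poif^\longest}(\dim V^\chi)\chi$ and with $\t_{\VF\ell}=\vt$.

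The next, and main, step is to check that the passage to $\VF k$ and $\VF\ell$ introduces no error. On the source side, $\Wedge^k\|\,\|_{\rep o}$ restricted to $\VF k$ is invariant under $\VF k\sf K$ and makes $\VF k\exp\a$ self-adjoint, hence is a scalar multiple of $\|\,\|_{\VF k o}$; moreover the decomposition $\Wedge^kV=\VF k\oplus(\VF k)^{\perp}$ is orthogonal for this norm, since the complement is $\sf K$- and $\exp\a$-invariant, hence $\sf G$-invariant. On the target side, the functional $\Wedge^{N-k}A^-(y)\in(\Wedge^kV)^*$ has weight $-\chi_k$ --- here one uses $\tr\rep=0$, so that the weights of $V$ sum to zero --- and $-\chi_k$ occurs with multiplicity one in $(\Wedge^kV)^*$; being the lowest weight of $(\VF k)^*$, this functional therefore already lies in $(\VF k)^*$ and annihilates $(\VF k)^\perp$. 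Together these facts say that the angle entering the first coordinate of $\big((x,X)|(y,Y)\big)_{\rep o}$ is exactly $\angle_{\VF k o}(\Xi_{\VF k}x,\Xi_{\VF k}^*y)$, so by Remark~\ref{GromovyRep} that coordinate equals $\chi_k\big((x|y)\big)$; the same argument, now with $\Wedge^{N-\ell}a^-(y)\in(\VF\ell)^*$, shows the second coordinate equals $\chi_\ell\big((x|y)\big)$.

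To finish, Remark~\ref{GromovyRep} also expands $\chi_k\big((x|y)\big)=\sum_{\sroot\in\vt}\<\chi_k,\sroot\>\,\peso_\sroot\big((x|y)\big)$ with \emph{all} coefficients $\<\chi_k,\sroot\>$ strictly positive, since $\t_{\VF k}=\vt$, and likewise for $\chi_\ell$; as each $\peso_\sroot\big((x|y)\big)\geq0$ and, by \eqref{minangulo}, $\|(x|y)\|\asymp\max_{\sroot\in\vt}\peso_\sroot\big((x|y)\big)\asymp\sum_{\sroot\in\vt}\peso_\sroot\big((x|y)\big)$, both coordinates of $\big((x,X)|(y,Y)\big)_{\rep o}$ are comparable to $\|(x|y)\|$ with constants depending only on $\rep$; since $\big\|\big((x,X)|(y,Y)\big)\big\|_{\rep o}$ is comparable to the larger of the two, this gives the claimed two-sided bound. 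The step that really needs care is the third one --- that the two ``opposite'' functionals fall exactly into $(\VF k)^*$ and $(\VF\ell)^*$, so that Remark~\ref{GromovyRep} can be applied with no error terms; it rests on the multiplicity-one computations together with $\tr\rep=0$, while the rest is bookkeeping with restricted weights and the scale-invariance of the angles involved.
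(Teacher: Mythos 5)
Your argument is correct and follows exactly the route the paper intends — its proof is the one-line "follows from Proposition \ref{amas} and Remark \ref{GromovyRep}", and you supply precisely the missing details: identifying the irreducible proximal $\sf G$-submodules of $\Wedge^kV$ and $\Wedge^\ell V$ generated by $\Wedge^ka^+$ and $\Wedge^\ell A^+$, checking via Proposition \ref{amas} that their highest weights have support exactly $\vt$, and verifying that the angles computed in the ambient exterior powers agree with those computed in the submodules before invoking Remark \ref{GromovyRep} and Eq. \eqref{minangulo}. No gaps; the multiplicity-one and orthogonality checks you flag as the delicate step are exactly the right ones and are carried out correctly.
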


\begin{proof} Follows from Proposition \ref{amas} and Remark \ref{GromovyRep}.\end{proof}

A pair $(x,y)\in\posgen_\vt(\sf G)$ determines the $n-k$-dimensional space $$X_\rep\cap Y_\rep.$$  By Lemma \ref{pv0}, such a space carries a natural decomposition into \emph{its neutralizing space} and the "$\sf M$-invariant complement". Indeed, if $g_0,g_1\in\sf G$ are such that 
$g_0(x,y)=\big([\sf P^\vt],[\wk{\sf P}^\vt]\big)=g_1(x,y)$ then $g_0g_1^{-1}$ belongs to $\sf L_\vt$, which preserves the decomposition \eqref{deconeu}. Consequently,  both subspaces of $X_\rep\cap Y_\rep$ \begin{alignat*}{2}\trivia^{(x,y)} & = \rep(g_0)^{-1}\trivia \\ \trivia^{(x,y),\perp_\sf M} & = \rep(g_0)^{-1}(\trivia^{\perp_{\sf M}})\end{alignat*} are independent of $g_0$. For each $(x,y)\in\posgen_\vt$ we fix $\psi_{(x,y)}\in\sf G$ such that \begin{equation}\label{endereza}\psi_{(x,y)}\big([\sf P^\vt],[\wk{\sf P}^\vt]\big)=(x,y).\end{equation} By Bochi-Potrie-S. \cite[Prop. 8.12]{BPS} it can be chosen so that $\|\cartan(\psi_{(x,y)})\|$ is coarsely comparable to the norm of the Gromov product $(x|y)_o^\vt.$

\subsection{$(\rep,X_0)$-compatible elements}
If $\lin g\in\sf G$ is $\vt$-proximal we have, via \eqref{equiv}, \begin{equation}\label{desoggg}V=\big(\lin g^+_\rep\oplus \lin g^-_\rep\big)\oplus \lin G^0_\rep \end{equation} where $\lin G^0_\rep:=\lin G^+_\rep \cap \lin G^-_\rep.$ Observe that \begin{enumerate}\item $\rep(\lin g)|\lin G^0_\rep$ decomposes as sum of roto-hometheties,
\item\label{ii} the subspace $\lin g^+_\rep$ is not necessarily attracting for $\rep(\lin g)$ on $\grass_k(V)$. This only happens if $\jordan(\lin g)$ lies in a specific neighborhood of $X_0$: the open cone $$\{v\in\a^+:\chi^+(v)>\chi^0(v),\ \forall \chi^+\in\poif^+,  \chi^0\in\poif^\longest\}.$$ Symmetrizing in order to deal with inverses we consider the sub-cone of $\a^+$: 
\begin{equation}\label{encono}\encono_\fund=\big\{v\in\a^+:\poif^+(v)>\poif^\longest(v)>\poif^-(v)\big\}.
\end{equation}

\end{enumerate}

Because of item \eqref{ii} one introduces the following definition.

\begin{defi} An element $g\in\sf G\ltimes V$ is \emph{$(\rep,X_0)$-compatible} if $\jordan(\lin g)\in\encono_\fund$. 
\end{defi}

\begin{lemma}\label{proximaltheta} A $(\rep,X_0)$-compatible element $g$ has $\vt$-proximal linear part. Moreover the flag $(\lin g^+_\rep,\lin G^+_\rep)\in\FF_{k,n}(V)$ is attracting for $\rep(\lin g)$ with repelling flag $(\lin g^-_\rep,\lin G^-_\rep)$.\end{lemma}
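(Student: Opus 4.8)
The statement has two parts: first, that a $(\rep,X_0)$-compatible element $g$ has $\vt$-proximal linear part; second, that the partial flag $(\lin g^+_\rep,\lin G^+_\rep)$ is attracting for $\rep(\lin g)$ on $\FF_{k,n}(V)$, with repelling flag $(\lin g^-_\rep,\lin G^-_\rep)$. Both are essentially about reading off the spectral behaviour of $\rep(\lin g)$ from the location of $\jordan(\lin g)$ in the cone $\encono_\fund$ defined in \eqref{encono}.

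First I would dispose of $\vt$-proximality. By definition $\jordan(\lin g)\in\encono_\fund\subset\a^+$, and in particular $\poif^+(\jordan(\lin g))>\poif^\longest(\jordan(\lin g))>\poif^-(\jordan(\lin g))$. For $\sroot\in\vt$ I need $\sroot(\jordan(\lin g))>0$; this follows because $\vt$ was defined (via $\simple-\vt=\{\sroot:\re_\sroot(\poif^+)=\poif^+\}$) precisely so that a simple root $\sroot$ lies in $\vt$ exactly when there are weights $\chi\in\poif^+$ with $\re_\sroot\chi\notin\poif^+$, i.e. when passing from $\poif^+$ to $\poif^-$ (or $\poif^\longest$) "costs" a multiple of $\sroot$; combined with the strict inequalities defining $\encono_\fund$ and the fact that $\jordan(\lin g)\in\a^+$ already gives $\sroot(\jordan(\lin g))\geq0$, one upgrades this to strict positivity on $\vt$. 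Hence $\rep_\sroot(\lin g)$ is proximal for each $\sroot\in\vt$, which is exactly the definition of $\vt$-proximality, and the attracting/repelling flags $(\lin g^\pm_\vt)$ and thus the spaces $\lin g^\pm_\rep,\lin G^\pm_\rep$ from \eqref{equiv}, \eqref{desoggg} are well defined.

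For the second part, I would compare eigenvalue moduli of $\rep(\lin g)$ across the three pieces of the decomposition \eqref{desoggg}. Conjugating $\lin g$ so that its $\R$-diagonalizable Jordan part is $\exp(\jordan(\lin g))$ (up to signs and the elliptic/unipotent factors, which do not affect moduli up to subexponential error), the eigenvalue moduli of $\rep(\lin g)$ on a weight space $V^\chi$ are comparable to $\exp(\chi(\jordan(\lin g)))$, and the subspaces $\lin g^+_\rep$, $\lin G^0_\rep$, $\lin g^-_\rep$ correspond respectively to $\bigoplus_{\chi\in\poif^+}V^\chi$, $\Vlongest=\bigoplus_{\chi\in\poif^\longest}V^\chi$, $\bigoplus_{\chi\in\poif^-}V^\chi$ under the straightening provided by Proposition \ref{amas} and the $\rep$-equivariant map \eqref{equiv}. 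The defining inequalities of $\encono_\fund$ say exactly that every modulus coming from $\poif^+$ strictly dominates every modulus coming from $\poif^\longest$, which in turn strictly dominates every modulus from $\poif^-$. Therefore, on the Grassmannian $\grass_k(V)$ the $k$-plane $\lin g^+_\rep$ is the (unique) attracting fixed point, and on $\FF_{k,n}(V)$ the flag $(\lin g^+_\rep,\lin G^+_\rep)$ with $\lin G^+_\rep=\lin g^+_\rep\oplus\lin G^0_\rep$ is attracting, with complementary (repelling) flag $(\lin g^-_\rep,\lin G^-_\rep)$; any flag in general position with the latter converges to the former under iteration, by the standard Lyapunov-gap argument (as in, e.g., Bochi–Potrie–S. \cite{BPS}).

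The only genuinely delicate point is the first one: verifying that the strict inequalities built into $\encono_\fund$, which a priori only control the $\poif^+$ versus $\poif^\longest$ versus $\poif^-$ hierarchy, do force $\sroot(\jordan(\lin g))>0$ for all $\sroot\in\vt$ — i.e. that $(\rep,X_0)$-compatibility really is strong enough to imply $\vt$-proximality and not merely $\vt'$-proximality for some smaller $\vt'$. This is where I would lean on Smilga's Proposition \ref{amas} and Lemma \ref{pv0}: they identify $\vt$ with the combinatorics of the $\rep\sf{M}\sf{A}$-action, and in particular show $\sf P^\vt$ is the full stabilizer of $a^+$ and of $A^+$, so the "gap" seen by $\rep$ between $a^+$ and $A^+$ is detected by precisely the roots in $\vt$. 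Once that identification is in hand, the rest is the routine eigenvalue-modulus comparison sketched above.
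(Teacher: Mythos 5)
Your proposal is correct and follows essentially the same route as the paper: for the first claim the paper also picks, for each $\sroot\in\vt$, a weight $\chi\in\poif^+$ with $\re_\sroot\chi\notin\poif^+$ and uses the strict inequalities defining $\encono_\fund$ together with $\re_\sroot\chi=\chi-\<\chi,\sroot\>\sroot$ to force $\sroot(\jordan(\lin g))>0$. The second claim is dispatched in the paper as "follows by definition", which is exactly the eigenvalue-modulus comparison across $\poif^+$, $\poif^\longest$, $\poif^-$ that you spell out.
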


\begin{proof}By definition, for every $\sroot\in\vt$ there exists $\chi\in\poif^+$ such that $\re_\sroot\chi\notin\poif^+$. Thus, if $g$ is $(\rep,X_0)$-compatible one has, for such $\chi$, \begin{alignat*}{2}\chi(\jordan(\lin g)) & >(\re_\sroot\chi)(\jordan(\lin g))\\&=(\chi-\<\chi,\sroot\>\sroot)(\jordan(\lin g))\\&=\chi(\jordan(\lin g))-\<\chi,\sroot\>\sroot(\jordan(\lin g)),\end{alignat*} giving $\sroot(\jordan(\lin g))>0$. The second statement follows by definition.\end{proof}

Observe that although the action $\rep(\lin g)$ on $\lin G^0_\rep=\lin G^+_\rep\cap\lin G^-_\rep$ might have some expanding/contracting spaces, this expansion is dominated by the one on $\lin g^+_\rep$ and the same holds for the contraction and $\lin g^-_\rep$. Moreover, recall notation from \S\,\ref{s.unmar}, $$\trivia^{(\lin g^+_\rep,\lin g^-_\rep)}\subset\cero(\lin g)\subset \lin G^0_\rep.$$

Let now $g\in\sf G\ltimes V$ be $(\rep,X_0)$-compatible. Equation \eqref{unmargu} provides an unnormalized Margulis invariant $$\umargulis(g)\in\cero(\lin g)\subset \lin G^0_\rep=\trivia^{(\lin g^+_\rep,\lin g^-_\rep)}\oplus\trivia^{(\lin g^+_\rep,\lin g^-_\rep),\perp_\sf M}$$ that we further project onto $\trivia^{(\lin g^+_\rep,\lin g^-_\rep)}$ and push to $\trivia$ via $\psi_{(\lin g^+,\lin g^-)}$ to obtain the Margulis invariant of $g$:

\begin{defi}\label{defimar} The \emph{Margulis invariant} of a $(\rep,X_0)$-compatible $g\in\sf G\ltimes V$ is $$\margulis(g):=\pi^\trivia\big(\psi_{(\lin g^+,\lin g^-)}\big(\umargulis(g)\big)\big)\in\trivia.$$
\end{defi}

\begin{obs}\label{inv2}\item\begin{itemize}\item[-] By both items on Remark \ref{Invariancia}, it is invariant under conjugation by $\sf G\ltimes V.$ \item[-]The Weyl group acts on $\trivia$ and Remark \ref{inversa} yelds $\margulis(g^{-1})=\longest\cdot\big(-\margulis(g)\big).$\end{itemize}\end{obs}

\subsection{Invariant flags} Assume $g\in\sf G\ltimes V$ is $(\rep,X_0)$-compatible and consider the projections parallel to the decomposition in Equation \eqref{desoggg}\begin{alignat*}{2}\pi^{\pm}_g & :V\to \lin g^+_\rep\oplus \lin g^-_\rep,\\ \pi_g^0 & :V\to \lin G^0_\rep.
\end{alignat*} 

\begin{defi}\label{origen}The unique fixed point of the affine map $\pi^{\pm}_g\circ g:\lin g^+_\rep\oplus \lin g^-_\rep\to\lin g^+_\rep\oplus \lin g^-_\rep$ will be denoted by $\o_g$.\end{defi} 

Whence, each of the following affine flags is $g$-invariant: $$ G^+  =\lin G^+_\rep+\o_g,\qquad G^-  =\lin G^-_\rep+\o_g,\qquad G^0 =\lin G^0_\rep+\o_g.$$Moreover, inside $G^+$ the notion of \emph{parallel to $\lin g^+_\rep$} is preserved by $g$. Indeed $$g(\lin g^+_\rep+\o_g)=\lin g^+_\rep +\o_g+\pi^0(\tra g).$$ The same thing happens for $G^-$ and $\lin g^-_\rep$.

\begin{figure}[h]\centering
\begin{tikzpicture}\begin{scope}[rotate=0,scale=0.7]

\begin{scope}[xslant=0.2,yslant=0.1,shift={(2.2,2.5)}]
\filldraw[fill=gray!10, draw=black] (0,0) rectangle (5.4,-2.5);
\draw[dashed] (0.3,0) -- (0.3,-2.5);
\end{scope}

\begin{scope}[xslant=1,shift={(0,0)}]
\filldraw[fill=gray!45,  draw=black] (0,0.8) rectangle (5,4.5);
\draw[dashed] (4.75,0.8) -- (4.75,4.5);
\draw[->] (0.5,1) ..  controls (1,2.5) ..   (4,2.8);
\end{scope}

\begin{scope}[xslant=0.2,yslant=0.1,shift={(2.2,2.5)}]
\filldraw[fill=gray!10, draw=black] (0,0) rectangle (5.4,3.5);
\draw[dashed] (0.3,0) -- (0.3,3.5);
\draw[->] (1.5,0) -- (3,0);
\draw[->] (0.5,0.5) .. controls (4,1) .. (4.5,3);
\end{scope}


\end{scope}
\end{tikzpicture}
\caption{Dynamics of a $(\rep,X_0)$-compatible element of $\sf G\ltimes V$.}
\end{figure}
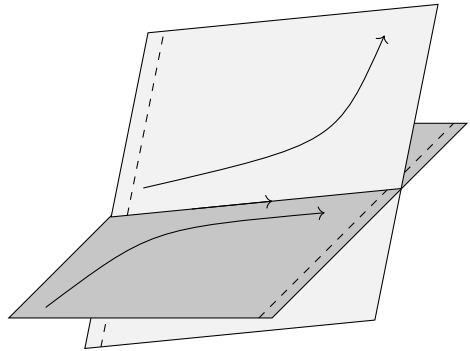

For an affine $p$-dimensional subspace $U\subset V$ we consider $\lin U=\{u-v:u,v\in U\}$, it is a vector subspace of $V$. We also consider the distance on the space of affine $p$-dimensional spaces defined by $$d_{\Aff}(U,W):=d_{\grass_p(V)}(\lin U,\lin W)+\inf\big\{\|u-w\|_\rep:u\in U\textrm{ and }w\in W\big\}.$$

\begin{lemma}\label{atraco}There exists $c$ only depending on $\rep$ so that if $g$ is $(\rep,X_0)$-compatible and $B\subset V$ is affine, co-dimension $k$, transverse to ${\lin g}^-_\rep$ and $\{b\}=B\cap g^-_\rep$ then,$$d_\Aff(gB,G^+)\leq  \frac{c}{e^{-(\lin g^-_\rep|\lin B)_{\rep o}}}e^{-\min\{\sroot(\jordan(\lin g)):{\sroot\in\vt}\}+\|(\lin g^+|\lin g^-)\|}+\|\lin g|\lin g^-_\rep\|\|b-\o_g\|.$$\end{lemma}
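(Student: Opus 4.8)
The plan is to estimate the two pieces of the affine distance
$$
d_\Aff(gB,G^+)=d_{\grass_{k+n}(V)}\big(\lin{gB},\,\lin G^+_\rep\big)+\inf\big\{\|u-w\|_\rep:u\in gB,\ w\in G^+\big\}
$$
separately (recall $\dim V=2k+n$, so $\dim\lin B=\dim\lin G^+_\rep=k+n$), keeping in mind $\lin G^+_\rep=\lin g^+_\rep\oplus\lin G^0_\rep$, $G^+=\lin G^+_\rep+\o_g$, the $\rep(\lin g)$-invariant splitting $V=\lin g^+_\rep\oplus\lin g^-_\rep\oplus\lin G^0_\rep$ of \eqref{desoggg}, and $b-\o_g\in\lin g^-_\rep$. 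The \emph{translation piece} is immediate: since $\o_g\in G^+$ and $G^+$ is $g$-invariant (the paragraph after Definition~\ref{origen}) we have $g(\o_g)\in G^+$, while $g(b)\in gB$; hence the infimum is at most $\|g(b)-g(\o_g)\|_\rep=\|\rep(\lin g)(b-\o_g)\|_\rep\le\|\lin g|\lin g^-_\rep\|\,\|b-\o_g\|_\rep$, which is the second summand of the claim.

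For the \emph{Grassmannian piece} write $\lin{gB}=\rep(\lin g)\lin B$ and view $\grass_{k+n}(V)$ as the flag manifold $\EuScript F_{\{\sroot_{k+n}\}}(\SL(V))$. By Lemma~\ref{proximaltheta}, $\rep(\lin g)$ is $\{\sroot_{k+n}\}$-proximal with attracting flag $\lin G^+_\rep$ and repelling flag $\lin g^-_\rep\in\EuScript F_{\ii\{\sroot_{k+n}\}}(\SL(V))=\grass_k(V)$, and the hypothesis on $B$ says exactly that $\lin B$ is transverse to $\lin g^-_\rep$. Applying Lemma~\ref{contraccionproximal} to $\SL(V)$, $\t=\{\sroot_{k+n}\}$, norm $\|\cdot\|_\rep$, gives
$$
d_{\grass_{k+n}(V)}\big(\rep(\lin g)\lin B,\,\lin G^+_\rep\big)\le C\,e^{(\lin g^-_\rep|\lin B)_{\rep o}}\,e^{-\sroot_{k+n}(\jordan(\rep(\lin g)))+\|(\lin g^-_\rep|\lin G^+_\rep)_{\rep o}\|}.
$$
It remains to rewrite the two exponents in terms of invariants of $\lin g\in\sf G$. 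By Proposition~\ref{amas} the three blocks of singular exponents of $\rep(\lin g)$ are $\poif^+$, $\poif^\longest$, $\poif^-$ (in decreasing order, by $(\rep,X_0)$-compatibility), so $\sroot_{k+n}(\jordan(\rep(\lin g)))=\min_{\chi\in\poif^\longest}\chi(\jordan(\lin g))-\max_{\chi\in\poif^-}\chi(\jordan(\lin g))$; by the reflection/weight-string argument behind Lemma~\ref{pv0} (passing from $\poif^\longest$ to $\poif^-$ requires a simple root of $\vt$, on which $\jordan(\lin g)\in\a^+$ is non-negative) this gap is $\ge\min\{\sroot(\jordan(\lin g)):\sroot\in\vt\}$. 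And $\|(\lin g^-_\rep|\lin G^+_\rep)_{\rep o}\|$ is a coordinate of the $\FF_{k,n}(V)$-Gromov product of the attracting--repelling pair $\big((\lin g^+_\rep,\lin G^+_\rep),(\lin g^-_\rep,\lin G^-_\rep)\big)$, hence controlled by $\|(\lin g^+|\lin g^-)\|$ via Corollary~\ref{anguloscasa}. Folding these comparisons and $C$ into $c$ gives the first summand, and adding the two estimates proves the lemma.

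The \emph{main obstacle} is this last bookkeeping in the Grassmannian piece: converting the invariants of the linear automorphism $\rep(\lin g)$ of $\grass_{k+n}(V)$ into $\min\{\sroot(\jordan(\lin g)):\sroot\in\vt\}$ and $\|(\lin g^+|\lin g^-)\|$. One must also carry along the (sub-exponential, hence ultimately harmless) roto-homothety and Jordan-block contributions of $\rep(\lin g)$ on $\lin G^0_\rep$ and $\lin g^\pm_\rep$, in order to confirm that it is the gap $\min_{\chi\in\poif^\longest}\chi-\max_{\chi\in\poif^-}\chi$ at $\jordan(\lin g)$ — and nothing smaller — that governs the contraction of $d_{\grass_{k+n}}$.
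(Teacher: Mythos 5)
Your proof is correct and follows essentially the same route as the paper's: the translation part is bounded by comparing $g(b)$ with the point $g(\o_g)\in G^+$ (the paper writes this as $\o_g+\umargulis(g)$), and the Grassmannian part is the proximal contraction estimate of Lemma \ref{contraccionproximal} transferred via Lemma \ref{proximaltheta} and Corollary \ref{anguloscasa}. The only difference is that you spell out the conversion of the eigenvalue gap of $\rep(\lin g)$ into $\min\{\sroot(\jordan(\lin g)):\sroot\in\vt\}$, a step the paper leaves implicit.
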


\begin{proof} One has $gB=\lin g\lin B+g(b).$ Since by Lemma \ref{proximaltheta} $\lin G^+_\rep$ is attracting for $\lin g$ with repelling flag $\lin g^-_\rep$, the distance between $\lin g\lin B$ and $\lin G^+_\rep$ is controlled by Corollary \ref{anguloscasa} together with Lemma \ref{contraccionproximal} giving the first term in the inequality.

To control the second term, recall we have defined $\umargulis(g)=\pi^0_g(v_g)$, so $\o_g+\umargulis(g)\in G^+.$ By definition $b=v+\o_g$ for some $v\in\lin g^-_\rep$, so $gb=\lin g v+\o_g+\umargulis(g)$. Whence $$\inf\big\{\|gw-u\|:w\in B, u\in G^+\big\}\leq\big\|gb-\big(\o_g+\umargulis(g))\big)\big\|\leq\|\lin gv\|,$$ completing the proof.\end{proof}

\begin{obs} We remark that, as $g$ is $(\rep,X_0)$-compatible, the spectral radius of $\lin g|\lin g^-_\rep$ is strictly smaller than $1$, so $\|\lin g^n|\lin g^-_\rep\|\to0$ as $n\to\infty$. Moreover,  Smilga \cite[Proposition 7.26]{SmilgaAnnalen} states that given $C\geq1$ there exists $c$ such that every $(\rep,X_0)$-compatible $g$ with $ \|(\lin g^+|\lin g^-)\|\leq C$ it holds $$e^{-\min\{\sroot(\cartan(\lin g)):\sroot\in\vt\}}\leq c\|\lin g|\lin g^-_\rep\|.$$
\end{obs}

\begin{defi}\label{affinestrength} The \emph{affine-contraction} of a $(\rep,X_0)$-compatible $g\in\sf G\ltimes V$ is $$\varsigma(g):=\|\lin g|\lin g^-_\rep\|\cdot\|\lin g^{-1}|\lin G^+_\rep\|\cdot e^{\|\o_g\|}.$$ \end{defi}

We conclude the section with the following perturbation Lemma.

\begin{lemma}\label{gecuadrado} Let $h\in\sf G\ltimes V$ be $(\rep,X_0)$-compatible and loxodromic. Then, there exists a non-empty Zariski-open subset $\EuScript G_h$ of $\sf G^2$ such that for every pair $f,q\in\sf G\ltimes V$ with $(\lin f,\lin q)\in\EuScript G_h$ the following holds.
\begin{enumerate}\item The sequence $\R_+\cdot\jordan(\lin f\lin h^n\lin q)\to\R_+\cdot\jordan(\lin h)$ as $n\to\infty$. In particular $fh^nq$ is $(\rep,X_0)$-compatible for all large enough $n$. \item The attracting affine flag of $fh^nq$ converges, as $n\to\infty$, to $f(H^+)$ and the repelling affine flag converges to $q^{-1}(H^-)$.
\end{enumerate}
\end{lemma}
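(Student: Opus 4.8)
The strategy is to reduce everything to the linear part via the Tits representations $\rep_\sroot$ with $\sroot\in\vt$, and then import the behaviour of the translation parts from the invariant-flag picture established in Lemma \ref{atraco} and Definition \ref{origen}. First I would produce the Zariski-open set $\EuScript G_h$: set up, for each $\sroot\in\vt$, the equation $\big(\lin f(\lin h^+),\lin q^{-1}(\lin h^-)\big)\notin\posgen_\vt(\sf G)$ in the variables $(\lin f,\lin q)$ — just as in the proof of Lemma \ref{modif} — and let $\EuScript G_h$ be the (non-empty, by Lemma \ref{esquiva} or simply by connectedness of $\sf G$, since $h$ loxodromic gives a transverse configuration) finite intersection over $\sroot\in\vt$. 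On this set the argument of Lemma \ref{modif} applies verbatim: writing $\rep_\sroot(\lin h^n)=(\pm e^{n\peso_\sroot(\jordan(\lin h))})\pi_{\lin h,\sroot}+\check P_n$ with $\check P_n$ of strictly smaller exponential rate, one gets $\rep_\sroot(\lin f\lin h^n\lin q)/\big((\pm)^n e^{n\peso_\sroot(\jordan(\lin h))}\big)\to \lin f\,\pi_{\lin h,\sroot}\,\lin q$, which is rank-one with non-zero trace by the defining condition on $(\lin f,\lin q)$. Hence $\rep_\sroot(\lin f\lin h^n\lin q)$ is proximal for large $n$, its attracting line is $\lin f\,\zeta_\sroot(\lin h^+)$ and its repelling hyperplane $\lin q^{-1}\zeta_\sroot^*(\lin h^-)$, and $\tfrac1n\jordan_1\rep_\sroot(\lin f\lin h^n\lin q)\to\peso_\sroot(\jordan(\lin h))$. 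Since the $\peso_\sroot$, $\sroot\in\vt$, together with the remaining fundamental weights determine $\jordan$ up to the kernel of $\pi_\vt$, and since $h$ is loxodromic so $\jordan(\lin h)\in\inte\a^+$, one deduces $\R_+\cdot\jordan(\lin f\lin h^n\lin q)\to\R_+\cdot\jordan(\lin h)$. As $\jordan(\lin h)\in\encono_\fund$ and $\encono_\fund$ is an open cone, $fh^nq$ is $(\rep,X_0)$-compatible for all large $n$, giving (1).

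For (2), the flag-manifold convergence $\big((\lin f\lin h^n\lin q)^+_\rep,(\lin f\lin h^n\lin q)^-_\rep\big)\to\big(\lin f(\lin h^+),\lin q^{-1}(\lin h^-)\big)$ in $\posgen_\vt(\sf G)$ is immediate from the proximality convergence above applied through the equivariant embedding \eqref{equiv}, so the linear parts $\lin G^+_\rep$ (resp. $\lin G^-_\rep$) of the invariant affine flags converge to those of $f(H^+)$ (resp. $q^{-1}(H^-)$). It remains to control the translation: one must show the fixed point $\o_{fh^nq}$ of $\pi^\pm_{fh^nq}\circ(fh^nq)$ on $(fh^nq)^+_\rep\oplus(fh^nq)^-_\rep$ converges to the image under $f$ of the base point $\o_h$ (which is the common base point of $H^\pm$). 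I would do this by an explicit affine fixed-point computation: conjugating by $\psi$'s to straighten the flags, $fh^nq$ acts on the (converging) subspace $\lin g^+_\rep\oplus\lin g^-_\rep$ with linear part whose restriction to $\lin g^+_\rep$ expands and to $\lin g^-_\rep$ contracts, both at exponential rates bounded away from $1$ uniformly in $n$ (by (1) and the $(\rep,X_0)$-compatibility cone being open), so the fixed point is given by a uniformly convergent geometric-type formula $\o=-(\lin g-\id)^{-1}\pi(\tra{(fh^nq)})$ split over the two eigendirections. Feeding in $\tra{(fh^nq)}=\lin f\lin h^n\tra q+\lin f\,\tra{(h^n)}+\tra f$ together with $\tra{(h^n)}=\sum_{j=0}^{n-1}\lin h^j\tra h$ and tracking which components survive under $\pi^\pm$ after projection to the converging attracting/repelling subspaces, one sees the $\tra q$-term is killed by contraction, the $\tra f$-term lands appropriately, and the middle sum reconstructs exactly $f$ applied to the fixed-point data of $h$; this yields $\o_{fh^nq}\to f(\o_h)$ and hence $(fh^nq)^+\to f(H^+)$, $(fh^nq)^-\to q^{-1}(H^-)$.

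The main obstacle is precisely this last bookkeeping: one is computing a fixed point of an affine map on a subspace that is itself moving with $n$, so the estimates must be made uniform in $n$, and one has to be careful that the expansion on $\lin g^+_\rep$ does not spoil convergence of the $\lin g^+_\rep$-component of $\o$ — this is where $(\rep,X_0)$-compatibility is essential, since it guarantees the relevant spectral gaps (and, via the remark after Lemma \ref{atraco} together with Smilga \cite[Prop.\ 7.26]{SmilgaAnnalen}, that $\|\lin g^{-1}|\lin G^+_\rep\|$ and $\|\lin g|\lin g^-_\rep\|$ stay controlled), so that $(\lin g-\id)^{-1}$ exists and is bounded on both invariant pieces uniformly along the sequence. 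Once the uniform boundedness of $(\lin f\lin h^n\lin q - \id)^{-1}$ on the relevant complementary subspace is in hand, the convergence of $\o_{fh^nq}$ follows from continuity of all the data, and the Lemma is proved.
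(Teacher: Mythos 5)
Your treatment of item (i) and of the convergence of the \emph{linear} parts of the invariant flags is fine and matches the paper's route (the paper simply quotes Lemma \ref{modif} to get $\big(\jordan(\lin f\lin h^n\lin q)-n\jordan(\lin h)\big)/n\to0$ and uses openness of $\encono_\fund$). But your argument for the translation part of item (ii) contains a genuine error: the fixed point $\o_{fh^nq}$ does \emph{not} converge to $f(\o_h)$, and the $\tra q$-term is \emph{not} killed. Indeed $\o_{fh^nq}$ lies in the linear subspace $(\lin f\lin h^n\lin q)^+_\rep\oplus(\lin f\lin h^n\lin q)^-_\rep$, which converges to $\lin f(\lin h^+_\rep)\oplus\lin q^{-1}(\lin h^-_\rep)$, whereas $f(\o_h)=\lin f(\o_h)+\tra f$ lies in $\lin f\big(\lin h^+_\rep\oplus\lin h^-_\rep\big)+\tra f$; these affine subspaces differ in general, so the two points cannot agree in the limit. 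Quantitatively, the expanding component of $\lin f\lin h^n\tra q$ grows like the top eigenvalue to the $n$, and after applying $\big(\lin g|_{\lin g^+_\rep}-\id\big)^{-1}$ it contributes an $O(1)$ \emph{nonzero} amount to $\o_{fh^nq}$ — consistent with the fact that $\o_g\in G^+\cap G^-$, so its limit must lie in $f(H^+)\cap q^{-1}(H^-)$ rather than in $f(H^+)\cap f(H^-)\ni f(\o_h)$. What the lemma actually requires is only $d_{\Aff}\big((fh^nq)^+, f(H^+)\big)\to0$, i.e.\ convergence of affine subspaces, for which it suffices that $\o_{fh^nq}$ approach the affine subspace $f(H^+)$, not the point $f(\o_h)$. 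Your computation could probably be repaired to give exactly that, but as written the bookkeeping proves (and relies on) a false statement.

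For comparison, the paper avoids the fixed-point formula entirely: after imposing the additional transversality $\lin f(\lin H^+_\rep)\cap\lin q^{-1}(\lin h^-_\rep)=\{0\}$ (and its mirror for the repelling flag), it uses the contraction estimate of Lemma \ref{atraco} to show that $fh^nq$ maps a small $d_{\Aff}$-neighbourhood of $f(H^+)$ into itself for large $n$; the attracting affine flag, being the limit of forward iterates of any transverse affine subspace, must then lie in that neighbourhood. This soft argument sidesteps the uniformity issues you flag (moving subspaces, growth of $\tra(h^n)$) and, in particular, never needs to identify the limit of $\o_{fh^nq}$. If you want to keep your computational route, you should (a) add the transversality conditions ensuring the limiting attracting and repelling $\rep$-flags are in general position, so that the decomposition $V=(g^+_\rep\oplus g^-_\rep)\oplus G^0_\rep$ for $g=fh^nq$ does not degenerate, and (b) replace the claim $\o_{fh^nq}\to f(\o_h)$ by the statement that the distance from $\o_{fh^nq}$ to the affine subspace $f(H^+)$ tends to $0$.
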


\begin{proof} We commence by considering the Zariski-open subset $\mathrm G_{h}\subset\sf G^2$ of Lemma \ref{modif}. For every $(\lin f, \lin q)\in\mathrm G_{h}$ the Lemma implies $$\frac{\jordan(\lin f\lin h^n\lin q)-n\jordan(\lin h)}{n}\xrightarrow[n\to\infty]{} 0,$$ giving the first statement in item (i). Since $\lin h$ is loxodromic, $\jordan(h)$ lies in the interior of $\X_\rep$ and thus item (i) follows readily.

Now, if $\lin f,\lin q$ moreover verify \begin{equation}\label{ab1}\lin f(\lin H^+_\rep)\cap  \lin q^{-1}(\lin h^-_\rep)=\{0\}\end{equation}
then, there exists a neighborhood $\EuScript U$ of $f(H^+)$ such that for all $B\in\EuScript U$ one has $q(B)\transverse \lin h^-_\rep$. Lemma \ref{atraco} implies that $h^nq(B)\to H^+$, so $fh^nq(B)\to f(H^+)$ as $n\to\infty$; i.e. if $n$ is large enough, there is a small neighborhood of $f(H^+)$ that is sent to itself by $fh^nq$, entailing that the attracting affine flag of $fh^nq$ is arbitrarily close to $f(H^+)$ as $n\to\infty$. To deal with the repelling flag, the argument follows by \begin{equation}\label{ab2} \lin q^{-1}(\lin H^-_\rep)\cap  \lin f(\lin h^+_\rep)=\{0\}.\end{equation}

The lemma is settled by observing that, since all the flags $\lin h^-_\rep,\lin h^+_\rep,\lin H^-_\rep$ and $\lin H^+_\rep$ are fixed, Equations \eqref{ab1} and \eqref{ab2} determine Zariski-open non-empty subsets of $\sf G^2$, which we further intersect with $\mathrm G_{hh}$ to conclude the proof.\end{proof}

\section{The Affine Ratio and Affine Limit Cone}

\subsection{The Af{}fine Ratio}

Let us consider a real vector space $V$ of dimension $2k+l$ and the incomplete flag space $$\FF_{k,l}(V)=\big\{(\ell,W):\ell\in\grass_k(V),W\in\grass_{k+l}(V),\,\ell\subset W\big\}.$$ It is a self-dual flag space of $V$ and two such flags, $(a,A),(b,B),$ are in general position if $a\cap B=b\cap A=\{0\},$ in this case $A\cap B$ has dimension $l.$ For $v\in V,$ let us denote by $(a,A)+v$ the affine flag $(a+v,A+v).$

Consider four flags $(a^+,A^+),(a^-,A^-),(b^+,B^+),(b^-,B^-)$ in $\FF_{k,l}(V),$ pairwise in general position, and consider also two arbitrary vectors $v,w\in V.$ We will define an invariant of the four-tuple of affine flags \begin{alignat*}{4}\sf a^+ & =(a^+,A^+)+w,\qquad &\sf a^- & =(a^-,A^-)+w,\\ \sf b^+ & =(b^+,B^+)+v,\qquad &\sf b^- & =(b^-,B^-)+v,\end{alignat*} which we call, by analogy with the cross ratio of four lines in the plane, the \emph{affine ratio}. Let us denote by $A^0=A^+\cap A^-$ (and $B^0=B^+\cap B^-$). This invariant will be an affine map from $A^0+w$ to itself, $$\raff(\sf a^-,\sf b^+,\sf b^-,\sf a^+):A^0\to A^0,$$ defined by following procedure.

Consider $u\in A^0$ so that $u+w\in A^0+w\subset A^-+w,$ translate $u+w$ parallel to $a^-+w$ until one reaches $A^-+w\cap(B^++v),$ call this vector $u_1.$ Translate $u_1$ parallel to $b^++v$ until reaching $u_2\in(B^++v)\cap (B^-+v),$ translate $u_2$ parallel to $b^-+v$ until reaching $u_3\in(B^-+v)\cap A^+,$ translate $u_3$ parallel to $a^++w$ until reaching again $u_4\in( A^++w)\cap (A^-+w)=A^0+w.$ Then we let (see Figure \ref{AffRatio}) 
 $$\raff(\sf a^-,\sf b^+,\sf b^-,\sf a^+)(u):=u_4-w\in A^0.$$

\begin{figure}[h]\centering
\begin{tikzpicture}\begin{scope}[yslant=0.1,scale=0.7]

\begin{scope}[xslant=0.2,yslant=0.1,shift={(2.2,2.5)}]
\filldraw[fill=gray!10, draw=black] (0,0) rectangle (5.4,-2.5);
\node[below right] at (5.4,-2.5) {$B^++v$};
\draw[dashed] (4.8,0) -- (4.8,-2.5);
\end{scope}

\begin{scope}[xslant=1,shift={(0,0)}]
\filldraw[fill=gray!45,  draw=black] (0,0) rectangle (5,4.5);
\node[below right] at (5,0) {$A^0+w$};
\node[right] at (5,4.5) {$A^-+w$};
\draw[dashed] (4.75,0) -- (4.75,4.5);
\draw[color=black] (4,0) -- (4,1.5);
\draw[color=black,>-] (4,1.5) -- (4,3.15);
\node[below] at (4,0) {$u+w$};
\end{scope}

\begin{scope}[xslant=0.2,yslant=0.1,shift={(2.2,2.5)}]
\filldraw[fill=gray!10, draw=black] (0,0) rectangle (5.4,3.5);
\draw[dashed] (4.8,0) -- (4.8,3.5);
\draw[color=black] (4.32,0) -- (4.32,1.5);
\draw[color=black,>-] (4.32,1.5) -- (4.32,3.15);
\end{scope}

\begin{scope}[xslant=1.6,yslant=0.3,shift={(-5.4,5)}]
\filldraw[fill=gray!25,  draw=black] (0,0) rectangle (3.4,2);
\node[left] at (0,0) {$B^-+v$};
\draw[dashed] (0.5,0)--(0.5,2);
\draw[color=black] (3.09,1) -- (3.09,2);
\draw[color=black,-<] (3.09,0) -- (3.09,1);
\end{scope}

\draw (0,0) rectangle (5,5);
\draw[dashed] (0.2,0)--(0.2,5);
\node[above] at (0.2,5) {$A^++w$};
\draw[-<] (4.58,0)--(4.58,2);
\draw[] (4.58,2)--(4.58,4.31);

\end{scope}
\end{tikzpicture}
\caption{Definition of the Affine Ratio}\label{AffRatio}
\end{figure}
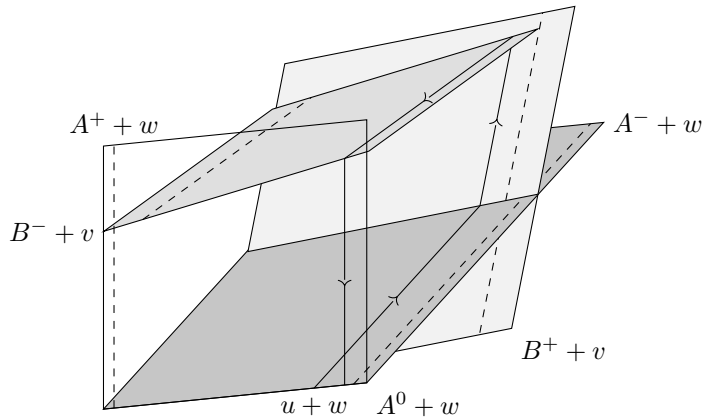

\begin{obs}\label{affR-invariante}The affine ratio is invariant under the action of $\GL(V)\ltimes V$ in the following sense: if $f\in\GL(V)\ltimes V$ then $$f\raff(\sf a^-,\sf b^+,\sf b^-,\sf a^+)f^{-1}=\raff(f\sf a^+,f\sf a^-,f\sf b^+,f\sf b^-).$$\end{obs}

Given a decomposition $V=U\oplus W$ let us denote by $\pi^{U,W}:V\to U$ the projection along it, so $\id=\pi^{U,W}\oplus\pi^{W,U}.$ The next lemma computes the translation part of $\raff$.

\begin{lemma}\label{formulaAffR}One has $\raff(\sf a^-,\sf b^+,\sf b^-,\sf a^+)(0)=\pi^{A^0,a^+\oplus b^-}\pi^{b^+\oplus a^-,B^0}(v-w).$
\end{lemma}

\begin{proof}Without loss of generality we may assume that $w=0.$ Using the definition of $\raff$ and the fact that $u=0$ one directly observes $u_1=\pi^{a^-B^+}v.$ From the decomposition $u_1-v=\pi^{b^+B^-}(u_1-v)+\pi^{B^-b^+}(u_1-v),$ one obtains that $$u_2=u_1-\pi^{b^+B^-}(u_1-v)=\pi^{b^+B^-}v+\pi^{B^-b^+}\pi^{a^-B^+}v.$$

\noindent
Now one has $u_3=\pi^{A^+b^-}u_2$; so finally we get \begin{alignat*}{2}\raff(x^+,x^-,y^+,y^-)(0)=u_4 & =\pi^{A^-a^+}u_3=\pi^{A^-a^+}\pi^{A^+b^-}\big(\pi^{b^+B^-}v+\pi^{B^-b^+}\pi^{a^-B^+}v\big)\\ & = \pi^{A^0,a^+\oplus b^-}\big(v-\pi^{B^-b^+}v+\pi^{B^-b^+}\pi^{a^-B^+}v\big)\\ & = \pi^{A^0,a^+\oplus b^-}\big(v+\pi^{B^-b^+}(\pi^{a^-B^+}v-v)\big) \\ & = \pi^{A^0,a^+\oplus b^-}\big(v-\pi^{B^-b^+}(\pi^{B^+a^-}v)\big)\\ & = \pi^{A^0,a^+\oplus b^-}\big(v-\pi^{B^-\cap B^+,b^+\oplus a^-}v\big)\\ & = \pi^{A^0,a^+\oplus b^-}\pi^{b^+\oplus a^-,B^0}(v),
\end{alignat*}as desired.\end{proof}

In particular, if all affine flags have one point in common $p=a^+\cap a^-\cap b^+\cap b^-$, then the translation part of $\raff$ is $0$ since we can then then take $v=w=p$.

An \emph{affine $\vt$-flag} of $\sf G\ltimes  V$ is an affine partial flag $(x_\rep,X_\rep)+v,$ for some $x\in\EuScript F_\vt(\sf G)$ and $v\in V.$ We now consider the affine ratio as an invariant of four affine $\vt$-flags. We will also normalize it so as to consider it as a map of $\Vlongest$ to itself. 

\begin{defi}\label{AffV0}Let $X^+,X^-,Y^+,Y^-$ be a four-tuple of pairwise transverse affine $\vt$-flags of $\sf G\ltimes  V,$ and define $$\raffN(X^-,Y^+,Y^-,X^+)  :\Vlongest\to \Vlongest$$ by conjugating $\raff\big(X^-,Y^+,Y^-,X^+\big)$ with the map $$\rep\big(\psi_{(x^+,x^-)}\big)\circ \trs_{-w}$$ that sends the pairs of affine flags $X^+=(x^+_\rep,X^+_\rep)+w$ and $X^-=(x^-_\rep,X^-_\rep)+w$ to $(a^+,A^+)$ and $(a^-,A^-).$ Finally, let us define the \emph{Translation Affine Ratio} $$\raffi(X^-,Y^+,Y^-,X^+)\in\trivia$$ as the translation part of $\raffN(X^-,Y^+,Y^-,X^+):\Vlongest\to \Vlongest$ along the neutralizing space $\trivia,$ this is to say $$\raffi(X^-,Y^+,Y^-,X^+):=\pi^\trivia\big(\raffN(X^-,Y^+,Y^-,X^+)(0)\big).$$\end{defi}

For a subset $\GA<\sf G\ltimes V$ we let $ \lin \GA<\sf G$ be the subset consisting on its linear parts. The purpose of this section is the following result.

\begin{prop}\label{hiperplanos}\label{interiorneu} Assume $\sf G$ is Zariski-connected. Let $\GA<\sf G\ltimes V$ be a Zariski-dense sub-semi-group such that $\Bcone_{\lin \GA}\cap\X_\rep$ has non-empty interior. Then the set $$\big\{\raffN(G^-,F^+,F^-,G^+)(0): f,g\in\GA\textrm{ are $(\rep,X_0)$-compatible and transverse}\big\}$$ is not contained in a hyperplane of $\Vlongest.$ Consequently, the set $$\big\{\raffi(G^-,F^+,F^-,G^+): f,g\in\GA\textrm{ are $(\rep,X_0)$-compatible and transverse}\big\}$$ is not contained in a hyperplane of $\trivia.$\end{prop}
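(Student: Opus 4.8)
The goal is to show the image of the Translation Affine Ratio is not contained in a hyperplane of $\trivia$, and the key is Lemma~\ref{formulaAffR} combined with a Zariski-density/irreducibility argument in the spirit of Benoist~\cite{limite}. The plan is as follows. First I would reduce the statement about $\raffi$ to the statement about $\raffN(G^-,F^+,F^-,G^+)(0)$: since $\pi^\trivia\colon\Vlongest\to\trivia$ is a surjective linear map and $\trivia$ is a $\rep(\sf M)$-invariant direct summand, a subset of $\Vlongest$ that is not contained in any hyperplane maps under $\pi^\trivia$ to a subset of $\trivia$ not contained in any hyperplane. So it suffices to establish the first assertion.

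Next I would set up the mechanism for producing $(\rep,X_0)$-compatible transverse pairs. Pick one loxodromic $\g_0\in\GA$ with $\jordan(\dot\g_0)\in\inte\encono_\fund$ — this exists because $\Bcone_{\dot\GA}\cap\X_\rep$ has non-empty interior and $\GA$ is Zariski-dense, so Benoist's Theorem~\ref{interiorsi} together with density of Jordan projections gives loxodromic elements with Jordan projection in $\inte\encono_\fund$. By Lemma~\ref{gecuadrado}, for any $f,q\in\GA$ with linear parts in a suitable non-empty Zariski-open $\EuScript G_{\g_0}\subset\sf G^2$, the elements $f\g_0^n q$ are $(\rep,X_0)$-compatible for large $n$, their attracting affine flags converge to $f(H^+)$ and repelling flags to $q^{-1}(H^-)$, where $H^\pm$ are the invariant affine flags of $\g_0$. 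Applying this with two independent pairs I obtain, in the limit, control of $\raffN(G^-,F^+,F^-,G^+)(0)$ for $F,G$ of the form $f_1\g_0^n q_1$, $f_2\g_0^m q_2$; by Lemma~\ref{formulaAffR} and Remark~\ref{affR-invariante}, after normalizing by $\rep(\psi)\circ\trs_{-w}$ the value becomes an explicit composition of projections $\pi^{A^0,a^+\oplus b^-}\pi^{b^+\oplus a^-,B^0}(v-w)$ applied to the translation parts, which depend polynomially/rationally on $f_i,q_i\in\GA$ and on the translation parts of chosen elements of $\GA$.

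Then comes the core argument. Suppose for contradiction that all these affine ratio values lie in a fixed hyperplane $\ker\Lambda\subset\Vlongest$, $\Lambda\in\Vlongest^*$. Using Lemma~\ref{formulaAffR}, $\raffN(\cdots)(0)$ is, after the normalization, a bilinear-type expression in the translation cocycle values $\co(\g)$ (for $\g\in\GA$) and a rational function of the linear parts. I would exploit the fact that $\GA$ is Zariski-dense in $\sf G\ltimes V$: the constraint ``$\Lambda$ vanishes on all these values'' is a Zariski-closed condition on tuples of group elements, hence if it holds on a Zariski-dense set it holds identically on $\sf G\ltimes V$. This forces $\Lambda$ to annihilate a $\rep(\sf G)$-invariant (or at least $\rep(\sf L_\vt)$-invariant) subset of $\Vlongest$; but $\Vlongest=\bigoplus_{\chi\in\poif^\longest}V^\chi$ and the various projections $\pi^{A^0,\cdot}$, as $F,G$ range over $\sf G\ltimes V$, sweep out all of $\Vlongest$ — concretely, varying $g,f$ moves the decomposition $V=(a^+\oplus b^-)\oplus A^0$ through all generic positions, and $v-w$ is an arbitrary vector, so the image spans $\Vlongest$. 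This contradicts $\Lambda\neq0$.

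The main obstacle I expect is the last step: showing the image actually spans $\Vlongest$, i.e. that the only linear form vanishing on $\{\pi^{A^0,a^+\oplus b^-}\pi^{b^+\oplus a^-,B^0}(V)\}$ over all admissible flag configurations is zero. This is where Smilga's structural results (Proposition~\ref{amas}, Lemma~\ref{pv0}) and the irreducibility of $\rep$ must be used: one needs that the $\rep(\sf G)$-orbit of the attracting/repelling flag data is large enough that the corresponding projections onto $A^0\subset\Vlongest$ cover $\Vlongest$, which should follow from strong irreducibility of $\rep$ (no proper invariant subspace of $V$, so no $\Lambda$ can be ``confined''), together with the fact that $\Vlongest$ is precisely the span of the $\longest$-invariant weight spaces and the Weyl group acts transitively enough on the relevant weight data. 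One then packages everything via a Zariski-density argument exactly as Benoist does for the limit cone, replacing his proximal-dynamics input by Lemma~\ref{gecuadrado} and his cross-ratio computation by Lemma~\ref{formulaAffR}.
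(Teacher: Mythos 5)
Your skeleton matches the paper's: Lemma \ref{formulaAffR} to express $\raffN(\cdot)(0)$ as a composition of projections applied to a translation vector, Lemma \ref{gecuadrado} to make the invariant affine flags of $fh^nq$ converge to $f(H^+)$ and $q^{-1}(H^-)$ (which \emph{are} regular in $(f,q)$, so that the avoidance condition becomes Zariski-open), and Zariski-irreducibility of $(\sf G\ltimes V)^2$ plus Zariski-density of $\GA\times\GA$ to produce actual elements of $\GA$. The reduction of the statement about $\raffi$ to the one about $\raffN(\cdot)(0)$ via surjectivity of $\pi^\trivia$ is also fine. The problem is the step you yourself flag as the main obstacle: showing that the relevant Zariski-open set of pairs is non-empty, i.e.\ that no hyperplane $U\subset\Vlongest$ can swallow all the values. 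Your proposed resolution (strong irreducibility of $\rep$, together with Weyl-group transitivity, forcing the projections onto $A^0$ to sweep out $\Vlongest$) is not how this is done and does not work as stated: it is an argument about the \emph{linear} data, whereas if all four affine flags pass through a common point the translation part of the affine ratio vanishes identically no matter what the linear configuration is; moreover, for a fixed configuration the image of $V$ under $\pi^{A^0,a^+\oplus b^-}\pi^{b^+\oplus a^-,B^0}$ has dimension at most $\min\{2k,l\}$, so it cannot span $\Vlongest$ when $2k<l$.

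The decisive input is the freedom in the translation parts, made available by Zariski-density of $\GA$ in the full affine group $\sf G\ltimes V$ (not merely of $\lin\GA$ in $\sf G$). Concretely, after conjugating so that $\o_g=0$ and $g^{\pm}$ are the standard flags (hence $A^0=\Vlongest$), the paper fixes the hyperplane $U$, takes $\lin f=\lin q=\id$, and must only find $v\in V$ with
$$\big(U\oplus a^+\oplus \lin h^-_\rep\big)\cap\big(\lin h^+_\rep\oplus a^-\big)\cap\trs_v\big(H^+\cap H^-\big)=\vacio.$$
The first two terms intersect in a $(2k-1)$-dimensional set and $H^+\cap H^-$ is an $l$-dimensional affine subspace, so since $(2k-1)+l<2k+l=\dim V$ a suitable translation $v$ exists. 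This single point witnesses non-emptiness of the Zariski-open condition \eqref{abierto3} on $(\sf G\ltimes V)^2$, and Zariski-density of $\GA^2$ finishes the argument. No irreducibility of $\rep$ and no Weyl-group transitivity is used at this stage. As written, your proof is missing this step, and the mechanism you propose in its place would not supply it.
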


\begin{proof} By Zariski-density, and since $\Bcone_{\lin \GA}\cap\X_\rep$ has non-empty interior, there exists a pair $g,h\in\GA$ such that the corresponding linear parts $\lin g$ and $\lin h$ are $(\rep,X_0)$-compatible, loxodromic and  transverse.

By conjugating $\GA$ we may, and will, assume that (recall Definition \ref{origen}) $$\o_{g}  =0,\  \lin g^+  =[\sf P^\vt],\textrm{ and } \lin g^-  =[\wk{\sf P}^\vt],$$ so that $(\lin g^+_\rep,\lin G^+_\rep)=(a^+,A^+)$, $(\lin g^-_\rep,\lin G^-_\rep)=(a^-,A^-)$, $A^0=\Vlongest$, $G^+=A^+$ and $G^-=A^-$. Applying Lemma \ref{formulaAffR} we see that we have to understand the vector 
\begin{alignat}{2}\label{inter}\nu(g,h) & :=\pi^{\Vlongest,a^+\oplus \lin h^-_\rep}\pi^{\lin h^+_\rep\oplus a^-,\lin H^0}(\o_h)\nonumber\\
&=\pi^{\Vlongest,a^+\oplus \lin h^-_\rep}\big((a^-\oplus \lin h^+_\rep)\cap H^0\big),\end{alignat}  for different choices of $h$.  More precisely, we fix $g$ and $h$ as above and a hyperplane $U\subset \Vlongest$, we will find $f,q\in\GA$ and $n\in\N$ such that $\nu(g,fh^nq)\not\in U.$

We apply Lemma \ref{gecuadrado} to find a non-empty Zariski-open subset $\EuScript G_h\subset \sf G^2$ such that if $f,q\in\sf G\ltimes V$ verify $(\lin f,\lin q)\subset\EuScript G_h$ then, for all large enough $n$ one has $fh^nq$ is $(\rep,X_0)$-compatible with attracting flag arbitrary close to $f(H^+)$ and repelling flag arbitrary close to $q^{-1}(H^-)$ (as $n\to\infty$).

To find elements $f,q\in\GA$ such that $\nu(g,fh^nq)\not\in U$ for big enough $n$, we use Equation \eqref{inter} and the above descriptions of the invariant affine flags of $fh^nq$ to see that we must find $f,q,\in\GA$ such that $(\lin f,\lin q)\in\EuScript G_h$ and \begin{equation}\label{abierto3}\big(U\oplus a^+\oplus \lin q^{-1}(\lin h^-_\rep)\big) \cap \Big(\big(a^-\oplus \lin f(\lin h^+_\rep)\big)\cap\big( f (H^+)\cap q^{-1} (H^-)\big)\Big)=\vacio. \end{equation}

This last equation defines a Zariski-open subset of $(\sf G\ltimes  V)^2$, the variables being $(f,q)$. Since $(\sf G\ltimes V)^2$ is Zariski-connected, it is Zariski-irreducible and thus a finite collection of non-empty Zariski-open subsets has non-trivial intersection. We already now that $\EuScript G_h$ is non-empty, so provided \eqref{abierto3} defines a non-empty set we have: since $\GA$ is Zariski-dense in $\sf G\ltimes  V,$ the product semi-group $\GA\times\GA$ is Zariski-dense in $(\sf G\ltimes V)^2$ and thus a pair $(f,q)\in\GA^2$ with $(\lin f,\lin q)\in\EuScript G_h$ and satisfying Equation \eqref{abierto3} will exist.

To complete the proof it remains thus to show that there exist $f,q\in\sf G\ltimes  V$ that verify \eqref{abierto3}. Consider then $\lin f=\lin q=\id.$ It suffices to find $v\in V$ such that \begin{equation}\label{vac}\big(U\oplus a^+\oplus \lin h^-_\rep\big) \cap \big( \lin h^+_\rep\oplus a^-\big)\cap\Big( \trs_v\big( H^+\cap H^-\big)\Big)=\vacio.\end{equation}

\noindent
In order to do so, we observe that \begin{alignat*}{2}&\dim\Big(\big(U\oplus a^+\oplus \lin h^-_\rep\big) \cap \big( \lin h^+_\rep\oplus a^-\big)\Big)=2k-1,\\ &\dim H^+\cap H^-=l.\end{alignat*} The second equality is obvious, the first one follows since $U\oplus a^+\oplus \lin h^-_\rep$ has co-dimension $1$ and $\lin h^+_\rep\oplus a^-$ is $2k$-dimensional (by transversality of $g$ and $h$) and not contained in the former. 

Thus, as the dimensions do not add up to $\dim V=2k+l$, we can translate the latter as to not intersect the former. i.e. there exists $v\in V$ such that Equation \eqref{vac} holds, as desired. \end{proof}

\subsection{The affine ratio and additivity default of Marguils' invariants}

Given $C>0$, we say that two $(\rep,X_0)$-compatible elements $g_0,g_1\in\sf G\ltimes  V$ are a $C$-\emph{transverse pair} if for all $i,j\in\{0,1\}$ one has $\|(\lin g^+_i|\lin g^-_j)\|\leq C$.

\begin{thm}[Smilga]\label{SM} Given $\eps>0$ there exists $C>0$ such that if $f,q\in\sf G\ltimes  V$ are a $(\rep,X_0)$-compatible $C$-transverse pair with affine contraction $\leq1/C$, then $$\big\|\margulis(fq)-\big(\margulis(f)+\margulis(q)\big)-\raffi(Q^-,Q^+,F^-,F^+)\big\|<\eps.$$
\end{thm}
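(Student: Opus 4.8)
The plan is to reduce the statement to a concrete comparison of two affine maps of $\Vlongest$: the one produced by composing the "straightened" actions of $f$ and $q$, and the one produced by the affine ratio $\raffi$. First I would fix the $(\rep,X_0)$-compatible $C$-transverse pair $f,q$ and work in the coordinates provided by Lemma \ref{elem} and Definition \ref{defimar}: each of $f$ and $q$ has invariant affine flags $F^\pm,F^0$ and $Q^\pm,Q^0$, and the un-normalized Margulis invariants live inside the neutral affine spaces $\lin G^0_\rep+\o_g$. The key geometric input is Lemma \ref{atraco} together with Smilga's estimate in the Remark following it: when the affine contraction $\varsigma$ is $\leq1/C$ with $C$ large, the image $fq(B)$ of an affine subspace $B$ transverse to the relevant repelling directions is exponentially close to the attracting affine flag of $f$, and symmetrically for $(fq)^{-1}$. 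Hence the attracting affine flag of $fq$ is $\eps$-close to $F^+$ along $\lin g^+_\rep$-directions inside $F^+$, and its repelling flag is $\eps$-close to $Q^-$; the neutral space $(FQ)^0$ of the product is correspondingly close to $F^0\cap Q^0$ (after the identifications of \S on invariant flags).

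Next I would track the translation part. Writing $fq$ in the form given by Lemma \ref{elem}, its un-normalized Margulis invariant is $\pi^1(\tra{fq})$ where $\tra{fq}=\lin f(\tra q)+\tra f$. Decomposing $V=(\lin g^+_\rep\oplus\lin g^-_\rep)\oplus\lin G^0_\rep$ for the product and using the invariance properties in Remark \ref{Invariancia} (in particular that adding $u-\lin fu$ does not change $\umargulis$ when $\lin f$ acts as identity on $\cero$), the contribution of $\tra f$ projects to $\umargulis(f)$, the contribution of $\lin f(\tra q)$ projects to $\umargulis(q)$ up to the discrepancy between the neutral space of $q$ and that of the product, and the \emph{defect} — the part that does not split — is exactly the projection computed in Lemma \ref{formulaAffR}, namely $\pi^{A^0,a^+\oplus b^-}\pi^{b^+\oplus a^-,B^0}$ applied to the relative translation of the two affine flags. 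Matching this projection with Definition \ref{AffV0} identifies it, after conjugating by $\rep(\psi_{(x^+,x^-)})\circ\trs_{-w}$ and projecting to $\trivia$, with $\raffi(Q^-,Q^+,F^-,F^+)$. All the terms that were replaced by their limits incur an error bounded by a constant (depending only on $\rep$ and the transversality constant $C$) times $\varsigma(fq)$ plus the exponential flag-contraction error from Lemma \ref{atraco}, both of which can be made $<\eps$ by choosing $C$ large.

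The main obstacle I expect is bookkeeping the three different neutral spaces — those of $f$, of $q$, and of the product $fq$ — and showing that the projections $\pi^\trivia$ used to define $\margulis(f)$, $\margulis(q)$ and $\margulis(fq)$ are mutually compatible up to $O(\varsigma)$ errors; this requires the uniform continuity of the maps $\psi_{(x,y)}$ in Proposition \ref{control-enderezar} on the $C$-bounded locus, and care that the $\sf M$-component of the straightening matrices stays controlled so that the projection onto $\trivia$ (rather than onto $\trivia^{\perp_\sf M}$) varies continuously. A secondary technical point is controlling $\|\o_{fq}\|$, i.e. how far the origin of the product's invariant flags drifts; this is where the factor $e^{\|\o_g\|}$ in the definition of affine-contraction $\varsigma$ in Definition \ref{affinestrength} is used, together with the fixed-point formula $\o=-(\lin f|W-\id)^{-1}\pi(\tra f)$ from Lemma \ref{elem} applied to the linear map $\lin f\lin h^n\lin q$ whose restriction to the $\pm$-part is a uniform contraction. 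Once these continuity estimates are in place, the inequality follows by taking $C$ large enough to absorb every error term into $\eps$.
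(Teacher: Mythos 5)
Your error-control framework (flag convergence via Lemma \ref{atraco}, absorption of all discrepancies into terms controlled by the affine contraction and the transversality constant) matches the paper's, and like the paper you ultimately defer the quantitative estimates to Smilga. But there are two problems at the heart of the argument. First, a concrete error: the ideally neutral space $(FQ)^0$ of the product is close to $F^+\cap Q^-$ (attracting flag of $fq$ near $F^+$, repelling near $Q^-$), not to $F^0\cap Q^0$ as you write — the latter is the intersection of two $l$-dimensional affine subspaces of a $(2k+l)$-dimensional space and is generically empty. The companion space $(QF)^0$ is close to $Q^+\cap F^-$, and both spaces are needed.

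Second, and more seriously, the step where the affine ratio appears is asserted rather than derived. You propose to split $\tra{fq}=\lin f(\tra q)+\tra f$ and claim the non-splitting part "is exactly the projection computed in Lemma \ref{formulaAffR}"; but that lemma computes a composition of two projections applied to the difference of base points of the two flag pairs, and nothing in your direct decomposition of the translation vector produces that composition. The mechanism in the paper (following Smilga) is different: one uses that $q$ conjugates $fq$ to $qf$, hence induces a map $(FQ)^0\to(QF)^0$, which is approximated by the projection to $Q^0$ parallel to $q^-$ followed by the projection to $(QF)^0$ parallel to $q^+$ plus a quasi-translation; composing with the analogous decomposition of $f:(QF)^0\to(FQ)^0$ yields precisely the four-projection loop of Figure \ref{AffRatio}, i.e.\ $\raffi(Q^-,Q^+,F^-,F^+)$. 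Without this conjugation argument (or an equivalent identity), your plan does not explain why the defect of additivity is the affine ratio rather than some other correction term, so the central identification is a genuine gap.
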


\begin{proof}[Quick sketch of proof] The proof is essentially contained in Smilga \cite[Proposition 9.3]{SmilgaAnnalen}, however the statement is not exactly the same us ours so we just explain the main ideas.

We want to understand the Margulis invariant of $fq$ by decomposing it into its factors $f$ and $q$. The former is, up to normalizing its invariant flags, the translation part of $fq$ restricted to the neutralizing space $$\trivia^{\big((\lin f\lin q)^+,(\lin f\lin q)^-\big)}\subset (FQ)^0.$$ If the angles between the invariant flags of $\lin f$ and $\lin q$ are controlled, and moreover the affine contraction $\varsigma(f)$ is small enough (Definition \ref{affinestrength}), Lemma \ref{atraco} implies that, the attracting flag of $fq$ is close to $F^+$. An analogous control for quantities related to $q^{-1}$ implies that the repelling flag of $fq$ is arbitrarily close to $Q^-$, so the ideally neutral space $(FQ)^0$ of $fq$ is close to $F^+\cap Q^-$. Similarly $qf$ has attracting flag close to $Q^+$, repelling flag close to $F^-$, and ideally neutral space close to $Q^+\cap F^-$. See Figure \ref{Aditivity}.

Now, the map $q$ conjugates $fq$ and $qf$, so $q$ sends $(FQ)^0$ so $(QF)^0$, and analogously, $f$ sends $(QF)^0$ to $(FQ)^0$.

The idea is then to decompose the map $q:(FQ)^0\to(QF)^0$ as the projection from $(FQ)^0$ to $Q^0$ parallel to $q^-$, followed by the projection $Q^0$ to $(QF)^0$ parallel to $q^+$ and some quasi-translation of $(FQ)^0$. Some errors have to be taken into account here as $(FQ)^0$ is not exactly $F^+\cap Q^-$ but only close to it, these errors tend to be negligible. However, for the approximation of $q:(FQ)^0\to(QF)^0$ with this composition of projections and a quasi-translation to be good, a control on the total separation of the spaces, namely, for example a control on $\|\o_q\|$ and $\|\o_f\|$ is sufficient (together with the previous control on angles/size of $q$ and $f$). This is resolved in Lemmas 9.8 and 9.10 and Corollary 9.9 of Smilga \cite{SmilgaAnnalen}.

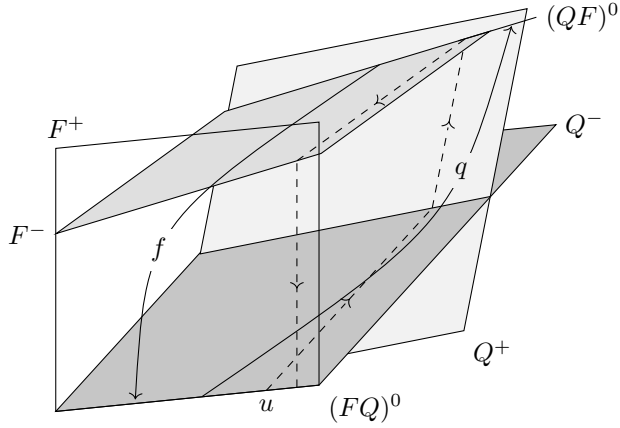
\begin{figure}[h]\centering
\begin{tikzpicture}\begin{scope}[yslant=0.1,scale=0.7]

\begin{scope}[xslant=0.2,yslant=0.1,shift={(2.2,2.5)}]
\filldraw[fill=gray!10, draw=black] (0,0) rectangle (5.4,-2.5);
\node[below right] at (5.4,-2.5) {$Q^+$};
\end{scope}

\begin{scope}[xslant=1,shift={(0,0)}]
\filldraw[fill=gray!45,  draw=black] (0,0) rectangle (5,4.5);
\node[below right] at (5,0) {$(FQ)^0$};
\node[right] at (5,4.5) {$Q^-$};
\draw[color=black,dashed] (4,0) -- (4,1.5);
\draw[color=black,>-,dashed] (4,1.5) -- (4,3.15);
\node[below]  at (4,0) {$u$};
\node (u) at (2.7,-0.1) {};
\node (v) at (1.6,-0.1) {};
\end{scope}

\begin{scope}[xslant=0.2,yslant=0.1,shift={(2.2,2.5)}]
\filldraw[fill=gray!10, draw=black] (0,0) rectangle (5.4,3.5);
\draw[color=black,dashed] (4.32,0) -- (4.32,1.5);
\draw[color=black,>-,dashed] (4.32,1.5) -- (4.32,3.15);
\end{scope}

\begin{scope}[xslant=1.6,yslant=0.3,shift={(-5.4,5)}]
\filldraw[fill=gray!25,  draw=black] (0,0) rectangle (3.4,2);
\node[left] at (0,0) {$F^-$};
\draw[color=black,dashed] (3.09,1) -- (3.09,2);
\draw[color=black,-<,dashed] (3.09,0) -- (3.09,1);
\node (f) at (3.5,2.2) {};
\node (h) at (2,2.1) {};
\node[right] at (4,2) {$(QF)^0$};
\draw (3,2)--(4,2);

\end{scope}

\draw (0,0) rectangle (5,5);
\node[above] at (0.2,5) {$F^+$};
\draw[-<,dashed] (4.58,0)--(4.58,2);
\draw[dashed] (4.58,2)--(4.58,4.31);
\end{scope}

\draw[->,thin] (u) ..  controls (5.3,2.5) ..   (f);
\node[left,fill=gray!10] at (5.6,3.2) {$q$};

\draw[->,thin] (h) ..  controls (1.2,2.5) ..   (v);
\node[left,fill=white] at (1.6,2.15) {$f$};

\end{tikzpicture}
\caption{Schematic situation in Theorem \ref{SM}, the ideally neutral spaces $(FQ)^0$ and $(QF)^0$ are very close (but do not coincide with) $F^+\cap Q^-$ and $Q^+\cap F^-$ respectively.}\label{Aditivity}
\end{figure}

On readily sees then the Affine Ratio of Figure \ref{AffRatio} appearing as the corresponding default, giving the result.\end{proof}

\subsection{The affine limit cone is convex and has non-empty interior}We define the \emph{affine limit cone} $\affinelim_{\GA}$ of a Zariski dense sub-semigroup $\GA<\sf G\ltimes  V$ as the smallest closed cone of $\trivia$ that contains the set of Margulis invariants $$\affinelim_\GA=\overline{\big\{\R_+\cdot\margulis(f):f\in\GA \textrm{ is $(\rep,X_0)$-compatible}\big\}}.$$

In light of Proposition \ref{hiperplanos} and Theorem \ref{SM} one has the following result. Similar versions will also appear in Kassel-Smilga \cite{KS} and in Ghosh \cite{Souravpersonalcomm}.

\begin{cor}\label{interiorAL}Let $\rep:\sf G\to \GL(V)$ be an irreducible representation with non-trivial neutralizing space and let $\GA<\sf G\ltimes V$ be a Zariski-dense sub-semi-group such that $\inte\Bcone_{\lin \GA}\cap\X_\rep\neq\vacio$. Then $\affinelim_\GA$ is convex and has non-empty interior.\end{cor}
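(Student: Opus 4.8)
The strategy is the exact analogue of Benoist's argument for his limit cone, with Proposition \ref{hiperplanos} playing the role of the Zariski-density statement and Theorem \ref{SM} playing the role of Theorem \ref{CR} (the asymptotic additivity of Jordan projections). I would first record the elementary observation that $\affinelim_\GA$ is a closed cone by construction; the two nontrivial points are non-empty interior and convexity.

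\emph{Non-empty interior.} Suppose for contradiction that $\affinelim_\GA$ is contained in a hyperplane $U\subset\trivia$. By Proposition \ref{hiperplanos}, applied with $\GA$ (whose linear part is Zariski-dense in $\sf G$ and satisfies $\inte\Bcone_{\lin\GA}\cap\X_\rep\neq\vacio$), there exist $(\rep,X_0)$-compatible, transverse $f,g\in\GA$ with $\raffi(G^-,F^+,F^-,G^+)\notin U$. Now I want to feed high powers of $f$ and $g$ into Theorem \ref{SM}. For this I would note: replacing $f,g$ by powers $f^N,g^N$, the affine contraction $\varsigma(f^N),\varsigma(g^N)\to0$ as $N\to\infty$ (the spectral radii of $\lin f^N|\lin f^-_\rep$ and $\lin f^{-N}|\lin F^+_\rep$ decay exponentially because $\lin f$ is $(\rep,X_0)$-compatible, and $\o_{f^N}=\o_f$ is fixed), while the transversality constant of the pair $(f^N,g^N)$ stays bounded (the invariant flags are unchanged). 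The invariants scale linearly: $\margulis(f^N)=N\margulis(f)$ and $\raffi$ is unchanged under replacing $f,g$ by powers since it depends only on the invariant affine flags. Hence Theorem \ref{SM} gives, for every $\eps>0$ and all large $N$, $$\big\|\margulis(f^Ng^N)-N\margulis(f)-N\margulis(g)-\raffi(G^-,G^+,F^-,F^+)\big\|<\eps.$$ Dividing by $N$ we see $\tfrac1N\margulis(f^Ng^N)\to\margulis(f)+\margulis(g)$, so both $\margulis(f)+\margulis(g)$ and (applying the scaling more carefully, or iterating the product $f^Ng^N$ against itself) the vectors $\margulis(f)+\margulis(g)+\tfrac1N\raffi$ lie, up to $\eps$, in the cone $\affinelim_\GA$. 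Letting $N$ vary and $\eps\to0$, and using that $\affinelim_\GA$ is closed and a cone, one concludes that the affine span of $\{\margulis(f),\margulis(g),\margulis(f)+\margulis(g)+\raffi(\dots)\}$, hence a vector outside $U$, lies in $\affinelim_\GA$ — contradicting $\affinelim_\GA\subset U$. Therefore $\affinelim_\GA$ spans $\trivia$; since it is a convex cone (next step) spanning $\trivia$, it has non-empty interior.

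\emph{Convexity.} Here I would mimic Benoist \cite{limite} directly: it suffices to show that for any two $(\rep,X_0)$-compatible $f,g\in\GA$, the segment between $\R_+\margulis(f)$ and $\R_+\margulis(g)$ is in $\affinelim_\GA$. Given relatively prime $p,q$, perturb $f$ and $g$ (replacing them by conjugates inside $\GA$, or by products with auxiliary elements as in Lemma \ref{gecuadrado}) so that $f$ and $g$ become transverse without changing $\R_+\margulis(f),\R_+\margulis(g)$ in the limit, then apply Theorem \ref{SM} repeatedly to the word $f^{pN}g^{qN}$: one gets $\tfrac1{(p+q)N}\margulis(f^{pN}g^{qN})\to \tfrac{p}{p+q}\margulis(f)+\tfrac{q}{p+q}\margulis(g)$, so every rational barycenter of $\margulis(f)$ and $\margulis(g)$ lies in $\affinelim_\GA$; closedness gives all barycenters. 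Combined with the cone property this yields convexity.

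\emph{Main obstacle.} The delicate point is the bookkeeping in the non-empty-interior step: one must arrange that iterating Theorem \ref{SM} genuinely produces a vector \emph{off the hyperplane} $U$ and not merely a sequence of vectors whose limit is on $U$ (the $\raffi$ term must survive, not be washed out by the $\tfrac1N$ normalization). The clean way around this is not to normalize: $\margulis(f^Ng^N)$ itself, for each fixed large $N$, is an honest element of $\affinelim_\GA\setminus\{0\}$ lying within $\eps$ of $N(\margulis f+\margulis g)+\raffi$; letting $N\to\infty$ and using that $\affinelim_\GA$ is a closed cone, the \emph{rays} through these points accumulate on the ray through $\margulis f+\margulis g$, but an Abel-summation / perturbation trick (exactly as in Benoist, inserting a third transverse element to break the degeneracy, or taking products $f^{N}g^{N}f^{M}$ with $M\ll N$) lets the $\raffi$-direction contribute to the cone, forcing $\affinelim_\GA\not\subset U$. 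Once Zariski-density of $\GA$ in $\sf G\ltimes V$ and the hypothesis $\inte\Bcone_{\lin\GA}\cap\X_\rep\neq\vacio$ are in hand, Proposition \ref{hiperplanos} and Theorem \ref{SM} do essentially all the work, and the remaining difficulty is purely this limiting/combinatorial argument, which is routine once set up with care.
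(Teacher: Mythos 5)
Your overall strategy coincides with the paper's: convexity via Theorem \ref{SM} applied to products of (powers of) transverse compatible elements, with Lemma \ref{gecuadrado} and Zariski-density used to manufacture transversality; non-empty interior via the interplay of Theorem \ref{SM} and Proposition \ref{hiperplanos}. Your observations that $\o_{f^N}=\o_f$, that the affine contraction of powers tends to $0$ by compatibility, and that $\raffi$ depends only on the invariant affine flags (hence is unchanged under taking powers) are all correct and are exactly what makes Theorem \ref{SM} applicable.

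The one step that does not close as written is the conclusion of the non-empty-interior argument. From $\margulis(f^Ng^N)\approx N(\margulis(f)+\margulis(g))+\raffi$ you try to extract an element of $\affinelim_\GA$ off the hyperplane $U$, and you correctly notice in your ``main obstacle'' paragraph that the rays through these points accumulate only on $\R_+(\margulis(f)+\margulis(g))$, so the $\raffi$-direction is washed out; the claim that ``the affine span of $\{\margulis(f),\margulis(g),\margulis(f)+\margulis(g)+\raffi\}$ lies in $\affinelim_\GA$'' does not follow, and no Abel-summation or third-element trick is needed (nor would it obviously work, since the cone genuinely need not contain any vector in the $\raffi$-direction). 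The resolution is to run the argument purely as a contradiction on the \emph{defect}, which is what the paper does: if every Margulis invariant of a compatible element of $\GA$ lies in the linear hyperplane $U$, then for every transverse compatible pair the difference $\margulis(f^ng^n)-n\big(\margulis(f)+\margulis(g)\big)$ lies in $U$ for all $n$, hence so does its limit $\raffi(G^-,F^+,F^-,G^+)$; Proposition \ref{hiperplanos} says these limits are not all contained in any hyperplane, contradiction. One never needs to exhibit an element of the cone outside $U$; one only needs the defects to escape every hyperplane. With that replacement your proof is the paper's proof. A last cosmetic remark: your convexity step should, as in the paper, let the auxiliary correction elements (the $f_0g^nq_0$, $fh^nq$ produced by Lemma \ref{gecuadrado}) wash out in a $k\to\infty$ limit rather than ``not change $\R_+\margulis$'' outright, since conjugating or multiplying does change the Margulis invariant by a bounded amount.
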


In contrast with Benoist's limit cone, it is fairly common for $\affinelim_\GA$ to be the whole space $\trivia.$ Indeed, this is the case when $\longest$ acts trivially on $\trivia.$

\begin{proof} We first establish convexity. To that end, we fix a pair $g,h\in\GA$ of $(\rep,X_0)$-compatible, loxodromic elements, we have to find an element in $\GA$ whose Margulis invariant lies about $\R_+(\margulis(g)+\margulis(h))$. We consider the Zariski open sets $\EuScript G_g$ and $\EuScript G_h$ given by Lemma \ref{gecuadrado} for $g$ and $h$. Since $g$ and $h$ are fixed, each of the equations \begin{alignat*}{4} \lin f(\lin H^+_\rep)\cap\lin h^-_\rep & =\{0\},
 \qquad &\lin H^+_\rep\cap q^{-1}(\lin h^-_\rep) &=\{0\},
 \\ \lin G^-_\rep\cap \lin q_0^{-1}(\lin g^-_\rep)& =\{0\},
 \qquad &\lin f_0(\lin G^+_\rep)\cap\lin g^-_\rep & =\{0\},
 \\ \lin H^+_\rep\cap \lin q_0^{-1}(\lin g^-_\rep)&=\{0\},
 \qquad &\lin G^+_\rep\cap \lin q^{-1}(\lin h^-)&=\{0\},\end{alignat*} defines a Zariski-open non-empty subset of $\sf G$. Since $\sf G$ is Zariski-irreducible finite collections of non-empty open sets intersect and thus there exist $f,q,f_0,q_0\in\GA$ such that $(\lin f,\lin q)\in\EuScript G_h$, $(\lin f_0,\lin q_0)\in\EuScript G_g$, that moreover verify the corresponding equations above. For large enough $n$ we have thus, by means of Lemma \ref{gecuadrado}, that

\begin{itemize} \item[-] $fh^nq$ and $f_0g^nq_0$ are $(\rep,X_0)$-compatible, \item[-] $fh^nq$ and $h$ are transverse, \item[-] $f_0g^nq_0$ and $g$ are transverse, \item[-] for all $m\geq1$ the elements $h^mfh^nq$ and $g^mf_0g^nq_0$ are also transverse (and $(\rep,X_0)$-compatible by Lemma \ref{pares}).\end{itemize}

Applying now Theorem \ref{SM} we see that, for big enough $n$, \begin{alignat*}{2}\lim_{k\to\infty}\frac{\margulis(g^kf_0g^nq_0h^kfh^nq)}k & =\lim_{k\to\infty}\frac{\margulis(g^kf_0g^nq_0)+\margulis(h^kfh^nq)}k\\ &=\lim_{k\to\infty}\frac{\margulis(g^k)+\margulis(f_0g^nq_0)+\margulis(h^k)+\margulis(fh^nq)}k \\&=\margulis(g)+\margulis(h),\end{alignat*} proving convexity of $\affinelim_{\GA}$.

To prove that $\affinelim_{\GA}$ has non-empty interior we use Theorem \ref{SM} together with Proposition \ref{interiorneu}. Indeed, if there exists a hyperplane $U\subset\trivia$ such that $\margulis(g)\in U$ for every $(\rep,X_0)$-compatible $g\in\GA$, then for any transverse pair of $(\rep,X_0)$-compatible elements $f,q\in\GA$ we have $$\raffi(Q^-,F^+,F^-,Q^+)=\lim_{n\to\infty} \margulis(f^nq^n)-n(\margulis(f)+\margulis(q))\in U,$$ contradicting Proposition \ref{interiorneu}. Finally, a convex cone that is not contained in a hyperplane has non-empty interior, completing the proof.\end{proof}

\section{The case of reducible representations}

We now consider a finite collection of non-trivial irreducible representations $\{\rep_i:\sf G\to \SL(V_i)\}_{i\in I}$ with $0\in\poids_{\rep_i}$ and study the representation $$\rep:=\bigoplus_i\rep_i:\sf G\to V:=\bigoplus_iV_i.$$ Fix $X_0\in\a^+\cap\Fix(\ii)$ which does not belong to the finite union of kernels $\ker\chi,$ $\chi\in\poids_{\rep_i}\setminus\poids^{\longest}_{\rep_i}$. Let $\vt_i\subset\simple$ be the set of simple roots stabilizing $\poids_{\rep_i}^+$ and $\vt:=\bigcup_{i\in I}\vt_i.$

It we let $$a_i^+=\bigcup_{\chi\in\poids_{\rep_i}^+}V^\chi\textrm{ and }A_i^+=\bigcup_{\chi\in\poids_{\rep_i}^+\cup\poids_{\rep_i}^\longest}V^\chi$$ then the parabolic group $\sf P_\vt$ is the stabilizer in $\sf G$ of $a^+:=\bigoplus_ia_i^+$ and of $A^+:=\bigoplus_i A_i^+$ and, if we denote by $p_i:\EuScript F_\vt\to\EuScript F_{\vt_i} $ the natural projection, then we have a map $\EuScript F_\vt\to\grass_{\sum_i k_i,\sum_i n_i}(V)$ defined by $$x\mapsto(x_\rep,X_\rep):=\Big(\bigoplus_i (p_i(x))_{\rep_i},\bigoplus_i (P_i(X))_{\rep_i}\Big),$$ (recall Equation \eqref{equiv}). We let $\encono_\rep:=\bigcap_i\encono_{\rep_i}$, where $\encono_{\rep_i}$ is defined as in Equation \eqref{encono}, and we say that $g\in\sf G\ltimes_\rep V$ is $(\rep,X_0)$-\emph{compatible} if $\jordan(\lin g)\in\encono_\rep$.

For each $i$ let us denote by $\trivia_i$ the associated neutralizing space and define the \emph{trivializing space} of $\rep$ by $$\trivia=\bigoplus_i\trivia_i.$$ Define also, for a $(\rep,X_0)$-compatible $g=(\lin g,v)\in\sf G\ltimes_\rep V $ its \emph{Margulis invariant} by $$\margulis(g):=\sum_i\margulis(\lin g,v_i),$$ where $v=\sum_i v_i$ in the decomposition $V=\bigoplus_i V_i.$ We extend the definitions of affine ratio and affine limit cone and the proof of Corollary \ref{interiorAL} gives the following:

\begin{cor}\label{rojo}Let $\rep:\sf G\to \SL(V)$ be as above and let $\GA<\sf G\ltimes V$ be a Zariski-dense sub-semi-group such that $\inte\Bcone_{\lin \GA}\cap\X_\rep\neq\vacio$. Then the affine limit cone $\affinelim_\GA\subset\trivia$ is convex and has non-empty interior.\end{cor}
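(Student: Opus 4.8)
The statement to prove is Corollary \ref{rojo}, which asserts that for a reducible representation $\rep=\bigoplus_i\rep_i$ and a Zariski-dense $\GA<\sf G\ltimes_\rep V$ with $\inte\Bcone_{\lin\GA}\cap\X_\rep\neq\vacio$, the affine limit cone $\affinelim_\GA\subset\trivia$ is convex with non-empty interior. As the statement of the Corollary itself indicates, the plan is to \emph{repeat verbatim the proof of Corollary \ref{interiorAL}}, checking at each step that the structural facts used there go through for the reducible $\rep$. The two ingredients are: (1) the perturbation Lemma \ref{gecuadrado} (whose proof only needs Lemma \ref{modif} about linear parts and the fact that the invariant affine flags are algebraically determined — neither is sensitive to irreducibility of $\rep$, only to $(\rep,X_0)$-compatibility, which for the direct sum is defined via $\encono_\rep=\bigcap_i\encono_{\rep_i}$); and (2) Smilga's additivity-default Theorem \ref{SM}, which must be re-read in the reducible setting.

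\textbf{Convexity.} First I would fix two $(\rep,X_0)$-compatible loxodromic $g,h\in\GA$. Applying Lemma \ref{gecuadrado} to each of $g$ and $h$ produces non-empty Zariski-open $\EuScript G_g,\EuScript G_h\subset\sf G^2$; intersecting these (using Zariski-irreducibility of $\sf G$, as $\sf G$ is Zariski-connected) with the finitely many non-empty Zariski-open transversality conditions $\lin f(\lin H^+_\rep)\cap\lin h^-_\rep=\{0\}$, etc., exactly as in the proof of Corollary \ref{interiorAL}, gives $f,q,f_0,q_0\in\GA$ (using Zariski-density of $\GA$) with $(\lin f,\lin q)\in\EuScript G_h$, $(\lin f_0,\lin q_0)\in\EuScript G_g$ satisfying all required transversalities. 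Then for large $n$ the elements $f_0g^nq_0$ and $fh^nq$ are $(\rep,X_0)$-compatible, transverse to $g$ resp.\ $h$, and one may form transverse iterates. Applying Theorem \ref{SM} twice, once for each factor $V_i$ (note $\margulis(g)=\sum_i\margulis(\lin g,v_i)$ is additive over $i$ by definition), yields
\begin{equation*}
\lim_{k\to\infty}\frac{\margulis\big(g^kf_0g^nq_0h^kfh^nq\big)}k=\margulis(g)+\margulis(h),
\end{equation*}
so the ray through $\margulis(g)+\margulis(h)$ is a limit of rays in $\affinelim_\GA$. This gives convexity.

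\textbf{Non-empty interior.} Here I would first prove a reducible analogue of Proposition \ref{interiorneu}: for a Zariski-dense $\GA<\sf G\ltimes V$ with $\Bcone_{\lin\GA}\cap\X_\rep$ of non-empty interior, the Translation Affine Ratios $\raffi(G^-,F^+,F^-,G^+)$ of $(\rep,X_0)$-compatible transverse pairs are not contained in any hyperplane of $\trivia=\bigoplus_i\trivia_i$. The proof of Proposition \ref{hiperplanos} is purely about ranges of projections associated to transverse flags and a dimension count (the dimensions $2k-1$ and $l$ do not add up to $2k+l$); replacing $k$ by $\sum_i k_i$ and $l$ by $\sum_i n_i$ and using the product map $\EuScript F_\vt\to\grass_{\sum k_i,\sum n_i}(V)$ introduced just before the Corollary, together with Lemma \ref{gecuadrado} and Zariski-irreducibility of $(\sf G\ltimes V)^2$, the argument is unchanged. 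Granting this, if some hyperplane $U\subset\trivia$ contained $\margulis(g)$ for every $(\rep,X_0)$-compatible $g\in\GA$, then for any transverse $(\rep,X_0)$-compatible pair $f,q\in\GA$ one gets $\raffi(Q^-,F^+,F^-,Q^+)=\lim_n\big(\margulis(f^nq^n)-n(\margulis(f)+\margulis(q))\big)\in U$ by Theorem \ref{SM}, contradicting the reducible Proposition \ref{interiorneu}. Since a convex cone not contained in a hyperplane has non-empty interior, this finishes the proof.

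\textbf{Main obstacle.} The only genuinely non-formal point is verifying that Smilga's Theorem \ref{SM} and its proof (via Smilga \cite{SmilgaAnnalen} Prop.\ 9.3 and the surrounding lemmas) survive the passage to the reducible $\rep$: one must check that the notions of affine contraction $\varsigma$, $C$-transverse pair, invariant affine flags, and the approximation of the conjugating maps by compositions of projections plus quasi-translations all decompose compatibly over the factors $V_i$. I expect this to be routine — the $(\rep,X_0)$-compatibility condition $\encono_\rep=\bigcap_i\encono_{\rep_i}$ is precisely designed so that the dominant expansion/contraction on $\lin g^\pm_\rep$ persists in each block — but it is where the actual content lies. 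Everything else (the Zariski-topology manipulations, the telescoping limits, the dimension count) is a transcription of the irreducible case.
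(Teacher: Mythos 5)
Your proposal is correct and is exactly the paper's approach: the paper's entire proof of Corollary \ref{rojo} consists of the sentence "We extend the definitions of affine ratio and affine limit cone and the exact same proof of Corollary \ref{interiorAL} gives the following," which is precisely the transcription you carry out (convexity via Lemma \ref{gecuadrado} and Theorem \ref{SM}, non-empty interior via the reducible analogue of Proposition \ref{interiorneu}). Your version is in fact more explicit than the paper's, in particular in flagging that the dimension count $2k-1+l<2k+l$ with $k=\sum_i k_i$, $l=\sum_i n_i$ and the block-compatibility of Smilga's estimates are the points that need checking.
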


\section{The cocycle viewpoint: Zariski density}\label{cocycleviewpoint}

Let us fix a (possibly reducible) representation $\rep:\sf G\to \SL(V)$. If $\G<\sf G$ is a semi-group, a \emph{cocycle} over $\rep$ is a map $\co:\G\to V$ such that for every $\lin g,\lin h\in\G$ $$\co(\lin g\lin h)=\co(\lin g)+\rep(\lin g)\co(\lin h);$$ it is a \emph{co-boundary} if there exists $v\in V$ with $\co(\lin g)= v-\rep(\lin g)v$.  The vector space $$H^1_\rep(\G,V)=\frac{\{\textrm{cocycles over $\rep$}\}}{\{\textrm{co-boundaries}\}}$$ is called \emph{the first twisted cohomology group}.  Semi-groups of $\sf G\ltimes_\rep V$ whose linear part is $\G$, are in bijective correspondence with cocycles over $\rep$ via $$\co\mapsto\G_\co:=\{(\lin g,\co(\lin g))\in\sf G\ltimes_\rep V:\lin g \in\G\}.$$ Two such groups are conjugated by a pure translation if and only if the associated cocycles differ by a co-boundary, i.e. are cohomologous.

One has the following result of Ghosh \cite{SouravIso} that crucially uses Whitehead's Lemma on the vanishing of the first cohomology of semi-simple Lie algebra representations.

\begin{prop}[{Ghosh \cite[Proposition 6.2]{SouravIso}}]\label{Zarirre} Let $\rep:\sf G\to \SL(V)$ be irreducible and non-trivial. Let $\G<\sf G$ be Zariski-dense and $\co\in H^1_\rep(\G,V)$ be non-trivial. Then the group $\G_\co$ is Zariski-dense in $\sf G\ltimes_\rep V$.
\end{prop}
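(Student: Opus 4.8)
The statement is Ghosh's Proposition~\ref{Zarirre}: if $\rep:\sf G\to\SL(V)$ is strongly irreducible and non-trivial, $\G<\sf G$ is Zariski-dense and $\co\in H^1_\rep(\G,V)$ is non-trivial, then $\G_\co$ is Zariski-dense in $\sf G\ltimes_\rep V$. First I would let $\sf H$ be the Zariski closure of $\G_\co$ inside $\sf G\ltimes_\rep V$ and let $\lin{\sf H}$ be its image under the projection to $\sf G$; since $\G<\lin{\sf H}$ and $\G$ is Zariski-dense in $\sf G$, we get $\lin{\sf H}=\sf G$. So $\sf H$ surjects onto $\sf G$, and the kernel of this projection is $\sf H\cap V$, a Zariski-closed subgroup of the vector group $V$, hence a linear subspace $W\subseteq V$. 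Because $\sf H$ is normalized-compatibly — more precisely, for $(\lin g,v)\in\sf H$ and $w\in W$ one has $(\lin g,v)(1,w)(\lin g,v)^{-1}=(1,\rep(\lin g)w)\in\sf H$, so $\rep(\lin g)w\in W$ — and since $\lin{\sf H}=\sf G$, the subspace $W$ is $\rep(\sf G)$-invariant. Strong irreducibility of $\rep$ then forces $W=\{0\}$ or $W=V$.

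If $W=V$ then $\sf H\supseteq\{1\}\ltimes V$, and combined with $\sf H$ surjecting onto $\sf G$ one checks $\sf H=\sf G\ltimes_\rep V$ (the product of the two subgroups $\{1\}\ltimes V$ and any set-theoretic lift of $\sf G$ already fills everything, and $\sf H$ being a group it contains such a lift), giving Zariski-density. So the crux is to rule out $W=\{0\}$. Suppose $W=\{0\}$: then the projection $\sf H\to\sf G$ is a bijective morphism of algebraic groups, hence an isomorphism, so $\sf H$ is the graph of an algebraic section $\sigma:\sf G\to\sf G\ltimes_\rep V$, which we may write $\sigma(\lin g)=(\lin g,\beta(\lin g))$ for a regular map $\beta:\sf G\to V$. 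The section property $\sigma(\lin g\lin h)=\sigma(\lin g)\sigma(\lin h)$ translates exactly into $\beta(\lin g\lin h)=\beta(\lin g)+\rep(\lin g)\beta(\lin h)$, i.e. $\beta$ is an \emph{algebraic} cocycle $\sf G\to V$ over $\rep$, and moreover $\beta|_\G=\co$ since $\G_\co\subseteq\sf H$.

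The final step is to invoke the vanishing of $H^1(\sf G,V)$: because $\sf G$ is semi-simple (so by Whitehead's Lemma — or its algebraic-group counterpart, $H^1$ of a reductive group with coefficients in a rational module having no trivial sub — the regular cocycle $\beta$ is a coboundary), there exists $v_0\in V$ with $\beta(\lin g)=v_0-\rep(\lin g)v_0$ for all $\lin g\in\sf G$. Restricting to $\G$ gives $\co(\lin g)=v_0-\rep(\lin g)v_0$, so $\co$ is a coboundary, contradicting the hypothesis that $\co$ is non-trivial in $H^1_\rep(\G,V)$. Hence $W=\{0\}$ is impossible, $W=V$, and $\sf H=\sf G\ltimes_\rep V$. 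The main obstacle — and the place where the hypotheses really bite — is the passage from $W=\{0\}$ to a contradiction: one needs that an algebraic section of $\sf G\ltimes_\rep V\to\sf G$ corresponds to an algebraic $1$-cocycle, and that such a cocycle is necessarily a coboundary, which is where semi-simplicity of $\sf G$ (Whitehead's Lemma / reductivity of algebraic groups) is used exactly as Ghosh indicates; the strong irreducibility of $\rep$ enters earlier, to pin down the $\rep(\sf G)$-invariant subspace $W$ as one of the two trivial options.
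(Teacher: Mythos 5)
Your proof is correct, and its first half coincides with the paper's: both arguments show that the projection of the Zariski closure onto $\sf G$ cannot be injective, because an algebraic section would produce an algebraic cocycle $\sf G\to V$ extending $\co$, which Whitehead's Lemma forces to be a coboundary, contradicting $[\co]\neq0$. (You are in fact slightly more careful than the paper here, noting explicitly that a bijective morphism of algebraic groups in characteristic zero is an isomorphism, so that the resulting cocycle is genuinely regular.) Where you diverge is the second half. You observe that the group of pure translations in the closure is a Zariski-closed subgroup of the vector group $V$, hence a linear subspace, visibly $\rep(\sf G)$-invariant, and therefore all of $V$ by irreducibility — two lines. The paper instead fixes a single non-trivial pure translation $(\id,u)$ and proves the stronger statement that the abstract additive group generated by $\rep(\sf G)u$ is already $V$, via an explicit weight-space computation with the operators ${\tt R}^\mu$ built from elements of $\exp(\a)$. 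Your soft argument is cleaner and entirely sufficient for the Proposition as stated; the reason the paper goes the long way (it says so: ``we add some details for later use'') is that the ${\tt R}^\mu$ machinery is reused verbatim in Lemma \ref{parcial}, where the translations live in $V\oplus W$ and one must isolate a component in a single weight space of $V$ — a situation where ``closed subgroup of a vector group is an invariant subspace'' does not directly deliver the conclusion. So your route proves the Proposition, but would not by itself set up the subsequent lemma.
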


\begin{proof} We add some details for later use. Let $\sf X$ be the Zariski closure of $\G_\co$ and consider  'the linear part' morphism ${\tt L}:\sf X\to\sf G$. Since $\G$ is Zariski-dense in $\sf G$ we have that ${\tt L}(\sf X)$ is surjective. Thus $\forall g\in\sf G$ there exists $u_g\in V$ with $(g,u_g)\in\sf X$.

If ${\tt L}$ were injective then such $u_g$ would be unique giving a well defined cocycle, $g\mapsto u_g\in V$ of $\sf G$ over $\rep$, extending $\co$. Whitehead's Lemma (see Raghunathan's book \cite{RaghunathanBook}) implies that $u_g$ is cohomologicaly trivial, and so is $\co$ contradicting our assumption. Thus ${\tt L}$ is not injective, which gives a pure translation $(\id,u)\in\sf X$. Since $(g,u_g)(\id,u)(g,u_g)^{-1}=(\id,\rep(g)u)$, the Proposition will be proved once we have stablished that the additive group spanned by $\rep(\sf G)u$ is $V$.

Lemma \ref{esquiva} provides a $g\in\sf G$ such that for all $\chi\in\poids_\rep$ one has $(\rep(g)u)_\chi\neq0$, where, for a vector $w\in V$, we have denoted by $w_\chi$ its componenent in the restricted weight space $V^\chi$, following the restricted weight decomposition of $V$.

Since the additive group spanned by $\rep(\sf G)u$ and that of $\rep(\sf G)\rep(g)u$ coincide, we assume from now that the pure translation $(\id, u)\in\sf X$ is such that $u_\chi\neq0$ for all $\chi\in\poids_{\rep}.$

Since $\poids_{\rep}$ is finite we can consider $z\in\a$ such that the values $\chi(z)$, for $\chi\in\poids_{\rep}$, are pairwise distinct. In particular $\chi(z)\neq0$ for any non-vanishing $\chi$.

Let us fix a weight $\mu\in\poids_\rep\setminus\{0\}$ and consider the linear map $${\tt R}^{\mu}=(\rep(\exp(z))-\id)\prod_{\chi\in\poids_{\rep}-\{0,\mu\}}\left(2\rep(\exp(z))-\rep\Big(\exp\Big(\big(1+\frac{\log 2}{\chi(z)}\big)z\Big)\Big)\right)\in\gl(V).$$ One readily sees that \begin{itemize}\item[-] ${\tt R}^{\mu} u$ belongs to the additive group spanned by $\rep(\sf G)u$, \item[-]the order on the above product is irrelevant as we are only considering  elements of $\exp(\a)$, which commute; whence ${\tt R}^{\mu }u_\chi=0$ for all $\chi\neq\mu$.
\end{itemize}

\noindent
In particular, \begin{alignat*}{2}{\tt R}^{\mu} u & ={\tt R}^{\mu}(u_{\mu}) \\&=\Big((e^{\mu(z)}-1)\prod_{\chi\in\poids_{\rep}-\{0,\mu\}}\big(2e^{\mu(z)}-e^{\mu(z)+\log2\frac{\mu(z)}{\chi(z)}}\big)\Big)(u_{\mu})= c\cdot u_{\mu}\neq0,
\end{alignat*} as the coefficient $c$ is non-zero since $\chi(z)\neq\mu(z)$ for all $\chi\neq\mu$.

One concludes that $\rep(\exp(a)){\tt R}^{\mu} u=\exp(\mu(a))cu_{\mu}$ for all $a\in\a$, so by also considering differences, we get that the line $\R(u_\mu)$ is contained in the additive group spanned by $\rep(\sf G)u$. Now, the additive group spanned by $\rep(\sf G)\R (u_{\mu})$ coincides with the vector space spanned by $\rep(\sf G) (u_{\mu})$, which is $V$ by irreducibility of $\rep$.\end{proof}

\begin{lemma}\label{parcial} Let $\rep:\sf G\to\SL(V)$ and $\uppsi:\sf G\to\SL(W)$ be two representations with $\rep$ irreducible and such that there exists $0\neq\mu\in\poids_\rep\setminus \poids_{\uppsi}$. Let $\G<\sf G$ be a Zariski-dense semigroup, $\co_V:\G\to V$ a non-coboundary cocycle and $\co_W:\G\to W$ a cocycle. Denote by $$\co=\co_V+\co_W:\G\to V\oplus W.$$ Then the Zariski closure $\sf X$ of $\G_\co$ contains $\sf G\ltimes V$. If moreover $\G_{\co_W}$ is Zariski-dense in $\sf G\ltimes W$, then $\sf X=\sf G\ltimes(V\oplus W)$.
\end{lemma}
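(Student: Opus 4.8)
The plan is to run the strategy of Proposition \ref{Zarirre} but to keep careful track of the extra $W$-summand. First I would let $\sf X$ be the Zariski closure of $\G_\co$ inside $\sf G\ltimes_\rep(V\oplus W)$ (where now $\rep$ denotes the direct-sum action) and consider the surjective linear-part morphism ${\tt L}:\sf X\to\sf G$; surjectivity holds because $\G$ is Zariski-dense in $\sf G$. As in the proof of Proposition \ref{Zarirre}, if ${\tt L}$ were injective the assignment $g\mapsto u_g$ (the unique vector with $(g,u_g)\in\sf X$) would define a cocycle of $\sf G$ over $\rep\oplus\uppsi$ extending $\co$, and Whitehead's Lemma would force $\co$, and in particular $\co_V$, to be a coboundary — contradicting the hypothesis on $\co_V$. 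Hence $\Ker{\tt L}$ is a non-trivial $\sf G$-invariant (for the conjugation action, which is $\rep\oplus\uppsi$) subgroup of $\{\id\}\ltimes(V\oplus W)$; since $\sf X$ is algebraic and $\Ker{\tt L}$ is its fibre over $\id$, $\Ker{\tt L}$ is an algebraic, hence linear, $\sf G$-invariant subspace $U\subset V\oplus W$, non-zero.

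The heart of the argument, and the step I expect to be the main obstacle, is to show $U\supset V$. The point is to exploit the weight $\mu$, which occurs in $\rep$ but not in $\uppsi$, to "project away" the $W$-component. I would argue as follows. Because $U$ is $\rep\oplus\uppsi$-invariant and non-zero, it contains some non-zero vector $u_0$. Using Lemma \ref{esquiva} (applied to the strongly irreducible $\rep$ — the $W$ part is harmless, or one replaces $u_0$ by an appropriate translate $\rep(g)u_0$) I may assume the $V$-component of $u_0$ has non-vanishing projection onto every restricted weight space $V^\chi$, $\chi\in\poids_\rep$, while we keep no control on its $W$-component. Now pick $z\in\a$ with $\chi(z)$, $\chi\in\poids_\rep\cup\poids_\uppsi$, pairwise distinct (in particular separating $\mu$ from every other weight appearing in either representation), and apply the operator ${\tt R}^\mu$ of the proof of Proposition \ref{Zarirre}, but with the product now running over all $\chi\in(\poids_\rep\cup\poids_\uppsi)\setminus\{0,\mu\}$: since $(\rep\oplus\uppsi)(\exp z)$ is simultaneously diagonalised by the weight decomposition of $V\oplus W$, the operator ${\tt R}^\mu$ annihilates every weight space except $V^\mu$ (here $\mu\notin\poids_\uppsi$ is crucial — otherwise a $W^\mu$-component could survive), and on $V^\mu$ it acts by a non-zero scalar by the same computation as before. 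Thus $U$ contains a non-zero vector of $V^\mu$; averaging its $\rep(\sf G)$-orbit and using strong irreducibility of $\rep$ gives $V\subset U=\Ker{\tt L}$, so $\sf X\supset\{\id\}\ltimes V$. Combined with ${\tt L}$ surjective and with the graph of $\co_W$ sitting inside $\sf X$ modulo $V$, this yields $\sf X\supset\sf G\ltimes V$.

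For the final assertion, suppose moreover that $\G_{\co_W}$ is Zariski-dense in $\sf G\ltimes_\uppsi W$. We have just shown $\{\id\}\ltimes V\subset\sf X$, so $\sf X$ contains $(\sf G\ltimes V)\cdot\G_\co$; quotienting $\sf G\ltimes(V\oplus W)$ by the normal subgroup $\{\id\}\ltimes V$ sends $\G_\co$ onto $\G_{\co_W}$ and sends $\sf X$ onto a Zariski-closed set containing $\G_{\co_W}$, hence onto all of $\sf G\ltimes W$ by hypothesis. Therefore $\sf X$ surjects onto $\sf G\ltimes W$ and contains $\{\id\}\ltimes V$; since $\{\id\}\ltimes V$ is precisely the kernel of that quotient map restricted to $\sf G\ltimes(V\oplus W)$, we conclude $\sf X=\sf G\ltimes(V\oplus W)$, as desired. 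The one routine check I am glossing is that $\Ker{\tt L}$ is genuinely a linear subspace (it is a Zariski-closed subgroup of a vector group, hence an algebraic subgroup, hence connected-plus-finite, but being also the fibre of a morphism of varieties through $0$ it is in fact a linear subspace — this is standard and already implicitly used in Proposition \ref{Zarirre}).
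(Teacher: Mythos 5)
Your overall route coincides with the paper's: reduce to producing a nonzero vector of $V^\mu$ inside the translation part of $\sf X$, kill all other weight spaces with the operator ${\tt R}^\mu$ taken over $\poids_\rep\cup\poids_\uppsi\setminus\{0,\mu\}$ (using crucially that $(V\oplus W)^\mu=V^\mu$ because $\mu\notin\poids_\uppsi$), and conclude $V\subset\sf X$ by strong irreducibility; the deduction of the final assertion is also the same as the paper's. However, there is one genuine gap in the middle step. Your Whitehead argument only yields that $U=\Ker({\tt L}|_{\sf X})$ is a nonzero $\rep\oplus\uppsi$-invariant subspace of $V\oplus W$; it does not tell you where $U$ sits. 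In particular nothing you have proved excludes $U\subset\{0\}\oplus W$, and in that case your appeal to Lemma \ref{esquiva} collapses: no translate $(\rep\oplus\uppsi)(g)u_0$ can create a nonzero $V$-component if $u_0$ has none, so you never reach a vector whose $V^\mu$-component survives ${\tt R}^\mu$. The hypothesis that $\co_V$ is not a coboundary must be used a second time precisely to rule this out, and your write-up does not do so.

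Two ways to close the gap. The paper's way: apply Proposition \ref{Zarirre} not to $\sf X$ directly but to the image of $\sf X$ under the projection $\pi:\sf G\ltimes(V\oplus W)\to\sf G\ltimes V$, $(g,v+w)\mapsto(g,v)$; since $\pi(\G_\co)=\G_{\co_V}$ and $\coclase_V\neq0$, that proposition gives $\pi(\sf X)=\sf G\ltimes V$, hence for \emph{every} $v\in V$ there is $w\in W$ with $(\id,v+w)\in\sf X$, and one simply chooses $v$ with $v_\mu\neq0$ before applying ${\tt R}^\mu$. Alternatively, staying within your framework: if $U\subset\{0\}\oplus W$, pass to the quotient $\sf G\ltimes\big((V\oplus W)/U\big)$; the image of $\sf X$ there has injective linear-part morphism, so Whitehead's Lemma applies again and forces the $V$-component of the induced cocycle --- which is still $\co_V$ since $U$ meets $V$ trivially --- to be a coboundary, a contradiction. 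Either patch makes your argument complete; without one of them the key step ``I may assume the $V$-component of $u_0$ has non-vanishing projection onto every $V^\chi$'' is unjustified.
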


\begin{proof} Let $\sf X$ be the Zariski closure of $\G_{\co}$. Consider the projection $\pi:\sf G\ltimes(V\oplus W)\to\sf G\ltimes V$ given by $\pi(g,v+w)=(g,v)$. Since $\coclase_V\neq0$, Proposition \ref{Zarirre} implies that $\pi|\sf X$ is surjective. Whence, for every $v\in V$ there exists $w\in W$ such that $(\id,v+w)\in\sf X$. Fix $\mu\in\poids_\rep\setminus\poids_\uppsi$ and choose $v$ so that $v_\mu\neq0$.

We consider now the restricted weight space decomposition $$V\oplus W=\sum_{\chi\in\poids_\rep\cup\poids_{\uppsi}}(V\oplus W)^\chi,$$ and denote, for $u\in V\oplus W$ by $u_\chi\in (V\oplus W)^\chi$ the associated component. Observe that $(V\oplus W)^\mu= V^\mu$ and thus $(v+w)_\mu=v_\mu$.

We now proceed again as in the proof of Proposition \ref{Zarirre} by considering the (modified) operator $${\tt R}^{\mu}=(\rep(\exp(z))-\id)\prod_{\chi\in\poids_{\rep}\cup\poids_\uppsi-\{0,\mu\}}\left(2\rep(\exp(z))-\rep\Big(\exp\Big(\big(1+\frac{\log 2}{\chi(z)}\big)z\Big)\Big)\right),$$ and applying it to $u=v+w$. The same arguments lead to the desired inclusion.

To prove the last item, we consider the projection $\pi^W:\sf G\ltimes(V\oplus W)\to\sf G\ltimes W$ given by $(g,v+w)\mapsto(g,w)$. By assumption $\pi^W(\sf X)=\sf G\ltimes W$ whence for each $w\in W$ and $g\in\sf G$ there exists $v\in V$ with $(g,v+w)\in\sf X$. However, as we have established that $\sf G\ltimes V\subset\sf X$, we obtain that $(g,w)=(g,v+w)\cdot(\id,-v)\in\sf X$. Thus, $\sf X$ contains both $\sf G\ltimes V$ and $\sf G\ltimes W$, giving the result.\end{proof}

Let us introduce the following definition.

\begin{defi}\label{disjoined} A finite collection of irreducible representations $\{\rep_i:\sf G\to V_i\}_{i\in I}$ is \emph{disjoined} if we can order $I=\lb1,k\rb$ such that $\rep_1$ is non-trivial and for each $i\geq 2$ there exists $0\neq\mu_i\in\poids_{\rep_i}\setminus\bigcup_{l=1}^{i-1}\poids_{\rep_{l}}.$ A representation $\rep$ is \emph{disjoined} if the collection of its factors is.
\end{defi}

\begin{obs}\label{adjuntadisjunda} Observe that the Adjoint representation is always disjoined (regardless that $\sf G$ has isomorphic factors) since the restricted weights of each irreducible factor of this representation lie on different factors of $\a^*$.
\end{obs}

\begin{cor}\label{souravzariski} Let $\{\rep_i:\sf G\to \SL(V_i)\}_{1}^k$ be a disjoined collection. Let $\G<\sf G$ be a Zariski-dense semigroup. Consider for each $i$ a non-coboundary cocycle $\co_i:\G\to V_i$ and define $\co=\sum_i\co_i:\G\to\bigoplus_iV_i$. Then $\G_\co$ is Zariski-dense in $\sf G\ltimes\big(\bigoplus_1^k V_i\big)$.
\end{cor}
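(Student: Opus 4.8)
The plan is to prove Corollary \ref{souravzariski} by induction on $k$, using Lemma \ref{parcial} as the inductive step and Proposition \ref{Zarirre} as the base case. Write $W_j = \bigoplus_{l=1}^{j} V_l$ and $\co^{(j)} = \sum_{l=1}^{j}\co_l : \G \to W_j$. The claim is that $\G_{\co^{(j)}}$ is Zariski-dense in $\sf G\ltimes W_j$ for every $j\in\lb1,k\rb$; for $j=k$ this is exactly the statement.

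First I would handle the base case $j=1$: since $\rep_1$ is strongly irreducible and non-trivial (by the definition of \emph{disjoined}, Definition \ref{disjoined}) and $\co_1$ is a non-coboundary cocycle, Proposition \ref{Zarirre} applies directly and gives that $\G_{\co_1}$ is Zariski-dense in $\sf G\ltimes_{\rep_1} V_1$. For the inductive step, suppose $\G_{\co^{(j-1)}}$ is Zariski-dense in $\sf G\ltimes W_{j-1}$ for some $j\geq2$. I want to apply Lemma \ref{parcial} with the roles $\rep \leftarrow \rep_j$ acting on $V \leftarrow V_j$, and $\uppsi \leftarrow$ the representation of $\sf G$ on $W \leftarrow W_{j-1}$, with cocycles $\co_V \leftarrow \co_j$ and $\co_W \leftarrow \co^{(j-1)}$. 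The cocycle $\co = \co_j + \co^{(j-1)}$ is precisely $\co^{(j)}$. To invoke the lemma I must check its hypotheses: $\rep_j$ is strongly irreducible (given); $\co_j$ is not a coboundary (given); there exists $0\neq\mu_j\in\poids_{\rep_j}\setminus\poids_{\uppsi}$; and $\G_{\co^{(j-1)}}$ is Zariski-dense in $\sf G\ltimes W_{j-1}$ (the inductive hypothesis). The key point is the third hypothesis: by the definition of a disjoined collection, there exists $0\neq\mu_j\in\poids_{\rep_j}\setminus\bigcup_{l=1}^{j-1}\poids_{\rep_l}$, and since the set of restricted weights of the direct sum $\uppsi = \bigoplus_{l=1}^{j-1}\rep_l$ is exactly $\poids_{\uppsi} = \bigcup_{l=1}^{j-1}\poids_{\rep_l}$, this $\mu_j$ indeed lies in $\poids_{\rep_j}\setminus\poids_{\uppsi}$. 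Then the ``moreover'' clause of Lemma \ref{parcial} yields $\sf X = \sf G\ltimes(V_j\oplus W_{j-1}) = \sf G\ltimes W_j$, completing the induction.

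The one subtlety I would want to be careful about is the identification of $\poids_{\uppsi}$ for the reducible representation $\uppsi = \bigoplus_{l=1}^{j-1}\rep_l$: a vector $w = \sum_l w_l$ lies in the restricted weight space $W_{j-1}^\chi$ iff each $w_l$ lies in $V_l^\chi$, so $W_{j-1}^\chi \neq \{0\}$ iff $\chi \in \poids_{\rep_l}$ for some $l\leq j-1$, hence $\poids_{\uppsi} = \bigcup_{l=1}^{j-1}\poids_{\rep_l}$ as claimed. This is routine but is the linchpin that makes the disjointedness hypothesis do its work. I do not expect any genuine obstacle: the entire content has been front-loaded into Proposition \ref{Zarirre} and Lemma \ref{parcial}, and the corollary is simply an iterated application of the latter, organized by an induction whose only non-formal ingredient is the bookkeeping of weight sets just described.

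\begin{proof} We argue by induction on $k$, writing $W_j=\bigoplus_{l=1}^jV_l$ and $\co^{(j)}=\sum_{l=1}^j\co_l:\G\to W_j$, and proving that $\G_{\co^{(j)}}$ is Zariski-dense in $\sf G\ltimes W_j$ for every $j\in\lb1,k\rb$. For $j=1$ the representation $\rep_1$ is strongly irreducible and non-trivial and $\co_1$ is not a coboundary, so Proposition \ref{Zarirre} gives the claim. Assume the statement for $j-1$, with $j\geq 2$. The restricted weights of $\uppsi:=\bigoplus_{l=1}^{j-1}\rep_l$ acting on $W_{j-1}$ are exactly $\poids_\uppsi=\bigcup_{l=1}^{j-1}\poids_{\rep_l}$, since $W_{j-1}^\chi=\bigoplus_{l=1}^{j-1}V_l^\chi$. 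Because the collection is disjoined, there is $0\neq\mu_j\in\poids_{\rep_j}\setminus\bigcup_{l=1}^{j-1}\poids_{\rep_l}=\poids_{\rep_j}\setminus\poids_\uppsi$. We apply Lemma \ref{parcial} with $\rep=\rep_j$ on $V=V_j$, with $\uppsi$ on $W=W_{j-1}$, with $\co_V=\co_j$ (not a coboundary) and $\co_W=\co^{(j-1)}$; note $\co=\co_j+\co^{(j-1)}=\co^{(j)}$. By the inductive hypothesis $\G_{\co_W}$ is Zariski-dense in $\sf G\ltimes W_{j-1}$, so the final assertion of Lemma \ref{parcial} gives that the Zariski closure of $\G_{\co^{(j)}}$ equals $\sf G\ltimes(V_j\oplus W_{j-1})=\sf G\ltimes W_j$. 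Taking $j=k$ completes the proof.\end{proof}
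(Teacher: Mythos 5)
Your proof is correct and follows exactly the paper's own argument: the paper proves this corollary by the same induction, with Proposition \ref{Zarirre} as the base case and Lemma \ref{parcial} as the inductive step. Your added verification that $\poids_\uppsi=\bigcup_{l=1}^{j-1}\poids_{\rep_l}$ is the right bookkeeping and is the only detail the paper leaves implicit.
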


\begin{proof} Follows by induction applying Proposition \ref{Zarirre} and Lemma \ref{parcial}.\end{proof}

We fix from now on a disjoined representation $\rep:\sf G\to V$ and a cocycle $\co:\G\to V$ over $\rep$. Remark \ref{Invariancia} implies that if $\lin g\in\G$ is a ($\rep,X_0)$-compatible element then $\margulis(\lin g,\co(\lin g))$ only depends on the class $[\co]\in H^1_\rep(\G,V)$, so we consider the map $\margulis:\G^\rep\times H^1_\rep(\G,V)\to\trivia$ defined by $$\margulis_\co(\lin g)=\margulis^\lin g(\co):=\margulis(\lin g,\co(\lin g)),$$ where we have denoted by $\G^{\rep}=\{(\rep,X_0)\textrm{-compatible elements of }\G\}$. By definition, the map is linear on the second variable and $\margulis_\co$ identically vanishes when $\co$ is a co-boundary. So from Corollary \ref{interiorAL} we conclude the following:

\begin{cor}\label{conococyclos}Let $\G\subset\sf G$ be a Zariski-dense sub-semi-group. Let $\{\rep_i:\sf G\to \SL(V_i)\}_{i\in I}$ be a disjoined collection and for each $i$ let $\co_i:\G\to V_i$ be a cocycle over $\rep_i$. Let $\rep=\bigoplus_i\rep_i$, and assume there exists, for each $i$, a $(\rep,X_0)$-compatible $\lin g_i\in\G$ such that $\margulis(\lin g_i,\co_i(\lin g_i))\neq0$. Denote by $V=\bigoplus_iV_i$ and by $\co=\sum_i\co_i:\G\to V$. Then the affine limit cone $\affinelim_{\G_\co}$ has non-empty interior.
\end{cor}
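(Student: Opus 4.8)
The statement to be proved is Corollary \ref{conococyclos}, which upgrades Corollary \ref{interiorAL} (and Corollary \ref{rojo}) to the case of a disjoined collection of representations where we are given cocycles $\co_i$ each of which is non-degenerate in the weak sense that \emph{some} $(\rep,X_0)$-compatible element of $\G$ has non-zero $i$-th Margulis invariant. The plan is to reduce the statement to Corollary \ref{rojo} by verifying its two hypotheses for the reducible representation $\rep=\bigoplus_i\rep_i$ and the group $\GA=\G_\co<\sf G\ltimes_\rep V$: namely that $\G_\co$ is Zariski-dense in $\sf G\ltimes_\rep V$, and that $\inte\Bcone_{\lin{\G_\co}}\cap\X_\rep\neq\vacio$.

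First I would check that each cocycle $\co_i$ is not a co-boundary. This is forced by the hypothesis: by Remark \ref{Invariancia} the Margulis invariant of a $(\rep,X_0)$-compatible element depends only on the cohomology class of the cocycle and vanishes identically on co-boundaries; since we assume there exists a $(\rep,X_0)$-compatible $\lin g_i\in\G$ with $\margulis(\lin g_i,\co_i(\lin g_i))\neq0$, the class $[\co_i]\in H^1_{\rep_i}(\G,V_i)$ is non-zero. Hence Corollary \ref{souravzariski} applies directly and gives that $\G_\co$ is Zariski-dense in $\sf G\ltimes(\bigoplus_iV_i)=\sf G\ltimes_\rep V$; this settles the first hypothesis of Corollary \ref{rojo}.

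For the second hypothesis I need $\inte\Bcone_{\lin{\G_\co}}\cap\X_\rep\neq\vacio$. Note $\lin{\G_\co}=\G$, so this asks that the interior of Benoist's limit cone of $\G$ meet the open cone $\X_\rep=\bigcap_i\X_{\rep_i}$ of Equation \eqref{encono} (for the reducible $\rep$). Since $\G$ is Zariski-dense, Theorem \ref{interiorsi} gives that $\Bcone_\G$ is convex with non-empty interior; the remaining point is that this interior actually meets $\X_\rep$. Here I would use the hypothesis again: for each $i$ the element $\lin g_i$ is $(\rep,X_0)$-compatible, hence $\jordan(\lin g_i)\in\X_{\rep_i}$, so in particular $\X_{\rep_i}\cap\inte\Bcone_\G\neq\vacio$ for every $i$. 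Taking, say, the element $\lin g_1$ which is $(\rep,X_0)$-compatible — i.e. $\jordan(\lin g_1)\in\X_\rep=\bigcap_i\X_{\rep_i}$ — we already have a point of $\Bcone_\G\cap\X_\rep$; and since $\X_\rep$ is open and $\inte\Bcone_\G$ is a non-empty open convex cone dense in $\Bcone_\G$, a small perturbation of $\R_+\cdot\jordan(\lin g_1)$ (using that $\lin g_1\in\G$ and, e.g., convexity together with any loxodromic element of $\G$ with Jordan projection in $\inte\Bcone_\G$, à la Lemma \ref{pares}) lands in $\inte\Bcone_\G\cap\X_\rep$. Actually the cleanest route is simply: $\jordan(\lin g_1)$ lies in the open cone $\X_\rep$, and by Theorem \ref{interiorsi} points of $\inte\Bcone_\G$ are arbitrarily close to $\R_+\cdot\jordan(\lin g_1)$, so $\inte\Bcone_\G\cap\X_\rep\neq\vacio$.

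With both hypotheses verified, Corollary \ref{rojo} (the reducible version of Corollary \ref{interiorAL}) applies to $\GA=\G_\co$ and concludes that the affine limit cone $\affinelim_{\G_\co}\subset\trivia=\bigoplus_i\trivia_i$ is convex and has non-empty interior, which is exactly the assertion. The only genuine subtlety — and the step I would be most careful about — is the bookkeeping in verifying $\inte\Bcone_\G\cap\X_\rep\neq\vacio$: one must be sure the \emph{single} open cone $\X_\rep=\bigcap_i\X_{\rep_i}$ (not just each $\X_{\rep_i}$ separately) meets the interior of $\Bcone_\G$, which is precisely why the hypothesis is phrased with one $(\rep,X_0)$-compatible element and not merely $(\rep_i,X_0)$-compatible elements for each $i$. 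Everything else is a direct citation of Corollary \ref{souravzariski}, Theorem \ref{interiorsi}, and Corollary \ref{rojo}.
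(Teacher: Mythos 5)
Your proposal is correct and follows essentially the same route as the paper: deduce $[\co_i]\neq0$ from Remark \ref{Invariancia}, apply Corollary \ref{souravzariski} to get Zariski-density of $\G_\co$, and conclude by Corollary \ref{rojo}. You are in fact slightly more careful than the paper's own (very terse) proof in explicitly verifying the hypothesis $\inte\Bcone_{\G}\cap\X_\rep\neq\vacio$ via the $(\rep,X_0)$-compatible element $\lin g_1$ together with the density of $\inte\Bcone_\G$ in $\Bcone_\G$; that verification is sound.
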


\begin{proof} By Remark \ref{Invariancia},  $[\co]_i\neq0$ as $\margulis(\lin g_i,\co_i(\lin g_i))\neq0.$ Corollary \ref{souravzariski} then implies that $\G_\co$ is Zariski-dense in $\sf G\ltimes_\rep V$ and the statement is reduced to Corollary \ref{rojo}.\end{proof}

\begin{obs}\label{nomade1} Under the assumptions of Corollary \ref{conococyclos} for $\co$, if $\G$ is a group and $\longest$ acts trivially on $\trivia$ then $\affinelim_{\G_\co}=\trivia.$ Indeed, since $\margulis(\lin g,\co(\lin g))=-\longest\margulis\big((\lin g,\co(\lin g))^{-1}\big)$ for every $\lin g,$ convexity gives $0\in\inte\affinelim_{\G_\co}.$ \end{obs}

\section{Compatible and $\vt$-Anosov linear part, normalized Margulis spectra} \label{normarde}


Consider $\rho\in\Anosov_\vt(\gh,\sf G)$ and a disjoined $\rep:\sf G\to\SL(V).$ A cocycle $\co\in H^1_{\rep\rho}(\gh,V)$ induces a $\trivia$-valued translation cocycle $c:\gh\times\bord^2\gh\to\trivia$ as in S. \cite{dichotomy} (see also Ledrappier \cite{ledrappier}), defined by $$c\big(\g,(x,y)\big)=\pi^\trivia\Big(\rep\big(\psi_{(x,y)}^{-1}\big)\pi^{X_\rep\cap Y_\rep,x_\rep\oplus y_\rep}\big(\co(\g^{-1})\big)\Big).$$ Indeed we have to check that for every pair $\g,h\in\gh$ one has $c\big(\g h,(x,y)\big)=c\big(h,(x,y)\big)+c\big(\g,h(x,y)\big),$ which follows from a straightforward computation. By S. \cite[Proposition 3.1.1]{dichotomy} there exists a H\"older-continuous $\ledrappier_\co:\sf U\gh\to\trivia$ such that for every hyperbolic $\g\in\gh$  $$\ell_{[\g]}(\ledrappier_\co)=\margulis(\g,\co(\g)).$$ Similar constructions are considered in Goldman-Labourie-Margulis \cite{GLM}. Thus, for every $\length\in\inte(\Bcone_{\vt,\rho})^*$, the set of \emph{normalized Margulis spectra} is convex and compact: $$\M^\length\big(\coclase\big)=\overline{\left\{\frac{\margulis\big(\g,\co(\g)\big)}{\length^\g(\rho)}:\g\in\gh_{\mathrm{h}}\right\}}.$$


\begin{obs}\label{nomade} Under the assumptions of Remark \ref{nomade1} for $\co$ (replacing $\G$ with $\rho(\gh)$), if $\longest$ acts trivially on $\trivia$ then convexity gives $\inte\M^\psi(\coclase)\neq\vacio$.
\end{obs}

Similar versions of the following, for split $\sf G,$ can also be found in Ghosh \cite{ghoshJordan}.


\begin{prop}[Kassel-Smilga \cite{KS}]\label{proper} Let $\rep:\sf G\to\SL(V)$ be irreducible and such that $\poids_\rep^\longest=\{0\}$, and consider $\rho\in\Anosov_\vt(\gh,\sf G)$ so that $\Bcone_{\rho}\subset\encono_\rep.$ If $0\in\inte\M^\length(\coclase)$ then the action of $\morfi_\co$ on $V$ is not proper. However, if $\length\in\inte(\a^+)^*$ and  $0\notin\M^\length(\coclase)$ then the corresponding action is proper.
\end{prop}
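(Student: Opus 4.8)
The statement is a dynamical properness criterion for the affine action $\rho_\co=(\rho,\co)$ of a $\vt$-Anosov, $(\rep,X_0)$-compatible subgroup $\gh<\sf G$ on $V$, and the natural strategy is to translate "properness" into statements about the Margulis invariants $\margulis(\g,\co(\g))$ and then invoke the thermodynamic machinery from \S\,\ref{normarde}. Recall that the Ledrappier potential $\ledrappier_\co:\UG\to\trivia=V^0$ has periods $\ell_{[\g]}(\ledrappier_\co)=\margulis(\g,\co(\g))$, that $\ledrappier_\rho$ has periods $\jordan_\vt^\g(\rho)$, and that $\length\in\inte(\Bcone_{\vt,\rho})^*$ makes $\length(\ledrappier_\rho)$ Liv\v{s}ic-cohomologous to a strictly positive function. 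Reparametrising $\phi$ by $\length(\ledrappier_\rho)$, the set $\M^\length(\coclase)$ is exactly $\medidas^{\phi^{\length(\ledrappier_\rho)}}(\ledrappier_\co/\length(\ledrappier_\rho))$, a compact convex subset of $V^0$ whose extreme data are the rescaled periods $\margulis(\g,\co(\g))/\length^\g(\rho)$.

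\textbf{The two implications.} First I would treat the non-properness statement. Assume $0\in\inte\M^\length(\coclase)$. By compactness-convexity and the density of periodic orbits, this means we can find $\g_1,\dots,\g_m\in\gh_{\mathrm h}$ whose Margulis invariants $\margulis(\g_i,\co(\g_i))$ positively span $V^0$, i.e.\ $0$ lies in the interior of their convex hull after rescaling. The geometric content we need is the relation between the Margulis invariant and actual displacement: for a $(\rep,X_0)$-compatible loxodromic $g$, the element $\rho_\co(\g)$ has an invariant affine space $\o_\g+\Vlongest^{(\g^+,\g^-)}$ on which it acts, up to the $\sf M$-part and exponentially small errors, as translation by $\margulis(\g,\co(\g))$ (this is Lemma \ref{elem} together with Definitions \ref{origen}, \ref{defimar}). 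For large powers, $\rho_\co(\g_i^n)$ moves a fixed compact set $K$ by roughly $n\,\margulis(\g_i,\co(\g_i))$ in the $V^0$-direction while contracting/expanding transversally with rates controlled by $\min_{\sroot\in\vt}\sroot(\cartan(\rho(\g_i)))$ — here the Anosov hypothesis and Lemma \ref{contraccionproximal}/Lemma \ref{atraco} give uniform exponential contraction, and \emph{non-swinging} guarantees the invariant neutral space is genuinely $V^0$ so no "swinging" directions interfere. Since the $\margulis(\g_i,\co(\g_i))$ surround $0$, a suitable product word $\g_1^{n_1}\cdots\g_m^{n_m}$ (or rather infinitely many such, using Theorem \ref{SM} to control additivity defects by the bounded Translation Affine Ratios) sends $K$ back near itself: the $V^0$-translations cancel while the transverse parts are killed by contraction. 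This produces infinitely many $\gamma\in\gh$ with $\rho_\co(\gamma)K\cap K\neq\vacio$, contradicting properness.

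\textbf{The converse.} For the second implication, assume $\length\in\inte(\a^+)^*$ and $0\notin\M^\length(\coclase)$. By compactness and convexity of $\M^\length(\coclase)$ there is a linear form $\theta\in (V^0)^*$ and $\eps>0$ with $\theta(\margulis(\g,\co(\g)))\geq\eps\,\length^\g(\rho)$ for all $\g\in\gh_{\mathrm h}$; since $\length\in\inte(\a^+)^*$, $\length^\g(\rho)$ is comparable to $\|\cartan(\rho(\g))\|$ up to additive constants (Anosov property, Theorem \ref{proxCartan}), hence $\theta(\margulis(\g,\co(\g)))$ grows linearly in $|\g|$. One then upgrades this spectral separation to a metric one: for any compact $K\subset V$, if $\rho_\co(\g)K\cap K\neq\vacio$ then $\|\co(\g)\|$ is bounded in terms of $\|\rho(\g)\|$ and $K$, and the component of $\co(\g)$ in the $V^0$-direction of the invariant splitting is, up to the bounded $\psi_{(\g^+,\g^-)}$-normalisation and error terms, the Margulis invariant; combining with the uniform lower bound forces $|\g|$ bounded, so only finitely many $\g$ can intersect-translate $K$ into itself. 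This is precisely Kassel-Smilga's properness criterion; I would cite \cite{KS} for the detailed quantitative estimates.

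\textbf{Main obstacle.} The delicate point is the non-properness direction: one must manufacture, from $0\in\inte\M^\length(\coclase)$ at the level of \emph{rescaled periods}, an honest sequence of group elements moving a compact set boundedly. Theorem \ref{SM} gives near-additivity $\margulis(fq)\approx\margulis(f)+\margulis(q)+\raffi(\cdots)$ only under a uniform control on transversality ($C$-transverse pairs) and on the affine contraction; so the real work is choosing the powers $n_i$ and the ping-pong-type word so that all the pairs that occur stay in a fixed $C$-transverse regime with small affine contraction (this is where Lemma \ref{gecuadrado} and Remark \ref{G'schottky}, producing a Schottky sub-semigroup realising the relevant cone, are used), while the Translation Affine Ratios — which are \emph{bounded} but not zero — contribute only a bounded perturbation that is absorbed once $0$ is in the \emph{interior}. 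The non-swinging hypothesis is exactly what rules out the pathological case where the neutral dynamics on $V^0$ has a nontrivial Jordan block or rotation preventing the transverse contraction argument, and I would flag that this is where it is essential.
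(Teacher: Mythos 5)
The paper contains no proof of this proposition: it is imported from Kassel--Smilga \cite{KS} (the bracketed attribution is the only justification given), so there is no internal argument to compare yours against. Judged on its own terms, your sketch follows the standard route --- a generalized opposite-sign-lemma argument for non-properness, and the Kassel--Smilga properness criterion for the converse --- and it correctly identifies where the real work lies. But two steps are thinner than your write-up suggests.

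First, in the non-properness direction, $0\in\inte\M^\length(\coclase)$ is a statement about the invariants \emph{rescaled} by $\length^\g(\rho)$. Concatenating powers $\g_1^{n_1}\cdots\g_m^{n_m}$ so that the rescaled contributions cancel only yields $\margulis(\gamma,\co(\gamma))=o\big(\length^{\gamma}(\rho)\big)$ along the resulting sequence, whereas what the criterion of Proposition \ref{souravnon-swinging} requires is $\length^{\g_n}(\rho)\to\infty$ with $\margulis(\g_n,\co(\g_n))$ \emph{bounded}. Closing that gap needs an actual specification/shadowing construction in which the affine-ratio error terms of Theorem \ref{SM} are summed along the word and shown to remain uniformly bounded, not merely ``absorbed because $0$ is interior''; you flag this as the obstacle but do not resolve it. Second, in the properness direction, the separation $\theta\big(\margulis(\g,\co(\g))\big)\geq\eps\,\length^\g(\rho)$ is a statement about conjugacy classes, while properness concerns displacements of a fixed compact set by actual group elements; bridging that spectral-versus-metric gap is precisely the content of the Kassel--Smilga criterion, and your treatment of it reduces to citing \cite{KS}. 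That is legitimate --- it is exactly what the paper does --- but it means your argument is an outline of their proof rather than an independent derivation.
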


%
%

In particular, if $0\notin\M^{\psi}(\coclase)$ then these Margulis space-times lie on the context of S. \cite[\S\,3.4,\,3.5,\,3.6]{dichotomy} and several results there apply directly.

\part{The cone of Jordan variations, normalizations, pressure}

A faithful morphism $\rho:\G\hookrightarrow\sf G$ is fixed, so we identify $\G$ with $\rho(\G)$ and say that $\g\in\G$ is loxodromic if $\rho(\g)$ is. We also fix an integrable vector $v\in\sf T_\rho\caracG,$ which defines a cocycle over $\Ad\rho,$ $\co_{v}:\G\to\ge$ given by $$\g\mapsto\co_{v}(\g):=\deriva t0\rho_t(\g)\rho(\g)^{-1}.$$ 

\noindent
It is a co-boundary iff there exists $(s_t)\in\sf G$ with $s_0=\id$ such that $\forall\g\in\G$ the curves $\rho_t(\g)$ and $s_t\rho(\g)s_t^{-1}$ have the same derivative.  Equivalently, the curve $\rho_t$ is tangent at $0$ to the conjugacy class of $\rho$, which is also equivalent to the fact that the curve of characters $\rho_t\in\caracG$ has zero derivative.

For $\g\in\G$ we let $\jordan^\g:\caracG\to\a$ be the map $\jordan^\g(\eta)=\jordan(\eta(\g))$ and $\mathrm d\jordan^\g$ its differential. If $\varphi\in\a^*$ we let $\varphi^\g:\caracG\to\R$ be the composition $$\varphi^\g=\varphi\circ\jordan^\g: \rho\mapsto\varphi\big(\jordan\big(\rho(\g)\big)\big).$$

We introduce two concepts which are the main object of this part.

\begin{defi}\label{defsBase} \item \begin{itemize}\item[-]The \emph{cone of Jordan variations of $v$} is the cone associated to variations of Jordan projections:$$\jordanlim_{v}:=\overline{\Big\{\R_+\cdot\varjor{\g}{v}:\g\in\G\textrm{ loxodromic}\Big\}}\subset\a.$$

\item[-] Let $\length\in\conodual{\rho}$, then the \emph{set of $\length$-normalized variations} is $$\VV{\length}_{v}=\overline{\Big\{\frac{\varjor\g v}{\length^\g(\rho)}:\g\in\G\Big\}}\subset\a.$$ \end{itemize}
\end{defi}

Let $\ge=\bigoplus_i \ge_i$ be the decomposition of $\ge$ on simple ideals and assume we've chosen the Cartan subspaces $\a_i$ of $\ge_i$ so that $\a=\bigoplus_i \a_i$. Let $p_i:\ge\to\ge_i$ be the associated projections, choices have been made so that $p(\a)=\a_i$. Assume also the Weyl chambers $\a_i^+$ where chosen so that $\a^+=\bigoplus_i \a_i^+$.

The vector $v$ has \emph{full variation} if for every $i\in I$ the cocycle $p_i(\co_{v})$ is not a coboundary, and has \emph{full loxodromic variation} if $p_i(\jordanlim_{v})\neq\{0\}$ for every $i$.

\section{Variation of eigenvalues and some consequences of Part \ref{affineactions}}

We now apply Part \ref{affineactions} to the adjoint representation $\rep=\Ad:\sf G\to\SL(\ge).$ The set of weights is $\root$ and no root is $\longest$-invariant so the ideally neutral space $$\Vlongest=\ge^0=\m\oplus\a.$$ Writing $\m=[\m,\m]\oplus\centro(\m)$ one has that $\sf M$ preserves each factor $\ge^0=[\m,\m]\oplus\centro(\m)\oplus\a$ and acts trivially on $\centro(\m)\oplus\a$ so the neutralizing space is $$\trivia=\centro(\m)\oplus\a.$$ 

Moreover, picking any $X_0\in\a^+\cap\Fix (\ii)$ one readily sees that the $(\Ad,X_0)$-compatible cone is nothing but the whole Weyl chamber $\a^+.$ The Margulis invariant is thus well defined for any $\adg\in\sf G\ltimes\ge$ with loxodromic $\lin \adg\in \sf G$ and one has $$\margulis(\adg)\in\centro(\m)\oplus\a.$$

Finally, observe that $\Ad$ is a disjoined representation (recall Definition \ref{disjoined}).

\subsection{The variation of the Kostant-Lyapunov-Jordan projection}

Let $(g_t)_{t\in(-\eps,\eps)}\subset\sf G$ be a differentiable curve with loxodromic $g=g_0.$ We denote by $\vec g\in\sf T_{g_0}\sf G$ its derivative. Consider $\adg\in\sf G\ltimes\ge$ with linear part $g$ and translation vector $$d_gL_{g^{-1}}(\vec g)=\deriva t0g_tg^{-1}\in\ge.$$ Then one has the following.

\begin{prop}\label{margulisderivada}The $\a$-factor of $\margulis(\adg)$ is ${\displaystyle\deriva t0\jordan\big(g_t).}$ 
\end{prop}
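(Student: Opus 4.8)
The plan is to relate the derivative of the Jordan projection $\deriva t0 \jordan(g_t)$ to the Margulis invariant of the affine transformation $\adg = (g, d_gL_{g^{-1}}(\vec g))$ by unwinding the definitions of both quantities. First I would use the characterization of the Jordan projection: since $g = g_0$ is loxodromic, after conjugating we may assume $g$ lies in $\exp(\inte\a^+)$; more precisely, there is $h \in \sf G$ with $hgh^{-1} = z_g \in \exp(\inte\a^+)$, and by Remark \ref{Invariancia} (conjugation-invariance of $\umargulis$, hence of $\margulis$) it suffices to compute everything after replacing the curve $g_t$ by $h g_t h^{-1}$. So I would reduce to the case $g = \exp(a)$ with $a \in \inte\a^+$, where $g$ is $\simple$-proximal, its attracting and repelling flags are the standard ones, and the ideally neutral space $\lin G^0_{\Ad} = \ge^0 = \m \oplus \a$ is the centralizer of $\a$.

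Next I would identify the generalized $1$-eigenspace $\cero(\Ad g)$ of $\Ad g$ inside $\ge^0$. Since $g = \exp(a)$ with $a$ regular, $\Ad g$ acts on $\ge_\beta$ by $e^{\beta(a)}$, which equals $1$ only for $\beta = 0$; on $\ge^0 = \m \oplus \a$, $\Ad g$ acts trivially, so $\cero(\Ad g) = \ge^0$, and the $\sf A$-invariant complement $W$ in the sense of \S\ref{s.unmar} is $\bigoplus_{\beta \neq 0}\ge_\beta$. Thus $\umargulis(\adg) = \pi^1(\tra\adg)$ is simply the $\ge^0$-component of $\deriva t0 (g_t g^{-1})$ relative to the root-space decomposition. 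The Margulis invariant $\margulis(\adg)$ is then the further projection of this onto $\trivia = \centro(\m)\oplus\a$ along $[\m,\m]$ (the normalization map $\psi$ being trivial in these coordinates since $g$ is already straightened). So the claim reduces to: the $\a$-component of the $\ge^0$-part of $\deriva t0(g_t g^{-1})$ equals $\deriva t0 \jordan(g_t)$.

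To prove this last equality I would argue as follows. For small $t$, $g_t$ remains loxodromic, and there is a curve $k_t \in \sf G$ with $k_0 = \id$ such that $k_t g_t k_t^{-1} = \exp(\jordan(g_t)) =: z_t \in \exp(\inte\a^+)$; the existence of a differentiable such $k_t$ follows from the implicit function theorem applied to the (real-analytic, by hypothesis) map $t\mapsto g_t$ together with the fact that $g = z_0$ is regular semisimple, so its centralizer is exactly $\exp(\ge^0)$ and the conjugation orbit map is a submersion transverse to $\exp(\a)$. Differentiating $k_t g_t k_t^{-1} = z_t$ at $t=0$ and writing $\xi = \deriva t0 k_t \in \ge$, $\dot z = \deriva t0 z_t \in \a$, one gets, with $g = z_0$,
\[
\xi - \Ad(g)\xi + d_gR_{g^{-1}}\big(\!\deriva t0 g_t\big) = \dot z,
\]
or equivalently $\deriva t0(g_t g^{-1}) = \dot z + (\Ad(g) - \id)\xi$. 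Now decompose $\xi = \xi^0 + \xi^\perp$ with $\xi^0 \in \ge^0$ and $\xi^\perp \in \bigoplus_{\beta\neq0}\ge_\beta$: then $(\Ad(g)-\id)\xi^0 = 0$ and $(\Ad(g)-\id)\xi^\perp \in \bigoplus_{\beta\neq0}\ge_\beta$, so the $\ge^0$-component of $\deriva t0(g_t g^{-1})$ is exactly $\dot z = \deriva t0 \jordan(g_t)$, which already lies in $\a$. Projecting onto $\trivia$ then onto $\a$ leaves $\dot z$ unchanged, giving $\margulis(\adg)$'s $\a$-factor $= \deriva t0\jordan(g_t)$, as desired.

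The main obstacle I anticipate is the differentiability/analyticity of the conjugating curve $k_t$ and the corresponding straightening of the curve $z_t$ — i.e. justifying cleanly that one can diagonalize the loxodromic element real-analytically in families. This is a standard consequence of the regularity of $g_0$ (its centralizer being a torus of the expected dimension) and the implicit function theorem, but it requires a little care to set up, especially in matching conventions with the abstract Margulis invariant of \S\ref{s.unmar}; once that is in place, the computation collapses to the elementary linear-algebra identity above. A secondary point to check is that the $\psi$-normalization in Definition \ref{defimar} acts trivially in the straightened frame (it does, since we have conjugated $g$ so that its attracting/repelling flags are the basepoints $[\sf P^\vt], [\check{\sf P}^\vt]$), so that no spurious Weyl-group twist enters.
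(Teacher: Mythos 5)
Your overall strategy is the same as the paper's: straighten the curve by a differentiable conjugation, use that $\margulis$ is unchanged under conjugation by a curve through the identity (the paper isolates this as Lemma \ref{curva}), differentiate the straightened relation, and read off the $\a$-component via the root-space decomposition. However, there is a genuine gap at the very first reduction. You claim that, $g$ being loxodromic, one may conjugate so that $g\in\exp(\inte\a^+)$, and later that there is a curve $k_t$ with $k_tg_tk_t^{-1}=\exp(\jordan(g_t))$. Neither holds in general: a loxodromic element is conjugate to $ma$ with $m\in\sf M$ and $a=\exp(\jordan(g))$, and the elliptic part $m$ cannot be conjugated away (think of a complex group, or $\SO(1,n)$, where $\sf M$ is nontrivial and acts nontrivially on $\m$; the Corollary following the Proposition in the paper is precisely about this case). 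Consequently the conjugation orbit of $g_t$ never meets $\exp(\a)$ when $m\neq e$, and your implicit-function-theorem construction of $k_t$ has no solution. The same issue infects the identity $(\Ad(g)-\id)\xi^0=0$: with $g=ma$, the operator $\Ad(g)|\ge^0=\Ad(m)$ is not the identity on $[\m,\m]$, so $\cero(\Ad g)$ is the fixed space of $\Ad(m)$ in $\ge^0$, not all of $\ge^0$.

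The repair is exactly what the paper does: conjugate by a differentiable curve $s_t$ (with $s_0=\id$) taking the attracting/repelling flags of $g_t$ to the standard pair, so that $h_t:=s_t^{-1}g_ts_t$ lies in $\sf M\sf A$ and factors as $h_t=m_ta_t$ with $\jordan(g_t)=\jordan(a_t)$; since $\sf M$ and $\sf A$ commute, $\deriva t0 h_th^{-1}=\vec mm^{-1}+\vec aa^{-1}$, and projecting onto $\trivia=\centro(\m)\oplus\a$ parallel to $[\m,\m]\oplus\n\oplus\wk\n$ shows the $\a$-factor is $\vec aa^{-1}=\deriva t0\jordan(g_t)$. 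Note that your error happens to wash out for split groups (where $\m=0$) and, for the final $\a$-factor, even in general — the spurious terms live in $\m$ and are killed by the projection to $\a$ — but as written the intermediate objects you use do not exist, so the argument needs the $\sf M\sf A$ correction to be valid.
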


The proof requires the following Lemma.

\begin{lemma}\label{curva} Consider a differentiable curve $(s_t)_{(-\eps,\eps)}\subset \sf G$ with $s_0=\id$, let $h_t=s_t^{-1}g_ts_t$ and $\sf h\in\sf G\ltimes\ge$ be defined as above, then $\margulis(\sf h)=\margulis(\sf g).$
\end{lemma}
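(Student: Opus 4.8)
The statement of Lemma~\ref{curva} asserts that the Margulis invariant $\margulis$ is unchanged when we replace the curve $(g_t)$ by a conjugated curve $(h_t) = (s_t^{-1}g_t s_t)$ with $s_0 = \id$. The natural strategy is to exhibit an actual conjugation in the affine group $\sf G \ltimes \ge$ taking $\sf g$ to $\sf h$, and then invoke the conjugation-invariance of the Margulis invariant recorded in Remark~\ref{inv2}. So first I would compute the linear part and translation part of $\sf h$ explicitly: the linear part of $\sf h$ is $h_0 = s_0^{-1} g_0 s_0 = g$, and its translation vector is $d_g L_{g^{-1}}(\vec h) = \frac{\partial}{\partial t}\big|_{t=0} h_t h^{-1}$, where $h = h_0 = g$.

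The key computation is to differentiate $h_t h^{-1} = s_t^{-1} g_t s_t g^{-1}$ at $t=0$, using $s_0 = \id$ and the Leibniz rule. Writing $\sigma = \frac{\partial}{\partial t}\big|_{t=0} s_t \in \ge$ (the derivative of $(s_t)$ at the identity), one gets, in terms of left-translation trivialization,
\[
\deriva t0 h_th^{-1} = -\sigma + \co_g + \Ad(g)\sigma,
\]
where $\co_g = \deriva t0 g_t g^{-1}$ is the translation vector of $\sf g$. In other words, the translation vector of $\sf h$ is $\co_g + (\Ad(g) - \id)\sigma = \co_g + \sigma - \Ad(g)\sigma$. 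Thus $\sf h$ and $\sf g$ have the same linear part $g$, and their translation vectors differ by the coboundary term $\sigma - \Ad(g)\sigma$. This exhibits $\sf h$ as $\sf g$ composed with conjugation by the pure translation $(\id, \sigma)$: indeed $(\id,\sigma)\,(g, \co_g)\,(\id,\sigma)^{-1} = (g, \co_g + \sigma - \Ad(g)\sigma) = \sf h$ in $\sf G \ltimes_{\Ad}\ge$.

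With this identity in hand, the lemma follows from the first bullet of Remark~\ref{inv2}, which states that the Margulis invariant of a $(\Ad,X_0)$-compatible element of $\sf G \ltimes \ge$ is invariant under conjugation by an element of $\sf G \ltimes \ge$; here the conjugating element is the pure translation $(\id,\sigma)$. (Both $\sf g$ and $\sf h$ are $(\Ad,X_0)$-compatible since their common linear part $g$ is loxodromic, so $\jordan(g) \in \inte\a^+ = \encono_{\Ad}$.) Alternatively, one could argue directly at the level of the un-normalized Margulis invariant using the second bullet of Remark~\ref{Invariancia}: since $\Ad(g)$ restricted to the generalized $1$-eigenspace $\cero(\Ad g)$ need not be trivial in general, the cleaner route is via the conjugation statement rather than the translation-insensitivity statement, so I expect the mild obstacle to be making sure the bookkeeping of left- versus right-trivialization in the derivative of $h_t h^{-1}$ is done consistently; once that sign/order is fixed the argument is immediate.
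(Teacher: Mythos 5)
Your proposal is correct and follows the paper's proof exactly: differentiate $h_th^{-1}=s_t^{-1}g_ts_tg^{-1}$ at $t=0$ to see that the translation vector of $\sf h$ differs from that of $\sf g$ by the coboundary $\Ad(g)\vec s-\vec s$, then invoke the conjugation-invariance of $\margulis$ from Remark \ref{inv2}. (There is a harmless sign slip: $(\Ad(g)-\id)\sigma=\Ad(g)\sigma-\sigma$, so the conjugating pure translation is $(\id,-\sigma)$ rather than $(\id,\sigma)$; this does not affect the argument.)
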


\begin{proof} Indeed, explicit computation yields that the translation part of $\sf h$ is $$\deriva t0h_th^{-1}=\deriva t0g_tg^{-1}+\Ad(g_0)(\vec s)-\vec s,$$ so the lemma follows by Remark \ref{inv2}.
\end{proof}

\begin{proof}[Proof of Proposition \ref{margulisderivada}] Let $x_t,y_t\in\EuScript F_\simple$ be the repelling and attracting flags of $g_t$, for small $t.$ Since $\margulis(\adg)$ is invariant under conjugation, we can assume that $x_0=[\wk{\sf P}^\simple]$ and $y_0=[\sf P^\simple],$ so that $g=g_0=ma $ for some $m\in\sf M$ and $a\in\sf A.$ Consider also a differentiable curve $s_t\in \sf G$ with $s_0=\id,$ that sends the pair $(x_t,y_t)$ to $([\wk{\sf P}^\simple],[{\sf P}^\simple]),$ then by Lemma \ref{curva} one has $\margulis(\sf h)=\margulis(\sf g),$ for $h_t:=s_t^{-1}g_ts_t.$ 

We compute now $\margulis(\sf h).$ Since $h_t$ fixes $([\wk{\sf P}^\simple],[{\sf P}^\simple])$ we have $h_t=m_ta_t$ for $m_t\in\sf M$ and $a_t\in\sf A$, and $\jordan(g_t)=\jordan(a_t).$ The derivative of $h_th^{-1}$ is, since $\sf M$ and $\sf A$ commute,  \begin{equation}\label{cta}\deriva t0h_th^{-1}  =\deriva t0 m_ta_ta^{-1}m^{-1}= \vec mm^{-1}+\vec aa^{-1}.\end{equation}

\noindent
The Margulis invariant of $\sf h$ is then computed by considering the eigenspace decomposition of $\Ad(ma),$ which is nothing but the root space decomposition $$\ge=\ge^0\oplus\bigoplus_{\aa\in\root}\ge^\aa=[\m,\m]\oplus\centro(\m)\oplus\a\oplus\n\oplus\wk\n,$$ and projecting the vector (\ref{cta}) onto $\trivia=\centro(\m)\oplus\a$ parallel to this decomposition. The $\a$-factor of $\margulis(\adg)$ is then $\vec aa^{-1},$ as desired.\end{proof}

The same proof above actually gives the following:

\begin{cor} Let $\sf G_\C$ be a complex semi-simple algebraic group. Let $\a_\C$ be a Cartan subalgebra of $\sf G_\C$ and let $\mu:\sf G_\C\to \exp(\a_\C)$ be the eigenvalue projection. Let $(g_t)_{t\in(-\eps,\eps)}\subset\sf G_\C$ be a differentiable curve with loxodromic $g_0.$ Then $$\margulis(\adg)=\big(\mathrm{d}\mu(\vec g)\big)\mu(g)^{-1}\in\a_\C.$$
\end{cor}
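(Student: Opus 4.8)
The plan is to observe that the proof of Proposition \ref{margulisderivada} never really used that $\sf G$ is a real \emph{split} or \emph{non-compact type} group: it only used the structure theory available for the adjoint action of a loxodromic element, namely that $\Ad(g)$ is semisimple on the relevant part and its centralizer (the ``ideally neutral'' piece) splits canonically. In the complex setting, when $\a_\C$ is a Cartan subalgebra of $\sf G_\C$, the adjoint representation $\Ad:\sf G_\C\to\SL(\ge_\C)$ has weight set the root system $\root_\C$, none of whose roots is fixed by the longest Weyl element, so $\Vlongest=\ge_\C^0=\a_\C$ (here $\m$ is trivial since $\a_\C$ is already Cartan), and hence the neutralizing space is $\trivia=\a_\C$ itself. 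The $(\Ad,X_0)$-compatible cone is again the whole Weyl chamber, and the Margulis invariant is defined for any $\adg\in\sf G_\C\ltimes\ge_\C$ with loxodromic linear part, taking values in $\a_\C$.

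First I would set up the complex analogue: pick $X_0$ in the interior of a Weyl chamber $\a_\C^+$ of $\a_\C$ so that every nonzero root is nonvanishing on $X_0$ (this is automatic since $\a_\C$ is Cartan, so $\Fix(\ii)$ is all of $\a_\C$ and $\poif^\longest=\{0\}$), and record that by the discussion at the start of \S\ref{xalpha}-type structure theory the decomposition $\ge_\C=\ge_\C^0\oplus\bigoplus_{\aa\in\root_\C}\ge_\C^\aa$ is exactly the eigenspace decomposition of $\Ad(g)$ for $g$ loxodromic with $\jordan_\C(g)\in\exp(\a_\C^+)$. Then I would repeat verbatim the argument of Lemma \ref{curva} and the proof of Proposition \ref{margulisderivada}: by conjugation-invariance of the Margulis invariant (Remark \ref{inv2}), reduce to the case $g_0\in\exp(\a_\C)$ with attracting/repelling flags at the base point; choose a differentiable curve $s_t$ straightening the attracting and repelling flags of $g_t$; write $h_t=s_t^{-1}g_ts_t=a_t$ for $a_t\in\exp(\a_\C)$ (there is no $\m$-component now), so that $\jordan_\C(g_t)=\jordan_\C(a_t)$; compute $\frac{\partial}{\partial t}|_{t=0}h_th^{-1}=\vec a a^{-1}\in\a_\C$, project onto $\trivia=\a_\C$ parallel to the root-space decomposition — which is the identity on that component — and conclude $\margulis(\adg)=\vec a a^{-1}=\frac{\partial}{\partial t}|_{t=0}\jordan_\C(g_t)$.

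The only genuinely new point compared with the real case — and the step I expect to be the main (though minor) obstacle — is checking that the formalism of Part \ref{affineactions} (the definition of $\umargulis$, of $\margulis$, and the invariance statements of Remarks \ref{Invariancia} and \ref{inv2}) goes through over $\C$ unchanged: these were phrased for a real vector space $V$ and a real reductive $\sf G$, but everything used — the generalized eigenspace $\cero(\lin f)$ for the eigenvalue $1$, the $\lin f$-invariant complement, and the projection $\pi^1$ — is purely linear-algebraic and works identically for $\ge_\C$ viewed either as a complex vector space or, if one prefers, as a real one. Once that bookkeeping is in place, no further work is required; the proof is ``the same proof as above'', as the corollary asserts. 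I would therefore state the corollary's proof as: \emph{Identical to that of Proposition \ref{margulisderivada}, noting that for $\sf G_\C$ with $\a_\C$ a Cartan subalgebra one has $\m=\{0\}$, hence $\trivia=\Vlongest=\ge_\C^0=\a_\C$ and the $(\Ad,X_0)$-compatible cone is $\a_\C^+$; thus the $\a_\C$-factor of the Margulis invariant is the whole invariant and equals $\frac{\partial}{\partial t}|_{t=0}\jordan_\C(g_t)$.}
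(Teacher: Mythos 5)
Your computation lands in the right place, but your route differs from the paper's and contains one assertion that needs care. The paper does \emph{not} re-found the theory over $\C$: it keeps $\sf G_\C$ as a real-algebraic group with the real Cartan subspace $\a$, for which $\m=\centro_\k(\a)$ is \emph{nonzero} (essentially $i\a$) but abelian; hence $\centro(\m)=\m$, $[\m,\m]=0$, and $\trivia=\centro(\m)\oplus\a=\m\oplus\a=\a_\C$. The proof of Proposition \ref{margulisderivada} already produced $\deriva t0 h_th^{-1}=\vec m m^{-1}+\vec a a^{-1}$ and identified the $\a$-factor of $\margulis(\adg)$ with $\vec a a^{-1}$; the only new observation needed is that the $\centro(\m)$-factor is the projection of $\vec m m^{-1}$ parallel to $[\m,\m]$, which is all of $\vec mm^{-1}$ since $[\m,\m]=0$, and $\vec mm^{-1}+\vec aa^{-1}$ is exactly $\deriva t0\jordan_\C(g_t)$. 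Your parenthetical ``$\m$ is trivial since $\a_\C$ is already Cartan'' is false in the framework in which $\margulis$ is actually defined (Part \ref{affineactions} works with the real restricted root decomposition, where $\ge^0=\m\oplus\a$ with $\m\neq0$); it only becomes true if you rebuild the whole Margulis formalism over $\C$ with $\a_\C$ as Cartan subalgebra, in which case you must additionally check that the resulting invariant coincides with the $\margulis$ appearing in the statement. That check does go through --- the complex root space decomposition of $\ge_\C$ relative to $\a_\C$, read as a real decomposition, is the restricted root space decomposition relative to $\a$ with $\ge^0=\a_\C$, so the projections defining $\umargulis$ and $\margulis$ agree --- but it is exactly the step you wave at rather than carry out, and it is where your write-up is weakest. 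The paper's one-line observation ($\m$ abelian, so nothing is lost in the projection $\pi^\trivia$) avoids this detour entirely and isolates the genuinely new content of the corollary, namely the identification of the hitherto-discarded $\m$-component of the Margulis invariant with the derivative of the rotation part of $\jordan_\C$.
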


\begin{proof} Considering $\sf G_\C$ as a real-algebraic group one has that $\m$ is abelian, so $\m=\centro(\m)$ and $\a_\C=\centro(\m)\oplus\a$. In the course of the proof of Proposition \ref{margulisderivada} one may observe that, the $\centro(\m)$-factor of $\margulis(\adg)$ is the projection of $\vec mm^{-1}\in\m=[\m,\m]\oplus\centro(\m)$ parallel to $[\m,\m],$ which readily gives the result.\end{proof}

\subsection{Every functional sees eigenvalue variations}  

We now prove Theorem \ref{A}.

\begin{cor}\label{phi-cotangent} If $\rho(\G)$ is Zariski-dense and $v$ has full loxodromic variation, then $\jordanlim_{v}$ is convex with non-empty interior. In particular, if $\rho$ is a regular point of $\caracG$ and $\varphi\in\a^*$ does not annihilate any of the $\a_i$'s, then $\spa\big\{\mathrm d\varphi^{\g}:\g\in\G\big\}=\sf T^*_\rho\caracG.$
\end{cor}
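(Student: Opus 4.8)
The plan is to deduce Corollary~\ref{phi-cotangent} from the affine-geometric machinery of Part~\ref{affineactions}, using as the central bridge Proposition~\ref{margulisderivada}, which identifies $\varjor\g v$ with the $\a$-factor of the Margulis invariant $\margulis\big(\rho(\g),\co_v(\g)\big)$ of the affine transformation with linear part $\rho(\g)$ and translation part $\co_v(\g)$ under the adjoint action $\rep=\Ad$. Since $\Ad$ is disjoined (Remark~\ref{adjuntadisjunda}) and $\rho(\G)$ is Zariski-dense in $\sf G$, I would apply Corollary~\ref{conococyclos} to the decomposition $\ge=\bigoplus_i\ge_i$ and the cocycles $p_i(\co_v):\G\to\ge_i$: the hypothesis of full loxodromic variation guarantees, by Proposition~\ref{margulisderivada}, that for each simple factor there is a loxodromic element $\g_i$ with $p_i\big(\varjor{\g_i}{v}\big)\neq0$, hence with $\margulis(\rho(\g_i),p_i(\co_v)(\g_i))\neq0$, so each class $[p_i(\co_v)]$ is non-trivial. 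Corollary~\ref{conococyclos} then yields that the affine limit cone $\affinelim_{\G_{\co_v}}\subset\trivia=\centro(\m)\oplus\a$ has non-empty interior.

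Next I would pass from the affine limit cone in $\trivia$ to the cone $\jordanlim_v\subset\a$ of Jordan variations. Here convexity follows from the convexity half of Corollary~\ref{interiorAL}/\ref{rojo} applied to $\G_{\co_v}$ — the same semigroup-Schottky argument in the proof of Corollary~\ref{interiorAL} shows that $\margulis(g)+\margulis(h)$ lies in the closed cone generated by Margulis invariants, and projecting to the $\a$-factor via Proposition~\ref{margulisderivada} gives exactly that $\varjor gv+\varjor hv$ lies (projectively, in the limit) in $\jordanlim_v$. For non-empty interior: one observes that $\jordanlim_v$ is the image of $\affinelim_{\G_{\co_v}}$ under the projection $\pi_\a:\trivia\to\a$, since the $\a$-factor of each Margulis invariant is precisely $\varjor\g v$. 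A linear image of a convex set with non-empty interior has non-empty interior in the image of the ambient space — but $\pi_\a$ is surjective onto $\a$ by construction of $\trivia=\centro(\m)\oplus\a$, so $\jordanlim_v=\pi_\a(\affinelim_{\G_{\co_v}})$ is convex with non-empty interior in $\a$. I would need to be slightly careful that taking closures and the projection commute appropriately with the cone operations, but this is routine since $\pi_\a$ is continuous, linear, and open.

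For the final assertion about cotangent spaces, suppose $\rho$ is a regular point and $\varphi\in\a^*$ vanishes on no $\a_i$. Since $\jordanlim_v$ has non-empty interior in $\a$ for every integrable $v$ with full loxodromic variation, $\varphi|\jordanlim_v\not\equiv0$, i.e.\ there is a loxodromic $\g$ with $\varphi(\varjor\g v)=\mathrm d\varphi^\g(v)\neq0$. To upgrade this pointwise non-vanishing to spanning, I would argue by contradiction: if $\{\mathrm d_\rho\varphi^\g:\g\in\G\}$ did not span $\sf T_\rho^*\caracG$, there would be a non-zero tangent vector $v$ annihilated by all of them, i.e.\ $\mathrm d\varphi^\g(v)=0$ for every $\g$; one then needs that such a $v$ can be taken to have full loxodromic variation, contradicting the previous sentence. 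The reduction here uses that if $p_i(\co_v)$ were cohomologically trivial for some $i$ (so $v$ does not have full loxodromic variation) one could still decompose or perturb — but more directly, the statement should be interpreted (as elsewhere in the paper) via Zariski-density of the base point, which forces $v\neq0$ in $\sf T_\rho\caracG$ to have non-trivial image in at least the factor carrying the Zariski-dense part; since $\rho(\G)$ Zariski-dense forces each $\rho$ projected to $\sf G_i$ to be Zariski-dense, a non-trivial $v$ with $\mathrm d\varphi^\g(v)=0$ for all $\g$ can be split as a sum over factors and handled factor by factor, each time invoking Theorem~\ref{interiorsi} and Corollary~\ref{conococyclos} on $\sf G_i$.

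I expect the main obstacle to be precisely this last reduction: cleanly arguing that a hypothetical annihilating vector $v$ can be arranged to have full loxodromic variation, or else that the factors where it has trivial variation contribute nothing to the pairing with $\mathrm d\varphi^\g$ (which is plausible since on such a factor the Jordan projection is infinitesimally constant, so $\varphi^\g$ restricted there has zero derivative, and $\varphi$ does not annihilate $\a_i$ so this genuinely constrains $v$ on that factor to be cohomologically trivial — hence $v=0$ if this holds on every factor). Modulo this bookkeeping, which is a matter of unwinding the product structure $\caracG\approx\prod_i\frak X(\G,\sf G_i)$ away from bad loci, the corollary follows from Corollary~\ref{conococyclos} and Proposition~\ref{margulisderivada}.
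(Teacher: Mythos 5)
Your argument is correct and is essentially the paper's own proof: Proposition \ref{margulisderivada} identifies $\varjor\g v$ with the $\a$-factor of the Margulis invariant, the Adjoint representation is disjoined, Corollary \ref{conococyclos} gives convexity and non-empty interior of the affine limit cone in $\centro(\m)\oplus\a$, and one projects onto $\a$. The reduction you flag as the ``main obstacle'' for the cotangent statement is resolved exactly by your parenthetical remark: the factors $i$ on which $p_i(\co_v)$ is a coboundary contribute nothing to any $\varjor\g v$ (the Margulis invariant vanishes on coboundaries), so one applies the non-empty-interior statement to the remaining collection of factors simultaneously (not factor by factor, which would risk cancellation between factors), where $\varphi$ still does not vanish identically since it annihilates no $\a_i$.
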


\begin{proof} Proposition \ref{margulisderivada} places the statement in the conditions of Corollary \ref{conococyclos} where $V_i=\ge_i$. The Corollary applies since the Adjoint representation is disjoined (Remark \ref{adjuntadisjunda}). Thus, the affine limit cone $$\affinelim_{\rho(\G)_{\co_{v}}}\subset\centro(\m)\oplus\a$$ is convex and has non-empty interior, whence it's projection onto the second factor also, giving the conclusion.\end{proof}

We introduce for convenience the following definition.

\begin{defi}If $\sf H$ is a reductive subgroup of $\sf G$ then an \emph{adjoint factor of $\sf H$} is a collection of irreducible factors of the representation $\ad_\ge|\h:\h\to\gl(\ge).$ An adjoint factor is \emph{disjoined} if the associated representation is.
\end{defi}

If $\rho(\G)$ has semi-simple Zariski closure $\sf H,$ then $H^1_{\Ad}(\G,\ge)$ splits as $$H^1_{\Ad\rho}(\G,\ge)=\bigoplus_{\factor{\sf H}\textrm{ irreducible adjoint factor}}H^1_{\Ad\rho}(\G,\factor{\sf H}).$$ 

We have the following refinement of Corollary \ref{phi-cotangent} whose proof is identical. 

\begin{cor}\label{factor-cotangente} Assume $\rho(\G)$ has semi-simple Zariski closure $\sf H$ and assume that $\Bcone_{\rho}\cap\inte\a^+\neq\vacio$. Let $\factor{\sf H}$ be a disjoined adjoint factor and $\rep=\Ad_\sf G(\sf H)|\factor{\sf H}.$ Assume $\co\in H^1_{\rep\rho} (\G,\factor{\sf H})\setminus\{0\}$ is integrable, then for every loxodromic $\g\in\G$ one has $$\varjor\g{\co}\in \factor{\sf H}\cap\a.$$ Moreover, if $[\co]$ projects non-trivially to the twisted cohomology associated to each irreducible factor of $\rep$, then  $\jordanlim_{v}$ is convex and has non-empty interior in $\factor{\sf H}\cap\a$. Consequently, for every $\varphi\in\a^*$ such that $\factor{\sf H}\cap\a\subsetneq\ker\varphi$ there exists $\g\in\G$ such that $$\mathrm{d}\varphi^{\g}(\co)\neq0.$$\end{cor}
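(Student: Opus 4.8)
The plan is to mirror the proof of Corollary \ref{phi-cotangent} almost verbatim, now keeping track of the refinement coming from the decomposition of $H^1_{\Ad\rho}(\G,\ge)$ into adjoint factors of $\sf H$. First I would fix the semi-simple Zariski closure $\sf H=\Zariski{\rho(\G)}$ and recall that, since $\sf H$ is semi-simple and acts on $\ge$ via the restriction of $\Ad_{\sf G}$, Whitehead/complete reducibility gives the $\sf H$-module splitting $\ge=\bigoplus \factor{\sf H}$ and the corresponding cohomological splitting $H^1_{\Ad\rho}(\G,\ge)=\bigoplus H^1_{\Ad\rho}(\G,\factor{\sf H})$. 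An integrable class $\co\in H^1_{\rep\rho}(\G,\factor{\sf H})$, $\rep=\Ad_{\sf G}(\sf H)|\factor{\sf H}$, then corresponds to a curve of representations $\rho_t$ whose velocity cocycle takes values in the $\sf H$-invariant subspace $\factor{\sf H}\subset\ge$, so the pair $(\rho,\co)$ is a morphism $\G\to\sf H\ltimes_{\rep}\factor{\sf H}$.

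Next I would invoke Proposition \ref{margulisderivada}: for each loxodromic $\rho(\g)$, the variation $\varjor\g\co=\deriva t0\jordan(\rho_t(\g))$ equals the $\a$-component of the Margulis invariant $\margulis\big(\rho(\g),\co(\g)\big)$ computed inside $\sf H\ltimes\h$ (applied with $\sf H$ in place of $\sf G$). But the Margulis invariant lands in the neutralizing space $\trivia$ attached to $\rep$, which is a subspace of $\factor{\sf H}$; intersecting with $\a$ one gets $\varjor\g\co\in\factor{\sf H}\cap\a$, which is the first assertion. For the interior statement, one uses that $\Ad_{\sf G}|\h$ is disjoined (Remark \ref{adjuntadisjunda}), hence so is the sub-collection $\factor{\sf H}$ by hypothesis; the condition that $[\co]$ projects non-trivially to the twisted cohomology of each irreducible factor of $\rep$ supplies, via Remark \ref{Invariancia} (the Margulis invariant depends only on the cohomology class and vanishes on coboundaries, linearly in the cocycle), a $(\rep,X_0)$-compatible element with non-zero Margulis invariant in that factor — here I would pick any $X_0\in\a_{\sf H}^+\cap\Fix(\ii_{\sf H})$, noting as in \S\ref{normarde} that for the adjoint representation the $(\Ad,X_0)$-compatible cone is the full Weyl chamber, and the hypothesis $\Bcone_\rho\cap\inte\a^+\neq\vacio$ guarantees loxodromic elements are available. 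Corollary \ref{conococyclos} (or directly Corollary \ref{rojo} after Corollary \ref{souravzariski} gives Zariski density of the affine group) then yields that the affine limit cone $\affinelim_{\rho(\G)_\co}\subset\trivia$ is convex with non-empty interior; projecting to the $\a$-factor gives convexity and non-empty interior of $\jordanlim_v$ inside $\factor{\sf H}\cap\a$.

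The last sentence is then immediate: if $\varphi\in\a^*$ does not vanish on all of $\factor{\sf H}\cap\a$, then, since $\jordanlim_v$ has non-empty interior in that subspace, it is not contained in $\ker\varphi$, so there is a loxodromic $\g\in\G$ with $\varphi\big(\varjor\g\co\big)=\mathrm d\varphi^\g(\co)\neq0$; unwinding $\mathrm d\varphi^\g=\mathrm d(\varphi\circ\jordan^\g)$ gives the stated conclusion.

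The main obstacle I anticipate is purely bookkeeping rather than conceptual: one must carefully transport the entire affine-geometry machinery of Part \ref{affineactions}, built for $\sf G$ acting on $\ge=\bigoplus_i\ge_i$, to the semi-simple group $\sf H$ acting on the (possibly non-ideal-decomposed, merely completely reducible) module $\ge|_{\sf H}$, checking that ``disjoined'' for the chosen adjoint factor is exactly the hypothesis needed so that Corollary \ref{souravzariski} applies and that the neutralizing spaces $\trivia_i$ and the compatibility cones $\encono_{\rep_i}$ behave as in the split-off case. Once that dictionary is set up, the argument is the same proof as Corollary \ref{phi-cotangent}.
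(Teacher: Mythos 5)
Your proposal follows the paper's own argument: the paper explicitly says the proof is ``identical'' to Corollary \ref{phi-cotangent} and, exactly as you do, identifies $\varjor\g\co$ with the $\a$-factor of the Margulis invariant of $\big(\rho(\g),\co(\g)\big)$ viewed in $\sf H\ltimes_\rep V_{\sf H}$ (using that $\sf H$ contains a $\sf G$-loxodromic element, so $\a\cap V_{\sf H}\subset (V_{\sf H})^0\subset(\m\oplus\a)\cap V_{\sf H}$), and then concludes via Corollary \ref{conococyclos} applied over $\sf H$. The only blemish is the slip ``$\sf H\ltimes\h$'' where you mean $\sf H\ltimes_\rep\factor{\sf H}$; it does not affect the argument.
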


\begin{proof} Since $\sf H$ contains a $\sf G$-loxodromic element the $0$-weight space of the representation $\rep:\sf H\to\GL(V_\sf H)$ verifies $$\a\cap V_\sf H\subset(V_\sf H)^0\subset (\m\oplus\a)\cap V_{\sf H}.$$ It follows that the $\a$-factor of the Margulis invariant of $(\eta(\g),\co(\g))$, as an element of $\sf H\ltimes_\rep V_\sf H $, coincides with $\varjor\g \co$, so non-empty interior of $\jordanlim_{\co}$ in $\factor{\sf H}\cap \a$ follows from Corollary \ref{conococyclos}.\end{proof}

\section{Zariski-density of elements with full variation}

In this section we establish the following.


\begin{prop}\label{fullZariskidenso} Assume $v\in\sf T_\rho\caracG$ has full loxodromic variation and Zariski-dense base-point. Fix a finite collection of hyperplanes $\EuScript P$ of $\a$. Then the set $$\G_\EuScript P=\{\textrm{loxodromic $g\in\G$ with}\ \varjor\g v\notin\bigcup_{U\in\EuScript P} U\}$$ is Zariski-dense in $\sf G$. Moreover, the set $\{\jordan(g):g\in\G_\EuScript P\}$ intersects every open cone contained in $\Bcone_\rho$.\end{prop}

We place ourselves in the assumptions of the Proposition, whose proof follows the same lines as Benoist \cite{limite} (see \cite[Theorem 6.36]{Benoist-QuintLibro}) for loxodromic elements.

\begin{lemma}\label{abc} Let $\K$ be a field and consider $w,g,h\in\GL(d,\K)$, then for every $N\in\N$ the Zariski closure of $\{wg^nh^n:n\in\lb N,\infty)\}$ contains the product $wgh$. Analogously, the Zariski closure of $\{g^nh^nw:n\in\lb N,\infty)\}$ contains $ghw$.
\end{lemma}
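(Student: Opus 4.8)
The statement to prove (Lemma \ref{abc}) is an elementary fact about Zariski closures, so the plan is essentially algebro-geometric rather than dynamical. Fix $w,g,h\in\GL(d,\K)$ and an integer $N$. Consider the polynomial map $\Psi\colon\GL(1,\K)=\K^\times\to\GL(d,\K)$ or, better, a one-parameter object. The key observation is that for a fixed pair $g,h$, the expression $g^nh^n$ depends \emph{polynomially} on $n$ once we pass to a suitable algebraic parametrization. More precisely, write $g$ and $h$ in multiplicative Jordan form; each of $g^n$, $h^n$ has matrix entries that are $\K$-linear combinations of terms of the shape $\lambda^n\binom{n}{k}$ where $\lambda$ runs over the eigenvalues and $k$ over bounded non-negative integers. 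Thus the entries of $wg^nh^n$ lie in the $\K$-span of finitely many functions $n\mapsto \mu^n p(n)$ with $\mu\in\overline\K^\times$ and $p$ a polynomial. The standard fact I will invoke is: if a sequence of points $(x_n)_{n\geq N}$ in affine space has coordinates of this exponential-polynomial form, then the Zariski closure of $\{x_n:n\geq N\}$ equals the Zariski closure of $\{x_n:n\geq M\}$ for every $M$, and in particular contains every point that can be obtained as a ``limit along the sequence'' in the Zariski sense.

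The cleanest way to organize this: let $Z$ denote the Zariski closure of $\{wg^nh^n:n\in\llbracket N,\infty)\}$. I want to show $wgh\in Z$. Suppose $P$ is a polynomial function on $\mathrm{M}_d(\K)$ vanishing on $Z$; then $P(wg^nh^n)=0$ for all $n\geq N$. But $n\mapsto P(wg^nh^n)$ is again an exponential-polynomial function of $n$ of the form $\sum_j \nu_j^n q_j(n)$ with finitely many distinct $\nu_j\in\overline\K^\times$ and polynomials $q_j$. A function of this form vanishing on all sufficiently large integers must vanish identically as a function of $n$ (this is the only genuine lemma needed, and it is a routine linear-algebra/Vandermonde argument: the functions $n\mapsto\nu^n\binom{n}{k}$ are linearly independent over $\overline\K$). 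Hence $P(wg^nh^n)=0$ for \emph{every} $n\in\Z$, in particular for $n=1$, giving $P(wgh)=0$. Since $P$ was an arbitrary element of the ideal of $Z$, this shows $wgh\in Z$. The statement about $\{g^nh^nw:n\geq N\}$ follows by the identical argument, or simply by noting $g^nh^nw = w^{-1}(wg^nh^nw)w$ and applying the first part with $g,h$ replaced by $wgw^{-1}$... actually more directly, the argument is symmetric and one just repeats it with $g^nh^nw$ in place of $wg^nh^n$.

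The one point requiring a little care is that the eigenvalues $\mu,\nu$ live a priori in $\overline\K$ rather than in $\K$, so strictly speaking the exponential-polynomial functions have coefficients in $\overline\K$; but this does not affect the linear-independence lemma (which is valid over any field of characteristic zero, or more generally once one notes the $\nu_j$ are distinct and invertible), and the conclusion $P(wgh)=0$ is a statement about an element of $\mathrm{M}_d(\K)$ so it descends. If $\K$ has positive characteristic one should be slightly more careful with the $\binom{n}{k}$ terms, but since all applications in the paper are over $\K=\R$ or $\C$ I will either restrict to characteristic zero or phrase the linear-independence step using the fact that the relevant functions extend to characters on a torus times unipotent. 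I expect the main (minor) obstacle to be precisely stating and proving the linear-independence of the functions $n\mapsto\nu^n\binom nk$ cleanly; everything else is a formal manipulation with the ideal of a Zariski closure.

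\begin{proof}
Write $g=g_sg_u$ and $h=h_sh_u$ for the multiplicative Jordan decompositions, with $g_s,h_s$ semisimple and $g_u,h_u$ unipotent commuting with $g_s,h_s$ respectively. Over $\overline\K$ we may simultaneously triangularize, and obtain that each matrix entry of $g^n$ (resp. $h^n$) is, as a function of $n$, a $\overline\K$-linear combination of functions $n\mapsto\lambda^n\binom{n}{k}$ with $\lambda$ an eigenvalue of $g$ (resp. $h$) and $0\le k<d$. Consequently every matrix entry of $wg^nh^n$ is a $\overline\K$-linear combination of functions of the form $n\mapsto\nu^n\binom{n}{k}\binom{n}{\ell}$, hence again an \emph{exponential-polynomial} function $\sum_{j} \nu_j^n q_j(n)$, where the $\nu_j\in\overline\K^\times$ are distinct and the $q_j$ are polynomials with coefficients in $\overline\K$.

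Let $Z$ be the Zariski closure in $\mathrm M_d(\K)$ of $\{wg^nh^n:n\in\llbracket N,\infty)\}$ and let $P$ be any polynomial function on $\mathrm M_d(\K)$ with $P|_Z=0$. Substituting the above expressions, the function $\varphi\colon\Z\to\overline\K$, $\varphi(n)=P(wg^nh^n)$, is itself exponential-polynomial, say $\varphi(n)=\sum_{j} \mu_j^n r_j(n)$ with $\mu_j\in\overline\K^\times$ pairwise distinct and $r_j$ polynomials. By construction $\varphi(n)=0$ for all $n\ge N$.

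We claim this forces $r_j\equiv 0$ for all $j$, hence $\varphi\equiv0$ on all of $\Z$. Indeed, suppose not; among all $j$ with $r_j\neq 0$, pick one maximizing $\deg r_j$, say of degree $m$. Applying the finite-difference operator $\Delta_{\mu}f(n):=\mu^{-1}f(n+1)-f(n)$ for $\mu=\mu_{j_0}$ reduces the degree of the $j_0$-term by one while leaving the other terms of the form $\nu^n(\text{polynomial})$ with $\nu\neq1$; iterating and then separating the distinct bases $\mu_j/\mu_{j_0}$ (a Vandermonde argument on finitely many consecutive values of $n\ge N$) one extracts that the leading coefficient of $r_{j_0}$ vanishes, a contradiction. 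Hence $\varphi\equiv 0$ on $\Z$; in particular $P(wgh)=\varphi(1)=0$.

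Since $P$ was an arbitrary element of the ideal of $Z$, we conclude $wgh\in Z$. The same argument applied verbatim to the exponential-polynomial function $n\mapsto P(g^nh^nw)$ shows that $ghw$ lies in the Zariski closure of $\{g^nh^nw:n\in\llbracket N,\infty)\}$.
\end{proof}
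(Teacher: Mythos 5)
Your argument is correct over fields of characteristic zero, but it takes a genuinely different route from the paper's. The paper's proof is purely formal: it considers the vanishing ideal $I$ of $\{wg^nh^n:n\geq N\}$ and the substitution operator $Tp(X)=p(wg^Nw^{-1}Xh^N)$, which maps $I$ into $I$ (because $Tp(wg^nh^n)=p(wg^{n+N}h^{n+N})$) and preserves each finite-dimensional piece $I^m$ of bounded degree; since $T$ is a linear automorphism of each $I^m$, one gets $T(I)=I$, and writing $q=Tp$ yields $q(wgh)=p(wg^{N+1}h^{N+1})=0$. That argument uses no structure theory of matrices and is valid over an arbitrary field. Your route --- Jordan form, exponential-polynomial entries of $wg^nh^n$, and a Skolem--Mahler--Lech-type rigidity statement ("an exponential polynomial vanishing on all large integers vanishes identically") --- buys a slightly stronger-looking conclusion (the closure contains $wg^nh^n$ for every $n\in\Z$, although the paper's argument also gives this by iterating $T^{\pm 1}$), but it genuinely needs characteristic zero: in characteristic $p$ the entries of the unipotent part $g_u^n$ involve $\binom{n}{k}$ for $k\geq p$, which are not polynomial functions of $n$ in the residue field, so both your exponential-polynomial normal form and the linear-independence lemma fail as stated. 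You flag this yourself, and it is harmless for every application in the paper (where $\K=\R$), but the lemma is stated for an arbitrary field, and your proof does not cover that case without the periodicity-of-$g_u^n$ argument you only gesture at. Finally, the finite-difference proof of the rigidity lemma is essentially right but worth stating cleanly: apply $\Delta_{\mu_{j_0}}$ exactly $\deg r_{j_0}+1$ times, which annihilates the $j_0$-term and multiplies the leading coefficient of each other term by the nonzero scalar $(\mu_j/\mu_{j_0}-1)^{\deg r_{j_0}+1}$, then induct on the number of terms.
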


\begin{proof}
Let $I=\{p\in\R[x_{ij}]:p(wg^{n}h^{n})=0\ \forall n\geq N\}$ be the associated ideal. We must show that for every $p\in I$ it holds $p(wgh)=0$.

Consider the map $T:\R[x_{ij}]\to\R[x_{ij}]$ defined, for $X=(x_{ij})_{ij}$, by $$Tp(X)=p(wg^Nw^{-1}Xh^N).$$ It is an isomorphism that preserves $I$. Moreover, the finite-dimensional vector space $$I^m=\{p\in I \textrm{ of total degree }\leq m\}$$ verifies $T(I^m)\subset I^m$ which yields, since $T$ is invertible, that $T(I^m)=I^m$, and thus $T(I)=I$. Consider then $q\in I$ and let $p\in I$ be such that $Tp=q$, then $q(wgh)=p(wg^{N+1}h^{N+1})=0,$ as desired.\end{proof}

Recall the projections $p_i:\a\to\a_i$ and let, for $U\in\EuScript P,$ $p_U\in\a^*$ be such that $\ker p_U=U.$ We treat indistinctively the $p_i$'s and the $p_U$'s and say throughout this section that $g\in\G$ has \emph{full variation} if forall $j\in I\cup\EuScript P$ one has $p_j(\varjor{g}{v})\neq0.$

\begin{lemma}\label{potencia}Let $g,h\in\G$ be loxodromic and transverse, if $g$ has full variation and $k\in\N$ is such that $$\forall i\ p_i\big(k\varjor gv+\varjor h v\big)\neq0,$$ then for all large enough $n$ (depending on $k$) the element $(g^k)^nh^n$ has full variation.
\end{lemma}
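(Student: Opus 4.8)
The plan is to combine the perturbation machinery of Lemma~\ref{gecuadrado} with the additivity estimate of Theorem~\ref{SM} and the trace-stability statement Lemma~\ref{abc}. First I would recall that since $g$ and $h$ are loxodromic and transverse, their Jordan projections lie in $\inte\a^+ = \inte\X_{\Ad}$, so by Lemma~\ref{gecuadrado} (applied with $\rep = \Ad$, for which every loxodromic element is $(\Ad,X_0)$-compatible) there is a non-empty Zariski-open set of perturbations making $(g^k)^n h^n$ loxodromic for all large $n$, with attracting/repelling affine flags converging to those of the corresponding invariant flags. In our situation no extra perturbation is needed: transversality of $g$ and $h$ means $(\id,\id)$ already lies in the relevant Zariski-open set (as in Lemma~\ref{pares} and the last sentence of Lemma~\ref{gecuadrado}), so the affine flags of $(g^k)^n h^n$ are genuinely converging to the concatenation of the flags of $g^k$ and $h^n$. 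In particular $(g^k)^n h^n$ is $(\Ad,X_0)$-compatible (i.e.\ loxodromic) for $n$ large.

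Next I would control the variation. By Proposition~\ref{margulisderivada} applied to the curve $\rho_t(\g)$, the vector $\varjor{\g}{v}$ is the $\a$-component of the Margulis invariant $\margulis(\rho(\g),\co_v(\g))$ of the affine element over $\Ad$. Since the linear parts $\rho((g^k)^n h^n)$ have $\varsigma \to 0$ (the affine-contraction of Definition~\ref{affinestrength} tends to $0$ along the sequence, by the remark after Lemma~\ref{atraco}) and the invariant flags stay in a bounded transversality region, Theorem~\ref{SM} gives
\begin{equation}\label{eq:addit}
\Big\|\margulis\big((g^k)^n h^n, \co_v\big) - n\,\margulis(g^k,\co_v) - n\,\margulis(h^n,\co_v)\big/n \cdots \Big\|
\end{equation}
—more cleanly: iterating Theorem~\ref{SM} and using that the Margulis invariant scales as $\margulis(g^{m},\co_v) = m\,\margulis(g,\co_v)$ along a single loxodromic element, we get
\[
\lim_{n\to\infty}\frac{\margulis\big((g^k)^n h^n,\co_v\big)}{n} = k\,\margulis(g,\co_v) + \margulis(h,\co_v).
\]
Projecting onto the $\a$-factor and then onto each simple ideal via $p_i$, and invoking Proposition~\ref{margulisderivada} once more, this reads
\[
\lim_{n\to\infty}\frac{p_i\big(\varjor{(g^k)^n h^n}{v}\big)}{n} = p_i\big(k\,\varjor{g}{v} + \varjor{h}{v}\big),
\]
which is non-zero for every $i$ by hypothesis. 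Hence for all large $n$ one has $p_i\big(\varjor{(g^k)^n h^n}{v}\big)\neq 0$ for every $i$, i.e.\ $(g^k)^n h^n$ has full variation.

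The main obstacle is purely bookkeeping: matching the hypotheses of Theorem~\ref{SM} (uniform $C$-transversality of the pair and affine-contraction $\le 1/C$ as one iterates the concatenation $g^k \cdots g^k h \cdots h$) with the convergence statements of Lemma~\ref{gecuadrado}, and making sure the constants do not blow up as $n\to\infty$. This is where one uses that, after conjugating so that $\o_{g}=0$ and the flags of $g$ are the standard ones, the only unbounded quantity is $\|\o_{h^n}\| $ which stays bounded because $h^n$ has fixed invariant flags; the remark of Smilga after Lemma~\ref{atraco} then bounds $e^{-\min_{\sroot\in\vt}\sroot(\cartan(\cdot))}$ by a multiple of the operator norm along the repelling space, forcing $\varsigma\to 0$. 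Once this uniformity is in place, the limit computation above is immediate and the lemma follows.
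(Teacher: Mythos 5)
Your argument is correct, but it takes a genuinely different route from the paper's. The paper's own proof is a two-line differentiation of Benoist's asymptotic additivity (Theorem \ref{CR}): since $\jordan\big((g^k)^nh^n\big)-\jordan(g^{kn})-\jordan(h^n)$ converges uniformly near $(g^k,h)$ and the deformation is analytic, one interchanges limit and derivative to get $\tfrac1n\varjor{(g^k)^nh^n}{v}\to k\varjor{g}{v}+\varjor{h}{v}$ directly. You instead reroute through the affine machinery of Part \ref{affineactions}: Proposition \ref{margulisderivada} identifies $\varjor{\g}{v}$ with the $\a$-component of $\margulis\big(\rho(\g),\co_v(\g)\big)$, and Smilga's Theorem \ref{SM} gives $\margulis\big((g^k)^nh^n\big)=n\big(k\margulis(g)+\margulis(h)\big)+O(1)$, the error being the Translation Affine Ratio of the invariant affine flags (which is constant in $n$) plus $\eps$. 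This is precisely the mechanism the paper uses to prove convexity of the affine limit cone in Corollary \ref{interiorAL}, so your route is consistent with the established toolbox; what it buys is that the interchange of limit and derivative is replaced by a quantitative additivity statement, and what it costs is the bookkeeping you flag. That bookkeeping does close up: the pair is uniformly transverse because powers have the same invariant flags; $\o_{f^n}=\o_f$ for any affine $f$ with loxodromic linear part, so the $e^{\|\o\|}$ term is bounded; $\|\Ad(g)^{-n}|\lin G^+\|$ stays bounded because $\Ad(g)$ has modulus-one eigenvalues on $\ge^0$, so the affine contractions of $(g^{kn},\co_v(g^{kn}))$ and $(h^n,\co_v(h^n))$ do tend to $0$; and $\margulis(f^n)=n\margulis(f)$ because the Levi acts trivially on $\trivia$. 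Two cosmetic points: your first displayed estimate is left unfinished (harmless, since you restate it cleanly just after), and loxodromy of $(g^k)^nh^n$ for large $n$ is better quoted from Theorem \ref{CR} or Lemma \ref{pares} than from Lemma \ref{gecuadrado}, which concerns products of the form $fh^nq$ rather than $f^nh^n$.
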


\begin{proof} By Theorem \ref{CR} we have $$\frac{\jordan\big((g^k)^nh^n\big)}n-\big(\jordan(g^k)+\jordan(h)\big)\xrightarrow{n\to\infty}0$$ and the convergence is uniform about $g^k$ and $h$. Since we're considering an analytic variation $u\mapsto\rho_u$ and $\jordan$ is an analytic function when restricted to loxodromic elements of $\sf G$, we can differentiate both sides in the convergence to obtain
$$\frac{\varjor{{g^k}^nh^n}v}n-k\varjor g v-\varjor h v\xrightarrow{n\to\infty}0.$$
By assumption we have, for every $i$, $p_i(k\varjor g v+\varjor h v)\neq0$ so the above convergence implies the lemma.\end{proof}

\begin{lemma}\label{ealverre}Let $\g\in\G$ be loxodromic, then there exists $ g\in\G_\EuScript P$ transverse to $\g$.
\end{lemma}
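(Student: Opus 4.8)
The plan is to first produce a single element of $\full$ and then deform it, by multiplying on both sides by generic elements of $\G$, so that it becomes transverse to $\g$ without losing full variation.

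For the first step I would invoke Corollary \ref{phi-cotangent}: since $\rho(\G)$ is Zariski-dense and $v$ has full loxodromic variation, $\jordanlim_v$ is a convex cone with non-empty interior in $\a$. The finite union $\bigcup_{i\in I}\ker p_i$ of proper subspaces of $\a$ is Zariski-closed with empty interior, so $\jordanlim_v$ is not contained in it; being the closure of $\{\R_+\cdot\varjor\ee v:\ee\in\G\text{ loxodromic}\}$, the pre-closure cone is not contained in it either. Hence there is a loxodromic $h_0\in\G$ with $p_i(\varjor{h_0}v)\neq0$ for every $i\in I$, that is $h_0\in\full$. (One could instead build such an element from the $p_i$-variations one factor at a time using Lemma \ref{potencia}, but invoking the non-empty interior is shorter.)

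Now fix the loxodromic $\g$, and consider the products $g_n:=w\,h_0^{\,n}\,w'$ with $w,w'\in\G$ and $n$ large. I would pick $(w,w')\in\G\times\G$ inside the non-empty Zariski-open subset of $\sf G\times\sf G$ defined by the three conditions: $(w,w')\in\mathrm G_{h_0}$, the set from Lemma \ref{modif} for $h_0$; $w(h_0^+)$ transverse to $\g^-$; and $\g^+$ transverse to $(w')^{-1}(h_0^-)$. Each of these is given by the non-vanishing of a polynomial (e.g.\ through a Tits representation), hence Zariski-open, and each is non-empty since $w\mapsto w(h_0^+)$ and $w'\mapsto (w')^{-1}(h_0^-)$ are dominant onto $\EuScript F_\simple$; by Zariski-irreducibility of $\sf G\times\sf G$ their intersection is non-empty and open, and $\G\times\G$ is Zariski-dense in $\sf G\times\sf G$, so such $(w,w')\in\G^2$ exists. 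For $n$ large, Lemma \ref{modif} gives that $\jordan(g_n)-n\jordan(h_0)$ converges and that $\rep_\sroot(g_n)/\bigl((\pm)^n e^{n\peso_\sroot(\jordan h_0)}\bigr)\to w\,\pi_{h_0,\sroot}\,w'$; since $\jordan(h_0)\in\inte\a^+$, the element $g_n$ is loxodromic, its attracting flag tends to $w(h_0^+)$ and its repelling flag to $(w')^{-1}(h_0^-)$ (images and kernels of the limit operators), so by the choice of $w,w'$ the element $g_n$ is transverse to $\g$. For full variation I would repeat the differentiation argument of Lemma \ref{potencia}: the convergence of $\jordan(\rho_u(g_n))-n\jordan(\rho_u(h_0))$ is uniform for $u$ near $0$, hence differentiable at $u=0$, so $\varjor{g_n}v-n\,\varjor{h_0}v$ stays bounded and $p_i(\varjor{g_n}v)=n\,p_i(\varjor{h_0}v)+O(1)\neq0$ for every $i$ once $n$ is large; thus $g:=g_n\in\full$ and is transverse to $\g$.

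The only genuinely delicate point is justifying the differentiation, i.e.\ the uniformity in the analytic parameter $\rho_u$ of the limit in Lemma \ref{modif}; this is handled exactly as in the proof of Lemma \ref{potencia}, using that $\jordan$, the rank-one projections $\pi_{\rho_u(h_0),\sroot}$, and the sub-exponential tail bounds all depend analytically, hence locally uniformly, on $u$.
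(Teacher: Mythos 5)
Your argument is correct, and the first step (extracting one element of $\full$ from the non-empty interior of $\jordanlim_v$ given by Corollary \ref{phi-cotangent}) is exactly what the paper does. Where you genuinely diverge is in the transversalization step. The paper keeps everything inside the machinery of Theorem \ref{CR}/Lemma \ref{potencia}: it picks an auxiliary loxodromic $h\in\G$ transverse to both $\g$ and the full-variation element, shows that $g^nh^m$ and $h^mg^n$ retain full variation, and then forms $(h^mg^n)^k(g^nh^m)^l$, whose attracting and repelling flags are forced near $h^\pm$ and hence transverse to $\g$. You instead conjugate-translate by a generic pair $(w,w')\in\G^2$ chosen in a non-empty Zariski-open subset of $\sf G^2$ (membership in $\mathrm G_{h_0}$ from Lemma \ref{modif} plus the two transversality conditions on $w(h_0^+)$ and $(w')^{-1}(h_0^-)$), and then differentiate the limit of Lemma \ref{modif} once to propagate full variation to $wh_0^nw'$. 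Both routes rest on the same analytic ingredient — interchanging the derivative in the deformation parameter with a locally uniform limit of Jordan projections — which the paper itself uses in Lemma \ref{potencia} and Proposition \ref{fullZariskidenso}, so your "delicate point" is handled by the paper's own standard argument. Your version is shorter and places the fixed points of the new element anywhere you like (not just near a pre-chosen $h^\pm$), at the cost of invoking the Zariski-density selection in $\G^2$; the paper's version stays entirely within products of powers of two fixed group elements, which is closer in spirit to Benoist's original argument that the whole section is modeled on. Note that this generic-pair technique is precisely what the paper deploys a few lines later in the proof of Proposition \ref{normconvex} ("we replace $h$ by some element of the form $fh^nq$..."), so your proof is fully consistent with the paper's toolkit.
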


\begin{proof} Consider a loxodromic $g\in\G$ with full variation, the existence of such $g$ is guaranteed by Theorem \ref{A}. By means of Zariski density of $\rho(\G)$, we can find a loxodromic $h$ that is transverse to both $\g$ and $g$. By Lemma \ref{potencia} elements of the form $g^nh^m$, for arbitrary large $n$ and $m$, have full variation, and analogously for $h^mg^n$. Again for large enough $m,n$, the pairs $h^mg^n$ and $g^nh^m$ are transverse, so we can find, by Lemma \ref{potencia} again, large enough $k,l$ so that $$(h^mg^n)^k(g^nh^m)^l $$ has full variation. Moreover, the attracting flag of the latter element is arbitrarily close to $h^+$ and the repelling flag is close to $h^-$, thus $(h^mg^n)^k(g^nh^m)^l$ has full variation and is transverse to $\g$.\end{proof}

\begin{lemma}\label{semigrupo} Let $g,h$ be loxodromic and transverse, assume moreover that $g\in\G_\EuScript P$ then, the products $hg$ and $gh$ belong to the Zariski closure $\overline{\G_\EuScript P}{}^\mathrm Z$. Moreover, the semi-group spanned by $\{gh,g\}$ is also contained in $\overline{\G_\EuScript P}{}^{\mathrm Z}$.
\end{lemma}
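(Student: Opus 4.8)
The plan is to combine Lemma \ref{abc} with the transversality hypothesis and the already-established Zariski-density of $\full$ (Proposition \ref{fullZariskidenso}, whose proof up to this point relies on Lemmas \ref{potencia}, \ref{ealverre}). First I would record the geometric consequence of transversality: since $g,h\in\G$ are loxodromic and transverse, for all large $n$ the elements $g^nh^n$ and $h^ng^n$ are loxodromic (indeed their attracting/repelling flags converge respectively to $(g^+,h^-)$ and $(h^+,g^-)$, both in general position), and by Lemma \ref{potencia} — applied with the roles of $g$ and $h$ possibly swapped, after choosing $k$ with $p_i(k\,\varjor g v+\varjor h v)\neq0$ for all $i$, which is possible because $g$ has full variation and one can absorb $\varjor hv$ into a high multiple of $\varjor gv$ — one gets that $g^{kn}h^n\in\full$ for all large $n$ (and likewise $h^ng^{kn}\in\full$). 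So $\full$ contains an infinite family $\{g^{kn}h^n:n\geq N\}$.

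Now I would invoke Lemma \ref{abc} with $w=\id$: the Zariski closure of $\{g^{kn}h^n:n\geq N\}$ contains the product $g^kh$. Since $\full\subset\overline{\full}{}^{\mathrm Z}$ and the latter is Zariski-closed, this gives $g^kh\in\overline{\full}{}^{\mathrm Z}$. To descend from $g^kh$ to $gh$ I would note that the same argument applies verbatim with $g$ replaced by $g^m$ for any $m\geq1$ with $p_i(m\,\varjor gv+\varjor hv)\neq0$ for all $i$; the set of bad $m$ is finite (for each $i$ at most one value of $m$ can make the $i$-th projection vanish, since $p_i(\varjor gv)\neq0$), so in particular $m=1$ works unless $1$ is bad, and if $1$ is bad one replaces $h$ by a transverse loxodromic conjugate or iterate with the same attracting/repelling data — but the cleaner route, and the one matching the hypothesis "$g\in\full$", is simply to observe that we may assume (after replacing $g$ by a suitable power, which does not affect transversality with $h$, full-ness being stable under powers since $\varjor{g^m}v=m\,\varjor gv$) that no obstruction occurs; alternatively one directly uses that $g$ and $gh$ are transverse and iterates. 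For $hg$ the identical argument uses the second half of Lemma \ref{abc} (Zariski closure of $\{h^ng^{kn}w\}$ with $w=\id$ contains $hg^k$, then reduce the exponent as above, or symmetrically swap roles). This establishes $gh,hg\in\overline{\full}{}^{\mathrm Z}$.

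For the last assertion, that the semi-group $\langle gh,g\rangle$ lies in $\overline{\full}{}^{\mathrm Z}$, I would argue inductively on word length. An arbitrary element of this semi-group is a product $w=g^{a_0}(gh)g^{a_1}(gh)\cdots(gh)g^{a_r}$ with $a_j\geq0$; I would show each such $w$ is a limit, in the Zariski topology, of full-variation elements. The key observation is that $g$ and $gh$ are themselves transverse and loxodromic (transversality of $g$ and $gh$ follows because $(gh)^+$ is near $g^+$ only if... — actually one checks $gh$ is loxodromic with attracting flag near $g^+$ and repelling near $h^-$ when we instead pass to high powers; more robustly, since $g\in\full$ and we have freedom to take powers, I would run the same $g^{kn}(gh)^n$ / Lemma \ref{abc} machine, now with $h$ replaced by the single element $gh$, noting $gh\in\overline\full{}^{\mathrm Z}$ already and that full-ness is a Zariski-constructible-type condition one can propagate). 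Concretely: replacing $g$ by $g^k$ so that $g^k$ and any finite product built from $g,gh$ are transverse and the relevant $p_i$-non-vanishing holds, Lemma \ref{potencia} plus Lemma \ref{abc} upgrade products of the two building blocks to $\overline\full{}^{\mathrm Z}$, and an induction on the number of $(gh)$-syllables closes the argument. I expect the main obstacle to be precisely this bookkeeping — ensuring at each inductive step that the two factors being multiplied are transverse loxodromic with the needed non-degeneracy of Jordan-variation projections $p_i$, so that Lemma \ref{potencia} genuinely applies; this is routine but requires care in choosing the powers, and is exactly the kind of "generic position after passing to high powers" argument that Benoist's original proof in \cite{limite} handles, so I would model the details on that.
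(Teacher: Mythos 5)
There is a genuine gap at two places. First, the descent from $g^kh$ to $gh$: you correctly obtain $g^kh\in\overline{\full}{}^{\mathrm Z}$ for all $k\geq K$, and you correctly observe that $k=1$ may fail the non-vanishing condition $p_i\big(\varjor gv+\varjor hv\big)\neq0$; but none of your proposed fixes works. Replacing $g$ by a power proves the statement for $g^mh$, not for $gh$ (the lemma is about the given $g$ and $h$), and "replacing $h$ by a transverse conjugate" changes the element whose membership you must prove. The correct step — and the one the paper takes — is a \emph{second} application of Lemma \ref{abc}: the family $\{g^kh\}_{k\geq K}=\{g^k\,\mathrm{id}^k\,h\}_{k\geq K}$ lies entirely in the Zariski-closed set $\overline{\full}{}^{\mathrm Z}$, and by Lemma \ref{abc} its Zariski closure contains $g\cdot\mathrm{id}\cdot h=gh$. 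No genericity of $k=1$ is needed.

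Second, the semigroup statement. Your plan is to rerun the first part with $h$ replaced by $gh$, but this fails for a structural reason: Lemma \ref{potencia} needs its input pair to consist of \emph{loxodromic, transverse} elements with one of them in $\full$, and $gh$ need not be loxodromic, need not be transverse to $g$, and — crucially — knowing $gh\in\overline{\full}{}^{\mathrm Z}$ does \emph{not} give $gh\in\full$. Full variation is a condition on derivatives of Jordan projections and does not propagate through Zariski closures; the elements of the semigroup $\langle gh,g\rangle$ are generally not in $\full$ themselves, only in its closure. The paper's induction circumvents this: assuming a positive word $w$ lies in $\overline{\full}{}^{\mathrm Z}$, it never pairs $w$ with $gh$; instead it pairs $w$ with honest elements of $\full$ of the form $(g^k)^nh^n$ (transverse to $w$ by ping-pong, since $w$ is a positive word in $g$ and $h$), deduces $w(g^k)^nh^n\in\overline{\full}{}^{\mathrm Z}$ from the first statement of the lemma, and only then uses Lemma \ref{abc} twice to contract $(g^k)^nh^n$ down to $g^kh$ and then to $gh$, yielding $wgh\in\overline{\full}{}^{\mathrm Z}$ (with $wg$, $gw$, $ghw$ handled analogously). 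You would need to restructure your induction along these lines for it to close.
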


\begin{proof} Since $g$ has full variation, there exist $K$ such that for all $k\geq K$ one has $$\forall i\ p_i\big(\varjor{g^k}v+\varjor h v\big)=p_i\big(k\varjor gv+\varjor h v\big)\neq0.$$ For every such $k$,
Lemma \ref{potencia} implies that for all large enough $n$ one has $(g^k)^nh^n\in\G_\EuScript P$. Lemma \ref{abc} yields $g^kh\in\overline{\G_\EuScript P}{}^{\mathrm Z}$ for all $k\geq K$ and thus $gh\in\overline{\G_\EuScript P}{}^{\mathrm Z}$.

We now show that the semi-group spanned by $\{gh,g\}$ is also contained in $\overline{\G_\EuScript P}{}^{\mathrm Z}$. To this end, consider an arbitrary word $w$ on the letters $gh$ and $g$, and assume by induction that $w\in\overline{\G_\EuScript P}{}^{\mathrm Z}$. We will show that the words $$ghw,wgh,gw,wg$$ are all contained in $\overline{\G_\EuScript P}{}^{\mathrm Z}$. Since $g$ and $h$ are transverse and we are only considering positive powers in $w$, the word $w$ is transverse to $g$, and to any element of the form $g^mh^n$ for positive $n,m\in\N$.

Since $g\in\G_\EuScript P$, then the first statement of this lemma implies that $wg\in\overline{\G_\EuScript P}{}^{\mathrm Z}$ as desired. Moreover, again since $g\in\G_\EuScript P$, the paragraph above establishes that, for all $k\geq K$ and all $n\geq N(k)$, one has $(g^k)^nh^n\in\G_\EuScript P$. Thus, since $w$ is transverse to $(g^k)^nh^n$, the first item of this Lemma gives that $\forall k\geq K,\ \forall n\geq N(k)$ $$ w (g^k)^nh^n\in\overline{\G_\EuScript P}{}^{\mathrm Z},$$ which implies, by applying twice Lemma \ref{abc} that $wgh\in\overline{\G_\EuScript P}{}^{\mathrm Z}$ as desired. The other inclusions are analogous.\end{proof}

\begin{proof}[Proof of Proposition \ref{fullZariskidenso}] We will show that the Zariski closure of $\G_\EuScript P$ contains all loxodromic elements of $\rho(\G)$, which are in turn Zariski-dense by Benoist \cite{limite}.

Consider then $\g\in\G$ loxodromic. By Lemma \ref{ealverre} there exists $g\in\G_\EuScript P$ transverse to $\g$. Lemma \ref{semigrupo} establishes that the semigroup spanned by $\{g\g,g\}$ is contained in $\overline{\G_\EuScript P}{}^{\mathrm Z}$, but the Zariski closure of a semi-group is a group (c.f. \cite[Lemma 6.15]{Benoist-QuintLibro}) thus the group spanned by $\{g\g,g\}$ is contained in $\overline{\G_\EuScript P}{}^{\mathrm Z}$ and in particular so does $\g=g^{-1}(g\g)$, as desired.

We finally establish the last statement in the Proposition. Consider an open cone $\scr C\subset\Bcone_\rho,$ $\g\in\G$ loxodromic with $\jordan(\g)\in\scr C$ and $g\in\G_\EuScript P$  transverse to $\g$. Consider then $t\in\R_+\setminus\Q$ so that $t\jordan(\g)+\jordan(g)\in\scr C$, by the abundance of such $t$'s we may further assume that for all $i$ $p_i\big(t\varjor \g v+\varjor g v\big)\neq0$. Consider then a sequence of rationals in lowest terms $m_n/q_n\to t$, since $t\in\R\setminus \Q$ we have $\min\{m_n,q_n\}\to\infty$. Lemma \ref{pares} implies then that $$\lim_{n\to\infty} \frac{\jordan(\g^{m_n}g^{q_n})}{q_n}=t\jordan(\g)+\jordan(g)\in\scr C,$$ so $\jordan(\g^{m_n}g^{q_n})\in\scr C$ for big enough $n$. Moreover, again by analyticity of our curve and uniform convergence on the above limit, we get by differentiating both sided of the limit that $\g^{m_n}g^{q_n}\in\G_\EuScript P$ for all large enough $n$.\end{proof}

\subsection{Convexity of normalized variations}

\begin{prop}\label{normconvex} Let $v\in\sf T_\rho\caracG$ have full loxodromic variation and Zariski-dense base-point then, the set of normalized variations is convex.
\end{prop}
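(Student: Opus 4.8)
The statement is that $\VV{\length}_v$ is convex whenever $v$ has full loxodromic variation and Zariski-dense base point. I would proceed exactly as in the proof of convexity of the affine limit cone (Corollary~\ref{interiorAL}) and of Proposition~\ref{fullZariskidenso}: the key point is to show that for any two loxodromic full-variation elements $g,h\in\G$ one can produce, as a limit of elements of $\G$, points of $\VV{\length}_v$ arbitrarily close to any prescribed convex combination
$$
\frac{s\,\varjor g v + (1-s)\,\varjor h v}{s\,\length^g(\rho) + (1-s)\,\length^h(\rho)}, \qquad s\in[0,1].
$$
First I would reduce to rational $s=p/q$ by density, and then consider the words $g^{p n} h^{q n}$ for large $n$. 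By Lemma~\ref{pares} (applied, after the reductions of Proposition~\ref{fullZariskidenso}, to transverse loxodromic $g,h$) one has $\jordan(g^{pn}h^{qn}) - pn\jordan(g) - qn\jordan(h) \to \crossg$ for a constant $\crossg$, and differentiating this relation in the analytic parameter $u\mapsto\rho_u$ (using analyticity and uniformity of the convergence, as in Lemma~\ref{potencia}) gives $\varjor{g^{pn}h^{qn}} v - pn\,\varjor g v - qn\,\varjor h v \to \mathrm d\crossg(v)$. Dividing numerator and denominator by $n$, the bounded correction terms wash out and one obtains that
$$
\frac{\varjor{g^{pn}h^{qn}} v}{\length^{g^{pn}h^{qn}}(\rho)} \longrightarrow \frac{p\,\varjor g v + q\,\varjor h v}{p\,\length^g(\rho) + q\,\length^h(\rho)},
$$
which is the desired convex combination with $s=p/(p+q)$. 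Since $\length\in\conodual\rho$ and $g,h$ are loxodromic with $\jordan\in\inte\a^+\subset\inte\Bcone_\rho$ (so $\length^g(\rho),\length^h(\rho)>0$), the denominators stay bounded away from $0$ and the limit is legitimate.

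To make this work for arbitrary points of $\VV{\length}_v$ — not just those coming from full-variation elements — I would first note that by Proposition~\ref{fullZariskidenso} the full-variation loxodromic elements are abundant enough: their Jordan projections meet every open subcone of $\Bcone_\rho$, and (by the same Lemma~\ref{abc}/Lemma~\ref{semigrupo} machinery) one can approximate an arbitrary loxodromic $\g\in\G$ by full-variation elements in the relevant limits, so that the closure defining $\VV{\length}_v$ is unchanged if we restrict to full-variation elements; alternatively one argues directly, as in Lemma~\ref{ealverre}, that any loxodromic $\g$ can be paired with a transverse full-variation $g$ and the products carry full variation. Then, given two points $v_1,v_2\in\VV{\length}_v$, approximate each by $\varjor{\g_i}v/\length^{\g_i}(\rho)$ with $\g_i$ loxodromic of full variation; after replacing $\g_1,\g_2$ by suitable conjugates (which does not change the quotients $\varjor\g v/\length^\g(\rho)$ since both $\jordan^\g$ and $\length^\g$ are conjugacy invariant) we may assume $\g_1$ and $\g_2$ are transverse, and apply the previous paragraph. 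A diagonal argument over the approximations and over rationals $p/q\to s$ then shows $sv_1+(1-s)v_2\in\VV{\length}_v$.

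The main obstacle I anticipate is the bookkeeping needed to differentiate the Benoist-type asymptotic $\jordan(g^{pn}h^{qn}) = pn\jordan(g)+qn\jordan(h)+\crossg+o(1)$ term-by-term: one must know that the convergence is locally uniform in the deformation parameter $u$ so that the derivative of the limit is the limit of the derivatives. This is exactly the kind of control that is available because $\jordan$ is real-analytic on the loxodromic locus, the maps $u\mapsto\rho_u(\g)$ are real-analytic, and the cross-ratio asymptotics of Theorem~\ref{CR} and Lemma~\ref{pares} hold with uniformity on compacta — the same argument already used in Lemma~\ref{potencia} and in the last paragraph of the proof of Proposition~\ref{fullZariskidenso}. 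Beyond that, the only remaining point is the elementary observation that the set of quotients $\{(p\,a_1+q\,a_2)/(p\,b_1+q\,b_2): p,q\in\N\}$ is dense in the segment joining $a_1/b_1$ and $a_2/b_2$ when $b_1,b_2>0$, which gives convexity after passing to the closure.
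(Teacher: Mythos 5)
Your proposal is correct and follows essentially the same route as the paper: differentiate the Benoist asymptotic $\jordan(\g^{m}h^{n})-m\jordan(\g)-n\jordan(h)\to\crossg$ from Lemma~\ref{pares} using analyticity and uniform convergence, divide by the word length, and use closedness of $\VV\length_v$ together with density of the resulting ratios to conclude. The only cosmetic differences are that the paper approximates an irrational $t$ by $m_n/q_n$ (rather than fixing $p/q$ and letting $n\to\infty$) and handles non-transverse pairs by replacing $h$ with $fh^nq$ rather than by conjugation; both devices are equivalent here.
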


\begin{proof} Consider $\g,h\in\full$. We first assume that $\g$ and $h$ are transversally loxodromic. Using the argument from the proof of Proposition \ref{fullZariskidenso}, we obtain that for every irrational $t\in\R_+$ one has $\jordan(\g^{m_n}h^{q_n})/q_n\to t\jordan(\g)+\jordan(h)$ which in turn gives:\begin{alignat*}{2}\frac{\length^{\g^{m_n}h^{q_n}}(\rho)}{q_n}&\xrightarrow[n\to\infty]{} t\length^\g(\rho)+\length^h(\rho),\\ \frac{\varjor{\g^{m_n}h^{q_n}}v}{q_n}&\xrightarrow[n\to\infty]{} t\varjor\g v+\varjor hv.\end{alignat*} Combining both equations and since $\VV \length_v$ is closed, we obtain that for every $t\in\R_+$ $$\frac{t\varjor\g v+\varjor hv}{ t\length^\g(\rho)+\length^h(\rho)}\in\VV\length_v.$$ So letting now $t=\length^h(\rho)/\length^\g(\rho)$ we obtain ${\displaystyle\frac12\left(\frac{\varjor\g v}{\length^\g(\rho)}+\frac{\varjor hv}{\length^h(\rho)}\right)\in\VV\length_v},$ as desired.

If $\g$ and $h$ are not transversally loxodromic then by Lemma \ref{modif} we can replace $h$ by some element of the form $fh^nq$ and apply the above.\end{proof}

\section{Theorem \ref{teoB}: Base-point independence}\label{SecTB}

We introduce, for convenience, the following definitions.

\begin{defi}\item \begin{itemize}\item[-] The \emph{support} of $\varphi\in\a^*$ is $\supp\varphi=\big\{\sroot\in\simple:\<\varphi,\sroot\>\neq0\big\}.$ Equivalently, upon writing $\varphi=\sum_{\sroot\in\simple}\varphi_\sroot\peso_\sroot$, one has $\sroot\in\supp\varphi$ if and only if $\varphi_\sroot\neq0$. In particular,  $\varphi\in(\a_\t)^*$ if and only if $\supp\varphi\subset\t$.

\item[-]If $g\in\sf G$ we say that \emph{$\aa$ strictly minimizes $g$ among $\supp\varphi$} if $\aa\in\supp\varphi$ and \begin{equation}\label{min}\aa(\jordan(g))<\sroot(\jordan(g))\ \forall\sroot\in\supp\varphi-\{\aa\}.\end{equation}\end{itemize}\end{defi}

 The main result of this section is the following.

\begin{thm}\label{LedIndepA} Let $\rho:\G\hookrightarrow\sf G$ be a Zariski-dense sub-semigroup and $v\in\sf T_\rho\caracG$ have full loxodromic variation. Consider $\varphi\in\a^*$ and assume there exist $\aa\in\supp\varphi$ with $\dim\ge_\aa=1$ and $g\in\G$ such that $\aa$ strictly minimizes $g$ among $\supp\varphi$. Then given $\length\in\a^*$ there exists a loxodromic $\g\in\G$ such that $\mathrm d\varphi^\g(v)-\length^\g(\rho)\notin\Z$. \end{thm}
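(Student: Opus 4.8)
The plan is to combine the affine–geometric machinery of Part~\ref{affineactions} with a ``spectral radius of a Tits representation'' argument. Since $\aa\in\supp\varphi$ has $\dim\ge_\aa=1$, the Tits representation $\rep_\aa\colon\sf G\to\PSL(V_\aa)$ with highest weight $l_\aa\peso_\aa$ has a one–dimensional highest weight space, and by Proposition~\ref{prop:titss} the $0$–weight phenomenon is governed by a single root. The key idea: restricting attention to those $\g\in\G$ for which $\aa$ strictly minimizes $\jordan(\g)$ among $\supp\varphi$, the quantity $\mathrm d\varphi^\g(v)$ becomes, up to an affine change of coordinates, a Margulis–type invariant associated to a reducible representation built from $\rep_\aa$ and its ``one–step descendant'' (the representation whose relevant weight space realizes $\chi_{\rep_\aa}-\aa$). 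First I would make precise this dictionary, using Proposition~\ref{margulisderivada} and Corollary~\ref{factor-cotangente} to identify $\varjor\g v$ with the $\a$–component of a Margulis invariant, and then pair with $\varphi$.

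Next I would run the density machinery. By Proposition~\ref{fullZariskidenso} the set $\full$ of loxodromic, full–variation elements is Zariski dense, and its Jordan projections meet every open subcone of $\Bcone_\rho$; in particular, since $g$ with $\aa$ strictly minimizing $\jordan(g)$ among $\supp\varphi$ exists, the open condition \eqref{min} is satisfied on a non-empty open subcone of $\Bcone_\rho$, so I can find $\g_0\in\full$ with $\aa$ strictly minimizing $\jordan(\g_0)$ among $\supp\varphi$. Now I would apply the double–density–type reasoning underlying Theorem~\ref{teoB}: using Lemma~\ref{pares}, Lemma~\ref{modif} and the additivity–with–defect result Theorem~\ref{SM} (with the Affine Ratio playing the role of the obstruction), I would show that the additive group generated by $$\big\{\big(\mathrm d\varphi^\g(v),\,\length^\g(\rho)\big):\g\in\G\ \text{loxodromic, }\aa\text{ strictly minimizes }\g\text{ among }\supp\varphi\big\}\subset\R^2$$ is \emph{not} contained in any line of rational slope through a lattice point; equivalently, it is dense in $\R^2$ or, at minimum, is not contained in the preimage of $\Z$ under $(x,y)\mapsto x-y$. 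This is where the hypothesis $\dim\ge_\aa=1$ is essential: it guarantees the relevant Margulis ``neutralizing space'' (the $\a$–part seen through $\rep_\aa$) is one–dimensional, so the interior/non-degeneracy statement of Proposition~\ref{interiorneu} and Corollary~\ref{interiorAL} can be leveraged in a genuinely two–dimensional $(\varphi\circ\text{Margulis},\length\circ\jordan)$ plane via Theorem~\ref{indeplambda1}–style independence.

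Concretely, the argument would go: suppose for contradiction that $\mathrm d\varphi^\g(v)-\length^\g(\rho)\in\Z$ for \emph{every} loxodromic $\g$ with $\aa$ strictly minimizing among $\supp\varphi$. Taking two such transverse elements $\g_1,\g_2$ (which exist by Zariski density and Lemma~\ref{ealverre}), and forming products $\g_1^m\g_2^n$—which, by Theorem~\ref{CR} and analyticity of $\jordan$ on loxodromic elements, still satisfy \eqref{min} for large $m,n$ and whose $(\varphi\text{-variation},\length)$–pair is asymptotically $m(\mathrm d\varphi^{\g_1}(v),\length^{\g_1}(\rho))+n(\cdots)+\text{bounded defect}$—I would derive that all these integer values force the \emph{defect term}, i.e. $\varphi$ applied to the Affine Ratio $\raffi$, to be constrained to a discrete set. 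But Proposition~\ref{interiorneu} (applied to the disjoined reducible representation assembled from $\rep_\aa$ and the adjoint, via Corollary~\ref{conococyclos}/Corollary~\ref{souravzariski}) says these Affine Ratios are not contained in a hyperplane of the neutralizing space, hence their $\varphi$–values are not discrete — a contradiction.

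The main obstacle, I expect, is the bookkeeping required to transfer the ``$\dim\ge_\aa=1$'' hypothesis into a clean one–dimensionality statement on the correct neutralizing subspace, and to verify that the reducible representation I assemble (so as to simultaneously control $\varphi^\g$ and feed it into the affine limit cone machinery) is genuinely \emph{disjoined} in the sense of Definition~\ref{disjoined}, so that Corollary~\ref{souravzariski} applies and Zariski density of the associated affine group is guaranteed. A secondary technical point is ensuring the strict–minimization condition \eqref{min} is preserved under the word constructions $\g_1^m\g_2^n$ and under the perturbations of Lemma~\ref{gecuadrado}; this should follow from openness of \eqref{min} together with Theorem~\ref{CR} and uniform convergence of the relevant Jordan–projection asymptotics, but it needs to be checked carefully since the whole argument is confined to that open subcone of $\Bcone_\rho$.
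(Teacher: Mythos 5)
Your proposal has a genuine gap at its crux. The contradiction you propose rests on the implication ``the Affine Ratios are not contained in a hyperplane (Proposition~\ref{interiorneu}), hence their $\varphi$-values are not discrete.'' That implication is false: a set of vectors can span the whole neutralizing space while all of its $\varphi$-values lie in $\Z$ (the standard basis of $\R^n$ and the coordinate-sum functional already give a counterexample). More structurally, the entire Margulis-invariant/Affine-Ratio machinery of Part~\ref{affineactions} is a \emph{first-order, linear} non-degeneracy tool: it rules out the variations or the additivity defects lying in a hyperplane, but it cannot see an integrality constraint. Indeed, if $\mathrm d\varphi^\g(v)-\length^\g(\rho)\in\Z$ for all loxodromic $\g$, then the defect term — being a limit of such integer quantities — is itself an integer, which is perfectly consistent with the defects spanning the neutralizing space. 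So the word constructions $\g_1^m\g_2^n$ together with Theorem~\ref{SM} produce no contradiction; they only reproduce the hypothesis.

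What the paper actually does is a \emph{second-order} analysis that your proposal does not contain. One fixes a strongly transversally proximal pair $(g,h)$ and studies the precise rate at which $\jordan(g^{2n}h^m)$ approaches its first-order approximation: by Lemma~\ref{lemmacm} and Proposition~\ref{cmgeneral} the correction $\nu^\t_{2n}(g,h)$ decays like $e^{-2n\aa(\jordan(g))}$, with a non-vanishing renormalized limit $\kappa^\varphi(g,h)$ — and this is exactly where $\dim\ge_\aa=1$ is used (it makes $V_2(\rep_\aa g)$ a single homothety, so the renormalized limit exists and is uniform). Differentiating this asymptotic along the deformation (Corollary~\ref{parest}) produces a term proportional to $n\,\mathrm d\aa^g(v)\,e^{-2n\aa^g(\rho)}$; confronting it with the hypothesis that the integers $m_{2n}=\mathrm d\varphi(\nu_{2n})(v)-\length(\nu_{2n}(\rho))$ converge (hence are eventually $0$) forces $\mathrm d\aa^g(v)=0$ for every such $g$ (Lemma~\ref{intermedioModif}). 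The proof then concludes with Benoist's sub-semigroup construction (Proposition~\ref{subsemi} and the Schottky refinement) to build a Zariski-dense $\G'$ on which $\aa$ of the Jordan variation vanishes identically, contradicting Theorem~\ref{A}. Your use of Proposition~\ref{fullZariskidenso} and of transverse pairs is in the right spirit for this last step, but without the exponential-rate asymptotics the integrality hypothesis is never actually contradicted.
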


\begin{cor}\label{r2}
In the assumptions of Theorem \ref{LedIndepA}, the additive group spanned by the pairs $\{\big(\mathrm d\varphi^\g(v),\jordan^\g(\rho)\big):\g\in\G\textrm{ loxodromic}\}$ is dense in $\R\times\a$.
\end{cor}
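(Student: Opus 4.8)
The goal is to deduce density of the additive group spanned by the pairs $\big(\mathrm d\varphi^\g(v),\jordan^\g(\rho)\big)$ in $\R\times\a$ from Theorem \ref{LedIndepA}, which provides, for each linear form $\length\in\a^*$, a loxodromic $\g\in\G$ with $\mathrm d\varphi^\g(v)-\length^\g(\rho)\notin\Z$. The key observation is that a closed subgroup $H$ of a finite-dimensional real vector space $E$ is either $E$ itself or is contained in a set of the form $\{(x,e):\theta(x,e)\in\Z\}$ for some non-zero linear functional $\theta$ on $E$ — indeed, by the classification of closed subgroups of $\R^n$, a proper closed subgroup lies in a proper subspace or in a union of parallel affine hyperplanes, and in either case there is a linear functional that is integer-valued on $H$. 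So I would argue by contradiction: suppose the group $G_\varphi$ spanned by the pairs is not dense, hence its closure $\overline{G_\varphi}$ is a proper closed subgroup of $\R\times\a$.

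First I would record that the coordinate $\jordan^\g(\rho)$ alone already spans a dense subgroup of $\a$: this is the Zariski-density hypothesis together with Benoist's Theorem \ref{interiorsi}, which gives that $\{\jordan(g):g\in\rho(\G)\}$ spans a dense subgroup of $\a$. Since loxodromic elements are abundant (and the non-loxodromic contributions can be absorbed, or one simply uses that loxodromic elements alone already suffice — e.g. via Theorem \ref{interiorsi} applied after Proposition \ref{subsemi}, or directly since Benoist's statement is about Jordan projections of all elements but loxodromic ones are Zariski-dense by \cite[Theorem 6.36]{Benoist-QuintLibro}), the projection of $\overline{G_\varphi}$ to the $\a$-factor is all of $\a$. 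Combined with properness, this forces $\overline{G_\varphi}$ to project \emph{onto} $\a$ but not be everything, so there is a non-zero linear functional $\theta\in(\R\times\a)^*$ with $\theta(\overline{G_\varphi})\subset\Z$; writing $\theta(x,e)=c\,x-\length(e)$ for $c\in\R$ and $\length\in\a^*$, the functional cannot have $c=0$ (else $\length(\jordan^\g(\rho))\in\Z$ for all loxodromic $\g$, contradicting density of the $\jordan^\g(\rho)$ in $\a$ unless $\length=0$, making $\theta=0$). Rescaling by $1/c$ I may assume $c=1$, so $\mathrm d\varphi^\g(v)-\length^\g(\rho)\in\Z$ for every loxodromic $\g\in\G$.

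This is precisely the negation of the conclusion of Theorem \ref{LedIndepA} for this particular $\length$. Applying Theorem \ref{LedIndepA} — whose hypotheses (Zariski-dense base point, full loxodromic variation, the existence of $\aa\in\supp\varphi$ with $\dim\ge_\aa=1$ that strictly minimizes some $g\in\G$) are exactly those assumed in the statement of Corollary \ref{r2} — produces a loxodromic $\g$ with $\mathrm d\varphi^\g(v)-\length^\g(\rho)\notin\Z$, a contradiction. Hence $\overline{G_\varphi}=\R\times\a$, which is the claim.

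\textbf{Main obstacle.} The substantive content is entirely in Theorem \ref{LedIndepA}; the deduction of Corollary \ref{r2} is a soft argument about closed subgroups of $\R^n$. The only point requiring a little care is the structural lemma used to pass from ``proper closed subgroup'' to ``contained in the integer locus of a linear functional'' while respecting the already-established density in the $\a$-coordinate: one must be slightly careful that the functional $\theta$ detected has non-trivial dependence on the first coordinate, which is exactly where one re-invokes density of $\{\jordan^\g(\rho)\}$ in $\a$. This is a short case analysis rather than a genuine difficulty. One should also note that the statement concerns the group spanned by pairs over \emph{loxodromic} $\g$, matching both the hypothesis source (Theorem \ref{LedIndepA} outputs a loxodromic witness) and the abundance statements of Proposition \ref{fullZariskidenso}, so there is no mismatch between the index set here and the index set in the cited density results.
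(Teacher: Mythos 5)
Your strategy is the same as the paper's: argue by contradiction, use the classification of closed subgroups of a finite-dimensional real vector space to produce a non-zero functional $\theta(x,e)=c\,x-\length(e)$ that is integer-valued on the group, rule out $c=0$ via Benoist's Theorem \ref{interiorsi} (density of the group spanned by Jordan projections), and dispose of $c\neq0$ via Theorem \ref{LedIndepA}. There is, however, one step that fails as written: from $c\,\mathrm d\varphi^\g(v)-\length^\g(\rho)\in\Z$ for all loxodromic $\g$ you cannot ``rescale by $1/c$ and assume $c=1$'', since dividing the relation by $c$ only places the values in $\tfrac1c\Z$, not in $\Z$, so the rescaled statement is no longer the negation of Theorem \ref{LedIndepA}. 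The repair is the one the paper uses: keep the integrality relation as it stands and apply Theorem \ref{LedIndepA} to the form $c\varphi$ rather than to $\varphi$. Since $\supp(c\varphi)=\supp\varphi$ for $c\neq0$, the hypotheses (a multiplicity-one root $\aa\in\supp\varphi$ strictly minimizing some $g\in\G$) are unaffected by the scaling, and the theorem yields a loxodromic $\g$ with $\mathrm d(c\varphi)^\g(v)-\length^\g(\rho)=c\,\mathrm d\varphi^\g(v)-\length^\g(\rho)\notin\Z$, the desired contradiction. With that one-line correction your argument coincides with the paper's proof; the remaining points (the structure theorem for proper closed subgroups, and the use of density of $\{\jordan^\g(\rho)\}$ to force $c\neq0$ when $\length\neq0$) are handled correctly.
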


\begin{proof} Otherwise, there exist $(a,\psi)\in\R\times\a^*$ s.t. for all loxodromic $\g\in\G$ it holds  $a\varphi(\varjor\g v)+\length(\jordan^\g(\rho))\in\Z$. If $a\neq0$ then $a\varphi$  verifies the assumptions of Theorem \ref{LedIndepA} giving a contradiction. If $a=0$ this is contained in Benoist's Theorem \ref{interiorsi}.
\end{proof}

\begin{ex} For $g\in\PSL(2,\C)$ denote by $|\g|$ its translation length on the hyperbolic $3$-space $\H^3$. Let $\cal T(S)$ be the Teichm\"uller space of $S$ as above. Consider then a Zariski-dense quasi-Fuchsian representation $\eta:\pi_1S\to\PSL(2,\C)$ and a non-zero $v=\vec\rho\in\sf T_\rho\cal T(S)$. Assume there exists $g\in\pi_1S$ with $|\rho g|<|\eta g|$. Then by Corollary \ref{r2} the group spanned by the pairs $$\Big\{\big((\partial/\partial t)_{t=0}|\rho_t\g|, |\eta \g|\big):\g\in\pi_1S\Big\}$$ is dense in $\R^2$.\end{ex}

Recall from \S\,\ref{s.Levi} that we have a projection $\pi_\t:\a\to\a_\t.$

\begin{cor}\label{double-density} Let $\t\subset\simple$ be such that $\dim\ge_\sroot=1$ for all $\sroot\in\t$. Let $\rho:\G\hookrightarrow\sf G$ be a Zariski-dense sub-semi-group  and consider an integrable, full loxodromic variation $v\in\sf T_\rho\caracG$.  Then the group spanned by $$\big\{\big(\varjort\g v,\jordan^\g(\rho)\big):\g\in\G\textrm{ loxodromic}\big\}$$ is dense in $\a_\t\times\a$.  Whence, if $\length\in\conodual\rho$ the set $\VV\length_{\t,v}$ has non-empty interior.\end{cor}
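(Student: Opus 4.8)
The plan is to deduce Corollary \ref{double-density} from Theorem \ref{LedIndepA}, exactly in the spirit of how Corollary \ref{r2} was obtained, but now recording the full $\a_\t$-component of the variation rather than a single coordinate. First I would fix the subspace $\a_\t$ and recall that $(\a_\t)^*$ is canonically identified, via \eqref{at*}, with the span of $\{\peso_\sroot|\a_\t:\sroot\in\t\}$, so that $\supp\varphi\subset\t$ for every $\varphi\in(\a_\t)^*$. To prove that the additive subgroup $\Gamma\subset\a_\t\times\a$ spanned by the pairs $\big(\varjort\g v,\jordan^\g(\rho)\big)$ (over loxodromic $\g\in\G$) is dense, it suffices by duality to show that no nonzero linear functional on $\a_\t\times\a$ takes values in $\Z$ on all of $\Gamma$. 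Such a functional has the form $(b,c)\mapsto\varphi(b)+\length(c)$ with $(\varphi,\length)\in(\a_\t)^*\times\a^*$ not both zero, and since $\varjort\g v=\pi_\t(\varjor\g v)$ while $\varphi\circ\pi_\t=\varphi$, the condition becomes $\varphi(\varjor\g v)+\length^\g(\rho)\in\Z$ for all loxodromic $\g$, i.e. $\mathrm d\varphi^\g(v)+\length^\g(\rho)\in\Z$.

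Next I would split into cases on whether $\varphi=0$. If $\varphi=0$ then $\length\neq0$ and the condition $\length^\g(\rho)\in\Z$ for all loxodromic $\g$ contradicts Benoist's Theorem \ref{interiorsi}, which asserts that the group generated by $\{\jordan(g):g\in\rho(\G)\}$ is dense in $\a$ (loxodromic elements suffice here since $\rho(\G)$ is Zariski-dense, so its loxodromic elements have Jordan projections spanning a dense subgroup — this is part of Benoist's statement, or follows from Theorem \ref{interiorsi} together with Proposition \ref{fullZariskidenso}). If $\varphi\neq0$, pick any $\aa\in\supp\varphi\subset\t$; by hypothesis $\dim\ge_\aa=1$. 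To apply Theorem \ref{LedIndepA} I need a loxodromic $g\in\G$ for which $\aa$ strictly minimizes $g$ among $\supp\varphi$, i.e. $\aa(\jordan(g))<\sroot(\jordan(g))$ for all $\sroot\in\supp\varphi-\{\aa\}$. Such a $g$ exists: the open subcone $\{v\in\inte\a^+:\aa(v)<\sroot(v)\ \forall\sroot\in\supp\varphi-\{\aa\}\}$ is nonempty, and by Benoist's Theorem \ref{interiorsi} the limit cone $\Bcone_{\rho}$ has nonempty interior in $\a^+$, hence by Proposition \ref{fullZariskidenso} (or simply density of $\{\R_+\jordan(g)\}$ in $\Bcone_\rho$) there is a loxodromic $g\in\G$ with $\jordan(g)$ in that open subcone. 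Then Theorem \ref{LedIndepA}, applied with this $\varphi$, $\aa$, $g$ and the given $\length$, produces a loxodromic $\g\in\G$ with $\mathrm d\varphi^\g(v)-\length^\g(\rho)\notin\Z$; replacing $\length$ by $-\length$ at the outset gives $\mathrm d\varphi^\g(v)+\length^\g(\rho)\notin\Z$, contradicting the assumed integrality. This establishes density of $\Gamma$.

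For the last sentence of the corollary, fix $\length\in\conodual\rho$. The set $\VV\length_{\t,v}$ is, by definition, $\pi_\t$ of the closure of $\{\varjor\g v/\length^\g(\rho):\g\in\G\text{ loxodromic}\}$, and by Proposition \ref{normconvex} together with Lemma \ref{cc} (or directly Proposition \ref{normconvex}) it is compact and convex; so it suffices to show it is not contained in an affine hyperplane of $\a_\t$. Suppose it were, say contained in $\{x\in\a_\t:\varphi(x)=c\}$ for some nonzero $\varphi\in(\a_\t)^*$ and $c\in\R$. Then for every loxodromic $\g$ one would have $\varphi(\varjort\g v)=c\,\length^\g(\rho)$, i.e. $\mathrm d\varphi^\g(v)-c\,\length^\g(\rho)=0\in\Z$; but applying the density statement just proved (with the functional $(\varphi,-c\length)$ on $\a_\t\times\a$, which is nonzero since $\varphi\neq0$), the values of $\mathrm d\varphi^\g(v)-c\,\length^\g(\rho)$ must be dense in $\R$, a contradiction. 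Hence $\VV\length_{\t,v}$ has nonempty interior.

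The only genuinely nontrivial input is Theorem \ref{LedIndepA}, whose proof is carried out separately in this section; granting it, the argument above is routine, and the single point requiring mild care is the passage between "the group spanned by the pairs is dense" and "no integer-valued functional exists", together with the verification that a suitable minimizing $g$ can be found inside the interior of Benoist's limit cone.
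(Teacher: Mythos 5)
Your overall strategy coincides with the paper's: argue by contraposition, reduce density of the group to the non-existence of a nonzero functional $(\varphi,\length)$ on $\a_\t\times\a$ taking integer values on the generating pairs, contradict Theorem \ref{LedIndepA}, and deduce non-empty interior of $\VV\length_{\t,v}$ from convexity (Proposition \ref{normconvex}). Your explicit treatment of the case $\varphi=0$ via Benoist's Theorem \ref{interiorsi} is a point the paper leaves implicit, and is fine, as is the duality reduction and the final hyperplane argument.

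There is, however, a genuine gap in the case $\varphi\neq0$: you fix $\aa\in\supp\varphi$ \emph{first} and then claim that, since the subcone $\{u\in\inte\a^+:\aa(u)<\sroot(u)\ \forall\sroot\in\supp\varphi-\{\aa\}\}$ is open and nonempty and $\Bcone_\rho$ has nonempty interior, some loxodromic $g\in\G$ has $\jordan(g)$ in that subcone. This does not follow: two nonempty open cones in $\a^+$ need not meet, and the limit cone of a Zariski-dense sub-semi-group can perfectly well lie entirely in $\{u:\aa(u)>\sroot(u)\}$ for your pre-chosen $\aa$ and some other $\sroot\in\supp\varphi$ (think of a thin limit cone in $\SL(3,\R)$ sitting on one side of the wall $\slroot_1=\slroot_2$; then $\aa=\slroot_1$ never minimizes). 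The quantifiers must be reversed, which is exactly what the paper does: using that $\Bcone_\rho$ has nonempty interior, first choose a loxodromic $g$ whose Jordan projection avoids the finitely many walls $\ker(\sroot-\sroot')$ for distinct $\sroot,\sroot'\in\supp\varphi$, so that the values $\sroot^g(\rho)$, $\sroot\in\supp\varphi$, are pairwise distinct, and only then let $\aa$ be the root of $\supp\varphi$ realizing the minimum for that $g$. The root $\aa$ is determined a posteriori by $g$, but the hypothesis that \emph{every} root of $\t$ has one-dimensional root space guarantees that whichever $\aa$ arises is admissible for Theorem \ref{LedIndepA}. With this correction the rest of your argument goes through.
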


\begin{proof}Otherwise there exist $\varphi\in(\a_\t)^*$ and $\length\in\a^*$ such that for all loxodromic $\g\in\G$ one has $\mathrm d\varphi^\g(v)-\length^\g(\rho)\in\Z$. Since $\rho(\G)$ is Zariski-dense its limit cone has non-empty interior, there exists then $g\in\G$ such that values $\sroot^g(\rho)$ for $\sroot\in\t$ are pairwise distinct. Since $\supp\varphi\subset\t$ and all roots in $\t$ have $1$-dimensional root space, there exists $\aa\in\supp\varphi$ with 1-dimensional root-space that strictly minimizes $g$ among $\supp\varphi$. This contradicts Theorem \ref{LedIndepA}. The last statement now follows since by Proposition \ref{normconvex} $\VV\length_{\t,v}$ is convex. \end{proof}

\subsection{Strongly transversally proximal elements}
We use freely notation from \S\,\ref{proxBenoist} and let, for proximal $g$,  $\mu_1(g)$ be the eigenvalue with $|\mu_1(g)|=\exp(\jordan_1(g)),$ $\mu_2(g)=\exp(\jordan_2(g))$ and $$\pi_g(w)=\beta_g(w)v_g.$$ 


Recall that an element of $\End(V)$ is \emph{semi-simple} if it is diagonalizable over $\C$. Finally, recall Eq. \eqref{B1} defining the multiplicative cross ratio $\crossm_1$.

\begin{lemma}[{Benoist-Quint \cite[Lemma 7.15]{Benoist-QuintLibro}}]\label{lemmacm} Let $g,h\in\End(V)$ be transversally proximal, then for $m,n$ big enough $g^nh^m$ is proximal and  \begin{equation}\label{cm}c_n(g,h):=\lim_{m\to\infty} \traza(\pi_g\pi_{g^nh^m})=\frac{\traza(\pi_gg^n\pi_h)}{\traza(g^n\pi_h)}=\crossm_1(g^+,g^-,g^nh^+,h^-).\end{equation} If they are strongly transversally proximal and $g$ is semi-simple, then the sequence $$\Big(\frac{\vp_1(g)}{\vp_2(g)}\Big)^{n}\log |c_{n}(g,h)|$$ is bounded. Moreover, let $g_e$ be the elliptic component of $g$ in Jordan's decomposition and $n_k$ a sequence such that $g_e^{n_k}|V_2(g)\xrightarrow[k\to\infty]{}\id|V_2(g).$  Then, \begin{equation}\label{cmraiz}\lim_{k\to\infty}\Big(\frac{\vp_1(g)}{\vp_2(g)}\Big)^{n_k}\log |c_{n_k}(g,h)|=\frac{-\beta_h(\tau_gv_h)}{\crossm_1(g^+,g^-,h^+,h^-)}\neq0,\end{equation} and the convergence is moreover uniform on $h$.
\end{lemma}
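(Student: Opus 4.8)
The plan is to analyze the behavior of the trace ratio $c_n(g,h)=\traza(\pi_g\pi_{g^nh^m})$ in the limit $m\to\infty$ and then expand it as $n\to\infty$ using the Jordan decomposition of $g$. First I would prove the first identity in \eqref{cm}: for $m$ large, $g^nh^m$ is proximal (since $g^n$ is proximal and $h^m$ is proximal and they are transversally proximal, the product is proximal for large exponents by standard Benoist-type estimates), and its attracting line converges to $g^n h^+$ scaled appropriately, while its repelling hyperplane converges to $g^nh^-= h^-$ (using $\ker\beta_{h^m}= h^-$ is $h$-invariant). Passing to the limit $m\to\infty$ in $\traza(\pi_g\pi_{g^nh^m})$ and using that $\pi_{h^m}\to\pi_h$ gives $\traza(\pi_g g^n\pi_h)/\traza(g^n\pi_h)$; recognizing $\pi_g$ and $\pi_{g^nh^+, h^-}$ as the rank-one projections in the definition \eqref{B1} yields the cross-ratio expression $\crossm_1(g^+,g^-,g^nh^+,h^-)$.

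Next I would set up the asymptotic analysis of $c_n(g,h)$ as $n\to\infty$. Write $\traza(g^n\pi_h)=\beta_h(g^nv_h)$ and $\traza(\pi_gg^n\pi_h)=\beta_g(v_h)\beta_h(g^n v_g)=\beta_g(v_h)\vp_1(g)^n$ (since $v_g$ is the top eigenvector and $\beta_h(v_g)$ is the leading coefficient). Now $c_n(g,h)=\beta_g(v_h)\vp_1(g)^n/\beta_h(g^nv_h)$, so $1-c_n(g,h)^{-1}\cdot(\textrm{something})$ must be compared carefully; more precisely I would expand $\beta_h(g^nv_h)$ by decomposing $v_h$ along the generalized eigenspaces of $g$. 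The dominant term is $\beta_g(v_h)\vp_1(g)^n$ (the $g^+$-component), and the next term comes from the action of $g^n$ on $V_2(g)$: writing $v_h = \beta_g(v_h)v_g + \tau_g v_h + (\textrm{lower order})$ and using $g^n\tau_gv_h = g_e^n g_h^n\big|_{V_2(g)}(\tau_g v_h)$ with $|g_h^n|_{V_2(g)}|\sim|\vp_2(g)|^n$, we get
\begin{equation*}
\beta_h(g^n v_h)=\beta_g(v_h)\vp_1(g)^n\Big(1+\Big(\tfrac{\vp_2(g)}{\vp_1(g)}\Big)^n\cdot\tfrac{\beta_h(g_e^n\tau_gv_h)}{\beta_g(v_h)|\vp_2(g)|^{n}/\vp_2(g)^n}\cdot(\cdots)+o((\vp_2/\vp_1)^n)\Big).
\end{equation*}
Hence $c_n(g,h)=1-\big(\vp_2(g)/\vp_1(g)\big)^n\cdot\theta_n + o((\vp_2/\vp_1)^n)$ where $\theta_n$ is a bounded sequence built from $\beta_h(g_e^n\tau_gv_h)$ and $\beta_g(v_h)=\crossm_1(g^+,g^-,h^+,h^-)$. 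Taking $\log$, since $c_n\to1$, gives $\big(\vp_1(g)/\vp_2(g)\big)^n\log|c_n(g,h)|= -\theta_n+o(1)$, proving boundedness.

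For the refined limit along $n_k$, the hypothesis $g_e^{n_k}|_{V_2(g)}\to\id$ makes $\beta_h(g_e^{n_k}\tau_g v_h)\to\beta_h(\tau_g v_h)$, so $\theta_{n_k}\to \beta_h(\tau_g v_h)/\crossm_1(g^+,g^-,h^+,h^-)$, and this is nonzero by the strong transversality hypothesis $\beta_h(\tau_g v_h)\neq0$ together with $\crossm_1(g^+,g^-,h^+,h^-)=\beta_g(v_h)\beta_h(v_g)\neq0$; this yields \eqref{cmraiz}. Uniformity in $h$ follows because all the estimates are controlled by $\|v_h\|$, $\|\beta_h\|$ and lower bounds on $|\beta_g(v_h)|$, $|\beta_h(v_g)|$, which vary continuously and can be taken locally uniform. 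The main obstacle I expect is bookkeeping the error terms in the expansion of $\beta_h(g^nv_h)$: one must show that the contributions of the generalized eigenspaces of $g$ strictly below $\vp_2(g)$ in modulus are genuinely $o((\vp_2/\vp_1)^n)$ uniformly, and must handle the possible non-semisimplicity of $g$ on $V_2(g)$ — this is exactly where the hypothesis that $g$ is semi-simple is used, as it guarantees $g|_{V_2(g)}$ has no nilpotent part so $|g^n|_{V_2(g)}|\asymp|\vp_2(g)|^n$ with no polynomial factors, keeping $\theta_n$ bounded rather than merely polynomially growing.
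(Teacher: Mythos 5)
Your proof follows essentially the same route as the paper's: both reduce $\log|c_n|\sim c_n-1$ to a ratio whose denominator $\traza(g^n\pi_h)=\beta_h(g^nv_h)$ is asymptotic to $\crossm_1(g^+,g^-,h^+,h^-)\,\vp_1(g)^n$ and whose numerator is governed by $\beta_h(g^n\tau_gv_h)$, with semi-simplicity of $g|V_2(g)$ giving boundedness of $\beta_h(g^n\tau_gv_h)/\vp_2(g)^n$ and the hypothesis $g_e^{n_k}|V_2(g)\to\id$ giving the limit, exactly as in the paper. The only blemish is a self-cancelling bookkeeping slip: $\traza(\pi_gg^n\pi_h)=\vp_1(g)^n\beta_g(v_h)\beta_h(v_g)$ rather than $\vp_1(g)^n\beta_g(v_h)$, and $\crossm_1(g^+,g^-,h^+,h^-)=\beta_g(v_h)\beta_h(v_g)$ rather than $\beta_g(v_h)$ — harmless here since your final formula is stated through the cross ratio, which restores the missing factor.
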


\begin{proof} We focus on the second statement which is slightly different from what is found in \cite{Benoist-QuintLibro}. Using Equation \eqref{cm} we compute

$$\log|c_n(g,h)|\underset{n\to\infty} {\sim}c_n(g,h)-1=\frac{\traza\big((\pi_g-1)g^n\pi_h\big)}{\traza(g^n\pi_h)}.$$ The denominator is easily controlled, indeed\begin{equation}\label{crprox}\frac{\traza(g^n\pi_h)}{\vp_1(g)^n}=\frac{\beta_h(g^nv_h)}{\vp_1(g)^n}\xrightarrow[n\to\infty]{}\beta_h\big(\pi_g(v_h)\big)=\crossm_1(g^+,g^-,h^+,h^-).
\end{equation}

We now study the numerator. Observe that $\traza\big((1-\pi_g)g^n\pi_h\big)=\beta_h(g^n\tau_g v_h)+o(\vp_2(g)^n).$ Since $g$ is semi-simple,  $V_2(g)$ decomposes as $\bigoplus_{i} W_i$ where each $W_i$ is $g$-invariant and $g|W_i=\vp_2(g)K_i,$ where $K_i:W_i\to W_i$ lies in an abelian compact group. Thus, the sequence $$\frac{\beta_h(g^n\tau_g v_h)+o(\vp_2(g)^n)}{\mu_2(g)^n}$$ is bounded. Moreover, considering the sequence $n_k\to\infty$ as in the statement, for all $i$ one has $K_i^{n_k}\to\id.$ Since $g$ is semi-simple we deduce that $$\frac{g^{n_k}|V_2(g)}{\vp_2(g)^{n_k}}\xrightarrow[k\to\infty]{}\id,$$ so one concludes  $$\frac{\traza\big((1-\pi_g)g^{n_k}\pi_h\big)}{\vp_2(g)^{n_k}}=\frac{\beta_h(g^{n_k}\tau_g v_h)}{\vp_2(g)^{n_k}}+\frac{o(\vp_2(g)^{n_k})}{\vp_2(g)^{n_k}}\xrightarrow[k\to\infty]{}\beta_h(\tau_gv_h),$$ as desired. \end{proof}

Suppose now that $g,h\in\sf G$ are $\t$-proximal, then we say they are \emph{strongly transversally $\t$-proximal} if for every $\sroot\in\t$ the maps $\rep_\sroot g$ and $\rep_\sroot h$ are strongly transversally proximal. For such a pair and $n\in\N$ we define the vector \begin{equation}\label{definu}\nu^\t_n(g,h)\in\a_\t\ \textrm{ so that }\ \forall\sroot\in\t,\ \peso_\sroot\big(\nu^\t_n(g,h)\big):=\log\big|c_n\big(\rep_\sroot g,\rep_\sroot h)\big|.\end{equation}

\begin{prop}\label{cmgeneral} Let $g\in\sf G$ be loxodromic and consider $h\in \sf G$ so that $g$ and $h$ are strongly transversally $\t$-proximal. Consider $\varphi\in(\a_\t)^*$ and let $\aa\in\supp\varphi$ be such that \begin{equation}\label{minprop}\aa(\jordan(g))=\min\{\sroot(\jordan(g)):\sroot\in\supp\varphi\}.\end{equation} If $\aa$ has multiplicity $1$ and is the only root in $\supp\varphi$ realizing the above minimum then there exists $\kappa^\varphi(g,h)\in\R$ such that
\begin{equation}\label{convkappa}\varphi\big(\nu^\t_{2n}(g,h)\big)e^{2n\aa(\jordan(g))}\xrightarrow[n\to\infty]{}\kappa^\varphi(g,h)\neq0.\end{equation}
In the latter case, the map $(\g,\eta)\mapsto\kappa^\varphi(\g,\eta)$ is analytic on both variables on a neighborhood of $(g,h)\in\sf G^2$ and the above convergence is uniform on a neighborhood of $g$ and $h$.

\end{prop}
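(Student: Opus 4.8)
The strategy is to reduce everything to Lemma \ref{lemmacm} applied root-by-root for $\sroot\in\t$, tracking which root contributes the dominant term of $\varphi(\nu^\t_n(g,h))$. Fix $\sroot\in\t$ and write $V_\sroot$ for the space of the Tits representation $\rep_\sroot$, $\mu_1(\rep_\sroot g)$ and $\mu_2(\rep_\sroot g)$ for its two largest eigenvalues in modulus. Recall from Equation \eqref{eq:spectralrep} that $\log|\mu_1(\rep_\sroot g)|=\peso_\sroot(\jordan(g))$, while the second eigenvalue satisfies $\log|\mu_2(\rep_\sroot g)| = \peso_\sroot(\jordan(g)) - \sroot(\jordan(g))$ — this is where the hypothesis $\dim\ge_\sroot=1$ is used, since then the weight $\peso_\sroot-\sroot$ of $V_\sroot$ has multiplicity one and is the unique second-highest weight. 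Hence the contraction ratio governing Lemma \ref{lemmacm} for the pair $(\rep_\sroot g,\rep_\sroot h)$ is exactly $\mu_2/\mu_1 = e^{-\sroot(\jordan(g))}$, and Lemma \ref{lemmacm} gives that $e^{n\sroot(\jordan(g))}\log|c_n(\rep_\sroot g,\rep_\sroot h)|$ stays bounded, with a subsequence $n_k$ (chosen so that $(\rep_\sroot g)_e^{n_k}$ tends to the identity on the line $V_2(\rep_\sroot g)$) along which it converges to a nonzero limit. Write $\peso_\sroot(\nu^\t_n(g,h)) = \log|c_n(\rep_\sroot g,\rep_\sroot h)|$, so that $e^{n\sroot(\jordan(g))}\peso_\sroot(\nu^\t_n(g,h))$ is bounded for every $\sroot\in\t$.

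\textbf{Isolating the dominant root.} Now expand $\varphi = \sum_{\sroot\in\t}\varphi_\sroot\peso_\sroot$ (using $\supp\varphi\subset\t$) so that
\begin{equation*}
\varphi(\nu^\t_n(g,h))\, e^{n\aa(\jordan(g))} = \sum_{\sroot\in\supp\varphi}\varphi_\sroot\, e^{n(\aa-\sroot)(\jordan(g))}\cdot e^{n\sroot(\jordan(g))}\peso_\sroot(\nu^\t_n(g,h)).
\end{equation*}
By hypothesis \eqref{minprop} and the assumption that $\aa$ is the \emph{unique} minimizer of $\sroot\mapsto\sroot(\jordan(g))$ over $\supp\varphi$, one has $(\aa-\sroot)(\jordan(g)) < 0$ for all $\sroot\in\supp\varphi-\{\aa\}$, so each such term goes to $0$ as $n\to\infty$ because the second factor is bounded. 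Therefore only the $\sroot=\aa$ term survives, and
\begin{equation*}
\varphi(\nu^\t_n(g,h))\, e^{n\aa(\jordan(g))} = \varphi_\aa\, e^{n\aa(\jordan(g))}\peso_\aa(\nu^\t_n(g,h)) + o(1).
\end{equation*}
Since $\aa$ has multiplicity one, the line $V_2(\rep_\aa g)$ is one-dimensional, so $(\rep_\aa g)_e^{n}$ restricted to it is multiplication by a unit complex scalar; passing to $n=2n_k$ for the subsequence $n_k$ produced by Lemma \ref{lemmacm} (or, more simply, noting that we only need the statement for even indices $2n$ so that sign issues in the eigenvalue of modulus $|\mu_2|$ disappear, which is the reason the statement is phrased with $2n$), the factor $e^{2n_k\aa(\jordan(g))}\peso_\aa(\nu^\t_{2n_k}(g,h)) = e^{2n_k\aa(\jordan(g))}\log|c_{2n_k}(\rep_\aa g,\rep_\aa h)|$ converges to $-\beta_h(\tau_{\rep_\aa g}v_h)/\crossm_1(\cdots)\neq 0$. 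Setting $\kappa^\varphi(g,h) := \varphi_\aa\cdot\big(-\beta_h(\tau_{\rep_\aa g}v_h)/\crossm_1(g^+_\aa,g^-_\aa,h^+_\aa,h^-_\aa)\big)$ gives the claimed nonzero limit. One subtlety: the statement asserts convergence of the full sequence $\varphi(\nu^\t_{2n}(g,h))e^{2n\aa(\jordan(g))}$, not merely of a subsequence, but since $g$ is \emph{loxodromic}, $\rho(g)$ is $\R$-diagonalizable up to finite order, so the elliptic part $(\rep_\aa g)_e$ has finite order on $V_2(\rep_\aa g)$; replacing $g$ by a fixed power if necessary (which does not affect the statement, as we may absorb it), $(\rep_\aa g)_e^{2n}$ is eventually constant on $V_2(\rep_\aa g)$ and the limit exists along the full sequence. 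I should be careful here about whether $g$ loxodromic forces semisimplicity of $\rep_\aa g$ — it gives semisimplicity of the hyperbolic part, and the point $\jordan(g)\in\inte\a^+$ ensures $\rep_\aa g$ is proximal; the generalized eigenspace $V_2$ for $\rep_\aa g$ need not be a line a priori, but $\dim\ge_\aa=1$ forces the weight space $V_\aa^{\peso_\aa-\aa}$ to be a line and, combined with proximality, $V_2(\rep_\aa g)$ is that line.

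\textbf{Analyticity and uniformity.} For the last assertions: the attracting/repelling data $g^\pm_\aa$, $v_{\rep_\aa g}$, $\beta_{\rep_\aa g}$ and the projection $\tau_{\rep_\aa g}$ onto the (now one-dimensional) second eigenspace are algebraic functions of the proximal matrix $\rep_\aa g$, hence vary real-analytically with $g$ in a neighborhood of our loxodromic $g$ (proximality being an open condition), and likewise $v_{\rep_\aa h},\beta_{\rep_\aa h}$ vary analytically with $h$; the cross ratio $\crossm_1$ is polynomial in these. Therefore $\kappa^\varphi(\cdot,\cdot)$ as defined above is real-analytic on a neighborhood of $(g,h)$ in $\sf G^2$, and it is nonvanishing there by continuity after possibly shrinking the neighborhood, since strong transversal $\t$-proximality is an open condition and the defining quantity is nonzero at $(g,h)$. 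Uniformity of the convergence \eqref{convkappa} on such a neighborhood follows from the uniformity statement already built into Lemma \ref{lemmacm} (the convergence there is uniform in $h$), together with the fact that all the exponential rates $(\aa-\sroot)(\jordan(\cdot))$ controlling the error terms are bounded away from $0$ and the bounds on the bounded factors are locally uniform. The main obstacle, and the place requiring genuine care rather than bookkeeping, is the passage from a subsequential limit (all that Lemma \ref{lemmacm} directly supplies) to a limit along the full even sequence $2n$: this is exactly what the restriction to even indices and the loxodromy (finite order of the elliptic part on the line $V_2$) are there to fix, and one must check that $\dim\ge_\aa=1$ together with $\jordan(g)\in\inte\a^+$ really does make $V_2(\rep_\aa g)$ a single line so that the elliptic part acts there by a root of unity.
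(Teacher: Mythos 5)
Your proof is correct and follows essentially the same route as the paper's: expand $\varphi$ in the fundamental weights, apply Lemma \ref{lemmacm} to each $\rep_\sroot$ to see that $e^{n\sroot(\jordan(g))}\peso_\sroot(\nu^\t_n(g,h))$ is bounded so that strict minimality of $\aa$ kills every term but the $\aa$-one, and then use $\dim\ge_\aa=1$ to make $V_2(\rep_\aa g)$ a real line on which the elliptic part acts by $\pm1$, so the even sequence $n_k=2k$ already satisfies the subsequence condition of Lemma \ref{lemmacm} and the full limit \eqref{convkappa} exists, with analyticity and uniformity coming from the analytic dependence of $\tau_{\rep_\aa g}$, $\beta_{\rep_\aa h}$, $v_{\rep_\aa h}$ on loxodromic/proximal data. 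The only blemishes are inessential: the second-eigenvalue formula $\log|\mu_2(\rep_\sroot g)|=(\peso_\sroot-\sroot)(\jordan(g))$ does not require $\dim\ge_\sroot=1$ (and that hypothesis is only made for $\aa$), and the digression about the elliptic part having finite order and replacing $g$ by a power is both false in general and unnecessary, since your final observation (one-dimensionality of $V_2(\rep_\aa g)$ over $\R$ together with even exponents) is exactly the paper's mechanism.
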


\begin{proof}By definition of $\nu^\t_n(g,h)$, upon writing $\varphi=\sum_{\sroot\in\t}\varphi_\sroot\peso_{\sroot}$ one has \begin{alignat*}{2}\varphi\big(\nu^\t_{n}(g,h)\big) & = \sum_{\sroot\in\t}\varphi_\sroot\peso_\sroot\big(\nu^\t_{n}(g,h)\big) \\ & =\sum_{\sroot\in\t}\varphi_\sroot \log\big|c_{n}(\rep_\sroot g,\rep_\sroot h)\big|. \end{alignat*}

Considering, for each $\sroot\in\t$, the fundamental representation associated to $\peso_\sroot$ we see by Lemma  \ref{lemmacm} that $e^{n\sroot(\jordan(g))}\peso_\sroot\big(\nu^\t_n(g,h)\big)$ is bounded, thus if $\aa$ is the only root realizing the minimum in Equation \eqref{minprop} we have, for every $\sroot\in\supp\varphi-\{\aa\}$ that $$e^{n\aa(\jordan(g))}\peso_\sroot\big(\nu^\t_n(g,h)\big)\xrightarrow[n\to\infty]{}0.$$

Moreover, since $\dim\ge_\aa=1$, $V_2(\rep_\aa(g))$ is $1$-dimensional. Thus $$\Big(\frac{\vp_1(\rep_\aa(g))}{\vp_2(\rep_\aa(g))}\Big)^2=e^{2\aa(\jordan(g))}$$ and combining with the last statement of Lemma \ref{lemmacm} we obtain

\begin{alignat}{2}\label{kafi}\varphi\big(\nu^\t_{2n}(g,h)\big)e^{2n\aa(\jordan(g))}\xrightarrow[n\to\infty]{}\varphi_\aa\frac{-\beta_{\rep_\aa h}(\tau_{\rep_\aa g}v_{\rep_\aa h})}{\peso_\aa(\cross_\t(g^+,g^-,h^+,h^-))}:=\kappa^\varphi(g,h).\end{alignat}

The convergence is uniform on a neighborhood of $h$ so we now treat uniform convergence in $g$. If $g'$ is close to $g$ then, by one-dimensionality of $\ge_\aa$, $g'$ also acts as a homothety with ratio $\vp_2(\rep_\aa g')$ on $V_2(\rep_\aa g')$ and thus uniform convergence follows. Analyticity of $\kappa^\varphi(\cdot,h)$ follows as, since $g$ is loxodromic, the map $\tau_{\rep_\aa g}$ varies analytically about $g$. \end{proof}

For a real-analytic curve $\big(t\mapsto(g_t,h_t)\big)_{t\in(-\eps,\eps)}$ with $g=g_0$ and $h=h_0$ strongly transversally $\t$-proximal, with $g$ loxodromic, we denote for every $ n\in\N$ and $\varphi\in(\a_\t)^*$ by $\nu_n^{(g,h)}(t):=\nu_n(g_t,h_t)$ and $\kappa^{\varphi,(g,h)}(t)=\kappa^\varphi(g_t,h_t).$ We also let $${\dot\nu}_n^{(g,h)}=\frac{\partial}{\partial t}\Big|_{t=0} \nu_{n}^{(g,h)}(t)\ \textrm{ and }\ {\dot\kappa}^{\varphi,(g,h)}=\frac{\partial}{\partial t}\Big|_{t=0}\kappa^{\varphi,(g, h)}(t).$$

\begin{cor}\label{parest}Consider a real-analytic curve $t\mapsto(g_t,h_t)$ for $t\in(-\eps,\eps)$ with $g=g_0$ and $h=h_0$ strongly transversally $\t$-proximal, and assume $g$ is loxodromic. Consider $\varphi\in(\a_\t)^*$ and assume there exists $\aa\in\supp\varphi$ with $\dim\ge_\aa=1$ and so that \begin{equation}\label{aamin}\aa(\jordan(g))<\sroot(\jordan(g))\ \forall\sroot\in\supp\varphi-\{\aa\}.\end{equation} Then, $$\varphi\big({\dot\nu}_{2n}^{(g,h)}\big)e^{2n\aa(\jordan(g))}+ \varphi\big({\nu}_{2n}^{(g,h)}\big)2n \Big(\frac{\partial}{\partial t}\Big|_{t=0}\aa\big(\jordan(g_t)\big)\Big)e^{2n\aa(\jordan(g))}\xrightarrow[n\to\infty]{} {\dot\kappa}^{\varphi,(g,h)}.$$
\end{cor}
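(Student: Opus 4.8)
The statement to prove is Corollary~\ref{parest}, which is essentially obtained by differentiating the convergence~\eqref{convkappa} of Proposition~\ref{cmgeneral}. The plan is as follows.

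\textbf{Setup and strategy.} By Proposition~\ref{cmgeneral}, under the stated hypotheses on $\varphi$ and $\aa$, for the curve $t\mapsto(g_t,h_t)$ we have for each fixed $t$ near $0$ that
\begin{equation*}
\varphi\big(\nu^\t_{2n}(g_t,h_t)\big)e^{2n\aa(\jordan(g_t))}\xrightarrow[n\to\infty]{}\kappa^{\varphi,(g,h)}(t),
\end{equation*}
and, crucially, that this convergence is \emph{uniform} for $t$ in a neighborhood of $0$ and that the limit $\kappa^{\varphi,(g,h)}(t)=\kappa^\varphi(g_t,h_t)$ is real-analytic in $t$. The idea is that uniform convergence of a sequence of analytic functions, together with analyticity of the limit, lets one pass to the derivative at $t=0$. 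However, uniform convergence of $f_n\to f$ with $f_n$ analytic does \emph{not} in general imply $f_n'\to f'$; I would instead use the following standard fact: if $f_n$ are holomorphic on a complex neighborhood of $0$, uniformly bounded there, and $f_n\to f$ pointwise (hence, by Vitali/Montel, locally uniformly), then $f_n'\to f'$ locally uniformly. Thus the first step is to upgrade the uniform convergence in Proposition~\ref{cmgeneral} to a \emph{holomorphic} statement: extend $t\mapsto(g_t,h_t)$ to a complex-analytic curve on a complex disk (possible since the curve is real-analytic), note that the maps $g\mapsto\tau_{\rep_\aa g}$, $g\mapsto\jordan(g)$, $h\mapsto v_{\rep_\aa h}$, etc., appearing in the formula~\eqref{kafi} for $\kappa^\varphi$ are holomorphic near $(g,h)$ (this is where loxodromicity of $g$ and one-dimensionality of $\ge_\aa$ are used, exactly as in the proof of Proposition~\ref{cmgeneral}), and that the finite-dimensional quantities $\nu^\t_{2n}$ defined via $c_n$ in~\eqref{definu}, \eqref{cm} are likewise holomorphic functions of $(g,h)$ for $n$ large. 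Then the sequence
\begin{equation*}
F_n(t):=\varphi\big(\nu^\t_{2n}(g_t,h_t)\big)e^{2n\aa(\jordan(g_t))}
\end{equation*}
is a sequence of holomorphic functions on a fixed complex disk, uniformly bounded there (by the uniform convergence), converging pointwise to the holomorphic function $\kappa^{\varphi,(g,h)}$.

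\textbf{Differentiating.} By the holomorphic convergence fact above, $F_n'(0)\to\big(\kappa^{\varphi,(g,h)}\big)'(0)={\dot\kappa}^{\varphi,(g,h)}$. It remains to compute $F_n'(0)$ by the product and chain rules: writing $a_n(t)=\varphi\big(\nu^\t_{2n}(g_t,h_t)\big)$ and $b_n(t)=e^{2n\aa(\jordan(g_t))}$, we have $F_n'(0)=a_n'(0)b_n(0)+a_n(0)b_n'(0)$, where $a_n'(0)=\varphi\big({\dot\nu}^{(g,h)}_{2n}\big)$ by definition of $\dot\nu$, $b_n(0)=e^{2n\aa(\jordan(g))}$, $a_n(0)=\varphi\big(\nu^{(g,h)}_{2n}\big)$, and $b_n'(0)=2n\big(\tfrac{\partial}{\partial t}|_{t=0}\aa(\jordan(g_t))\big)e^{2n\aa(\jordan(g))}$ (using that $\jordan$ is analytic on loxodromic elements, so $t\mapsto\aa(\jordan(g_t))$ is differentiable). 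Substituting gives exactly the left-hand side of the claimed convergence, and the limit is ${\dot\kappa}^{\varphi,(g,h)}$, as desired.

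\textbf{Main obstacle.} The genuinely delicate point is not the bookkeeping of the product rule but justifying the interchange of $\lim_n$ and $\tfrac{\partial}{\partial t}$. Merely having uniform convergence of $F_n$ to $\kappa^{\varphi,(g,h)}$ on a real interval is insufficient; one really needs the holomorphic extension so that a normal-families / Cauchy-integral-formula argument applies. Concretely, I expect to spend the bulk of the argument checking that all ingredients entering $F_n$ — the cross-ratio numbers $c_n(\rep_\aa g,\rep_\aa h)$, the projections $\pi_{\rep_\sroot g}$, the eigenvalue $\jordan(g_t)$ — extend holomorphically and remain uniformly bounded on a fixed complex polydisk about $(g,h)$, uniformly in $n$. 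Given the uniformity already asserted in Proposition~\ref{cmgeneral} (whose proof is over $\End(V)$ and works verbatim for complex parameters once $g$ stays loxodromic and $\rep_\aa g$ stays semi-simple with a simple second eigenvalue, which holds on a neighborhood), this is routine but is the only substantive step; the rest is a direct computation.
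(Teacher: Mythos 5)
Your proposal is correct and follows essentially the same route as the paper: the paper's own proof simply invokes Proposition \ref{cmgeneral}, notes that $\nu_n^{(g,h)}(t)$ and $\kappa^{\varphi,(g,h)}(t)$ are real-analytic and that the convergence in \eqref{convkappa} is uniform, and then "intertwines limit and derivative." Your additional step of complexifying the curve and invoking a Vitali/Montel (Cauchy integral formula) argument supplies the justification for that interchange which the paper leaves implicit, and correctly identifies that uniform convergence on a real interval alone would not suffice.
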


\begin{proof}

Assumptions are made so that Proposition \ref{cmgeneral} applies. By definition, $\nu_n^{(g,h)}(t)$ and $\kappa^{\varphi,(g,h)}(t)$ are real-analytic, and since the convergence in Eq. \eqref{convkappa} is uniform, we can intertwine limit and derivative to obtain the desired result.\end{proof}

\subsection{Proof of Theorem \ref{LedIndepA}}

We place ourselves under the assumptions of Theorem \ref{LedIndepA} and begin with the following lemma that does not assume Zariski-density of the base point, it will be also needed later on.

\begin{lemma}\label{intermedioModif} Consider an analytic curve $(\rho_t:\G\to\sf G)_{t\in(-\eps,\eps)}$ with speed $v$ and a loxodromic $g\in\G$. Consider $\varphi\in(\a_\t)^*$ and assume there exists $\aa\in\supp\varphi$ with $\dim \ge_\aa=1$ that strictly minimizes $g$ among $\supp\varphi$. Let $h\in\G$ be such that the pair $(g,h)$ is strongly transversally $\t$-proximal.
Consider $\length\in(\a_\t)^*$ and assume that the values $\sroot^g(\rho)$, for $\sroot\in\supp\length$, are all distinct. If for all loxodromic $\g\in\G$ it holds $\mathrm d\varphi^\g(v)-\length^\g(\rho)\in\Z$, then $$\mathrm d\aa^{g}(v)=0.$$ \end{lemma}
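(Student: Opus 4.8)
The plan is to exploit the asymptotic expansion from Corollary \ref{parest}, applied along the given analytic curve $\rho_t$ and to the pair $(g,h)$, and to feed it into Benoist's asymptotic formula for Jordan projections. First I would record the algebraic input: by Theorem \ref{CR} the pair $(g,h)$ being (strongly) transversally $\t$-proximal gives, for $n$ large, that $g^nh^n$ is $\t$-proximal and $\jordan_\t(g^nh^n)-n\jordan_\t(g)-n\jordan_\t(h)\to\crossg_\t(g,h)$; and more precisely, by the $\crossm_1$-computation of Lemma \ref{lemmacm} together with the definition of $\nu_n^\t(g,h)$ in Equation \eqref{definu}, the finer asymptotics of $\jordan_\t(g^nh^m)$ as $m\to\infty$ is governed (in the $\t$-coordinates) by the correction term $\nu_n^\t(g,h)$, with $e^{n\aa(\jordan(g))}\peso_\sroot(\nu_n^\t(g,h))$ bounded and, for the minimizing root $\aa$, $\varphi(\nu_{2n}^\t(g,h))e^{2n\aa(\jordan(g))}\to\kappa^\varphi(g,h)\neq0$ by Proposition \ref{cmgeneral}.

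Next I would differentiate. Since $t\mapsto\rho_t$ is analytic and $\jordan$ is analytic on loxodromic elements, and since the relevant convergences are uniform (Proposition \ref{cmgeneral}, Corollary \ref{parest}), I may differentiate the asymptotic relation for $\varphi^{g^{2n}h^{2n}}$ term by term at $t=0$. Concretely, writing $\varphi\big(\jordan_\t(\rho_t(g^{2n}h^{2n}))\big)=2n\varphi(\jordan_\t(\rho_t(g)))+2n\varphi(\jordan_\t(\rho_t(h)))+\varphi(\crossg_\t^{(t)}(g,h))+\text{(error)}$ where the error is controlled by $\varphi(\nu_{2n}^\t)$ up to exponentially small terms, Corollary \ref{parest} tells me that after multiplying the $t$-derivative of the error by $e^{2n\aa(\jordan(g))}$, the dominant surviving contribution is
\begin{equation*}
\varphi\big(\nu_{2n}^{(g,h)}\big)\,2n\,\Big(\tfrac{\partial}{\partial t}\big|_{t=0}\aa(\jordan(\rho_t(g)))\Big)e^{2n\aa(\jordan(g))}\xrightarrow[n\to\infty]{}\dot\kappa^{\varphi,(g,h)} - \lim \varphi(\dot\nu_{2n})e^{2n\aa(\jordan(g))}.
\end{equation*}
Since $\varphi(\nu_{2n}^{(g,h)})e^{2n\aa(\jordan(g))}\to\kappa^\varphi(g,h)\neq0$, the factor $2n$ forces $\mathrm d\aa^g(v)=\tfrac{\partial}{\partial t}|_{t=0}\aa(\jordan(\rho_t(g)))=0$ unless $\varphi(\dot\nu_{2n})e^{2n\aa(\jordan(g))}$ also blows up linearly in $n$ to cancel it — and this is exactly where the hypothesis $\mathrm d\varphi^\g(v)-\length^\g(\rho)\in\Z$ for all loxodromic $\g$ enters.

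The role of the integrality hypothesis: for all large $n$ the elements $\g=g^{2n}h^{2n}$ are loxodromic, so $\mathrm d\varphi^{g^{2n}h^{2n}}(v)=\length^{g^{2n}h^{2n}}(\rho)+m_n$ with $m_n\in\Z$. Now $\length^{g^{2n}h^{2n}}(\rho)$ is, by Lemma \ref{pares} / Theorem \ref{CR} and the hypothesis that the values $\sroot^g(\rho)$ ($\sroot\in\supp\length$) are distinct, asymptotic to $2n\length(\jordan(g))+2n\length(\jordan(h))+\length(\crossg_\t(g,h))+(\text{exponentially small, of order }e^{-2n\beta(\jordan(g))})$ for the smallest $\beta\in\supp\length$; in particular its fractional part, hence the fractional part of $\mathrm d\varphi^{g^{2n}h^{2n}}(v)$, converges exponentially fast (at rate at least $e^{-2n\aa(\jordan(g))}$, since $\aa$ minimizes $g$ among $\supp\varphi\subset\t$ and is a $\t$-root, while $\supp\length$'s minimizer is no slower) — so the quantity $\varphi(\dot\nu_{2n})e^{2n\aa(\jordan(g))}$, which measures precisely the $t$-derivative of this exponentially small correction term, stays \emph{bounded}. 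Therefore the left-hand side of the displayed limit, being $O(n)\cdot\mathrm d\aa^g(v)$ plus bounded quantities, can converge only if $\mathrm d\aa^g(v)=0$. The main obstacle I anticipate is the bookkeeping of error terms: I must make sure that the exponentially small parts of $\varphi^{g^{2n}h^{2n}}$ and their $t$-derivatives are genuinely dominated at rate $e^{-2n\aa(\jordan(g))}$ (this uses that $\aa$ strictly minimizes $g$ among $\supp\varphi$ and that $\dim\ge_\aa=1$ so the gap to the next eigenvalue of $\rep_\aa(g)$ is exactly $\aa(\jordan(g))$), and that differentiating the asymptotic expansion under the uniform-convergence hypothesis is legitimate — this is where I would lean on Proposition \ref{cmgeneral} and Corollary \ref{parest} rather than redo the estimates.
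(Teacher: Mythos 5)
Your overall architecture matches the paper's: feed the asymptotics of $\nu^{\t}_{2n}(g,h)$ and its $t$-derivative (Proposition \ref{cmgeneral}, Corollary \ref{parest}) into the integrality hypothesis and let the divergent factor $2n\,e^{2n\aa^g(\rho)}$ force $\mathrm d\aa^g(v)=0$. But the one step where the hypothesis $\mathrm d\varphi^\g(v)-\length^\g(\rho)\in\Z$ actually enters is missing, and your substitute for it does not work. What the paper uses is that, by Equation \eqref{definu} together with Lemma \ref{lemmacm} and Theorem \ref{CR}, the vector $\nu^{\t}_n(g,h)$ is a \emph{uniform double limit of $\pm$-sums of Jordan projections of loxodromic elements of $\G$} (of the form $g^n(g^kh^l)^m$, etc.); since $\Z$ is closed and stable under such combinations, one may differentiate and pass to the limit to get $m_n:=\mathrm d\varphi\big(\nu^{\t,(g,h)}_n\big)(v)-\length\big(\nu^{\t,(g,h)}_n(\rho)\big)\in\Z$. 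This is what lets one replace the unknown quantity $\mathrm d\varphi(\nu_{2n})(v)$ by $\length(\nu_{2n}(\rho))+m_{2n}$ in the limit of Corollary \ref{parest}; then $\nu_{2n}(\rho)\to0$ and the divergence of $e^{2n\aa^g(\rho)}/n$ force $m_{2n}=0$ eventually, and one is left to show $\length(\nu_{2n}(\rho))\,e^{2n\aa^g(\rho)}/n\to0$, which is where the distinctness of the values $\sroot^g(\rho)$, $\sroot\in\supp\length$, is used to \emph{rule out} the case $\sroot^g(\rho)<\aa^g(\rho)$ by contradiction with Lemma \ref{lemmacm}.

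Your replacement --- applying the hypothesis only to the family $\g=g^{2n}h^{2n}$ and arguing that ``the fractional part of $\mathrm d\varphi^{g^{2n}h^{2n}}(v)$ converges exponentially fast, hence $\varphi(\dot\nu_{2n})e^{2n\aa^g(\rho)}$ stays bounded'' --- fails for two reasons. First, the fractional part of $\length^{g^{2n}h^{2n}}(\rho)$ has no reason to converge: it contains $2n\big(\length(\jordan(g))+\length(\jordan(h))\big)$, whose fractional parts are generically equidistributed, so no information about $m_n$ can be extracted this way. Second, $\nu^{\t}_n(g,h)$ is \emph{not} the remainder in the expansion $\jordan_\t(g^nh^n)=n\jordan_\t(g)+n\jordan_\t(h)+\crossg_\t(g,h)+o(1)$; it is the cross-ratio-type invariant of the four flags $g^+,g^-,g^nh^+,h^-$, so controlling $\mathrm d\varphi^{g^{2n}h^{2n}}(v)$ gives no control on $\mathrm d\varphi(\nu_{2n})(v)$ without the double-limit representation above. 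Finally, your parenthetical ``while $\supp\length$'s minimizer is no slower'' assumes $\min_{\sroot\in\supp\length}\sroot^g(\rho)\geq\aa^g(\rho)$, which is not a hypothesis of the lemma; it must be (and in the paper is) derived, by showing that the contrary would make $\length(\nu_{2n}(\rho))e^{2n\aa^g(\rho)}$ blow up exponentially along a subsequence, contradicting the limit already obtained.
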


\begin{proof}By definition (Eq. \eqref{definu}), the vector $\nu_n( g,h)$ is a uniform double limit of sums of vectors of the form $\pm\jordan(\rho(g^n(g^kh^l)^m))$. Moreover, for every $n$, $\nu_n(g,h)$ is an analytic function on $g$ and $h$, which is also a uniform limit of analytic functions. Since the curve $\rho_t$ is analytic we can intertwine limit and derivative in the definition of $\nu_n$ so for every $n\in\N$ it holds, as $g$ and $h$ are transversally $\t$-proximal, that \begin{equation}\label{varcmblu}m_n:=\mathrm d\varphi\big(\nu^{\t,(g, h)}_n\big)(v) -\psi\big({\nu}^{\t,(g, h)}_n(\rho)\big)\in\Z,\end{equation} where we have denoted by $\nu_n^{\t,(g, h)}(\rho_t)=\nu^\t_n(\rho_t g,\rho_t h).$ Since $\dim\ge_\aa=1$ Corollary \ref{parest} states that $$\mathrm d\varphi\big(\nu^{\t,(g, h)}_{2n}\big)(v)e^{2n\aa^{g}(\rho)}+ \varphi\big({\nu}^{\t,(g, h)}_{2n}(\rho)\big)2n \big(\mathrm d\aa^{g}(v)\big)e^{2n\aa^{g}(\rho)}\xrightarrow[n\to\infty]{} \mathrm d\kappa^{\varphi,(g, h)}(v).$$

Pairing with Equation \eqref{varcmblu} gives \begin{equation}\label{piblu}\Big(\length\big({\nu}^{\t,(g, h)}_{2n}(\rho)\big)+m_{2n}\Big) e^{2n\aa^{g}(\rho)}+ \varphi\big({\nu}^{\t,(g, h)}_{2n}(\rho)\big)2n \big(\mathrm d\aa^{g}(v)\big)e^{2n\alpha^{g}(\rho)}\xrightarrow[n\to\infty]{} \mathrm d\kappa^{\varphi,(g, h)}(v).\end{equation}

Dividing by $n$ and considering the limit we obtain by Equation \eqref{convkappa}

\begin{equation}\label{piblu2}\Big(\length\big({\nu}^{\t,(g, h)}_{2n}(\rho)\big)+m_{2n}\Big) \frac{e^{2n\aa^{g}(\rho)}}n\xrightarrow[n\to\infty]{}  2\mathrm d\aa^{g}(v)\kappa^{\varphi,(g,h)}(\rho)\end{equation}

\noindent One has that $\nu_{2n}^{\t,(g,h)}(\rho)\to0$ and thus also does $\length(\nu_{2n}^{\t,(g,h)}(\rho)).$ Since $e^{2n\aa^g(\rho)}/n$ is divergent, we obtain that $m_{2n}=0$ for all big enough $n$, giving in turn that \begin{equation}\label{psiblu3}\length\big({\nu}^{\t,(g, h)}_{2n}(\rho)\big)\frac{e^{2n\aa^{g}(\rho)}}n\xrightarrow[n\to\infty]{}  2\mathrm d\aa^{g}(v)\kappa^{\varphi,(g,h)}(\rho).\end{equation}

Using the definition of $\nu^{\t,(g,h)}_n$ we obtain $$e^{2n\aa^g(\rho)}\length\big(\nu^\t_{2n}(g,h)\big)  =\sum_{\sroot\in\t}\length_\sroot e^{2n\big(\aa^g(\rho)-\sroot^g(\rho)\big)} e^{2n\sroot^g(\rho)}\log\big|c_{2n}(\rep_\sroot g,\rep_\sroot h)\big|.$$ 

If there exists $\sroot\in\supp\length$ so that $\aa^g(\rho)-\sroot^g(\rho)>0$ then we let $\sroot$ be the root that maximizes this value. Applying Lemma  \ref{lemmacm} to the representation $\rep_\sroot$ we obtain a subsequence $n_k$ such that $$e^{2n_k\sroot^g(\rho)}\log\big|c_{2n_k}(\rep_\sroot g,\rep_\sroot h)\big|\xrightarrow{k\to\infty}{}K\neq0.$$ Since the terms $e^{2n_k\delta^g(\rho)}\log\big|c_{2n_k}(\rep_\delta g,\rep_\delta h)\big|$ are bounded for every $\delta\in\supp\length$, we deduce that
$e^{2n_k\aa^g(\rho)}\length\big(\nu^\t_{2n_k}(g,h)\big)$ is diverging to infinity at an exponential rate $\mu=\aa^g(\rho)-\sroot^g(\rho)>0$, which combined with Equation \eqref{psiblu3} gives $e^{2n_k\mu}/n_k$ is convergent as $k\to\infty$, a contradiction.

We conclude that $\forall$ $\sroot\in\supp\length$ one has $\aa^g(\rho)\leq\sroot^g(\rho)$ and applying Lemma \ref{lemmacm} we obtain, since $e^{2n\sroot^g(\rho)}\log\big|c_{2n}(\rep_\sroot g,\rep_\sroot h)\big|$ is bounded for every $\sroot$, that 
$e^{2n\aa^g(\rho)}\length\big(\nu^\t_{2n}(g,h)\big)$ is converging to a constant $C$, which is possibly zero. Thus $$\length\big({\nu}^{\t,(g, h)}_{2n}(\rho)\big)\frac{e^{2n\aa^{g}(\rho)}}n\xrightarrow[n\to\infty]{} 0,$$ giving, since $\kappa^\varphi(g,h)\neq0$ by Proposition \ref{cmgeneral}, that $\mathrm d\aa^g(v)=0$ as desired.\end{proof}

\begin{proof}[Proof of Theorem \ref{LedIndepA}] Let us assume by contradiction that for all $\g\in\G$ one has $$\mathrm d\varphi^\g(v)-\length^\g(\rho)\in\Z.$$ By hypothesis, there exists $\aa\in\supp\varphi$ with $\dim\ge_\aa=1$ and a loxodromic $\g$ verifying Eq. \eqref{min}. The Zariski-density assumption gives (c.f. Benoist-Quint \cite[Lemma 7.20]{Benoist-QuintLibro}), for each loxodromic $\g$, an $h\in\G$ such that $(\g,h)$ are strongly transversally loxodromic. Thus, we can apply Lemma \ref{intermedioModif} to every loxodromic $g\in\G$ such that the values $\sroot^g(\rho)$, for $\sroot\in\t$, are all distinct and verifies Equation \eqref{min} to obtain:

\begin{obs}\label{obstu} For every $g\in\G$ loxodromic  such that the values $\sroot^g(\rho)$, for $\sroot\in\t$, are all distinct and verifying Eq. \eqref{min} one has $\mathrm d\aa^g (v)=0.$
\end{obs}

Proposition \ref{fullZariskidenso} states we can choose $\g\in\G_{\mathrm{full}}$ verifying the assumptions of the above remark. Moreover, we use Proposition \ref{subsemi} by Benoist and more specifically Remark \ref{G'schottky} to choose a Zariski-dense sub-semi-group $\G'<\G$ that contains $\g^k$ for some large power $k$ and whose limit cone is a convex cone about $\R_+\jordan(\rho \g),$ chosen so that for all $ h\in\G'$ Equation \eqref{min} holds (for $h$ instead of $\g$) and such that the values $\sroot^h(\rho)$, for $\sroot\in\t$, are all distinct.

Since $\G'$ is chosen with $\g^k\in\G'$, the curve $\eta_t:=\rho_t|\G'$ has full loxodromic variation. However, Remark \ref{obstu} gives that $$\forall h\in\G'\textrm{ it holds }\mathrm d\aa^{h}(\vec{\eta})=0,$$ contradicting Theorem \ref{A}. This completes the proof.\end{proof}

\section{The case of $\t$-Anosov representations: cohomological independence and other consequences}
We assume throughout that $\gh$ is word-hyperbolic and $\rho\in\Anosov_\t(\gh,\sf G).$


\subsection{Full loxodromic variation for $\t$-Anosov representations}\label{fullerton}We prove:

\begin{cor}\label{corfullerton} Assume that $\t\cap\simple_i\neq\vacio$ for every simple factor $\ge_i$ of $\ge$. If $v$ has full variation then it has (full) loxodromic variation. 
\end{cor}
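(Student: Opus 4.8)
The statement asserts that if $v\in\sf T_\rho\caracG$ has full variation---meaning that for every simple factor $\ge_i$ of $\ge$ the cocycle $p_i(\co_v)$ is not a coboundary---then $v$ in fact has full loxodromic variation, i.e. $p_i(\jordanlim_v)\neq\{0\}$ for every $i$. The natural plan is to reduce to each simple factor separately and there invoke the machinery already built for the adjoint representation. Fix a simple factor $\ge_i$ with $\t\cap\simple_i\neq\vacio$. Since $\rho$ is $\t$-Anosov and $\t$ meets $\simple_i$, the composition of $\rho$ with the projection to the factor group $\sf G_i$ (or rather the analysis of $\rho$ through the Tits representations $\rep_\sroot$ for $\sroot\in\t\cap\simple_i$) is still Anosov, so loxodromic elements in $\sf G_i$ are plentiful: Benoist's limit cone $\Bcone_{\rho}$ has non-empty interior in every factor, and in particular $\Bcone_\rho\cap\inte\a^+\neq\vacio$ after projecting, so the hypotheses of Corollary \ref{factor-cotangente} (with $\factor{\sf H}=\ge_i$, which is a disjoined adjoint factor by Remark \ref{adjuntadisjunda}) are met.

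\textbf{Key steps.} First I would record that $\t$-Anosov-ness of $\rho$ forces, for each $i$, the existence of elements $\g\in\gh$ with $p_i(\jordan(\rho(\g)))\in\inte\a_i^+$: indeed, for $\sroot\in\t\cap\simple_i$ one has $\sroot(\cartan(\rho(\g)))\geq\mu|\g|-c\to\infty$, and combined with hyperbolicity of $\gh$ and Proposition \ref{conical} this yields $\sroot(\jordan(\rho(\g)))>0$ for suitable $\g$; a short argument (taking products of transverse loxodromic elements, or simply using that $\Bcone_{\t,\rho}$ has the interior of $\a_\t$-positivity) upgrades this to full loxodromicity in the factor $\ge_i$. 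So $\Bcone_\rho\cap\inte\a^+\neq\vacio$ holds after projecting to each $\ge_i$. Second, I apply Corollary \ref{factor-cotangente}: since $p_i(\co_v)\neq0$ in cohomology and $\rep=\Ad_{\sf G}(\sf H)|\ge_i$ has $\ge_i$ itself as its unique irreducible factor (so the projection-to-each-irreducible-factor hypothesis is automatic), the corollary gives that $\jordanlim_{p_i(\co_v)}$ is convex with non-empty interior inside $\ge_i\cap\a=\a_i$. In particular $p_i(\jordanlim_v)\supseteq\jordanlim_{p_i(\co_v)}\neq\{0\}$. Third, since this holds for every $i\in I$, $v$ has full loxodromic variation by definition.

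\textbf{Main obstacle.} The delicate point is the passage from ``$\t$ meets $\simple_i$'' to ``$\rho(\gh)$ contains elements that are loxodromic in the factor $\sf G_i$'' --- one must be careful that Anosov-ness only guarantees proximality with respect to the roots in $\t$, not full loxodromicity in $\sf G_i$. However this is resolved exactly as in the proof of Proposition \ref{fullZariskidenso}: taking two transverse $\t$-proximal elements and forming products $g^nh^n$, Theorem \ref{CR} (or Lemma \ref{pares}) lets one reach Jordan projections in the interior of the $\t$-limit cone, and since $\rho(\gh)$ is Zariski-dense in its (reductive) Zariski closure one can further perturb into the interior of $\a_i^+$; alternatively one cites directly that $\Bcone_{\t,\rho}$ as in Eq. \eqref{conti} is full-dimensional in $\a_\t$ and that $\t\cap\simple_i\neq\vacio$ forces the $\ge_i$-component to be non-trivial. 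The remaining content is then a bookkeeping reduction to the already-proven Corollary \ref{factor-cotangente}, applied factor by factor, which I expect to be routine.
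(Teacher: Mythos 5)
Your argument is correct (modulo the Zariski‑density of the base point, which the statement leaves implicit but which the paper's own proof also uses), but it follows a genuinely different route from the paper's. The paper argues by contrapositive through the thermodynamic/cross‑ratio machinery of Bridgeman--Canary--Labourie--S.: picking $\aa_i\in\t\cap\simple_i$, if $\mathrm d\peso_{\aa_i}^\g(v)=0$ for every loxodromic $\g$, then the logarithmic derivative of the cross ratio of $\rep_{\aa_i}\rho_t$ vanishes on the quadruples of attracting/repelling points of loxodromic elements, which are dense in $\bord^{(4)}\gh$ (Lemma \ref{presslox}), hence it vanishes identically, and the argument of \cite[Lemma 10.3]{pressure} then forces $p_i(\co_v)$ to be a coboundary. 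This is exactly where the hypothesis $\t\cap\simple_i\neq\vacio$ enters: it guarantees $\rep_{\aa_i}\rho$ is projective Anosov so the limit maps and cross ratios exist. You instead reduce to the affine‑action machinery of Part \ref{affineactions} via Corollary \ref{factor-cotangente} applied factor by factor: $[p_i(\co_v)]\neq 0$ plus Zariski density gives Zariski density of the associated affine group (Proposition \ref{Zarirre}), hence non‑empty interior of the affine limit cone in $\a_i$, hence $p_i(\jordanlim_v)\neq\{0\}$. Two remarks on your version. First, your ``main obstacle'' dissolves more simply than you suggest: Zariski density of $\rho(\gh)$ in $\sf G$ already gives $\Bcone_\rho\cap\inte\a^+\neq\vacio$ by Benoist's Theorem \ref{interiorsi}, with no appeal to the Anosov structure; conversely, without Zariski density in $\sf G$ your Step 1 genuinely fails (there are $\t$-Anosov representations with no loxodromic element, e.g. $g\mapsto g\otimes g$ of a Fuchsian group), and Corollary \ref{factor-cotangente} would not apply, so you cannot dispense with that hypothesis. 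Second, in your route the hypothesis $\t\cap\simple_i\neq\vacio$ plays no role at all, whereas it is the engine of the paper's proof; what your approach buys is independence from the Anosov/reparametrized‑flow formalism (and in particular from \cite{pressure}), at the price of invoking the full strength of the Margulis‑invariant and affine‑limit‑cone results.
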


The proof is essentially contained in Bridgeman-Canary-Labourie-S. \cite[\S\,10]{pressure} so we only give the minor required modifications.

\begin{lemma}\label{presslox}Let $\rho:\gh\to\sf G$ be $\t$-Anosov and have Zariski-dense image, then $$\big\{\big(h^-,h^+,\g^-,\g^+\big):\textrm{$\rho(\g)$ and $\rho(h)$ are loxodromic}\big\}$$ is dense in $\bord^{(4)}\gh$.
\end{lemma}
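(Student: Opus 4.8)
The statement is a density result in $\bord^{(4)}\gh$ for the set of four-tuples realized by fixed points of loxodromic elements, under the assumption that $\rho$ is $\t$-Anosov with Zariski-dense image. The natural strategy is to reduce it to a statement about loxodromicity combined with the well-understood density of attracting/repelling fixed points of $\t$-proximal elements. First I would recall that since $\rho$ is $\t$-Anosov, every infinite-order $\g\in\gh$ has $\rho(\g)$ being $\t$-proximal (Proposition \ref{conical}), so that $\rho(\g)$ has a well-defined pair $(\g^+,\g^-)\in\posgen_\t$ via the equivariant maps; what we need is to upgrade $\t$-proximality to loxodromicity (i.e. $\jordan(\rho(\g))\in\inte\a^+$, proximality in \emph{all} simple roots, not just those in $\t$) for a dense set of pairs.

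\textbf{Key steps.} The plan is as follows. (1) Start from the density of $\{(\g^+,\g^-):\g\in\gh_{\mathrm h}\}$ in $\posgen_\t$, which follows from minimality of the $\gh$-action on $\bord\gh$ and the fact that hyperbolic elements have dense fixed points in $\bord^2\gh$; more precisely the set of four-tuples $(h^-,h^+,\g^-,\g^+)$ with $\g,h$ merely hyperbolic (hence $\rho$-images $\t$-proximal) is dense in $\bord^{(4)}\gh$. (2) Use Zariski-density of $\rho(\gh)$ together with Benoist's Theorem \ref{interiorsi}: the limit cone $\Bcone_{\rho(\gh)}$ has non-empty interior in $\a^+$, so there exists at least one loxodromic element $g_0\in\rho(\gh)$. (3) Given an arbitrary hyperbolic $\g$ with $\rho(\g)$ $\t$-proximal, and a generic hyperbolic $h$ transverse to it (so that $\rho(h)$ is transversally $\t$-proximal to $\rho(\g)$ and to $g_0$), consider words of the form $\g' = \g^m (g_0')^n$ where $g_0'$ is a high conjugate/power of $g_0$ realizing loxodromic Jordan projection: by Lemma \ref{pares} and Theorem \ref{CR}, for $m,n$ large $\jordan(\rho(\g'))\approx m\jordan(\rho(\g)) + n\jordan(g_0')$ lies in $\inte\a^+$, so $\rho(\g')$ is loxodromic. (4) Crucially, the attracting flag of $\g'$ converges to $\g^+$ (the attracting flag of a high power of $\g$ dominates) as $m\to\infty$ with $n$ fixed suitably — here one uses the contraction estimates of Lemma \ref{contraccionproximal} / Lemma \ref{modif}-type arguments — and similarly one can arrange the repelling flag to approach $\g^-$. (5) Applying this construction simultaneously, with the roles of the two coordinates played independently, to approximate $(h^-, h^+, \g^-, \g^+)$ by $((h')^-, (h')^+, (\g')^-, (\g')^+)$ with $\rho(\g'), \rho(h')$ both loxodromic, we conclude density of the loxodromic four-tuples in the closure of the $\t$-proximal four-tuples, hence in $\bord^{(4)}\gh$.

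\textbf{Main obstacle.} The delicate point is step (4)–(5): controlling the attracting and repelling \emph{flags} of the perturbed element $\g'=\g^m(g_0')^n$ so that they genuinely converge to the prescribed $\g^\pm$ while the Jordan projection simultaneously enters $\inte\a^+$. These two requirements pull in slightly different directions — loxodromicity wants a substantial $g_0'$-contribution, while flag control wants the $\g^m$-contribution to dominate — so one must interleave the limits carefully (first fix $g_0'$ large enough in the right chamber direction, then send $m\to\infty$), invoking the uniform-in-flags convergence already packaged in Lemma \ref{modif} and Lemma \ref{gecuadrado}. I would expect the cleanest write-up to cite the analogous argument in Bridgeman-Canary-Labourie-S. \cite[\S 10]{pressure} for the flag-convergence bookkeeping and Benoist \cite{limite} (via Remark \ref{G'schottky} and Proposition \ref{subsemi}) to produce loxodromic elements with fixed points near any prescribed $\t$-proximal pair, thereby reducing the argument to combining these two inputs rather than redoing the estimates from scratch.
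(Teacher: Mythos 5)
Your high-level plan (start from density of fixed points of hyperbolic elements, then upgrade $\t$-proximal to loxodromic by multiplying with powers of a fixed loxodromic $g_0$) is a legitimate alternative to the paper's route, which instead quotes Benoist's description of the limit set in the full flag variety $\EuScript F_\simple$ to get density in $\bord\gh$ of attracting points of elements with loxodromic image, and then runs a standard ping-pong. However, your execution has two concrete problems.

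First, the word $\g'=\g^m(g_0')^n$ cannot approximate the pair $(\g^-,\g^+)$: its repelling fixed point is the attracting point of $(g_0')^{-n}\g^{-m}$, which by ping-pong converges to $(g_0')^-$, not to $\g^-$. So while the attracting point of $\g'$ does approach $\g^+$, the repelling one escapes to the wrong place, and "similarly one can arrange the repelling flag to approach $\g^-$" is false for this word. You need a sandwich word such as $\g^m g_0^N \g^m$, whose inverse begins with $\g^{-m}$, so that its fixed points converge (as $N\to\infty$, $m$ fixed large) to $\g^{m}(g_0^+)$ and $\g^{-m}(g_0^-)$, which are close to $\g^\pm$. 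Second, the loxodromicity claim in step (3) is not justified: Lemma \ref{pares} requires \emph{both} elements to be loxodromic, and Theorem \ref{CR} only controls $\jordan_\t$, whereas $\rho(\g)$ is merely $\t$-proximal. In particular your proposed order of limits ("first fix $g_0'$ large, then send $m\to\infty$") destroys the guarantee: once the $\g^m$-contribution dominates, the simple roots outside $\t$ evaluated on $\jordan(\rho(\g'))$ stay bounded and need not be positive, so $\rho(\g')$ need not be loxodromic. The correct interleaving is the opposite one: fix $m$ large for the flag bookkeeping, then send the power $N$ of the genuinely loxodromic $g_0$ to infinity and invoke Lemma \ref{modif} (whose hypothesis is that the \emph{middle} element is loxodromic) to conclude $\jordan(\rho(\g^m g_0^N\g^m))-N\jordan(\rho(g_0))$ converges, hence lies in $\inte\a^+$ for large $N$. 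With these two repairs your strategy does yield the lemma, and it bypasses the paper's appeal to Benoist's limit-set theorem at the cost of redoing the perturbation estimates.
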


\begin{proof} The Lemma is certainly true if we remove the 'loxodromic' condition, we show how we reduce the question to this situation. Since $\rho(\gh)$ is Zariski-dense $\xi^\t(\bord\gh)$ is the limit set of $\rho(\gh)$ on the flag space $\EuScript F_\t(\sf G)$. By Benoist \cite[Remarque 3.6 2)]{limite} it is the image, under the natural projection, of the limit set $\Lambda_{\rho(\gh)}$ in the full flag space $\EuScript F_\simple(\sf G)$. Again by Zariski-density, the latter is the closure of attracting full flags of loxodromic elements in $\rho(\gh)$, and the lemma follows.\end{proof}

\begin{proof}[Proof of Corollary \ref{corfullerton}]
Consider $\aa_i\in\t\cap\simple_i$. We will show that for each $i$ there exist a loxodromic $\g\in\gh$ such that $\mathrm{d}\peso_{\aa_i}^\g(v)\neq0.$ Otherwise, for every loxodromic $\g\in\gh$ one has $$\frac{\partial}{\partial t}\Big|_{t=0}\jordan_1\big(\rep_{\aa_i}(\rho_t(\g))\big)=0.$$

Using now \cite[Prop. 9.4]{pressure} one has that for every co-prime pair of loxodromic $\g,h\in\gh$ it holds $\partial^{\log{}}\mathrm b_{\rep_{\aa_i}\rho_t}(\g^-,\g^+,h^-,h^+)=0$ (recall \S\ref{crog}). By Lemma \ref{presslox}, this implies that for every $(x,y,z,t)\in\partial^{(4)}\gh$ one has $\partial^{\log{}}\mathrm b_{\rep_{\aa_i}\rho_t}(x,y,z,t)=0$. Moreover $\rep_{\aa_i}\rho$ is irreducible and projective Anosov,  so from this point on the proof of \cite[Lemma 10.3]{pressure} woks verbatim to give that the cocycle $p_i(\co_v)$ is cohomologically trivial, contradicting our assumptions.\end{proof}

\subsection{Cohomological independence of $\ledrappier$ and $\vec\ledrappier$}\label{Livsiccohom}
Consider the Ledrappier potential $\ledrappier_t=\ledrappier_{\rho_t}:\UG\to\a_\t$ and denote by $\vec \ledrappier=\vec{\ledrappier}{}:\UG\to\a_\t$ its differentiation w.r.t. $t$ at $0$. Corollary \ref{corfullerton} and Theorem \ref{LedIndepA} readily give:
 
\begin{cor}\label{Livindependent} Let $v\in\sf T_\rho\Anosov_\t(\gh,\sf G )$ have full variation with Zariski-dense $\rho.$ \begin{enumerate}\item\label{noZ} Consider $\varphi,\psi\in(\a_\t)^*$ and assume there exist $\g\in\gh$ and a multiplicity-$1$ root $\aa$ that strictly minimizes $\g$ among $\supp\varphi$. Then, $\varphi(\vec\ledrappier{})-\length(\ledrappier)$ is not Liv\v sic-cohomologous to a function with periods in $\Z$.
\item If moreover every root in $\t$ has multiplicity one then $\ledrappier$ and ${\vec\ledrappier}{}$ are Liv\v sic-cohomologically independent (thus Corollary \ref{masa} applies).
\end{enumerate}
\end{cor}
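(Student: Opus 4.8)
\textbf{Plan of proof for Corollary \ref{Livindependent}.} The strategy is to transfer the dynamical non-degeneration statements about $\ledrappier^\t$ and $\vec\ledrappier^\t$ on the base flow $\UG$ into the period-level statements covered by Theorem \ref{LedIndepA}, using the fact that a H\"older-continuous function on a transitive metric-Anosov flow is determined, up to Liv\v sic-cohomology, by its periods (the Liv\v sic theorem recalled in \S\,\ref{thermopre}), and that its periods over the orbit $[\g]$ of a hyperbolic $\g\in\gh$ are exactly $\jordan_\t^\g(\rho)$ for $\ledrappier^\t$ and $\mathrm d\jordan_\t^\g(v)$ for $\vec\ledrappier^\t$ (Theorem \ref{teoLedr} and the differentiation of the Ledrappier embedding in \S\,\ref{thermoSS}). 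The first thing I would note is that the hypothesis ``full variation + Zariski-dense base point'' places us, via Corollary \ref{corfullerton} (valid since $\t$ meets every simple factor), in the ``full loxodromic variation'' situation required by Theorem \ref{LedIndepA}; moreover every hyperbolic $\g\in\gh$ has $\rho(\g)$ loxodromic because $\rho$ is $\t$-Anosov and $\rho(\gh)$ is Zariski-dense, so the loxodromic elements $\g$ supplied by Theorem \ref{LedIndepA}/Proposition \ref{fullZariskidenso} can be realized as periodic orbits of $\phi$.

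For item (i): suppose for contradiction that $\varphi(\vec\ledrappier^\t)-\length(\ledrappier^\t)$ is Liv\v sic-cohomologous to a H\"older function all of whose periods lie in $\Z$. Taking periods over $[\g]$ for hyperbolic $\g$, Liv\v sic-cohomology preserves periods, so this says precisely that $\mathrm d\varphi^\g(v)-\length^\g(\rho)\in\Z$ for every hyperbolic $\g\in\gh$. But the hypothesis on $\varphi$ — the existence of $\g_0\in\gh$ and a multiplicity-one simple root $\aa\in\supp\varphi$ strictly minimizing $\g_0$ among $\supp\varphi$ — together with Zariski-density of $\rho(\gh)$ and full loxodromic variation of $v$, is exactly the hypothesis of Theorem \ref{LedIndepA} (applied to the semi-group $\G=\gh$ via $\rho$). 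Theorem \ref{LedIndepA} then produces a loxodromic $\g\in\gh$ with $\mathrm d\varphi^\g(v)-\length^\g(\rho)\notin\Z$, contradicting the displayed inclusion. Here I would be mildly careful that the $\g$ in the hypothesis of (i) can be taken hyperbolic in $\gh$ (it is, since the statement quantifies over $\gh$) and that ``loxodromic $\rho(\g)$'' for the $\g$ output by Theorem \ref{LedIndepA} indeed gives a genuine periodic orbit — but this is immediate because $\rho$ is $\t$-Anosov so $\rho$ is faithful with $\gh$ word-hyperbolic, and loxodromic elements of $\rho(\gh)$ correspond to infinite-order, hence hyperbolic, elements of $\gh$.

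For item (ii): when every root in $\t$ has multiplicity one, I want to show $\ledrappier^\t$ and $\vec\ledrappier^\t$ are Liv\v sic-cohomologically independent, i.e. no nontrivial linear combination $\varphi(\vec\ledrappier^\t)-\length(\ledrappier^\t)$ (with $\varphi,\length\in(\a_\t)^*$) is Liv\v sic-cohomologous to a constant — and in fact item (i) gives more, namely it is not even cohomologous to a function with integer periods, provided $\varphi\neq0$. If $\varphi\neq0$, pick any $\aa\in\supp\varphi$; since $\rho(\gh)$ is Zariski-dense, $\Bcone_{\t,\rho}$ has non-empty interior, so there is $\g\in\gh$ whose Jordan projection makes the values $\sroot^\g(\rho)$, $\sroot\in\supp\varphi$, pairwise distinct, whence some root in $\supp\varphi$ (automatically of multiplicity one, by hypothesis on $\t$) strictly minimizes $\g$ among $\supp\varphi$; thus the hypothesis of item (i) holds and the conclusion of item (i) applies. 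If $\varphi=0$, then cohomological triviality of $\length(\ledrappier^\t)$ would force $\length^\g(\rho)\in\R$ constant-period, i.e. $\length(\jordan_\t^\g(\rho))$ constant on all hyperbolic $\g$, which is absurd (again because $\Bcone_{\t,\rho}$ is a cone with non-empty interior, by Benoist's Theorem \ref{interiorsi} applied to the Zariski-dense $\rho(\gh)$), so $\length=0$ as well. Hence the only Liv\v sic-trivial combination is $\varphi=\length=0$, giving cohomological independence and, as noted, Corollary \ref{masa} then applies to $\ledrappier^\t$, $\vec\ledrappier^\t$.

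\textbf{Main obstacle.} The genuinely substantive input is Theorem \ref{LedIndepA}, which is already proved earlier; given it, the present corollary is essentially a dictionary translation. The one point requiring a little care — and the place where the argument could go wrong if stated carelessly — is the passage from ``periods lie in $\Z$'' for a function on $\UG$ to the inclusion $\mathrm d\varphi^\g(v)-\length^\g(\rho)\in\Z$ for \emph{all} hyperbolic $\g$: this uses that Liv\v sic-cohomology is exactly equality of all periods (true for transitive H\"older metric-Anosov flows, which $\phi$ is by Theorem \ref{tutti}), and that $\UG$ has a periodic orbit for every infinite-order element of $\gh$, which holds by the construction in \S\,\ref{referenceFlow}. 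I would make these two facts explicit and then simply cite Theorem \ref{LedIndepA}; no further computation is needed.
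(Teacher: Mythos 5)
Your proposal is correct and follows exactly the route the paper intends: the paper gives no written proof beyond the remark that Theorem \ref{LedIndepA}, Liv\v sic's theorem and Corollary \ref{corfullerton} "readily imply" the statement, and your argument is precisely the dictionary (periods of $\ledrappier^\t$ and $\vec\ledrappier^\t$ equal $\jordan_\t^\g(\rho)$ and $\mathrm d\jordan_\t^\g(v)$, Liv\v sic-cohomology is detected by periods, full variation upgrades to full loxodromic variation via Corollary \ref{corfullerton}) that makes this implication explicit. The reduction in item (ii) to item (i) via non-empty interior of the limit cone is also the same device the paper uses in Corollary \ref{double-density}.
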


We conclude then the following.

\begin{cor}\label{nondeg} Consider $\length\in\conodual{\t,\rho}$ and assume there exist $\g\in\gh$ and $\aa$ with $\dim\ge_\aa=1$ that strictly minimizes $\g$ among $\supp\length$. If $v\in\sf T_\rho\caracteres$ has full variation then  $\PP^\length_\rho(v)>0$.\end{cor}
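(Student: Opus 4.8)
The plan is to derive Corollary \ref{nondeg} from Corollary \ref{Livindependent}\eqref{noZ} via the criterion recorded in Remark \ref{varphi=cte}. First I would recall the setup: $v\in\sf T_\rho\caracteres$ has full variation, the base point $\rho$ is Zariski-dense, and $\rho$ is $\t$-Anosov; by Corollary \ref{corfullerton} full variation then upgrades to full loxodromic variation, so all the hypotheses of the results in \S\ref{Livsiccohom} are in force. Fix $\length\in\conodual{\t,\rho}$ and suppose, aiming at a contradiction, that $\PP^\length_\rho$ degenerates at $v$.

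By Remark \ref{varphi=cte}, degeneracy of $\PP^\length_\rho$ at $v$ means precisely that $\length\big(\VV\length_{\t,v}\big)=\partial^{\log{}}\entropy\length{}$, i.e. the function $\length(\VV\length_{\t,v})$ is the single constant $\partial^{\log{}}\entropy\length{}$. Unwinding the definition of $\VV\length_{\t,v}$ and translating back to the flow-dynamical language of \S\ref{thermoSS}, this is equivalent to the statement that the H\"older-continuous function $\length(\vec\ledrappier{}^\t)-\big(\partial^{\log{}}\entropy\length{}\big)\cdot\length(\ledrappier^\t)$ has all its periods equal to zero, hence by Liv\v sic's theorem is Liv\v sic-cohomologous to zero. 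In particular, for the constant $c:=\partial^{\log{}}\entropy\length{}\in\R$, the function $\length(\vec\ledrappier{}^\t)-c\,\length(\ledrappier^\t)$ is Liv\v sic-cohomologically trivial, a fortiori Liv\v sic-cohomologous to a function with periods in $\Z$ (namely the zero function).

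Now I apply Corollary \ref{Livindependent}\eqref{noZ} with $\varphi=\length$ and $\psi=c\,\length$. The hypothesis there is exactly the hypothesis of the present corollary: there exist $\g\in\gh$ and a root $\aa$ with $\dim\ge_\aa=1$ that strictly minimizes $\g$ among $\supp\length$ — note $\supp(c\,\length)\subset\supp\length$, so no issue arises from the scaling (and if $c=0$ the statement is immediate since $\psi=0$ has support contained in $\supp\length$). Corollary \ref{Livindependent}\eqref{noZ} then asserts that $\length(\vec\ledrappier{}^\t)-c\,\length(\ledrappier^\t)$ is \emph{not} Liv\v sic-cohomologous to a function with periods in $\Z$, directly contradicting the conclusion of the previous paragraph. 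Hence $\PP^\length_\rho$ cannot degenerate at $v$, i.e. $\PP^\length_\rho(v)>0$.

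The only delicate point — and the one I would write out carefully rather than wave at — is the equivalence between the ``level set of $\length$'' condition of Remark \ref{varphi=cte} and the vanishing-of-periods statement to which Corollary \ref{Livindependent}\eqref{noZ} applies; this is where one must be careful that $\VV\length_{\t,v}=\medidas^{\phi^{\length(\ledrappier)}}(\vec\ledrappier/\length(\ledrappier))$ from \eqref{vv}, that $\length$ of this compact convex set is a point iff the integrand $\length(\vec\ledrappier)/\length(\ledrappier)$ is Liv\v sic-cohomologous to a constant for the reparametrized flow, and that this translates back (multiplying through by $\length(\ledrappier)$ and comparing periods, which scale by $\ell_{\phi^{\length(\ledrappier)}}(\g)=\length^\g(\rho)$) to the statement that $\mathrm d\length^\g(v)-c\,\length^\g(\rho)=0$ for all loxodromic $\g$, i.e. that the original (unreparametrized) function $\length(\vec\ledrappier{}^\t)-c\,\length(\ledrappier^\t)$ has all periods zero. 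Everything else is a direct citation.
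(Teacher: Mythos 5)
Your proof is correct and follows essentially the same route as the paper: the paper likewise characterizes degeneracy of $\PP^\length_\rho$ at $v$ as the Liv\v sic-cohomological equivalence of $(\mathrm d\entropy\length{}(v))\,\length(\ledrappier)$ and $\entropy\length{}\,\length(\vec\ledrappier)$ (via Theorem \ref{JJ}, which is exactly the content of your Remark \ref{varphi=cte} reduction) and then contradicts this with Corollary \ref{Livindependent}(\ref{noZ}) applied to $\varphi=\length$ and $\psi=c\,\length$. Your only superfluous worry is the case $c=0$ and the containment $\supp(c\length)\subset\supp\length$: the hypothesis of Corollary \ref{Livindependent}(\ref{noZ}) constrains only $\varphi$, so $\psi$ is unrestricted and no case distinction is needed.
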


\begin{proof} By Theorem \ref{JJ}, $\PP^\length_\rho$ degenerates at $v$ iff $(\mathrm d \entropy\length{}v)\length(\ledrappier)$ and $\entropy{\length}{}\length\big({\vec\ledrappier}\big)$ are Liv\v sic-cohomologous. However this does not hold by Corollary \ref{Livindependent}\eqref{noZ}.\end{proof}

\subsection{Variations along level sets of $\scr h$ give non-proper actions}\label{leve} This principle is used in Labourie \cite{Labourie-FuchsAffine,LabourieAffine}, we give new situations where it applies.

Recall from Sullivan \cite{sullivan} (see Yue \cite{yue}), that if $\sf X$ has rank $1$ and $\rho:\gh\to\isom\sf X$ is convex-co-compact, then $\scr h_\rho$ is the Hausdorff dimension of the limit set of $\rho$ on $\partial_\infty\sf X$ for a visual metric. Also, by Bridgeman-Canary-Labourie-S. \cite{pressure} (see \S\,\ref{thermoSS}), the function $\rho\mapsto\scr h_\rho$ is real-analytic on the space of convex-co-compact representations.

The adjoint representation of a rank-one simple $\ge$ has neutralizing dimension $1$ as long as $\centro(\m)=\{0\}$. Thus, the rank 1 simple groups with $\neudim(\Ad)\neq1$ have Lie algebras equal to $\so_{1,3}$ or $\su_{1,n}$ for $n\geq2$ (see Knapp \cite[Appendix C]{knapp}), whence:

\begin{cor}[Rank 1]\label{levelsets} Let $\sf G$ be the identity component of the isometry group of $\H^n_\R$ for $n\neq3$, $\H^n_\H$ $n\geq2$, or the Cayley hyperbolic plane, and let $\rho:\gh\to \sf G$ be convex co-compact and Zariski-dense. Let $\co\in H^1_{\Ad\rho}(\gh,\ge)$ be an integrable cocycle. If $\mathrm d_\rho\scr h(\co)=0$ then the affine action of $\morfi_\co$ on $\ge$ is not proper.
\end{cor}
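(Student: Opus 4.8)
The strategy is to reduce Corollary \ref{levelsets} to Proposition \ref{proper} by realizing the hypothesis $\mathrm d_\rho\scr h(\co)=0$ as the condition ``$0\in\inte\M^\length(\coclase)$'' for a suitable length functional $\length$. First I would fix $\rep=\Ad:\sf G\to\SL(\ge)$; under the stated rank-one hypotheses on $\sf G$ one has $\centro(\m)=\{0\}$ and $\neudim(\Ad)=\dim\trivia=1$, so $\trivia=\a$ is one-dimensional and $\M^\length(\coclase)$ is an interval in $\R$. Since $\rho$ is convex-cocompact it is $\{\aa\}$-Anosov for the unique simple root $\aa$ (here $\simple=\{\aa\}$), and $\Ad$ is non-swinging in rank one. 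Thus Proposition \ref{proper} applies: if $0\in\inte\M^\length(\coclase)$ then the action of $\morfi_\co$ on $\ge$ is not proper, while if $0\notin\M^\length(\coclase)$ and $\length\in\inte(\a^+)^*$ then the action is proper (this last half gives the converse-flavoured ``in particular'' clause once we know criticality of $\scr h$ forces $0$ into the interior).

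\textbf{The key computation.} The heart of the matter is to identify the boundary behaviour of $\M^\length(\coclase)$ in terms of $\mathrm d\scr h$. Write $\length=\peso_\aa$ (up to scaling, the unique length functional, as in the proof of Lemma \ref{masainte}); then by Corollary \ref{masa} (applicable because $\ledrappier$ and $\vec\ledrappier$ are Liv\v sic-cohomologically independent by Lemma \ref{masainte}/Theorem \ref{indeplambda1}) there is a distinguished point $\mass\length{\co}\in\inte\VV\length_{\{\aa\},v}$, and more importantly there is its analogue for the Ledrappier potential $\ledrappier_\co$ of the \emph{affine} cocycle $\co$: pairing $\ledrappier_\co/\length(\ledrappier)$ against $\mmax_{\phi^{\length(\ledrappier)}}$ lands in $\inte\M^\length(\coclase)$. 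By Corollary \ref{derivadaentropia} applied to the entropy $\hh^{\length(\ledrappier_t)}$ — which in rank one is exactly $\scr h_{\rho_t}$ up to the fixed scaling factor $\peso_\aa$ — one gets $$\mathrm d_\rho\scr h(\co)=-\int\frac{\vec\ledrappier_\co}{\length(\ledrappier)}\,d\mmax_{\phi^{\length(\ledrappier)}},$$ where $\vec\ledrappier_\co$ has periods $\margulis(\g,\co(\g))$. Hence the hypothesis $\mathrm d_\rho\scr h(\co)=0$ says precisely that the centre of mass of $\ledrappier_\co/\length(\ledrappier)$ against the measure of maximal entropy vanishes, i.e. that $0$ equals the distinguished interior point of $\M^\length(\coclase)$; in particular $0\in\inte\M^\length(\coclase)$, provided $\M^\length(\coclase)$ genuinely has non-empty interior.

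\textbf{Non-empty interior.} This last provision is supplied by the non-empty-interior machinery of Part \ref{affineactions}: since $\rho(\gh)$ is Zariski-dense and $\co$ is integrable and non-zero, the associated affine group $\rho(\gh)_{\co}<\sf G\ltimes_{\Ad}\ge$ is Zariski-dense (Proposition \ref{Zarirre}, or Corollary \ref{souravzariski} using that $\Ad$ is disjoined, Remark \ref{adjuntadisjunda}); and $\rho$ being Anosov, hence $\sf G$-generic, its limit cone meets $\inte\a^+=\X_{\Ad}$. Corollary \ref{conococyclos} then gives that $\affinelim_{\rho(\gh)_\co}$ has non-empty interior in $\trivia=\a$, so $\M^\length(\coclase)$ is a non-degenerate interval. (Alternatively one invokes Corollary \ref{masa}, whose hypothesis is exactly the Liv\v sic-independence established in Lemma \ref{masainte}.) Combining: $\mathrm d_\rho\scr h(\co)=0$ forces $0\in\inte\M^\length(\coclase)$, and Proposition \ref{proper} yields non-properness. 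The ``in particular'' clause follows because if $\scr h$ is critical at $\rho$, then $\mathrm d_\rho\scr h(\co)=0$ for \emph{every} integrable $\co$, so no affine deformation above $\Ad\rho$ can be proper.

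\textbf{Expected main obstacle.} The routine-looking but genuinely delicate point is matching the abstract ``entropy regulating form'' $\mass{}{}$ and the Babillot--Ledrappier interior point of $\M^\length$ with the literal derivative of Hausdorff dimension; i.e. checking that the reparametrized-flow entropy $\hh^{\length(\ledrappier_{\rho_t})}$ really is (a fixed multiple of) Sullivan's $\scr h_{\rho_t}$ in these rank-one cases, that $\vec\ledrappier_\co$ and $\vec\ledrappier$ carry the right periods $\margulis(\g,\co(\g))$ via Proposition \ref{margulisderivada}, and that the non-swinging/neutralizing-dimension-one hypotheses of Proposition \ref{proper} hold for $\Ad$ precisely when $\ge\in\{\so_{1,n}\ (n\neq3),\ \sp_{1,n},\ \f_{4,-20}\}$. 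None of these steps is conceptually hard, but assembling them carefully — especially the sign and scaling bookkeeping relating $\mathrm d\scr h$, $\partial^{\log}\hh$, and the centre of mass of $\ledrappier_\co$ — is where the real work lies.
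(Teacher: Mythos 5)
Your proposal is correct and follows essentially the same route as the paper's proof: Equation \eqref{masaentropia} turns $\mathrm d_\rho\scr h(\co)=0$ into $\peso_\aa(\mass{\peso_\aa}{\co})=0$, Lemma \ref{masainte} places $0$ in the interior of the set of normalized variations, the vanishing of $\centro(\m)$ together with Proposition \ref{margulisderivada} identifies that set with $\M^{\length}(\coclase)$, and Proposition \ref{proper} concludes. Your alternative justification of non-empty interior via Corollary \ref{conococyclos} is a valid substitute for the Livšic-independence route the paper actually takes.
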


\begin{proof} If $\mathrm d\scr h(\co)=0$ Eq. \eqref{masaentropia} implies that $\peso_\aa(\mass{\peso_\aa}{\co})=0$. Lemma \ref{masainte} gives then $0\in\inte \VV{\peso_\aa}_{\co}$. Since $\centro(\m)=\{0\}$ Proposition \ref{margulisderivada} implies that $\VV{\peso_\aa}_{\co}=\M^{\peso_\aa}(\co)$. Kassel-Smilga's Proposition \ref{proper} gives then non-properness of the affine action.\end{proof}

\begin{cor} We let $\sf G$ be as in Corollary \ref{levelsets} and $\mathbb F$ be a non-abelian free group, then there exists $C>0$ such that if $\rho:\mathbb F\hookrightarrow\sf G$ is a Schottky group with contraction greater than $C$, then $\scr h$ is not critical at $\rho$.
\end{cor}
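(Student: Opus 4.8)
The statement asserts a uniform threshold: there is a constant $C$ (depending only on $\sf G$) such that every Schottky embedding $\rho:\mathbb F\hookrightarrow\sf G$ with contraction $>C$ has $\scr h$ non-critical at $\rho$. By Corollary \ref{levelsets}, it suffices to exhibit, for such $\rho$, an integrable cocycle $\co\in H^1_{\Ad\rho}(\mathbb F,\ge)$ with $\mathrm d_\rho\scr h(\co)\neq0$; indeed the contrapositive of the implication in Corollary \ref{levelsets} (``$\scr h$ critical $\Rightarrow$ no proper affine action'') together with the first assertion there shows that if $\scr h$ \emph{were} critical at $\rho$ then $\mathrm d_\rho\scr h\equiv0$ on all integrable cocycles, so it is enough to rule this out. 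The plan is to produce such a $\co$ from the affine geometry developed in Part \ref{affineactions} and then turn the non-properness criterion around.

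\textbf{Main steps.} First I would recall that for a Schottky group $\rho:\mathbb F\to\sf G$ with sufficiently large contraction, Zariski density is automatic for a generic choice of generators, and more to the point all cohomology classes are integrable: $\mathbb F$ being free, $H^1_{\Ad\rho}(\mathbb F,\ge)$ has dimension $(\mathrm{rk}_{\mathbb F}-1)\dim\ge$ and every class is realized by an actual analytic deformation (the character variety of a free group is smooth). So integrability is free. Second, I would invoke Proposition \ref{souravnon-swinging} (Kassel--Smilga, Ghosh) in the rank-one, non-swinging setting $\rep=\Ad$, $\trivia=\ge^0=\a$: the affine action of $\morfi_\co$ on $\ge$ is proper if and only if \emph{no} sequence $\g_n$ has $\length^{\g_n}(\rho)\to\infty$ with $\margulis(\g_n,\co(\g_n))$ bounded; equivalently $0\notin\M^{\peso_\aa}(\coclase)$. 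Third, I would use Corollary \ref{conococyclos}/Remark \ref{nomade} (or directly Corollary \ref{interiorAL}, noting $\longest$ acts trivially on $\trivia=\a$ in rank one with $\centro(\m)=\{0\}$): for a Zariski-dense Schottky $\rho$ and a non-trivial class $\co$ the Margulis spectrum $\M^{\peso_\aa}(\coclase)$ has non-empty interior, hence one can choose $\co$ so that $0\notin\M^{\peso_\aa}(\coclase)$ — i.e. so that the affine action of $\morfi_\co$ on $\ge$ \emph{is} proper. By Corollary \ref{levelsets} such a $\co$ must have $\mathrm d_\rho\scr h(\co)\neq0$, and therefore $\scr h$ is not critical at $\rho$.

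\textbf{Uniformity of $C$.} The delicate point — and the main obstacle — is that the conclusion must hold for \emph{all} Schottky groups above a single threshold $C$ depending only on $\sf G$, not merely for Zariski-dense ones. Two issues arise. (i) A Schottky group need not be Zariski-dense; but its Zariski closure $\sf H$ is reductive with a proximal element, so one should run the argument inside an adjoint factor, using Corollary \ref{factor-cotangente} in place of Corollary \ref{phi-cotangent}, and observe that $\scr h$ is governed by the $\{\aa\}$-Ledrappier potential which only feels $\peso_\aa|\a_\sf H$; since $\sf H$ is rank one (being a subgroup of a rank-one $\sf G$ acting convex-cocompactly) the same non-swinging/interior argument applies verbatim to $\sf H\ltimes\ge_\sf H$, and $\neudim(\Ad_{\sf H})=1$ still holds by the classification in Corollary \ref{levelsets}. (ii) The existence of a class $\co$ with $0\notin\M^{\peso_\aa}(\coclase)$ needs a \emph{quantitative} lower bound on the size of $\M^{\peso_\aa}$ that degrades gracefully as the Schottky contraction grows: here one uses that a Schottky group with contraction $\geq C$ contains loxodromic elements $g,h$ forming a $C'$-transverse pair with affine contraction $\leq 1/C'$ for $C'\to\infty$ as $C\to\infty$, so Theorem \ref{SM} gives $\margulis$-additivity with small error and one can manufacture, inside $\mathbb F$, words whose Margulis invariants are bounded away from $0$ in a fixed direction; concretely for $\co$ supported on a single free generator $a$ one takes $\co(a)$ to have non-zero $\margulis(a,\co(a))$, which forces $\M^{\peso_\aa}(\coclase)$ to contain a ray from $0$ of definite opening, whence $0$ lies in its interior only if one instead \emph{perturbs} to push $0$ off the spectrum — the genuinely uniform step is that for large enough contraction the map $\co\mapsto\M^{\peso_\aa}(\coclase)$ is ``uniformly open'', which follows by combining the uniform estimates of Smilga \cite{SmilgaAnnalen} quoted after Lemma \ref{atraco} with Proposition \ref{interiorneu}. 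Assembling these, the threshold $C$ can be extracted purely from the constants in Theorem \ref{SM} and Lemma \ref{atraco}, which depend only on $\rep=\Ad$, hence only on $\sf G$, completing the proof.
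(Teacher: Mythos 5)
Your overall strategy is the right one, and it matches the paper's: apply Corollary \ref{levelsets} in the contrapositive, so that it suffices to produce one cocycle $\co$ for which the affine action of $\morfi_\co$ on $\ge$ is proper. But the step where you produce such a $\co$ contains a genuine gap. You argue: ``the Margulis spectrum $\M^{\peso_\aa}(\coclase)$ has non-empty interior, \emph{hence} one can choose $\co$ so that $0\notin\M^{\peso_\aa}(\coclase)$.'' This is a non sequitur. Non-empty interior of $\M^{\peso_\aa}(\coclase)$ for a fixed nontrivial $\co$ only says that the normalized Margulis invariants fill out a nondegenerate interval; it says nothing about whether $0$ can be pushed outside that interval for \emph{some} choice of $\co$. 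Since $\margulis(\g,\co(\g))$ is linear in $\co$, rescaling $\co$ rescales the whole interval and never moves $0$ out of it. A clear warning sign is that your ``Main steps'' paragraph never uses the contraction hypothesis: if the argument were valid as written, $\scr h$ would be non-critical at \emph{every} Zariski-dense convex-cocompact representation, and the constant $C$ in the statement would be superfluous.

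The existence of a cocycle with $0\notin\M^{\peso_\aa}(\coclase)$ --- equivalently, of a proper affine action above $\Ad\rho$ --- is precisely Smilga's construction \cite{smilga1}, and this is the second ingredient the paper invokes (its proof reads: ``Follows from Corollary \ref{levelsets} together with Smilga's construction''). That construction is where the threshold $C$ comes from: one chooses translation parts for the Schottky generators and uses the quasi-additivity of Margulis invariants (in the spirit of Theorem \ref{SM}), with error terms controlled by the contraction, to show that the normalized invariants of \emph{all} words stay in a half-line bounded away from $0$. Your ``Uniformity of $C$'' paragraph gestures at this (``manufacture words whose Margulis invariants are bounded away from $0$ in a fixed direction'') but frames it as a ``uniform openness'' of $\co\mapsto\M^{\peso_\aa}(\coclase)$, which is neither established in the paper nor the mechanism that actually works; carrying out the inductive control over all words of $\mathbb F$ is the substance of \cite{smilga1} and cannot be replaced by the interior results of Part \ref{affineactions}, which go in the opposite direction (they produce large spectra, not spectra avoiding $0$).
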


\begin{proof} Follows from Corollary \ref{levelsets} together with Smilga's construction \cite{smilga1}.
\end{proof}

Let us consider now $\sf G=\SL(3,\R)$ and the functional $\sf H\in\a^*$ given by $\sf H(a)=(a_1-a_3)/2$, the \emph{Hilbert entropy} $\entropy{\sf H}{}:\Anosov_\simple(\gh,\SL(3,\R))\to\R_+.$

\begin{cor}[$\SL(3,\R)$]\label{HilbertCritica}Consider  $0\neq v\in\sf T_\rho\Anosov_\simple(\gh,\SL(3,\R))$ with Zariski-dense base-point. If $\mathrm d\entropy{\sf H}{}(v)=0$ then the affine action on $\sl(3,\R)$ via $\co_v$ is not proper.
\end{cor}

\begin{proof} Since $\SL(3,\R)$ is simple $v$ has full variation. Applying Corollary \ref{Livindependent} we obtain that $\VV{\sf H}_v$ has non-empty interior and by Corollary \ref{masa} $$\mass{\sf H}v\in\inte\VV{\sf H}_v.$$

By Equation \eqref{masaentropia} $\sf H(\mass{\sf H}v)=-\mathrm d\log \entropy{\sf H}{}(v)=0$, so $\mass{\sf H}v\in\ker\sf H$. Since $\sf H$ is $\ii$-invariant, the set of normalized variations $\VV{\sf H}_v$ is also $\ii$-invariant so we obtain $\ii(\mass{\sf H}v)=-\mass{\sf H}v\in\inte\VV{\sf H}_v$,  thus by convexity, $0\in\inte\VV{\sf H}_v.$ Since $\SL(3,\R)$ is split, $ \m=0$ and whence $\centro(\m)=\{0\}$, so by Proposition \ref{margulisderivada} we conclude that $0\in\inte\M^{\sf H}(\co_v)$. Kassel-Smilga's Proposition \ref{proper} gives then non-properness of the action.\end{proof} 

As before, Corollary \ref{HilbertCritica} together with Smilga's construction \cite{smilga1} gives:

\begin{cor} Let $\mathbb F$ be a non-abelian free group, then there exists $C>0$ such that if $\rho:\mathbb F\hookrightarrow\SL(3,\R)$ is a Schottky subgroup with contraction greater than $C$, then $\entropy{\sf H}{}$ is not critical at $\rho$.
\end{cor}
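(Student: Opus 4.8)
The final statement to prove is the Corollary asserting: if $\mathbb F$ is a non-abelian free group, there exists $C>0$ so that any Schottky subgroup $\rho:\mathbb F\hookrightarrow\SL(3,\R)$ with contraction greater than $C$ has $\entropy{\sf H}{}$ non-critical at $\rho$. The plan is to combine the previous Corollary \ref{HilbertCritica} with Smilga's construction \cite{smilga1}, exactly as indicated.

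\begin{proof} First I would recall Smilga's construction \cite{smilga1}: for the irreducible representation $\rep$ under consideration — here the adjoint representation $\Ad:\SL(3,\R)\to\SL(\sl(3,\R))$, which has $1$-dimensional neutralizing space since $\SL(3,\R)$ is split — Smilga produces free subgroups of $\SL(3,\R)\ltimes_{\Ad}\sl(3,\R)$ acting properly discontinuously on $\sl(3,\R)$, and moreover the linear parts can be taken to be Schottky groups with arbitrarily strong contraction. Thus there is a constant $C>0$ such that any Schottky subgroup $\rho(\mathbb F)<\SL(3,\R)$ with contraction greater than $C$ admits a proper affine action $\morfi_{\co}$ on $\sl(3,\R)$ above $\Ad\rho$, for some integrable cocycle $\co\in H^1_{\Ad\rho}(\mathbb F,\sl(3,\R))$ (integrability holds since $\mathbb F$ is free, so every cocycle is integrable — one can realize the deformation explicitly as a curve of representations). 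Such a $\rho$ has Zariski-dense image in $\SL(3,\R)$, for a sufficiently contracting Schottky group in the irreducible representation generates a Zariski-dense subgroup, and it is $\simple$-Anosov as a Schottky subgroup.

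Next I would argue by contradiction: suppose $\entropy{\sf H}{}$ were critical at such a $\rho$, i.e. $\mathrm d_\rho\entropy{\sf H}{}=0$. Then for the nonzero vector $v=\co\in\sf T_\rho\Anosov_\simple(\mathbb F,\SL(3,\R))$ we would have $\mathrm d\entropy{\sf H}{}(v)=0$, so Corollary \ref{HilbertCritica} applies and asserts that the affine action on $\sl(3,\R)$ via $\co_v=\co$ is \emph{not} proper. This directly contradicts the properness supplied by Smilga's construction. Hence $\entropy{\sf H}{}$ cannot be critical at $\rho$, which is the claim.

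The only point requiring a little care is matching the hypotheses of Corollary \ref{HilbertCritica} — namely that the base point $\rho$ be $\simple$-Anosov with Zariski-dense image and that $v$ be a genuine tangent vector to the Anosov character variety corresponding to the cocycle $\co$. The first is immediate for a highly contracting Schottky group (Anosov-ness is the standard ping-pong estimate, Zariski-density follows from strong contraction in the adjoint/irreducible representation together with freeness, exactly as in Benoist \cite{limite}); the second is automatic over a free group, where $H^1_{\Ad\rho}(\mathbb F,\sl(3,\R))$ is the full tangent space and every class is represented by an analytic curve of representations. I expect no real obstacle here: this corollary is a formal consequence of Corollary \ref{HilbertCritica} and Smilga's theorem, mirroring the argument of the previous Corollary about rank-one groups, with $\sf H$ playing the role of a generic $\ii$-invariant functional and the split hypothesis $\m=0$ used — already inside Corollary \ref{HilbertCritica} — to identify $\VV{\sf H}_v$ with the Margulis spectrum $\M^{\flujo^\length}(\co_v)$. \end{proof}
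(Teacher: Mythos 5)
Your proposal is correct and is exactly the paper's argument: the paper proves this corollary in one line by combining Corollary \ref{HilbertCritica} with Smilga's construction, and your contradiction argument (Smilga supplies a proper action above $\Ad\rho$ via some non-coboundary cocycle $\co$, while criticality of $\entropy{\sf H}{}$ would force non-properness via Corollary \ref{HilbertCritica}) is the intended reading. One parenthetical slip: the neutralizing space of $\Ad$ for $\SL(3,\R)$ is $\a$, which is $2$-dimensional, not $1$-dimensional — but this plays no role since you invoke Corollary \ref{HilbertCritica} as a black box.
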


\section{The case of $\T$-positive representations}\label{Theta-positive}

Guichard-Wienhard \cite{GW-positive} have introduced, for a subset $\T\subset\simple$, the notion of a \emph{$\T$-positive representation}  $\pi_1S\to\sf G$. We refer to their work for the definition and instead use the following result, which states that these representations are $\T$-Anosov, and verify a stronger form called \emph{hyperconvexity}. We will whence begin by studying the latter in \S\ref{112}. The following collects results which can be found in Beyrer-Pozzetti \cite{BP}, Guichard-Labourie-Wienhard \cite{GLW}, Beyrer-Pozzetti-Guichard-Labourie-Wienhard \cite{BGLPW} in different generality and in Pozzetti-S.-Wienhard \cite{PSW1,PSW2}, S. \cite{clausurasPos} and \cite{BGLPW} for the second item.


\begin{thm}\label{tpos}\item\begin{enumerate}\item\label{anoso}\emph{{\bf{(}} \cite{BP,GLW,BGLPW}{\bf{)}}} Every $\T$-positive representation is $\T$-Anosov, moreover, $\tpos\subset\frak X\big(\pi_1S,\sf G\big)$ is open and closed. \item\label{hyper}\emph{{\bf{(}} \cite{PSW1,PSW2,clausurasPos,BGLPW}{\bf{)}}} For every $\upalpha\in\inte\T,$ and $\rho\in\tpos,$ the representation $\rep_\upalpha\rho$ is $(1,1,2)$-hyperconvex. 
\end{enumerate}
\end{thm}

\subsection{$(1,1,2)$-hyperconvex representations and Hausdorff dimension}\label{112} The main purpose of this section is to establish Corollary \ref{C1hff} below. We recall a definition from Pozzetti-S.-Wienhard \cite{PSW1}.

\begin{defi}\label{hyperdefi} Let $\K=\R$ or $\C.$ A representation $\rho:\gh\to\SL(d,\K)$ is $(1,1,2)$-\emph{hyperconvex} if it is $\{\slroot_1,\slroot_2\}$-Anosov and for every triple $x,y,z\in\bord\gh$ of pairwise distinct points one has $$\big(\xi^1(x)\oplus\xi^1(y)\big)\cap\xi^{d-2}(z)=\{0\}.$$ If $\sf G$ is real-algebraic  and $\aa\in\simple,$ then a representation $\rho:\gh\to\sf G$ is $\aa$-\emph{hyperconvex} if $\rep_\aa\rho$ is $(1,1,2)$-hyperconvex. 
We let $\Anosov^{\pitchfork}_{\aa}(\gh,\sf G)$ denote the space of such representations, it is an open subset of the character variety (\cite[Proposition 6.2]{PSW1}).
\end{defi}


\begin{thm}\label{simpleroot1}Let $\rho:\pi_1S\to\SL(d,\R)$ be $(1,1,2)$-hyperconvex, then
\begin{enumerate}\item \label{h==1}\emph{{\bf{(}}Pozzetti-S.-Wienhard \cite{PSW1}{\bf{)}}} $\entropy{\slroot_1}\rho=1$ and
\item\label{Gsimple}\emph{{\bf{(}}Pozzetti-S. \cite[Theorem C]{PS}{\bf{)}}} if $\rho(\pi_1S)$ acts furthermore irreducibly on $\R^d$ then the Zariski closure $\sf G$ of $\rho(\pi_1S)$ is simple and the highest restricted weight of the representation $\sf G\hookrightarrow\SL(d,\R)$ is a multiple of a fundamental weight for a root $\aa\in\simple$ with $\dim\ge_\aa=1$.  \end{enumerate}
\end{thm}

%

We let $\barra{\aa}$ be the set consisting of $\aa$ together with the simple roots neighbouring $\aa$ in the Dynking diagram of $\sf G.$ Observe that $\aa\in\inte\barra\aa$ and whence $\aa\in(\a_{\barra\aa})^*$.

\begin{cor}\label{roothyperconvex} Let $\rho\in\Anosov^\pitchfork_\aa(\gh,\sf G)$ have Zariski-dense image. Then $\rho$ is $\barra\aa$-Anosov. Moreover, $\PP^\aa_\rho$ is (well defined and) Riemannian.\end{cor}

\begin{proof} The representation $\rep_\aa$ has highest restricted weight $n\peso_\aa$, whence $\forall\,g\in\sf G$
\begin{alignat*}{2}\slroot_1\big(\jordan(\rep g)\big)&=\aa\big(\jordan_{\sf G}(g)\big),\\ \slroot_2\big(\jordan(\rep g)\big)&=\min_{\sroot\in\simple_\sf G:\<\aa,\sroot\>\neq0}\sroot\big(\jordan_{\sf G}(g)\big).\end{alignat*}

\noindent
Since $\rep_\aa\rho$ is $\{\slroot_2\}$-Anosov, this last equation gives that $\rho$ is $\barra\aa$-Anosov. Since $\aa\in(\a_{\barra\aa})^*$ the pressure $\PP^\aa$ is well defined and $\PP^{\slroot_1}_\rho(v)=\PP^\aa_\rho(v)$. By Theorem \ref{simpleroot1} one has $\mathrm{d}\entropy\aa{}v=0$ and thus $\PP^\aa(v)=0$ if and only if $\forall\g$ $\mathrm{d}\alpha^\g(v)=0$ (Eq. \eqref{P=0}).  However, since $\sf G$ is simple $v$ has full variation and, since $\rho$ is Anosov, by Corollary \ref{fullerton} it has full loxodromic variation, so Corollary \ref{phi-cotangent} concludes.\end{proof}


\subsubsection{Root system of the complexification} We recall here some facts needed in \ref{hyperC}. Let $\ge$ be a simple real Lie algebra with Cartan decomposition $\ge=\k\oplus\p$.

Let $\ge_\C$ be the complexification of $\ge$, then $\u=\k\oplus i\p$ is a compact subalgebra and $\s=\p\oplus i\k$ is an $\ad_\u$-module. Thus, the involution $\tau:\ge_\C\to\ge_\C$ defined as $\tau|\u=\id$ and $\tau|\s=-\id$ is a Cartan involution of $\ge_\C$.

Let $\b\subset\m$ be a maximal Abelian subalgebra. Then $\h=\a\oplus\b$ is a Cartan subalgebra of $\ge$, meaning that $\h_\C$ is a Cartan subalgebra of $\ge_\C$. As such, we have a root-space decomposition $$\ge_\C=\h_\C\oplus\bigoplus_{\aa\in\sf{\Sigma}}(\ge_\C)_\aa,$$ where $(\ge_\C)_\aa=\big\{x\in\ge_\C:\forall h\in\h_\C\textrm{ one has }[h,x]=\aa(h)x\big\}$ and $\sf{\Sigma}=\big\{\aa\in(\h_\C)^*:(\ge_\C)_\aa\neq\{0\}\big\}.$


Corollary 6.49 from Knapp's book \cite{knapp} states that every $\aa\in\sf\Sigma$ verifies $\aa|\a\oplus i\b$ is real-valued, so since $\a\oplus i\b$ is a real form form of $\h_\C$, $\aa$ is uniquely determined by $\aa|\a\oplus i\b$. Moreover $\a\oplus i\b$ is a maximal abelian subspace of $\s$, so $\sf{\Sigma}'=\{\aa|\a\oplus i\b:\aa\in\sf{\Sigma}\}$ is the restricted root system of $\ge_\C$ as a real Lie algebra of non-compact type.

One has also that (\cite[Eq. 6.48b]{knapp}) if $\sroot\in\root$ then $$\ge_\sroot=\ge\cap\Big(\bigoplus_{\aa\in\sf{\Sigma}:\aa|\a=\sroot}(\ge_\C)_\aa\Big).$$ We obtain thus the following Lemma.

\begin{lemma}\label{rootspaceC}Let $\ge$ be simple and assume there exists $\sroot\in\root$ such that $\dim\ge_\sroot=1$, then $\ge_\C$ is simple and there exists a unique $\aa_\sroot\in\sf{\Sigma}$ such that $\aa_\sroot|\a=\sigma.$ Consequently, if $\sroot\in\simple$, we have a natural embedding of flag spaces $\EuScript F_{\{\sroot\}}(\sf G)\subset\EuScript F_{\{\aa_\sroot\}}(\sf G_\C).$
\end{lemma}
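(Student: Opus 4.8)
The plan is to leverage the two facts about the complexification recalled just above the statement: the root-space decomposition $\ge_\C=\h_\C\oplus\bigoplus_{\aa\in\sf{\Sigma}}(\ge_\C)_\aa$, and the identity $\ge_\sroot=\ge\cap\big(\bigoplus_{\aa\in\sf{\Sigma}:\aa|\a=\sroot}(\ge_\C)_\aa\big)$. First I would observe that, since every element of $\a$ is $\R$-diagonalizable, $\ad\a$ acts semi-simply on $\ge_\C$ and its $\sroot$-eigenspace is precisely $\bigoplus_{\aa\in\sf{\Sigma}:\aa|\a=\sroot}(\ge_\C)_\aa$; comparing with the displayed identity shows that this subspace is exactly the complexification of $\ge_\sroot$. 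Taking complex dimensions gives $1=\dim_\R\ge_\sroot=\sum_{\aa\in\sf{\Sigma}:\aa|\a=\sroot}\dim_\C(\ge_\C)_\aa$, and since each summand is at least $1$ there is a unique $\aa_\sroot\in\sf{\Sigma}$ with $\aa_\sroot|\a=\sroot$ (and then $\dim_\C(\ge_\C)_{\aa_\sroot}=1$), which is the uniqueness claim. For simplicity of $\ge_\C$: being the complexification of a simple real algebra, $\ge_\C$ is either simple or isomorphic to a product of two copies of a complex simple algebra; in the latter case $\ge$ would be a complex Lie algebra viewed as real, so all its restricted root spaces would be even-dimensional (see Knapp \cite{knapp}), which is incompatible with $\dim\ge_\sroot=1$. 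Hence $\ge_\C$ is simple.

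Now assume $\sroot\in\simple$. I would fix a positive system of $\sf{\Sigma}$ compatible with the chosen ordering on $\a$, i.e. with $\aa>0$ whenever $\aa|\a\in\root^+$, and with the roots vanishing on $\a$ (the roots of $\m$) ordered arbitrarily. Then $\aa_\sroot$ is a simple root of $\sf{\Sigma}$: if $\aa_\sroot=\gamma_1+\gamma_2$ with $\gamma_1,\gamma_2>0$, then $\sroot=\gamma_1|\a+\gamma_2|\a$ with each $\gamma_i|\a$ equal to $0$ or to a positive restricted root; both cannot be nonzero since $\sroot$ is simple, and if $\gamma_1|\a=0$ then $\gamma_2|\a=\sroot$, so $\gamma_2=\aa_\sroot$ by uniqueness and $\gamma_1=0$, a contradiction. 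Next I would check that the standard parabolic subalgebra $\frak p_{\{\aa_\sroot\}}(\ge_\C)$ of $\ge_\C$ attached to $\aa_\sroot$ satisfies $\ge\cap\frak p_{\{\aa_\sroot\}}(\ge_\C)=\frak p_{\{\sroot\}}(\ge)$: using once more that $\ge_\mu=\ge\cap\big(\bigoplus_{\aa|\a=\mu}(\ge_\C)_\aa\big)$, this comes down to the fact that, writing $\gamma\in\sf{\Sigma}$ in the simple roots of $\sf{\Sigma}$, the coefficient of $\sroot$ in $\gamma|\a$ equals the coefficient of $\aa_\sroot$ in $\gamma$ — which holds because $\aa_\sroot$ is the unique root of $\sf{\Sigma}$ whose restriction involves $\sroot$, again by the uniqueness above. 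Consequently $\sf G\cap\sf P_{\{\aa_\sroot\}}(\sf G_\C)=\sf P_{\{\sroot\}}(\sf G)$, so the $\sf G$-orbit of the base point in $\sf G_\C/\sf P_{\{\aa_\sroot\}}(\sf G_\C)$ is $\sf G/\sf P_{\{\sroot\}}(\sf G)=\EuScript F_{\{\sroot\}}(\sf G)$, which gives a $\sf G$-equivariant injection $\EuScript F_{\{\sroot\}}(\sf G)\hookrightarrow\EuScript F_{\{\aa_\sroot\}}(\sf G_\C)$; this map is a topological embedding because $\EuScript F_{\{\sroot\}}(\sf G)$ is compact.

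I expect the main obstacle to be this last step, namely producing an honest embedding of flag manifolds rather than merely a continuous injection — i.e. the verification that $\sf P_{\{\aa_\sroot\}}(\sf G_\C)\cap\sf G=\sf P_{\{\sroot\}}(\sf G)$ with the correct Levi factors. This is exactly where the hypothesis $\dim\ge_\sroot=1$ is used essentially: if $\sroot$ had several lifts to $\sf{\Sigma}$ one would need to track the Satake involution permuting them, and the matching of the two Levi subgroups would be considerably more delicate. Everything else amounts to routine bookkeeping with the facts recalled from Knapp's book.
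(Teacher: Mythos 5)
Your argument is correct and follows essentially the same route as the paper: uniqueness of $\aa_\sroot$ comes from the dimension count forced by the identity $\ge_\sroot=\ge\cap\big(\bigoplus_{\aa|\a=\sroot}(\ge_\C)_\aa\big)$, and simplicity of $\ge_\C$ from the observation that $\ge$ cannot itself be complex when some root space is one-dimensional. The only difference is that the paper treats the flag-space embedding as immediate ("the last assertion follows"), whereas you spell out the matching of parabolics $\ge\cap\frak p_{\{\aa_\sroot\}}(\ge_\C)=\frak p_{\{\sroot\}}(\ge)$ — a correct and welcome elaboration.
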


\begin{proof} The real algebra $\ge$ cannot in itself be complex (otherwise every root space has dimension $\dim_\R=2$), so the first statement follows from \cite[Theorem 9.4(b)]{knapp}.

Concerning the second statement, for every $\aa\in\sf{\Sigma}$ one has $(\ge_\C)_\aa\cap\ge\neq0,$ indeed if $x+iy\in(\ge_\C)_\aa$ with $x,y\in\ge$, then $$[a,x+iy]=[a,x]+i[a,y]=\aa(a)x+\aa(a)iy.$$ So since $\aa|\a$ is real valued one has, for all $a\in\a$, $[a,x]=\alpha(a)x$ and $[a,y]=\alpha(a)y$. The last assertion follows. \end{proof}

\subsubsection{Variation of Hausdorff dimension on complex groups}\label{hyperC} Throughout this subsection we let $\sf G_\C$ be the group of $\C$-points of $\sf G$. Consider the embedding $\sf G\subset\sf G_\C$.

If $\aa$ is a restricted root of $\sf G_\C$ (as a real group), then we fix a Riemannian metric on the flag space $\EuScript F_{\{\aa\}}(\sf G_\C)$, denote by $\Hff(X)$ the associated Hausdorff dimension of a subset $X$ of $\EuScript F_{\{\aa\}}(\sf G_\C)$ and consider the function \begin{alignat*}{2}\Hff_\aa &:\Anosov_{\{\aa\}}(\gh, \sf G_\C)\to\R_{>0}\\\rho&\mapsto\Hff\big(\xi^\aa(\bord\gh)\big).\end{alignat*}  It follows from Pozzetti-S.-Wienhard \cite{PSW1} and Bridgeman-Canary-Labourie-S. \cite{pressure} that $\Hff_\aa$ is an analytic function on $\Anosov^{\pitchfork}_\aa(\gh,\sf G_\C).$


By Lemma \ref{rootspaceC}, if $\sroot\in\simple$ is a restricted root of $\sf G$ so that $\dim\ge_\sroot=1$, then there is a unique restricted root $\aa_\sroot$ of $\sf G_\C$ so that $\aa_\sroot|\a=\sroot$, so we have an inclusion $$\Anosov_{\sroot}^\pitchfork(\gh,\sf G)\subset\Anosov_{\aa_\sroot}^\pitchfork(\gh,\sf G_\C),$$ and an embedding of the flag spaces $\EuScript F_{\{\sroot\}}(\sf G)\subset\EuScript F_{\{\aa_\sroot\}}(\sf G_\C)$.

Let $\sf J$ denote the almost-complex structure of $\frak X(\pi_1S,\sf G_\C)$ induced by the complex structure of $\sf G_\C$. Let us also consider the irreducible representation $\rep_\aa:\sf G\to\SL(d,\R)$ which extends by complexifying to a representation $\rep_\aa:\sf G_\C\to\SL(d,\C)$. Since $\rep_\aa\sf G$ contains a proximal element, the complexified representation is also irreducible (over $ \C$). We recall a needed ingredient.

\begin{thm}[{Bridgeman-Pozzetti-S.-Wienhard \cite{HessianHff}}]\label{BPSWHyper} Let $\rho:\pi_1S\to\PSL(d,\R)$ be $(1,1,2)$-hyperconvex. Then for every integrable $v\in\sf T_\rho\frak X(\pi_1S,\PSL(d,\R))$ with integrable ${\sf J}v,$ one has $\hess_\rho\Hff_{\slroot_1}(\sf{J} v)=\PP^{\slroot_1}_\rho(v).$\end{thm}

We establish then the purpose of this section.

\begin{cor}\label{C1hff} Let $\rho\in\Anosov_\aa^\pitchfork(\gh,\sf G)$ have Zariski-dense image. Then for every $v\in\sf T_\rho\frak X(\pi_1S,\sf G_\C)$ that is not tangent to the real characters one has $$\hess_\rho\Hff_\aa(v)>0.$$ In particular, there exists a neighborhood $\EuScript V$ of $\rho$ in $\frak X(\pi_1S,\sf G_\C)$ such that if $\eta\in\EuScript V$ verifies $\Hff\big(\xi^\aa_\eta(\bord\pi_1S)\big) =1$ then the Zariski closure of $\eta(\pi_1S)$ is (conjugate to) $\sf G$.
\end{cor}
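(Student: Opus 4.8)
The statement to prove is Corollary \ref{C1hff}: for $\sf G_\R$ simple connected, $\aa\in\simple$ with $\dim\ge_\aa=1$, and $\rho:\pi_1S\to\sf G_\R$ Zariski-dense with $\rep_\aa\rho$ being $(1,1,2)$-hyperconvex, one has $\hess_\rho\Hff_\aa(v)>0$ for every $v\in\sf T_\rho\frak X(\pi_1S,\sf G_\C)$ not tangent to the real characters, and consequently a local rigidity statement for $\Hff_\aa$. The plan is to combine the thermodynamic machinery of the Hausdorff dimension of $(1,1,2)$-hyperconvex limit curves with the non-degeneracy results obtained earlier in this section (Corollary \ref{roothyperconvex}) together with the splitting of the complex tangent space as $\sf T_\rho\frak X(\pi_1S,\sf G_\R)\oplus\sf J\,\sf T_\rho\frak X(\pi_1S,\sf G_\R)$ via the complex structure $\sf J$.

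\textbf{Step 1: identifying $\Hff_\aa$ with a critical exponent and computing its first derivative.} First I would use Theorem \ref{simpleroot1}\eqref{h==1} (Pozzetti--S.--Wienhard): since $\rep_\aa\rho$ is $(1,1,2)$-hyperconvex with $\bord\pi_1S$ a circle, $\entropy{\slroot_1}{\rep_\aa\rho}=1$, and via the identification of flag spaces $\EuScript F_{\{\aa\}}(\sf G_\R)\subset\EuScript F_{\{\aa_\aa\}}(\sf G_\C)$ from Lemma \ref{rootspaceC} one translates this to say that $\Hff_\aa$ has a critical point at $\rho$ inside $\frak X(\pi_1S,\sf G_\C)$ and, on the real slice $\sf T_\rho\frak X(\pi_1S,\sf G_\R)$, it equals $1$ identically in a neighbourhood (since hyperconvexity is open and $\entropy{\slroot_1}{}$ is constant on the hyperconvex locus, and in the real case $\Hff$ of the circle coincides with that critical exponent). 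So $\Hff_\aa$ is critical at $\rho$, its Hessian is well-defined, and it is degenerate precisely along the real slice by construction. The point of the Corollary is that it is \emph{positive-definite} transverse to the real slice, i.e. on $\sf J\,\sf T_\rho\frak X(\pi_1S,\sf G_\R)$.

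\textbf{Step 2: reduce the Hessian to a pressure form via the Hessian-of-Hausdorff-dimension formula.} Next I would invoke Bridgeman--Pozzetti--S.--Wienhard \cite{HessianHff} (cited in the introduction for exactly this purpose), which expresses $\hess_\rho\Hff_\aa$ in terms of a pressure-type form; combined with the reparametrization/thermodynamic formalism of \S\ref{thermopre} and \S\ref{thermoSS}, the Hessian of the Hausdorff dimension at a critical point is (a positive multiple of) the pressure form $\PP^\aa_\rho$ restricted appropriately — more precisely, for a deformation $w$ the quantity $\hess_\rho\Hff_\aa(w)$ vanishes iff the variation of the relevant $\aa$-length functional is Liv\v sic-cohomologically trivial, exactly the vanishing condition for $\PP^\aa_\rho(w)$. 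For $w$ in the real slice this is automatic. For $w=\sf Jv$ with $v\in\sf T_\rho\frak X(\pi_1S,\sf G_\R)$ nonzero: since $\sf G_\R$ is simple, $v\neq0$ has full variation, and since $\rho$ is $\t$-Anosov (with $\t$ the neighbours of $\aa$, as in the proof of Corollary \ref{roothyperconvex}) Corollary \ref{corfullerton} gives that $v$ has full loxodromic variation; then Corollary \ref{phi-cotangent} produces $\g\in\pi_1S$ with $\aa(\varjor\g v)\neq0$. The complex structure $\sf J$ interchanges real and imaginary parts of eigenvalue data (the complex Jordan projection is $\C$-linear in the deformation by the corollary to Proposition \ref{margulisderivada}), so $\aa(\varjor\g{\sf Jv})\neq0$ as well, which forces $\PP^\aa_\rho(\sf Jv)>0$, hence $\hess_\rho\Hff_\aa(\sf Jv)>0$. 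Finally, since $\Hff_\aa$ is analytic (footnoted in the introduction, due to \cite{PSW1} plus \cite{pressure}), a critical point whose Hessian is positive-definite transverse to the manifold $\frak X(\pi_1S,\sf G_\R)$ of minimisers is a strict local minimum modulo that submanifold; together with the classification of Zariski closures of such representations (Zariski-density being open, and intermediate strata having strictly larger $\Hff_\aa$ by the Hessian bound applied along them), this yields the neighbourhood $\EuScript V$ on which $\Hff_\aa(\eta)=1$ forces $\eta(\pi_1S)$ to be Zariski-dense in a conjugate of $\sf G_\R$.

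\textbf{Main obstacle.} The delicate point is Step 2: matching the Hessian of $\Hff_\aa$ on the \emph{complex} flag variety $\EuScript F_{\{\aa\}}(\sf G_\C)$ — where the $\sf G_\C$-action is not conformal for the chosen Riemannian metric — with the pressure form $\PP^\aa$, and in particular verifying that the degeneration set of the Hessian is governed purely by the vanishing of $\aa(\varjor\g{\cdot})$ and not by some extra term coming from the non-conformality or from the imaginary directions. This requires carefully citing the formula of \cite{HessianHff} in the $(1,1,2)$-hyperconvex setting and checking its hypotheses apply to $\rep_\aa\eta$ for $\eta$ near $\rho$ in $\frak X(\pi_1S,\sf G_\C)$; everything else (full loxodromic variation, $\aa$ seeing eigenvalue variation, $\sf J$-compatibility) is handled by the results already established in this part of the paper.
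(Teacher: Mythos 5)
Your overall architecture coincides with the paper's: split $\sf T_\rho\frak X(\pi_1S,\sf G_\C)$ into the real tangent space plus its image under $\sf J$ (the paper justifies this splitting via irreducibility of the complexified $\rep_\aa$ and Theorem \ref{regularsurface}), invoke Bridgeman--Pozzetti--S.--Wienhard \cite{HessianHff}, and feed in the non-degeneracy of the pressure form obtained from full loxodromic variation and Corollary \ref{phi-cotangent} --- this last chain is precisely the content of Corollary \ref{roothyperconvex}, which the paper simply cites at this point.

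However, the mechanism you give for positivity in the imaginary directions is wrong. You claim that, since the complex Jordan projection varies $\C$-linearly, one has $\aa(\varjor\g{\sf Jv})\neq0$, and that this ``forces $\PP^\aa_\rho(\sf Jv)>0$''. But $\C$-linearity gives $\varjor\g{\sf Jv}=i\,\varjor\g{v}$ in $\a_\C$, which is purely imaginary because $v$ is real; hence the first-order variation along $\sf Jv$ of the \emph{real} Jordan projection of $\sf G_\C$ (the logarithms of the moduli of eigenvalues) vanishes identically. This is consistent with --- indeed it is the reason why --- $\rho$ is a critical point of $\Hff_\aa$, and it means that a ``pressure form evaluated at $\sf Jv$'', whose non-degeneracy is detected by first-order variations of real length functionals (Remark \ref{varphi=cte}), would be \emph{zero}, not positive. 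The whole point of \cite{HessianHff} is that the Hessian of $\Hff$ in the imaginary direction is a genuinely second-order quantity; Theorem A there reads $\hess_\rho\Hff_{\slroot_1}(\sf Jv)=\PP^{\slroot_1}_\rho(v)$ with the \emph{real} vector $v$ on the right-hand side. Once you quote it in that form, your argument closes exactly as the paper's does: $\PP^{\slroot_1}_\rho(v)=\PP^{\aa}_\rho(v)>0$ for $v\neq0$ real by Corollary \ref{roothyperconvex} (equivalently, by the full-variation argument you spell out inline), and positivity of the Hessian on the complement of the real slice follows.
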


\begin{proof} Theorem \ref{regularsurface} implies that $\rho$ is a regular point of $\frak X(\pi_1S,\sf G_\C)$ and thus, since $\rho$ has values in $\sf G$, one has \begin{equation}\label{tsplits}\sf T_\rho\frak X(\pi_1S,\sf G_\C)=\sf T_\rho\frak X(\pi_1S,\sf G)\oplus\sf J\big(\sf T_\rho\frak X(\pi_1S,\sf G)\big).\end{equation}

Theorem \ref{BPSWHyper} gives that for $v\in\sf T_\rho\frak X(\pi_1S,\sf G)$, $\hess_\rho\Hff_{\slroot_1}(\sf Jv)=\PP^{\slroot_1}_\rho(v).$ Thus combining with Corollary \ref{roothyperconvex} we obtain that, for every $v\in\sf T_\rho\frak X(\pi_1S,\sf G)$ $$\hess_\rho\Hff_\aa(\sf J v)>0.$$ The result then follows.\end{proof}

\begin{cor}\label{HffTpos} For every non-zero $v\in\sf T\tpos$ with Zariski-dense basepoint, and $\ua\in\inte\T$ one has $\hess_\rho\Hff_{\upalpha}(\sf Jv)>0.$\end{cor}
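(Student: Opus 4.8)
The plan is to deduce Corollary \ref{HffTpos} directly from the two ingredients cited just before its statement, namely Theorem \ref{tpos}\eqref{hyper} and Corollary \ref{C1hff}. First I would recall that $\tpos$ sits inside $\frak X(\pi_1S,\SO(p,q))$ and fix a $\T$-positive $\rho$ with Zariski-dense image, a non-zero $v\in\sf T_\rho\tpos$, and a root $\ua=\ua_k\in\inte\T$, so that $k\in\lb1,p-2\rb$. The key structural input is that although the ambient group $\SO(p,q)$ need not be simple and $\ua$ is not a root with one-dimensional root space for $\SO(p,q)$ itself, we can replace $\rho$ by $\Wedge^k\rho:\pi_1S\to\SL\big(\Wedge^k\R^{p+q}\big)$. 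By Theorem \ref{tpos}\eqref{hyper} this representation is $(1,1,2)$-hyperconvex, hence in particular $\{\slroot_1,\slroot_2\}$-Anosov; and $\peso_{\ua_k}\big(\jordan^\g(\rho)\big)$ is (a positive multiple of) $\jordan_1\big(\Wedge^k\rho(\g)\big)$, so the Hausdorff-dimension function $\Hff_{\ua_k}$ on flags of $\SO(p,q)$ corresponds, under the inclusion of flag spaces $\EuScript F_{\{\ua_k\}}(\SO(p,q))\subset\P(\Wedge^k\R^{p+q})$, to $\Hff_{\slroot_1}$ for $\Wedge^k\rho$.

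Next I would invoke Corollary \ref{C1hff}, applied with $\sf G_\R$ the Zariski closure of $\rho(\pi_1S)$ and $\aa$ the distinguished simple root furnished by Theorem \ref{simpleroot1}(2) (the one with $\dim\ge_\aa=1$ whose fundamental weight governs the embedding $\sf G_\R\hookrightarrow\SL(d,\R)$ via $\rep_\aa=\Wedge^k(\,\cdot\,)|_{\sf G_\R}$). Concretely: since $\rho$ has Zariski-dense image in $\sf G_\R$ and $\rep_\aa\rho=\Wedge^k\rho$ is $(1,1,2)$-hyperconvex, Corollary \ref{C1hff} gives $\hess_\rho\Hff_\aa(\sf Jw)>0$ for every $w\in\sf T_\rho\frak X(\pi_1S,\sf G_\R)$ with $w\neq0$ — and more generally $\hess_\rho\Hff_\aa(u)>0$ for any $u\in\sf T_\rho\frak X(\pi_1S,(\sf G_\R)_\C)$ not tangent to the real characters. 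Taking $w=v$ (regarded inside the real characters of $\sf G_\R$, which is legitimate because $v$ is tangent to $\tpos$ and $\tpos$ is open and closed by Theorem \ref{tpos}\eqref{anoso}) yields $\hess_\rho\Hff_{\slroot_1}(\sf Jv)>0$ for the $\Wedge^k$-picture, hence $\hess_\rho\Hff_{\ua}(\sf Jv)>0$ after translating back through the flag-space inclusion. I should check along the way that the complex structure $\sf J$ on $\frak X(\pi_1S,\SO_\C(p,q))$ restricts compatibly to the one on $\frak X(\pi_1S,(\sf G_\R)_\C)$ used in Corollary \ref{C1hff}; this is immediate since both come from the complex structure on the respective Zariski-closed subgroup and $\sf G_\R\subset\SO(p,q)$.

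The main (and essentially only) obstacle I anticipate is bookkeeping rather than substance: verifying carefully that the Hausdorff-dimension functional $\Hff_{\ua_k}$ attached to the flag variety $\EuScript F_{\{\ua_k\}}(\SO(p,q))$ agrees, up to a bi-Lipschitz change of the reference Riemannian metric (which does not affect Hausdorff dimension), with $\Hff_{\slroot_1}$ for $\Wedge^k\rho$ restricted to the sub-flag-variety — i.e. that the Plücker-type embedding $\EuScript F_{\{\ua_k\}}(\SO(p,q))\hookrightarrow\P(\Wedge^k\R^{p+q})$ is a smooth embedding and that $\xi^{\ua_k}_\rho(\bord\pi_1S)$ maps homeomorphically and bi-Lipschitzly onto $\xi^1_{\Wedge^k\rho}(\bord\pi_1S)$. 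This is a standard consequence of the construction of Tits/Plücker embeddings together with the fact that the curve lands in a single $\sf G_\R$-orbit, and it is exactly the identification already used implicitly in the proof of Corollary \ref{roothyperconvex}. Once that identification is in place, the positivity statement is a one-line consequence of Corollary \ref{C1hff}, and I would close by remarking that $\inte\T$ is nonempty precisely when $p\geq3$, which is the implicit standing hypothesis.
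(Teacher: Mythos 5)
Your proposal is correct and follows exactly the route the paper takes: the paper's entire proof of Corollary \ref{HffTpos} is the one-line observation that Theorem \ref{tpos}\eqref{hyper} (hyperconvexity of $\Wedge^k\rho$ for $k\in\lb1,p-2\rb$, i.e.\ of $\rep_{\ua}\rho$ for $\ua\in\inte\T$) puts $\rho$ in the hypotheses of Corollary \ref{C1hff} with $\sf G_\R=\SO(p,q)$ and $\aa=\ua_k$ (a long root, hence with one-dimensional root space), whence $\hess_\rho\Hff_{\ua}(\sf Jv)>0$. Your extra detour through the Zariski closure and Theorem \ref{simpleroot1}(2) is unnecessary here since the basepoint is assumed Zariski-dense, but it is harmless, and the flag-space/Pl\"ucker bookkeeping you flag is indeed the identification already built into Corollary \ref{C1hff}.
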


\begin{proof} Follows from Theorem \ref{tpos}\eqref{hyper} and Corollary \ref{C1hff}.\end{proof}

\subsection{Length functions and pressure} We establish the following:


\begin{cor}\label{inteT}Let $\rho:\pi_1S\to\sf G$ be $\T$-positive and $\sf H$ be its Zariski closure.
\begin{enumerate}
\item If $\sf H=\sf G$ then  $\forall\,\length\in\conodual{\T,\rho}$ the pressure form $\PP_\rho^\length$ is Riemannian.

\item\label{estrata} If $\sf H$ is simple, then for every $\length\in\conodual{\inte\T,\rho}$ one has $\PP^\length$ is Riemannian when restricted to characters with values in $\sf H$.
\end{enumerate}
\end{cor}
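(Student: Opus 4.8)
The statement asserts that for a $\T$-positive $\rho:\pi_1S\to\SO(p,q)$, the pressure form $\PP^\length_\rho$ is Riemannian (nondegenerate positive definite) for a suitable class of $\length$, in two cases according to the Zariski closure $\sf G$ of $\rho(\pi_1S)$. The strategy is to reduce, in both cases, to an application of Corollary \ref{nondeg}: we must exhibit, for each nonzero $v\in\sf T_\rho\frak X(\pi_1S,\sf G)$ and each admissible $\length$, a simple root $\aa$ with $\dim\ge_\aa=1$ and an element $\g\in\pi_1S$ such that $\aa$ \emph{strictly} minimizes $\rho(\g)$ among $\supp\length$, together with the fact that $v$ has full variation. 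Full variation of a nonzero $v$ is automatic once we know $\sf G$ is simple: any nonzero tangent vector to the character variety of a simple group has noncohomologically-trivial cocycle, and by Corollary \ref{corfullerton} (applicable since $\rho$ is Anosov with respect to a $\t$ containing a simple root of $\sf G$, as arranged in the proof of Corollary \ref{roothyperconvex}) full variation upgrades to full loxodromic variation.

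\textbf{Case (i): $\sf G=\SO(p,q)$.} Here $\root$ is of type $\B_{p-1}$ (in the restricted sense, with the diagram displayed in \S\ref{Theta-positive}), $\T=\{\ua_1,\dots,\ua_{p-1}\}$, and Theorem \ref{tpos}\eqref{anoso} gives that $\rho$ is $\T$-Anosov. All the long roots $\ua_1,\dots,\ua_{p-2}$ have one-dimensional root spaces. For $\length\in\conodual{\T,\rho}$ we have $\supp\length\subseteq\T$. First I would use Zariski-density of $\rho(\pi_1S)$ in $\SO(p,q)$ together with Benoist's Theorem \ref{interiorsi}: the limit cone $\Bcone_{\T,\rho}$ has nonempty interior, so there exists $g\in\pi_1S$ with $\rho(g)$ loxodromic whose Jordan projection has the values $\{\sroot(\jordan(\rho(g))):\sroot\in\supp\length\}$ pairwise distinct; choosing $\aa\in\supp\length$ realizing the minimum among these values gives a root with one-dimensional root space (at worst one needs to perturb $g$ inside $\pi_1S$ so that the minimizing root is a long root $\ua_j$ with $j\le p-2$ rather than the short root $\eps_p$ — this is possible by Proposition \ref{fullZariskidenso}-type density of loxodromic elements realizing directions in $\Bcone_{\T,\rho}$, since the minimizing root varies as the direction ranges over the cone and the short-root minimizing locus is a proper subcone). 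Then $\aa$ strictly minimizes $g$ among $\supp\length$ and Corollary \ref{nondeg} gives $\PP^\length_\rho(v)>0$ for every nonzero $v$; by Theorem \ref{regularsurface} $\rho$ is a regular point, so positive-definiteness on all of $\sf T_\rho\tpos$ follows, i.e. $\PP^\length_\rho$ is Riemannian.

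\textbf{Case (ii): $\rho(\pi_1S)$ irreducible on $\R^{p+q}$ but $\sf G\subsetneq\SO(p,q)$.} By Theorem \ref{tpos}\eqref{hyper}, $\Wedge^k\rho$ is $(1,1,2)$-hyperconvex for $k\in\lb1,p-2\rb$; equivalently, for each $\ua_k$ with $k\le p-2$ the Tits representation $\rep_{\ua_k}\rho$ (the $k$-th exterior power, up to the appropriate quotient) is $(1,1,2)$-hyperconvex. Theorem \ref{simpleroot1} then forces $\sf G$ to be simple and the inclusion $\sf G\hookrightarrow\SL(\Wedge^k\R^{p+q})$ to have highest weight a multiple of a fundamental weight $\peso_{\aa_k}$ for some $\aa_k\in\simple_{\sf G}$ with $\dim\ge_{\aa_k}=1$. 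Exactly as in the proof of Corollary \ref{roothyperconvex}, this identifies $\ua_k\big(\jordan(\rho(\g))\big)=\aa_k\big(\jordan_{\sf G}(\rho(\g))\big)$ and shows $\rho:\pi_1S\to\sf G$ is $\t$-Anosov for $\t$ the set of roots neighboring $\aa_k$ in the Dynkin diagram of $\sf G$, with $\aa_k\in\inte\t$; moreover $\PP^{\ua_k}_\rho=\PP^{\aa_k}_\rho$ on $\sf T_\rho\frak X(\pi_1S,\sf G)$. For a general $\length\in\conodual{\inte\T,\rho}$, the support of $\length$ is contained in $\inte\T=\{\ua_1,\dots,\ua_{p-2}\}$, all of whose roots have one-dimensional restricted root space in $\SO(p,q)$ and pull back (via the formula above applied to each $k$ occurring in $\supp\length$) to roots $\aa_k$ of $\sf G$ with one-dimensional root space. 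Using Theorem \ref{interiorsi} for $\sf G$, pick $g$ with $\rho(g)$ loxodromic and the values $\{\sroot(\jordan_{\sf G}(\rho(g))):\sroot\in\supp\length\}$ pairwise distinct, let $\aa\in\supp\length$ be the minimizer, note $\dim\ge_\aa=1$ and $\aa$ strictly minimizes $g$ among $\supp\length$, and apply Corollary \ref{nondeg} (whose hypotheses are met since any nonzero $v$ has full variation, $\sf G$ being simple, hence full loxodromic variation by Corollary \ref{corfullerton}). This yields $\PP^\length_\rho(v)>0$ for every nonzero $v\in\sf T_\rho\frak X(\pi_1S,\sf G)$, i.e. $\PP^\length$ restricted to characters with values in $\sf G$ is Riemannian.

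\textbf{Main obstacle.} The delicate point is the bookkeeping in Case (ii): matching the functionals $\length\in\conodual{\inte\T,\rho}$ (defined on the restricted Cartan of $\SO(p,q)$) with functionals on the restricted Cartan of the smaller simple group $\sf G$, and checking that ``strict minimizer with one-dimensional root space'' survives this restriction for \emph{every} $\length$ in the relevant cone — i.e. that the locus of $\length$ for which no such $g$ exists is empty. One should verify that the map $\jordan_{\sf G}(\rho(\pi_1S))$ is ``fat enough'' in $\a_{\sf G}^+$ (Zariski-density plus Theorem \ref{interiorsi}) that for any finite set $\supp\length$ of roots one can separate their values on some loxodromic $g$ while keeping the minimizer a one-dimensional-root-space root; this is where one must be careful that the short root $\eps_p$ (which has $\dim\ge_{\eps_p}>1$ in $\B$-type, but which does not lie in $\inte\T$) genuinely plays no role, and that the classification in Theorem \ref{simpleroot1} really does pin down $\aa_k$ to a one-dimensional root space in \emph{every} intermediate stratum. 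Everything else is a routine assembly of Corollaries \ref{nondeg}, \ref{corfullerton}, \ref{roothyperconvex} and Theorems \ref{interiorsi}, \ref{tpos}, \ref{regularsurface}.
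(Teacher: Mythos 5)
Your proof is correct and follows essentially the same route as the paper: both items reduce to Corollary \ref{nondeg} (nonzero $v$ has full variation by simplicity of the Zariski closure, upgraded to full loxodromic variation by Corollary \ref{corfullerton}), and item (ii) is handled exactly as in the paper by transporting $\length$ to a functional $\bar\length$ on $\a_{\sf G}$ via the correspondence $\ua_k\leftrightarrow\sroot_k$ between multiplicity-one roots furnished by Theorem \ref{tpos}\eqref{hyper} and Theorem \ref{simpleroot1} (the paper cites \cite[Lemma 4.8]{clausurasPos} for the uniqueness and one-dimensionality of $\sroot_k$, which is the same content you extract from Corollary \ref{roothyperconvex}). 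The only superfluous step is your perturbation in case (i) to dodge the short root $\eps_p$: since $\supp\length\subseteq\T$ consists only of long roots, $\eps_p$ can never arise as the minimizer among $\supp\length$.
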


\begin{proof} When $\rho(\pi_1S)$ is Zariski-dense then, since all roots in $\T$ have one-dimensional root spaces and $\sf G$ is simple, the result readily follows from Corollary \ref{nondeg}.

The second item is a bit more involved. The combination of Theorem \ref{tpos}\eqref{hyper} and S. \cite[Lemma 4.8]{clausurasPos} imply that for every $\upalpha\in\inte\T$ there exists (a unique) $\sroot_\upalpha\in\simple_\sf H$ such that for all $\g\in\pi_1S$ it holds $$\sroot_\upalpha(\jordan_{\sf H}\big(\iota\g)\big)=\upalpha_k\big(\jordan_\sf G(\rho\g)\big).$$ In particular $\iota:\pi_1S\to\sf H$ is $\{\sroot_\upalpha\}$-Anosov. Moreover \cite[Lemma 4.8]{clausurasPos} states that $\dim \ge_{\sroot_\upalpha}=1$. Theorems \ref{simpleroot1} and \ref{tpos}\eqref{hyper} imply moreover that $$\peso_\upalpha(\jordan_{\sf G}\rho \g)=n_\upalpha\peso_{\sroot_\upalpha}(\jordan_\sf H(\iota\g)).$$ It follows that, since $\length\in\<\{\peso_\upalpha:\upalpha\in\inte\T\}\>$ there exists $\bar\length\in(\a_\sf H)^*$ such that for every $\g\in\pi_1S$ one has $$\length(\jordan_{\sf G}(\rho\g))=\bar\length(\jordan_\sf H(\iota\g)).$$ Moreover $\bar\length\in\conodual{\{\sroot_\upalpha\},\iota}$ so  Corollary \ref{nondeg} yelds the desired non-degeneracy.
\end{proof}


We introduce then the following definition.

\begin{defi} A \emph{length function} on $\tpos$ is a $\Mod(S)$-invariant map $\uppsi:\tpos\to(\a_\T)^*$ so that for every $\rho\in\tpos$ one has $\uppsi(\rho)\in\conodual{\T,\rho}$.
\end{defi}

\begin{cor}\label{tposlengthfunction} Let $\uppsi:\tpos\to(\a_{\inte\T})^*$ be a length function. Then the semi-definite form $\rho\mapsto\PP^{\uppsi(\rho)}$ induces a $\Mod(S)$-invariant path metric on the space of $\T-$positive representations with simple Zariski-closure.
\end{cor}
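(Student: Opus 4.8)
The plan is to deduce Corollary \ref{tposlengthfunction} from the pointwise non-degeneracy already established in Corollary \ref{inteT}\eqref{estrata}, together with the general path-metric machinery for pressure forms. First I would observe that, since $\uppsi$ takes values in $(\a_{\inte\T})^*$, at every irreducible $\rho\in\tpos^{\mathrm{irr}}$ the form $\uppsi(\rho)\in\conodual{\inte\T,\rho}$, so Corollary \ref{inteT}\eqref{estrata} applies and gives that $\PP_\rho^{\uppsi(\rho)}$ is positive-definite on $\sf T_\rho\tpos^{\mathrm{irr}}$; here one uses that, by Theorem \ref{tpos}\eqref{anoso}, $\tpos$ is open and closed in the character variety, that irreducibility is an open condition, and that at an irreducible point the character variety is smooth (e.g.\ Theorem \ref{regularsurface}), so $\tpos^{\mathrm{irr}}$ is an analytic manifold and the tangent spaces behave as expected.

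Next I would upgrade the pointwise positivity to a genuine (continuous) Riemannian metric. The subtle point is that $\uppsi$ is only assumed \emph{smooth} (as in Definition \ref{funclong}, transcribed to the $\tpos$ setting), not analytic, and the pressure form $\PP^\length_\rho$ depends on the pair $(\length,\rho)$; but by Theorem \ref{teoLedr} and the analyticity of $\rho\mapsto\ledrappier_\rho$, together with the analytic dependence of entropy, intersection and variance in the thermodynamic formalism (Theorem \ref{JJ}, Proposition \ref{pder}), the assignment $(\length,\rho)\mapsto\PP^\length_\rho$ is jointly continuous (indeed analytic in $\rho$ and smooth in $\length$) on the cone-bundle $\lenf_\T(\pi_1S,\SO(p,q))$. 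Composing with the smooth section $\rho\mapsto(\uppsi(\rho),\rho)$ yields a field of symmetric bilinear forms $\rho\mapsto\PP^{\uppsi(\rho)}_\rho$ that is as regular as $\uppsi$ and, by the previous paragraph, positive-definite everywhere on $\tpos^{\mathrm{irr}}$; hence it is a (smooth, or at least continuous) Riemannian metric.

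Then I would address $\Mod(S)$-invariance: the mapping class group acts on the character variety by precomposition, it preserves $\tpos$ and $\tpos^{\mathrm{irr}}$, and by Corollary \ref{tauisom}-type reasoning the dynamical intersection $\II^\length$, and hence $\PP^\length$, transforms equivariantly, i.e.\ $\gamma^*\PP^{\length}_{\gamma\cdot\rho}=\PP^{\length}_\rho$ for $\gamma\in\Mod(S)$. Combined with the hypothesis that $\uppsi$ is $\Mod(S)$-invariant, $\uppsi(\gamma\cdot\rho)=\uppsi(\rho)$, this gives $\gamma^*\big(\PP^{\uppsi(\cdot)}\big)=\PP^{\uppsi(\cdot)}$, so the metric descends to (or rather is invariant on) $\tpos^{\mathrm{irr}}$. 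A continuous Riemannian metric induces a path (length) metric on the underlying manifold in the usual way, and invariance of the metric tensor gives invariance of the induced distance; this finishes the proof.

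I expect the main obstacle to be purely technical bookkeeping rather than a deep new idea: namely verifying that the construction is legitimate on the possibly non-Hausdorff or singular locus issues of $\frak X(\pi_1S,\SO(p,q))$ away from very regular points, and that restricting everything to $\tpos^{\mathrm{irr}}$ genuinely removes those pathologies, so that "path metric'' makes sense. A secondary point requiring care is that the regularity claim — "as regular as $\uppsi$'' — forces one to track how the pressure form depends on $\length$; but since this dependence is through finitely many analytic operations in the thermodynamic formalism, smoothness of $\uppsi$ passes through unimpeded. Everything else is an application of results already in the paper.
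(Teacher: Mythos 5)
There is a genuine gap in your first step. You claim that Corollary \ref{inteT}(\ref{estrata}) gives that $\PP_\rho^{\uppsi(\rho)}$ is positive-definite on all of $\sf T_\rho\tpos^{\mathrm{irr}}$ at every irreducible $\rho$. That is not what that corollary says: for an irreducible but non-Zariski-dense $\rho$ with Zariski closure $\sf G\subsetneq\SO(p,q)$, it only gives that $\PP^{\uppsi(\rho)}$ is Riemannian \emph{when restricted to characters with values in $\sf G$}, i.e.\ on the tangent space to the stratum of representations factoring through $\sf G$. In directions transverse to that stratum the form may degenerate — this is not a technicality one can wave away: the analogous Hitchin-component analysis (Theorem \ref{thmPrinc}) shows that pressure forms genuinely do degenerate at lower strata in directions inside the stratum's normal bundle, and this is precisely why the corollary asserts only a \emph{path metric} induced by a \emph{semi-definite} form, not a Riemannian metric. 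Your argument, if it worked, would prove the stronger (and unestablished) statement that $\rho\mapsto\PP^{\uppsi(\rho)}$ is Riemannian on all of $\tpos^{\mathrm{irr}}$; the remainder of your proof then rests on this false premise.

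The missing idea is a stratification argument. One uses Theorems \ref{simpleroot1} and \ref{tpos}(\ref{hyper}) to see that there are only finitely many pairs $(\sf G,\rep)$ that can arise as Zariski closures of points of $\tpos^{\mathrm{irr}}$, stratifies $\tpos^{\mathrm{irr}}$ by the corresponding nested submanifolds $W_{(\sf G,\rep)}$, and notes (via Labourie) that the locus where the Zariski closure drops further has strictly smaller dimension. Corollary \ref{inteT}(\ref{estrata}) — and here the hypothesis $\uppsi(\rho)\in(\a_{\inte\T})^*$ is used — gives positive-definiteness of $\PP^{\uppsi(\rho)}$ on $\sf T_\rho W_{(\sf G,\rep)}$ for $\rho$ in the open dense part of each stratum. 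One then invokes Lemma \ref{nested} of Bray–Canary–Kao–Martone, which is exactly designed to show that a non-negative symmetric $2$-tensor that is definite on the tangent space of each stratum (off the next one) induces a path pseudo-metric that separates points. Your treatment of $\Mod(S)$-invariance is fine, but without the stratification and Lemma \ref{nested} the separation of the path pseudo-metric is not justified.
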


We emphasize that our length function has values in $(\a_{\inte\T})^*\subset(\a_\T)^*$.

\begin{proof}

The set of pairs $(\sf H,\rep)$, where $\sf H$ is a simple Lie group and $\rep:\sf H\to\sf G$ is an irreducible representation up to conjugation, is finite. We further restrict the class of such pairs by only considering $(\sf H,\rep)$ if there exists $\rho\in\tpos$ with Zariski closure conjugate to $\rep(\sf H)$. By assumption, the finitely many submanifolds of $$W_{(\sf H,\rep)}=\big\{\rho\in\tpos:\overline{\rho(\pi_1S)}{}^{\mathrm Z}\subset\textrm{ a conjugate of }\rep(\sf H)\big\}$$ exhaust the space we want to understand. Moreover, by Labourie \cite[Theorem 5.2.6]{LabourieLectures} the set $$W_{(\sf H,\rep)}^\mathrm{Z}=\big\{\rho\in W_{(\sf H,\rep)}:\overline{\rho(\pi_1S)}{}^\mathrm{Z}\textrm{ is conjugate to }\rep(\sf H)\big\} $$ is open on $W_{(\sf H,\rep)}$ and $W_{(\sf H,\rep)} \setminus W_{(\sf H,\rep)}^\mathrm{Z}$ has dimension strictly smaller than $\dim W_{(\sf H,\rep)}$.

Since we have chosen the length function $\uppsi$ to have values in $(\a_{\inte\T})^*$, Corollary \ref{inteT}\eqref{estrata} implies that $\rho\mapsto\PP^{\uppsi(\rho)}$ is Riemannian on every $W_{(\sf H,\rep)}^\mathrm{Z}$, so  Lemma \ref{nested} below gives the desired conclusion.\end{proof}

\begin{lemma}[{Bray-Canary-Kao-Martone \cite[Lemma 5.2]{pressurecusped}}]\label{nested}Let $W_0$ be a smooth manifold and let $W_n \subset W_{n-1} \subset\cdots\subset W_1 \subset W_0$ be a nested collection of submanifolds of $W_0$ so that $W_i$ has non-zero codimension in $W_{i-1}$ for all $i$. Set $W_{n+1} = \vacio$. Suppose that $g$ is a smooth non-negative symmetric $2$-tensor on $W_0$ such that for every $i \in\lb0,n\rb$, the restriction of $g$ to $\sf T_xW_i$ is positive definite if $x \in W_i \setminus W_{i+1}$. Then, the path pseudo-metric defined by $g$ is a metric.
\end{lemma}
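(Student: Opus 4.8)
The plan is to show that the path pseudo-metric $d$ of $g$ satisfies $d(x,y)>0$ for every pair of distinct points $x\ne y$ of $W_0$. It is cleanest to prove, by induction on the length $n$ of the chain, the following stronger \emph{local} statement: for each $x\in W_0$ there is a neighbourhood basis $\{V\}$ of $x$ in $W_0$ such that every member $V$ carries a constant $\delta_V>0$ with the property that any piecewise-$\class^1$ path in $W_0$ issuing from $x$ and leaving $V$ has $g$-length at least $\delta_V$. Granting this, given $x\ne y$ one picks $V$ in the basis with $y\notin V$; every path from $x$ to $y$ leaves $V$, hence has $g$-length $\ge\delta_V$, so $d(x,y)\ge\delta_V>0$.

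For $n=0$ one has $W_1=\vacio$, so $g$ is positive definite on all of $W_0$, i.e. a genuine Riemannian metric, and the local estimate is the classical fact that on a relatively compact coordinate ball $V$ on which $g\ge c_V\,h$ for a fixed auxiliary Riemannian metric $h$, any path from the centre leaving $V$ has $g$-length $\ge\sqrt{c_V}\,\mathrm{dist}_h(x,\partial V)$. For arbitrary $n$ the same argument applies, verbatim, at any point $x$ where $g_x$ is positive definite, positive definiteness being an open condition; in particular it covers every $x\in W_0\setminus W_1$. This leaves only the points $x$ at which $g_x$ is degenerate; for such a point one necessarily has $i:=i(x)\ge 1$, where $i(x)$ denotes the largest index with $x\in W_i$, and then $x\in W_i\setminus W_{i+1}$, so by hypothesis $g$ restricted to $\sf T_x W_i$ is positive definite.

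Fix such an $x$ and choose a chart $\phi\colon U\xrightarrow{\ \sim\ } B^a\times B^b\subset\R^{\dim W_i}\times\R^{\dim W_0-\dim W_i}$ with $\phi(x)=0$ and $\phi(U\cap W_i)=B^a\times\{0\}$; write points as $(u,w)$ and let $\pi(u,w)=(u,0)$. The submanifolds $W_{i+1}\subset\cdots\subset W_n$ all lie inside $W_i$, and $g|_{\sf T W_i}$ satisfies the hypotheses of the lemma relative to this shorter chain, of length $n-i<n$; hence, by the inductive hypothesis applied inside $W_i$, after shrinking $U$ there is $\delta'>0$ so that every piecewise-$\class^1$ path in $W_i$ issuing from $x$ and leaving $U\cap W_i$ has $g$-length $\ge\delta'$. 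On the other hand, positive definiteness of $g$ in the $u$-directions at $x$, continuity of $g$, and the Cauchy--Schwarz inequality for the positive-semidefinite form $g$ provide constants $c_0,M>0$ and, after a further shrinking of $U$, the pointwise bound $g_p(v,v)\ge c_0|v_u|^2-M|v_w|^2$ for all $p\in\overline U$ and all $v=v_u+v_w$ written in the coordinate frame. The remaining task is to combine these two inputs: a cheap path issuing from $x$ cannot move appreciably in the $u$-directions without, after projecting by $\pi$, producing a cheap path in $W_i$ leaving $U\cap W_i$ and contradicting $\delta'$; hence it must escape $U$ through a transverse face $\{|w|=\text{const}\}$, which forces it into the region $|w|\ge\eta$, where it meets either the open locus $W_0\setminus W_1$ on which $g$ is Riemannian, handled as in the base case, or the lower strata $W_1,\dots,W_{i-1}$, to which the same reasoning is applied recursively.

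The main obstacle is precisely this last step: controlling paths that drift off the stratum $W_i$ in the transverse $w$-directions, along which $g$ may be arbitrarily degenerate. The point that makes the argument close is that $g$ is everywhere positive \emph{semi}-definite, so such transverse excursions can never be exploited to shorten a connecting path below the thresholds produced above, while in order to leave $U$ (and, in the global statement, to reach a distinct point $y\in W_i$) the path must cross a region of definite size on which $g$ is uniformly positive --- either because $g$ is positive definite there, or by the inductive control coming from the lower strata. Carrying out this bookkeeping is exactly the content of Bray--Canary--Kao--Martone \cite[Lemma 5.2]{pressurecusped}, to which we refer for the complete details.
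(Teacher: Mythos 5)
First, a point of reference: the paper does not prove this lemma at all --- it is imported verbatim from Bray--Canary--Kao--Martone and used as a black box --- so there is no internal argument to compare yours against. Judged on its own terms, your proposal is not a proof: it ends by deferring the decisive estimate to the very reference whose statement is being established, and the one concrete mechanism you do offer for that step is flawed. You claim that a $g$-cheap path issuing from $x$ which moves appreciably in the $W_i$-directions would, after applying the coordinate projection $\pi$, produce a $g$-cheap path in $W_i$ leaving $U\cap W_i$, contradicting the inductive constant $\delta'$. But $\pi$ is not length-non-increasing for a degenerate semi-definite tensor: the kernel of $g_x$ meets $\sf T_xW_i$ trivially, so it is in general a ``diagonal'' subspace, transverse to both $\sf T_xW_i$ and the chosen coordinate transversal, and motion along it has negligible $g$-cost while its projection to $W_i$ has definite $g$-cost. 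Concretely, take $W_0=\R^2$ with coordinates $(u,w)$, $n=1$, $W_1=\{w=0\}$, and $g=(du-dw)^2+w^2\,dw^2$; all hypotheses of the lemma hold, yet the path $t\mapsto(t,t)$, $t\in[0,\delta]$, has $g$-length $\delta^2/2$ while its projection $t\mapsto(t,0)$ has $g$-length $\delta$. Hence ``cheap and moving in the $u$-directions'' does not imply ``cheap projection'', the contradiction with $\delta'$ never materializes, and your dichotomy (either negligible $u$-motion or escape through a transverse face) is unjustified.

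The genuine content of the lemma is exactly the bookkeeping you leave open: one must show that a path of small $g$-length confined near $x$ has small displacement along $W_i$, and the obstruction is that the pointwise inequality $\sqrt{g_p(v,v)}\ge c\,|v_u|-C\,|v_w|$ (which does follow from semi-definiteness, Cauchy--Schwarz for $g$, and definiteness on $\sf T_xW_i$) only bounds the $u$-displacement by the $g$-length plus the \emph{total transverse variation} $\int|v_w|$, which an oscillating path can make arbitrarily large inside a tiny neighbourhood. Controlling that term is where the positive codimension of the strata and the induction genuinely enter, and neither your sketch nor the paper supplies this; as written, the statement should be treated as what it is in the text, namely an external citation.
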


\part{Hitchin components}

Let $\ge$ be a simple split real Lie algebra and $\Int\ge$ its group of inner automorphisms. Let  also $\s\subset\ge$ be a principal $\sl_2$ as in \S\,\ref{xalpha}. This Lie-algebra morphism comes from a Lie-group morphism $\tau_\ge=\PSL(2,\R)\to\Int\ge$ also called \emph{principal}.

Let also $S$ be a closed orientable connected surface of Euler characteristic $\chi(S)<0$. A representation $\rho:\pi_1S\to\Int\ge$ is \emph{Fuchsian} if it factors as $$\pi_1S\to\PSL(2,\R)\xrightarrow{\tau_\ge}\Int\ge,$$ where the first arrow is discrete and faithful. A connected component of the character variety $\frak X(\pi_1S,\Int\ge)$ that contains a Fuchsian representation will be called a \emph{Hitchin component} of $\Int\ge$ and denoted by $\hitchin_\ge(S)$. Hitchin \cite{Hitchin} established that $\hitchin_\ge(S)$ is a contractible differentiable manifold of dimension $|\chi(S)|\dim\ge$. The \emph{Fuchsian locus} inside $\hitchin_\ge(S)$ is a natural copy of the Teichm\"uller space of $S$.

In this section we establish Theorem \ref{thmPrinc} describing degenerations of pressure forms on $\hitchin_\ge(S)$. Actually, by Labourie \cite{labourie} and Beyrer-Labourie-Guichard-Pozzeti-Wienhard \cite{BGLPW}, Hitchin representations are $\simple$-Anosov, so representations with Zariski-dense image are already dealt with by Corollary \ref{nondeg}. Moreover, since Hitchin representations are $\simple$-positive (Fock-Goncharov \cite{FG} and \emph{loc. cit.} \cite{BGLPW}) and have simple Zariski-closure by Theorem \ref{clausuras} below,  Corollary \ref{tposlengthfunction} establishes separation of the path-pseudo metric, for every length function $\uppsi:\hitchin_\ge(S)\to\a^*$.




\begin{cor}\label{pathmetricHitchin} For any length function $\uppsi:\hitchin_\ge(S)\to\a^*$ the associated pressure semi-norm $\rho\mapsto\PP^{\uppsi(\rho)}$ induces a $\Mod(S)$-invariant path metric on $\hitchin_\ge(S)$.
\end{cor}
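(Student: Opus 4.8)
The plan is to combine the local (infinitesimal) Riemannian property established in Corollary \ref{HitchinDense} with a stratification argument over the Zariski closures of Hitchin representations, exactly in the spirit of the proof of Corollary \ref{tposlengthfunction}. First I would recall that a length function $\uppsi$, being $\Mod(S)$-invariant and taking values in $\conodual\rho$ at each $\rho$, produces via precomposition with the Ledrappier potential a $\Mod(S)$-invariant assignment $\rho\mapsto\PP^{\uppsi(\rho)}_\rho$, which is a smooth non-negative symmetric $2$-tensor on $\hitchin_\ge(S)$; the $\Mod(S)$-invariance of the associated path-pseudo-metric is then immediate, so the whole point is \emph{separation}, i.e. that the pseudo-metric is an honest metric. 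For this it suffices, by the nested-submanifold Lemma \ref{nested} of Bray-Canary-Kao-Martone, to exhibit a finite filtration of $\hitchin_\ge(S)$ by submanifolds on each open stratum of which $\rho\mapsto\PP^{\uppsi(\rho)}_\rho$ is positive definite on the tangent space of that stratum.

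The stratification is provided by the classification of Zariski closures of Hitchin representations (Guichard \cite{clausura}, S. \cite{clausurasPos}): there are only finitely many conjugacy classes of pairs $(\sf H,\iota)$ with $\sf H$ reductive and $\iota:\sf H\hookrightarrow\Int\ge$ arising as the Zariski closure of some $\rho\in\hitchin_\ge(S)$, the generic one being $\sf H=\Int\ge$ itself and the deepest one the principal $\PSL(2,\R)$. Writing $W_{(\sf H,\iota)}=\{\rho\in\hitchin_\ge(S):\overline{\rho(\pi_1S)}{}^{\mathrm Z}\subset\textrm{a conjugate of }\iota(\sf H)\}$ and its open sub-locus $W^{\mathrm Z}_{(\sf H,\iota)}$ of representations whose Zariski closure is exactly (a conjugate of) $\iota(\sf H)$, one gets a nested collection of submanifolds indexed by the (finite) poset of these pairs; by Labourie \cite[Theorem 5.2.6]{LabourieLectures} the complement $W_{(\sf H,\iota)}\setminus W^{\mathrm Z}_{(\sf H,\iota)}$ has strictly smaller dimension, so the codimension hypothesis of Lemma \ref{nested} is met. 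On the top stratum $W^{\mathrm Z}_{(\Int\ge,\id)}$ positive-definiteness is precisely Corollary \ref{HitchinDense}. On an intermediate stratum $W^{\mathrm Z}_{(\sf H,\iota)}$ with $\sf H\neq\Int\ge$, a non-zero tangent vector $v$ there has base point $\rho$ with Zariski closure $\iota(\sf H)$, and one argues as in Corollary \ref{inteT}\eqref{estrata} / Corollary \ref{roothyperconvex}: since $\rho$ is $\simple$-Anosov, $v$ has full variation (each simple factor of $\ge$ meets $\simple$), hence full loxodromic variation by Corollary \ref{corfullerton}, and the functional $\uppsi(\rho)\in\conodual\rho$ restricts to an element of the dual limit cone of $\iota:\pi_1S\to\sf H$; Corollary \ref{factor-cotangente} then yields $\g\in\pi_1S$ with $\uppsi(\rho)(\varjor\g v)\neq0$, which together with Remark \ref{varphi=cte} (the degeneracy locus of $\PP^{\uppsi(\rho)}$ forces $\VV{\uppsi(\rho)}_v$ into a level set of $\uppsi(\rho)$) gives $\PP^{\uppsi(\rho)}_\rho(v)>0$.

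The main obstacle I anticipate is the intermediate-strata step, specifically checking that the degeneracy criterion of Remark \ref{varphi=cte} can be contradicted using only the \emph{full loxodromic variation} in the factor cohomology $H^1_{\Ad\rho}(\pi_1S,\factor{\sf H})$ associated to the Zariski closure, rather than needing Zariski-density in all of $\Int\ge$; this is exactly where Corollary \ref{factor-cotangente} — which requires the adjoint factor to be disjoined and the cocycle to project non-trivially to each irreducible piece — must be invoked, and one needs to know that $\ad_\ge|\h$ is disjoined, or at least that $\uppsi(\rho)$ does not vanish on the full $\a$-part of the relevant factor. (Compare the subtlety flagged for the classical-type refinement in Theorem \ref{thmPrinc}, which is precisely why the \emph{stronger} statement there needs the extra Lie-theoretic input; here, for the mere separation statement, one only needs \emph{some} periodic orbit to see a non-trivial $\uppsi(\rho)$-variation, which is the weaker conclusion of Corollary \ref{factor-cotangente} and should go through for every length function, not just generic ones.) Once this positivity on each stratum is in hand, Lemma \ref{nested} closes the argument, and $\Mod(S)$-invariance is inherited from that of $\uppsi$ and of the Ledrappier potentials.
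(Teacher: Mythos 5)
Your overall architecture (stratify by Zariski closure, verify positive-definiteness on each open stratum, conclude by Lemma \ref{nested}) is exactly the paper's, and the top stratum is handled correctly by Corollary \ref{HitchinDense}. The gap is in your treatment of the intermediate strata. You argue that Corollary \ref{factor-cotangente} produces some $\g\in\pi_1S$ with $\uppsi(\rho)\big(\varjor\g v\big)\neq0$ and that this, via Remark \ref{varphi=cte}, forces $\PP^{\uppsi(\rho)}_\rho(v)>0$. But Remark \ref{varphi=cte} characterizes degeneration as $\length\big(\VV\length_{\t,v}\big)$ being the single value $\partial^{\log}\entropy\length{}(v)$, which need not be zero away from the Fuchsian locus. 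In other words, degeneration is compatible with $\mathrm d\length^\g(v)=c\,\length^\g(\rho)$ for all $\g$ with a fixed \emph{non-zero} constant $c$, and a single element with non-vanishing variation does not contradict this. What is really needed is that the variation functional is not Liv\v sic-cohomologous to a multiple of the length functional — the content of Theorem \ref{LedIndepA}/Corollaries \ref{Livindependent} and \ref{nondeg} — and Corollary \ref{factor-cotangente} (non-empty interior of the cone of Jordan variations) does not deliver that: it rules out $\mathrm d\varphi^\g(v)\equiv0$, not $\mathrm d\varphi^\g(v)\equiv c\,\length^\g(\rho)$. The "main obstacle" you flagged (disjointedness of the adjoint factor) is not where the difficulty lies.

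The fix, which is how the paper proceeds, is to observe that by Theorem \ref{clausuras} each intermediate stratum $W^{\mathrm Z}_{(\sf H,\iota)}$ is (an open subset of) the Hitchin component $\hitchin_{\h}(S)$ of the smaller \emph{split} group, embedded via the natural inclusion of Table \ref{unesco}; a point of the open stratum is then Zariski-dense \emph{in} $\sf H$ and a tangent vector to the stratum is a genuine tangent vector to $\hitchin_{\h}(S)$. One then applies Corollary \ref{HitchinDense} intrinsically to $\hitchin_{\h}(S)$ with the length functional $\uppsi(\rho)|\a_{\h}$, which lies in $\conodual{\iota}$ because the limit cone of $\rho$ in $\sf H$ maps into its limit cone in $\sf G$. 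This invokes the full strength of the base-point Liv\v sic-independence (Theorem \ref{LedIndepA}), which is available since all groups in Table \ref{unesco} are split, hence all their root spaces are one-dimensional. With that substitution your proof closes; as written, the intermediate-strata step does not.
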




However, understanding degenerations at non-Zariski-dense points is much more subtle and will require some work. This will be finally established in Theorem \ref{degLietheoretic}.

A key object to understand these degenerations is that of \emph{Kostant lines} of $\ge$, by definition these are the $0$-restricted weight space of an $\ad\s$-module. They appear in Theorem \ref{thmPrinc} but are also needed to understand Hausdorff dimension degenerations (\S\,\ref{Hffdeg}). This is why we will find explicit formulae for these lines (\S\,\ref{ktsec}). These computations play a role on giving an explicit description of the functional $\kahl\in\a^*$ whose pressure form $\PP^\kahl$ is compatible with Goldman's symplectic form at Fuchsian points (Corollary \ref{compa1}).

\section{Necessary facts}

The opposition involution $\ii$ of types $\A$, $\D$ and $\E_6$ is induced by a non-trivial external involution $\undi:\Int\ge\to\Int\ge$, unique up to conjugation, that induces in turn a non-trivial involution of the character variety that preserves each Hitchin component $\undiX:\hitchin_\ge(S)\to\hitchin_\ge(S)$. We have thus natural inclusions \begin{alignat}{2}\label{ptfixes}\Fix\undiX=\hitchin_{\B_k}(S) & \subset\hitchin_{\A_{2k}}(S),\nonumber\\ \Fix\undiX=\hitchin_{\Ce_k}(S) & \subset\hitchin_{\A_{2k-1}}(S),\nonumber\\
\Fix\undiX=\hitchin_{\B_k}(S)& \subset\hitchin_{\D_{k+1}}(S),\nonumber\\ \Fix\undiX=\hitchin_{\F_4}(S)& \subset\hitchin_{\E_6}(S).\end{alignat} There is also another natural inclusion $\hitchin_{\Ge}(S)\subset\hitchin_{\B_3}(S)$ given by the fact that the fundamental representation for the short root of $\rep:\gedossplit\to\so(3,4)$ sends a principal $\sl_2$ of $\gedossplit$ to a principal $\sl_2$ of $\so(3,4)$.



Since a Hitchin representation is $\simple$-Anosov (Labourie \cite{labourie}) we may consider its \emph{critical hypersurface} as in \S\,\ref{anosovpre}. The following was first stablished by Potrie-S. \cite{exponentecritico} for types $\A$, $\B$, $\Ce$ and $\Ge$ and the work from Pozzetti-S.-Wienhard \cite{PSW1} together with S. \cite{clausurasPos} gives a unified approach for all types:

\begin{thm}[\cite{exponentecritico,PSW1,clausurasPos}]\label{PS} For every 
 $\rho\in\hitchin_\ge(S)$ one has $\simple\subset\EuScript Q_\rho$. \end{thm}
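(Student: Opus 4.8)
The statement to establish is that for every $\rho\in\hitchin_\ge(S)$ one has $\simple\subset\EuScript Q_\rho$, i.e.\ that for each simple restricted root $\sroot\in\simple$ the functional $\sroot$ lies on the critical hypersurface of $\rho$, equivalently $\entropy{\sroot}{\rho}=1$. The plan is to reduce this to the two inputs cited in the theorem label, namely the hyperconvexity-type results of Potrie--S.\ \cite{exponentecritico} and Pozzetti--S.--Wienhard \cite{PSW1}, together with the Zariski-closure classification from S.\ \cite{clausurasPos}. I would first recall that by Theorem \ref{hit-anosov} every Hitchin representation is $\simple$-Anosov, so $\entropy{\sroot}{\rho}$ is well defined for each $\sroot\in\simple$, and that the quantity $\entropy{\sroot}{\rho}$ depends continuously (indeed real-analytically, by Theorem \ref{teoLedr} and Theorem \ref{JJ}'s surrounding analyticity statements) on $\rho$ within $\hitchin_\ge(S)$.

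The core mechanism is the following: for a Hitchin representation the boundary curve $\zeta_\rho:\bord\pi_1S\to\EuScript F_\simple(\Int\ge)$ is such that each projection $\mathbf L_{\rho,\sroot}=\zeta^\sroot_\rho(\bord\pi_1S)$ into the partial flag variety $\EuScript F_{\{\sroot\}}(\Int\ge)$ is a $\class^{1+\alpha}$ circle (this is the content of Labourie \cite{labourie} together with \cite{PSW1}), and the $\class^{1}$-regularity of this circle forces the associated reparametrized flow $\phi^{\peso_\sroot(\ledrappier_\rho)}$ (equivalently the root-flow) to have topological entropy exactly $1$. Concretely, one shows that $\rep_\sroot\circ\rho:\pi_1S\to\PSL(V_\sroot)$ is $(1,1,2)$-hyperconvex, so Theorem \ref{simpleroot1}\eqref{h==1} applies and gives $\entropy{\slroot_1}{\rep_\sroot\circ\rho}=1$; translating back through $\peso_\sroot(\jordan(\rep_\sroot g))=l_\sroot\peso_\sroot(\jordan_\ge(g))$ and comparing with $\sroot$ via the relation between $\peso_\sroot$ and $\sroot$ on the relevant one-dimensional quantities yields $\entropy{\sroot}{\rho}=1$ once one knows $\dim\ge_\sroot$ behaves as needed. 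Here the classification of Zariski closures \cite{clausurasPos} enters: it guarantees that the $(1,1,2)$-hyperconvexity of $\rep_\sroot\circ\rho$ holds for all $\rho$ in the Hitchin component (not merely at Fuchsian points), by interpolating between the Fuchsian case — where it is a direct computation with the principal $\sl_2$ and the Veronese curve — and the Zariski-dense case, passing through each intermediate stratum $W_{(\sf H,\rep)}$.

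Thus the steps in order are: (1) record $\simple$-Anosovness and well-definedness of $\entropy{\sroot}{}$; (2) at a Fuchsian point $\prin$, verify by explicit $\sl_2$-theory (the exponents decomposition $\ge=\bigoplus_e V_e$ and Kostant's description) that each $\rep_\sroot\circ\prin$ is $(1,1,2)$-hyperconvex, hence $\entropy{\sroot}{\prin}=1$; (3) invoke openness of the hyperconvexity condition (\cite[Prop. 6.2]{PSW1}) and the fact established in \cite{exponentecritico,PSW1,clausurasPos} that $(1,1,2)$-hyperconvexity of $\rep_\sroot\circ\rho$ propagates over all of $\hitchin_\ge(S)$ — using the stratification by Zariski closure from \cite{clausurasPos} to handle the non-Fuchsian, non-Zariski-dense strata — and conclude $\entropy{\sroot}{\rho}=1$ for every $\rho$; (4) assemble over $\sroot\in\simple$ to get $\simple\subset\EuScript Q_\rho$. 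The main obstacle I expect is step (3): establishing that the hyperconvexity (equivalently the $\class^1$-regularity of $\mathbf L_{\rho,\sroot}$, equivalently $\entropy{\sroot}{\rho}\equiv 1$) persists at the intermediate strata, where the Zariski closure is a proper reductive subgroup $\sf H\subsetneq\Int\ge$ arising as a principal-type image; this requires knowing that the composition $\rep_\sroot|_{\sf H}$ still satisfies the relevant weight-multiplicity-one and transversality conditions, which is exactly what the classification \cite{clausurasPos} together with the spectral comparison $\sroot_k(\jordan_\sf H(\iota\g))=\upalpha_k(\jordan(\rho\g))$-type identities is designed to supply. Everything else is either citation or a routine $\sl_2$ computation at the Fuchsian point.
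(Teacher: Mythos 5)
First, a remark on what you are being compared against: the paper does not prove Theorem \ref{PS} at all — it imports it verbatim from \cite{exponentecritico,PSW1,clausurasPos} — so your proposal is really measured against the arguments of those references. You have correctly identified their mechanism: for each $\sroot\in\simple$ one uses the Tits representation $\rep_\sroot$, for which $\slroot_1\big(\jordan(\rep_\sroot g)\big)=\sroot\big(\jordan(g)\big)$ (this uses that the second highest weight is $\chi_{\rep_\sroot}-\sroot$ and that $\dim\ge_\sroot=1$, automatic in the split case), one shows $\rep_\sroot\circ\rho$ is $(1,1,2)$-hyperconvex, and one applies Theorem \ref{simpleroot1}(i) to get $\entropy{\sroot}{\rho}=\entropy{\slroot_1}{\rep_\sroot\rho}=1$, i.e.\ $\sroot\in\EuScript Q_\rho$. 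Steps (1), (2) and (4) of your plan are fine.

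The gap is exactly where you locate it, in step (3), and your proposed fix does not work. Openness of $(1,1,2)$-hyperconvexity (\cite[Proposition 6.2]{PSW1}) together with a stratification of $\hitchin_\ge(S)$ by Zariski closures cannot propagate hyperconvexity from the Fuchsian locus to the whole connected component: an open condition is not inherited by a connected set from a proper subset, and knowing that $\overline{\rho(\pi_1S)}{}^{\mathrm Z}$ is one of the groups of Table \ref{unesco} says nothing a priori about transversality of the triples $\big(\xi^1(x),\xi^1(y),\xi^{d-2}(z)\big)$; moreover the classification \cite{clausurasPos} sits downstream of these hyperconvexity statements, so your logic risks circularity. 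The ingredient that actually closes the argument in the cited works is Lusztig/Fock--Goncharov positivity: every Hitchin representation is positive (this is what characterizes the component), and positivity of the flag triples directly forces the transversality defining $(1,1,2)$-hyperconvexity at \emph{every} $\rho\in\hitchin_\ge(S)$ — Fuchsian, intermediate or Zariski-dense alike; compare Proposition \ref{todopar} in the present paper, which is the same phenomenon one step lower. (Equivalently, in Potrie--S.'s original route for types $\A$, $\B$, $\Ce$, $\Ge$, the global input is Labourie's Property (H), again valid on the whole component.) The role of \cite{clausurasPos} is then only the spectral bookkeeping you mention at the very end — matching simple roots of the Zariski closure with simple roots of $\ge$ uniformly in all types — not the propagation of hyperconvexity across strata.
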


Convexity of the critical hyper-surface together with the above gives then:

\begin{cor}\label{fuchscritica} If $\prin$ is Fuchsian then, for every $\length\in\conodual{\prin},$ $\entropy\length\prin$ is critical at $\prin$.
\end{cor}

The following is a consequence of Luzstig's positivity from Fock-Goncharov \cite{FG}:

\begin{prop}\label{todopar} For every $\rho\in\hitchin_\ge(S)$ and every pair of transverse $\g,h\in\pi_1S$, the pair $\rho(\g)$ and $\rho(h)$ is strongly transversally $\simple$-proximal.
\end{prop}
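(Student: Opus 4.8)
\textbf{Proof plan for Proposition \ref{todopar}.} The statement to establish is that for a Hitchin representation $\rho\in\hitchin_\ge(S)$ and any two transverse elements $\g,h\in\pi_1S$, the pair $\rho(\g),\rho(h)$ is strongly transversally $\simple$-proximal, i.e.\ for each $\sroot\in\simple$ the proximal endomorphisms $\rep_\sroot(\rho(\g))$ and $\rep_\sroot(\rho(h))$ are strongly transversally proximal in the sense of Definition \ref{stronglytransversal}: not only $\beta_{\rep_\sroot\rho(\g)}(v_{\rep_\sroot\rho(h)})\beta_{\rep_\sroot\rho(h)}(v_{\rep_\sroot\rho(\g)})\neq0$, but also $\beta_{\rep_\sroot\rho(h)}\big(\tau_{\rep_\sroot\rho(\g)}v_{\rep_\sroot\rho(h)}\big)\neq0$, where $\tau_{\rep_\sroot\rho(\g)}$ is the projection onto the second eigenspace $V_2(\rep_\sroot\rho(\g))$ along the $\rep_\sroot\rho(\g)$-invariant complement.

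First I would fix $\sroot\in\simple$ and reduce everything to a single fundamental representation $\rep_\sroot\circ\rho:\pi_1S\to\PSL(V_\sroot)$, which by Theorem \ref{hit-anosov} is projective Anosov (since Hitchin representations are $\simple$-Anosov), with equivariant maps $\xi^1,\xi^{V_\sroot-1}$; the transversality hypothesis on $\g,h$ gives that $\g^\pm$ and $h^\pm$ are four pairwise distinct points of $\bord\pi_1S$ and hence $\rep_\sroot\rho(\g)^\pm$, $\rep_\sroot\rho(h)^\pm$ are in general position, which already yields the ordinary transversal proximality, $\crossm_1(\rep_\sroot\rho(\g)^+,\rep_\sroot\rho(\g)^-,\rep_\sroot\rho(h)^+,\rep_\sroot\rho(h)^-)\neq0$. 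The substantive point is the non-vanishing involving the \emph{second} eigenspace. For this I would invoke Fock--Goncharov positivity \cite{FG}: in the Hitchin component the flag curve $\zeta_\rho$ is \emph{positive}, meaning that for any cyclically ordered tuple of boundary points the associated configuration of flags lies in the totally positive part of the flag variety, and moreover the flag curve is hyperconvex/Frenet (Labourie \cite{labourie}) so that successive ``osculating'' subspaces are in general position in a strong sense. Concretely, positivity implies that for the cyclically ordered four-tuple $(\g^+, h^+, \g^-, h^-)$ (or the relevant ordering determined by transversality) every principal minor one writes down from the corresponding flags is a nonzero real number of definite sign; the quantity $\beta_{\rep_\sroot\rho(h)}\big(\tau_{\rep_\sroot\rho(\g)}v_{\rep_\sroot\rho(h)}\big)$ is, up to nonzero factors, exactly such a minor — it measures the position of the attracting line $v_{\rep_\sroot\rho(h)}$ of $\rho(h)$ relative to the flag of $\rho(\g)$ refined by its second eigenspace $V_2(\rep_\sroot\rho(\g))$, and the eigenspace $V_2$ of a Hitchin-proximal element is itself read off the osculating flag $\zeta_\rho(\g^+)$ at the attracting fixed point. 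Thus I would make precise the identification of $\tau_{\rep_\sroot\rho(\g)}$ with a projection determined by two consecutive terms of the Frenet flag of $\rho$ at $\g^+$, and then conclude the non-vanishing from the fact that $h^+$ is transverse to all of $\zeta_\rho(\g^+)$ together with positivity ruling out degenerate configurations.

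The main obstacle I anticipate is precisely this last identification: to argue cleanly that $V_2(\rep_\sroot\rho(\g))$ is the span of a controlled pair of Frenet-flag components of $\rho$, one needs that $\rho(\g)$ acts on $V_\sroot$ with a ``Frenet'' eigenvalue pattern — its top two eigenvalues are simple and the gap between the first and second is exactly $\sroot(\jordan(\rho(\g)))$ — which is a genuine structural fact about Hitchin representations, following from $\simple$-Anosov-ness plus the description of weights of $\rep_\sroot$ in \S\ref{representaciones} ($\chi_{\rep_\sroot}=l_\sroot\peso_\sroot$ with $\dim V_\sroot^{\chi_{\rep_\sroot}}=1$ and $\chi_{\rep_\sroot}-\sroot$ the next weight). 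Granting that, $V_2(\rep_\sroot\rho(\g))$ is the image of the $(\chi_{\rep_\sroot}-\sroot)$-weight space under the flag curve, a proper subspace of $\zeta^{V_\sroot-1}_\rho(\g^-)^\perp$-complement, and transversality of $h^+$ to $\zeta_\rho(\g^+)$ — which is part of Labourie's hyperconvexity in the Hitchin component — forces $\beta_{\rep_\sroot\rho(h)}(\tau_{\rep_\sroot\rho(\g)}v_{\rep_\sroot\rho(h)})\neq0$, since a vanishing would place the attracting line of $\rho(h)$ in a positive-codimension piece of the flag of $\rho(\g)$, contradicting positivity of the configuration $(\g^\pm,h^\pm)$. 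Finally I would remark that since this holds for every $\sroot\in\simple$, the pair $\rho(\g),\rho(h)$ is strongly transversally $\simple$-proximal, completing the proof. (An alternative, perhaps cleaner, route I would keep in reserve is to use the $(1,1,2)$-hyperconvexity of $\rep_\sroot\circ\rho$ coming from \cite{PSW1} together with the Frenet property, since $(1,1,\dots)$-hyperconvexity is exactly the statement that $\xi^1(h^+)$ stays transverse to the span of the first two osculating spaces of $\rho$ at $\g^+$, which is what strong transversal proximality requires.)
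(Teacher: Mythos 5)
Your plan is correct and matches the paper's (unwritten) argument: the paper states this proposition purely as a consequence of Lusztig positivity of the flag configurations from Fock--Goncharov, which is exactly the engine you invoke, with the Frenet/hyperconvexity property supplying the identification of $V_2(\rep_\sroot\rho(\g))$ with the relevant osculating datum at $\g^+$ and positivity ruling out the vanishing of the resulting minor $\beta_{\rep_\sroot\rho(h)}\big(\tau_{\rep_\sroot\rho(\g)}v_{\rep_\sroot\rho(h)}\big)$. Your structural observations (simplicity of the top two weights of $\rep_\sroot$ for split $\ge$, the gap being $\sroot(\jordan(\rho\g))$) are accurate and fill in details the paper leaves implicit.
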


The following recovers a result by Guichard \cite{clausura} for types $\A$, $\B$, $\Ce$ and $\Ge$.

\begin{thm}[S. \cite{clausurasPos}]\label{clausuras}Let $\rho\in\hitchin_\ge(S)$ have Zariski closure $\sf H.$ Then $\h_{ss}$ is either $\ge,$ a principal $\sl_2(\R),$ or $\Int\ge$-conjugated to one of the possibilities in Table \ref{unesco}.
\end{thm}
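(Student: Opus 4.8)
The plan is to adapt Guichard's type-$\A$ argument \cite{clausura} to all types, feeding in Fock--Goncharov positivity \cite{FG} and the hyperconvexity/highest-weight results quoted above in place of the ad hoc hyperconvexity used there. First I would record that $\rho\in\hitchin_\ge(S)$ is $\simple$-Anosov (Theorem \ref{hit-anosov}), hence \emph{totally loxodromic}, and that by Proposition \ref{todopar} transverse pairs are strongly transversally $\simple$-proximal; together with the equivariant positive flag curve $\zeta_\rho:\bord\pi_1S\to\EuScript F_\simple(\Int\ge)$ this lets one rule out a unipotent radical in the Zariski closure $\sf H$ of $\rho(\pi_1S)$ --- a nontrivial one would violate the transversality of distinct flags along $\zeta_\rho$ --- so $\h$ is reductive and only $\h_{ss}$ remains to be identified. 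Moreover $\zeta_\rho(\bord\pi_1S)$ is an $\sf H$-invariant subset of $\EuScript F_\simple(\Int\ge)$ not contained in a proper $\sf H$-sub-flag-variety, and (being a genuine Frenet curve) its projection to each $\P(V_\sroot)$ spans; hence each Tits representation $\rep_\sroot|\h_{ss}$, $\sroot\in\simple$, is irreducible.

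Next I would extract the rigidity. By Labourie \cite{labourie} and Pozzetti--S.--Wienhard \cite{PSW1} each $\rep_\sroot\circ\rho$ is $(1,1,2)$-hyperconvex, so Theorem \ref{simpleroot1} applied to the irreducible representation $\h_{ss}\hookrightarrow\ge\xrightarrow{\rep_\sroot}\SL(V_\sroot)$ forces $\h_{ss}$ to be simple and the highest restricted weight of this representation to be an integer multiple of a fundamental weight $\peso_\aa$ of $\h_{ss}$ with one-dimensional root space. Letting $\sroot$ range over $\simple$ pins down the branching of $\h_{ss}\subset\ge$ node by node. The remaining constraint comes from the Fuchsian locus: $\hitchin_\ge(S)$ is connected and contains a Fuchsian $\prin$ with Zariski closure a principal $\PSL(2,\R)$; because the Frenet property of $\zeta_\rho$ restricted to a principal $\PSL(2,\R)\subset\sf H$ forces its osculating data to agree with those of the principal embedding, the inclusion $\h_{ss}\subset\ge$ must send a principal $\sl_2$ of $\h_{ss}$ to a principal $\sl_2$ of $\ge$ in the sense of \S\ref{xalpha} and Kostant \cite{kostant}. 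Solving the combined conditions --- $\rep_\sroot|\h_{ss}$ irreducible with almost-fundamental highest weight for every $\sroot\in\simple$, together with principal-to-principal --- against the classification of simple Lie algebras yields exactly $\h_{ss}=\ge$, $\h_{ss}$ a principal $\sl_2(\R)$, or one of the entries of Table \ref{unesco}.

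Finally, to get the converse one notes that each $\rep:\sf H\to\Int\ge$ in Table \ref{unesco} carries a principal $\sl_2$ of $\sf H$ to a principal $\sl_2$ of $\ge$ (Kostant \cite{kostant}), so post-composition embeds $\hitchin_\h(S)$ in $\hitchin_\ge(S)$ and a point generic in $\hitchin_\h(S)$ has Zariski closure exactly $\rep(\sf H)$, by the same analysis run inside $\sf H$. I expect the main obstacle to be the completeness of the enumeration: translating ``$\rep_\sroot|\h_{ss}$ irreducible with almost-fundamental highest weight for all $\sroot$'' plus ``principal-to-principal'' into a finite, type-by-type verification, and --- for the exceptional and non-classical intermediate strata --- making the Fock--Goncharov positive structure on $\EuScript F_\simple(\Int\ge)$ restrict compatibly with the hyperconvexity input from the $\rep_\sroot$, is where essentially all of the real work concentrates.
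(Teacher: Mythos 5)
This theorem is not proved in the paper at all: it is imported verbatim from \cite{clausurasPos} (recovering Guichard \cite{clausura} for types $\A$, $\B$, $\Ce$, $\Ge$), so there is no internal proof to compare yours against. Your sketch does follow the broad strategy of those references --- use the Anosov/positive limit curve and the hyperconvexity input of Theorem \ref{simpleroot1} to constrain the highest weights of $\h_{ss}\hookrightarrow\ge$, add the principal-$\sl_2$ condition coming from connectedness of the Hitchin component, and then enumerate.

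There is, however, a genuine gap at the step where you claim that the limit curve spans each $\P(V_\sroot)$ and hence that every Tits representation $\rep_\sroot|\h_{ss}$ is irreducible, which is what licenses your appeal to Theorem \ref{simpleroot1}(ii) for every node. This is false, and the counterexamples sit inside the very table you are trying to establish: for $\ge=\so(n,n)$ the Tits representation $\rep_{\slroot_1}$ is the standard $2n$-dimensional representation, and its restriction to the copy of $\so(n-1,n)$ stabilizing a non-isotropic line $\ell$ is reducible ($\R^{2n}=\ell\oplus\ell^\perp$); the limit curve of such a representation lies entirely in $\P(\ell^\perp)$ and does not span. The same happens for the $\gedossplit\subset\so(4,4)$ entry and for $\f_4\subset\e_6$ (where the $27$-dimensional representation restricts as $26\oplus 1$). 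If your irreducibility claim were correct, Theorem \ref{simpleroot1}(ii) would wrongly exclude these strata. The actual proofs must therefore split into an irreducible case (where the Pozzetti--S. rigidity applies and pins down $\h_{ss}$ as simple with almost-fundamental highest weight) and a reducible case, where one analyzes the invariant subspaces compatible with positivity/hyperconvexity separately; that dichotomy, together with the type-by-type enumeration you already flag as the hard part, is where the substance of \cite{clausura} and \cite{clausurasPos} lies. A smaller remark: the statement only classifies $\h_{ss}$, so your effort to exclude a unipotent radical is not needed here (reductivity of the full Zariski closure is a separate result, quoted later in the paper from Carvajales--Dey--Pozzetti--Wienhard).
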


\begin{table}[h!]
  \begin{center}
    \begin{tabular}{r|r|r} 
      $\ge$ & $\h_{ss}$ & $\phi:\h_{ss}\to\ge$  \\
      \hline
     $\sl_{2n}(\R)$ & $\sp(2n,\R)$ & defining representation \\\hline
      \multirow{2}{*}{$\sl_{2n+1}(\R)$} & $\so(n,n+1)$ $\forall n$ & defining representation \\
       & $\gedossplit$ if $n=3$ & fundamental for the short root\\\hline
      $\so(3,4)$ & $\gedossplit$ & fundamental for the short root \\\hline
      \multirow{4}{*}{$\so(n,n)$} & $\so(n-1,n)$ $\forall n\geq3$ & stabilizer of a non-isotropic line \\
       & $\so(3,4)$ if $n=4$ & fundamental for the short root \\\cline{3-3}
       & \multirow{2}{*}{$\gedossplit$ if $n=4$} & stabilizes a non-isotropic line $L$ and is  \\ & & fundamental for the short root on $L^\perp$\\\hline 
      $\e_6$ & $\f_4$ & $\Fix\undi$ (see Eq. \eqref{ptfixes})
    \end{tabular}
    \caption{Theorem \ref{clausuras}. If a simple split algebra $\ge$ is not listed in the first column then $\h_{ss}$ is either $\ge$ or a principal $\sl_2(\R);$ $\e_6,\f_4$ and $\gedossplit$ denote the split real forms of the corresponding exceptional complex Lie algebras. 
} \label{unesco}
  \end{center}
\end{table}

We conclude with the proof of the following Corollary from the Introduction.

\begin{cor}[Curves with arbitrarily small root-variation]\label{rootsmall} Consider $\slroot\in\simple$ and let $0\neq v\in\sf T_\rho\hitchin_{\ge}(S)$ have Zariski-dense base-point. Then there exists $h$ such that for positive $\eps$ and $\delta$ there exists $C>0$ with $$\#\big\{[\g]\in[\pi_1S]\textrm{ primitive}:\peso_{\slroot}^\g(\rho)\in(t-\eps,t]\textrm{ and }|\mathrm d\slroot^\g(v)|\leq\delta\big\}\sim C \frac{e^{ht}}{t^{3/2}}.$$ 
\end{cor}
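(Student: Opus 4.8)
\textbf{Proof strategy for Corollary \ref{rootsmall}.} The statement is an equidistribution/double-counting result: we want to count primitive conjugacy classes whose $\peso_\slroot$-length lies in a window $(t-\eps,t]$ \emph{and} whose $\slroot$-variation is small, and obtain a precise $e^{ht}/t^{3/2}$ asymptotic. The plan is to cast this in the thermodynamic framework of \S\,\ref{thermopre} applied to the metric-Anosov flow $\phi$ on $\UG$ attached to $\rho$ (Theorem \ref{tutti}), carrying two real-valued H\"older potentials: $\peso_\slroot(\ledrappier_\rho)$, which is Liv\v sic-cohomologous to a strictly positive function because $\peso_\slroot\in\conodual\rho$ (this uses $\simple\subset\EuScript Q_\rho$, Theorem \ref{PS}), and $\slroot(\vec\ledrappier_v)$, the differential of the Ledrappier potential in the root direction $\slroot$, whose periods are $\mathrm d\slroot^\g(v)$. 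The hypotheses of Corollary \ref{rootsmall}—$\ge$ simple split, $v\neq0$, Zariski-dense base point—combined with Corollary \ref{corfullerton} (or directly Corollary \ref{fullerton}, since $\rho$ is $\simple$-Anosov by Theorem \ref{hit-anosov}) give that $v$ has full loxodromic variation. Since every simple root of a split Lie algebra has one-dimensional root space, Corollary \ref{Livindependent}(i), or more precisely the non-integrality statement of Theorem \ref{LedIndepA} applied with $\varphi=\slroot$ and the minimizing-root hypothesis (which holds because $\{\slroot\}$ is a singleton so $\slroot$ trivially strictly minimizes any loxodromic $g$ among $\supp\slroot=\{\slroot\}$), shows that the pair of periods $\big(\mathrm d\slroot^\g(v),\peso_\slroot^\g(\rho)\big)$ generates a dense additive subgroup of $\R^2$, i.e. $\peso_\slroot(\ledrappier_\rho)$ and $\slroot(\vec\ledrappier_v)$ are Liv\v sic-cohomologically independent and satisfy a non-arithmeticity condition.

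Granting this independence and non-arithmeticity, the plan is then to invoke a joint equidistribution theorem for the periods of a pair of H\"older potentials over a metric-Anosov flow—the $\R^2$-valued analogue of the prime orbit theorem, as produced by the renewal-theoretic / transfer-operator machinery behind Theorem \ref{JJ} and the analyticity statements of \S\,\ref{thermopre} (this is the Babillot-type / Roblin-type counting; it is classical for Anosov flows and transfers to Smale flows). Concretely, reparametrize $\phi$ by $\peso_\slroot(\ledrappier_\rho)$ so that the new flow $\phi^{\peso_\slroot(\ledrappier_\rho)}$ has periods $\peso_\slroot^\g(\rho)$ and topological entropy $h:=\entropy{\peso_\slroot}\rho$; the measure of maximal entropy of the reparametrized flow has a local central limit theorem for the additional observable $\slroot(\vec\ledrappier_v)/\peso_\slroot(\ledrappier_\rho)$ (whose integral against $\mmax$ is nonzero precisely by Corollary \ref{masa}/\ref{minte}, but here we only need the window around an arbitrary $\delta$, not around $0$), and integrating this local CLT against the orbit-counting measure yields, for each fixed $\eps,\delta>0$,
\begin{equation*}
\#\big\{[\g]\ \text{primitive}: \peso_\slroot^\g(\rho)\in(t-\eps,t],\ |\mathrm d\slroot^\g(v)|\le\delta\big\}\sim C\,\frac{e^{ht}}{t^{3/2}},
\end{equation*}
where the extra $t^{-1/2}$ relative to the usual $e^{ht}/t$ prime orbit asymptotic comes from the window constraint in the one extra real direction, and $C=C(\eps,\delta,\rho,v)>0$ because the relevant Gibbs measure charges every nonempty open set (here one uses that the flow is topologically mixing, which follows from metric-Anosov + the fact that $\UG$ is connected, and that $\slroot(\vec\ledrappier_v)$ is not cohomologous to a constant, so the value $0$ is in the interior of the range of asymptotic average variations; actually for the \emph{last} sentence of the Corollary, $|\mathrm d\slroot^\g(v)|\le\delta$ for some $\g$ is immediate once $C>0$).

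\textbf{Main obstacle.} The essential input—and the step I would spend the most care on—is the \emph{local} central limit theorem, equivalently the non-lattice (aperiodicity) of the additional observable, needed to rule out that the periods $\mathrm d\slroot^\g(v)$ concentrate on a coset of a discrete subgroup of $\R$ relative to the $\peso_\slroot$-length scale; this is exactly what the density statement coming from Theorem \ref{LedIndepA}/Corollary \ref{Livindependent} provides, so the logical structure is clean, but one must check that ``dense additive subgroup of $\R^2$ generated by the period pairs'' upgrades to the spectral-gap statement for the twisted transfer operator $\mathcal L_{s\,\peso_\slroot(\ledrappier)+ib\,\slroot(\vec\ledrappier)}$ for all $b\in\R\setminus\{0\}$ (no eigenvalue on the unit circle)—i.e. that non-arithmeticity of periods implies the absence of a peripheral eigenvalue. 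This is a standard but nontrivial implication in thermodynamic formalism (it is where one genuinely uses that $\UG$ is connected and the flow topologically mixing, together with a Liv\v sic-type argument identifying any peripheral eigenfunction with a cohomology between $b\,\slroot(\vec\ledrappier)$ and a constant, contradicting density); for a H\"older metric-Anosov (Smale) flow one cites the version of this machinery already underlying the analyticity and variance results quoted from Bridgeman-Canary-Labourie-S.\ \cite{pressure} and Parry-Pollicott \cite{parrypollicott}. With that in hand the asymptotic, and in particular the existence of $\g$ with arbitrarily large $\peso_\slroot$-length and $|\mathrm d\slroot^\g(v)|\le\delta$, follows.
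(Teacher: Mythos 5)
Your architecture is the paper's: establish non-arithmeticity of the period pairs $\big(\mathrm d\slroot^\g(v),\peso_\slroot^\g(\rho)\big)$ (the paper quotes Theorem \ref{teoB}, which is your Theorem \ref{LedIndepA}/Corollary \ref{double-density} route) and then invoke the Babillot--Ledrappier local-limit counting theorem to get the $e^{ht}/t^{3/2}$ asymptotic. One structural difference: you propose to build or transfer the local CLT/transfer-operator machinery to metric-Anosov (Smale) flows, which you rightly flag as the delicate analytic step. The paper sidesteps this entirely by recalling that for Hitchin representations the reparametrized flow $\phi^{\peso_\slroot(\ledrappier)}$ is H\"older-conjugate to a genuine $\class^{1+\alpha}$ Anosov flow (Potrie--S., Pozzetti--S.--Wienhard), so \cite[Theorem 1.2]{babled} applies verbatim; this is a real simplification you should adopt rather than re-prove aperiodicity estimates for Smale flows.

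There is, however, a genuine gap in your verification of the hypotheses of the counting theorem. Babillot--Ledrappier require the target value $\xi$ of the normalized auxiliary period to lie in the \emph{interior} of the set of means $\big\{\int\slroot(\vec\ledrappier)/\peso_\slroot(\ledrappier)\,d\mu:\mu\in\medidas^\phi\big\}$, and here the constraint $|\mathrm d\slroot^\g(v)|\le\delta$ is a window centered at $\xi=0$, so one must prove $0\in\inte\,\slroot\big(\VV{\peso_\slroot}_v\big)$. Your proposal explicitly claims the window is ``around an arbitrary $\delta$, not around $0$,'' which is not correct, and your fallback justification (that $\slroot(\vec\ledrappier_v)$ is not cohomologous to a constant) only shows the set of means is a nondegenerate interval, not that it contains $0$ in its interior; a priori it could be $[1,2]$, in which case the count in the statement would be exponentially smaller and $C=0$. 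Non-arithmeticity of the period pairs does not supply this centering either. The paper closes the gap with Theorem \ref{PS}: $\slroot\in\EuScript Q_\eta$ for every Hitchin $\eta$, so $\entropy\slroot{}$ is identically $1$ along the deformation, its logarithmic derivative at $v$ vanishes, and by Eq. \eqref{masaentropia} the barycenter of the normalized variations lies in $\ker\slroot$; combined with the nonempty interior coming from double density this places $0$ in $\inte\,\slroot\big(\VV{\peso_\slroot}_v\big)$. Without this input the asymptotic with $C>0$, and hence the final sentence of the corollary, does not follow.
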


\begin{proof} Since $\rho$ is Hitchin, the flow $\phi^{\peso_\slroot(\ledrappier)}$ is H\"older-conjugated to a $\class^{1+\alpha}$-Anosov flow $\Phi=(\Phi_t:\sf US\to\sf US)_{t\in\R}$ (\cite{exponentecritico,PSW1}). Theorem \ref{teoB} implies that group spanned by $$\Big\{\big(\mathrm d\slroot^\g(v),\peso_\slroot^\g(\rho)\big):\g\in\pi_1S\Big\}$$ is dense in $\R^2.$ Finally, by Theorem \ref{PS} $0\in\inte\slroot\big(\VV{\peso_\slroot}_v)$. This places $\Phi$ together with the potential $\slroot(\vec\ledrappier)$ in the assumptions of Babillot-Ledrappier \cite[Theorem 1.2]{babled}, where we pick $\xi=0$.\end{proof}

\section{Kostant lines}\label{ktsec}

Recall from Kostant \cite{kostant} that there are $\rk\ge$ irreducible adjoint factors of $\s$ and they have odd dimensions $2e+1.$ The numbers $e$ are called \emph{the exponents} of $\ge$ and the associated factor is denoted by $V_e.$   Table \ref{exponentes} gives the exponents for each type.

\begin{table}[h]
\begin{center}
\begin{tabular}{r|r} 
$\simple_\ge$ & exponents  \\\hline     
$\A_d$& $1,2,\ldots,d$ \\\hline
$\B_d $ & $1,3,5,\ldots,2d-1$\\\hline
$\Ce_d$ &$1,3,5,\ldots,2d-1$\\\hline
$\D_d$ & $1,3,\ldots, 2d-3,d-1$\\\hline 
$\E_6$ & $1,4,5,7,8,11$\\\hline
$\E_7$ & $1,5,7,9,11,13,17$\\\hline
$\E_8$ & $1,7,11,13,17,19,23,29$\\\hline
$\F_4$ & $1,5,7,11$\\\hline
$\Ge$ & $1,5$
\end{tabular}
\caption{Exponents of irreducible reduced root systems} \label{exponentes}
\end{center}
\end{table}

If $e$ is an exponent of $\ge,$ then the 0-restricted-weight space of $V_e$ is a line of $\a$ that we will denote by $\kt^e=\kt^e_\ge=V_e\cap\a$ and call \emph{the Kostant line} of exponent $e.$ In this section we task on giving a rather explicit description of these lines.

\begin{obs}\label{perp}If $e\neq f$ then $\kt^e$ and $\kt^f$ are orthogonal for the Killing form.
\end{obs}

\begin{proof} Let $\s=\<E,H,F\>$ have the standard relations of an $\sl_2$-triple and let $0\neq v_e^+\in V_e$ belong to the highest restricted weight space, so $\ad E\cdot v^+_e=0$. By definition one has $0\neq k_e=\ad(F)^e(v_e^+)\in\kt^e$. By associativity of the Killing form one has $$(k_e,k_f)=(-1)^{e+f}\big((\ad F)^e(v_e^+),(\ad F)^f(v_f^+)\big)=(-1)^{f}\big(v_e^+,(\ad F)^{e+f}(v_f^+)\big).$$ However if $e>f$ then $(\ad F)^{e+f}(v^+_f)=0,$ so, $(k_e,k_f)=0.$\end{proof}

\begin{obs}\label{evenw0}Kostant lines $\kt^e$ are fixed by the longest element of the Weyl group of $\A_d$ for even exponent $e,$ and are anti-fixed for odd exponent $e.$
\end{obs}


\subsection{$\A,$ $\B$ and $\Ce$}

If we denote by $f(x)=x(d-x),$ then the triple $\{E,F,H\}$ below spans a principal $\sl_2$ of $\sl_d(\R),$ denoted by $\s:$

\begin{alignat*}{2} H & =  \diag(d-1,d-3,\ldots,1-d),\\ 
E &= \begin{pNiceMatrix}  & 1 &  & & \\
& & \Ddots  & & \\
 & & & & \\
 & & & &1 \\
 & & & & \end{pNiceMatrix}\textrm{ and } F  =  \begin{pNiceMatrix}  0&  &  & & \\
f(1) & \Ddots& & & \\
 & f(2) & & & \\
 & & \Ddots& & \\
 & & & f(d-1)&0 \end{pNiceMatrix}.\end{alignat*}


\noindent
Consider the matrix product $E^e=E\cdots E$  and define the \emph{Kostant vector} $$\ktv^e:=(-1)^e(\ad F)^e(E^e)=\Big[\cdots\big[[E^e,F],F\big]\cdots F\Big].$$

For every $e\in\lb1,d\rb$ the $E^e$ is annihilated by $\ad E$ and is an eigenvector of $\ad H$ of eigenvalue $2e.$ The space $\spa\{(\ad F)^l\cdot E^e:l\in\Z_{\geq0}\}$ is thus an $\ad\s$-module of dimension $2e+1.$ The $0$-restricted weight space $\R\cdot (\ad F)^e(E^e)$ is the Kostant line $\kt^e$ and thus $\ktv^e\in\kt^e-\{0\}$.

Denote by $\pi^{i,j},$ for $i,j\in\lb1,d\rb,$ the elementary matrix whose only non-vanishing entry is $(j,i),$ and this entry is $1,$ this is to say, $\pi^{i,j}$ is the operator sending $e_j\mapsto e_i$ and $e_k\mapsto 0$ for every $k\neq j.$ We simplify $\pi^{i,i}$ as $\pi^i.$ Elementary computation gives:

\begin{equation}\label{pj}[\pi^{i,j},\pi^{l,t}]=\left\{\begin{array}{cc}0 & \textrm{ if }i\neq t,l\neq j,\\ \pi^{i,t} & \textrm{ if }i\neq t,j=l,\\\pi^i-\pi^j& \textrm{ if }i=t,l=j,\\-\pi^{l,j}&\textrm{ if }i=t,l\neq j.\end{array}\right.\end{equation}

Also, with this notation one has \begin{equation}\label{EFpi}E^e=\sum_{j=1}^{d-e} \pi^{j,e+j}\textrm{ and }F=\sum_{i=1}^{d-1}f(i)\pi^{i+1,i}.\end{equation}

\begin{prop}\label{formulaexponentes} One has that \begin{alignat}{2}\label{j=1}\ktv^e & =(-1)^e\sum_{l=1}^{d-e}\Big(f(l)\cdots f(l+e-1)\cdot\sum_{t=0}^{e}(-1)^{t}\binom{e}{t}\pi^{l+e-t}\Big)\nonumber \\ & =\sum_{j=1}^d\pi^j\Big(\sum_{t=0}^e(-1)^{t}\binom{e}{t}f(j-t)\cdots f(j-t+e-1)\Big).\end{alignat} 
\end{prop}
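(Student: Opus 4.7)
The plan is to compute $(\ad F)^e(E^e)$ term-by-term by iterating a simple recursion derived from the commutation relations \eqref{pj}. Using $F=\sum_{i=1}^{d-1}f(i)\,\pi^{i+1,i}$ together with \eqref{pj}, a direct calculation shows that for every $k\geq 1$ and every $j\in\lb1,d-k\rb$,
\[
\ad(F)(\pi^{j,j+k})\;=\;-f(j+k-1)\,\pi^{j,j+k-1}\;+\;f(j)\,\pi^{j+1,j+k},
\]
with the boundary case $k=1$ handled by the diagonal sub-case of \eqref{pj}. Thus $\ad F$ acts on $\pi^{j,j+k}$ as the sum of a \emph{shift-left} $(j,k)\mapsto(j,k-1)$ with coefficient $-f(j+k-1)$ and a \emph{shift-down} $(j,k)\mapsto(j+1,k-1)$ with coefficient $f(j)$.

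I would then iterate this recursion $e$ times on each summand $\pi^{j,j+e}$ of $E^e=\sum_{j=1}^{d-e}\pi^{j,j+e}$. The result is a sum over the $2^e$ length-$e$ paths in the integer grid starting at $(j,e)$ and ending at some $(j+t,0)$, where $t\in\lb0,e\rb$ counts the number of shift-down moves; exactly $\binom{e}{t}$ paths terminate there, and each such endpoint corresponds to the diagonal matrix $\pi^{j+t}$. The key claim is that the coefficient accumulated along every path from $(j,e)$ to $(j+t,0)$ equals $(-1)^{e-t}\,f(j)\,f(j+1)\cdots f(j+e-1)$, independently of the order of moves. To verify this, track the column index $j+a+s$ of the running matrix $\pi^{j+a,j+a+s}$: this index decreases by $1$ at each shift-left and is preserved at each shift-down, so the $e-t$ shift-lefts are necessarily performed at columns $j+e,j+e-1,\ldots,j+t+1$ (in some order) and contribute the factors $-f(j+e-1),\ldots,-f(j+t)$; similarly, the $t$ shift-downs happen at rows $j,j+1,\ldots,j+t-1$ (in some order) and contribute $f(j),\ldots,f(j+t-1)$. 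Multiplying yields the announced coefficient.

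Summing over $t$ and $j$ produces
\[
(\ad F)^e(E^e)\;=\;(-1)^e\sum_{j=1}^{d-e}f(j)\cdots f(j+e-1)\sum_{t=0}^e(-1)^t\binom{e}{t}\pi^{j+t},
\]
and the definition $\ktv^e=(-1)^e(\ad F)^e(E^e)$, together with the reindexing $t\mapsto e-t$ (using $\binom{e}{e-t}=\binom{e}{t}$), gives the first equality of \eqref{j=1}. The second equality is obtained by regrouping terms according to the diagonal index $j\in\lb1,d\rb$; adopting the convention $f(x)=0$ for $x\notin\lb1,d-1\rb$ absorbs the boundary indices into a single uniform expression.

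The main obstacle is the path-independence assertion in the second paragraph. Once one recognizes that the column index of the evolving generator serves as a bookkeeping device forcing the multiset of shift-left contributions to be $\{-f(j+t),\ldots,-f(j+e-1)\}$ and the multiset of shift-down contributions to be $\{f(j),\ldots,f(j+t-1)\}$ regardless of the order of moves, the remainder of the argument reduces to routine manipulation of a binomial sum.
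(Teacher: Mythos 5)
Your proof is correct and follows essentially the same route as the paper's: both compute $(\ad F)^e(E^e)$ by iterating the one-step bracket formula $\ad F(\pi^{a,b})=-f(b-1)\pi^{a,b-1}+f(a)\pi^{a+1,b}$ and observing that binomial coefficients emerge from the iteration. Your explicit path-counting interpretation (tracking row and column indices to show the accumulated coefficient is path-independent) is a clean way of making precise the step the paper treats as visible by inspection ("one readily sees the binomial coefficients\ldots appearing"), and the reindexing $t\mapsto e-t$ and the regrouping by diagonal index match the paper's concluding steps.
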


For example one has \begin{alignat}{3}\label{ejsktv}\ktv^2 & =2\sum_{j=1}^d\big(d^2+3d(1-2j)+6j(j-1)+2\big)\pi^j, \nonumber\\  \ktv^3 & = 6\sum_{j=1}^d(-2j + 1 + d)(d^2 - 10dj + 10j^2 + 5d - 10j + 6)\pi^j,\nonumber\\ \ktv^{d-1} &  = f(1)\cdots f(d-1)\sum_{j=1}^d(-1)^{j-1}\binom{d-1}{d-j}\pi^j.\end{alignat} 

\begin{proof} We compute  $(\ad F)^e(E^e)$. Using Equation \eqref{EFpi} this translates to computing the brackets $$\Big[\sum_{i=1}^{d-1}f(i)\pi^{i+1,i},\big[\cdots,\big[\sum_{i=1}^{d-1}f(i)\pi^{i+1,i},\sum_{j=1}^{d-e} \pi^{j,e+j}\big]\cdots\big]\Big],$$ for which we use the elementary computations in Equation \eqref{pj}. To do so, we use a recursive argument, for which we compute the bracket $[F,a(l,t)\pi^{l,t}]$ for arbitrary $l,t\in\lb1,d\rb$ and some real-valued function $a.$ Direct computation gives then $$[F,a(l,t)\pi^{l,t}]=a(l,t)\big(f(l)\pi^{l+1,t}-f(t-1)\pi^{l,t-1}\big).$$ 

Applying again $[F,\cdot],$ the term $\pi^{l+1,t-1}$ will appear once for each factor $\pi^{l+1,t}$ and $\pi^{l,t-1},$ with coefficient $f(l)f(t-1)(-1)2.$ If one further applies $[F,\cdot]$ one readily sees the binomial coefficients with the alternating signs appearing as the coefficient of $\pi^{l+k-j,t-j}$ (together with the corresponding $f$'s), this is to say \begin{alignat}{2} (\ad F)^k(\pi^{l,t}) & =  f(l)\cdots f(l+k-1)\pi^{l+k,t}\nonumber\\ & +\sum_{i=1}^{k-1}f(l)\cdots f(l+k-i-1)\cdot f(t-1)\cdots f(t-i)(-1)^i\binom ki\pi^{l+k-i,t-i} \nonumber\\ & + (-1)^kf(t-1)\cdots f(t-k)\pi^{l,t-k}\end{alignat}

Thus, replacing $t=l+e$ and $k=e$ one has: \begin{alignat}{2} (\ad F)^e(\pi^{l,l+e}) & =  f(l)\cdots f(l+e-1)\pi^{l+e,l+e}\nonumber\\ & +\sum_{i=1}^{e-1}f(l)\cdots f(l+e-i-1)\cdot f(l+e-1)\cdots f(l+e-i)(-1)^i\binom ei\pi^{l+e-i,l+e-i} \nonumber\\ & + (-1)^ef(l+e-1)\cdots f(l+e-e)\pi^{l,l+e-e}\nonumber\\&= f(l)\cdots f(l+e-1)\sum_{i=0}^e(-1)^i\binom ei\pi^{l+e-i}.\end{alignat} Summing on $l$ from $1$ to $d-e$ gives the first required formula.

The second equality is not completely immediate from the first so we quickly explain how it is obtained. By standard reordering of the sum one gets: 

\begin{alignat*}{3}\ktv^e &= (-1)^e\Big(\sum_{l=1}^{d-e}f(l)\cdots f(l+e-1)\cdot\sum_{t=0}^{e}(-1)^{t}\binom{e}{t}\pi^{l+e-t}\Big) & \\ & =(-1)^e\sum_{s=0}^e(-1)^{e-s} \sum_{j=1}^{d-e}\big(\binom{e}{s}f(j)\cdots f(j+e-1)\big)\pi^{j+s} & (s=e-t)\\ &= \sum_{s=0}^e(-1)^{s} \sum_{i=s+1}^{d-e+s}\big(\binom{e}{s}f(i-s)\cdots f(i-s+e-1)\big)\pi^{i} &\quad (i=j+s).
\end{alignat*}

One observes then that for every $i\in\lb1,s\rb$ the number $f(i-s)\cdots f(i-s+e-1)=0,$ since $i-s\leq0$ and $i-s+e-1\geq0$ (recall $s\in\lb0,e\rb$), so one can extend the lower index of the sum in $i$ in the above formula to starting from $i=1$ and the sum will be unchanged. Analogous reasoning allows to extend the upper index of the sum (recall $f(x)=f(d-x)$) so the proof is complete.\end{proof}

We need the following to describe adjoint factors in the Hitchin component.

\begin{lemma}[Exponents are shifted]\label{permutaexponentes} Consider $e,k\in\lb2,d-1\rb$ then the vector $[[F,E^e],E^k] \in\R\cdot E^{e+k-1}$. Moreover, for $k\leq d-3,$ $[[F,E^3],E^k]\neq0$.
\end{lemma}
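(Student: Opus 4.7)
The plan is to exploit the principal $\sl_2$-structure $\s = \<E,H,F\>$ to force $[[F,E^e],E^k]$ to lie in the top weight line of the Kostant $\s$-submodule $V_{e+k-1}$, and then to compute the scalar for $e=3$ directly from the matrix entries.

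First I would record the elementary facts that $E$ is a (fixed) nilpotent upper triangular matrix so that its integer powers $E^e$ commute with each other and that $[H, E^e] = 2e\cdot E^e$. In particular, $[E,E^k]=0$ and $[E,E^e]=0$. Then, applying the Jacobi identity twice,
\[
[E,[[F,E^e],E^k]] \;=\; [[E,[F,E^e]],E^k] + [[F,E^e],[E,E^k]] \;=\; [[E,F],E^e],E^k] \;+\; 0
\;=\; [\,[H,E^e]\,,E^k] \;=\; 2e\,[E^e,E^k]\;=\;0.
\]
Together with the computation $[H,[[F,E^e],E^k]] = (2e-2+2k)\,[[F,E^e],E^k]$, this shows that $[[F,E^e],E^k]$ is a highest-weight vector for $\ad\s$ of weight $2(e+k-1)$.

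Next I invoke Kostant's decomposition $\sl_d = \bigoplus_{f=1}^{d-1} V_f$: a highest-weight vector of weight $2(e+k-1)$ can only occur in $V_{e+k-1}$, and the corresponding highest-weight line in this irreducible module is one-dimensional. Because $E^{e+k-1}$ is itself a non-zero highest-weight vector of that weight (when $e+k-1\leq d-1$), it spans that line, while for $e+k-1\geq d$ both sides vanish. Hence $[[F,E^e],E^k]\in \R\cdot E^{e+k-1}$, which establishes the first statement.

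For the second statement I compute the scalar explicitly for $e=3$. Using Eq.~\eqref{EFpi} and the multiplication rule $\pi^{i,j}\pi^{l,t}=\delta_{jl}\pi^{i,t}$, the $(a,a+2)$-entry of $[F,E^3]=FE^3-E^3F$ evaluates to $f(a-1)-f(a+2)$, with the convention $f(0)=f(d)=0$. Since $f(x)=x(d-x)$ a short algebraic simplification gives $f(a-1)-f(a+2)=3(2a-d+1)$, so
\[
[F,E^3] \;=\; 3\sum_{a=1}^{d-2}(2a-d+1)\,\pi^{a,a+2}.
\]
Computing $[\pi^{a,a+2},E^k]=\pi^{a,a+k+2}-\pi^{a-k,a+2}$ via Equation~\eqref{pj} and re-indexing $a\mapsto a+k$ in the second contribution yields
\[
[[F,E^3],E^k] \;=\; 3\sum_{a=1}^{d-k-2}\big[(2a-d+1)-(2(a+k)-d+1)\big]\pi^{a,a+k+2}
\;=\; -6k\,E^{k+2},
\]
which is non-zero for every $k\in\lb 1,d-3\rb$, completing the proof. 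The only step that requires more than bookkeeping is the passage from ``annihilated by $\ad E$ and $H$-weight $2(e+k-1)$'' to ``proportional to $E^{e+k-1}$''; this is where Kostant's decomposition and the principal-$\sl_2$ structure are used in an essential way, and I expect it to be the only subtlety in an otherwise computational argument.
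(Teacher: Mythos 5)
Your proposal is correct and follows essentially the same route as the paper: show that $[[F,E^e],E^k]$ is annihilated by $\ad E$ and is an $\ad H$-eigenvector of weight $2(e+k-1)$, then use Kostant's structure theory (the paper phrases this via $\dim\ker\ad E=d-1$, you via the highest-weight lines of the $V_f$'s, which is the same fact) to conclude it lies on $\R\cdot E^{e+k-1}$, and finish with an explicit matrix computation for $e=3$. The one genuine improvement is in the last step: you obtain the clean closed form $[[F,E^3],E^k]=-6k\,E^{k+2}$ valid uniformly for $1\le k\le d-3$, whereas the paper only extracts the coefficient of $\pi^{1,k+3}$ and handles $k=d-3$ as a separate subcase; your constant $-6k$ agrees with the paper's $-f(3)+f(k+3)-f(k)$ (respectively $-2f(3)$ when $k=d-3$, since $f(d-3)=f(3)=3(d-3)$).
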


\begin{proof} The centralizer of $E$ has dimension $d-1$ (Kostant \cite[Corollary 5.3]{kostant}). It is thus spanned, as a vector space, by $\{E^l:l\in\lb1,d-1\rb\}.$ The first assertion of the lemma follows by the combination of two straightforward calculations: \begin{alignat*}{2} \ad_E\big(\big[[F,E^e],E^k\big]\big) & = [\ad_E([F,E^e]),E^k]=\big[[H,E^e],E^k\big]=2e[E^e,E^k]=0;\\ \ad_H\big(\big[[F,E^e],E^k\big]\big) & =\big[\ad_H([F,E^e]),E^k\big]+\big[[F,E^e],\ad_H(E^k)\big]\\ & =(2e-2+2k)\big[[F,E^e],E^k\big].\end{alignat*} Indeed, the first computation gives that the desired element belongs to the span of $\{E^l:l\in\lb1,d-1\rb\}$, and the second asserts that it is an eigenvector of $\ad_H$ of eigenvalue $2(e+k-1)$, giving the desired conclusion.

To show that $[[F,E^3],E^k]\neq0$ if $k\leq d-3$ we use Eq. \eqref{pj}. One has $$[F,E^3]= \sum_{j=1}^{d-3}f(j)\pi^{j+1,j+3}-f(j+2)\pi^{j,j+2}.$$ Since we intend to further bracket with $E^k=\sum_{l=1}^{d-k}\pi^{l,k+l}$ we observe that $$[\pi^{j+1,j+3},\pi^{l,k+l}]=\left\{\begin{array}{cc}\pi^{j+1,k+j+3} & \textrm{ if }l=j+3,\\ -\pi^{j+1-k,j+3}& \textrm{ if }k+l=j+1,\\ 0& \textrm{ otherwise, }\end{array}\right.$$ where both non-vanishing options cannot simultaneously occur (since $k\neq-2$). Similarly one has $$[\pi^{j,j+2},\pi^{l,k+l}]=\left\{\begin{array}{cc}\pi^{j,j+k+2}& \textrm{ if }l=l+2,\\ -\pi^{j-k,j+2} & \textrm{ if }k+l=j,\\ 0 & \textrm{ otherwise. }\end{array}\right.$$ Putting together the last three equations, one has $$\big[[F,E^3],E^k\big]=\sum_{j=1}^{d-3}f(j)\big(\pi^{j+1,k+j-3}-\pi^{j+1-k,j+3}\big)-f(j+2)\big(\pi^{j,j+k+2}-\pi^{j-k,j+2}\big).$$ We show then that the coefficient of $[[F,E^3],E^k]$ in  $\pi^{1,k+3}$ is non-zero. Indeed it is $$-f(3)+f(k+3)-f(k)=-6k\neq0$$ if $k\leq d-4$ or $-2f(3)\neq0$ if $k=d-3.$\end{proof}

\begin{prop}[Adjoint Factors]\label{adfactors} \item
\begin{itemize}\item[-] Let $\ge=\so(n,n+1)$ or $\sp(2n,\R)$ and $\phi:\ge\to\sl(d,\R)$ be the defining representation. Then as an $\ad(\phi\,\ge)$-module one has $$\sl(d,\R)=\rep_{2\peso_{\aa_1}}\oplus\rep_{\peso_{\aa_2}}.$$ These two factors also correspond to the decomposition of $\sl(d,\R)$ in odd versus even exponents. Moreover, for each factor one has $\vt=\simple$ and for any $X_0\in\a^+$ the cone of $(\rep,X_0)$-compatible elements is $\encono_{\rep}=\a^+$.

\item[-] Let now $\gedossplit$ be a real-split form of the exceptional complex Lie algebra of type $\Ge$ and let $\phi:\gedossplit\to\sl_7(\R)$ be the fundamental representation associated to the short root. Then $\phi(\gedossplit)$ has three adjoint factors given by  $V_1\oplus V_5$, $V_3$ and $V_2\oplus V_4\oplus V_6$ and for each factor the same conclusion as in the previous item holds.

\end{itemize}
\end{prop}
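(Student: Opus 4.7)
The plan is to handle both parts via a common strategy: decompose $\sl(d,\R)$ into $\phi(\ge)$-submodules via tensor algebra on the defining representation $V$, and then match this decomposition with the $\ad\s$-splitting by showing that $\phi$ sends the principal $\sl_2$ of $\ge$ to a principal $\sl_2$ of $\sl(d,\R)$.

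For the first statement, first I would exploit the fact that $V$ carries a $\phi(\ge)$-invariant non-degenerate bilinear form (symmetric for $\so(n,n+1)$, alternating for $\sp(2n,\R)$), which identifies $V\cong V^*$ and hence $\sl(d,\R)=\End_0(V)\cong (V\otimes V)/\R$. The splitting $V\otimes V=\Sym^2V\oplus\Lambda^2V$ then yields a $\tr(AB)$-orthogonal decomposition $\sl(d,\R)=\phi(\ge)\oplus\phi(\ge)^\perp$: in the $\so$-case, $\phi(\ge)=\Lambda^2V$ has highest weight $\peso_{\aa_2}$ and $\phi(\ge)^\perp=\Sym^2_0V$ has highest weight $2\peso_{\aa_1}$; in the $\sp$-case, $\phi(\ge)=\Sym^2V$ has highest weight $2\peso_{\aa_1}$ and $\phi(\ge)^\perp=\Lambda^2_0V$ has highest weight $\peso_{\aa_2}$. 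In both cases the common formula $\sl(d,\R)=\phi_{2\peso_{\aa_1}}\oplus\phi_{\peso_{\aa_2}}$ follows. For the correspondence with odd versus even exponents, I would verify that the principal $\sl_2$ of $\ge$ acts on $V$ with weights forming a single unbroken arithmetic progression (a classical check via the explicit formulas in \S\,\ref{xalpha}), so it is principal in $\sl(d,\R)$ too; then Kostant's theorem (Table \ref{exponentes}) gives $\ge=V_1\oplus V_3\oplus\cdots\oplus V_{2n-1}$, which are precisely the odd-exponent factors of $\sl(d,\R)=\bigoplus_{e=1}^{d-1}V_e$, and $\phi(\ge)^\perp$ accounts exactly for the even-exponent factors.

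For the second statement, I would use that $V$ carries an invariant symmetric bilinear form since the embedding factors through $\so(3,4)\subset\sl_7(\R)$, so the same tensor argument yields $\sl_7(\R)=\Lambda^2V\oplus\Sym^2_0V$ of dimensions $21+27$. The $21$-dimensional piece $\Lambda^2V\cong\so(V)$ splits further as $\gedossplit\oplus W$ with $\dim W=7$; since $\gedossplit$ has a unique non-trivial $7$-dimensional irreducible representation, $W$ must be irreducible (and abstractly isomorphic to $V$). For the $\ad\s$-identification, I would invoke that the principal $\sl_2$ of $\gedossplit$ also maps to a principal $\sl_2$ of $\sl_7(\R)$, so $V$ is an irreducible $\sl_2$-module of dimension $7$, namely $V=V_3$. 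Clebsch--Gordan then gives $\Sym^2V_3=V_0\oplus V_2\oplus V_4\oplus V_6$ and $\Lambda^2V_3=V_1\oplus V_3\oplus V_5$; combined with $\gedossplit=V_1\oplus V_5$ from Kostant, this forces $W=V_3$ and $\Sym^2_0V=V_2\oplus V_4\oplus V_6$, as claimed.

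The main obstacle I expect is the rigorous verification that $\phi$ carries the principal $\sl_2$ of $\ge$ to a principal $\sl_2$ of $\sl(d,\R)$; equivalently, that $V$ is irreducible as $\s$-module. For $\so(n,n+1)$ and $\sp(2n,\R)$ this follows from the explicit principal nilpotent recalled in \S\,\ref{xalpha} --- it must act with a single Jordan block on $V$ --- a routine but careful check. For $\gedossplit$ one can argue either by an explicit weight calculation using $2\epsilon_0$ from \S\,\ref{xalpha} against the known weights of the $7$-dimensional representation, or by invoking the factorization $\gedossplit\hookrightarrow\so(3,4)\hookrightarrow\sl_7(\R)$: both inclusions send principal $\sl_2$ to principal $\sl_2$ (the first via Theorem \ref{clausuras} together with the preservation of Fuchsian loci across nested Hitchin components, the second by the already-handled classical case).
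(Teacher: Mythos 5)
Your proposal is correct, and it takes a genuinely different route from the paper. The paper's proof is terse: it observes from Table \ref{exponentes} that $\phi(\ge)=\bigoplus_{\text{odd }e}V_e$, and then invokes Lemma \ref{permutaexponentes} (the exponent-shifting identity $[[F,E^3],E^k]\in\R\cdot E^{k+2}$, non-vanishing for $k\leq d-3$) to conclude that any $\ad\phi(\ge)$-submodule of the complement containing some $E^k$ with $k$ even must contain all higher even $E^{k'}$; together with invariance of the trace-form complement this forces irreducibility of $\bigoplus_{\text{even }e}V_e$. For the $\Ge$ case the paper explicitly waives the computation. Your argument instead exploits the invariant bilinear form on $V$ to identify $\sl(d,\R)\cong\Sym^2V\oplus\Lambda^2V$ modulo scalars, and then appeals to the classical irreducibility of $\Sym^2_0V$, $\Lambda^2_0V$ (or the adjoint) as $\ge$-modules; the match with the exponent decomposition then follows from the fact that $\phi$ carries the principal $\sl_2$ of $\ge$ to a principal $\sl_2$ of $\sl(d,\R)$, which you correctly reduce to checking that the $\ad H$-weights on $V$ form an unbroken arithmetic progression. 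What each approach buys: the paper's route stays entirely inside the explicit Lie-bracket calculus it has already set up for $\sl(d,\R)$ (and the lemma is reused elsewhere), whereas your tensor-algebra route is more conceptual, avoids the finicky coefficient check in Lemma \ref{permutaexponentes}, and in particular supplies an honest proof for the $\Ge$ case via the Clebsch--Gordan identities $\Lambda^2V_3=V_1\oplus V_3\oplus V_5$ and $\Sym^2_0V_3=V_2\oplus V_4\oplus V_6$, which the paper omits. One small point you should make fully explicit when writing this up: the summand $W\subset\Lambda^2V$ of dimension $7$ must be shown to contain no trivial $\gedossplit$-summand; this follows because the centralizer of $\gedossplit$ in $\so(3,4)\subset\End(V)$ is trivial (Schur's lemma plus the trace-zero condition), after which the representation theory of $\gedossplit$ forces $W\cong V$.
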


\begin{proof} We focus on the first item, the second following similarly but with more involved computations that we omit. Using the computation for the exponents of $\ge$ in Table \ref{exponentes} one sees that $\phi(\ge)=\sum_{\mathrm{odd}\ e}V_e$ is an irreducible factor. Moreover, as $[F,E^3]\in\phi(\ge)$, Lemma \ref{permutaexponentes} implies that all even exponents belong to the same irreducible factor, giving the result. The second statement follows readily as, by direct computation, the weights of $\ad(\phi)$ are integer multiples of simple roots of $\ge$ and every root appears as a weight in each of the factors.  The $\gedossplit$ case follows by explicit verification. See Figure \ref{hasseEx}. \end{proof}



\begin{figure}
\begin{tikzpicture}
\node[scale=1] at (0,0){
\begin{tikzcd}[column sep=small]
 & \circ \arrow[d, "\aa"]& & \\ 
  & \circ \arrow[d, "\bb"]& & \\
 & \circ\arrow[d, "\aa"]&  & \\
 & \circ \arrow[d, "\aa"]& & \\ 
  & \circ \arrow[d, "\bb"]& & \\
 & \circ\arrow[d, "\aa"]&  & \\
 & \circ  & &
\end{tikzcd}};

\node[scale=1] at (4,-2){\dynkin[labels={\bb,\aa},scale=1.4] G2};
\node[scale=1] at (4,-3){$\simple^+=\{\aa,\bb,\aa+\bb,2\aa+\bb,3\aa+\bb,3\aa+2\bb\}$};

\node[scale=1] (b) at (4,0) {\begin{tikzpicture}\begin{rootSystem}{G}
\node[right] at (hex cs:x=1,y=0){\small\(\peso_\aa=2\aa+\bb\)};
\roots

\wt [black]{1}{0}
\wt [black]{-1}{1}
\wt [black]{-1}{0}
\wt [black]{0}{0}
\wt [black]{1}{-1}
\wt [black]{-1}{1}
\wt [black]{2}{-1}
\wt [black]{-2}{1}
\WeylChamber
\end{rootSystem}\end{tikzpicture}};
\end{tikzpicture}
\caption{Hasse diagram for the $7$-dimensional irreducible representation of $\Gedossplit$, which is the fundamental representation of the short root, together with the corresponding set of weights (in black).
}\label{hasseEx}
\end{figure}

\subsection{Cleaner formulae for $\kt^e$}

We proceed to a more explicit computation of $\ktv^e$. To this end, consider the \emph{(finite) difference operator} defined, for $g:\R\to\R$, by $$\diff g(x)=g(x+1)-g(x).$$ We also consider, for a real number $z\in\R$ (a slight modification of) the \emph{falling factorial} notation: for $k\in\N$ we let $$z^{\lf k}=z(z-1)\cdots (z-k+1),$$ with the convention that $z^{\lf 0}=1,$ in particular $0^{\lf 0}=1$, and for later use we define $z^{\lf{-k}}=0.$ For a function $g,$ we let $g(x)^{\lf k}$ be the $k$-th falling factorial applied to the real number $g(x).$ Straightforward computations yield the following rules.

\begin{lemma}\label{cuenta} For every $k\in\N$ one has\begin{itemize}\item[i)] ${\displaystyle\diff^k g(x)=\sum_{i=0}^k(-1)^i\binom k ig(x+k-i);}$\item[ii)] a Leibnitz rule ${\displaystyle\diff^k(gh)(x)=\sum_{i=0}^k\binom ki\diff^ig(x)\diff^{k-i}h(x+i)};$ 
\item[iii)] $\diff x^{\lf k}=kx^{\lf{k-1}}$ and if we let $r(x)=l-x$ for some $l\in\R,$ then $$\diff \big(r(x)\big)^{\lf k}=-kr(x+1)^{\lf{k-1}}.$$

\end{itemize}\end{lemma}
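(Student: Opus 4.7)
All three identities are elementary and will be proved by straightforward induction or direct manipulation.

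For item (i), I will proceed by induction on $k$. The base case $k=1$ is the definition $\diff g(x)=g(x+1)-g(x)$. For the inductive step, writing $\diff^{k}g=\diff(\diff^{k-1}g)$, applying the induction hypothesis to $\diff^{k-1}g(x+1)$ and $\diff^{k-1}g(x)$, and regrouping the resulting binomial coefficients via Pascal's identity $\binom{k}{i}=\binom{k-1}{i-1}+\binom{k-1}{i}$ yields the claim. The only bookkeeping subtlety is to make sure the boundary terms at $i=0$ and $i=k$ match the coefficients $\binom{k}{0}=\binom{k}{k}=1$.

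For item (ii), I will again induct on $k$. The base case $k=1$ is the direct computation
\[
\diff(gh)(x)=g(x+1)h(x+1)-g(x)h(x)=\diff g(x)\cdot h(x+1)+g(x)\cdot\diff h(x),
\]
obtained by adding and subtracting $g(x)h(x+1)$, which matches $\sum_{i=0}^{1}\binom{1}{i}\diff^{i}g(x)\diff^{1-i}h(x+i)$. The inductive step writes $\diff^{k}(gh)=\diff(\diff^{k-1}(gh))$, substitutes the inductive expression, and uses the same Pascal's identity to combine the terms. The key step to monitor is that the shift $h(x+i)\mapsto h(x+i+1)$ occurring inside $\diff$ is exactly what keeps the index on $h$ compatible with the final formula.

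For item (iii), everything follows from direct calculation using $x^{\lf k}=x^{\lf k-1}(x-k+1)$ and $(x+1)^{\lf k}=(x+1)x^{\lf k-1}$: subtracting gives $\diff x^{\lf k}=x^{\lf k-1}[(x+1)-(x-k+1)]=kx^{\lf k-1}$. For $r(x)=l-x$, analogously one factors $r(x)^{\lf k}=(l-x)\,r(x+1)^{\lf k-1}$ and $r(x+1)^{\lf k}=r(x+1)^{\lf k-1}(l-x-k)$, so that $\diff (r(x))^{\lf k}=r(x+1)^{\lf k-1}[(l-x-k)-(l-x)]=-k\,r(x+1)^{\lf k-1}$.

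No step presents any genuine difficulty; the whole proof is a matter of careful index manipulation, and no step is expected to be an obstacle.
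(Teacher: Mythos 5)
Your proof is correct and follows the same elementary route the paper takes implicitly (the paper dismisses the lemma as ``straightforward computations''): induction with Pascal's identity for items (i) and (ii), and the factorizations $x^{\lf k}=(x-k+1)x^{\lf k-1}$, $(x+1)^{\lf k}=(x+1)x^{\lf k-1}$ (and the analogous ones for $r$) for item (iii). All steps check out, including the index bookkeeping in the shift $h(x+i)\mapsto h(x+i+1)$ in part (ii).
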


Considering the function $g_{d,e}(x)=d+e-x,$ together with \begin{equation}\label{Fde}F_{d,e}(x)=f(x-e)f(x-e+1)\cdots f(x-1)=(x-1)^{\lf e}\cdot\big(g_{d,e}(x)\big)^{\lf e},\end{equation} Proposition \ref{formulaexponentes} yields $$\ktv^e  =\sum_{j=1}^d\pi^j\Big(\sum_{t=0}^e(-1)^{t}\binom{e}{t}F_{d,e}(j-t+e)\Big) =\sum_{j=1}^d\pi^j\diff^eF_{d,e}(j),$$ where the last equality comes from Lemma \ref{cuenta}. We compute then $\diff^eF_{d,e}(x)$ using the Leibnitz rule applied to the product $F_{d,e}(x)=(x-1)^{\lf e}\cdot(g_{d,e}(x))^{\lf e}.$
\begin{alignat*}{2}\diff^eF_{d,e}(x) & =\sum_{i=0}^e\binom{e}{i} \diff^i(x-1)^{\lf e}\diff^{e-i}(g_{d,e}(x+i))^{\lf{e}}\\ &= \sum_{i=0}^e(-1)^{e-i}\binom{e}{i} \frac{e!}{(e-i)!}(x-1)^{\lf {e-i}}\frac{e!}{i!}\big(g_{d,e}(x+e)\big)^{\lf i}\\ &= (-1)^ee!\sum_{i=0}^e(-1)^{i}\binom{e}{i}^2 (x-1)^{\lf {e-i}}\big(g_{d,e}(x+e)\big)^{\lf i}.\end{alignat*}

In order to decide whether $\ktv^e$ belongs to the kernel of a simple root $\slroot_j\in\simple$ we compute $-\slroot_j(\ktv^e)=\diff^{e+1}F_{d,e}(j),$ which we write, by Lemma \ref{cuenta}, as 
\begin{alignat*}{2}\diff^{e+1}F_{d,e}(x) & =\sum_{i=0}^{e+1}\binom{e+1}{i} \diff^i(x-1)^{\lf e}\diff^{e+1-i}(g_{d,e}(x+i))^{\lf{e}}\\ &= \sum_{i=0}^{e+1}(-1)^{e+1-i}\binom{e+1}{i} \frac{e!}{(e-i)!}(x-1)^{\lf {e-i}}\frac{e!}{i-1!}\big(g_{d,e}(x+e+1)\big)^{\lf {i-1}}\\ &= (-1)^{e+1}(e+1)!\sum_{i=1}^e(-1)^{i}\binom{e}{i}\binom{e}{i-1} (x-1)^{\lf {e-i}}\big(d-1-x\big)^{\lf {i-1}}.\end{alignat*}

We record the above computations in the following lemma.

\begin{lemma}\label{calculo} One has \begin{alignat}{2}\ktv^e & =(-1)^ee!\sum_{j=1}^d\pi^j\Big(\sum_{t=0}^e(-1)^t\binom{e}{t}^2(j-1)^{\lf{e-t}}(d-j)^{\lf t}\Big);\nonumber\\ \label{diofanto}\slroot_j(\ktv^e) & =(-1)^e(e+1)!\sum_{t=1}^e(-1)^{t}\binom{e}{t}\binom{e}{t-1} (j-1)^{\lf {e-t}}\big(d-1-j\big)^{\lf {t-1}}.\end{alignat}
\end{lemma}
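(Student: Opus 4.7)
The plan is to collect the difference-calculus identities of Lemma \ref{cuenta} and apply them to the explicit coefficients produced by Proposition \ref{formulaexponentes}. First I would observe that the coefficient of $\pi^j$ in the second formula of Proposition \ref{formulaexponentes} is exactly $(-1)^e\diff^e F_{d,e}(j)$, where $F_{d,e}$ is the polynomial defined in Equation \eqref{Fde}, since the identity $\diff^k g(x)=\sum_{i=0}^k(-1)^i\binom{k}{i}g(x+k-i)$ from Lemma \ref{cuenta}(i) matches the alternating binomial sum appearing in Proposition \ref{formulaexponentes} after the change of summation index $t\mapsto e-t$. This recasts the problem purely as one about computing $\diff^e F_{d,e}$ and $\diff^{e+1} F_{d,e}$.

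Next I would factor $F_{d,e}(x)=(x-1)^{\lf e}\cdot\big(g_{d,e}(x)\big)^{\lf e}$ with $g_{d,e}(x)=d+e-x$, which is possible because $f(x)=x(d-x)$ factors as $f(x)=x(d-x)$ and the product $f(x-e)\cdots f(x-1)$ telescopes into two falling factorials. Then applying the Leibniz rule (Lemma \ref{cuenta}(ii)) together with the derivative rule (Lemma \ref{cuenta}(iii)), the derivatives $\diff^i(x-1)^{\lf e}$ and $\diff^{k-i}(g_{d,e}(x+i))^{\lf e}$ reduce to explicit falling-factorial monomials multiplied by the obvious factorial coefficients $e!/(e-i)!$ and (up to sign) $e!/(k-i)!$, where $k=e$ or $k=e+1$. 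Collecting terms yields the first displayed identity in the Lemma immediately.

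For the second identity I would use that the vector $\ktv^e$ is diagonal, so that applying the simple root $\slroot_j:a\mapsto a_j-a_{j+1}$ corresponds exactly to applying $-\diff$ to the coefficient function $j\mapsto \diff^e F_{d,e}(j)$, yielding $\slroot_j(\ktv^e)=-\diff^{e+1}F_{d,e}(j)$. One then rewrites $\diff^{e+1}F_{d,e}$ using Leibniz once more: the binomial $\binom{e+1}{i}$ splits via the Pascal-type identity $\binom{e+1}{i}=\frac{e+1}{i}\binom{e}{i-1}$, which combined with the $(e-i)!$ in the denominator forces the surviving terms to start at $i=1$ and end at $i=e$. After the resulting re-indexing one obtains $(d-1-j)^{\lf{i-1}}$ from $g_{d,e}(x+e+1)^{\lf{i-1}}$ evaluated at $x=j$, producing precisely Equation \eqref{diofanto}.

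The only genuinely delicate point is bookkeeping of the signs and of the shifts inside the falling factorials: the convention $z^{\lf{-k}}=0$ is what kills the boundary terms $i=0$ and $i=e+1$ of the Leibniz expansion for $\diff^{e+1}$, and the identity $g_{d,e}(x+e)=d-x$ is what produces the nice expression $(d-j)^{\lf t}$ in the first formula. No new ideas beyond Lemma \ref{cuenta} are required; the proof is essentially a bookkeeping verification, and I expect the sole obstacle to be arranging the sign and shift conventions so that the two formulas display in the stated form.
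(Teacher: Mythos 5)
Your plan coincides with the paper's own proof: identify the coefficient of $\pi^j$ from Proposition \ref{formulaexponentes} as a finite difference of $F_{d,e}$, factor $F_{d,e}(x)=(x-1)^{\lf e}\big(g_{d,e}(x)\big)^{\lf e}$, apply the Leibniz rule of Lemma \ref{cuenta}, evaluate $g_{d,e}(x+e)=d-x$ (resp.\ $g_{d,e}(x+e+1)=d-1-x$), and for the second identity apply one more $-\diff$ since $\slroot_j(a)=a_j-a_{j+1}$. This is exactly the paper's derivation; the approach is sound.

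Two bookkeeping slips that you should fix before writing out the details, of exactly the kind you yourself flagged. First, the coefficient of $\pi^j$ in Proposition \ref{formulaexponentes} is $\diff^e F_{d,e}(j)$ with no extra factor of $(-1)^e$: the alternating sum $\sum_{t=0}^e(-1)^t\binom{e}{t}F_{d,e}(j+e-t)$ already has the form $\sum_{i=0}^k(-1)^i\binom{k}{i}g(x+k-i)$ of Lemma \ref{cuenta}(i) with $i=t$, $k=e$, $x=j$; no reindexing is required, and performing $t\mapsto e-t$ introduces no net sign in any case. With your claimed $(-1)^e\diff^e F_{d,e}(j)$ the first displayed identity would come out missing its $(-1)^e$ prefactor. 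Second, the factorial coefficient from $\diff^{k-i}\big(g_{d,e}(x+i)\big)^{\lf e}$ is $(-1)^{k-i}\,e!/(e-k+i)!$, not $e!/(k-i)!$: for $k=e$ this is $e!/i!$, which is what produces $\binom{e}{t}^2$ in the first formula; for $k=e+1$ it is $e!/(i-1)!$, which together with $\binom{e+1}{i}$ (via your Pascal identity $\binom{e+1}{i}=\frac{e+1}{i}\binom{e}{i-1}$, or directly) gives $(e+1)!\binom{e}{i}\binom{e}{i-1}$. With these corrected, the boundary terms $i=0$ and $i=e+1$ in the $\diff^{e+1}$ Leibniz sum drop out as you say (via $z^{\lf{-1}}=0$, equivalently $\binom{e}{-1}=\binom{e}{e+1}=0$), and the computation closes.
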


\begin{obs}\label{peso1} In particular one has $\peso_1(\ktv^e)=e!(d-1)^{\lf e}>0$.
\end{obs}

\begingroup
\renewcommand{\arraystretch}{1.13}

\begin{table}[h!]
\begin{center}
\begin{tabular}{r|r|r|r|r} 
& $\sl_3(\R)$ & $\sl_4(\R)$ & $\sl_5(\R)$ & $\sl_6(\R)$  \\
\hline
$\ktv^1$ & $(2,0,-2)$ & $(3,1,-1,-3)$ & $(4,2,0,-2,-4)$ & $(5,3,1,-1,-3,-5)$\\\hline
$\ktv^2$ & $(4,-8,4)$ &  $12\cdot(1,-1,-1,1)$ &$12\cdot(2,-1,-2,-1,2)$ & $8\cdot(5,-1,-4,-4,-1,5)$\\\hline
$\ktv^3$ &  &  $36\cdot(1,-3,3,-1)$ & $144\cdot(1,-2,0,2,-1)$ & $72\cdot(5,-7,-4,4,7,-5)$\\\hline
$\ktv^4$ & & & $576\cdot(1,-4,6,-4,1)$ & $2880\cdot(1,-3,2,2,-3,1)$\\\hline
$\ktv^5$ & & & & $14400\cdot(1,-5,10,-10,5,-1)$\\\hline\hline
\end{tabular}

\ \ \\
\begin{tabular}{r|r|r} 
& $\sl_7(\R)$ & $\sl_8(\R)$   \\
\hline
$\ktv^1$ & $(6,4,2,0,-2,-4,-6)$ & $(7,5,3,1,-1,-3,-5,-7)$ \\\hline
$\ktv^2$ & $12\cdot(5,0,-3,-4,-3,0,5)$ &  $12\cdot(7,1,-3,-5,-5,-3,1,7)$ \\\hline
$\ktv^3$ & $720\cdot (1,-1,-1,0,1,1,-1) $&  $180\cdot(7,-5,-7,-3,3,7,5,-7)$ \\\hline
$\ktv^4$ &$2880\cdot (3,-7,1,6,1,-7,3) $& $2880\cdot(7, -13, -3, 9, 9, -3, -13, 7)$ \\\hline
$\ktv^5$ & $86400\cdot (1,-4,5,0,-5,4,-1)$& $43200\cdot(7, -23, 17, 15, -15, -17, 23, -7)$\\\hline
$\ktv^6$ & $518400\cdot (1,-6,15,-20,15,-6,1)$ & $3628800\cdot(1, -5, 9, -5, -5, 9, -5, 1)$\\\hline
$\ktv^7$ & & $25401600\cdot(1, -7, 21, -35, 35, -21, 7, -1)$
\end{tabular}

\caption{The Kostant vectors of $\sl_d(\R)$ for $d\in\lb3,8\rb$.} \label{ejemplosKV}
\end{center}
\end{table}
\endgroup

\subsection{Type $\D$}\label{typeD}

Consider a $2n$-dimensional real vector space equipped with a bilinear form $\upomega$ of signature $(n,n)$ and let $\SO_{n,n}$ be the volume preserving automorphisms of $\upomega.$ Let also $x\mapsto x^*$ be the adjoint operator defined by $\upomega$, then $$\so(n,n)=\{x\in\sl(2n,\R):x+x^*=0\}.$$

Consider a non-isotropic line $\ell$ and its orthogonal complement $\ell^\perp$ for $\upomega$.

We have then an $\upomega$-preserving involution $i$ with $i|\ell=-\id$ and $i|\ell^\perp=\id$, which gives an involution $\undi:\SO_{n,n}\to\SO_{n,n}$ defined by $g\mapsto igi$. The group of fixed points of $\undi$ is the subgroup of $\SO_{n,n}$ that stabilizes $\ell$. For $g\in\Fix\undi$, the restriction $g\mapsto g|\ell^\perp$ gives an isomorphism of $(\Fix\undi)_0$ with a special orthogonal group of signature $(n-1,n)$.

The differential $d_{\scr e}\undi:\so_{n,n}\to\so_{n,n}$ coincides with $x\mapsto ixi$ and is a Lie-algebra involution giving a decomposition \begin{equation}\label{adjnn}\so_{n,n}=\Lie(\Fix\undi)\oplus\{x\in\so_{n,n}:\mathrm d_\scr e\undi(x)=-x\}.\end{equation} If $x\in\sl(2n,\R)$ is anti-fixed by $d_e\undi$ then one readily observes that $x(\ell)\subset\ell^\perp$ and $x(\ell^\perp)\subset\ell$. We can easily describe then the anti-fixed subspace of $\so(n,n)$ as  $$\{x\in\so_{n,n}:\mathrm d_\scr e\undi(x)=-x\}=\big\{x-x^*:x\in\hom(\ell,\ell^\perp)\big\}.$$ It is a $2n-1$-dimensional vector space and an irreducible $\Lie(\Fix\undi)$-module.

If $\s$ is a principal $\sl_2$ of $\so_{n,n}$ then it stabilizes a non-isotropic line, which we can assume to be $\ell$, and acts irreducibly on $\ell^\perp$, so we conclude that $$V_{n-1,\mathrm a}:=\{x:\mathrm d_e\undi(x)=-x\}$$ is an irreducible $\s$-factor of dimension $2(n-1)+1$. 

The $\mathrm a$ in the notation solves an ambiguity issue when $n$ is even. Indeed, observe from Table \ref{exponentes} that two situations occur for $\so_{n,n}.$ If $n$ is odd, the exponent $n-1$ occurs with multiplicity one and is the only even exponent of $\so_{n,n}.$ However if $n$ is even, there are two $\s$-adjoint factors of dimension $2(n-1)+1.$ One of these factors is contained in $\Fix\mathrm d_\scr e\undi,$ and the other one is $V_{n-1,\mathrm a}$.

Let $\a_{\so_{n,n}}=\R^n$ be a Cartan subspace of $\so_{n,n}$ and consider the set of simple roots $\simple=\{\slroot_1,\ldots,\slroot_{n-1},\upalpha_n\}$ where $\slroot_i(a)=a_i-a_{i+1}$ and  $\upalpha_n(a)=a_{n-1}+a_{n}.$ We can choose a Cartan subspace $\a_{\so_{n-1,n}}$ of $\Fix\undi$ that is embedded in $\a_{\so_{n,n}}$ as $\{a\in\R^n:a_n=0\}.$ The involution $\mathrm d_\scr e\undi$ acts on $\a_{\so_{n,n}}$ as $$\ii:=(a_1,\ldots,a_n)\mapsto(a_1,\ldots,-a_n).$$ It sends $\slroot_{n-1}$ to $\upalpha_n$ and fixes the other roots so it is the opposition involution $\ii$ of $\a_{\so_{n,n}}$. Moreover, the Kostant line associated to the anti-fixed factor is
$$\kt^{n-1,\mathrm a}:=\R\cdot(0,\ldots,0,1).$$ The other Kostant lines are those of $\so_{n-1,n}$ inside $\a_{\so_{n,n}}$ via the above inclusion.

\subsubsection{Triality}\label{tricota} 

\begin{figure}
\begin{tikzpicture}[scale=.9]
\node[scale=1] (c) at (2,0){
\begin{tikzcd}[column sep=small]
 & \circ \arrow[d, dash, "\slroot_1"]& & \\ 
 & \circ\arrow[d, dash, "\slroot_2"]&  & \\
 & \circ \arrow[dl, dash,swap,"\slroot_3"] \arrow[dr, dash,"\upalpha"]  & &\\
  \circ  \arrow[dr, dash,"\upalpha"] &  & \arrow[dl, dash,"\slroot_3"] \circ  & \\
  & \circ  \arrow[d, dash,"\slroot_{2}"]&   &   \\
  & \circ\arrow[d, dash,"\slroot_1"]& & \\
 & \circ &  &
\end{tikzcd}};

\node[scale=1] at (2,-5){\dynkin[labels={\slroot_1,\slroot_2,\slroot_3,\upalpha},scale=1.4] D4};

\node[scale=1] (d) at (6,0){
\begin{tikzcd}[column sep=small]
 & \circ \arrow[d, dash, "\aa"]& & \\ 
 & \circ\arrow[d, dash, "\bb_2"]&  & \\
 & \circ \arrow[dl, dash,swap,"\aa"] \arrow[dr, dash,"\bb"]  & &\\
  \circ  \arrow[dr, dash,"\bb"] &  & \arrow[dl, dash,"\aa"] \circ  & \\
  & \circ  \arrow[d, dash,"\bb_{2}"]&   &   \\
  & \circ\arrow[d, dash,"\aa"]& & \\
 & \circ &  &
\end{tikzcd}};

\node[scale=1] at (6,-5){\dynkin[labels={\bb,\bb_2,\aa},scale=1.4] B3};

\node[scale=1] at (4,0.5) {$\begin{dynkinDiagram}D{4}\draw[thick] (root 1) to [out=-60, in=180] (root 4);\end{dynkinDiagram}$}; 
\draw[scale=1, ->] (c) -- (d);

\end{tikzpicture}
\caption{The irreducible representation $\so(3,4)\to\so(4,4)$.}\label{d4b3}
\end{figure}

We now deal with the special case $\D_4$. In this special case the Dynkin diagram has an order three automorphism $\uptau$ that fixes $\slroot_2$ and $\slroot_1\mapsto\slroot_3$, $\slroot_3\mapsto\upalpha_4$ and $\upalpha_4\mapsto\slroot_1$, see Equation \eqref{uptau}.

\begin{equation}\label{uptau}\begin{dynkinDiagram}[scale=1.5]D{4}
\node[left,/Dynkin diagram/text style] at (root 1){\(\slroot_{1}\)};
\node[above right,/Dynkin diagram/text style] at (root 3){\(\slroot_{3}\)};
\node[below right,/Dynkin diagram/text style] at (root 4){\(\upalpha_{4}\)};
\draw[thick,->] (root 4) to [out=180, in=-60] (root 1);
\draw[thick,->] (root 1) to [out=60, in=180] (root 3);
\draw[thick,->] (root 3) to [out=-60, in=60] (root 4);
\end{dynkinDiagram}\end{equation}

This automorphism can be realized as the orthogonal transformation $$\uptau:=\frac12\left(\begin{smallmatrix} 1 & 1 & 1 &-1\\1& 1& -1& 1\\1&-1&1&1\\1&-1&-1&-1\end{smallmatrix}\right):\a_{\so_{4,4}}\to\a_{\so_{4,4}}.$$ This automorphism can also be extended to an external automorphism of $\underline{\uptau}:\so_{4,4}\to\so_{4,4}$ whose fixed point set $\Fix\uptau=\gedosl$. Moreover, the involution $\uptau\ii\uptau^{-1}$ of $\a_{\so_{4,4}}$ has fixed-point set the image $\rep_\aa(\so_{3,4})$ of the fundamental representation for the short root of $\so_{3,4}$, see Figure \ref{d4b3}. The adjoint factors of $\rep_\aa(\so_{3,4})$ are then $$\so_{4,4}=\big(\underline\uptau(V_1\oplus V_3\oplus V_5)\big)\oplus(\underline\uptau(V_{3,\mathrm a})).$$ Using the explicit formula for $\uptau:\a_{\so_{4,4}}\to\a_{\so_{4,4}}$ one has \begin{equation}\label{ktresa} \underline\uptau(V_{3,\mathrm a})\cap\a_{\so_{4,4}}=\uptau(\kt^{3,\mathrm a})=\R\cdot(-1,1,1,-1).\end{equation}
This last equation will be needed in the proof of Theorem \ref{degLietheoretic}.

\section{Pressure degenerations are Lie-theoretic}\label{seccionPresion}

In this section we prove the following. Recall that for Fuchsian $\prin\in\hitchin_g(S)$ we let $$\tangcero\prin e=H^1_{\Ad\prin}(\pi_1S,V_e).$$

\begin{thm}\label{degLietheoretic} Let $\ge$ be simple split  of type $\A$, $\B$, $\Ce$, $\D$ or $\Ge$. Consider $\rho\in\hitchin_\ge(S)$ and a length functional $\length\in\conodual\rho$ then, the pressure form $\PP^\length_\rho$ is degenerate at $\tangente\in\sf T_\rho\hitchin_\ge(S)$ if and only if either of the following situations hold:\begin{itemize}

\item[-] $\rho$ is Fuchsian and $${\displaystyle \tangente\in\bigoplus_{e:\length(\kt^e)=0}\tangcero\prin e},$$

\item[-] $\rho$ is self dual, $\length$ is $\ii$-invariant and $\tangente$ is $\overline\ii$-anti-invariant. 

\item[-] $\ge$ is of type $\sf A_6$ or $\Ce_3$, the Zariski closure of $\rho(\pi_1S)$ is $\gedosl$, $\length(\kt^3)=0$ and $\tangente\in H^1_{\Ad\rho}(\pi_1S,V_3)$.

\item[-] $\ge$ is of type $\sf D_4$, the Zariski closure of $\rho(\pi_1S)$ is conjugate to $\rep_{\aa}(\SO_{3,4})$, $\tangente\in H^1_{\Ad\rho}\big(\pi_1S,\underline\uptau (V_{3,\mathrm a})\big)$ and $\length(-1,1,1,-1)=0.$

\end{itemize}
\end{thm}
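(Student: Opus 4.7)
\medskip

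The plan is to first reformulate the degeneration criterion, then reduce via existing results to non-Zariski-dense representations and stratify by Zariski closure using Theorem \ref{clausuras}, and finally complete the two implications via a multi-factor Liv\v sic-independence argument.

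By Theorem \ref{JJ} and Remark \ref{varphi=cte}, the form $\PP^\length_\rho$ degenerates at $\tangente$ precisely when there exists $C\in\R$ such that $\mathrm d\length^\g(\tangente)=C\length^\g(\rho)$ for every $\g\in\pi_1S$, and Corollary \ref{derivadaentropia} forces $C=0$ whenever $\mathrm d\length^\g(\tangente)$ vanishes identically in $\g$. Corollary \ref{HitchinDense} together with Theorem \ref{clausuras} reduces matters to the case where the Zariski closure $\sf H$ of $\rho(\pi_1S)$ is either a principal $\sl_2$ (Fuchsian) or one of the entries of Table \ref{unesco}, and the corresponding splitting $\ge=\bigoplus_iV_i$ into irreducible $\Ad\sf H$-modules induces a decomposition $\tangente=\sum_i v_i$ via $\sf T_\rho\hitchin_\ge(S)=\bigoplus_iH^1_{\Ad\rho}(\pi_1S,V_i)$.

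For the easy direction (each bullet implies degeneration) the verification that the cohomology class $\g\mapsto\length(\mathrm d\jordan^\g(\tangente))$ vanishes proceeds case by case: the Fuchsian bullet follows from Corollary \ref{factor-cotangente} once one observes that $V_e\cap\a=\kt^e\subset\ker\length$; the self-dual bullet uses that $\ii$-invariance of $\length$ is equivalent to $\length$ vanishing on the $\ii$-anti-invariant part of $\a$, which contains the $\a$-projection of any $\mathrm d\undi$-anti-invariant cocycle (Corollary \ref{factor-cotangente} again); the $\gedossplit$ bullet invokes the three-term decomposition of $\sl_7(\R)$ from Proposition \ref{adfactors}, noting that $V_3\cap\a=\kt^3$; and the $\D_4$ bullet uses the triality identity \eqref{ktresa}, namely $\underline\uptau(V_{3,\mathrm a})\cap\a=\R\cdot(-1,1,1,-1)$.

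For the hard direction the crucial step is a multi-factor refinement of Corollary \ref{double-density} applied to $\rho:\pi_1S\to\sf H$ (Zariski-dense in the simple group $\sf H$, whose simple roots all have multiplicity one in every relevant case) together with the cocycles $v_i$: the additive group spanned by
\[
\big\{\big((\mathrm d\jordan^\g(v_i))_{i:v_i\ne0},\,\jordan^\g(\rho)\big):\g\in\pi_1S\big\}
\]
is dense in $\bigl(\bigoplus_{i:v_i\ne0}V_i\cap\a\bigr)\times\a_\sf H$. Disjointedness of the family $\{V_i:v_i\ne0\}$ in the sense of Definition \ref{disjoined}, needed to invoke Corollary \ref{souravzariski}, is checked case-by-case: in the Fuchsian case it follows from the standard $\sl_2$-weight count ($V_{e+1}$ contains the weight $2e+2$ not shared with any $V_{e'}$, $e'\le e$), and in the other strata it can be read off from Proposition \ref{adfactors} and the explicit adjoint decompositions in \S\,\ref{typeD}--\S\,\ref{tricota}. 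Combined with the degeneracy equation $\sum_i\length(\mathrm d\jordan^\g(v_i))=C\length^\g(\rho)$ and the fact that $\length|\a_\sf H\ne0$ (since $\Bcone_\rho\subset\a_\sf H$ and $\length\in\conodual\rho$), the joint density forces $C=0$ and $\length|V_i\cap\a=0$ for every $i$ with $v_i\ne0$. Matching to the four bullets is then done by inspection: the tangential factor in a non-Fuchsian stratum is always non-degenerate by Corollary \ref{HitchinDense} applied inside $\hitchin_\sf H(S)$; the $\Fix\undi$-anti-invariant factor produces the self-dual bullet; the $V_3$ factor in the $\gedossplit$-case produces the third bullet; and \eqref{ktresa} produces the fourth.

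The main obstacle is establishing the multi-factor Liv\v sic-independence underlying the hard direction. Theorem \ref{LedIndepA} combines strong transverse proximality (Proposition \ref{todopar}) with the cross-ratio asymptotics of Proposition \ref{cmgeneral} to produce independence of a single pair $(\mathrm d\varphi^\g(v),\length^\g(\rho))$; adapting it to simultaneously track periods in several disjoined factors requires running the argument of \S\,\ref{SecTB} inside the affine group $\sf H\ltimes\bigoplus_i V_i$ whose Zariski-density is supplied by Corollary \ref{souravzariski}, while keeping the cross-ratio asymptotics coherent across factors and carefully identifying, for each disjoined factor, a loxodromic element together with a multiplicity-one root that strictly minimizes it among the support of the candidate linear form.
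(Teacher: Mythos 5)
Your proposal diverges from the paper's proof in one structural respect and, more importantly, leaves its central step unestablished.

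The paper does not prove the hard direction via a joint-density argument. Instead it first shows that the decomposition of $\sf T_\rho\hitchin_\ge(S)$ by exponents (at a Fuchsian point) or by $\mathrm{Fix}/\mathrm{AntiFix}$ of the relevant involution (on the intermediate strata) is \emph{orthogonal} for every $\PP^\length$ --- via Lemma \ref{ortogonalfuchs}, which rests on the Labourie--Wentworth computation \eqref{horto}, and via Corollary \ref{tauisom}, which makes $\undiX$ an isometry of $\PP^\length$. Once orthogonality is in hand, each factor is analyzed in isolation using Lemma \ref{identicaly0} (and Lemmas \ref{interiorV}, \ref{estrata2forma}), which reduces matters to showing that a single interval $\VV\length_{\co}\subset V_i\cap\a$ has nonempty interior and contains $0$. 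Only the $\gedossplit$ stratum in type $\A_6$ requires mixing factors, and there the paper runs two separate arguments depending on whether $\slroot_2\in\supp\length$, using Lemma \ref{intermedioModif} in one case and the irreducibility plus Theorem \ref{indeplambda1} in the other. This is a lighter-weight argument than yours, and it never needs joint density across all factors at once.

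Your route replaces the orthogonality reduction by a claimed density of the tuple $\big((\mathrm d\jordan^\g(v_i))_i,\,\jordan^\g(\rho)\big)$ in $\bigoplus_i(V_i\cap\a)\times\a_\sf H$. This is \emph{not} what Corollary \ref{double-density} says: that result gives density of the pair $\big(\varjort\g v,\jordan^\g(\rho)\big)$ in $\a_\t\times\a$ for a Zariski-dense image in $\sf G$, where $\a_\t\subset\a$ is a subspace of the \emph{same} Cartan. When the Zariski closure is $\sf H\subsetneq\sf G$, the projections $\mathrm d\jordan^\g(v_i)$ lie in $V_i\cap\a$, subspaces that are transverse to $\a_\sf H$, and the argument underlying Theorem \ref{LedIndepA} --- strong proximality plus the cross-ratio asymptotics of Proposition \ref{cmgeneral} --- is calibrated to track a single scalar pair $(\mathrm d\varphi^\g(v),\length^\g(\rho))$ via a multiplicity-one root that strictly minimizes $\g$ among $\supp\varphi$. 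Tracking several $V_i$-components simultaneously, each of which lives in a different $\a$-subspace and each of which would a priori require its own minimizing root, is a substantially different problem; and Corollary \ref{souravzariski}, which you invoke, supplies only Zariski-density of the affine group $\rho(\pi_1S)_{\co}$, not density of periods. You flag this as ``the main obstacle'' but do not carry it out --- and this is precisely the step on which the theorem hinges, so what you have is a plan, not a proof. If you want to keep your route, you would need to prove the multi-factor density statement from scratch (for instance, by first establishing the orthogonality used in the paper and then showing density factor by factor, which would largely reproduce the paper's argument anyway).

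Two smaller issues. First, for the non-Fuchsian strata you assert that the tangential factor is handled ``by Corollary \ref{HitchinDense} applied inside $\hitchin_\sf H(S)$'', but one must verify that $\length|_{\a_\sf H}$ actually lies in the dual limit cone of $\rho$ viewed as a representation into $\sf H$, and (in the $\gedossplit$ case) that the relevant roots of $\sf H$ still have multiplicity one; the paper does this carefully via \cite[Lemma 4.8]{clausurasPos} and \cite[Lemma 7.6]{CDPW}. Second, matching the non-degenerate factors to the four bullets ``by inspection'' hides the entire $\D_4$ triality analysis and the $\gedossplit$ two-case split, which are the most involved parts of the paper's proof.
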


We begin the proof of Theorem \ref{degLietheoretic} with some preparation lemmas.

\subsection{Preparation Lemmas of independent interest I}

\begin{lemma}\label{interiorV} Let $\prin\in\hitchin_\ge(S)$ be Fuchsian and $e$ an exponent of $\ge$. For any non-zero $\co\in \tangcero\prin e$ and every $\length\in\conodual{\prin}$ such that $\length(\kt^e)\neq0$ the set of normalized variations $\VV\length_{\co}\subset\kt^e$ has non-empty interior.
\end{lemma}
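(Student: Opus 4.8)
The plan is to realize the restriction of $\prin$ to a suitable sub-semigroup as a representation into a smaller group where $V_e$ becomes (essentially) the adjoint representation, and then invoke the non-empty interior machinery of Part \ref{affineactions}. Concretely, a Fuchsian $\prin$ has Zariski closure a principal $\PSL(2,\R)$, call it $\sf S=\tau_\ge(\PSL(2,\R))$, so $\ge$ decomposes under $\ad\s$ as $\bigoplus_e V_e$ with $V_1=\s$. The cocycle $\co\in\tangcero\prin e=H^1_{\Ad\prin}(\pi_1S,V_e)$ is a class for the $\sf S$-representation $\ad_\ge|\s$ restricted to the irreducible factor $V_e$; call this factor-representation $\rep_e:\PSL(2,\R)\to\SL(V_e)$, which is the $(2e+1)$-dimensional irreducible representation of $\PSL(2,\R)$. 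First I would observe that $\rep_e$ has $0$ as a restricted weight with one-dimensional $0$-weight space $\kt^e$, and (since $\PSL(2,\R)$ is split) trivial center of $\sf M$, so $\trivia=V_e^0=\kt^e$ and $\neudim\rep_e=1$. Thus $\kt^e$ plays the role of the neutralizing space.

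Next I would check the hypotheses of Corollary \ref{conococyclos} (or directly Corollary \ref{factor-cotangente}) for the base point $\prin:\pi_1S\to\sf S$. Since $\prin$ is Fuchsian, $\prin(\pi_1S)$ is Zariski-dense in $\sf S=\PSL(2,\R)$, and $\Bcone_{\prin}=\a^+_{\sf S}$ has non-empty interior (every Fuchsian element is loxodromic in $\PSL(2,\R)$ and these fill the Weyl chamber, which is one-dimensional). The $(\rep_e,X_0)$-compatible cone equals $\a^+_{\sf S}$ because $\rep_e$ is the adjoint-like representation of the split rank-one group — here one uses that the restricted weights of $\rep_e$ are integer multiples of the single simple root and all of them appear, exactly as in Corollary \ref{marga}. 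Then I need $\co$ to be non-trivial as a class in $H^1_{\rep_e\prin}(\pi_1S,V_e)$, which is exactly the hypothesis $\co\neq0$ in $\tangcero\prin e$, together with the existence of some loxodromic $g$ with $\margulis(g,\co(g))\neq0$; but if every such Margulis invariant vanished, the argument of Corollary \ref{factor-cotangente} would force $[\co]=0$ via Whitehead/Ghosh (Proposition \ref{Zarirre}), a contradiction. Hence Corollary \ref{interiorAL}/\ref{conococyclos} applies: the affine limit cone $\affinelim_{\prin(\pi_1S)_{\co}}\subset\kt^e$ has non-empty interior, and by Proposition \ref{margulisderivada} (in the form of Corollary \ref{factor-cotangente}, using $V_e\cap\a=\kt^e$) this cone is precisely $\jordanlim_\co$ inside $\kt^e$. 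Since $\kt^e$ is one-dimensional, "non-empty interior of $\jordanlim_\co$ in $\kt^e$" already forces $\varjor\g\co$ to take both signs (or at least to be non-zero for a Zariski-dense set of $\g$), and in particular $\length(\varjor\g\co)\neq0$ for suitable $\g$ whenever $\length(\kt^e)\neq0$.

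Finally I would pass from the Jordan-variation cone to the normalized-variation set $\VV\length_\co$. By Proposition \ref{normconvex} the set $\VV\length_\co$ is convex (the restriction of $\prin$ to an appropriate sub-semigroup still has full loxodromic variation in the simple group $\sf S$ since $\sf S$ is simple and $\co\neq0$), so $\VV\length_\co\subset\kt^e$ is a compact convex subset of a line. It is non-empty and, by the previous paragraph, not reduced to $\{0\}$ and in fact the ratios $\varjor\g\co/\length^\g(\prin)$ attain values of both signs (apply the convexity/transversality argument of Proposition \ref{normconvex} to a transverse pair $\g,h$ with $\varjor\g\co$ and $\varjor h\co$ of opposite signs, which exist because $\jordanlim_\co$ has non-empty interior). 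A compact convex subset of a line containing points of both signs has non-empty interior, which is the assertion. The main obstacle I anticipate is the bookkeeping in the first paragraph: carefully identifying the factor-representation $\rep_e$ with the $(2e+1)$-dimensional irreducible $\PSL(2,\R)$-representation, verifying that its neutralizing space is the full $0$-weight space $\kt^e$ (so $\longest$ acts on $\trivia$ by $(-1)^e$, cf. Remark \ref{evenw0}, and one still gets non-empty interior of the cone rather than of a point), and confirming that the $(\rep_e,X_0)$-compatibility cone is all of $\a^+_{\sf S}$ — i.e. that Fuchsian loxodromic elements are automatically $(\rep_e,X_0)$-compatible — so that Corollary \ref{conococyclos} genuinely applies with $\G$ replaced by $\prin(\pi_1S)$.
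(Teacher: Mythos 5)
There is a genuine gap at the last step. Your reduction to the affine machinery of Part \ref{affineactions} (realizing $V_e$ as the $(2e+1)$-dimensional irreducible representation of the principal $\PSL(2,\R)$ and invoking Corollary \ref{conococyclos} / Corollary \ref{factor-cotangente}) correctly yields that the cone $\jordanlim_{\co}$ has non-empty interior in the line $\kt^e$. But in a one-dimensional space this only says the cone is not $\{0\}$, i.e.\ that \emph{some} $\varjor\g{\co}$ is non-zero; a convex cone in $\R$ with non-empty interior may well be a half-line, so it does \emph{not} force the variations to take both signs. (For odd $e$ the longest element acts as $-1$ on $\kt^e$ by Remark \ref{evenw0}, so Remark \ref{inv2} gives $\margulis(\g^{-1})=\margulis(\g)$ and inverses do not produce the opposite sign either.) Consequently your claimed transverse pair $\g,h$ with variations of opposite signs need not exist, and the compact convex set $\VV\length_{\co}\subset\kt^e$ could a priori be a single non-zero point $\{p\}$ — exactly the case where $\varjor\g{\co}=\length^\g(\prin)\,p$ for all $\g$ — which your argument does not exclude.

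The paper closes this hole with a different input: criticality of the entropy at Fuchsian points. By Corollary \ref{fuchscritica} and Equation \eqref{masaentropia}, $0=\partial^{\log}\entropy\length{}=\length(\mass\length{\co})$ with $\mass\length{\co}\in\VV\length_{\co}\subset\kt^e$; since $\length$ does not vanish on the line $\kt^e$, this pins down $0\in\VV\length_{\co}$. Then empty interior would force $\VV\length_{\co}=\{0\}$, i.e.\ $\varjor\g{\co}=0$ for every $\g$, which contradicts $\co\neq0$ by \cite[Prop.~10.1]{pressure} (Theorem \ref{indeplambda1}). If you want to avoid the entropy input, you could instead rule out the single-point case $\{p\}$ with $p\neq0$ directly: at a Fuchsian point all Jordan projections lie on $\kt^1$, so $\length^\g(\prin)$ is proportional to $\peso_1^\g(\prin)$, and $\peso_1(\kt^e)\neq0$ by Remark \ref{peso1}; hence $\VV\length_{\co}=\{p\}$ would give $\mathrm d\peso_1^\g(\co)=K\,\peso_1^\g(\prin)$ for all $\g$ with $K\neq0$, again contradicting Theorem \ref{indeplambda1}. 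Either way, an extra rigidity statement beyond the non-empty interior of $\jordanlim_{\co}$ is indispensable, and your proposal as written does not supply it.
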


\begin{proof} Corollary \ref{fuchscritica} and Equation \eqref{masaentropia} imply that $$0=\partial^{\log}\entropy\length{}=\length(\mass\length{\co})\in\length\big(\VV\length_{\co}\big).$$  Since $\ker\length\cap\kt^e=\{0\}$ and $\VV\length_{\co}\subset\kt^e$ by Corollary \ref{factor-cotangente}, the above equation yields then $0\in\VV\length_{\co}$. Since $\VV\length_{\co}$ is convex, if its interior where empty then  $\{0\}=\VV\length_{\co}$. Thus, for every $\g\in\pi_1S$ one has $\varjor\g{\co}=0,$ contradicting Theorem \ref{indeplambda1}.\end{proof}

\begin{lemma}\label{estrata2forma} Let $\rho\in\hitchin_{\A_{d-1}}(S)$ and $\length\in\conodual\rho$. \begin{enumerate}\item\label{parques} If $\rho(\pi_1S)$ has Zariski-closure $\SO(n,n+1)$ or $\PSp(2n,\R)$ according to the parity if $d$, or Zariski-closure $\Gedosl$ if $d=7$, and if we let $\ge$ be the corresponding Lie algebra, then $$\factor\ge=\bigoplus_{e\textrm{ even}}V_e$$ is an adjoint factor of $\rho$ and  for every non-zero $\co\in H^1_{\Ad\rho}(\pi_1S,\factor\ge)$
$$\VV{\length}_{\co}\subset\factor\ge$$ has non-empty interior. \item\label{7} If $d=7$ and the Zariski closure of $\rho(\pi_1S)$ is $\Gedosl$, then for any non-trivial cocycle $\co\in H^1_{\Ad\rho}(\pi_1S,V_3)$ the set $\VV\length_\co\subset\kt^3$ has non-empty interior and contains $\{0\}$ in its interior. In particular $\PP^\length(\co)\neq0$.\end{enumerate}\end{lemma}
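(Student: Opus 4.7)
The strategy in both parts is to apply Corollary \ref{factor-cotangente} to obtain non-empty interior of the Jordan variation cone $\jordanlim_\co$ inside the relevant subspace, and then to promote this to non-empty interior of $\VV\length_\co$ by combining convexity (\`a la Proposition \ref{normconvex}) with the additional information produced by that same Corollary.

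\medskip\noindent\textbf{Part (\ref{parques}).} Proposition \ref{adfactors} shows that in each of the three listed cases the subspace $\factor\ge = \bigoplus_{e\text{ even}} V_e$ is a single irreducible $\h_{ss}$-factor of $\Ad_{\sl(d,\R)}|\h_{ss}$, so the one-element collection $\{\factor\ge\}$ is trivially disjoined. Since $\rho$ is $\simple$-Anosov (Theorem \ref{hit-anosov}) we have $\Bcone_\rho \subset \inte \a^+$, and Corollary \ref{marga} ensures that every element of $\a^+$ is $(\Ad_{\sl(d,\R)}|\factor\ge, X_0)$-compatible. Corollary \ref{factor-cotangente} then applies, producing both (a) non-empty interior of $\jordanlim_\co$ inside $\factor\ge \cap \a$, and (b) for every $\varphi \in \a^*$ whose restriction to $\factor\ge \cap \a$ is non-zero, a loxodromic $\g \in \pi_1 S$ with $\varphi(\varjor\g\co) \neq 0$.

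To upgrade (a) to non-empty interior of $\VV\length_\co$, I first verify that $\VV\length_\co$ is convex by running the proof of Proposition \ref{normconvex} verbatim: the analytic curve $\rho_t \subset \hom(\pi_1S, \SL(d,\R))$ integrating $\co$ exists by analyticity of the character variety, and Proposition \ref{todopar} supplies pairs of strongly transversally $\simple$-proximal elements in $\rho(\pi_1 S)$ that allow the asymptotic of Lemma \ref{pares} to be differentiated as in the original proof. Suppose now that $\VV\length_\co$ were contained in a proper affine subspace of $\factor\ge \cap \a$; choosing a supporting hyperplane yields $\varphi \in \a^*$ with non-zero restriction to $\factor\ge \cap \a$ yet vanishing on every $\varjor\g\co/\length^\g(\rho)$. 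Since $\length^\g(\rho) > 0$, this forces $\varphi(\varjor\g\co) = 0$ for every loxodromic $\g$, contradicting (b). Hence $\VV\length_\co$ affinely spans $\factor\ge \cap \a$, so by convexity it has non-empty interior there.

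\medskip\noindent\textbf{Part (\ref{7}).} Proposition \ref{adfactors} identifies $V_3$ as an irreducible adjoint factor of $\gedossplit \hookrightarrow \sl_7(\R)$, and the argument of part (\ref{parques}) applied to this single irreducible factor produces a loxodromic $\g$ with $0 \neq \varjor\g\co \in V_3 \cap \a = \kt^3$. Since $3$ is an odd exponent, Remark \ref{evenw0} shows that the opposition involution $\ii$ acts on the line $\kt^3$ as $-\id$; combined with the identity $\jordan(g^{-1}) = \ii\jordan(g)$, differentiating along the curve integrating $\co$ yields $\varjor{\g^{-1}}\co = \ii(\varjor\g\co) = -\varjor\g\co$ in $\kt^3$. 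After normalizing by the positive quantities $\length^{\g^{\pm 1}}(\rho)$, the set $\VV\length_\co$ contains two points of opposite signs on the line $\kt^3$; by the convexity argument of part (\ref{parques}) it contains the whole open segment between them, which in particular contains $\{0\}$ in its interior.

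For the concluding assertion $\PP^\length_\rho(\co) \neq 0$: Theorem \ref{degLietheoretic} will identify $\length|\kt^3 = 0$ as the precise degeneration case, so it suffices to treat $\length|\kt^3 \neq 0$. Under this assumption the non-zero linear form $\length: \kt^3 \to \R$ sends the open neighborhood of $0$ in $\VV\length_\co$ onto an open interval of $\R$ around $0$, so $\length(\VV\length_\co)$ is not a singleton; by Remark \ref{varphi=cte} combined with Theorem \ref{JJ} this precludes degeneration of $\PP^\length$ at $\co$. The main technical point throughout is the transport of the convexity argument of Proposition \ref{normconvex} from the Zariski-dense tangent-vector setting to the present cocycle setting, which is made possible by the abundance of strongly transversally proximal pairs provided by Proposition \ref{todopar}.
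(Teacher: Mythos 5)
There are two genuine errors in your proposal, one per item.

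\textbf{Part (i).} You deduce non-emptiness of the interior of $\VV{\length}_{\co}$ from non-emptiness of the interior of $\jordanlim_{\co}$. This implication is false in general: if $\jordanlim_{\co}$ were a half-space of $\factor\ge\cap\a$, it would have non-empty interior, yet $\VV{\length}_{\co}$ could sit inside an affine hyperplane $\{\varphi=c\}$ with $c\neq0$. In your argument, the supporting hyperplane of $\VV{\length}_{\co}$ reads $\{\varphi=c\}$; you tacitly set $c=0$ and conclude $\varphi(\varjor\g\co)=0$, but nothing forces $c=0$. The missing ingredient is precisely Remark~\ref{evenw0}: for even exponents $\longest$ acts trivially on $\kt^e$, so by Remark~\ref{inv2} one has $\varjor{\g^{-1}}{\co}=-\varjor\g{\co}$, which forces the supporting hyperplane to pass through the origin. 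The paper packages this symmetry into Remark~\ref{nomade} and obtains non-empty interior of the normalized Margulis spectrum directly from Corollary~\ref{marga}.

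\textbf{Part (ii).} Here the error is more serious: you assert that $\ii$ acts on $\kt^3$ as $-\id$ because $3$ is odd. But Remark~\ref{evenw0} says that $\longest$ acts on $\kt^e$ as $-\id$ for \emph{odd} $e$, and since $\ii=-\longest$ on $\a$, the opposition involution acts \emph{trivially} on $\kt^3$. Consequently $\varjor{\g^{-1}}{\co}=\ii(\varjor\g{\co})=\varjor\g{\co}$, not its negative, and your proposed two opposite-sign points in $\VV{\length}_{\co}$ do not exist. The symmetry trick simply does not apply here, which is why the paper uses a different mechanism: restricting attention to $\gedossplit\subset\so(3,4)$, the identity $\a_{\gedossplit}=\ker(\eps_3-\slroot_1)$ combined with Theorem~\ref{PS} shows that the functional $\varphi_0=\tfrac12(\eps_3+\slroot_1)$ has critical entropy along the deformation; one then applies Lemma~\ref{identicaly0}\eqref{iiN} (with the roles of $\varphi$ and $\length$ reversed, observing $\varphi_0(\kt^3)\neq0$ from Table~\ref{ejemplosKV}) to place $0$ in the interior of $\VV{\length}_{\co}$. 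Separately, your final step invokes Theorem~\ref{degLietheoretic}, which is proved \emph{using} this lemma; the paper instead cites Lemma~\ref{identicaly0}\eqref{iiP=0} directly, avoiding the circularity.
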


\begin{proof} By Remark \ref{evenw0} $\longest$ acts trivially on even exponent spaces, so the first item is a consequence of Remark \ref{nomade} with Propositions \ref{margulisderivada} and \ref{adfactors}. To deal with the second item, we restrict ourselves to $\Gedosl<\SO(3,4)$ with Cartan subspace $\a_{\so(3,4)}=\R^3$, Weyl chamber $\{a\in\R^3: a_1\geq a_2\geq a_3\geq0\}  $ and simple roots $$\slroot_1(a)=a_1-a_2,\ \slroot_2(a)=a_2-a_3\textrm{ and }\eps_3(a)=a_3.$$ The subalgebra $\gedosl$ has Cartan subspace $\a_{\gedosl}\subset \a_{\so(3,4)}$ given by $$\a_{\gedosl}=\{(a_1,a_2,a_1-a_2):a_1,a_2\in\R\}$$ and simple roots $\{\slroot_1,\slroot_2\}$, see Figure \ref{hasseEx}.

Since $\a_{\gedosl}=\ker(\eps-\slroot_1)$, any convex combination $t\eps-(1-t)\slroot_1$ coincides with $\slroot_1$ when restricted to $\a_{\gedosl}.$ Since $\entropy{\slroot_1}\rho=1$ by Theorem \ref{PS}, it follows that the affine line $\{t\eps+(1-t)\slroot_1:t\in\R\}$ is contained in the critical hypersurface $\EuScript Q_\rho$ of $\rho:\pi_1\to\Gedosl<\SO(3,4),$ see Figure \ref{Q34}. Fix such a combination, $\varphi=(1/2)(\eps+\slroot_1)$ for example and observe, from Table \ref{ejemplosKV} that $\kt^3=\R\cdot(1,-1,-1)$ and that $\varphi(1,-1,-1)=1/2\neq0$.

If we let $\rho_t$ be tangent to $\co\in H^1_{\Ad\rho}(\pi_1S,V_3)$ then again Theorem \ref{PS} implies that $\{\eps,\slroot_1\}\subset\EuScript Q_{\rho_t},$ giving that $\entropy{\varphi}{\rho_t}$ is critical at $\rho$. We apply now Lemma \ref{identicaly0}\eqref{iiN} (with the roles of $\varphi$ and $\length$ reversed) to obtain that $\VV\length_\co\subset\kt^3$ has non-empty interior. The last statement now follows from Lemma \ref{identicaly0}\eqref{iiP=0}.
\end{proof}

\begin{figure}\centering
\begin{tikzpicture}[scale=.4]

\draw (-2,3) parabola bend (0,0) (1.5,5);
\draw (-3,5) -- (-3,-2.7) -- (3,-0.7) -- (3,7) -- cycle;
\fill[color=gray!20]
(0,0) parabola (-2,3) 

-- (0,0) -- (5,0)

-- (5,0) parabola (3,3) -- (-2,3);

\draw[thick] (3,3) parabola bend (5,0) (6.5,5);
\draw (-2,3) -- (3,3);
\draw (0,0) -- (5,0);
\draw (1.5,5) -- (6.5,5);
\draw[dashed] (-2,3)--(-3.1,3); \draw (-3.1,3) -- (-6,3);
\draw[dashed] (0,0)--(-2,0);
\draw[dashed] (1.5,5)--(0.5,5);

\node[above] at (-2,3) {$\slroot_1$};
\node[above] at (-5,3) {$\eps_3$};

\draw[fill]  (-2,3) circle [radius=0.07cm];
\draw[fill]  (-5,3) circle [radius=0.07cm];

\node at (7.4,4) {$\EuScript Q_{\rho}$};
\node at (7,-2) {$(\a_{\SO(3,4)})^*$};
\node[left] at (-3,-2) {$(\a_{\gedosl})^*$};

\fill[color=gray!60](3,3) parabola bend (5,0) (6.5,5) 
--(6.5,5) -- (1.5,5) -- (1.18,3) -- (3,3);

\draw[fill]  (1.5,5) circle [radius=0.07cm] node[above right] {$\slroot_2$};

\end{tikzpicture}
 \caption{The critical hypersurface in $\a_{\SO(3,4)}^*$ of a Hitchin representation $\rho$ whose Zariski closure is $\Gedosl$.}\label{Q34}
 \end{figure}

\subsection{Pressure forms: some information on lower strata}Let now $\rho:\gh\to\sf G$ be a $\simple$-Anosov representation and consider an integrable cocycle $\co\in H^1_{\Ad\rho}(\gh,\ge)$, we also denote by $\co\in\sf T_\rho\caracteres$ the associated tangent vector.

\begin{lemma}\label{identicaly0}Let $\rho\in\Anosov_\simple(\gh,\sf G)$ have semi-simple Zariski closure $\sf H$. Fix $\length\in\conodual{\rho}$ and a disjoined adjoint factor $\factor{\sf H}$ of $\sf H$. Consider an integrable cocycle $\co\in H^1_{\Ad\rho}(\gh,\factor{\sf H})$ such that there exists $\g\in\gh$ with $\varjor\g\co\neq0$, then: \begin{enumerate}
\item\label{i} If $\factor{\sf H}\cap\a\subset \ker\length,$ then $\PP^\length$ degenerates at $\co$. 
\item\label{iiN} If $\factor{\sf H}\cap\a\cap\ker\length=\{0\}$ and $\mathrm{d}\entropy\psi{}(\co)=0$ then for every $\varphi\in\conodual\rho$ with $\factor{\sf H}\cap\a\cap\ker\varphi=\{0\}$ the set $\VV{\varphi}_{\co}$ has non-empty interior and $0\in\inte\VV{\varphi}_{\co}$.

\item\label{iiP=0}  If $\factor{\sf H}\cap\a\cap\ker\length=\{0\}$ and $\PP^\length(\co)=0$ then $\VV{\length}_{\co}$ is reduced to a point and is non-zero. 

\item\label{iii} Assume $\sf H$ has rank $1$. If $\length,\varphi\in\conodual\rho$ both have kernel whose intersection with $\factor{\sf H}\cap\a$ vanishes, then there exists $c>0$ such that for all $\sf v\in H^1_{\Ad\rho}(\gh,\factor{\sf H})$ one has $\PP^{\length}_\rho(\sf v)=c\PP^{\varphi}_\rho(\sf v).$\end{enumerate}\end{lemma}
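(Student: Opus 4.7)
The four parts share the common strategy of exploiting Corollary \ref{factor-cotangente}, which constrains $\varjor\g\co\in\factor{\sf H}\cap\a$, together with Corollary \ref{masa}, which places $\mass\length\co\in\relinte\VV\length_\co$.

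For (i), the hypothesis $\factor{\sf H}\cap\a\subset\ker\length$ combined with Corollary \ref{factor-cotangente} gives $\length(\varjor\g\co)=0$ for every loxodromic $\g$, so by Liv\v sic's Theorem $\length(\vec\ledrappier_\co)$ is Liv\v sic-cohomologous to $0$. Corollary \ref{derivadaentropia} then yields $d\entropy\length(\co)=0$, so the derivative $\partial_{t=0}[\entropy\length{\rho_t}\length^\g(\rho_t)]=d\entropy\length(\co)\length^\g(\rho)+\entropy\length\rho\length(\varjor\g\co)$ vanishes for every periodic orbit, and Theorem \ref{JJ} gives $\PP^\length_\rho(\co)=0$.

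For (ii), I would first replace $\factor{\sf H}$ by the sum of the irreducible sub-factors on which $[\co]$ is non-trivial; the resulting sub-factor is still disjoined, the kernel condition is preserved, and Corollary \ref{factor-cotangente} now gives $\inte\jordanlim_\co\neq\vacio$ in $\factor{\sf H}\cap\a$. Equation \eqref{masaentropia} with $d\entropy\length(\co)=0$ gives $\length(\mass\length\co)=0$; since $\mass\length\co\in\factor{\sf H}\cap\a$ and $\length$ is injective on this subspace (by the kernel condition applied to a linear subspace), $\mass\length\co=0$, hence $0\in\relinte\VV\length_\co$ by Corollary \ref{masa}. If $\VV\length_\co$ lay in a proper linear subspace $W\subsetneq\factor{\sf H}\cap\a$ through the origin, then every $\varjor\g\co\in\length^\g(\rho)W\subset W$, contradicting $\inte\jordanlim_\co\neq\vacio$; so $0\in\inte\VV\length_\co$. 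For an arbitrary $\varphi$ with the same kernel condition, the same span argument rules out empty interior of $\VV\varphi_\co$; if $0$ were on its boundary, a supporting $\chi\in(\factor{\sf H}\cap\a)^*$ would give $\chi(\varjor\g\co)\geq0$ for every loxodromic $\g$, contradicting the fact, deduced from $0\in\inte\VV\length_\co$, that $\chi(\varjor\g\co)$ takes both signs. Hence $0\in\inte\VV\varphi_\co$.

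For (iii), the kernel condition forces $\dim(\factor{\sf H}\cap\a)\leq1$, and the assumption that some $\varjor\g\co\neq0$ combined with Corollary \ref{factor-cotangente} (after the same restriction as in (ii)) upgrades this to $\dim=1$. Then $\length|\factor{\sf H}\cap\a$ is injective, and Remark \ref{varphi=cte} together with $\PP^\length(\co)=0$ yields $\length(\VV\length_\co)=\{\partial^{\log{}}\entropy\length(\co)\}$, whence $\VV\length_\co=\{p\}$ for a unique $p$, which is non-zero since otherwise all Jordan variations would vanish. For (iv), the rank-one hypothesis makes $\a_{\sf H}$ one-dimensional with a generator $v_1$, while (iii)'s dimension count makes $\factor{\sf H}\cap\a$ one-dimensional with a generator $v_2$. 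Setting $t=\length(v_1)/\varphi(v_1)$ and $s=\length(v_2)/\varphi(v_2)$ gives Liv\v sic-cohomological proportionalities $\length(\ledrappier_\rho)\sim t\varphi(\ledrappier_\rho)$ and $\length(\vec\ledrappier_{\sf v})\sim s\varphi(\vec\ledrappier_{\sf v})$; in particular the Gibbs measures coincide, $\gibbs_{-\entropy\length\rho\length(\ledrappier_\rho)}=\gibbs_{-\entropy\varphi\rho\varphi(\ledrappier_\rho)}$, and a direct computation via Corollary \ref{derivadaentropia} gives $d\entropy\length(\sf v)=(s/t^2)d\entropy\varphi(\sf v)$. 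Combining these, the tangent vectors $\partial_{u=0}[\entropy\length{\rho_u}\length(\ledrappier_{\rho_u})]$ and $\partial_{u=0}[\entropy\varphi{\rho_u}\varphi(\ledrappier_{\rho_u})]$ in $\cal P^\alpha(\UG)$ are related by the scalar $s/t$, yielding $\PP^\length_\rho(\sf v)=(s/t)^2\PP^\varphi_\rho(\sf v)$ with $c=(s/t)^2>0$.

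The main obstacle is the transfer in (ii) from $\length$ to an arbitrary $\varphi$: one cannot directly re-invoke the mass-point argument because $d\entropy\varphi(\co)=0$ is not among the hypotheses, so the supporting-hyperplane contradiction using $0\in\inte\VV\length_\co$ appears to be the cleanest way to bypass this. The secondary subtlety is the bookkeeping in (iv) around the fact that $\rho_u$ for $u\neq 0$ need not have rank-one Zariski closure, which is handled by performing the comparison between $\length$ and $\varphi$ entirely through the rank-one identifications at $\rho$ (where proportionalities of periods and coincidence of Gibbs measures are exact).
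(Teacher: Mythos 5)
Your proof is correct and follows essentially the same route as the paper's: Corollary \ref{factor-cotangente} localizes variations to the (necessarily one-dimensional) space $\factor{\sf H}\cap\a$, the mass point $\mass\length\co=0$ comes from Corollary \ref{masa} and Equation \eqref{masaentropia}, convexity forces $0\in\inte\VV\length_\co$, and the rank-one proportionality yields the variance scaling in part (\ref{iii}). The only cosmetic difference is in part (\ref{iiN}): the paper transfers from $\length$ to $\varphi$ by noting directly that $0\in\inte\VV\length_\co$ produces $\g,h$ with $\length(\varjor\g\co)<0<\length(\varjor h\co)$, hence the same sign change for $\varphi$; your supporting-hyperplane formulation and the invocation of Corollary \ref{factor-cotangente} for non-empty interior are equivalent but a bit heavier than needed given that $\factor{\sf H}\cap\a$ is one-dimensional.
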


\begin{proof} Corollary \ref{factor-cotangente} implies that for all $\g\in\gh$ \begin{equation}\label{eqahora}\varjor\g{\co}\in \factor{\sf H}\cap\a.\end{equation} If we assume that $\factor{\sf H}\cap\a\subset\ker\length$ then $\VV{\length}_{\co}\subset\ker\length$ and thus Remark \ref{varphi=cte} shows degeneracy. If $\factor{\sf H}\cap\a\cap\ker\length=\{0\}$ then necessarily $\dim\factor{\sf H}\cap\a=1$ and $\VV\length_{\co}$ is an interval contained in this line (possibly reduced to a point).

Let us deal now with item \eqref{iiN}, we first establish the result for $\length$ and deal afterwards with the general case. Since $\mathrm{d}\entropy\psi{}(\co)=0$ Eq. \eqref{masaentropia} gives that $\length(\mass\length{\co})=0$, however $\mass\length{\co}\in\VV\length_{\co}\subset\factor{\sf H}\cap \a$ which only intersects $\ker\length$ at $\{0\}$. We conclude that $$\mass\length{\co}=0\in\VV\length_{\co}.$$ 

If $\VV\length_{\co}=\{0\}$ then for all $\g\in\gh $ $\varjor\g\co=0$ contrary to our assumptions. We obtain thus that $\VV\length_{\co}$ is an interval with non-empty interior. This implies in turn that $\length(\vec\ledrappier)$ and $\length(\ledrappier)$ are not Liv\v sic-cohomologous and thus that $0=\mass\length{\co}\in\inte\VV\length_{\co}.$ This gives item (\ref{iiN}) for $\length$ but also gives a bit more: there exists $\g,h\in\gh$ such that \begin{equation}\label{decadalado}\length(\varjor\g\co)<0 < \length(\varjor h\co).\end{equation}

\noindent If $\varphi\in\conodual\rho$ is such that $\factor{\sf H}\cap \a\cap\ker\varphi=\{0\}$, then there exists $c\neq0$ with$\varphi|\factor{\sf H}\cap \a=c\length|\factor{\sf H}\cap \a$. Assume that $c>0$, then Equation \eqref{decadalado} yields $$\varphi(\varjor\g\co)<0 < \varphi(\varjor h\co),$$ which implies that $\varphi(\VV\varphi_{\co})$ is an interval with $0$ in its interior, giving the result.

We now deal with item \eqref{iiP=0}. If $\PP^\length(\co)=0$ then Remark \ref{varphi=cte} implies that $\VV{\length}_{\co}$ is contained in a level set of $\length,$ and is thus a point. If it where zero, for every $\g\in\gh$ we have $\varjor\g{\co}=0$. Since we assumed this was not the case, $\VV\length_{\co}\neq\{0\}.$

We now focus on item \eqref{iii}, se we assume $\sf H$ has rank $1$. As before $\factor{\sf H}\cap\a$ is one-dimensional. Then one has: \begin{itemize}\item[-] there exists $c>0$ such that for every $u\in \a_\sf H$ one has $\varphi(u)=c\length(u)$, \item[-] there exists $C\neq0$ such that for every $v\in\factor{\sf H}\cap \a$ one has $\varphi(v)=C\length(v)$.
\end{itemize}

Consequently, for every $\sf v\in H^1_{\Ad\rho}(\gh,\factor{\sf H})$ Equation \eqref{eqahora} implies that $\varphi(\vec\ledrappier_\sf v)=C\length(\vec\ledrappier_\sf v)$ and $\varphi(\ledrappier_\rho)=c\length(\ledrappier_\rho)$. Moreover, Corollary \ref{derivadaentropia} implies then that $$\deriva t0 \entropy\varphi{\rho_t}\varphi(\ledrappier_{\rho_t})=\frac{C}{c}\deriva t0 \entropy\length{\rho_t}\length(\ledrappier_{\rho_t}).$$ Thus,
\begin{alignat*}{2}\PP^{\varphi}_\rho(\sf v)& =\frac{\Var\Big(\deriva t0 \entropy\varphi{\rho_t}\varphi(\ledrappier_{\rho_t}),m_{-\entropy{\varphi}{\rho}\varphi(\ledrappier)}\Big)}{\entropy{\varphi}\rho\int\varphi(\ledrappier_\rho)dm_{-\entropy{\varphi}{\rho}\varphi(\ledrappier)}}\\ & =\frac{\Var\Big(\frac{C}{c}\deriva t0 \entropy\length{\rho_t}\length(\ledrappier_{\rho_t}),m_{-\entropy{\length}{\rho}\length(\ledrappier)}\Big)}{\entropy{\length}\rho\int\length(\ledrappier_{\rho})dm_{-\entropy{\length}{\rho}\length(\ledrappier)}}=  \frac{C^2}{c^2}\PP^\length_\rho(\sf v).\end{alignat*} \end{proof}

\subsection{Pressure forms at the Fuchsian locus I} For a Fuchsian $\prin$ we have

\begin{equation}\label{decoortogonal}\sf T_\rho\hitchin_{\A_{d-1}}(S)=\bigoplus_{e\textrm{ exponent}}\tangcero\prin e.\end{equation}

Let $e\in\lb1,d-1\rb$ and let $q\in H^0(K^{e+1})$ be a holomorphic differential of degree $e+1$ on the Riemann surface associated to $\prin.$ The Hitchin parametrization provides a \emph{normalized deformation} $\nrm(q)\in\sf T_\prin\hitchin_{\A_{d-1}}(S)$ as in Labourie-Wentworth \cite{Labourie-Wentworth}, moreover \cite[Corollary 3.5.2]{Labourie-Wentworth} implies that if we let $\coclase_{\nrm(q)}\in H^1_{\Ad\prin}\big(\pi_1S,\sl_d(\R)\big)$ be the associated cocycle then $\coclase_{\nrm(q)}\in\tangcero\prin e,$ and \cite[Proposition 6.5.7]{Labourie-Wentworth} states that if $p\in H^0(K^{f+1})$ with $f\neq e$ then \begin{equation}\label{horto}\PP^{\peso_1}_\prin\big(\nrm (q),\nrm (p)\big)=0.\end{equation}

We can now establish the following.


\begin{lemma}\label{ortogonalfuchs} Let $\ge$ have type $\A$, $\B$, $\Ce$, $\D$, or $\Ge$. Let $\prin\in\hitchin_\ge(S)$ be a Fuchsian representation, then for every $\length\in\conodual\prin$ and exponents $e\neq f$ the subspaces $\tangcero\prin e$ and $\tangcero\prin f$ are $\PP^\length_\prin$-orthogonal.
\end{lemma}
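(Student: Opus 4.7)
The plan is to exploit the Kostant-line decomposition from \S\ref{ktsec} to reduce the pressure form on $\tangcero\prin e$ to a scalar covariance, so that Labourie--Wentworth's orthogonality \eqref{horto} can be transferred to every length functional and to every type. Fix an exponent $e$ and a cocycle $\co\in\tangcero\prin e$. By Corollary \ref{factor-cotangente}, one has $\varjor\g\co\in V_e\cap\a=\kt^e=\R\cdot\ktv^e$ for every $\g\in\pi_1S$, so every period of $\vec\ledrappier_\co:\UG\to\a$ lies in the Kostant line $\kt^e$; Liv\v sic's theorem then furnishes a real-valued H\"older $f_\co\in\Holder(\UG,\R)$ with $\vec\ledrappier_\co\sim f_\co\cdot\ktv^e$ up to Liv\v sic-coboundary, and in particular $\length(\vec\ledrappier_\co)\sim\length(\ktv^e)f_\co$ for every $\length\in\a^*$. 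Since $\prin$ is Fuchsian, Corollary \ref{fuchscritica} makes every $\length\in\conodual\prin$ critical for $\entropy\length{}$ at $\prin$, and combining Theorems \ref{JJ} and \ref{pder} the Hessian collapses to
\[
\PP^\length_\prin(\co_1,\co_2)\;=\;C_\length\cdot\length(\ktv^{e_1})\,\length(\ktv^{e_2})\,\covar(f_{\co_1},f_{\co_2})
\]
for $\co_i\in\tangcero\prin{e_i}$ and some $C_\length>0$. The lemma thereby reduces to showing $\covar(f_{\co_1},f_{\co_2})=0$ whenever $e_1\neq e_2$.

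For $\ge$ of type $\A_{d-1}$ this is essentially Labourie--Wentworth: by \eqref{enVe} every $\co_i$ is a normalized deformation $\nrm(q_i)$, so \eqref{horto} and bilinearity yield $\PP^{\peso_1}_\prin(\co_1,\co_2)=0$; since $\peso_1(\ktv^e)\neq 0$ for every exponent $e$ by Remark \ref{peso1}, this forces $\covar(f_{\co_1},f_{\co_2})=0$. For $\ge$ of type $\B_n$, $\Ce_n$ or $\Ge$ I would use the principal-$\sl_2$-preserving embeddings $\phi:\ge\hookrightarrow\sl_d(\R)$ supplied by Table \ref{unesco} and the text following \eqref{ptfixes} (with the chain $\gedossplit\hookrightarrow\so(3,4)\hookrightarrow\sl_7(\R)$ handling the $\Ge$ case): under these, a Fuchsian $\prin\in\hitchin_\ge(S)$ composes to a Fuchsian $\Phi\prin\in\hitchin_{\A_{d-1}}(S)$, Proposition \ref{adfactors} identifies $V_e^\ge\subset V_e^{\sl_d}$ so that $\phi_*(\kt^e_\ge)=\kt^e_{\sl_d}$, and the matching periods make the scalar potential $f_\co$ for $\co$ and the analogous scalar potential for $\phi_*\co$ Liv\v sic-cohomologous up to a fixed nonzero multiple; the covariance vanishing therefore descends from the type-$\A$ case.

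The main obstacle is the case of type $\D_n$, since the defining representation $\so(n,n)\to\sl_{2n}$ does \emph{not} preserve principal $\sl_2$'s. Here I would split according to the external involution $\underline\ii$ from \S\ref{typeD}: any Fuchsian $\prin\in\hitchin_{\D_n}(S)$ is $\underline\ii$-fixed---its principal $\PSL(2,\R)$ sits inside $\SO(n-1,n)=\Fix\underline\ii$---and the $\ad\s$-factor decomposition aligns with $\mathrm d_\prin\underline\ii$: the $\underline\ii$-invariant factors are exactly the $V_e\subset\so(n-1,n)$ for $e\in\{1,3,\dots,2n-3\}$, while the only anti-invariant factor is $V_{n-1,\mathrm a}$. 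When $e_1\neq e_2$ are both exponents of $\B_{n-1}$, both cocycles live in $H^1_{\Ad\prin}(\pi_1S,\so(n-1,n))$ and orthogonality follows from the $\B_{n-1}$-case (via $\B_{n-1}\hookrightarrow\A_{2n-2}$). When instead one of the cocycles, say $\co_1\in\tangcero\prin{n-1,\mathrm a}$, is $\mathrm d_\prin\underline\ii$-anti-invariant and the other $\co_2$ lies in a $\underline\ii$-invariant factor of exponent $e_2\neq n-1$, Corollary \ref{tauisom} gives
\[
\PP^\length_\prin(\co_1,\co_2)\;=\;\PP^\length_\prin\bigl(\mathrm d_\prin\underline\ii(\co_1),\mathrm d_\prin\underline\ii(\co_2)\bigr)\;=\;-\PP^\length_\prin(\co_1,\co_2),
\]
forcing the pairing to vanish. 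The technical subtlety common to all types is keeping these Ledrappier-potential comparisons compatible with the embeddings and with the $\underline\ii$-decomposition, which is precisely the role played by the scalar lift $f_\co$ constructed in the first paragraph.
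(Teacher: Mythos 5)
Your proposal is correct and follows essentially the same route as the paper: reduce each $\PP^\length_\prin$-restriction to $\tangcero\prin e$ to a scalar covariance weighted by $\length(\ktv^e)$ using the one-dimensionality of Kostant lines and Corollary \ref{factor-cotangente} (the paper phrases this via the rescaling argument of Lemma \ref{identicaly0}\,\eqref{iii}, equivalent to your Liv\v sic scalar lift $f_\co$), invoke Labourie--Wentworth's \eqref{horto} and Remark \ref{peso1} for type $\A$, pass through the embeddings \eqref{ptfixes} for $\B,\Ce,\Ge$, and treat the extra $V_{n-1,\mathrm a}$-factor in type $\D$ by the $\undiX$-isometry from Corollary \ref{tauisom}. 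The only cosmetic difference is that you spell out the scalar Liv\v sic lift and the compatibility of the embeddings explicitly, which the paper leaves implicit.
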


\begin{proof} Since $\kt^e$ is $1$-dimensional and does not lie in $\ker\peso_1$ (Remark \ref{peso1}), there exist $c_e\in\R-\{0\}$ such that $\length|\kt^e=c_e\peso_1|\kt^e.$

Consider now $\co\in \tangcero{\prin}{e}$ and $\sf v\in \tangcero{\prin}{f}$, and write $\coclase=\coclase_{\nrm (q)}$ for some holomorphic differential $q$ and similarly for $\sf v$ and a differential $p$. Corollary \ref{factor-cotangente} implies that $\vec\ledrappier_\co$ has values in $\kt^e$ and $\vec\ledrappier_\sf v$ has values in $\kt^f$, and thus \begin{alignat*}{2}\length(\vec\ledrappier_\co)& =c_e\peso_1(\vec\ledrappier_\co),\\ \length(\vec\ledrappier_\sf v)& =c_f\peso_1(\vec\ledrappier_\sf v).\end{alignat*}

\noindent
Moreover, since $\prin$ is Fuchsian the argument of Lemma \ref{identicaly0}\eqref{iii} yields, by Eq \eqref{horto} $$\PP^\length_\prin\big(\nrm (p),\nrm (q)\big)=\frac{c_ec_f}{c_1^2}\PP^{\peso_1}_\prin\big(\nrm(p),\nrm (q)\big)=0.$$

This deals with types $\A,\,\B,\,\Ce$ and $\Ge$, and for all exponents for the type $\D_k$ except $\ktv^{k-1,\mathrm{a}}$. However by \S\,\ref{typeD} the decomposition $H^1_{\Ad\prin}(\pi_1S,\so(k-1,k))\oplus H^1_{\Ad\prin}(\pi_1S,V_{k-1,\mathrm a})$ consists on fixed and anti-fixed point of the involution $\mathrm d_\prin\undiX$, which is an isometry of $\PP^\length$ by Lemma \ref{tauisom}, yielding the result.\end{proof}

\subsection{Proof of Theorem \ref{degLietheoretic}}By means of Theorem \ref{clausuras}, we proceed with an analysis according to the Zariski closure of $\rho(\pi_1S)$. If $\rho(\pi_1S)$ is Zariski-dense then Corollary \ref{nondeg} implies that every $\length\in\conodual\rho$ induces a Riemannian $\PP^\length$.

At the other end, if $\rho=\prin$ is Fuchsian then we have \begin{equation}\label{decom}\sf T_\prin\hitchin_\ge(S)=\bigoplus_{e\textrm{ exponent}}\tangcero\prin e.\end{equation} 

If follows from Lemma \ref{ortogonalfuchs}  that, for type $\A$, $\B$, $\Ce$, $\D$ and $\Ge$, the above decomposition is orthogonal for every pressure form $\PP^\length_\prin$. So we study each  $\tangcero\prin e$.

Item \eqref{i} from Lemma \ref{identicaly0} implies that $\PP^\length$ degenerates on every $\tangcero\prin e$ with $\kt^e\subset\ker\length$. We have to show thus non-degeneracy of $\PP^\length$ on the adjoint factors with $\length(\ktv^e)\neq0$. Let $e$ be such that this happens. 

Lemma \ref{interiorV} states that the set of normalized variations $\VV{\length}_\tangente$ has non-empty interior. If $\PP^\length$ degenerates in $\tangcero\prin e$ then Lemma \ref{identicaly0}\eqref{ii} states that $\VV\length_{\tangente}$ is reduced to a point, yielding thus a contradiction. This concludes the Fuchsian points.

We deal now with type $\sf A$ and intermediate strata, i.e. $\rho(\pi_1S)$ has Zariski-closure either $\SO(n,n+1)$ or $\PSp(2n)$ according to the parity of $d$ (we deal later with the $\Gedossplit$-case), let $\ge$ be the associated Lie algebra. In this situation, by Proposition \ref{adfactors}, $\ge=\bigoplus_{e \textrm{ odd}}V_e$ and $\factor\ge=\bigoplus_{e\textrm{ even}}V_e$ are the two adjoint factors.

The first factor is settled by Corollary \ref{nondeg}, so we focus on the latter. Lemma \ref{estrata2forma} states that for any $\length\in\conodual\rho$ the set $\VV\length_\tangente$ has non-empty interior. So the only possibility for $\VV\length_\tangente$ to be contained on a level set if $\length$ is that $\factor\ge\subset\ker\length$, or equivalently $\length$ is $\ii$-invariant. Thus if $\length$ is not $\ii$-invariant then both restrictions $\PP^\length|H^1_{\Ad\rho}(\pi_1S,\ge)$ and $\PP^\length|H^1_{\Ad\rho}(\pi_1S,\factor\ge)$ give definite pressure forms. To conclude it suffices to show that $$H^1_{\Ad\rho}(\pi_1S,\ge)\perp_{\PP^\length}H^1_{\Ad\rho}(\pi_1S,\factor\ge).$$ However, by Corollary \ref{tauisom} the involution $\undiX$ is an isometry of $\PP^\length$ and the decomposition above coincides with the decomposition $$\sf T_\rho\hitchin_{\A_{d-1}}(S)=\Fix d_\rho\undiX\oplus\Fix(-d_\rho\undiX),$$ thus the decomposition is $\PP^\length$-orthogonal, giving non-degeneracy on $\sf T_\rho\hitchin_{\A_{d-1}}(S).$

We finally deal with the case where $\rho(\pi_1S)$ has Zariski-closure $\Gedosl$, this is the most involved case. By Proposition \ref{adfactors} there are three adjoint factors \begin{alignat*}{2}\sl_7(\R) & =V_3\oplus\big(V_1\oplus V_5)\oplus\big(V_2\oplus V_4\oplus V_6)\\ &= V_3\oplus\rep_{\peso_\aa}(\frak G_2)\oplus \big(V_2\oplus V_4\oplus V_6). \end{alignat*}

By Corollary \ref{nondeg} $\PP^\length$ is non-degenerate on deformations along $V_1\oplus V_5=\gedosl$. The factors $V_2\oplus V_4\oplus V_6$ and $V_3$ are dealt with in Lemma \ref{estrata2forma} items \eqref{parques} and \eqref{7} respectvely. This deals with the adjoint factors individually. As in the previous paragraph, the $H^1$ associated to even exponents and the $H^1$ associated to odd exponents are $\PP^\length$-orthogonal, so to prove non-degeneracy it remains to understand the restriction of $\PP^\length$ to $H^1_{\Ad\rho}(\pi_1S,V_3)\oplus H^1_{\Ad\rho}(\pi_1S,V_1\oplus V_5)$. As each factor has already been dealt with, we consider non-vanishing $\co_3$ and $\co_{1,5}$ in $H^1_{\Ad\rho}(\pi_1S,V_3)$ and $H^1_{\Ad\rho}(\pi_1S,V_1\oplus V_5)$ respectively and study the deformation associated to $$\co=\co_3+\co_{1,5}\in H^1_{\Ad\rho}(\pi_1S,V_3)\oplus H^1_{\Ad\rho}(\pi_1S,V_1\oplus V_5).$$ We have to show that any $\length\in\conodual\rho$ verifies $\PP^\length(\co)\neq0$.

In this situation, we can restrict ourselves to $\SO(3,4)$ with Cartan subspace $\a_{\so(3,4)}=\R^3$, Weyl chamber $\{a\in\R^3: a_1\geq a_2\geq a_3\geq0\} $ and simple roots $$\slroot_1(a)=a_1-a_2,\ \slroot_2(a)=a_2-a_3\textrm{ and }\eps_3(a)=a_3.$$ The subalgebra $\gedosl$ has Cartan subspace $\a_{\gedosl}\subset \a_{\so(3,4)}$ given by $$\a_{\gedosl}=\{(a_1,a_2,a_1-a_2):a_1,a_2\in\R\}$$ and simple roots $\{\slroot_1,\slroot_2\}$, see Figure \ref{hasseEx}. The proof of non-degeneracy is split into:

\begin{enumerate}\item\label{sl2in} $\slroot_2\in\supp\length$, \item\label{sl2not} $\slroot_2\notin\supp\length$.
\end{enumerate}

We deal first with item \eqref{sl2in}, the proof uses Lemma \ref{intermedioModif} applied to $\length=\varphi$, so we assume by contradiction that $\PP^\length(\co)=0$, or equivalently that $\length(\vec\ledrappier)$ and $c\length(\ledrappier)$ are Liv\v sic-cohomologous for some $c\neq0$. By Theorem \ref{PS} $\entropy{\slroot_1}\rho=\entropy{\slroot_2}\rho=1$ so Theorem \ref{JJ} implies that there exists $\g\in\pi_1S$ such that $$\slroot_2(\rho\g)<\slroot_1(\rho\g)=\eps_3(\rho\g).$$ Since by assumption $\slroot_2\in\supp\length$ we have $\slroot_2$ strictly minimizes $\rho(\g)$ among $\supp\length$ and we can apply Lemma \ref{intermedioModif} (with $\aa=\slroot_2$). Moreover, by Proposition \ref{todopar}, every non-commuting pair $g,h\in\pi_1S$ has images $\rho(g),\rho(h)$ that are $\simple$-transversally proximal. By Benoist \cite[Proposition 5.1]{limite}, and because $\ii=\id$ in this case, we find a subgroup $\G'<\pi_1S$ such that $\rho(\G')\subset\Gedosl$ is Zariski-dense, $\simple$-Anosov and has limit cone contained in $\{a:\slroot_2(a)<\slroot_1(a)\}$. So applying Lemma \ref{intermedioModif} we see that \begin{equation}\label{kersl2}\jordanlim_{\tangente|\G'}\subset\ker\slroot_2.
 \end{equation} 
 
However, the representation $\Ad_{\SO(3,4)}|\Gedosl$ is disjoined, indeed it has only two factors and one of them has more restricted weights than the other (recall Definition \ref{disjoined}), whence Corollary \ref{factor-cotangente} states that $\jordanlim_{\tangente|\G'}$ has non-empty interior and cannot be contained in $\ker\slroot_2.$
 
We now turn to item \eqref{sl2not}, i.e. $\slroot_2\notin\supp\length$. Since we're working in $\SO(3,4)$ we think of $\length$ as an element of $\a_{\SO(3,4)}^*$, which is spanned by the fundamental weights 
$$\peso_{\slroot_1}(a) =a_1,\qquad \peso_{\slroot_2}(a)=a_1+a_2,\qquad\peso_{\eps_3}(a)=a_1+a_2+a_3.$$Since, by assumption $\slroot_2\notin\supp\length$, up to scaling $\length$, which does not change the pressure form $\PP^\psi$, we have that for some $b\in\R$, either of the following hold: $$\length =\peso_{\slroot_1}+b\peso_{\eps_3}\,\textrm{ or }\length=b\peso_{\slroot_1}+\peso_{\eps_3}.$$ Assume the first one holds (the other is analogous), so $\length=\peso_{\slroot_1}+b\peso_{\eps_3}$.



The form $\psi$, on $\a_{\Gedosl}=\kt^1\oplus\kt^5$, verifies $$\length(a_1,a_2,a_1-a_2)=a_1+b(a_1+a_2+(a_1-a_2))=(1+2b)\peso_{\slroot_1}(a),$$ and one has moreover $\peso_{\slroot_1}|\kt^3=-\peso_{\eps_3}|\kt^3$ so, upon writing $v=v_3+v_{1,5}$ in the decomposition $\kt^3\oplus(\kt^1\oplus\kt^5)$, \begin{alignat*}{2}\length(v) & =\length(v_3)+\length(v_{1,5})= -b\peso_{\slroot_1}(v_3)+(1+2b)\peso_{\slroot_1}(v_{1,5})\\
&=\peso_{\slroot_1}\big(-bv_3+(1+2b)v_{1,5}\big). \end{alignat*}

Assuming by contradiction that there exists $c\neq0$ such that for all $\g\in\pi_1S$ one has $\length(\varjor\g\co))=c\length(\jordan(\rho\g))$ we obtain that, for all $\g$ one has $$\peso_{\slroot_1}\Big(-b\varjor\g\co_3+(1+2b)\varjor\g\co_{1,5}\Big)=c(1+2b)\peso_{\slroot_1}\big(\jordan(\rho\g)\big).$$ Considering $\co'=-b\co_3+(1+2b)\co_{1,5}$, linearity of the Margulis invariants gives $$\varjor\g{\co'}=-b\varjor\g\co_3+(1+2b)\varjor\g\co_{1,5},$$ so one has $$\peso_{\slroot_1}(\varjor\g{\co'})=c(1+2b)\peso_{\slroot_1}\big(\jordan(\rho\g)\big).$$

Since $\rho(\pi_1S)$ acts irreducibly on $\R^7$ we can apply Theorem \ref{indeplambda1} to obtain that $\co'$ is trivial, giving that either $\co_3$ or $\co_{1,5}$ is trivial which contradicts our starting assumption. This completes the proof for types $\A$, and thanks to the inclusions \eqref{ptfixes} we have also dealt with types $\B$, $\Ce$ and $\Ge$.

We end this section dealing with type $\D$, so let $\rho\in\hitchin(S,\D_n)$ with $n\geq5$ (we deal later with $\D_4$) and $\length\in\conodual\rho$.

As before we make use of the classification of Zariski closures of $\rho(\pi_1S)$. By Carvajales-Dey-Pozzetti-Wienhard \cite[Lemma 7.6]{CDPW} the Zariski-closure is semi-simple and Theorem \ref{clausuras} gives thus that $\overline{\rho(\pi_1S)}{}^\mathrm Z$ is:
\begin{itemize}
\item[-] $\SO(n,n)$, in which case Corollary \ref{nondeg} implies that $\PP^\length$ is Riemannian;
\item[-] a principal $\SL_2$, this case is dealt with by Lemmas \ref{interiorV} and \ref{ortogonalfuchs};

\item[-]a representation $\SO(n-1,n)\to\SO(n,n)$ stabilizing a non-isotropic line. 
\end{itemize}

It remains to deal with the last item. In this case we have a group involution $\undi$ of $\SO(n,n)$ whose fixed points are the corresponding $\SO(n-1,n)$. Thus, $\Ad\rho$ has two adjoint factors given by Equation \eqref{adjnn}, which are the fixed points and anti-fixed point set $d_{\scr e}\undi$; Corollary \ref{tauisom} implies that \begin{equation}\label{perpen}H^1_{\Ad\rho}(\pi_1S,\Fix d_{\scr e}\undi)\perp_{\PP^\length}H^1_{\Ad\rho}(\pi_1S,\mathrm{AntiFix}\, d_{\scr e}\undi)\end{equation}so we only have to deal with each factor independently.

One factor corresponds to deformations inside $\SO(n-1,n)$ and is settled by Corollary \ref{nondeg}, the other one has one-dimensional neutralizing space, namely $\kt^{n-1,\mathrm{a}}$ which is dealt with by means of the combination if items \eqref{iiN} and \eqref{iiP=0} of Lemma \ref{identicaly0}. Indeed, we only need to find a linear form $\varphi\in\conodual\rho$ whose entropy has vanishing derivative along a given $\co\in H^1_{\Ad\rho}(\pi_1S,V_{n-1,\mathrm{a}})$. If we let $\a_{\so_{n,n}}=\R^n$ be a Cartan subspace of $\so_{n,n}$ and consider the set of simple roots $\simple=\{\slroot_1,\ldots,\slroot_{n-1},\upalpha_n\}$ then the Cartan subspace of $\a_{\so_{n-1,n}}$ is $\ker(\slroot_{n-1}-\upalpha_n)$. Since by Theorem \ref{PS} $\simple\subset \EuScript Q_{\eta}$ for every $\eta\in\hitchin_{\D_n}(S)$, the linear form $(1/2)(\slroot_{n-1}+\upalpha_n)$ has critical entropy at $\rho$ (the argument is verbatim from the $\Gedossplit$-case in Figure \ref{Q34}), as desired.

We finally deal with $\D_4$. The above discussion holds verbatim, except that we have one more possibility for the Zariski closure of $\rho(\pi_1S)$, namely the fundamental representation for the short root of $\SO(3,4)$. From \S\,\ref{tricota} the adjoint factors of $\Ad\rho$ are deformations along $\SO(3,4)$ and cocycles with values in $\underline\uptau(V^{3,\mathrm a})$, these spaces correspond also to fixed and anti-fixed points of a Lie-algebra involution (namely $\underline\uptau\undi\underline\uptau^{-1}$), and the above discussion works verbatim, giving, by Equation \eqref{ktresa}, the resulting condition for $\length$ to have degenerate pressure form.

\section{Pressure forms at the Fuchsian locus II}\label{compatibleS}

We restrict now to $\ge=\sl(d,\R)$. Recall from Labourie-Wentworth \cite{Labourie-Wentworth} that if $\prin$ is Fuchsian and $q$ is a holomorphic differential over $S_\prin$, then there is a natural tangent vector $\nrm(q)\in\sf T_\prin\hitchin_{\A_{d-1}}(S)$, called the \emph{normalized} deformation. We will use our techniques and the results from \cite{Labourie-Wentworth} to homogenize pressure metrics on $\nrm(q).$

\subsection{Description of pressure metrics at the Fuchsian locus} Let us fix $\peso_1\in\a^*$ as reference the functional and consider $\length\in\a^*$ with $\length(\ktv^1)>0$. For each $e\in\lb1,d-1\rb$ we let $c_e\in\R$ be defined by $$\length|\kt^e:=c_e\peso_1|\kt^e.$$ Equivalently, by Remark \ref{peso1}, $c_e:=\length(\ktv^e)/(e!(d-1)^{\lf{e}}).$ The pressure form $\PP^{\length}$ is defined on  $\EuScript U_{\length }$ (recall Eq. \eqref{Upsi}) which, since $c_1>0$, contains the Fuchsian locus. The proof of Lemma \ref{identicaly0}\eqref{iii} readily gives:


\begin{prop}\label{prescribir} For every Fuchsian $\prin$ and $v\in\tangcero{\prin}{e}$ one has ${\displaystyle \PP_\prin^{\length}=\Big(\frac{c_e}{c_1}\Big)^2\PP^{\peso_1}_\prin.}$
\end{prop}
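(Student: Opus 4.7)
The plan is to reduce the statement to an essentially tautological rescaling computation, very much in the spirit of item \eqref{iii} of Lemma \ref{identicaly0}, whose rank-one argument becomes applicable at the Fuchsian locus precisely because the principal $\sl_2$ is itself of rank one. Three ingredients drive the reduction. First, since $\prin$ is Fuchsian, every Jordan projection $\jordan(\prin\g)$ lies in the Cartan line of the principal $\sl_2$, which by definition is $V_1\cap\a=\kt^1$; hence $\ledrappier_\prin$ is $\kt^1$-valued and the identity $\length|\kt^1=c_1\peso_1|\kt^1$ globalizes to
$$\length(\ledrappier_\prin)=c_1\,\peso_1(\ledrappier_\prin).$$
Second, because $v\in\tangcero{\prin}{e}=H^1_{\Ad\prin}(\pi_1S,V_e)$, Corollary \ref{factor-cotangente} forces $\varjor{\g}{v}\in V_e\cap\a=\kt^e$ for every loxodromic $\g$; hence $\vec\ledrappier_v$ is $\kt^e$-valued, so $\length(\vec\ledrappier_v)=c_e\,\peso_1(\vec\ledrappier_v)$. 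Third, by Corollary \ref{fuchscritica} both entropies $\entropy{\length}{}$ and $\entropy{\peso_1}{}$ are critical at $\prin$.

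With these in hand I would expand the definition of the pressure form. Criticality of $\entropy{\length}{}$ makes the term from differentiating $\entropy{\length}{\rho_t}$ vanish, so the derivative at $t=0$ of the pressure-zero potential $\entropy{\length}{\rho_t}\length(\ledrappier_{\rho_t})$ reduces to $\entropy{\length}{\prin}\length(\vec\ledrappier_v)=c_e\,\entropy{\length}{\prin}\,\peso_1(\vec\ledrappier_v)$, pulling $c_e$ cleanly out of the variance. Next, the elementary rescaling properties recalled in \S\ref{thermopre} (the equilibrium state is insensitive to positive scaling of the potential, while the entropy of the reparametrized flow scales inversely) give, from the pointwise identity in the first ingredient,
$$\gibbs_{-\entropy{\length}{\prin}\length(\ledrappier_\prin)}=\gibbs_{-\entropy{\peso_1}{\prin}\peso_1(\ledrappier_\prin)}=:m,\qquad\entropy{\length}{\prin}=\entropy{\peso_1}{\prin}/c_1.$$
Plugging all of this into
$$\PP^\length_\prin(v)=\frac{\Var_{m}\bigl(\partial_t|_{t=0}\,\entropy{\length}{\rho_t}\length(\ledrappier_{\rho_t})\bigr)}{\entropy{\length}{\prin}\int\length(\ledrappier_\prin)\,dm}$$
and cancelling one factor of $c_1$ against the rescaling of $\entropy{\length}{\prin}$ in the denominator, and two factors of $c_e$ coming from the squared variance against one factor of $c_1^2$ coming from the squared rescaling of entropy, leaves exactly the scalar $(c_e/c_1)^2$ multiplying $\PP^{\peso_1}_\prin(v)$.

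I do not anticipate a genuine obstacle: conceptually, on a Fuchsian representation the Ledrappier potential lives in a single line and, when tested against a cocycle in $\tangcero{\prin}{e}$, its variation also lives in a single line, so any two length functionals agree up to constants on everything that enters the variance formula. The only point that requires a modicum of care is the bookkeeping of the rescaling properties of the entropy, the pressure-zero potential and the equilibrium state (exactly as in the proof of Lemma \ref{identicaly0}\eqref{iii}), so that the $c_1$'s cancel correctly between numerator and denominator and leave the symmetric $(c_e/c_1)^2$ claimed in the statement.
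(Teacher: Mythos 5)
Your proof is correct and follows essentially the same route as the paper's, which simply defers to the rescaling argument of Lemma \ref{identicaly0}(iii): because $\ledrappier_\prin$ lives in $\kt^1$ and $\vec\ledrappier_v$ lives in $\kt^e$, the constants $c_1$ and $c_e$ factor out of everything that enters the pressure-form formula. Your additional invocation of entropy criticality (Corollary \ref{fuchscritica}) to kill the $\partial_t\entropy\length{}$ term is a valid simplification available at Fuchsian points but is not actually needed for the bookkeeping — Lemma \ref{identicaly0}(iii) establishes the scaling identity $\deriva t0 \entropy\varphi{\rho_t}\varphi(\ledrappier_{\rho_t})=\tfrac{C}{c}\deriva t0 \entropy\length{\rho_t}\length(\ledrappier_{\rho_t})$ directly from Corollary \ref{derivadaentropia} without assuming the entropy derivative vanishes.
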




\subsection{A pressure form compatible at the Fuchsian locus}\label{comS}

Consider a Fuchsian $\prin:\pi_1S\to\PSL_d(\R),$ $e\in\lb1,d-1\rb$ and $q\in H^0(K^{e+1})$ a holomorphic differential of degree $e+1$ on $S_\prin.$ Then Hitchin's parametrization provides a \emph{normalized deformation} $\nrm(q)\in\sf T_\prin\hitchin_{\A_{d-1}}(S)$ and one has the following.

\begin{thm}[{Labourie-Wentworth \cite[Cor. 3.5.2 and Cor. 6.1.2]{Labourie-Wentworth}}]\label{LW}\label{standard-cociclo}Let $\prin$ be Fuchsian, $q$ a holomorphic differential on $S_\prin$ of degree $e+1$ and $\coclase_{\nrm(q)}\in H^1_{\Ad\prin}\big(\pi_S,\sl_d(\R)\big)$ the cocycle associated to $\nrm(q).$ Then $\coclase_{\nrm(q)}\in H^1_{\Ad\prin}\big(\pi_S,V_e)$ and \begin{alignat*}{2}\PP^{\peso_1}_\prin\big(\nrm(q)\big)&= \frac{(d-1)!^2}{2^e}\frac{(d+1)d(d-1)}{3\cdot2}\frac{(2e+1)!}{(d+e)!(d-e-1)!}{\displaystyle \frac{\int_S\|q\|^2d\area_\prin}{\pi|\chi(S)|}} \\&= \frac{(d-1)^{\lf{e}} }{(d+e)^{\lf{e-1}} } \frac{(d-1)}{3}\frac{(2e+1)!}{2^e}{\displaystyle \frac{\int_S\|q\|^2d\area_\prin}{2\pi|\chi(S)|}}.\end{alignat*}\end{thm}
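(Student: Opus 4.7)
The plan is to reduce both assertions to harmonic analysis on the Riemann surface $S_\prin$ via the non-abelian Hodge correspondence, and then to carry out the resulting variance computation in Kostant's explicit basis of $\ge$ adapted to the principal $\sl_2$.

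For the first assertion that $\coclase_{\nrm(q)}\in\tangcero\prin e$, I would work in the Higgs bundle picture. A Fuchsian representation $\prin$ corresponds to the \emph{uniformizing Higgs bundle}, whose Higgs field is the principal nilpotent $e^-\in\s$. Hitchin's section sends $q\in H^0(K^{e+1})$ to a Higgs field of the form $e^-+q\cdot v_e$, where $v_e\in V_e$ is a highest-weight vector for $\ad(2\epsilon_0)$ of weight $2e$ (which, as a section of $\End$ twisted by $K$, is holomorphic of degree $e+1$ precisely because of the $\ad\s$-grading of $\sl_d$). Differentiating this family at $t=0$ and applying the Hodge-theoretic identification of $\sf T_\prin\hitchin_{\sl_d}(S)$ with harmonic $\ge$-valued $1$-forms, the infinitesimal deformation lies in the $\ad\s$-submodule generated by $v_e$, which is $V_e$. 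Hence $\coclase_{\nrm(q)}\in H^1_{\Ad\prin}(\pi_1S,V_e)=\tangcero\prin e$.

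For the pressure computation I would unwind definitions and reduce to a Petersson-type integral. At the Fuchsian point, $\peso_1(\ledrappier_\prin)=\tfrac{d-1}{2}\ell_\prin$, where $\ell_\prin$ is the hyperbolic length function on $\sf U\pi_1S$. Consequently the flow $\phi^{\peso_1(\ledrappier_\prin)}$ is the geodesic flow of $S_\prin$ rescaled by the constant $(d-1)/2$, its topological entropy is $\entropy{\peso_1}{\prin}=2/(d-1)$, and its equilibrium state is a constant multiple of Liouville measure, so $\int \peso_1(\ledrappier_\prin)\,d\gibbs_{-\entropy{\peso_1}\prin\peso_1(\ledrappier_\prin)}=(d-1)/2$. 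Theorem \ref{pder} then gives
\[
\PP^{\peso_1}_\prin(\nrm(q)) \;=\; \frac{\Var\big(\peso_1(\vec\ledrappier)\big)}{\entropy{\peso_1}\prin\cdot (d-1)/2}.
\]
By the first part, $\peso_1(\vec\ledrappier)$ takes values on the Kostant line $\kt^e$, on which $\peso_1$ evaluates to $\peso_1(\ktv^e)=e!(d-1)^{\lf e}$ (Remark \ref{peso1}); this already accounts for the global factor $(d-1)^{\lf e}/(d+e)^{\lf{e-1}}$ in the statement once combined with the Clebsch--Gordan normalization below.

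The key step will then be identifying $\vec\ledrappier$ concretely on $\sf U\pi_1S$. Under the $\PSL(2,\R)$-equivariant decomposition $\sl_d(\R)=\bigoplus_e V_e$, the factor $V_e$ is the $(2e+1)$-dimensional irreducible, i.e.\ $\Sym^{2e}$ of the standard representation. Via Eichler integration, the cocycle $\co_{\nrm(q)}$ represents a $V_e$-valued harmonic form on $S_\prin$ whose $(1,0)$-part is $q$; pairing against $\peso_1$ and restricting to the unit tangent bundle produces a function on $\sf U\pi_1S$ whose $L^2$-norm against Liouville measure is a universal multiple of the Petersson norm $\int_S\|q\|^2\,d\area_\prin$. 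Mixing of the geodesic flow ensures that the variance of this zero-mean $L^2$-function equals its $L^2$-norm (up to the normalization of Liouville volume), and Gauss--Bonnet, $\area_\prin(S)=2\pi|\chi(S)|$, produces the explicit factor $1/(\pi|\chi(S)|)$.

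The main obstacle will be reconciling all combinatorial constants. The factor $(2e+1)!/2^e$ encodes the ratio between the $\Sym^{2e}$-invariant Hermitian form used in the Petersson pairing and the one induced by the inclusion $V_e\subset\sl_d$; the binomial factor $(d-1)!^2/[(d+e)!(d-e-1)!]$ arises as a Clebsch--Gordan coefficient coming from projecting the trace pairing on $\End(\R^d)$ onto its $\ad\s$-isotypic component $V_e$; and $(d+1)d(d-1)/6$ is the ratio between the Killing form of $\sl_d$ and the trace form $\tr(XY)$ in the defining representation. Each of these can be cross-checked against Kostant's identity from Proposition \ref{formulaexponentes} and against $\peso_1(\ktv^e)=e!(d-1)^{\lf e}$, which the final formula must recover. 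A careful accounting of these normalizations against the Petersson pairing should then reproduce the stated coefficient.
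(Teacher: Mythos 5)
The paper does not prove this theorem: it is a verbatim citation of Labourie--Wentworth~\cite{Labourie-Wentworth}, Corollaries~3.5.2 and~6.1.2, and the present paper only \emph{uses} the formula (in Corollary~\ref{compa1}). Your proposal is therefore not to be compared against an internal argument but against the cited source, and judged on its own feasibility.

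Your first paragraph --- that Hitchin's parametrization sends a degree-$(e+1)$ differential to a deformation of the Higgs field by a section of the $(2e)$-eigenspace of $\ad(2\epsilon_0)$, hence $\coclase_{\nrm(q)}\in\tangcero\prin e$ --- is correct and is indeed how \cite[Cor.~3.5.2]{Labourie-Wentworth} is established. The reduction $\peso_1(\ledrappier_\prin)=\tfrac{d-1}{2}\ell_\prin$, $\entropy{\peso_1}\prin=2/(d-1)$, Gauss--Bonnet for the area, and the appearance of $\peso_1(\ktv^e)=e!(d-1)^{\lf e}$ from Corollary~\ref{factor-cotangente} are all sound.

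The genuine gap is the sentence ``Mixing of the geodesic flow ensures that the variance of this zero-mean $L^2$-function equals its $L^2$-norm (up to the normalization of Liouville volume).'' That is false. The quantity $\Var_f(g)$ defined before Theorem~\ref{pder} is the Green--Kubo asymptotic variance
$\displaystyle\int_{-\infty}^{\infty}\Big(\int g\cdot(g\circ\phi_s)\,d\gibbs_f\Big)\,ds$,
not the $L^2$-norm $\int g^2\,d\gibbs_f$. Mixing gives decay of the integrand, making the integral converge, but does not collapse it to the $s=0$ term; for generic $g$ the two quantities differ. The identity that actually makes the explicit constant come out is a representation-theoretic one: $L^2(\sf U S_\prin)$ decomposes into $\widehat{\PSL(2,\R)}$-isotypical pieces, the observable $\peso_1(\vec\ledrappier)$ lands in a specific $K$-type of each complementary/principal-series component, and the autocorrelation integral is then a resolvent computation in each component. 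This is precisely the nontrivial content of McMullen's thermodynamic formula for the Weil--Petersson metric and of \cite[Cor.~6.1.2]{Labourie-Wentworth}, and it is where the factors $(2e+1)!/2^e$ and $(d+1)d(d-1)/6$ are actually produced. As written, your sketch replaces this core computation by an unjustified shortcut, so it does not yet establish the stated formula.
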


We consider then the following functional.

\begin{defi}\label{defKahl}We let $\kahl\in\a^*$ be defined by, for all $e\in\lb1,d-1\rb,$  $${\displaystyle\kahl|\kt^e=\sqrt{\frac{(d+e)^{\lf{e-1}}}{(d-1)^{\lf e}}\frac{3\cdot 2^e}{(\dim V_e)!(d-1)}}\cdot\peso_1|\kt^e.}$$\end{defi}

By Remark \ref{peso1} $\peso_1(\ktv^e)\neq0$, whence by definition $\kahl(\ktv^e)\neq0$ and thus Theorem \ref{degLietheoretic} entails that $\PP^\kahl$ is Riemannian on $\EuScript U_\kahl$. We have:

\begin{cor}\label{compa1}Let $\prin$ be a Fuchsian representation and $q\in \bigoplus_{e=1}^{d-1}H^0(K^e)$ then $$\PP^\kahl_\prin\big(\nrm(q)\big)=\frac{(d-1)^2}{2\pi|\chi(S)|}\int_{S_\prin}\|q\|^2d\area_\prin$$ and $\kahl$ is the only linear form so that this equation holds at the Fuchsian points. Thus, there exists $\lambda>0$ so that the operator $\mathrm{j}$ defined by $\PP^\kahl(u,v)=\symp(\mathrm{j} u,v)$ squares $-\lambda$ on the tangent space to $\hitchin_{\A_{d-1}}(S)$ at the Fuchsian points $\cal T(S).$ \end{cor}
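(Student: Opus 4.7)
The plan is to decompose $q$ by degree, reduce the computation on each degree to Theorem \ref{LW} via Proposition \ref{prescribir}, and check that the factors in the Definition \ref{defKahl} of $\kahl$ were cooked up precisely to cancel all Lie-theoretic factors simultaneously.

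First I would write $q=\sum_{e=1}^{d-1}q_e$ with $q_e\in H^0(K^{e+1})$, so $\nrm(q)=\sum_e\nrm(q_e)$ with $\nrm(q_e)\in\tangcero\prin e$ by Theorem \ref{standard-cociclo}. Since $\prin$ is Fuchsian, Lemma \ref{ortogonalfuchs} gives that the subspaces $\tangcero\prin e$ are pairwise $\PP^\kahl_\prin$-orthogonal, hence
$$\PP^\kahl_\prin\big(\nrm(q)\big)=\sum_{e=1}^{d-1}\PP^\kahl_\prin\big(\nrm(q_e)\big).$$
Next I would apply Proposition \ref{prescribir}: writing $\kahl|\kt^e=C_e\,\peso_1|\kt^e$ for the square-root constant in Definition \ref{defKahl}, one has $\PP^\kahl_\prin(v)=(C_e/C_1)^2\PP^{\peso_1}_\prin(v)$ for every $v\in\tangcero\prin e$. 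A direct evaluation gives $C_1=1/(d-1)$ and
$$\Big(\frac{C_e}{C_1}\Big)^{\!2}=\frac{(d-1)(d+e)^{\lf{e-1}}}{(d-1)^{\lf e}}\cdot\frac{3\cdot2^e}{(2e+1)!}.$$

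The main computational step is to combine this with Theorem \ref{LW}, which provides the explicit value of $\PP^{\peso_1}_\prin\big(\nrm(q_e)\big)$. Multiplying the two expressions, the factors $(d-1)^{\lf e}/(d+e)^{\lf{e-1}}$, $3$, $2^e$ and $(2e+1)!$ cancel in pairs, leaving
$$\PP^\kahl_\prin\big(\nrm(q_e)\big)=\frac{(d-1)^2}{2\pi|\chi(S)|}\int_{S_\prin}\|q_e\|^2\,d\area_\prin,$$
which is exactly the statement's coefficient, \emph{independent of $e$}. Summing over $e$ and using orthogonality of the spaces $H^0(K^{e+1})$ in the $L^2$ norm gives the claimed identity. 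This ``miraculous'' cancellation is of course not a miracle: the constants defining $\kahl$ were designed to produce it, and indeed this is the only choice that makes the constant independent of $e$, which proves uniqueness up to scaling: for any other $\length\in\conodual\prin$ making the same identity hold, the orthogonality in Lemma \ref{ortogonalfuchs} and Proposition \ref{prescribir} force $(c_e^\length/c_1^\length)^2$ to equal a specific value for every $e$, determining $\length|\kt^e$ up to a global scale; since the Kostant lines $\{\kt^e\}_e$ span $\a$ (Remark \ref{perp}), $\length$ is determined up to scale, and the pressure form is invariant under such scaling.

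For the final assertion, the identity just established shows that $\PP^\kahl_\prin$ at every Fuchsian point coincides, up to the positive constant $(d-1)^2/(2\pi|\chi(S)|)$, with the real $L^2$ inner product on $\bigoplus_{e}H^0(K^{e+1})$ transported to $\sf T_\prin\hitchin_{\A_{d-1}}(S)$ via the Hitchin parametrization $q\mapsto\nrm(q)$. The Hitchin parametrization endows this tangent space with a complex structure $J$ (multiplication by $i$ on each $H^0(K^{e+1})$), and Goldman's symplectic form $\symp$ at Fuchsian points is the K\"ahler form associated to a positive multiple of the same $L^2$ Hermitian product and the same $J$ (this is the classical Weil--Petersson statement for Teichm\"uller space and its extension to the Hitchin section through Labourie--Wentworth \cite{Labourie-Wentworth}). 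Thus at $\prin$ one has $\PP^\kahl_\prin=c\,g$ and $\symp_\prin=c'\,g(J\cdot,\cdot)$ for positive constants $c,c'$ and a common Riemannian $g$. The operator $\mathrm{j}$ defined by $\PP^\kahl(u,v)=\symp(\mathrm{j} u,v)$ then satisfies $g(u,v)=(c'/c)\,g(J\mathrm{j} u,v)$, forcing $\mathrm{j}=-(c/c')J$ and hence $\mathrm{j}^2=-(c/c')^2\id$, so $\lambda=(c/c')^2>0$.

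The main obstacle is, as usual, not algebraic but bookkeeping: one has to trust that Definition \ref{defKahl} is correctly normalized so that the many falling-factorial and factorial factors cancel, and one needs the Weil--Petersson-type description of $\symp$ at Fuchsian points on the Hitchin component, which is implicit in Labourie--Wentworth's analysis of normalized deformations via holomorphic differentials on the Hitchin section.
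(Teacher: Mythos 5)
Your proof is correct and takes essentially the same approach as the paper, which is a one-line reduction to Theorem \ref{LW} and Proposition \ref{prescribir} for the formula and to Labourie--Wentworth's Lemma 5.1.1 for the K\"ahler compatibility; you have simply expanded the falling-factorial cancellation and spelled out the Weil--Petersson argument. One small point on which both your uniqueness argument and the Corollary's phrasing slide: Proposition \ref{prescribir} only pins down $(c^\length_e/c^\length_1)^2$, hence determines $\length|\kt^e$ up to a sign for each $e$ (not merely up to a global scale), so the ``only linear form'' claim should be read modulo positive scaling \emph{and} the per-exponent sign choices implicit in Definition \ref{defKahl}.
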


\begin{proof}
By Theorem \ref{standard-cociclo} $\coclase_{\nrm(q)}\in H^1_{\Ad\rho}(\pi_1S,V_e)$ so the result follows from Theorem \ref{LW} and Proposition \ref{prescribir}. The last assertion follows from \cite[Lemma 5.1.1]{Labourie-Wentworth}.
\end{proof}

If we want to find the form $\kahl_\ge$ restricted to the Hitchin components of type $\B,\,\Ce$ or $\Ge$, then we keep the coefficients of Definition \ref{defKahl} for odd exponents and impose $\kahl_\ge(\kt^e)=0$ for even exponents, for type $\Ge$ we further impose $\kahl_{\Gedosl}(\kt^3)=0.$

\begin{obs}\label{calculokahl}
Eq. \eqref{kahlrg2} contains the computation of $\kahl$ for the rank $2$ simple split algebras, we compute here $\kahl$ for rank $3$. These computations are straightforward consequence of the definition of $\kahl$ and the formul\ae\  for $\ktv^e$ from Lemma \ref{calculo}:

\begin{alignat*}{2}
20\kahl_{\sl(4,\R)}(a)&=\Big(6+\frac{\sqrt{10}}{15}\Big)a_1+\Big(4-\frac{\sqrt{10}}{15}-\frac{\sqrt{10}\sqrt3}3\Big)a_2+\Big(2+\frac{2\sqrt{10}}{15}-\frac{\sqrt{10}\sqrt{3}}3\Big)a_3;\\
35\kahl_{\sp(6,\R)}(a)&=\Big(5 + \frac{211\sqrt{35}}{3780}\Big)a_1+ \Big(3 - \frac{299\sqrt{35}}{3780}\Big)a_2 + \Big(1 - \frac{79\sqrt{35}}{1890}\Big)a_3;\\
28\kahl_{\so(3,4)}(a)&=\Big(3+\frac{\sqrt{42}\sqrt{10}}{90} +\frac{ \sqrt{42}}{3780}\Big)a_1
+\Big(2 - \frac{\sqrt{42}\sqrt{10)}}{90} - \frac{\sqrt{42}}{945}\Big)a_2\\ & \ \ \ +\Big(1 - \frac{\sqrt{42}\sqrt{10}}{90} + \frac{\sqrt{42}}{756}\Big)a_3.
\end{alignat*}
\end{obs}

\section{Hausdorff dimension degenerations}\label{Hffdeg}
%
%
%
%
%
%
%

Since Hitchin representations are $\simple$-positive (Fock-Goncharov \cite{FG}), Corollary \ref{HffTpos} deals with the Hessian of Hausdorff dimension at Zariski-dense points. We now apply Theorem \ref{thmPrinc} to understand degenerations for the lower strata, which reduces the question to the Fuchsian locus and to two exceptional situations. We focus on the former. Combining with Theorem \ref{BPSWHyper} we get:


\begin{cor}\label{corcho}Let $v\in\sf T_\prin\hitchin_\ge(S)$ with $\prin$ Fuchsian and $\sroot\in\simple$. If $v\in\tangcero\prin e$ and $\kt^e\subset\ker\sroot$, then $\hess_\prin\Hff_\sroot(\sf Jv)=0.$ If $\ge$ has classical type then the converse is also true: if $\hess_\prin\Hff_\sroot(\sf Jv)=0$ then $v\in\bigoplus_{e:\kt^e\subset\ker\sroot}\tangcero\prin e$.
\end{cor}

The question is thus reduced to understanding the triplets $(d,e,j)$ such that \begin{equation}\label{defSingular}\slroot_j(\ktv^e)=0.\end{equation} Such line will be called a \emph{simple-singular Kostant line}. In the following we study this equation in some situations, however the general case is not understood.

\subsection{Elementary Families}\label{elemfam}

\begin{prop}The following are simple-singular Kostant lines for $\A_{d-1}$:
\begin{enumerate}\item $d=2n,$ even exponent and the middle simple root $\slroot_n,$\item the second root $\slroot_2$ and, for every exponent $e,$ $d=1+e(e+1)/2,$
\item the triplets ($d$, exponent, root) defined as, for every $m\in\N$ $$(4m+3,2m+1,2m),$$
\item $e=3$ and the pairs $(d,j):=\Big(\begin{smallmatrix}4 & -5\\ 1 & -1\end{smallmatrix}\Big)^k \Big(\begin{smallmatrix}7\\ 2\end{smallmatrix}\Big),$ for any integer $k\geq0$,\item the 4th exponent $e=4$ and the pairs $(d,j)$ of the form $(d,j)=\Big(\begin{smallmatrix}6 & -7\\ 1 & -1\end{smallmatrix}\Big)^k \Big(\begin{smallmatrix}11\\ 2\end{smallmatrix}\Big),$ or $(d,j)=\Big(\begin{smallmatrix}6 & -7\\ 1 & -1\end{smallmatrix}\Big)^k \Big(\begin{smallmatrix}17\\ 3\end{smallmatrix}\Big),$ for any integer $k\geq0$.
\end{enumerate}
\end{prop}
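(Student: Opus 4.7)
The plan is to apply Lemma \ref{calculo} — which expresses
\begin{equation*}
\slroot_j(\ktv^e) = (-1)^e(e+1)!\sum_{t=1}^e(-1)^{t}\binom{e}{t}\binom{e}{t-1}(j-1)^{\lf{e-t}}(d-1-j)^{\lf{t-1}}
\end{equation*}
— and verify the vanishing in each of the five families separately. Case (1) is handled without any computation: by Remark \ref{evenw0} the Kostant line $\kt^e$ for even exponent $e$ is pointwise fixed by the longest element $\longest$ of $W(\A_{d-1})$, and since $\longest$ reverses coordinates on $\a=\{a\in\R^d:\sum a_i=0\}$, it sends $\slroot_n=a_n-a_{n+1}$ to $a_{n+1}-a_n=-\slroot_n$ when $d=2n$. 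Hence $\slroot_n(\ktv^e)=\slroot_n(\longest\ktv^e)=-\slroot_n(\ktv^e)$, giving the desired vanishing. Case (2) is a short direct computation: for $j=2$ the falling factorial $(j-1)^{\lf{e-t}}=1^{\lf{e-t}}$ vanishes whenever $e-t\geq 2$ (since $1\cdot 0=0$), so only the terms $t=e-1$ and $t=e$ survive; collecting these and factoring out a common $(d-3)^{\lf{e-2}}$ leaves a linear expression in $d$ whose unique root is precisely $d=1+e(e+1)/2$.

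Cases (4) and (5) I would treat via a Pell-type reduction. For fixed $e$ the identity $\slroot_j(\ktv^e)=0$ is a polynomial in $(d,j)$ of total degree $e-1$. For $e=3$ a direct expansion gives
\begin{equation*}
(j-1)(j-2)-3(j-1)(d-1-j)+(d-1-j)(d-2-j)=0,
\end{equation*}
which under the change of variables $(U,V)=(d,\,2j-d)$ becomes the Pell-type equation $(U+2)^2-5V^2=4$, whose fundamental integer solution corresponds to $(d,j)=(7,2)$. The proper automorph of this form transfers back to $(d,j)$-coordinates as the matrix $\left(\begin{smallmatrix}4&-5\\1&-1\end{smallmatrix}\right)$, generating the orbit described. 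For $e=4$ the cubic in $(k,m):=(j-1,\,d-j-1)$ factors as
\begin{equation*}
(k-m)\bigl[\,k^2+m^2-5km-3(k+m)+2\,\bigr] = 0 \, ;
\end{equation*}
the first factor corresponds to $d=2j$ and is precisely Case (1) specialized to $e=4$, while the vanishing of the second factor transforms under the same linear change of variables into $3U^2-7V^2=20$, whose proper automorph yields in $(d,j)$-coordinates the matrix $\left(\begin{smallmatrix}6&-7\\1&-1\end{smallmatrix}\right)$ and whose integer solutions split into exactly two orbits with representatives $(11,2)$ and $(17,3)$. Checking that the displayed matrices act as the stated automorphs, and that the initial solutions lie in the two orbits, is a routine $2\times 2$ verification.

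The subtlest case is (3), because the triple $(d,e,j)=(4m+3,\,2m+1,\,2m)$ varies in all three parameters and thus does not live on a fixed Diophantine curve of the type exploited in (4) and (5). After substituting $j-1=e-2$ and $d-1-j=e+1$ into the formula, the $t=1$ term vanishes (since $(e-2)^{\lf{e-1}}=0$), and the remaining sum becomes a terminating balanced hypergeometric series depending only on $m$. I would complete the case by recognizing this series as an instance of a classical cancellation identity (Pfaff–Saalsch\"utz or Whipple-type), or, failing that, by induction on $m$ using the three-term recursions among consecutive summands; spot-checks at $m=1,2$ confirm the identity. This is the main technical obstacle: while every individual $m$ can be verified by a finite computation, producing a uniform closed-form argument seems to require a nontrivial combinatorial identity rather than a symmetry or Diophantine reduction as in the previous four cases.
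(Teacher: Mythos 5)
Your treatment of items (1), (2), (4) and (5) follows the paper's route: item (1) is the $\longest$-symmetry argument via Remark~\ref{evenw0}; item (2) is the direct expansion from Lemma~\ref{calculo} (the paper leaves this to the reader); and items (4) and (5) both reduce to computing the automorph group of an integral binary quadratic form, which is exactly what the paper does --- it works directly with $q_3(d,j)=-d^2+5dj-5j^2=1$ for $e=3$ rather than passing to a Pell normal form, but the content is the same. There is a small algebra slip in your case (4): with $U=d$ and $V=2j-d$ the quadric reduces to $U^2-5V^2=4$, not $(U+2)^2-5V^2=4$; e.g.\ $(d,j)=(7,2)$ gives $V=-3$ and $49-45=4$. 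This does not affect the strategy, only the bookkeeping.

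The genuine gap is case (3). You correctly observe that the $t=1$ term vanishes and reduce to an alternating binomial sum, but you then assert that finishing the case ``seems to require a nontrivial combinatorial identity rather than a symmetry or Diophantine reduction.'' In fact a symmetry finishes it and is as elementary as case (1). After substituting $j=2m$, $e=2m+1$, $d=4m+3$ and factoring out constants one is left with
\[
S=\sum_{t=2}^{2m+1}(-1)^{t}\tbinom{2m+1}{t}\tbinom{2m+1}{t-1}\tbinom{2m+1}{t-2},
\]
and the involution $t\mapsto 2m+3-t$ of $\{2,\ldots,2m+1\}$ sends each summand to its negative: the three binomial factors are cyclically permuted because $\tbinom{2m+1}{r}=\tbinom{2m+1}{2m+1-r}$, while $(-1)^{2m+3-t}=-(-1)^t$ since $2m+3$ is odd. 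Hence $S=-S=0$. This is precisely what the paper does; you had the right reduction but did not spot the involution, and spot-checks at $m=1,2$ together with a speculative hypergeometric or inductive plan do not constitute a proof of the general case.
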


\begin{proof} Item (i) follows from $\ii$-invariance of $\kt^e$ for even $e$ (Remark \ref{evenw0}). The next two items follow by direct computation, let us do (iii). Indeed, using Lemma \ref{calculo} and replacing $j$ by $2m,$ $e$ by $2m+1$ and $d$ by $4m+3$ one has
\begin{alignat*}{2}\frac{\slroot_{2m}(\ktv^{2m+1})}{(-1)^e(e+1)!} & =\sum_{t=1}^{2m+1}(-1)^{t}\binom{2m+1}{t}\binom{2m+1}{t-1} (2m-1)^{\lf {2m+1-t}}\big(2m+2\big)^{\lf {t-1}}\\
 & = \sum_{t=2}^{2m+1}(-1)^{t}\tbinom{2m+1}{t}\tbinom{2m+1}{t-1} \frac{(2m-1)!}{(t-2)!}\frac{(2m+2)!}{(2m+3-t)!}\\ & = (2m-1)!(2m+2)\sum_{t=2}^{2m+1}(-1)^{t}\tbinom{2m+1}{t}\tbinom{2m+1}{t-1} \tbinom{2m+1}{t-2} =0,
\end{alignat*}
since by considering $k=2m+1-t$ in the above, the sum equals its negative\footnote{We thank Germain Poullot for the above argument.}

The next two also follow from Lemma \ref{calculo} however in these cases one has to find the integer solutions of an integral equation $q=c$. For the third exponent one has $q_3(d,j)=-d^2+5dj-5j^2=1$ which is solved by considering the cyclic group $\SO(q_3,\Z)=\<\Big(\begin{smallmatrix}4 & -5\\ 1 & -1\end{smallmatrix}\Big)\>.$ The last case is analogous. \end{proof}

%
%
%
%
%

\subsection{Degenerations for the 3rd root}\label{3raraiz} We complete the proof of Corollary \ref{Hff3}.

\begin{cor} An element $\tangente\in\tangcero\prin e$ is such that $\hess_\prin\Hff_{\slroot_3}(\sf J\tangente)=0$ if and only if the pair $(d,e)$ verifies $1<e<d$ and satisfies the Diophantine equation \begin{equation}\label{s3}e^4-6de^2+2e^3+6d^2-6de+11e^2-18d+10e+12=0.\end{equation}\end{cor}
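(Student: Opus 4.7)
The plan is to reduce the pressure-vanishing condition to a purely Lie-theoretic equation on the Kostant line $\kt^e$, and then to expand that equation explicitly with the formulas of \S\,\ref{ktsec}.

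First, at a Fuchsian $\prin\in\hitchin_{\A_{d-1}}(S)$ Theorem \ref{degLietheoretic} (applied with $\ge=\sl(d,\R)$, which is of type $\A$) states that $\PP^{\slroot_3}_\prin$ degenerates at $\tangente\in\sf T_\prin\hitchin_{\A_{d-1}}(S)$ if and only if $\tangente\in\bigoplus_{e:\slroot_3(\kt^e)=0}\tangcero\prin e$ (the other degeneration clauses in Theorem \ref{degLietheoretic} do not apply at a Fuchsian point). Since each Kostant line $\kt^e$ is one-dimensional and spanned by $\ktv^e\neq0$, for a non-zero $\tangente\in\tangcero\prin e$ this amounts to the single scalar equation $\slroot_3(\ktv^e)=0$. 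The corollary is therefore reduced to showing that $\slroot_3(\ktv^e)=0$ is equivalent to the Diophantine equation \eqref{s3}.

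Second, I would plug $j=3$ into the explicit formula of Lemma \ref{calculo}:
\begin{equation*}
\slroot_3(\ktv^e)=(-1)^e(e+1)!\sum_{t=1}^{e}(-1)^{t}\binom{e}{t}\binom{e}{t-1}\,2^{\lf e-t}(d-4)^{\lf t-1}.
\end{equation*}
The key simplification is that the falling factorial $2^{\lf k}=2(2-1)\cdots(2-k+1)$ vanishes as soon as $k\geq 3$. Thus only the three indices $t\in\{e-2,e-1,e\}$ contribute a non-zero term to the sum, reducing the problem to a three-term identity.

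Third, I would evaluate each of these three terms. Writing them out, one gets, up to a global sign $(-1)^e$:
\begin{equation*}
e\,(d-4)^{\lf e-1}\;-\;e^2(e-1)\,(d-4)^{\lf e-2}\;+\;\tfrac{1}{6}e^2(e-1)^2(e-2)\,(d-4)^{\lf e-3}=0.
\end{equation*}
Using $(d-4)^{\lf e-1}=(d-4)^{\lf e-3}(d-e-1)(d-e-2)$ and $(d-4)^{\lf e-2}=(d-4)^{\lf e-3}(d-e-1)$, I would factor out $e\,(d-4)^{\lf e-3}$. Since we are in the range $1<e<d$ where this factor is non-zero (with the edge cases $e=2$ and $e=3$ handled by inspection, noting that falling factorials with negative arguments vanish by our convention), the vanishing of $\slroot_3(\ktv^e)$ is equivalent to the vanishing of the bracketed polynomial expression. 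Multiplying by $6$ yields
\begin{equation*}
6(d-e-1)(d-e-2)-6e(e-1)(d-e-1)+e(e-1)^2(e-2)=0,
\end{equation*}
which after standard expansion and regrouping in powers of $d$ and $e$ reproduces precisely \eqref{s3}.

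The routine but unavoidable obstacle is the expansion in the last step; no clever manoeuvre seems to shorten it, so I would simply carry it out term by term, verifying that the cross terms $-6de^2-6de-18d$ and the lower-order terms $e^4+2e^3+11e^2+10e+12$ appear as stated.
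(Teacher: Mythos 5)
Your proposal is correct and follows essentially the same route as the paper: apply Lemma \ref{calculo} with $j=3$, observe that $2^{\lf e-t}$ vanishes for $e-t\geq3$ so only the indices $t\in\{e-2,e-1,e\}$ survive, compute those three terms, factor out the common $e(d-4)^{\lf e-3}$, and expand to reach \eqref{s3}. The three intermediate coefficients you write match the paper's $p(e,e),p(e,e-1),p(e,e-2)$ exactly, and your remark about the edge cases $e\in\{2,3\}$ (where $(d-4)^{\lf e-3}$ requires the convention on negative falling factorials) plays the same bookkeeping role as the paper's restriction $t\leq e\leq d-2$.
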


\begin{proof} By Corollary \ref{corcho} we have to solve $\slroot_3(\ktv^e)=0$ for which Lemma \ref{calculo}  gives \begin{equation}\label{tgen}\sum_{t=1}^e(-1)^{t}\binom{e}{t}\binom{e}{t-1} 2^{\lf {e-t}}\big(d-4\big)^{\lf {t-1}}=0.\end{equation} We begin by observing that $2^{\lf{e-t}}\neq0$ if and only if $e-t\in\{0,1,2\},$ and that $(d-4)^{\lf {t-1}}\neq0$ iff $t<d-2$. Moreover, since $t\leq e$ and $\ktv^{d-1}$ is never singular by Eq. \eqref{ejsktv}, we can restrict to $t\leq e \leq d-2$. If we let $p(e,t)$ denote the general term in the sum \eqref{tgen}, then we want to compute the alternated sum $$p(e,e)-p(e,e-1)+p(e,e-2)=0,$$ together with the constrain $e\leq d-2$. Explicit computation gives
\begin{alignat*}{2} p(e,e) &  =\Big(e (d-4)^{\lf {e-3}}\Big)(d-e-1)(d-e-2);\\
p(e,e-1) &= \Big(e(d-4)^{\lf{e-3}}\Big)e(e-1)(d-e-1); \\p(e,e-2)&=\Big(e(d-4)^{\lf{e-3}}\Big)\frac{e(e-1)^2(e-2)}6.
\end{alignat*}

\noindent
So alternating the sum and removing the common factor gives Equation \eqref{s3}.\end{proof}

We complete the proof of Corollary \ref{Hff3} by solving \eqref{s3} over $\Z$. A first remark is that it is preserved by the involution $(d,e)\mapsto(d,-e-1)$, so we only need to find (and care about) its solutions for $e\geq0$.

\begin{prop}\label{eqsols} The integer solutions of  \eqref{s3} with $e\geq0$ are 

\begin{table}[h!]
\begin{center}
\begin{tabular}{c|c|c|c|c|c}
$e$ & $0$ & $1$ & $2$ & $4$ & $8$ \\ \hline\hline
$d$ & $2$ & $3$ & $6$ & $17$ & $58$ \\ \hline
$d$ & $1$ & $2$ & $3$ & $6$ & $17$
\end{tabular}
\end{center}
\end{table}
\end{prop}


We prove now Proposition \ref{eqsols}. Clearing the variable $d$ gives $d=\frac{e^2+e+3}2\pm\frac16\sqrt{3(e^4+2e^3-e^2-2e+3)}$ so we now focus on the Diophantine equation \begin{equation}\label{despejar}f(x):= 3(x^4+2x^3-x^2-2x+3)=y^2,\end{equation} which, by means of the rational solution $(-1, 3)$, can be transformed by a $\Q$-birational map to an elliptic equation:

\begin{lemma} Consider the elliptic curve over $\Q$ defined by \begin{equation}\label{eliptica}E:\ y^2=x^3-147x+610.\end{equation} Then rational solutions of Equation \eqref{despejar} are parametrized by $E(\Q)$ via the maps

\begin{alignat*}{2} X_1(x,y) & =\left(-\frac{-7x + 35 + y}{17 + y - x}, 3\frac{-2x^3 + y^2 + 9x^2 + 28y + 169}{(17 + y - x)^2}\right);\\
X_2(x,y) & =\left(-\frac{7x - 35 + y}{-17  + x+y}, 3\frac{-2x^3 + y^2 + 9x^2 - 28y + 169}{(-17 + x + y)^2}\right).\end{alignat*}
\end{lemma}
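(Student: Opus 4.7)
The plan is to exhibit an explicit birational $\Q$-equivalence between the hyperelliptic quartic $\cal C:\ y^2 = f(x) = 3(x^4+2x^3-x^2-2x+3)$ and the Weierstrass cubic $E$, using the rational point $P_0=(-1,3)\in\cal C(\Q)$ as a base-point. The two rational maps $X_1,X_2:E\to\cal C$ will arise as the two branches of this correspondence, separated by the square root that enters when one inverts the transformation; they reflect the hyperelliptic involution $(X,Y)\mapsto(X,-Y)$ on $\cal C$.

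First I would verify $P_0\in\cal C(\Q)$: indeed $f(-1)=3(1-2-1+2+3)=9=3^2$. Then I would apply the classical procedure for bringing a quartic $y^2=g(x)$ equipped with a rational point to short Weierstrass form. Translate $P_0$ to infinity via $x=-1+1/u$, $y=v/u^2$, producing $v^2=u^4f(-1+1/u)=9+bu+cu^2+du^3+eu^4$ with explicit $b,\ldots,e\in\Q$. Since the constant term $9$ is a square one applies Nagell's algorithm: complete the square $v\mapsto v+3+(b/6)u$ to eliminate the $u^0$ and $u^1$ terms on the right, divide by $u^2$, and perform a final affine change of variables to reach $Y^2=X^3+AX+B$. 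A direct (if tedious) computation should yield $(A,B)=(-147,610)$.

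The second step is to invert this composition. Because it involves extracting $v$ from $v^2$, the inverse has exactly two branches, which are $X_1$ and $X_2$. The formulas written in the statement can then be certified independently: substitute $(X_i(x,y))$ in $Y^2-f(X)$, reduce modulo the ideal generated by $y^2-x^3+147x-610$, and check that the resulting rational function in $x$ vanishes identically. Surjectivity of $X_1(E(\Q))\cup X_2(E(\Q))$ onto $\cal C(\Q)$ follows from the fact that the whole construction is a $\Q$-birational equivalence: outside a finite exceptional locus every $\Q$-point of $\cal C$ is the image of a $\Q$-point of $E$, and points of the exceptional locus are handled separately by inspection.

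The main obstacle is bookkeeping rather than conceptual depth: the cascade of substitutions generates unwieldy rational expressions, and landing exactly on the stated normalisation $(A,B)=(-147,610)$ with $X_1,X_2$ appearing in the displayed form requires disciplined sign and scaling choices. A useful internal check is that, up to flipping the sign of one coordinate, $X_2(x,y)=X_1(x,-y)$: this symmetry matches the hyperelliptic involution on $\cal C$ and fails immediately if any coefficient along the way is mistracked. In practice the symbolic manipulation, and the final verification, are best delegated to a computer algebra system.
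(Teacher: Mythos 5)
Your proposal is correct and follows essentially the same route as the paper's proof: use the rational point $(-1,3)$ to bring the quartic to Weierstrass form via a translation to infinity, a degree-reducing change of variables, and a quadratic-formula/discriminant argument, with the two branches of the resulting birational map giving $X_1$ and $X_2$ (the paper's chain of substitutions --- shift, invert, kill the $x^4$ term via $y\mapsto x^2+y-\tfrac1{12}$, solve the remaining quadratic in $x$, then set the discriminant equal to $v^2$ --- differs only in bookkeeping from yours). One small slip in your Nagell step: the substitution should introduce a fresh variable $w$ via $v = 3 + \tfrac{b}{6}u + wu^2$ rather than shifting $v$ by $3 + \tfrac{b}{6}u$ (a mere shift does not eliminate the low-order terms cleanly), but since you delegate the symbolic work to a CAS this does not undermine the plan.
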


\begin{proof} This is standard given there exists a rational solution of \eqref{despejar}, in this case $x=-1$, $y=3$. We begin by replacing $x$ by $x-1$ which gives $$ 3x^4-6x^3-3x^2+6x+9-y^2=0,$$ followed by replacing $x$ by $1/x$ and $y$ by $3y/x^2$, which gives, by considering the numerator  $$9 x^4+6x^3-3x^2-6x+3-9y^2=0=x^4+\frac{2}{3}x^3-\frac{1}{3}x^2-\frac{2}{3}x+\frac{1}{3}x-y^2.$$ 

We now replace $x$ by $x-2/(3\cdot4)$ and $y$ by $x^2+y-(1/6\cdot2)$ to obtain a quadratic polynomial on $x$, indeed the additive terms are chosen to make disappear the higher-degree terms on $x$. The solutions on $x$ of the obtained quadratic polynomial are $$x=\frac{-14 \pm \sqrt{-5832y^3 + 2646y + 610}}{18(6y + 1)},$$
\noindent
which are obtained by describing the rational solutions of the equation $\Delta=v^2$. This latter equation is $$v^2=-5832u^3 + 2646u + 610=\big(-3^22u\big)^3-147\big(-3^22u\big)+610,$$ so replacing $-3^22u$ by $u$ we obtain the desired elliptic Equation \eqref{eliptica}, and the composition of the above local change of variables give the stated rational map.\end{proof}

We now proceed via the $\mathfrak{Ellog}$ method and, more precisely, use Tzanakis \cite{cuartica}. The Mordel-Weil group of $E(\Q)$ consists on one torsion point $(5,0)$  and the points $$R_1=(9,4)\ \textrm{ and }\ R_2=(11,18)$$ form a basis of the free part. If $(x,y)$ is a rational solution of Equation \eqref{despejar} then via Lemma \ref{eliptica} $X_i(x,y)\in E(\Q)$ and thus can be written as $$X_i(x,y)=m_0(5,0)+m_1R_1+m_2R_2$$ for some integers $m_i$, (with $m_0\in\{0,1\}$), additivity denotes the group law of $E(\Q)$.

The method consists on providing an upper bound for $M=\max\{|m_1|,|m_2|\}$ under the assumption that $(x,y)$ is a pair of integers, which reduces the problem to an explicit computation that can be carried out by a computer. To find this upper bound we nedd some data about the curve $E$, most of the following computations are computer-assisted and required to work on Maple with 13 decimal digits.

Consider the polynomial $q(u)=u^3-147u+610$, then

\begin{itemize} \item the solutions to $q=0$ are ${\displaystyle e_3=\frac{-5-3\sqrt{57}}2,\ e_2=5,\ e_1=\frac{-5+3\sqrt{57}}2,}$ \item The discriminant of $q$ is $2659392$ and $\Delta=2^4*2659392=42550272$,
\item the minimal real period of $E$ is ${\displaystyle \omega=2\int_{e_1}^\infty\frac{dt}{\sqrt{q(t)}}\asymp 0.9810124566....}$
\item The fundamental periods are, if $\mathrm M$ denotes the arithmetic-geometric mean, \begin{alignat*}{2}\omega_1 & =\frac{2\pi}{\mathrm M(\sqrt{e_1-e_3},\sqrt{e_1-e_2})}=2\omega\asymp1.962095763...,\\ \omega_2 & =\frac{2\pi}{\mathrm M(\sqrt{e_1-e_3},\sqrt{-1}\sqrt{e_2-e_3)}}\asymp 1.177161295... - 1.128478211...\sqrt{-1}.\end{alignat*} Since $\omega_2/\omega_1$ does not belong to the Gauss fundamental domain of the modular surface, we consider $$\tau=\big(\begin{smallmatrix} 1 & 0 \\ 1 & 1\end{smallmatrix}\big)(-\omega_2/\omega_1)\asymp 0.1849446113... + 1.171782212...\sqrt{-1}$$ with modulus $|\tau|\asymp1.186287512...$

\item The $j$-invariant is $j_E=470596/57$ and so the Archimedean contribution to its height is $h_\infty(j)=\log|470596/57|\asymp9.018703988....$
\end{itemize}

By means of \cite[Theorem 1.1]{SilvermanPaper} we have, for every $p\in E(\Q)$, that \begin{equation}\label{weilcan}\hat h(p)-\frac12 h(x(p))\leq\frac{h(\Delta)+h_\infty(j)}{12}+1.07=c_{11}\asymp 3.285408400...\end{equation}
Let $h$ be the logarithmic height, defined for a rational $p/q$ in lowest terms, defined by $h(p/q)=\log\max\{|p|,|q|\}$, and  if $(p_i/q_i)\in\Q^n$ we let  $$h(p_1/q_1,\ldots,p_n/q_n)=\log\max\{q,q|p_i|/q_i:i\in\lb1,n\rb\},$$  where $q=\mathrm{lcm}\{q_i:i\in\lb1,n\rb\}$. We denote by \begin{alignat*}{2}h_E & :=\max\{1,h(-147/4,610/16),h(j_E)\}\\&=h(j_E)=\log470596\asymp13.06175526....\end{alignat*}

We will also need the point $x_0=6\sqrt3-1>e_1$, we let $\sigma=1$ and we consider $$R_0=\big(x_0,6(3-\sqrt 3)\big)\in E(\Q(\sqrt3)).$$

If we let $E_0(\R)$ denote the unbounded component of $E(\R)$, then for $p\in E_0(\R)$ with coordinates $(u(p),v(p))$, the map $\phi:E_0(\R)\to\R/\Z$ given by $$\phi(p):=\left\{\begin{array}{lll} 0 \mod 1 & \textrm{if $p=O$},\\{\displaystyle \frac1\omega \int_{u(p)}^\infty\frac{du}{\sqrt{q(u)}} \mod 1} & \textrm{if }v(p)\geq0,\\ -\phi(-P) \mod 1& \textrm{if }v(p)\leq0,\end{array}\right.$$ is a group homomorphism. Observe that $\{R_0,R_1,R_2\}\subset E_0(\R)$. One has \begin{alignat*}{2} \omega\phi(R_1) & \asymp 0.8918445254...\\ \omega\phi(R_2) & \asymp 0.6925571056...\\  \omega\phi(R_0) & \asymp 0.8235278325... \end{alignat*}

\noindent Tzanakis \cite{cuartica} requires us to choose numbers $A_0,\ldots,A_3,\cal E$ such that \begin{alignat*}{2}A_0& \geq\max\Big\{ h_E,\frac{3\pi\omega^2}{|\omega_1|^2\frak I(\tau)}\Big\}=h_E\asymp 13.06175526....;\\
A_{i+1}&\geq \max\Big\{ h_E,\frac{3\pi\omega^2\phi(R_i)^2}{|\omega_1|^2\frak I(\tau)},\hat h(R_i):\Big\}\\ &=\max\big\{ h_E,\hat h(R_i)\big\}=h_E;\\ {\sf e} & \leq \cal E\leq \sf e\min\left\{\frac{|\omega_1|}{\omega}\cdot\sqrt{\frac{2A_0\frak I\tau}{3\pi}},\frac{|\omega_1|}{\omega\phi(R_i)}\cdot\sqrt{\frac{2A_{i+1}\frak I\tau}{3\pi}},i=0,1,2\right\}; \end{alignat*} where we have used Equation \eqref{weilcan} to find an upper bound of $\hat h (R_i)$,  and $\sf e$ is the Euler number. We can choose then $A_i=13.5$ for $i=0,1,2,3$, and $\cal E=9$. Then we compute $c_4,c_5$ and $c_6$ from \cite[\S\,7]{cuartica} given by David \cite[Th\'eor\`eme 2.1]{sinnou}:
\begin{alignat*}{2}c_4 &=2.9\cdot10^{30}\cdot 2^{10}\cdot4^{32}\cdot5^{80.3}(\log\cal E)^{-9}(13.5)^4\\&\asymp 2.043497279...\cdot10^{110}\\ c_5 & =\log(2\cal E)=\log (18)\\ c_6&=\log(18)+h_E\asymp 15.95212702....\end{alignat*}

We also consider the regulator matrix associated to the basis $\{R_1,R_2\}$, it is the matrix associated to the quadratic form on $E(\Q)$ defined by $\<P,Q\>=\hat h(P+Q)-\hat h(P)-\hat h(Q).$ We let $c_1$ be its smallest eigenvalue, one has $c_1\leq 0.303868...$

We now have to find a constant $c_9$ such that $$\frac1\omega\int_U^\infty\frac{dx}{\sqrt{f(u)}}\leq \frac{c_9}{\omega}\frac1U$$ which can be easily shown to be $c_9=1/\sqrt3.$ We now compute a constant $c_{10}$ so that for every $u\geq1$ $$h\Big(\frac{6\sqrt{Q(u)}-6u+18}{u^2}\Big)\leq c_{10}+2\log u,$$ and we get $c_{10}=3$. We finally have that the constants $c_{12}=1$ and $c_{13}=0$ and we obtain the first upper bound on $M\geq16$:

$$c_1M^2\leq \log c_9+\frac12c_{10}+c_{11}+c_4(\log M+c_5)(\log\log M+c_6)^5,$$ which gives $M\leq 6.123\cdot10^{59}.$

We now proceed with the reduction of the upper bound for $M$ applying \cite[\S\,5]{cuartica}. Since $R_0$ does not belong to $E(\Q)$, the $\R/\Z$ elements $$\{\phi(R_0),\phi(R_1),\phi(R_2)\}$$ are linearly independent over $\Q$, and thus our situation is Case 2 in that section, we are hence bound to use \cite[Proposition 4]{cuartica}, which gives the reduction $M\leq30$. At this point we proceed with a case by case computation using, for example, Maple.

\bibliography{ktlinesArx}
\bibliographystyle{plain}

\author{\vbox{\footnotesize\noindent 
	Andr\'es Sambarino\\
	CNRS - Sorbonne Universit\'e \\ IMJ-PRG (CNRS UMR 7586)\\ 
	4 place Jussieu 75005 Paris France\\
	\texttt{andres.sambarino@imj-prg.fr}
\bigskip}}

\end{document}